\crefname{equation}{}{}
\numberwithin{equation}{section}
\newtheorem{theorem}{Theorem}[section]
\newtheorem{lemma}[theorem]{Lemma}
\newtheorem{corollary}[theorem]{Corollary}
\newtheorem{claim}[theorem]{Claim}
\theoremstyle{definition}
\newtheorem{remark}[theorem]{Remark}
\newtheorem{definition}[theorem]{Definition}
\newtheorem*{remark*}{Remark}
\newtheorem*{claim*}{Claim}
\newcommand\ceil[1]{\lceil #1 \rceil}
\newcommand\ceilL[1]{\left\lceil{#1}\right\rceil}
\newcommand\floor[1]{\lfloor #1 \rfloor}
\newcommand\eps{\varepsilon}
\renewcommand\le{\leqslant}
\renewcommand\ge{\geqslant}
\renewcommand\preceq{\preccurlyeq}
\renewcommand\varrho{\rho} 
\newcommand\NN{\mathbb{N}}
\newcommand\NNP{\NN^+}
\newcommand\ZZ{{\mathbb Z}}
\newcommand\RR{{\mathbb R}}
\newcommand\CC{{\mathbb C}}
\newcommand\E{\operatorname{\mathbb E{}}}
\renewcommand\Pr{{\mathbb P}}
\newcommand\pto{\overset{\mathrm{p}}{\to}}
\newcommand\dtv[2]{\mathrm{d}_{\mathrm{TV}}\bigl(#1\,,#2\bigr)}
\newcommand\Po{\mathrm{Po}}
\newcommand\Var{\operatorname{Var}}
\newcommand\noproof{\qed}
\newcommand\cA{{\mathcal A}}
\newcommand\cB{{\mathcal B}}
\newcommand\cC{{\mathcal C}}
\newcommand\cD{{\mathcal D}}
\newcommand\cE{{\mathcal E}}
\newcommand\cF{{\mathcal F}}
\newcommand\cG{{\mathcal G}}
\newcommand\cH{{\mathcal H}}
\newcommand\cK{{\mathcal K}}
\newcommand\cI{{\mathcal I}}
\newcommand\cN{{\mathcal N}}
\newcommand\cP{{\mathcal P}}
\newcommand\cQ{{\mathcal Q}}
\newcommand\cR{{\mathcal R}}
\newcommand\cS{{\mathcal S}}
\newcommand\cT{{\mathcal T}}
\newcommand\cX{{\mathcal X}}
\newcommand\bcX{\overline{\mathcal X}}
\newcommand\cY{{\mathcal Y}}
\newcommand\chH{\widehat{\mathcal H}}
\newcommand\tlambda{\lambda} 
\newcommand\hlambda{\mu} 
\newcommand\tq{{\tilde{q}}}
\newcommand\tC{{\tilde{C}}}
\newcommand\tH{{\widetilde{H}}}
\newcommand\tP{{\widetilde{P}}}
\newcommand\tR{{\widetilde{R}}}
\newcommand\tY{{\tilde{Y}}}
\newcommand\tZ{{\tilde{Z}}}
\newcommand\tk{{\tilde{k}}}
\newcommand\tr{{\tilde{r}}}
\newcommand\tw{{\tilde{w}}}
\newcommand\ttt{{\tilde{t}}}
\newcommand\fD{{\mathfrak D}}
\newcommand\fE{{\mathfrak E}}
\newcommand\fH{{\mathfrak H}}
\newcommand\fN{{\mathfrak N}}
\newcommand\fR{{\mathfrak R}}
\newcommand\fS{{\mathfrak S}}
\newcommand\bp{{\mathfrak{X}}}
\newcommand\tc{t_{\mathrm{c}}}
\newcommand\tcx{t_{\mathrm{b}}}
\newcommand{\ve}{\vec{e}}
\newcommand{\vv}{\vec{v}}
\newcommand{\vc}{\vec{c}}
\newcommand{\tF}{\tilde{F}}
\newcommand\ba{\mathbf{a}} 
\newcommand\bbb{\mathbf{b}} 
\newcommand\bx{\mathbf{x}}
\newcommand\br{\mathbf{r}} 
\newcommand\by{\mathbf{y}} 
\newcommand\pp{\mathbf{p}} 
\newcommand\qq{\mathbf{q}}
\newcommand\op{o_{\mathrm{p}}}
\newcommand\Op{O_{\mathrm{p}}}
\newcommand\Thetap{\Theta_{\mathrm{p}}}
\newcommand\modp{\equiv_{\per} 0}
\newcommand\per[1][{\cR}]{p_{{#1}}}
\newcommand\trho{\tilde\rho}
\newcommand\dx{\mathrm{d}x}
\newcommand\dy{\mathrm{d}y}
\newcommand\dt{\mathrm{d}t}
\newcommand\ddt{\frac{\mathrm{d}}{\mathrm{d}t}}
\newcommand\cc{\mathrm{c}}
\newcommand\JP{J^{\Po}}
\newcommand\JPU{J^{\Po}(U^\mathrm{c})}
\newcommand\JPp{J^{+}}
\newcommand\JPm{J^{-}}
\newcommand\JPpm{J^{\pm}}
\newcommand\bb[1]{\bigl(#1\bigr)}
\newcommand\LS{S}
\DeclareMathOperator{\Bin}{Bin}
\newcommand{\indic}[1]{\mathbbm{1}_{\{{#1}\}}}
\newcommand\bSp{\overline{\cS^+}}
\newcommand\cSkr{\cS_{\cR}^*}
\newcommand\kR{k_{\cR}}
\newcommand\IR{I_{\cR}}
\newcommand\cSR{\cS_{\cR}}
\newcommand\gf{g}
\newcommand\tg{{\tilde g}}
\newcommand\hf{{\hat g}}
\newcommand\htf{{\hat{\tilde g}}}
\newcommand*\newoddpage{\clearpage\newpage}
\newenvironment{romenumerate}[1][0pt]{
\addtolength{\leftmargini}{#1}\begin{enumerate}
 }{\end{enumerate}}
\let\OLDthebibliography\thebibliography
\renewcommand\thebibliography[1]{
  \OLDthebibliography{#1}
  \setlength{\parskip}{0pt}
  \setlength{\itemsep}{0pt plus 0.3ex}
}
\begin{document}

\title{The phase transition in \\ bounded-size Achlioptas processes} 
\author{Oliver Riordan%
\thanks{Mathematical Institute, University of Oxford, Radcliffe Observatory Quarter, Woodstock Road, Oxford OX2 6GG, UK.
E-mail: {\tt riordan@maths.ox.ac.uk}.}
\ and Lutz Warnke%
\thanks{Department of Mathematics, University of California, San Diego, La Jolla CA~92093, USA. 
E-mail: {\tt lwarnke@ucsd.edu}. 
Supported by NSF~grant DMS-1703516, NSF~CAREER grant~DMS-2225631, and a Sloan Research Fellowship.}
}
\date{May~11, 2017; revised October~23, 2023}

\renewcommand{\thefootnote}{\fnsymbol{footnote}}
\footnotetext{\hspace{-0.5em}AMS 2000 Mathematics Subject Classification: 05C80, 60C05, 90B15}
\renewcommand{\thefootnote}{\arabic{footnote}} 

\maketitle

\begin{abstract}
Perhaps the best understood phase transition is that in the component structure of the
uniform random graph process introduced by Erd\H os and R\'enyi around 1960.
Since the model is so fundamental, it is very interesting to know which features of
this phase transition are specific to the model, and which are `universal', at least
within some larger class of processes (a `universality class').
Achlioptas process, a class of variants of the Erd\H os--R\'enyi process that
are easy to define but difficult to analyze, have been extensively studied
from this point of view. Here, 
settling a number of conjectures and open problems,
we show that all `bounded-size'
Achlioptas processes share (in a strong sense)
all the key features of the Erd\H os--R\'enyi phase transition. We do not expect this to
hold for Achlioptas processes in general.
\end{abstract}


\setcounter{tocdepth}{2} 
\tableofcontents 

\newpage

\section{Introduction}

\subsection{Summary} 
In this paper we study the percolation phase transition in Achlioptas processes, which have become a key example for random graph processes with dependencies between the edges. 
Starting with an empty graph on~$n$ vertices, in each step two potential edges are chosen uniformly at random. 
One of these two edges is then added to the evolving graph,
where the choice of which edge is decided by a rule that may only use the sizes of the components containing the four endvertices.%
\footnote{Here we are describing Achlioptas processes with `size rules'. This is by far the most natural
and most studied type of Achlioptas process, but occasionally more general rules are considered.}
For the widely studied class of bounded-size rules (where all component sizes larger than some constant~$K$ are treated the same), the location and existence of the percolation phase transition is nowadays well-understood. 
However, despite many partial results during the last decade (see, e.g.,~\cite{BK,SW,JS,KPS,BBW11,RWapcont,BBW12b,BBW12a,DKP}),
our understanding of the finer details of the phase transition has remained incomplete, in particular
concerning the size of the largest component. 

Our main results resolve the finite-size scaling behaviour of percolation in all bounded-size Achlioptas processes. 
We show that for any Achlioptas processes with any such rule the 
phase transition is qualitatively the same as that of the classical Erd\H os--R\'enyi random graph process
in a very precise sense: the width of the `critical window' (or `scaling window') is the same,
and so is the asymptotic behaviour of the size of the largest component
above and below this window, as well as the tail behaviour of the component size distribution
throughout the phase transition.
In particular, when $\varepsilon = \varepsilon(n) \to 0$ as $n \to \infty$ but $\varepsilon^3 n \to \infty$,
we show that, with probability tending to $1$ as $n \to \infty$,
the size of the largest component after $i$ steps satisfies
\[
L_1(i) \sim \begin{cases} C \varepsilon^{-2}\log(\varepsilon^3 n) & \text{if $i=\tc n-\eps n$,}\\ c\varepsilon n  & \text{if $i=\tc n+\eps n$,}\end{cases}
\]
where~$\tc,C,c>0$ are rule-dependent constants%
\footnote{Following standard conventions, here~$\tc$ stands for the `critical time' where the phase transition happens (with respect to the size of the largest component); we stress that the constant~$c>0$ is not related to~$\tc$.}
(in the Erd\H os--R\'enyi case we have~$t_{\mathrm{c}}=C=1/2$ and~$c=4$). 
These and our related results for the component size distribution settle a number of conjectures and open problems from~\cite{JSP,JS,KPSPC,BS,KPS,BBW12a,DKP}.
In the language of mathematical physics, they establish that all bounded-size Achlioptas processes fall in the same `universality
class' (we do not expect this to be true for general Achlioptas processes). 
Such strong results (which fully identify the phase transition of the largest component and the critical window) are known for very few random graph models.

Our proof deals with the edge-dependencies present in bounded-size Achlioptas processes 
via a mixture of combinatorial multi-round exposure arguments, the differential equation method,
PDE theory, and coupling arguments.  
This eventually enables us to  analyze the phase transition via branching process arguments.  

\subsection{Background and outline results}\label{sec:bg}
In the last 15 years or so there has been a great deal of interest in studying evolving network models, i.e.,
random graphs in which edges (and perhaps also vertices) are randomly added step-by-step, rather than generated
in a single round. Although the original motivation, especially for the Barab\'asi--Albert model~\cite{BA1999},
was more realistic modelling of networks in the real world, by now evolving models are studied in their own
right as mathematical objects, in particular to see how they differ from static models.
Many properties of these models have been studied, starting with the degree distribution. In many cases
one of the most interesting features is a \emph{phase transition} where a `giant' (linear-sized)
component emerges as a density parameter increases beyond a critical value.

One family of evolving random-graph models that has attracted a great deal of interdisciplinary
interest (see, e.g.,~\cite{DRS,dCDGM,RW,J,BF,SW,RWapcont,BBW12b}) is that of \emph{Achlioptas processes},
proposed by Dimitris Achlioptas at a Fields Institute workshop in~2000.
These `power of two choices'~\cite{Power2Paper,Power2Survey} variants of the Erd{\H o}s--R{\'e}nyi random graph process can be described as follows. 
Starting with an empty graph on~$n$ vertices and no edges, in each step
two potential edges~$e_1,e_2$ are chosen uniformly at random from all~$\binom{n}{2}$ possible edges (or from those not already present).
One of these edges is selected according to some `decision rule' $\cR$  and added to the evolving graph. 
Note that the distribution of the graph $G^{\cR}_{n,i}$ after~$i$ steps depends on the rule used, and that always adding~$e_1$ gives the classical Erd{\H o}s--R{\'e}nyi random graph process (approximately or exactly, depending on whether repeated edges and loops are allowed or forbidden).
%
Figure~\ref{fig:L1plots} gives a crude picture of the phase transition for a range of rules. 
In general, the study of
Achlioptas processes is complicated by the fact that there are non-trivial dependencies between the choices in different rounds. 
Indeed, this makes the major tools and techniques for studying the phase transition unavailable (such as tree-counting~\cite{ER1960,Bollobas1984,Luczak1990,2SAT}, branching processes~\cite{Karp1990,BJR,BR2009,BR2012}, or random walks~\cite{Aldous1997,NP2007,NP2010,BR2012RW}), since these crucially exploit independence. 

\begin{figure}[t] 
\centering
  \setlength{\unitlength}{1bp}%
\hspace{2em}\includegraphics[width=3.0in,bb=90 50 450 302]{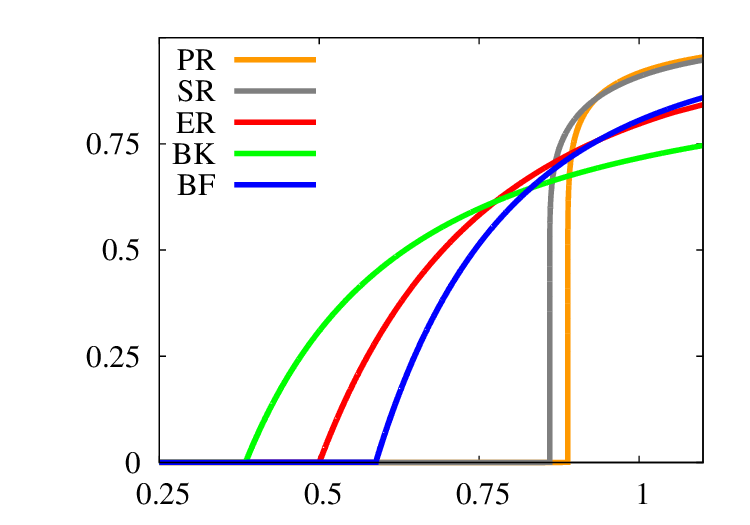}\vspace{-0.75em}
	\caption{\label{fig:L1plots} Simulation of the fraction $L_1(G^{\cR}_{n,tn})/n$ of vertices in the largest component after~$tn$ steps in various Achlioptas processes for~$n=10^{13}$, giving an approximation to the limiting curve $\rho^{\cR}(t)$. 
	The rules are Erd{\H o}s--R{\'e}nyi~(ER), Bohman--Frieze~(BF) and Bohman--Kravitz~(BK), all of which are bounded-size, and the sum and product rules (SR and PR), which are not.}
\end{figure}

The non-standard features of Achlioptas processes have made them an important testbed for developing new robust methods in the context of random graphs with dependencies, and for gaining a deeper understanding of the phase-transition phenomenon.
Here the class of \emph{bounded-size rules} has received considerable attention (see, e.g,~\cite{BK,SW,JS,KPS,BBW11,RWapcont,BBW12b,BBW12a,DKP}): 
the decision of these rules is based  
only on the sizes $c_1, \ldots, c_4$ of the components containing the endvertices of the two potential edges~$e_1$ and~$e_2$, with the restriction that all component sizes larger than some given \emph{cut-off~$K$} are treated in the same way (i.e., the rule only `sees' the truncated sizes $\min\{c_i,K+1\}$). 
Perhaps the simplest example is the \emph{Bohman--Frieze process}~(BF), the bounded-size rule with cut-off~$K=1$ in which the edge $e_1$ is added if and only if $c_1=c_2=1$ (see, e.g.,~\cite{BF,JS,KPS}). 
Figure~\ref{fig:L1plots} suggests that while the BF rule delays percolation compared to the classical Erd{\H o}s--R{\'e}nyi random graph process~(ER),
it leaves the essential nature of the phase transition unchanged.
In this paper we make this rigorous for all bounded-size rules, by showing that these exhibit Erd{\H o}s--R{\'e}nyi-like behaviour (see Theorem~\ref{thm:intro}).
Although very few rigorous results are known for rules which are not bounded-size (see~\cite{RWapsubcr,RWapunique}), as suggested
in Figure~\ref{fig:L1plots} these seem to have very different behaviour in general.

The study of bounded-size Achlioptas processes is guided by the typical questions from percolation theory (and random graph theory). 
Indeed, given any new model, the first question one asks is whether there is a phase transition
in the component structure, and where it is located. 
This was answered
in a pioneering paper by Spencer and Wormald~\cite{SW} (and for a large subclass by Bohman and Kravitz~\cite{BK}) using a blend of combinatorics, differential equations and probabilistic arguments. 
They showed that for any bounded-size rule $\cR$ there is a rule-dependent \emph{critical time} $\tc=\tc^\cR \in (0,\infty)$ at which the phase transition happens, i.e., at which the largest component goes from being of order~$O(\log n)$ to order~$\Theta(n)$. 
More precisely, writing, as usual, $L_j(G)$ for the number of vertices in the $j$th largest component of a graph $G$,
Spencer and Wormald showed that there is a constant~$\tc=\tc^\cR$  
given by  the blowup point of a certain finite system of differential equations
such that,
for any fixed $t \in [0,\infty)$, \emph{whp} (with high probability, i.e., with probability tending to $1$ as $n \to \infty$) we have
\begin{equation}\label{eq:def:tc}
L_1(G^{\cR}_{n,tn}) = \begin{cases} O(\log n) & \text{if $t < \tc$,}\\ \Theta(n) & \text{if $t > \tc$.}\end{cases}
\end{equation}
In the Erd{\H o}s--R{\'e}nyi process~\eqref{eq:def:tc} holds with $\tc=1/2$, see also Remark~\ref{rem:ER}.
\begin{remark}\label{rem:rounding}
Here and throughout we adopt the (common) \emph{rounding convention}, that any quantity (here~$tn$) indexing a step in our discrete-time process is automatically rounded down to the nearest integer.
\end{remark}
\noindent
Let $N_k(G)$ denote the number of vertices of~$G$ which are in components of size~$k$, and let
\begin{equation}\label{eq:def:Sr}
S_r(G) = \sum_{C} |C|^r/n = \sum_{k \ge 1}k^{r-1}N_k(G)/n,
\end{equation}
where the first sum is over all components $C$ of $G$ and $|C|$ is the number of vertices in $C$.
Thus $S_{r+1}(G)$ is the $r$th
moment of the size of the component containing a randomly chosen vertex.
The \emph{susceptibility} $S_2(G)$ is of particular interest since in many classical percolation
models its analogue diverges precisely at the critical point.
Spencer and Wormald~\cite{SW} showed that this holds also for bounded-size
Achlioptas processes: the $n\to\infty$ limit of $S_2(G^{\cR}_{n,tn})$ diverges at the critical time~$\tc$.

Once the existence and location of the phase transition have been established, one typically asks about finer details of the phase transition,  
 in particular about the size of the largest component.
For the Bohman--Frieze process this was addressed 
in an influential paper
by Janson and Spencer~\cite{JS}, using a mix of coupling arguments, the theory of inhomogeneous random graphs, and asymptotic analysis of differential equations.  
They showed that there is a constant $c=c^{\mathrm{BF}}>0$ such that we whp have linear growth of the form
\begin{equation}\label{eq:L1:BF}
 \lim_{n\to\infty} L_1(G^{\mathrm{BF}}_{n,\tc n + \eps n})/n = (c+o(1)) \eps
\end{equation}
as $\eps \searrow 0$, which resembles the Erd{\H o}s--R{\'e}nyi behaviour (where $\tc=1/2$ and $c=4$). 
Using work of the present authors~\cite{RWapcont} and PDE theory, this was 
extended to certain BF-like rules by Drmota, Kang and Panagiotou~\cite{DKP},  
but the general case remained open until now.
Regarding the asymptotics in~\eqref{eq:L1:BF}, note that~$\eps$ is held fixed as $n\to\infty$; only after taking the limit
in~$n$ do we allow $\eps\to 0$. 

The next questions one typically asks concern the `finite-size scaling', i.e., behaviour as a function of~$n$, usually with a focus on the size of the largest component as $\eps=\eps(n) \to 0$ at various rates.
For the `critical window' $\eps = \lambda n^{-1/3}$ (with $\lambda \in \RR$) of bounded-size rules
this was resolved
by Bhamidi, Budhiraja and Wang~\cite{BBW11,BBW12b}, using coupling arguments, Aldous' multiplicative coalescent, and inhomogeneous random graph theory. 
However, the size of the largest component outside this window has surprisingly remained open, despite considerable
attention.
For example, two papers~\cite{BBW12a,Sen} were solely devoted to the study of $L_1(G^{\cR}_{n,i})$ in the usually easier \emph{subcritical phase} ($i=\tc n - \eps n$ with $\eps^3 n \to \infty$), but 
both obtained suboptimal upper bounds (a~similar remark applies to the susceptibility, see~\cite{BBW12b}). 
In contrast, there are no rigorous results about $L_1(G^{\cR}_{n,i})$ in the more interesting \emph{weakly supercritical phase} ($i=\tc n + \eps n$ with $\eps\to 0$ but $\eps^3 n \to \infty$),
making the size of the largest component perhaps the most important open problem in the context of bounded-size~rules.

Of course, there are many further 
questions that one can ask about the phase transition, and here one central theme is:  
how similar are Achlioptas processes to the Erd{\H o}s--R{\'e}nyi reference model?
For example, concerning vertices in `small' components of size~$k$, tree counting shows that in the latter model we have 
\begin{equation}\label{eq:Nk:Er}
N_k(G^{\mathrm{ER}}_{n,\tc n \pm \eps n})/n \approx k^{-3/2}e^{-(2+o(1))\eps^2 k}/\sqrt{2\pi}
\end{equation}
as $\eps \to 0$ and $k \to \infty$ (ignoring technicalities), where $\tc=1/2$.
Due to the dependencies between the edges explicit formulae are not available for bounded-size rules, 
which motivates the development of new robust methods that 
recover the tree-like Erd{\H o}s--R{\'e}nyi asymptotics in such more complicated settings. 
Here Kang, Perkins and Spencer~\cite{KPS} presented 
an interesting PDE-based argument
for the Bohman--Frieze process, 
but this contains
an error (see their erratum~\cite{KPSE}) which does not seem to be fixable.
Subsequently, partial results have been proved by Drmota, Kang and Panagiotou~\cite{DKP}
for a restricted class of BF-like rules.

In this paper we answer the percolation questions discussed above
for all bounded-size Achlioptas processes, settling a number of open problems and conjectures~\cite{JSP,JS,KPSPC,BS,KPS,BBW12a,DKP} concerning the phase transition. 
We first present a simplified version of our main results, 
writing $L_j(i)=L_j(G^{\cR}_{n,i})$, $S_r(i)=S_r(G^{\cR}_{n,i})$ and $N_k(i)=N_k(G^{\cR}_{n,i})$ to avoid clutter.
In a nutshell, \eqref{eq:intro:Ljsub}--\eqref{eq:intro:smallcpt} of Theorem~\ref{thm:intro} 
determine the finite-size scaling behaviour of the largest component, the susceptibility, and the small components.
Informally speaking, all these key statistics 
have, up to rule-specific constants, the same asymptotic behaviour as in the 
Erd{\H o}s--R{\'e}nyi process, including the same `critical exponents'
(in ER we have $t_{\mathrm{c}}=C=1/2$, $c=4$, $B_r=(2r-5)!!2^{-2r+3}$, $A=1/\sqrt{2\pi}$ and $a=2$, see also Remark~\ref{rem:ER}).
In particular, 
\eqref{eq:intro:L1sup}--\eqref{eq:intro:L2sup} 
show that the unique `giant component' initially grows at a linear rate, as illustrated by Figure~\ref{fig:L1plots}.
\begin{theorem}
\label{thm:intro}
Let $\cR$ be a bounded-size rule with
critical time $\tc=\tc^{\cR}>0$ as in~\eqref{eq:def:tc}. 
There are rule-dependent positive constants $a,A,c,C,\gamma$ and $(B_r)_{r \ge 2}$ such that the following holds for any ${\eps=\eps(n) \ge 0}$ satisfying ${\eps \to 0}$ and ${\eps^3 n \to \infty}$ as ${n \to \infty}$.%
{\begin{enumerate}
\itemindent -0.75em 
\item\label{th1i} \emph{(Subcritical phase)} For any fixed $j \ge 1$ and $r \ge 2$, whp we have%
\begin{align}
\label{eq:intro:Ljsub}
L_j(\tc n -\eps n) & \sim C \eps^{-2}\log(\eps^3 n),\\
\label{eq:intro:Sksub}
S_r(\tc n -\eps n) & \sim B_r \eps^{-2r+3}.%
\end{align}%
\item\label{th1ii} \emph{(Supercritical phase)} Whp we have%
\begin{align}
\label{eq:intro:L1sup}
L_1(\tc n +\eps n) & \sim c\eps n , \\
\label{eq:intro:L2sup}
L_2(\tc n +\eps n) & = o(\eps n) .%
\end{align}%
\item\label{th1iii} \emph{(Small components)}
Suppose that $k=k(n)\ge 1$ and $\eps=\eps(n)\ge 0$ satisfy $k \le n^{\gamma}$, $\eps^2k \le \gamma\log n$, $k\to\infty$ and $\eps^3k\to 0$. Then whp we have
\begin{align}
\label{eq:intro:smallcpt}
N_k(\tc n \pm \eps n) & \sim A k^{-3/2}e^{-a\eps^2k}n.
\end{align}%
Here we do not assume that $\eps^3n\to\infty$; note that~\eqref{eq:intro:smallcpt} is a statement about two steps of the process, $\tc n+\eps n$ and $\tc n-\eps n$, not a range of steps.
\end{enumerate}}%
\end{theorem}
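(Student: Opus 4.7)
The overall plan is to reduce each statement to a branching-process computation, where the offspring distribution is determined by a deterministic \emph{density profile} $(\rho_k(t))_{k \ge 1}$ describing, in the $n \to \infty$ limit, the proportion of vertices in components of size $k$ at time $tn$. The main challenge is the non-trivial dependency between edge choices, which I would handle through a multi-round exposure: first reveal the sequence of component-size classes involved in each step, then reveal the actual vertices. Given the classification in the first round, much of the local structure factorises and is amenable to branching-process analysis.

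I would first establish sharp concentration of $N_k(i)/n$ around $\rho_k(i/n)$ for $i = tn$ with $t < \tc$, via the differential equation method, refining Spencer--Wormald to obtain error bounds uniform in $k$ up to the relevant scale. Since $\cR$ is bounded-size, the ODE for $\rho_k(t)$ has a transparent structure for $k > K$: the generating function $\psi(t, z) = \sum_{k} \rho_k(t) z^k$ satisfies a quasilinear first-order PDE whose coefficients depend only on the finitely many $\rho_j(t)$ with $j \le K$ plus bulk summaries such as $S_1$. I would solve this via the method of characteristics and perform a quantitative singularity analysis of $z \mapsto \psi(\tc - \eps, z)$ near its dominant singularity $z_\eps$, which approaches $1$ as $\eps \to 0$. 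The expected Erd\H{o}s--R\'enyi universality gives the ansatz $\psi(\tc - \eps, z) \approx \psi_0(\eps) - c_1(\eps)(z_\eps - z)^{3/2}$, and classical transfer theorems for singular generating functions then yield the tail $\rho_k(\tc - \eps) \sim A k^{-3/2} e^{-a\eps^2 k}$ together with $S_r(\tc n - \eps n) \sim B_r\eps^{-2r+3}$, proving \eqref{eq:intro:Sksub}. The supercritical side of \eqref{eq:intro:smallcpt} and the small-component part on either side of the critical point are obtained by extending the characteristic flow past $\tc$, where the $\rho_k$ for finite $k$ remain well-defined even after the mass defect corresponding to the giant opens up.

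With the deterministic profile controlled, the probabilistic statements follow by branching-process comparison. Conditioned on the first round of exposure, the exploration of the component of a uniformly chosen vertex in $G^\cR_{n,\tc n \pm \eps n}$ is well-approximated by a multi-type branching process whose mean operator has dominant eigenvalue $1 \mp \Theta(\eps)$. In the subcritical phase this gives $\Pr\bigl(|C(v)| \ge k\bigr) \sim A' k^{-1/2} e^{-a\eps^2 k}$; a first/second-moment argument on the number of components of size exceeding a cut-off of order $\eps^{-2}\log(\eps^3 n)$ then yields \eqref{eq:intro:Ljsub} in the familiar Erd\H{o}s--R\'enyi manner. In the supercritical phase the survival probability of the branching process is $\sim c\eps$, giving $L_1 \sim c\eps n$ in expectation; uniqueness of the giant, and hence \eqref{eq:intro:L2sup}, would be supplied by a sprinkling argument merging all clusters above a slowly growing threshold. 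Throughout, the multi-round exposure is what decouples enough to make the coupling to the branching process hold up to component sizes $\eps^{-2}\log n$, which is the scale needed.

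The hardest part will be the quantitative singularity analysis of the PDE, uniformly in the joint regime $\eps \to 0$, $k \to \infty$, $\eps^3 k \to 0$ demanded by \eqref{eq:intro:smallcpt}. This is precisely where the Kang--Perkins--Spencer argument for Bohman--Frieze contained an error that their erratum does not fix: the PDE coefficients, though smooth, depend on all of $\psi$ through the summary quantities, so the singularity expansion must be derived self-consistently with the characteristic flow rather than inserted as an ansatz. Controlling subdominant corrections well enough for them to survive a union bound over the $n$ starting vertices, and extending the analysis across the critical window into the supercritical side where a different normalisation is needed, is the technical core on which every part of the theorem rests.
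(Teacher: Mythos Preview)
Your high-level architecture (branching-process comparison, first/second moment arguments, sprinkling for the giant) matches the paper, but the technical core of your proposal diverges from the paper's in a way that matters.

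You propose to attack the asymptotics of $\rho_k(t)$ via singularity analysis of the generating function $\psi(t,z)=\sum_k \rho_k(t)z^k$, solving its quasilinear PDE by characteristics and extracting the $k^{-3/2}e^{-a\eps^2 k}$ behaviour from a $3/2$-power singularity near $z=1$. This is precisely the Kang--Perkins--Spencer / Drmota--Kang--Panagiotou route, and you correctly flag it as the hardest part and the locus of the KPS error. The paper's view is that this error ``does not seem to be fixable'', and DKP only carried the argument through for a restricted class of BF-like rules; your proposal is essentially to try to fix it for general bounded-size rules, and you do not say how.

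The paper sidesteps this entirely. Its two-round exposure is not ``reveal size classes, then vertices'' step by step; instead it freezes a partition $V_S\cup V_L$ at a fixed subcritical time $i_0=(\tc-\sigma)n$, where $V_L$ consists of vertices already in components of size $>K$. The crucial bounded-size observation is that any $v\in V_L$ has size $\omega$ forever after, so in the first round one reveals, for each later step, only which sampled vertices lie in $V_S$ (and exactly which) versus $V_L$ (but not which); this suffices to determine every decision of $\cR$. In the second round the $V_L$-endpoints are filled in as independent uniform samples from $V_L$. This turns $G_i$ into an explicit random hypergraph on $V_L$ with Poisson-like independence, and the neighbourhood exploration couples directly to a concrete two-type branching process $\bp_t$ with offspring distribution $(Y_t,Z_t)$ built from the $(k,r)$-component statistics $q_{k,r}(t)$.

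The PDE theory in the paper serves a different purpose from yours: Cauchy--Kovalevskaya is used to show that $P(t,x,y)=\sum_{k,r}q_{k,r}(t)x^ky^r$, and hence the probability generating function $\E(\alpha^{Y_t}\beta^{Z_t})$, is \emph{analytic} near $(\tc,1,1)$. The $k^{-3/2}e^{-\psi(t)k}$ asymptotics for $\rho_k(t)=\Pr(|\bp_t|=k)$ then come from branching-process theory (local limit theorems for the random walk underlying $\bp_t$), not from singularity analysis of a PDE for $\sum_k\rho_k z^k$. This is what allows the argument to go through uniformly for all bounded-size rules and down to the optimal scale $\eps^3 n\to\infty$, including the delicate sandwiching by perturbed processes $\bp_t^\pm$ needed for the finite-$n$ statements.

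In short: your plan would require solving the open problem of making the KPS singularity analysis work in general, whereas the paper's $V_S/V_L$ exposure replaces that problem with a branching-process computation whose inputs are controlled by analyticity alone.
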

In each case, what we actually prove is stronger (e.g., relaxing $\eps\to 0$ to $\eps \le \eps_0$); see Section~\ref{sec:results}. 
To the best of our knowledge, analogous precise results, giving sharp estimates for the size of the largest component in the entire sub- and super-critical phases (see also Remark~\ref{rem:intro}), are known only for the Erd{\H o}s--R{\'e}nyi model~\cite{Bollobas1984,Luczak1990},
and percolated versions of random regular graphs~\cite{NachmiasPeres2010}, the configuration model~\cite{OR2012}, and (in the supercritical case) the hypercube~\cite{vdHN2012}.
\begin{remark}[Notation]\label{rem:panot}
Here and throughout we use the following standard notation for probabilistic asymptotics, where $(X_n)$
is a sequence of random variables and $f(n)$ a function. `$X_n\sim f(n)$ whp' 
means that there is some
$\delta(n)\to 0$ such that whp $(1-\delta(n))f(n)\le X_n\le (1+\delta(n))f(n)$. This is equivalent to 
$X_n=(1+\op(1))f(n)$, where in general $\op(f(n))$ denotes a quantity that, after dividing by $f(n)$,
tends to 0 in probability. Similarly, `$X_n=o(f(n))$ whp' simply means $X_n=\op(f(n))$.
`$X_n=\Op(f(n))$' means that $X_n/f(n)$ is bounded in probability. 
Finally, we use $a \pm b$ as shorthand for $a+b$ or $a-b$, with (as usual) a consistent choice of signs. Thus $a^\pm=b\mp c$ means $a^+=b-c$ and $a^-=b+c$, for example.
\end{remark}
\begin{remark}[Critical window]\label{rem:intro}
Assume that $\eps=\eps(n) \ge 0$ satisfies $\eps \to 0$ and $\eps^3 n \le \omega=\omega(n) \to \infty$ as $n \to \infty$. 
By~\eqref{eq:intro:Ljsub} and~\eqref{eq:intro:L1sup}, for any step $\tc n -\eps n \le i \le \tc n + \eps n$ whp 
$n^{2/3}/\omega \le L_1(i) \le \omega n^{2/3}$, say. 
It follows that when $\eps^3n=O(1)$, then $L_1(i)=\Op(n^{2/3})$ (in fact $\Thetap(n^{2/3})$). We do not discuss this critical case further,
since it is covered by the results of~\cite{BBW11,BBW12b}.
\end{remark}
The natural benchmark for our results is the classical Erd{\H o}s--R{\'e}nyi random graph process (which, as discussed, is also a bounded-size Achlioptas process). 
In their seminal~1960 paper, Erd{\H{o}}s and R{\'e}nyi~\cite{ER1960} determined the asymptotics of the 
number $L_1(G^{\mathrm{ER}}_{n,i})$ of vertices in the largest component after $i=tn$ steps for fixed $t>0$.
In~1984,
 Bollob{\'a}s~\cite{Bollobas1984} initiated the study of `zooming in' on the critical point, i.e, of the case~$t \sim 1/2$, which 
has nowadays emerged into a powerful paradigm.  
In particular, assuming $\eps =\eps(n) \to 0$ and $\eps^3 n \ge (\log n)^{3/2}$, Bollob{\'a}s identified the 
characteristic features of the phase transition. 
Namely, in the subcritical phase $i=n/2-\eps n$ there are many `large' components of size $L_j(G^{\mathrm{ER}}_{n,i}) \sim 2^{-1}\eps^{-2}\log(\eps^3n)$, 
similar to~\eqref{eq:intro:Ljsub}.  
In the supercritical phase $i=n/2+\eps n$ there is a unique `giant component' of size $L_1(G^{\mathrm{ER}}_{n,i}) \sim 4\eps n$, whereas all other components are much smaller, 
similar to~\eqref{eq:intro:L1sup}--\eqref{eq:intro:L2sup}. 
In~1990, {\L}uczak~\cite{Luczak1990} sharpened the assumptions of~\cite{Bollobas1984} to the optimal condition $\eps =\eps(n) \to 0$ and $\eps^3 n \to \infty$ (also used by Theorem~\ref{thm:intro}), thus fully identifying the phase transition picture.
Indeed, a separation between the sub- and super-critical phases requires $\eps^{-2}\log(\eps^3n) = o(\eps n)$, which is equivalent to $\eps^3 n \to \infty$ (see also Remark~\ref{rem:intro}). 
Informally speaking, Theorem~\ref{thm:intro} shows that the characteristic Erd{\H o}s--R{\'e}nyi features are 
robust in the sense that they remain valid for all bounded-size Achlioptas processes.

One main novelty of our proof approach is a combinatorial multi-round exposure argument around the critical point~$\tc$. 
From a technical perspective this allows us to avoid arguments where the process is approximated
(in some time interval) by a simpler process, which would introduce various error terms.
Such approximations are key in all previous work on this problem~\cite{SW,BK,JS,KPS,BBW11,RWapcont,BBW12b,BBW12a,Sen,DKP}. 
Near the critical~$i \approx \tc n$ we are able to track the exact evolution of our bounded-size Achlioptas process $(G^{\cR}_{n,i})_{i \ge 0}$.
This more direct control is key for our very precise results, in particular concerning the finite-size scaling behaviour as $\eps=\eps(n) \to 0$. 
In this context our high-level proof strategy for step $i=\tc n + \eps n$ 
is roughly as follows: 
(i)~we first track the evolution of $(G^{\cR}_{n,i})_{0 \le i \le i_0}$ up to step $i_0=(\tc-\sigma)n$ for some tiny constant~$\sigma > 0$, 
(ii)~we then reveal information about the steps $(\tc-\sigma) n, \ldots, (\tc+\eps) n$ in two stages
(a type of a two-round exposure), 
and (iii)~we analyze the second exposure round using branching process arguments.
The key is to find a suitable two-round exposure method in step (ii).
Of course, even having found this,
since there are dependencies between the edges, the technicalities of 
our approach are naturally 
quite involved (based on a blend of techniques, including the differential equation method, PDE theory, and branching process analysis); 
see Section~\ref{sec:overview} for a detailed overview of our arguments. 

So far we have discussed bounded-size rules. One of the first concrete rules suggested was the
\emph{product rule} (PR), where we select the potential edge minimizing the product of the sizes
of the components it joins. 
This rule belongs to the class of \emph{size rules}, which make their decisions based only on the sizes $c_1, \ldots, c_4$ of the components containing the endvertices of~$e_1,e_2$, but is not a \emph{bounded}-size rule.
The original question of Achlioptas from around 2000 was whether one can
delay the phase transition beyond $\tc^{\mathrm{ER}} = 1/2$ using an appropriate rule, and Bollob\'as
quickly suggested the product rule as most likely to do this. 
In fact, this question (which with hindsight
is not too hard) was answered affirmatively by Bohman and Frieze~\cite{BF} using a much simpler rule (a minor variant of the BF~rule). 
Under the influence of statistical mechanics, the focus quickly shifted
from the location of the critical time~$\tc$ 
to the qualitative behaviour of the phase transition (see, e.g.,~\cite{JSP}). 
In this context the product rule has received considerable attention; 
the simulation-based Figure~\ref{fig:L1plots} shows why: for this rule the growth of the largest component seems very abrupt, i.e., much steeper than in the Erd{\H o}s--R{\'e}nyi process.
In fact, based on extensive numerical data, Achlioptas, D'Souza and Spencer conjectured in \emph{Science}~\cite{DRS} that, for the product rule, the size of the largest component whp `jumps' from $o(n)$ to $\Theta(n)$ 
in $o(n)$ steps of the process, a phenomenon known as `explosive percolation'.
Although this claim was supported by many papers in the physics literature (see the references in~\cite{RW,RWapcont,J,dCDGM}), 
we proved in~\cite{RW,RWapcont} that no Achlioptas process can `jump', i.e., that they all have continuous phase transitions. 
Nevertheless, the product rule (like other similar rules) still seems
to have an extremely steep phase transition;
we believe that $L_1(G^{\mathrm{PR}}_{n,\tc n + \eps n}) \sim c\eps^{\beta} n$ for some $\beta \in (0,1)$, in contrast to the `linear growth'~\eqref{eq:intro:L1sup} of bounded-size rules; see also~\cite{dCDGM}. 
Despite much attention, general size rules have largely remained resistant to rigorous analysis; see~\cite{RWapsubcr,RWapunique} for some partial results.
Our simulations and heuristics~\cite{RWPRE} strongly suggest that $L_1(tn)/n$ can even be nonconvergent in some cases.

\subsection{Organization}
The rest of the paper is organized as follows.
In Section~\ref{sec:results} we define the class of models that we shall work with, which is more general
than that of bounded-size Achlioptas process.
Then we give our detailed results for the size of the largest component, the number of vertices in small components,
and the susceptibility (these imply Theorem~\ref{thm:intro}). 
In Section~\ref{sec:overview} we give an overview of the proofs, highlighting the key ideas and techniques --
the reader mainly interested in the ideas of the proofs may wish to read this section first.
In Section~\ref{sec:setup} we formally introduce the proof setup, including the two-round exposure, and establish some preparatory results. 
Sections~\ref{sec:cpl} and~\ref{sec:cpl2} are the core of the paper; here we relate the component size distribution of~$G^{\cR}_{n,i}$ to a certain branching process, and estimate the first two moments of~$N_k(i)$.
In Section~\ref{sec:proof} we then establish our main results for $L_1(i)$, $N_k(i)$ and $S_r(i)$, by exploiting the technical work of Sections~\ref{sec:setup}--\ref{sec:cpl2} and the branching process results proved with Svante Janson in~\cite{BPpaper}. 
In Section~\ref{sec:open} we discuss some extensions and several open problems. 
Finally, Appendix~\ref{sec:apx} contains some results and calculations that are omitted from the main text, and 
Appendix~\ref{sec:notation} gives a brief glossary of notation.

\enlargethispage{\baselineskip}

\subsection{Acknowledgements}
The authors thank Costante Bellettini and Luc Nguyen for helpful comments on analytic solutions to PDEs, 
Svante Janson for useful feedback on the branching process analysis contained in an earlier version of this paper (based on large deviation arguments using a uniform local limit theorem together with uniform Laplace method estimates), 
and Joel Spencer for his continued interest and encouragement.
We are also grateful to the referees for a careful reading of the paper, and for helpful suggestions concerning the presentation.

\newoddpage

\section{Statement of the results}\label{sec:results}

In this section we state our main results in full, and also give further details of the most relevant earlier results
for comparison.
In informal language, we show that in any bounded-size Achlioptas process, the phase transition `looks like' that in the Erd{\H o}s--R{\'e}nyi reference model (with respect to many key statistics).
In mathematical physics jargon this loosely says that all bounded-size rules belong to the same \emph{`universality class'} (while certain constants may differ, the behaviour is essentially the same).  
In a nutshell, our three main contributions
are as follows, always considering an arbitrary bounded-size rule.
\begin{enumerate}
	\item[(1)] {\bf Size of the largest component:} 
	We determine the asymptotic \emph{size of the largest component} in the sub- and super-critical phases, i.e., step $i=\tc n \pm \eps n$ with $|\eps|^3 n \to \infty$ and $|\eps| \le \eps_0$ (see Theorems~\ref{thm:L1sub} and~\ref{thm:L1} in Section~\ref{sec:intro:L1}), and show uniqueness of the `giant' component in the supercritical phase. We recover the characteristic Erd{\H o}s--R{\'e}nyi features showing, for example, 
that whp 
$L_2(\tc n +\eps n) \ll L_1(\tc n +\eps n) \sim \rho(\tc+\eps) n$ for some (rule-dependent) analytic function
$\rho$ with $\rho(\tc+\eps) \sim c \eps$ as~$\eps \searrow 0$. 
	\item[(2)] {\bf Small components:}
	We determine the whp asymptotics
	of the number of vertices in components of size~$k$ as approximately $N_k(\tc n \pm \eps n) \approx Ak^{-3/2}e^{-(a+o(1))\eps^2k}n$ (see Theorems~\ref{thm:rhok} and~\ref{thm:Nk} in Section~\ref{sec:intro:Nk}). 
	Informally speaking, in all bounded-size rules the number of vertices in \emph{small components} thus exhibits Erd{\H o}s--R{\'e}nyi tree-like behaviour, including polynomial decay at criticality (the case $\eps=0$).  
	\item[(3)] {\bf Susceptibility:} 
	We determine the $\eps=\eps(n) \to 0$ whp asymptotics of the subcritical \emph{susceptibility} as
	$S_r(\tc n-\eps n) \sim B_r \eps^{-2r+3}$ (see Theorem~\ref{thm:Sj} in Section~\ref{sec:intro:Nk}). 
	Thus the `critical exponents' associated to the susceptibility are the same for any bounded-size rule as in the Erd{\H o}s--R{\'e}nyi case.
\end{enumerate}

So far we have 
largely ignored that Achlioptas processes evolve over time. 
Indeed, Theorem~\ref{thm:intro} deals with the `static behaviour' of some particular step~$i=i(n)$, i.e., whp properties of the random graph~$G^{\cR}_{n,i}$. 
However, we are also (perhaps even more) interested in the `dynamic behaviour' of the evolving graph, i.e., whp properties of the random graph process~$(G^{\cR}_{n,i})_{i \ge 0}$. 
Our results accommodate this: Theorems~\ref{thm:L1}, \ref{thm:Nk} and~\ref{thm:Sj} apply \emph{simultaneously} to every step outside of the critical window, i.e., 
every step $i=\tc n \pm \eps n$ with $|\eps|^3 n \to \infty$ and~$|\eps| \le \eps_0$.

\begin{remark}\label{rem:ER} 
When comparing our statements with results for the classical Erd{\H o}s--R{\'e}nyi process, the reader should keep in mind that $G^{\mathrm{ER}}_{n,i}$ corresponds to the uniform size model with $i$~edges. 
In particular, results for step $i=n/2 \pm \eps n$ should be compared to the binomial model $G_{n,p}$ with edge probability $p=(1\pm 2\eps)/n$.
\end{remark}

\begin{remark}\label{rem:notation} 
Occasionally we write~$a_n \ll b_n$ for~$a_n = o(b_n)$, and~$a_n \gg b_n$ for~$a_n = \omega(b_n)$. 
\end{remark}

\begin{definition}\label{def:analytic} 
For $I\subseteq \RR$, we say that a function $f:I\to \RR$ is \emph{(real) analytic} if for every $x_0\in I$
there is an $r>0$ and a power series $g(x)=\sum_{j\ge 0} a_j (x-x_0)^j$ with radius of convergence at least $r$ such
that~$f$ and~$g$ coincide on $(x_0-r,x_0+r)\cap I$. This implies that $f$ is infinitely differentiable, but not vice versa.
A function~$f$ defined on some domain including~$I$ is \emph{(real) analytic on~$I$}
if $f|_I$ is analytic. The definitions for functions of several variables are analogous.
\end{definition}

\subsection{Bounded-size $\ell$-vertex rules}\label{sec:intro:def}
All our results apply to (the bounded-size case of) a class of processes that generalize Achlioptas
processes. As in~\cite{RW,RWapcont}
we call these \emph{$\ell$-vertex rules}.
Informally, in each step we sample~$\ell$ random vertices (instead of two random edges), 
and according to some rule~$\cR$ then add one of the $\binom{\ell}{2}$ possible edges between them to the evolving graph.
 
Formally, an \emph{$\ell$-vertex size rule~$\cR$} yields for each~$n$ a random sequence $(G^{\cR}_{n,i})_{i\ge 0}$ of graphs with vertex set $[n]=\{1, \ldots, n\}$, as follows. $G^{\cR}_{n,0}$ is the empty graph with no edges. 
In each step~$i\ge 1$ we draw~$\ell$ vertices $\vv_i={(v_{i,1},\ldots,v_{i,\ell})}$
from $[n]$ independently and uniformly at random, 
and then, writing $\vc_i={(c_{i,1},\ldots,c_{i,\ell})}$ 
for the sizes of the components containing $v_{i,1},\ldots,v_{i,\ell}$ in $G^{\cR}_{n,i-1}$, 
we obtain~$G^{\cR}_{n,i}$ by adding the edge ${v_{i,j_1}v_{i,j_2}}$ to $G_{n,i-1}^{\cR}$, where the rule $\cR$ deterministically selects the edge (between the vertices in~$\vv_i$) based only on the component sizes. 
Thus we may think of $\cR$ as a function $(\vc_i) \mapsto \{j_1,j_2\}$ from $(\NNP)^\ell$
to $\binom{[\ell]}{2}$.
Note that $G^{\cR}_{n,i}$ may contain loops and multiple edges; formally, it is a multigraph. However, there 
will be rather few of these and they do not affect the component structure, so the reader will lose nothing
thinking of $G^{\cR}_{n,i}$ as a simple~graph.

As the reader can guess, a~\emph{bounded-size $\ell$-vertex rule~$\cR$ with cut-off~$K$} is then an $\ell$-vertex size rule where all component sizes larger than~$K$ are treated in the same way. 
Following the literature (and to avoid clutter in the proofs), we introduce the convention that a component has \emph{size $\omega$} if it has size at least~$K+1$. We define the set 
\[
\cC = \cC_K := \{1, \ldots, K, \omega \} 
\]
of all `observable' component sizes. 
Thus any bounded-size rule~$\cR$ with cut-off~$K$ corresponds to a function~${\cC_K^\ell\to \binom{[\ell]}{2}}$. 
Of course, $\cR$ is a~\emph{bounded-size $\ell$-vertex rule} if it satisfies the definition above for some~$K$.

For the purpose of this paper, results for $\ell$-vertex rules routinely transfer to processes with small variations in the definition (since we only consider at most, say, $9n$ steps, exploiting that $\tc \le 1$ by~\cite{RWapcont}). 
As in~\cite{RWapcont} this includes, for example, each time picking an $\ell$-tuple of \emph{distinct} 
vertices, or picking (the ends of) $\ell/2$ randomly selected (distinct) 
edges not already present. 
We thus recover the original Achlioptas processes 
as $4$-vertex rules where $\cR$ always selects one of the pairs~$e_{i,1}=\{v_{i,1},v_{i,2}\}$ and~$e_{i,2}=\{v_{i,3},v_{i,4}\}$. 
Since we are aiming for strong results here, one has to be a little careful with this reduction; an explicit
argument is given in Appendix~\ref{apx:tfer}.

\begin{remark}\label{rem:defs}
In the results below, a number of rule-dependent constants and functions appear. To avoid repetition, we briefly describe
the key ones here. 
Firstly, for each rule $\cR$ there is a set $\cSR\subseteq \NN^+$ of component sizes that can be produced 
by the rule. For large enough $k$, we have $k\in \cSR$ if and only if $k$ is a multiple of the
\emph{period} $\per$ of the rule; see Section~\ref{sec:period}. For all Achlioptas processes,
$\cSR=\NN^+$ and $\per=1$, so the indicator functions $\indic{k\in \cSR}$ appearing in many results
play no role in this case.
Secondly, for each rule $\cR$ there is a function
$\psi(t)=\psi^{\cR}(t)$ describing the exponential rate of decay of the component size distribution at time $t$ (step $tn$),
as in Theorem~\ref{thm:rhok}.   
This function is (real) analytic on a neighbourhood $(\tc-\eps_0,\tc+\eps_0)$ of $\tc$, with $\psi(\tc)=\psi'(\tc)=0$ and $\psi''(\tc) > 0$.
Hence $\psi(\tc \pm \eps)=\Theta(\eps^2)$ as $\eps\to 0$. 
\end{remark}

\subsection{Size of the largest component}\label{sec:intro:L1}
In this subsection we discuss our results for the size of the largest component in bounded-size rules, 
which are much in the spirit of the pioneering work of Bollob{\'a}s~\cite{Bollobas1984} and {\L}uczak~\cite{Luczak1990} for the Erd{\H{o}}s--R{\'e}nyi model. 
Here Theorem~\ref{thm:L1} is perhaps our most important single result: it establishes the asymptotics of $L_1(i)=L_1(G^{\cR}_{n,i})$ in the sub- and super-critical phases (i.e., all steps $i=\tc n \pm \eps n$ with $|\eps|^3 n \to \infty$ and $|\eps| \le \eps_0$).
These asymptotics are as in the Erd{\H{o}}s--R{\'e}nyi case, up to rule-specific constants. 

One of the most basic questions about the phase transition in any random graph model is: how large is the largest component just after the transition? 
From the general continuity results of~\cite{RWapcont} it follows for bounded-size rules that $L_1(tn)/n$ converges to a deterministic `scaling limit' (see also~\cite{RWapunique}). 
More concretely, there exists a continuous function $\rho=\rho^{\cR}:[0,\infty) \to [0,1]$ such that for each $t \ge 0$ we have 
\begin{equation}\label{eq:L1:pto}
\frac{L_1(tn)}{n} \pto \rho(t) , 
\end{equation}
where $\pto$ denotes convergence in probability (i.e., for every $\eta>0$ whp 
$|L_1(tn)/n-\rho(t)| \le \eta$). 
In fact, it also follows that $\rho(t) = 0$ for $t \le \tc$ and $\rho(t)>0$ otherwise, see~\cite{SW,BK,RWapsubcr}.  
Of course, due to our interest in the size of the largest component, this raises the natural question:
what are the asymptotics of the scaling limit $\rho=\rho^{\cR}$?
For \emph{some} bounded-size rules, Janson and Spencer~\cite{JS} and Drmota, Kang and Panagiotou~\cite{DKP} 
showed that $\rho(\tc+\eps) \sim c \eps$ as $\eps \searrow 0$, where $c=c^{\cR}>0$. 
However, for Erd{\H o}s--R{\'e}nyi random graphs much stronger properties are known: $\rho=\rho^{\mathrm{ER}}$ is analytic on $[1/2,\infty)$.
In particular, it has a power series expansion of the form 
\[
\rho^{\mathrm{ER}}(1/2+\eps) = 4 \eps + \sum_{j \ge 2} a_j \eps^j
\]
for $\eps \ge 0$ having positive radius of convergence 
(in fact, $1-\rho(t)=e^{-2t\rho(t)}$).
Our next theorem shows that \emph{all} bounded-size rules have these typical Erd{\H o}s--R{\'e}nyi properties (up to rule specific constants), 
confirming natural conjectures of Janson and Spencer~\cite{JS} and Borgs and Spencer~\cite{BS} (and the folklore conjecture that $\rho(\tc+\eps) \sim c \eps$ initially grows at a linear rate).
\begin{theorem}[Linear growth of the scaling limit]%
\label{thm:L1rho}
Let $\cR$ be a bounded-size $\ell$-vertex rule. 
Let the critical time $\tc>0$ and the function $\rho=\rho^{\cR}$ be as in~\eqref{eq:def:tc} and~\eqref{eq:L1:pto}.
Then the function $\rho$ is analytic on $[\tc,\tc+\eps_0]$ for some $\eps_0>0$, with $\rho(\tc)=0$ and 
the right derivative of $\rho$ at $\tc$ strictly positive. 
More precisely, there are constants $(a_j)_{j \ge 1}$ with $a_1>0$ such that 
for all $\eps \in [0,\eps_0]$ we have 
\begin{equation}\label{eq:thm:L1rho}
\rho(\tc+\eps) = \sum_{j \ge 1} a_j\eps^j . 
\end{equation} 
\end{theorem}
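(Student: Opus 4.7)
The linear leading behavior is essentially immediate from results already stated. By \eqref{eq:intro:L1sup} of Theorem~\ref{thm:intro}, $L_1(\tc n + \eps n) \sim c\eps n$ whp for a rule-dependent constant $c>0$, and \eqref{eq:L1:pto} gives $L_1(tn)/n \pto \rho(t)$. Combining these yields $\rho(\tc+\eps) = c\eps + o(\eps)$ as $\eps\searrow 0$, which (once analyticity is shown) identifies $a_1 = c > 0$ in \eqref{eq:thm:L1rho}. So the substantive content is the \emph{analyticity} of $\rho$ on $[\tc,\tc+\delta)$.

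To establish analyticity I would use the branching-process characterization developed in Sections~\ref{sec:cpl}--\ref{sec:cpl2} together with an analytic Lyapunov--Schmidt / implicit function argument. The plan: for $t$ in a neighbourhood of $\tc$, $\rho(t) = 1 - \sum_k \rho_k(t)$ where each $\rho_k(t)$ arises as the probability that a certain multi-type Galton--Watson process $\fX_t$ (with finite type space indexed by the observable sizes $\cC_K$, or a suitable refinement) has total progeny equal to $k$ when seeded from a canonical initial distribution. The offspring distributions of $\fX_t$ are polynomials in quantities such as the $\rho_j(t)$ for $j \le K$ and similar bounded-size statistics, all of which satisfy a finite autonomous system of polynomial ODEs that is non-singular on an open interval containing $\tc$; by elementary ODE theory the coefficients are real analytic in $t$. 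Hence the vector probability generating function $\mathbf{F}(t,\mathbf{s})$ of the offspring law of $\fX_t$ is jointly analytic in $(t,\mathbf{s})$ on a neighbourhood of $(\tc,\mathbf{1})$.

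Then $1-\rho(t) = \mathbf{e}(t)\cdot \mathbf{q}(t)$, where $\mathbf{q}(t)$ is the vector of type-wise extinction probabilities, satisfying the fixed-point system $\mathbf{q} = \mathbf{F}(t,\mathbf{q})$. Setting $\mathbf{y} = \mathbf{1} - \mathbf{q}$, this system takes the form $\mathbf{y} = M(t)\mathbf{y} - \mathbf{Q}(t,\mathbf{y})$, where $M(t)$ is the mean offspring matrix and $\mathbf{Q}(t,\mathbf{y})$ is analytic and vanishes to second order in $\mathbf{y}$. At $t=\tc$ the Perron eigenvalue of $M(\tc)$ equals $1$, and it crosses $1$ transversally as $t$ increases through $\tc$; this transversality is equivalent to $\psi''(\tc) > 0$ from Remark~\ref{rem:defs} and is already built into the paper's technical framework. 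Decomposing $\mathbf{y} = r\mathbf{v}(t) + \mathbf{w}$ with $\mathbf{v}(t)$ the (analytic) Perron eigenvector and $\mathbf{w}$ in its spectral complement, the classical implicit function theorem solves $\mathbf{w}$ as an analytic function of $(t,r)$, reducing the system to a scalar equation $(\lambda(t)-1)\,r = \beta r^2 + O(r^3)$ with $\beta > 0$. Dividing through by $r$ eliminates the trivial root and leaves an analytic equation whose unique positive branch $r = r(t)$ is analytic on $[\tc,\tc+\delta)$ with $r(\tc)=0$ and $r'(\tc) > 0$. This gives analyticity of $\rho(t) = \mathbf{e}(t)\cdot(r(t)\mathbf{v}(t)+\mathbf{w}(t,r(t)))$ and hence \eqref{eq:thm:L1rho} with $a_1 > 0$.

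The main obstacle I expect is upgrading the branching-process description from the asymptotic form used in Sections~\ref{sec:cpl}--\ref{sec:cpl2} (where $\rho_k(t)$ is identified only up to $o(1)$ errors through a coupling and two-round exposure) to the \emph{exact} identity $\rho(t) = 1 - \mathbf{e}(t)\cdot \mathbf{q}(t)$ with coefficients analytic in $t$. The standard remedy is to send the coupling error parameters to zero and exploit continuity of the survival probability of a multi-type Galton--Watson process in its offspring law (which holds uniformly away from criticality), but doing so requires careful bookkeeping of which of the paper's quantities are already established to be analytic. A secondary subtlety is to make the Lyapunov--Schmidt reduction work \emph{through} $\tc$ rather than only on one side, which is why I phrase the conclusion as analyticity on $[\tc,\tc+\delta)$ via a one-sided analytic branch selected by positivity.
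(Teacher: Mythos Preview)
Your opening paragraph is circular: \eqref{eq:intro:L1sup} is deduced \emph{from} Theorems~\ref{thm:L1rho} and~\ref{thm:L1} (the constant $c$ there is precisely $a_1$), not the other way round. But as you say, the real content is analyticity, and your high-level plan for that --- identify $\rho(t)$ as a branching-process survival probability, show the offspring generating function is analytic in $(t,\cdot)$, then run a bifurcation/implicit-function argument through criticality --- is exactly the paper's route. Theorem~\ref{thm:L1rho} is obtained as an immediate corollary of Corollary~\ref{cor:rhosr} (which gives $\rho(t)=\Pr(|\bp_t|=\infty)$) and Theorem~\ref{thsurv-simple} (analyticity of this survival probability, proved in~\cite{BPpaper}).

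The gap is in the branching process you posit. There is no evident finite-type process with types in $\cC_K$ whose offspring law is polynomial in $(\rho_j(t))_{j\le K}$ and which captures $\rho(t)$; the paper does not construct one, and the restriction of prior work such as~\cite{DKP} to BF-like rules suggests this shortcut is not available in general. The paper's $\bp_t$ (Section~\ref{sec:BPI}) has only two types, $L$ and $S$, with $S$-particles childless, so survival is governed by a \emph{single}-type process with offspring $Y_t$. But $Y_t$ (see~\eqref{def:Yt2Zt}) is a compound-Poisson functional of the entire infinite family $(q_{k,r}(t))_{k,r\ge 0}$ together with $(\rho_k(t_0))_{k>K}$; it is not polynomial in finitely many ODE-tracked functions. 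Analyticity of $\gf(t,\alpha,\beta)=\E(\alpha^{Y_t}\beta^{Z_t})$ (Theorem~\ref{thm:f:mgf}) requires the PDE machinery of Section~\ref{sec:AP} --- Cauchy--Kovalevskaya applied to the generating function $P(t,x,y)=\sum_{k,r}q_{k,r}(t)x^ky^r$ (Theorem~\ref{thm:PDE}) --- combined with the exponential tails of Theorem~\ref{thm:Qkr}. This is precisely the obstacle you anticipate in your final paragraph, and it is not bypassed by the polynomial structure you assume. Once $\gf$ is analytic, your Lyapunov--Schmidt reduction is correct in spirit but more than needed: the two-round exposure of Section~\ref{sec:exp} is engineered so that the fixed-point equation is scalar, with criticality $\E Y_{\tc}=1$ and transversality $\frac{\mathrm{d}}{\mathrm{d}t}\E Y_t\big|_{t=\tc}>0$ coming directly from Lemma~\ref{lem:Zt:E} rather than via $\psi''(\tc)>0$.
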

Informally, \eqref{eq:L1:pto} and~\eqref{eq:thm:L1rho} show that the initial growth of the largest component is linear for any bounded-size rule, i.e., roughly that $L_1(\tc n + \eps n) \approx c \eps n$ for some rule-dependent constant $c=a_1>0$ (see also Figure~\ref{fig:L1plots}). 
\begin{remark}\label{rem:rhok:BP}
The proof shows that for $t \in [\tc-\eps_0,\tc+\eps_0]$ we have $\rho(t)=\Pr(|\bp_t|=\infty)$ for a certain branching process~$\bp_t$ defined in Section~\ref{sec:BPI}. In the Erd\H os--R\'enyi case, this would simply be a Poisson Galton--Watson process, but here it is much more complicated.
\end{remark}
The convergent power series expansion~\eqref{eq:thm:L1rho} improves and extends results of Janson and Spencer~\cite{JS} for the Bohman--Frieze process (with a $O(\eps^{4/3})$ second order error term), and one of the main results of Drmota, Kang and Panagiotou~\cite{DKP} for a restricted class of bounded-size rules (they establish $\rho(\tc+\eps) = c \eps + O(\eps^2)$ for BF-like rules). 
Theorem~\ref{thm:L1rho} also shows that $\rho'(t)$ is discontinuous at~$\tc$ (recall that $\rho(t)=0$ for $t \le \tc$), which in mathematical physics is a key feature of a `second order' phase transition.

Unfortunately, the convergence result~\eqref{eq:L1:pto} tells us very little 
about the size of the largest component just before the phase transition (recall that $\rho(t)=0$ for $t \le \tc$).
For the Erd{\H o}s--R{\'e}nyi process it is well-known that in the subcritical phase, for any fixed $j \ge 1$ we roughly have $L_j(n/2-\eps n) = \Theta(\eps^{-2}\log(\eps^3 n))$ whenever $\eps^3n \to \infty$, see, e.g.,~\cite{Bollobas1984,Luczak1990}. 
For bounded-size rules there are several partial results~\cite{SW,KPS,BBW12a,Sen} for $L_1(\tc n-\eps n)$, but none are as strong as the aforementioned Bollob\'as--{\L}uczak results from~\cite{Bollobas1984,Luczak1990}.
For the subcritical phase, our next theorem establishes the full Erd{\H o}s--R{\'e}nyi-type behaviour for all bounded-size rules (in a strong form).  
Theorem~\ref{thm:L1sub} confirms a conjecture of Kang, Perkins and Spencer~\cite{KPS}, and 
resolves a problem
of Bhamidi, Budhiraja and Wang~\cite{BBW12a}, both concerning upper bounds of the form $L_1(\tc n-\eps n) \le D \eps^{-2}\log n$. 
\begin{theorem}[Largest subcritical components]%
\label{thm:L1sub}
Let $\cR$ be a bounded-size $\ell$-vertex rule with critical time $\tc>0$ as in~\eqref{eq:def:tc}. 
There is a constant $\eps_0>0$ such that the following holds for any integer $r \ge 1$.
For any $i=i(n) \ge 0$ 
such that $\eps=\tc-i/n$ satisfies $\eps \in (0,\eps_0)$ and $\eps^3 n \to \infty$ as $n \to \infty$, we have 
\begin{equation}\label{eq:L1sub}
L_r(i) = \psi(\tc-\eps)^{-1}\left(\log(\eps^3n)-\frac{5}{2}\log\log(\eps^3n) +\Op(1) \right) ,
\end{equation}
where the rule-dependent function $\psi(t)$ is as in Theorem~\ref{thm:rhok} below.
\end{theorem}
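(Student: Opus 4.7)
The plan is to give a two-sided estimate on $L_r(i)$ by combining the small-component asymptotics of Theorem~\ref{thm:Nk} with classical first- and second-moment arguments in the spirit of Bollob\'as and {\L}uczak. Set $\psi:=\psi(\tc-\eps)=\Theta(\eps^2)$ and
\[
k_{\pm} := \psi^{-1}\bigl(\log(\eps^3 n) - \tfrac{5}{2}\log\log(\eps^3 n) \pm C\bigr)
\]
for a constant $C=C(r)$ to be chosen large. Then $k_{\pm}=\Theta\bigl(\eps^{-2}\log(\eps^3 n)\bigr)$, which lies in the range where Theorem~\ref{thm:Nk} applies (after possibly shrinking $\eps_0$), so $\E[N_k(i)] \sim A k^{-3/2} e^{-\psi k} n$ holds uniformly for $k\in\cSR$ with $k$ near $k_{\pm}$. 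The target is to show that whp $k_-\le L_r(i)\le k_+$ for $C$ sufficiently large.

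\textbf{Upper bound.} Let $Y_{\ge k}(i)$ denote the number of components of size at least~$k$. Writing $Y_{\ge k}(i)=\sum_{j\ge k}N_j(i)/j$ and applying Theorem~\ref{thm:Nk} term by term,
\[
\E\bigl[Y_{\ge k_+}(i)\bigr] \le O(1)\cdot n\sum_{\substack{j\ge k_+\\ j\in\cSR}} j^{-5/2} e^{-\psi j}.
\]
The sum is dominated by its initial term times a geometric factor of order $\psi^{-1}$ (Laplace-type asymptotics, using that the ratio of consecutive $\cSR$-terms is essentially $e^{-\per\psi}$). A direct calculation with $\psi=\Theta(\eps^{2})$ and $\psi k_+=\log(\eps^3 n)-\tfrac{5}{2}\log\log(\eps^3 n)+C$ then gives $\E[Y_{\ge k_+}(i)]=O(e^{-C})$. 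Markov's inequality yields $Y_{\ge k_+}(i)<r$ whp once $C$ is large, so $L_r(i)\le k_+$.

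\textbf{Lower bound.} We need at least $r$ components of size $\ge k_-$. Let $J:=\{k\in\cSR:\tfrac12 k_-\le k\le k_-\}$ and $Z:=\sum_{k\in J}N_k(i)/k$. The same computation gives $\E[Z]\ge c_1 e^{C}\to\infty$ as $C\to\infty$. To upgrade this to a lower bound in probability I would invoke the second-moment estimates for $N_k(i)$ developed in Section~\ref{sec:cpl2}, which should yield $\Var[Z]=o(\E[Z]^2)$; Chebyshev's inequality then gives $Z\ge r$ whp, hence $L_r(i)\ge k_-$.

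\textbf{Main obstacle.} The main difficulty is the concentration required for the lower bound: one must show that the component sizes at distinct vertices behave essentially independently in the subcritical regime, despite the nontrivial edge-dependencies introduced by the rule~$\cR$. This is precisely what the two-round exposure and branching-process coupling of Sections~\ref{sec:cpl}--\ref{sec:cpl2} are built to deliver, by coupling the simultaneous exploration of two components of $G^{\cR}_{n,i}$ to two (nearly) independent subcritical branching processes $\bp_t$ (see Remark~\ref{rem:rhok:BP}). A secondary point is to make the conclusion uniform over the full range of~$i$; this I would handle by a union bound over a fine grid of $\eps$-values, using that $L_r(i)$ changes by $O(1)$ per step to interpolate.
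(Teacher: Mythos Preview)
Your overall strategy—first and second moment with thresholds
$k_\pm=\psi^{-1}\bigl(\log(\eps^3n)-\tfrac52\log\log(\eps^3n)\pm C\bigr)$—is exactly the paper's approach. But the key input you cite is not available in the range you need, and this is a genuine gap, not a technicality.

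You claim that $k_\pm=\Theta(\eps^{-2}\log(\eps^3n))$ ``lies in the range where Theorem~\ref{thm:Nk} applies (after possibly shrinking~$\eps_0$)''. It does not: Theorem~\ref{thm:Nk} requires $k\le n^{1/10}$, whereas $k_\pm$ can be as large as $n^{2/3}/\omega(n)$ when $\eps$ is close to the critical window (e.g., $\eps=n^{-1/3}\omega(n)^{1/3}$). Shrinking~$\eps_0$ only affects the upper bound on~$\eps$, not this lower range. Even falling back to the cruder Theorem~\ref{thm:NkC} does not help: its additive error $k(\log n)^{D_{\bp}}n^{1/2}$ swamps the signal $\rho_k(t)n=\Theta(\log(\eps^3n))$ for individual~$k$ near~$k_\pm$. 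This is precisely why the paper does \emph{not} route through the concentration of individual $N_k(i)$ here. Instead, it works in the Poissonized model~$\JPpm$ (via Lemmas~\ref{lem:cond'} and~\ref{lem:cpl}) and uses Lemma~\ref{lem:NkE:sub} for $\E N_{\ge\Lambda}(\JPpm)$ together with the BK-type variance bound Lemma~\ref{lem:Nkvar:sub} for $X=N_{\ge\Lambda^-}-N_{\ge\Lambda^+}$; both of these are valid for $\Lambda$ up to $\min\{n^{2/3},n^{1/3}/\eps\}$, which covers~$k_\pm$ in the full regime $\eps^3n\to\infty$.

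Two smaller points. First, your lower-bound window $J=\{k:\tfrac12 k_-\le k\le k_-\}$ only witnesses components of size $\ge k_-/2$, not $\ge k_-$; you should take a window above $k_-$ (the paper uses $[\Lambda^-,\Lambda^+)$). Second, the uniformity over~$i$ via a grid and interpolation is not needed for Theorem~\ref{thm:L1sub}, which concerns a single $i=i(n)$; that argument belongs to Theorem~\ref{thm:L1}.
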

Here, as usual, $X_n=\Op(1)$ means that for any $\delta > 0$ there are $C_{\delta},n_{0}>0$ such that $\Pr(|X_n| \le C_{\delta}) \ge 1-\delta$ for $n \ge n_0$.
See Remark~\ref{rem:defs} for the interpretation of the function $\psi$.
Since $\psi(\tc-\eps)=\Theta(\eps^2)$, this result shows in crude terms that the largest $O(1)$
components have around the same size $L_r(\tc n -\eps n) \approx L_1(\tc n -\eps n) \approx a\eps^{-2} \log(\eps^3 n)$ for some rule-dependent constant $a >0$. 

Theorem~\ref{thm:L1sub} is best possible:
the assumption $\eps^3 n \to \infty$ cannot be relaxed -- inside the critical window the sizes of the largest components are not concentrated, see~\cite{BBW12b}. 
Furthermore, as discussed in~\cite{BR2009}, the $\Op(1)$ error term  is sharp in the Erd{\H o}s--R{\'e}nyi case (where $\tc=1/2$ and $\psi(\tc-\eps)\sim 2\eps^2$ as $\eps \to 0$).
Theorem~\ref{thm:L1sub} improves several previous results on the size of the largest subcritical component due to Wormald and Spencer~\cite{SW}, Kang, Perkins and Spencer~\cite{KPS}, Bhamidi, Budhiraja and Wang~\cite{BBW12a} and Sen~\cite{Sen}. 
Most notably, in the weakly subcritical phase $i=\tc-\eps n$ with $\eps=\eps(n) \to 0$ the main results of~\cite{BBW12a,Sen} are as follows: 
for bounded-size rules~\cite{BBW12a} establishes bounds of the form $L_1(i) \le D\eps^{-2}(\log n)^4$ for $\eps=\eps(n) \ge n^{-1/4}$, 
which in~\cite{Sen} was sharpened for the Bohman--Frieze rule to $L_1(i) \le D\eps^{-2}\log n$ for $\eps=\eps(n) \ge n^{-1/3}$. 
The key difference is that~\eqref{eq:L1sub} provides \emph{matching} bounds 
(the harder lower bounds were missing in previous work) 
all the way down to the critical window. Here 
the difference between $\log n$ and $\log(\eps^3n)$ matters. 

For the supercritical phase, \eqref{eq:L1:pto} and Theorem~\ref{thm:L1rho} apply only for \emph{fixed} $\eps>0$, showing roughly that $L_1(\tc n + \eps n) \approx \rho(\tc+\eps) n$. 
Of course, it is much more interesting to `zoom in' on the critical point~$\tc$, and study the size of the largest component when $\eps=\eps(n) \to 0$. 
Despite being very prominent and interesting in 
the classical Erd{\H o}s--R{\'e}nyi model, see, e.g.,~\cite{Bollobas1984,Luczak1990,NP2007,BR2009,BB,JLR}, 
the weakly supercritical phase of bounded-size rules has remained resistant to rigorous analysis for more than a decade. 
The following theorem closes this gap in our understanding of the phase transition, 
establishing the typical Erd{\H o}s--R{\'e}nyi characteristics for all bounded-size rules.
Informally, \eqref{eq:super:L1}--\eqref{eq:super:L2} state that (whp) we have $L_2(\tc n+\eps n) \ll L_1(\tc n+\eps n) \approx \rho(\tc +\eps) n$ whenever $\eps=\eps(n)$ satisfies $\eps^3n\to\infty$ and $0 < \eps \le \eps_0$ (note that $\tau \to 0$ as $n \to \infty$, where we have not attempted to optimize~$\tau$).  
In particular, in view of~\eqref{eq:thm:L1rho} this means that, in all bounded-size rules, just after the critical window the unique `giant component' already grows with linear rate (i.e., that whp $L_1(\tc n+\eps n) \approx c \eps n$, see Figure~\ref{fig:L1plots}).
\begin{theorem}[Size of the largest component]%
\label{thm:L1}
Let $\cR$ be a bounded-size $\ell$-vertex rule. 
Let the critical time $\tc>0$ and the functions $\rho,\psi$ be as in~\eqref{eq:def:tc} and Theorems~\ref{thm:L1rho} and~\ref{thm:L1sub}. 
There is a constant $\eps_0>0$ such that the following holds for any function $\omega=\omega(n)$ with $\omega \to \infty$ as $n \to \infty$ \footnote{As usual we assume $\omega>0$, but we do not write this as it is not formally needed: any statements involving $\omega$ that we prove are asymptotic (for example any `whp' statement), and so we need consider only $n$ large enough that $\omega(n)>0$.}
and any fixed $r\ge 1$. Setting $\tau=\tau(n):=(\log \omega)^{-1/2}$, whp the following inequalities hold in all steps $i=i(n) \ge 0$. 
{\begin{enumerate}
\itemindent -0.75em 
\item \emph{(Subcritical phase)} If $\eps=\tc-i/n$ satisfies $\eps^3 n \ge \omega$ and $\eps \le \eps_0$, then%
\begin{align}
\label{eq:sub:Lj}
L_r(i) & \in [ (1\pm \tau) \psi(\tc-\eps)^{-1}\log(\eps^3 n) ],
\end{align}%
where $[a\pm b]$ denotes the interval $[a-b,a+b]$.
\item \emph{(Supercritical phase)} If $\eps=i/n-\tc$ satisfies $\eps^3 n \ge \omega$ and $\eps \le \eps_0$, then we have%
\begin{align}
\label{eq:super:L1}
L_1(i) & \in [ (1 \pm \tau) \rho(\tc+\eps) n ], \\
\label{eq:super:L2}
L_2(i) & \le \tau L_1(i) .
\end{align}
\end{enumerate}}%
\end{theorem}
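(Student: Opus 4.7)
Equation \eqref{eq:sub:Lj} is essentially Theorem~\ref{thm:L1sub} combined with a uniformity-in-$i$ argument. Pointwise, Theorem~\ref{thm:L1sub} gives $L_r(i)\psi(\tc-\eps) = \log(\eps^3 n) - \tfrac{5}{2}\log\log(\eps^3 n) + \Op(1)$; our hypothesis $\eps^3 n\ge\omega$ with $\omega\to\infty$ yields $\tau\log(\eps^3 n)\ge\sqrt{\log\omega}\to\infty$ while the lower-order terms are $O(\log\log n)+\Op(1)$, which is $o(\tau\log(\eps^3 n))$, giving the pointwise form of \eqref{eq:sub:Lj}. To upgrade this to a uniform statement over $i$, I would apply the pointwise bound (in quantitative tail form) at a polynomially dense grid of $i$-values in the subcritical window, then use the monotonicity of $L_1$ together with the fact that in each step the multiset of component sizes either is unchanged or has two entries merged, to sandwich $L_r(i)$ between its values at the two surrounding grid points (on which $\psi(\tc-i/n)^{-1}\log((\tc-i/n)^3 n)$ differs by a factor $1+o(\tau)$).

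\textbf{Supercritical case, $L_1$ upper bound.} For \eqref{eq:super:L1} the plan is to use Theorem~\ref{thm:Nk} to identify the vertex mass in bounded components and then argue that the complement forms a unique giant. Choose a threshold $k_0=k_0(\eps,n)\to\infty$ with $k_0=o(\eps n)$ within the hypotheses of Theorem~\ref{thm:Nk}, so that $N_k(i)=(1+o(1))Ak^{-3/2}e^{-\psi(\tc+\eps)k}n$ uniformly for $k\le k_0$ whp. Summing and invoking the branching-process identity $\sum_{k\ge 1}Ak^{-3/2}e^{-\psi(\tc+\eps)k}=1-\rho(\tc+\eps)$ (which comes from the extinction characterization of Remark~\ref{rem:rhok:BP}, the left-hand side being the expected fraction of vertices lying in finite components of the approximating branching process) yields $\sum_{k\le k_0}N_k(i)=(1-\rho(\tc+\eps)\pm o(\eps))n$. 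Since $\sum_k N_k(i)=n$, the total size of components of size greater than $k_0$ is $(\rho(\tc+\eps)\pm o(\eps))n$, which in particular upper bounds $L_1(i)$.

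\textbf{Uniqueness of the giant (the main obstacle).} The matching lower bound on $L_1$ and the bound \eqref{eq:super:L2} on $L_2$ together amount to showing that these $\approx \rho(\tc+\eps)n$ large-component vertices actually lie in a single component. The plan is a sprinkling argument, exploiting the two-round exposure set up in Section~\ref{sec:setup}. Apply the upper-bound argument just described at the intermediate time $i'=\tc n+(\eps/2)n$ to obtain a set $B$ of vertices in components of size $>k_0$ at time $i'$, of total size $|B|=\Theta(\eps n)$. In the remaining $(\eps/2)n$ steps, each step samples a fresh uniform $\ell$-tuple of vertices, and for any two components $V_1,V_2\subseteq B$ of sizes $s_1,s_2\ge \tau|B|$ a positive rule-dependent fraction of $\ell$-tuples hitting both $V_1$ and $V_2$ force the rule to add an edge joining them (here one uses that bounded-size rules cannot distinguish $V_1$ from $V_2$, since both have truncated size $\omega$, so the merging probability depends only on the relative sizes). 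Hence two such components remain unmerged with probability $\exp(-\Omega(s_1 s_2\eps/n))\le\exp(-c\tau^2\eps^3 n)=\exp(-c\omega/\log\omega)$, and a union bound over the at most $O(1/\tau^2)=O(\log\omega)$ components of this size in $B$ shows whp that $B$ lies in a single component, establishing \eqref{eq:super:L2} and the matching lower bound on $L_1$. The main technical subtlety is to verify the ``rule-dependent positive fraction'' claim uniformly over all bounded-size rules, and to control the $o(\eps)$ errors uniformly in $i$ in the presence of the two-round exposure's conditioning.
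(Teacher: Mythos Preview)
Your subcritical argument is essentially the paper's (a grid plus monotonicity), though the paper re-derives the tail bounds at the grid points rather than invoking Theorem~\ref{thm:L1sub}, since the $\Op(1)$ statement there is not directly summable.

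Your supercritical argument has two genuine gaps. First, the upper bound via Theorem~\ref{thm:Nk} only works when the threshold $k_0$ can be taken both $\gg\eps^{-2}$ (so that $\sum_{j\ge k_0}\rho_j(t)=o(\tau\eps)$) and $\le n^{1/10}$ (the range in Theorem~\ref{thm:Nk}). Near the edge of the window, say $\eps^3 n=\omega$ with $\omega$ growing slowly, we have $\eps^{-2}\approx n^{2/3}\omega^{-2/3}\gg n^{1/10}$, so no such $k_0$ exists. The paper makes exactly this point (see the remark after Theorem~\ref{thm:NkC}): the route through the small-component concentration only reaches $\eps\ge n^{-1/6+o(1)}$, and covering the full range $\eps\gg n^{-1/3}$ requires the second-moment estimate for $N_{\ge\Lambda}$ in Lemmas~\ref{lem:Nkvar:sup} and~\ref{lem:NkE:sup}, applied directly in the sandwiched Poissonized model.

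Second, your sprinkling interval is far too long. Sprinkling from time $\tc+\eps/2$ to $\tc+\eps$ and merging all of $B$ yields $L_1(i)\ge(1-o(1))|B|\approx\rho(\tc+\eps/2)n$, which is roughly $\tfrac12\rho(\tc+\eps)n$ since $\rho$ is linear near $\tc$---you lose a factor of two, not a factor of $1-\tau$. The paper instead sprinkles over an interval of length $\xi\eps n$ with $\xi=(\log\omega)^{-1}\ll\tau$, so that $\rho$ changes by only $O(\xi)$; it then verifies (via Lemma~\ref{lem:SP}, with $\Lambda\approx\eps^{-2}(\log(\eps^3 n))^3$ and $x=\Theta(\eps n)$) that this short interval already suffices to merge all components of size $\ge\Lambda$, not just those of size $\ge\tau|B|$. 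Your version, which only pairs up components of size $\ge\tau|B|$, would in any case leave the mass in components of size between $k_0$ and $\tau|B|$ unaccounted for.
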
%
Note that we consider step $i=\tc n - \eps n$ in~\eqref{eq:sub:Lj} and step $i=\tc n + \eps n$ in~\eqref{eq:super:L1}--\eqref{eq:super:L2}. 
This parametrization may look strange, but it allows us to conveniently make whp statements about the random graph process~$(G^{\cR}_{n,i})_{i \ge 0}$. 
Indeed, \eqref{eq:sub:Lj}--\eqref{eq:super:L2} whp hold \emph{simultaneously} in every step outside of the critical window, i.e., 
every step $i=\tc n \pm \eps n$ with $|\eps|^3 n \to \infty$ and $|\eps| \le \eps_0$.
This is much stronger than a whp statement for some particular step $i=i(n)$, i.e., for the random graph $G^{\cR}_{n,i}$.
For this reason, the subcritical part of Theorem~\ref{thm:L1} does not follow from Theorem~\ref{thm:L1sub}. 
With this discussion in mind, one can argue that Theorem~\ref{thm:L1} describes the `dynamic behaviour' of the phase transition in bounded-size Achlioptas processes.

\subsection{Small components}\label{sec:intro:Nk}
During the last decade, a widely used heuristic for many `mean-field' random graph models is that most `small' components are trees (or tree-like). 
The rigorous foundations of this heuristic can ultimately be traced back to the classical Erd{\H o}s--R{\'e}nyi model, where it has been key in the discovery (and study) of the phase transition phenomenon, see~\cite{ER1960,Bollobas1984,Luczak1990}. 
As there are explicit counting formulae for trees, by exploiting the (approximate) independence between the edges this easily gives the asymptotics of $N_k(\tc n \pm \eps n)$ in the Erd{\H o}s--R{\'e}nyi process, see~\eqref{eq:Nk:Er}.

For bounded-size rules, the classical tree-counting approach breaks down due to the dependencies between the edges.
However, Spencer and Wormald~\cite{JSP,SW} already observed around~2001 that $N_k(tn)=N_k(G^{\cR}_{n,tn})$ can be approximated
via the differential equation method~\cite{DEM,DEM99}; their proof implicitly exploits that the main contribution again comes from trees, see also~\cite{RWapsubcr}. 
In particular, for the more general class of bounded-size $\ell$-vertex rules it is nowadays routine to prove that for any $t \in [0,\infty)$ and $k \ge 1$ we have 
\begin{equation}\label{eq:Nk:pto}
\frac{N_k(tn)}{n} \pto \rho_k(t) ,
\end{equation}
where the functions $\rho_k=\rho_k^{\cR}:[0,\infty) \to [0,1]$ are the unique solution of an associated system of differential equations ($\rho'_k$ depends only on $\rho_j$ with $1 \le j \le \max\{k,K\}$, see Lemmas~\ref{lem:Nk:t} and~\ref{lem:Nk2:t}). 
In fact, a byproduct of~\cite{SW,RWapsubcr} is that the $\rho_k(t)$ have exponential decay of the form $\rho_k(t) \le A_t e^{-a_t k}$ for $t < \tc$, with $a_t,A_t>0$.   
To sum up, in view of~\eqref{eq:Nk:Er} and~\eqref{eq:Nk:pto} the precise asymptotics of~$\rho_k(\tc n \pm \eps n)$ is an interesting problem (for the special case $\eps=0$ this was asked by Spencer and Wormald as early as 2001, see~\cite{JSP}).
This requires the development of new proof techniques, which recover the Erd{\H o}s--R{\'e}nyi tree-asymptotics in random graph models with dependencies.

This challenging direction of research was pursued by Kang, Perkins and Spencer~\cite{KPS,KPSE} and Drmota, Kang and Panagiotou~\cite{DKP}, who obtained some partial results for bounded-size rules, using PDE-theory and an auxiliary result from~\cite{RWapcont}.
However, they only recovered the exponential rate of decay (i.e., that $\log(\rho_k(\tc \pm \eps)) \approx -(a+o(1))\eps^2 k$ for small~$\eps$ and large~$k$) for a restricted class of rules which are Bohman--Frieze-like. 
We sidestep both shortcomings by directly relating $\rho_k(t)$ with an associated branching process, see Remark~\ref{rem:rhok:BP2}. 
Indeed, the next theorem completely resolves the asymptotic behaviour of $\rho_k(t)$ for all bounded-size $\ell$-vertex rules.
Note that below we have $\psi(\tc \pm \eps)=a\eps^2 + O(\eps^3)$ and $\theta(\tc \pm \eps) = A + O(\eps)$ for rule-dependent constants $a,A>0$, so \eqref{eq:rhok} qualitatively recovers the full Erd{\H o}s--R{\'e}nyi tree-like behaviour of~\eqref{eq:Nk:Er}.  
A more quantitative informal summary of \eqref{eq:Nk:pto} and \eqref{eq:rhok} is that whp $N_k(\tc n \pm \eps n) \approx A k^{-3/2}e^{-(a+o(1))\eps^2 k}n$ for large~$k$ and small~$\eps$ (ignoring technicalities). Note that the similar behaviour of small components above and below $\tc$ is a version of the `duality' phenomenon seen in $G(n,p)$, related to conditioning a branching process on extinction.
\begin{theorem}[Differential equation asymptotics]
\label{thm:rhok}
Let $\cR$ be a bounded-size $\ell$-vertex rule. 
Let the critical time $\tc>0$ and the functions $(\rho_k)_{k \ge 1}$ be as in~\eqref{eq:def:tc} and~\eqref{eq:Nk:pto},
and the set $\cSR$ of reachable component sizes as in~\eqref{eq:cSr}. 
There exist a constant $\eps_0>0$ and non-negative analytic functions $\theta(t)$ and $\psi(t)$ on $I=[\tc-\eps_0,\tc+\eps_0]$ such that 
\begin{equation}\label{eq:rhok}
 \rho_k(t) = (1+O(1/k)) \indic{k \in \cSR} k^{-3/2} \theta(t) e^{-\psi(t) k},
\end{equation}
uniformly in $k\ge 1$ and $t\in I$, with $\theta(\tc),\psi''(\tc)>0$ 
and $\psi(\tc)=\psi'(\tc)=0$. 
Furthermore, $\rho_k(t) \in [0,1]$ and $\sum_{k \ge 1}\rho_k(t) + \rho(t)=1$ for $t \in I$ and $\rho$ as in
~\eqref{eq:L1:pto} and Theorem~\ref{thm:L1rho}.
\end{theorem}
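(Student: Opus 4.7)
The plan is to reduce the whole statement to a local-limit asymptotic for the total progeny of a suitable multi-type branching process $\bp_t$, and to read off all the announced structural properties of $\theta$ and $\psi$ from the branching process parameters. Recall from Remark~\ref{rem:rhok:BP} that $\rho(t) = \Pr(|\bp_t| = \infty)$ for $t$ in a neighbourhood of $\tc$; the companion identity I would target is
\[
\rho_k(t) = \Pr(|\bp_t| = k) \quad \text{for all } k \ge 1 \text{ and } t \in I,
\]
with the root type of $\bp_t$ sampled from the stationary size-biased law so that the left-hand side equals the limiting fraction of vertices lying in components of size exactly $k$. Granted this identity, the normalization $\sum_k \rho_k(t) + \rho(t) = 1$ is immediate from the finite/infinite dichotomy for $|\bp_t|$.

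Before invoking any asymptotics I would construct $\bp_t$. The usual local description is to expose the component of a uniformly chosen vertex step-by-step: whenever one of its already-exposed vertices is among the $\ell$ random picks, the rule $\cR$ decides whether a new edge attaches, based only on the truncated sizes of the participating components. Since the truncated densities are given by $\rho_1(t),\dots,\rho_K(t)$, which solve a finite polynomial ODE and depend analytically on $t \in I$, the limiting exploration is a multi-type Galton--Watson tree $\bp_t$ with finitely many types whose offspring generating function is analytic in $t$ on a complex neighbourhood of $I$. The rigorous coupling between this tree and $G^{\cR}_{n,i}$ would be carried out by running the differential equation method of~\cite{SW} up to time $\tc - \sigma$ for some small $\sigma > 0$ (as in Sections~\ref{sec:cpl} and~\ref{sec:cpl2}), and then extending past $\tc$ via the two-round exposure outlined in Section~\ref{sec:overview}.

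With $\bp_t$ in hand, I would invoke the multi-type subexponential asymptotic of~\cite{BPpaper}: for an analytic, critical-or-subcritical offspring distribution with finite exponential moments,
\[
\Pr(|\bp_t| = k) = (1 + O(1/k))\,\indic{k \in \cSR}\,k^{-3/2}\theta(t)\,e^{-\psi(t)k},
\]
uniformly in $k \ge 1$ and $t \in I$. Here $\psi(t)$ is the Cram\'er rate of the embedded random walk encoding the exploration, $\theta(t)$ is the Laplace-method prefactor, and the indicator records the period $\per$ of the offspring support (Section~\ref{sec:period}), which pins down exactly which $k$ can occur in $\bp_t$. Analyticity of $\psi$ and $\theta$ on $I$ transfers from analyticity of the offspring generating function, since the saddle point defining $\psi$ stays strictly inside the region of analyticity by convexity together with the control on $\rho_j$, $j \le K$.

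The critical behaviour is the final ingredient: at $t = \tc$ the branching process $\bp_t$ is critical (mean offspring equals $1$), which is precisely the boundary of the subcritical regime on which the Cram\'er transform vanishes, so $\psi(\tc) = \psi'(\tc) = 0$; non-degeneracy of the offspring variance then forces $\psi''(\tc) > 0$, and $\theta(\tc) > 0$ is the standard positive Laplace prefactor. The main obstacle I anticipate is the uniform identification $\rho_k(t) = \Pr(|\bp_t| = k)$ as $t$ crosses $\tc$: the infinite ODE system for $(\rho_k)$ is singular at $\tc$ (its blowup is exactly how $\tc$ was defined in~\eqref{eq:def:tc}), so matching the ODE solution with the branching process probabilities requires either an implicit-function argument off-criticality combined with a continuity argument at $\tc$, or direct probabilistic control on $G^{\cR}_{n,i}$ through $\tc$ via the two-round exposure. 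Everything else should reduce to bookkeeping.
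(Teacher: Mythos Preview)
Your high-level plan --- identify $\rho_k(t)$ with $\Pr(|\bp_t|=k)$ and then invoke the total-progeny asymptotics from~\cite{BPpaper} --- is exactly what the paper does (Corollary~\ref{cor:Nk} plus the remark after Theorem~\ref{asyfull-simple}). Two details in your write-up need correction, though.

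First, your description of $\bp_t$ conflates the dynamics of the process with a static exploration of $G^{\cR}_{n,tn}$. The paper's $\bp_t$ (Section~\ref{sec:BPI:distr}) is not obtained by re-running the rule's decisions step by step; it arises from the structure of the \emph{second} exposure round. Particles are of two types $L$ and $S$, and the offspring law $(Y_t,Z_t)$ is built from the functions $q_{k,r}(t)$ that track $(k,r)$-components in the marked graph $H_i$, together with the initial component-size distribution $\rho_k(t_0)$ for $k>K$. Analyticity of the probability generating function in $t$ (Theorem~\ref{thm:f:mgf}) therefore rests on the PDE work of Section~\ref{sec:AP} (Theorem~\ref{thm:PDE}), not only on the finite ODE for $(\rho_j)_{j\le K}$.

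Second, the obstacle you anticipate is a red herring: the ODE system for $(\rho_k)_{k\ge 1}$ is not singular at $\tc$ --- each $\rho_k$ is smooth on all of $[0,t_1]$ (Lemma~\ref{lem:Nk2:t}). What diverges at $\tc$ is the susceptibility $s_2(t)=\sum_k k\rho_k(t)$, not any individual $\rho_k$. The identification $\rho_k(t)=\Pr(|\bp_t|=k)$ is accordingly much easier than you fear: both are $n$-free quantities that approximate $\E N_k(tn)/n$ (the first via Lemma~\ref{lem:Nk2:t}, the second via Theorem~\ref{thm:Nk:E}), so letting $n\to\infty$ forces equality (Corollary~\ref{cor:Nk}). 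No separate continuity or implicit-function argument across $\tc$ is needed.
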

\begin{remark}\label{rem:rhok}
It follows immediately from~\eqref{eq:rhok} that there is a constant $B >0$ such that ${\sum_{j \ge k} \rho_j(\tc)} = {(1+O(1/k)) B k^{-1/2}}$ for all $k \ge 1$.
\end{remark}
\begin{remark}\label{rem:rhok:BP2}
The proof shows that for $t \in I$ we have $\rho_k(t)=\Pr(|\bp_t|=k)$ for a certain branching process~$\bp_t$ defined in Section~\ref{sec:BPI}. 
\end{remark}
The above multiplicative error $1+O(1/k)$ is best possible for Erd{\H o}s--R{\'e}nyi (where $\tc=1/2$, $\psi(t)=-\log(2te^{1-2t})=2t-1-\log(2t)$ and $\theta(\tc)=1/\sqrt{2\pi}$, so that $\psi(\tc\pm\eps) \sim 2 \eps^2$ and $\theta(\tc\pm\eps) \sim 1/\sqrt{2\pi}$ as~$\eps \to 0$).
Moreover, the detailed asymptotics of~\eqref{eq:rhok} resolves conjectures of Kang, Perkins and Spencer~\cite{KPSPC} and Drmota, Kang and Panagiotou~\cite{DKP}.
The indicator $\indic{k \in \cSR}$ may look somewhat puzzling; its presence is due to the generality of $\ell$-vertex rules -- see Remark~\ref{rem:defs} and Section~\ref{sec:period}. In the Achlioptas process case we have $\cSR=\NN^+$, 
i.e., all component sizes are possible, and so the indicator may be omitted.

Although~\eqref{eq:rhok} is very satisfactory for the `idealized' component size distribution $(\rho_k)_{k \ge 1}$, 
we cannot simply combine it with~\eqref{eq:Nk:pto} to obtain the results we would like for the component size distribution $(N_k)_{k \ge 1}$ of the random graph process $(G^{\cR}_{n,i})_{i \ge 0}$, which is of course our main object of interest. 
The problem is that~\eqref{eq:Nk:pto} only applies for $k=O(1)$ and fixed $t = \tc \pm \eps$, whereas we would like to consider~$k \to \infty$ and~$\eps \to 0$. 
In other words, we would like variants of~\eqref{eq:Nk:pto} which allow us (i)~to study large component sizes with $k = k(n) \to \infty$, and (ii)~to `zoom in' on the critical $\tc$, i.e., study $t=\tc\pm \eps$ with $\eps=\eps(n) \to 0$. 
The next theorem 
accommodates both features: it shows that $N_k(i) \sim \rho_k(i/n) n$ holds for a wide range of sizes~$k$ and steps~$i$.
Note that 
there is some  $\gamma = \gamma(\beta,a, \eps_0)>0$ such that the assumptions on $k$ below, and hence
\eqref{eq:Nk}--\eqref{eq:Ngek}, hold for any $1 \le k \le \gamma \log n$, 
with the allowed range
of $k$ increasing as $\eps=\eps(n) \to 0$. (Aiming at simplicity, here we have not tried to optimize the range;
see also Theorem~\ref{thm:NkC}, Corollary~\ref{cor:Nk} and Section~\ref{sec:mom:large}. Note that we allow $\eps=0$.)
\begin{theorem}[Number of vertices in small components]%
\label{thm:Nk}
Let $\cR$ be a bounded-size $\ell$-vertex rule. 
Let the critical time $\tc>0$ be as in~\eqref{eq:def:tc}, and the functions $\rho,(\rho_k)_{k \ge 1}$ as in Theorem~\ref{thm:L1rho} and~\eqref{eq:Nk:pto}, 
and define $a:=\psi''(\tc)>0$ where~$\psi$ is as in Theorem~\ref{thm:rhok}. 
There is a constant $\eps_0>0$ such that, with probability $1-O(n^{-99})$, the following inequalities hold for all steps $(\tc-\eps_0) n \le i \le (\tc+\eps_0) n$ and sizes $1 \le k \le n^{1/10}$ such that
$\eps=i/n-\tc$ satisfies $10a\eps^2 k \le \log n$: 
\begin{align}
\label{eq:Nk}
\frac{N_k(i)}{n} & \in \bigl[ \bigl(1 \pm n^{-1/30}/k\bigr) \cdot \rho_k(i/n) \bigr], \\
\label{eq:Ngek}
\frac{N_{\ge k}(i)}{n} & \in \Bigl[ \bigl(1 \pm n^{-1/30}/k\bigr) \cdot \Bigl(\sum_{j \ge k}\rho_j(i/n) + \rho(i/n)\Bigr) \Bigr],
\end{align}
where $N_{\ge k}(i) := \sum_{k'\ge k} N_{k'}(i)$, and $[a\pm b]$ denotes the interval $[a-b,a+b]$.
\end{theorem}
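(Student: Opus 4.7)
The plan is to derive Theorem~\ref{thm:Nk} by combining the first- and second-moment estimates developed in Sections~\ref{sec:cpl}--\ref{sec:cpl2} with a concentration argument and a union bound. By the branching-process coupling set up there together with Remark~\ref{rem:rhok:BP2}, for a fixed vertex $v$ the probability that $v$ lies in a component of size $k$ in $G^{\cR}_{n,i}$ should be identifiable with $\Pr(|\bp_{i/n}|=k)=\rho_k(i/n)$ up to a multiplicative factor $1+O(n^{-1/30}/k)$, uniformly over the allowed range. Summing over $v$ gives the first-moment estimate $\E N_k(i) = n\rho_k(i/n)(1+O(n^{-1/30}/k))$; since $k\le n^{1/10}$ the cluster exploration is confined to a vanishing fraction of the vertices, a regime in which it should closely track $\bp_{i/n}$.

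For concentration, the key observation is that a single step of the process alters $N_k$ by $O(k)$ (merging two components of sizes $a,b$ only changes $N_k$ if one of $a,b,a+b$ equals $k$, and the resulting change is at most $2k$); the analogous bound holds for $N_{\ge k}$. A martingale bounded-differences inequality applied to the vertex-pair-exposure filtration therefore yields $|N_k(i)-\E N_k(i)|\le C k\sqrt{n\log n}$ with failure probability $n^{-101}$. Combining this with the lower bound $\rho_k(t)\ge c k^{-3/2}e^{-\psi(t)k}$ from Theorem~\ref{thm:rhok} and using the hypothesis $10a\eps^2 k\le \log n$ (which implies $e^{-\psi(t)k}\ge n^{-1/10}$), a short calculation confirms that this deviation is at most $n^{-1/30}/k\cdot \E N_k(i)$ for all $k\le n^{1/10}$. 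A union bound over the $O(n^{1+1/10})$ admissible pairs $(i,k)$ is absorbed into the stated $1-O(n^{-99})$.

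The statement for $N_{\ge k}(i)$ is obtained by the same moment-plus-concentration route applied directly to $N_{\ge k}(i)$ rather than by summing pointwise errors on the $N_{k'}$, which would be too wasteful. Here the mean is $n\bigl(\sum_{k'\ge k}\rho_{k'}(i/n)+\rho(i/n)\bigr)$, which is identified via the branching-process tail in the subcritical regime and via the giant-component asymptotics of Theorems~\ref{thm:L1rho}--\ref{thm:L1} in the supercritical regime; the required multiplicative error $n^{-1/30}/k$ then follows from the lower bound $\sum_{k'\ge k}\rho_{k'}(t)+\rho(t)\ge c\min(k^{-1/2},\eps)$ which combines Remark~\ref{rem:rhok} (at criticality) with the linear growth $\rho(t)\ge c\eps$ supplied by Theorem~\ref{thm:L1rho} (in the supercritical case).

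The main obstacle is the sharpness of the first-moment step: pushing the multiplicative error for $\E N_k(i)$ down to $n^{-1/30}/k$ across the entire parameter range---and in particular for $k$ approaching $n^{1/10}$ with $\eps\to 0$---requires both the precise tail asymptotics of the branching process $\bp_t$ imported from~\cite{BPpaper} and a delicate two-round coupling between cluster exploration in $G^{\cR}_{n,i}$ and $\bp_t$ that accurately accounts for the tree/cycle structure of the explored cluster. By comparison, the concentration piece is essentially standard once the first moment is in hand.
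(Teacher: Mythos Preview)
Your overall strategy---first moment via the branching-process coupling of Sections~\ref{sec:cpl}--\ref{sec:cpl2}, then concentration, then a union bound---matches the paper's, and your final calculation comparing the additive error $O(k\sqrt{n\log n})$ to the multiplicative target $(n^{-1/30}/k)\rho_k(t)n$ is correct. However, the concentration step contains a genuine gap, and your closing remark that ``the concentration piece is essentially standard'' is precisely backwards.

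You propose a martingale bounded-differences inequality on the vertex-pair-exposure filtration, justified by the fact that one step of the process changes $N_k$ by $O(k)$. But McDiarmid (equivalently, Azuma on the Doob martingale of $N_k(i)$) requires the Lipschitz condition: changing the random tuple $\vv_j$ at step~$j$, while holding all other random inputs fixed, changes the \emph{final} value $N_k(i)$ by $O(k)$. This fails for Achlioptas processes. Changing $\vv_j$ can change the component sizes the rule sees at step $j+1$, hence the edge added there, hence the sizes seen at step $j+2$, and so on; the effect cascades, exactly as flagged at the start of Section~\ref{sec:overview}. The one-step bound $|N_k(j)-N_k(j-1)|\le 2k$ is true but is the wrong condition; it gives Azuma only for the compensated martingale $N_k(j)-\sum_{s\le j}\E[\Delta N_k\mid\cF_{s-1}]$, whose terminal value differs from $N_k(i)-\E N_k(i)$ by a random drift that you have not controlled for $k$ up to~$n^{1/10}$.

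The paper's fix is the two-round exposure of Section~\ref{sec:exp}. It conditions on the parameter list $\fS_i$; given a $t$-nice $\fS_i$, the remaining randomness in $J(\fS_i)$ consists of $O(n)$ \emph{independent} uniform choices of vertices in $V_L$, each of which affects exactly one edge, so McDiarmid now applies legitimately (Lemma~\ref{lem:Nk:C}). Combining this with the conditional first-moment estimate (Theorem~\ref{thm:Nk:E}) and the fact that $\fS_i$ is nice with probability $1-O(n^{-99})$ (Lemma~\ref{lem:nice}) yields Theorem~\ref{thm:NkC}, from which Theorem~\ref{thm:Nk} follows by essentially the comparison you sketch.

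A secondary issue: for $N_{\ge k}$ you invoke Theorem~\ref{thm:L1} to handle the giant in the supercritical regime, but that theorem is proved \emph{after} Theorem~\ref{thm:Nk}. The paper avoids any case split by using directly that $\Pr(|\bp_t|\ge k)=\sum_{j\ge k}\rho_j(t)+\rho(t)$ (Corollaries~\ref{cor:Nk} and~\ref{cor:rhosr}).
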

\begin{remark}\label{rem:thm:Nk}
For comparison with \eqref{eq:intro:smallcpt} in Theorem~\ref{thm:intro} 
note that if we set $t=i/n$ and $\eps=|\tc-t|$, then $\eps^3 k = o(1)$ implies $\psi(t)k = a\eps^2k + o(1)$ in~\eqref{eq:rhok}.
Similarly, $\eps^2 k = o(1)$ implies $\psi(t)k = o(1)$. 
\end{remark}
The multiplicative $1+o(1/k)$
error term in~\eqref{eq:Nk} allows for very precise asymptotic results in combination with~\eqref{eq:rhok}.
Indeed, whp, for all steps $(\tc-\eps_0) n \le i \le (\tc+\eps_0) n$ and sizes $1 \le k \le n^{1/10}$ 
satisfying $10a(\tc -i/n)^2k \le \log n$, we 
have
\[
N_k(i) = (1+O(1/k)) \indic{k \in \cSR} k^{-3/2} \theta(i/n) e^{-\psi(i/n) k} n ,
\]
where in this and all similar formulae, the implicit constant is uniform over the choice of $i=i(n)$ and $k=k(n)$.
Furthermore, combining~\eqref{eq:Ngek} with Remark~\ref{rem:rhok},  we see that, whp, for all $1 \le k \le n^{1/10}$ we have
\begin{equation}\label{eq:Ngek:tc}
N_{\ge k}(\tc n) = (1+O(1/k)) B k^{-1/2} n .
\end{equation}
Thus, at criticalilty we have \emph{polynomial decay} of the tail of the component size distribution, which is a prominent hallmark of the critical window. 
For bounded-size rules the $Bk^{-1/2}n$ asymptotics of~\eqref{eq:Ngek:tc} answers a question of Spencer and Wormald from 2001, see~\cite{JSP}.

\subsection{Susceptibility}\label{sec:intro:Sj}
The susceptibility~$S_2(tn)$ is a key statistic of the phase transition, which has been widely studied in a range of random graph models (see, e.g.~\cite{BCvdHSS2005,JL,J2010,JR2012,JW2016}).
For example, in classical percolation theory the critical density coincides with the point where (the infinite analogue of) the susceptibility diverges, and in the Erd{\H o}s--R{\'e}nyi process it is folklore that 
for $t < \tc^{\mathrm{ER}}=1/2$ we have 
\begin{equation}\label{eq:intro:s2ER}
S_2(G^{\mathrm{ER}}_{n,tn}) \pto \frac{1}{1-2t}.
\end{equation}
More importantly, in the context of bounded-size Achlioptas processes
the location~$\tc$ of the phase transition is \emph{determined} by the critical time where the susceptibility diverges, see~\cite{SW,BK,RWapsubcr}. 
This characterization is somewhat intuitive, since $S_2(tn)=S_2(G^{\cR}_{n,tn})$ is the expected size of the component containing a randomly chosen vertex from $G^{\cR}_{n,tn}$, see~\eqref{eq:def:Sr}. 
Of course, since $L_1(G)^2/n \le S_2(G)\le L_1(G)$, bounds on one of~$L_1(i)$ and~$S_2(i)$ imply bounds on the other.
(For example, $S_2(i) = O(1)$ implies $L_1(i) = O(\sqrt{n}) = o(n)$.) However, one only obtains weak results this way;
proving that whp $L_1(tn)=\Omega(n)$ after the point at which $S_2(tn)$ blows up is far from trivial.

Turning to the susceptibility in 
bounded-size rules, using the differential equation method~\cite{DEM,DEM99,DEMLW} and ideas from~\cite{SW,JS,RWapsubcr} it is nowadays routine to prove that for each $t \in [0,\tc)$ and $r \ge 2$ we have
\begin{equation}\label{eq:Sj:pto}
S_r(tn) \pto s_r(t) ,
\end{equation}
where the functions $s_r=s_r^{\cR}:[0,\tc) \to [1,\infty)$ are the unique solution of a certain system of differential equations (involving also $\rho_1,\ldots,\rho_K$), 
with $\lim_{t \nearrow \tc} s_r(t) = \infty$.
(Recall that $S_{r+1}$ denotes the $r$th moment of the size of the component containing a random vertex.)
Motivated by `critical exponents' from percolation theory and statical physics, the focus has thus shifted towards the finer behaviour of the susceptibility, i.e., the question at what rate $s_r(\tc-\eps)$ blows up as $\eps \searrow 0$ (in the Erd{\H o}s--R{\'e}nyi case we have $s_2(\tc-\eps) \sim (2\eps)^{-1}$, see~\eqref{eq:intro:s2ER} and~\cite{RGD,JL}). 
Using asymptotic analysis of differential equations, Janson and Spencer~\cite{JS} determined the scaling behaviour of $s_2, s_3$ and $s_4$ for the Bohman--Frieze rule. 
For $s_2$ and $s_3$ their argument was generalized by Bhamidi, Budhiraja and Wang~\cite{BBW12b} to all bounded-size rules. 
Based on branching process arguments, 
the next theorem establishes the asymptotic behaviour of $s_r$ for \emph{any}~$r \ge 2$, for the larger class of bounded-size $\ell$-vertex rules.
To avoid clutter below, we adopt the convention that the double factorial $x!!=\prod_{0 \le j < \ceil{x/2}}(x-2j)$
is equal to $1$ for $x \le 0$.
Recall from Remark~\ref{rem:defs} that for an Achlioptas processes~$\per=1$.
\begin{theorem}[Idealized susceptibility asymptotics]%
\label{thm:sjfkt}
Let $\cR$ be a bounded-size $\ell$-vertex rule.
Let the critical time $\tc>0$ and the functions $(s_r)_{r \ge 2}$ be as in~\eqref{eq:def:tc} and~\eqref{eq:Sj:pto}. 
Let 
\begin{equation}\label{eq:sjfkt:Dr}
B_r := (2r-5)!! \cdot \sqrt{2\pi}\theta(\tc)/\per \cdot \psi''(\tc)^{-r+3/2} , 
\end{equation}
where $\per \ge 1$ is defined in Section~\ref{sec:period}, and the functions $\theta(t)$ and $\psi(t)$ are as in Theorem~\ref{thm:rhok}.
Then there exists a constant $\eps_0>0$ such that, for all $r \ge 2$ and $\eps \in (0,\eps_0)$, we have $B_r>0$ and
\begin{equation}\label{eq:sjfkt}
s_r(\tc-\eps) = (1+O(\eps))  B_r \eps^{-2r+3}.
\end{equation}
\end{theorem}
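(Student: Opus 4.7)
The plan is to combine the series representation of $s_r$ with the precise local asymptotics of $\rho_k(t)$ given by Theorem~\ref{thm:rhok}. First, I would establish the identity
\begin{equation*}
s_r(t) = \sum_{k \ge 1} k^{r-1} \rho_k(t), \qquad t \in [\tc - \eps_0, \tc),
\end{equation*}
via the branching-process interpretation of Remark~\ref{rem:rhok:BP2}, which gives $\sum_{k} k^{r-1} \rho_k(t) = \E\bigl[|\bp_t|^{r-1}\bigr]$ (with $|\bp_t| < \infty$ almost surely for $t < \tc$), or alternatively by checking that the right-hand side satisfies the same autonomous ODE system as $s_r$ in~\eqref{eq:Sj:pto} with matching initial data. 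Absolute convergence of the series for $t < \tc$ is immediate from the exponential decay in~\eqref{eq:rhok}.

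Writing $\lambda = \psi(\tc - \eps)$, substituting~\eqref{eq:rhok} yields
\begin{equation*}
s_r(\tc - \eps) = \theta(\tc - \eps) \sum_{k \in \cSR} \bigl(1 + O(1/k)\bigr) k^{r-5/2} e^{-\lambda k},
\end{equation*}
so the bulk of the work is to evaluate $\Sigma(\lambda) := \sum_{k \in \cSR} k^{r-5/2} e^{-\lambda k}$ as $\lambda \to 0^+$. By the period property (Section~\ref{sec:period}), there is some $M_0$ for which $k \in \cSR$ if and only if $\per \mid k$ whenever $k \ge M_0$; the contribution from $k < M_0$ is~$O(1)$. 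For the tail, the substitution $k = \per m$ gives $\per^{r-5/2} \sum_m m^{r-5/2} e^{-(\lambda\per)m}$, and Euler--Maclaurin comparison with the integral $\int_0^\infty x^{r-5/2} e^{-(\lambda\per)x}\,dx = \Gamma(r - 3/2)(\lambda\per)^{-r+3/2}$ yields
\begin{equation*}
\Sigma(\lambda) = \per^{-1} \Gamma(r - 3/2) \lambda^{-r+3/2} \bigl(1 + O(\lambda^{1/2})\bigr).
\end{equation*}
Since $r \ge 2$ gives $r - 3/2 \ge 1/2 > 0$, the Gamma function is well-defined and positive. The $O(1/k)$ perturbation in the sum contributes at most a constant times $\sum_k k^{r-7/2} e^{-\lambda k} = O(\lambda^{-r+5/2}) = O(\eps^2) \cdot \eps^{-2r+3}$, which is absorbed into a $1 + O(\eps)$ factor.

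To finish, using $\psi(\tc) = \psi'(\tc) = 0$ and the analyticity of $\psi$ and $\theta$ on $I$, I have the Taylor expansions $\theta(\tc - \eps) = \theta(\tc) + O(\eps)$ and $\psi(\tc - \eps) = (\psi''(\tc)/2)\eps^2(1 + O(\eps))$, so that $\lambda^{-r+3/2} = (\psi''(\tc)/2)^{-r+3/2}\eps^{-2r+3}(1 + O(\eps))$. Combining everything with the elementary identity $(2r-5)!! = 2^{r-2}\Gamma(r - 3/2)/\sqrt{\pi}$ recovers exactly the constant~$B_r$ in~\eqref{eq:sjfkt:Dr} and gives the claimed formula $s_r(\tc - \eps) = (1 + O(\eps)) B_r \eps^{-2r+3}$; the positivity $B_r > 0$ follows since $\theta(\tc), \psi''(\tc) > 0$. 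I expect the main obstacle to be the first step, namely rigorously justifying the series identity $s_r(t) = \sum_k k^{r-1}\rho_k(t)$ for $t$ just below $\tc$ (the differential equation for $s_r$ implicitly encodes a tail condition, and verifying that the branching process moments satisfy this same equation requires care). Once that is in hand, the remainder is standard asymptotic analysis applied to a gamma-like sum.
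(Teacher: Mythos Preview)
Your proposal is correct and follows essentially the same route as the paper. The paper establishes the identity $s_r(t)=\sum_{k\ge 1}k^{r-1}\rho_k(t)=\E|\bp_t|^{r-1}$ via Corollary~\ref{cor:rhosr} (using the branching process interpretation you mention), then substitutes the point-probability asymptotics and evaluates the resulting sum $\sum_{k\equiv 0\ (\per)}k^{r-5/2}e^{-\psi(t)k}$ via Lemma~\ref{lem:Sj:sum}, which is exactly your Euler--Maclaurin/Gamma-integral step written with the double factorial in place of $\Gamma(r-3/2)$.
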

In the language of mathematical physics~\eqref{eq:Sj:pto} and~\eqref{eq:sjfkt} loosely say that, as $\eps \searrow 0$, all bounded-size rules have the same susceptibility-related `critical exponents' as the Erd{\H o}s--R{\'e}nyi process
(where the constant is $B_r=(2r-5)!!2^{-2r+3}$, since $\theta(\tc) = 1/\sqrt{2\pi}$ and $\psi''(\tc)= 4$ by folklore results).
\begin{remark}\label{rem:sjfkt:BP}
The proof shows that for $t \in [\tc-\eps_0,\tc)$ we have $s_r(t)=\E |\bp_t|^{r-1}$ for a certain branching process~$\bp_t$ defined in Section~\ref{sec:BPI}.
\end{remark}

Next we `zoom in' on the critical point~$\tc$, i.e., discuss the behaviour of the susceptibility $S_r(\tc n-\eps n)$ when $\eps=\eps(n) \to 0$. 
Here the subcritical phase in the Erd{\H o}s--R{\'e}nyi case was resolved by Janson and Luczak~\cite{JL}, using martingale arguments, differential equations and correlation inequalities.  
For bounded-size rules Bhamidi, Budhiraja and Wang~\cite{BBW12b,BBW12a} used martingales arguments and the differential equation method
to prove results covering only part of the subcritical phase. 
In particular, for $i=\tc n-\eps n$ with $\eps=\eps(n) \to 0$ their results apply only to $S_2(i)$ and $S_3(i)$
and only
in the restricted range $\eps \ge n^{-1/5}$. 
Using very different methods, 
the next theorem resolves the scaling behaviour of the susceptibility~$S_r(i)$ in the entire subcritical phase. 
In particular, our result applies for any $r \ge 2$ all the way up to the critical window, i.e., we only assume $\eps^3 n \to \infty$. 
Note that $\gamma_{r,n,\eps} = o(1)$ when $\eps=o(1)$, so~\eqref{eq:Sjsub:uniform} intuitively states that whp $S_r(\tc n-\eps n) \approx B_r \eps^{-2r+3}$. 
%
\begin{theorem}[Subcritical susceptibility]%
\label{thm:Sj}
Let $\cR$ be a bounded-size $\ell$-vertex rule with critical time $\tc>0$ as in~\eqref{eq:def:tc},
and define $B_r>0$ as in~\eqref{eq:sjfkt:Dr}.
There are positive constants $\eps_0>0$ and $(A_r)_{r \ge 2}$ such that the following holds for any function $\omega=\omega(n)$ with $\omega \to \infty$ as $n \to \infty$. 
For any integer $r \ge 2$, whp
\begin{equation}\label{eq:Sjsub:uniform}
S_r(i) \in [(1 \pm \gamma_{r,n,\eps}) B_r \eps^{-2r+3}]
\end{equation}
holds in all steps $i=i(n) \ge 0$ such that $\eps=\tc-i/n$ satisfies $\eps^3 n \ge \omega$ and $\eps \le \eps_0$,
where $\gamma_{r,n,\eps}:=A_r\bigl(\eps + (\eps^3n)^{-1/4}\bigr)$ 
and where $[a\pm b]$ denotes the interval $[a-b,a+b]$. 
\end{theorem}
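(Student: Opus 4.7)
The starting point is Theorem~\ref{thm:sjfkt}, which gives the deterministic asymptotics $s_r(\tc-\eps) = (1+O(\eps)) B_r \eps^{-2r+3}$. With this in hand the task reduces to showing that, whp uniformly over the relevant $i = \tc n - \eps n$,
\[
S_r(i) = \bigl(1 \pm O((\eps^3 n)^{-1/4})\bigr) s_r(i/n);
\]
combined with Theorem~\ref{thm:sjfkt}, the $\eps$ term in $\gamma_{r,n,\eps}$ then comes from the asymptotics of $s_r$ while the $(\eps^3 n)^{-1/4}$ term comes from the concentration of $S_r(i)$ around $s_r(i/n)$.

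My plan is to write $S_r(i) = n^{-1}\sum_k k^{r-1} N_k(i)$ and split the sum at a threshold $K^* = C_0 \eps^{-2}\log(\eps^3 n)$, with $C_0 = C_0(r)$ chosen large. For the tail $k > K^*$, the uniform-in-$i$ version of Theorem~\ref{thm:L1sub} (obtainable by a net argument together with the monotonicity $L_1(i) \le L_1(i+1)$) gives whp that $L_1(i) \le K^*$ for every $i$ in the given range, so the tail $\sum_{k > K^*} k^{r-1} N_k(i)$ is identically zero. For the head $k \le K^*$, the plan is to replace $N_k(i)/n$ by $\rho_k(i/n)$ using the first- and second-moment estimates on $N_k(i)$ developed in Sections~\ref{sec:cpl} and~\ref{sec:cpl2} (the technical backbone underlying Theorem~\ref{thm:Nk}, and which should be sharp in the relevant range $k \sim \eps^{-2}$). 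Together with Chebyshev's inequality these should yield
\[
n^{-1}\sum_{k \le K^*} k^{r-1} N_k(i) = \bigl(1 \pm O((\eps^3 n)^{-1/4})\bigr) \sum_{k \le K^*} k^{r-1} \rho_k(i/n)
\]
with probability at least $1 - O(n^{-99})$. The remaining tail $\sum_{k > K^*} k^{r-1} \rho_k(i/n)$ is negligible compared to $s_r(i/n)$ by the Gaussian-type bound $\rho_k(i/n) = O(k^{-3/2}e^{-\psi(i/n)k})$ from Theorem~\ref{thm:rhok} (with $\psi(i/n) = \Theta(\eps^2)$); the exponential factor at $k = K^*$ is $(\eps^3 n)^{-\Omega(C_0)}$, so choosing $C_0$ large makes this much smaller than $(\eps^3 n)^{-1/4}$. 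Uniformity over the at most $n$ relevant values of $i$ then follows by a union bound against the $O(n^{-99})$ failure probability.

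The main obstacle is achieving the relative error $(\eps^3 n)^{-1/4}$ for the weighted sum $\sum k^{r-1} N_k(i)$. A direct use of Theorem~\ref{thm:Nk}, which gives $|N_k(i)/n - \rho_k(i/n)| = O(n^{-1/30} \rho_k(i/n)/k)$, only yields a relative error $O(n^{-1/30} s_{r-1}(i/n)/s_r(i/n)) = O(n^{-1/30}\eps^{-2})$ on $S_r$, which is insufficient as $\eps$ approaches the critical scale $n^{-1/3}$. The sharper variance estimates from Section~\ref{sec:cpl2} are essential: they must control $\Cov(N_k(i), N_{k'}(i))$ precisely enough in the bulk $k,k' \sim \eps^{-2}$ that the weighted variance $n^{-2}\sum_{k,k'} k^{r-1}(k')^{r-1}\Cov(N_k,N_{k'})$ is of order $s_r(i/n)^2/(\eps^3 n)^{1/2}$. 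Equivalently, working directly with $S_r(i) = n^{-1}\sum_v |C_v(i)|^{r-1}$ via the two-vertex branching process coupling from Section~\ref{sec:cpl}, the diagonal contribution to $\Var[S_r(i)]$ is $n^{-1}\E|C_V(i)|^{2r-2} \sim s_{2r-1}(i/n)/n = O(\eps^{-4r+5}/n)$, which relative to $s_r(i/n)^2 = \Theta(\eps^{-4r+6})$ is $O((\eps n)^{-1})$, comfortably within the claimed rate; the off-diagonal contributions are then controlled by the same two-vertex coupling.
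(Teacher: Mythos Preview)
There is a genuine gap in the uniformity argument. Your own variance computation at the end (and the paper's Lemma~\ref{lem:Srvar:sub}) gives $\Var S_r(i) \le n^{-1}\E S_{2r}(i) = O\bigl((\eps^3 n)^{-1}(\E S_r(i))^2\bigr)$, so Chebyshev with relative error $(\eps^3 n)^{-1/4}$ yields failure probability $O((\eps^3 n)^{-1/2})$, \emph{not} $O(n^{-99})$. When $\eps$ is close to the critical scale $n^{-1/3}$ this probability is only $o(1)$, and a union bound over all $\Theta(n)$ steps $i$ in the range blows up. The $O(n^{-99})$ bound you cite is the one behind Theorem~\ref{thm:NkC}, but that comes from McDiarmid/bounded differences and gives the crude additive error $k(\log n)^{D_\bp}n^{1/2}$ on each $N_k$; summing $k^r$ times this up to $K^*=\Theta(\eps^{-2}\log(\eps^3 n))$ produces a relative error of order $n^{-1/2}\eps^{-5}(\log n)^{O(1)}$, which also blows up near the critical scale.

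The paper fixes this not by sharper tail bounds but by exploiting monotonicity of $S_r(i)$ in $i$. One chooses a geometric grid $\eps_j=\omega^{1/3}n^{-1/3}(1+\xi)^{j-1}$ with $\xi=\omega^{-1/4}$, applies Chebyshev (via the sandwich $\JPm\subseteq J(\fS)\subseteq \JPp$ and Lemma~\ref{lem:SrE:sub}) only at the grid points $m_j=(\tc-\eps_j)n$, and observes that the failure probabilities $O((\eps_j^3 n)^{-1/2})$ form a summable geometric series. Between grid points, $S_r(m_{j+1})\le S_r(i)\le S_r(m_j)$ deterministically, and the grid is fine enough that $B_r\eps_j^{-2r+3}$ and $B_r\eps_{j+1}^{-2r+3}$ differ only by a factor $1+O(\xi)$, which is absorbed into $\gamma_{r,n,\eps}$. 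Your detour through $L_1(i)\le K^*$ is also unnecessary: the paper controls $\E S_{r,n}(\JPpm)$ directly by comparison with $\E|\bp_t^\pm|^{r-1}$, with no need to truncate the sum over $k$.
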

The assumption $r \ge 2$ cannot be relaxed, since $S_1(i) =1$ holds deterministically, cf.~\eqref{eq:def:Sr}.
In~\eqref{eq:Sjsub:uniform} we have not tried to optimize the error term for $\eps=\Theta(1)$, since our main interest concerns the $\eps \searrow 0$ behaviour. 
The supercritical scaling of the susceptibility is less informative and interesting, since $S_2(i)$ is typically dominated by the contribution from the largest component. 
In particular, for $i=\tc n + \eps n$ with $\eps^3n \to \infty$ we believe 
that whp $S_2(i) \sim L_1(i)^2/n$ for any bounded-size rule (for fixed $\eps \in (0,\eps_0)$ this follows from Theorem~\ref{thm:L1}), but we have not investigated this.

\newoddpage

\section{Proof overview}\label{sec:overview}
In this section we give an overview of the proof, with an emphasis on the structure of the argument.
Loosely speaking, one of the key difficulties is that there are non-trivial dependencies between the choices in different rounds. 
To illustrate this, let us do the following thought experiment. Suppose that we change the vertices offered to the rule at one step, and as a consequence, the rule adds a different edge to the graph.
This results in a graph with different component sizes. Hence,
whenever the process samples vertices from these components in subsequent steps, the rule is presented with different component sizes. 
This may alter the decision of the rule, and hence the edge added, 
which can change further subsequent decisions, and so on. 
In other words, changes can propagate throughout the evolution of the process, which makes the analysis challenging.

\begin{figure}[t]
\centering
  \setlength{\unitlength}{1bp}%
  \begin{picture}(262.54, 39.69)(0,0)
  \put(0,0){\includegraphics{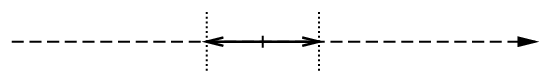}}
  \put(239.37,24.66){\fontsize{8.54}{10.24}\selectfont \makebox[0pt]{$steps$}}
  \put(125.86,26.08){\fontsize{8.54}{10.24}\selectfont \makebox[0pt]{$t_{\mathrm c} n$}}
  \put(97.51,10.49){\fontsize{8.54}{10.24}\selectfont \makebox[0pt][r]{$i_0$}}
  \put(157.04,10.49){\fontsize{8.54}{10.24}\selectfont $i_1$}
  \end{picture}%
	\caption{\label{fig:po1} The basic proof setup. 
We condition on the component structure of the graph after $i_0=(\tc-\sigma)n$ steps, and then reveal information about steps $i_0, \ldots, i_1=(\tc+\eps)n$ via a two-round exposure. 
The crux will be that the distribution of the second exposure round is extremely well behaved (consisting of many independent random choices), which eventually allows us to analyze the component size distribution of the resulting graph~$G_i$ in step $i_0 < i \le i_1$ via branching process methods.}
\end{figure}

For bounded-size rules we overcome this difficulty via the following high-level proof strategy. 
First, we track the evolution of the entire component size distribution during the initial $i_0=(\tc-\sigma)n$ steps, where~$\sigma > 0$ is a small constant. 
Second, using the graph after $i_0$ steps as an anchor, for $i_1=(\tc+\eps)n$ we reveal information about the steps $i_0, \ldots, i_1$ via a two-round exposure argument (not the classical multi-round exposure used in random graph theory). 
We engineer this two-round exposure in a way that eventually allows us to analyze the component size distribution in step $i_0 \le i \le i_1$ via a neighbourhood exploration process which closely mimics a branching process.
Intuitively, this allows us to reduce most questions about the component size distribution to questions about certain branching processes. 
These branching processes are not of a standard form, but we are nevertheless able to analyze them (with some technical effort). 
This close coupling with a branching process is what allows us to obtain such precise results. In this argument
the restriction to bounded-size rules is crucial, see Sections~\ref{sec:exp} and~\ref{sec:VS}.

In the following subsections we further expand on the above ideas, still ignoring a number of technical details and difficulties. 
In Section~\ref{sec:po:exp} we outline our setup and the two-round exposure argument. 
Next, in Section~\ref{sec:po:nebp} we explain the analysis of the component size distribution via exploration and branching processes. 
Finally, in Section~\ref{sec:po:main} we turn to the key statistics $L_1(i)$, $N_k(i)$ and $S_r(i)$, and briefly discuss how we eventually adapt approaches used to study the Erd\H os--R\'enyi model to bounded-size Achlioptas processes.

\subsection{Setup and two-round exposure}\label{sec:po:exp}
In this subsection we discuss the main ideas used in our two-round exposure; see Section~\ref{sec:exp} for the technical details. 
Throughout we fix a bounded-size $\ell$-vertex rule $\cR$ with cut-off~$K$ (as defined in Section~\ref{sec:intro:def}).
Using the methods of~\cite{RWapsubcr}, we start by tracking the evolution of the entire component size distribution up to step~${i_0=(\tc-\sigma)n}$.  
More precisely, we show that the numbers~$N_{k}(i_0)$ of vertices in components of size~$k$ can be approximated by deterministic functions (see Theorem~\ref{thm:init} and Lemma~\ref{lem:Nk2:t}).

\begin{figure}[t]
\centering
  \setlength{\unitlength}{1bp}%
  \begin{picture}(278.80, 115.05)(0,0)
  \put(0,0){\includegraphics{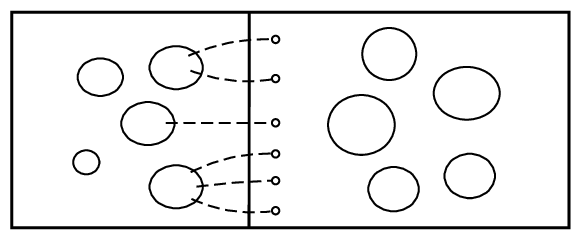}}
  \put(19.27,96.65){\fontsize{11.38}{13.66}\selectfont \makebox[0pt]{$V_S$}}
  \put(258.92,96.65){\fontsize{11.38}{13.66}\selectfont \makebox[0pt]{$V_L$}}
  \end{picture}%
	\caption{\label{fig:po2} Example of the `partial graph' used for the second exposure round of the graph~$G_i$ in step~$i_0 \le i \le i_1$. 
After the first exposure round we have revealed a certain subgraph $H_i$ of $G_i$, together with the $V_S$--endvertices of all $V_S$--$V_L$ edges in $G_i\setminus H_i$ (the endvertices in~$V_L$ are still uniformly random), and the number of $V_L$--$V_L$ edges of $G_i\setminus H_i$ (these edges are still uniform). 
We obtain the graph $G_i$ by (a)~connecting the undetermined $V_S$--$V_L$ edges to random vertices in~$V_L$, and (b)~adding the correct number of random $V_L$--$V_L$ edges. 
The point of this description is that it involves many independent random choices, so it allows us to analyze the component size distribution of the resulting graph~$G_i$ using branching process techniques.} 
\end{figure}

Conditioning on the graph $G_{i_0}=G^{\cR}_{n,i_0}$ after $i_0$ steps, i.e., regarding it as given, 
we shall reveal information about steps $i_0+1, \ldots, i_1=(\tc+\sigma)n$ in two rounds.
We assume (as we may, since the variables~$N_{k}(i_0)$ are concentrated) that each $N_{k}(i_0)$ is close to its expectation. 
We partition the vertex set of~$G_{i_0}$ into~$V_S \cup V_L$, where~$V_S$ contains all vertices that \emph{in the graph $G_{i_0}$} are in components of size at most~$K$ (the labels~$S$ and~$L$ refer to `small' and `large' component sizes).
Note that in any later step $i \ge i_0$, since $G_i\supseteq G_{i_0}$, every vertex~$v \in V_L$ is in a component of $G_i=G^{\cR}_{n,i}$ with size larger than~$K$, i.e., with size~$\omega$ as far as the rule~$\cR$ is concerned.
Hence, when a vertex $v$ in $V_L$ is offered to $\cR$, in order to know the decision made by~$\cR$ we do
not need to know \emph{which} vertex in $V_L$ we are considering -- as far as~$\cR$ is concerned, all 
such vertices have the same component size $\omega$.
In our first exposure round we reveal everything about the vertices offered to~$\cR$ in all steps $i_0< i\le i_1$
except that whenever a vertex in $V_L$ is chosen, we do not reveal which vertex it is; as just observed, this
information tells us what decisions $\cR$ will make.
This allows us to track: (i)~the edges added inside $V_S$, (ii)~the $V_S$--endvertices of the edges added connecting $V_S$ to $V_L$, and (iii)~the \emph{number} of edges added inside~$V_L$. 
(Formally this can be done via the differential equation method~\cite{DEM,DEM99,DEMLW} and branching process techniques, see Section~\ref{sec:DEM}--\ref{sec:BPA}; note that (i)--(ii) track the evolution of the `$V_S$-graph' beyond the critical~$\tc$.)
After this first exposure round we have revealed a subgraph $H_i$ of $G_i$ (called the `partial graph' in Figure~\ref{fig:po2}), consisting of all edges in~$G_{i_0}$, together with all edges
in steps between~$i_0$ and~$i$ with both ends in~$V_S$. Furthermore, we know that~$G_i$ consists of~$H_i$ with
certain edges added: a known number of $V_S$--$V_L$ edges whose endpoints in $V_S$ are known, and a known number of $V_L$--$V_L$ edges.

In the second exposure round the vertices in $V_L$ (corresponding to~(ii) and~(iii) above) are now chosen independently and uniformly at random from~$V_L$; see the proof of Lemma~\ref{lem:exp} for the full details. 
Hence, after conditioning on the outcome of the first exposure round, the construction of $G_i$ from the `partial graph'~$H_i$ described above has a very simple form (see Figure~\ref{fig:po2} and Lemma~\ref{lem:cond}). 
Indeed, for each $V_S$--$V_L$ edge the so-far unknown $V_L$--endpoint is replaced with a uniformly chosen random vertex from~$V_L$. 
Furthermore, we add a known number of uniformly chosen random edges to~$V_L$. 
This setup, consisting of many independent uniform random choices, is ideal for neighbourhood exploration and branching process techniques.

\subsection{Component size distribution}\label{sec:po:nebp} 
To get a handle on the component size distribution of the graph~$G_i=G^{\cR}_{n,i}$ after $i_0 < i \le i_1$ steps, we use neighbourhood exploration arguments to analyze the second exposure round described above.
As usual, we start with a random vertex $v \in V_S \cup V_L$, and iteratively explore its neighbourhoods.
Suppose for the moment that $v\in V_L$.
Recalling the construction of $G_i$ from the partial graph $H_i$, any vertex $w \in V_L$ has neighbours in~$V_L$ and~$V_S$, which arise (a)~via random $V_L$--$V_L$ edges and (b)~via $V_S$--$V_L$ edges with random $V_L$--endpoints. 
Furthermore, each of the adjacent $V_S$--components found in~(b) potentially yields further $V_L$--neighbours via $V_S$--$V_L$ edges.
Repeating this exploration iteratively, we eventually uncover 
the entire component of $G_i$ which contains the initial vertex~$v$. 
Treating (a) and (b) together as a single step, each time we `explore' a vertex in $V_L$ we
reach a random number of new vertices in $V_L$, picking up a random number of vertices in $V_S$ along the way.
As long as we have not used up too many vertices, the sequence of pairs $(\tY_j,\tZ_j)$ giving the number 
of $V_L$ and $V_S$ vertices found in the $j$th step will be close to a sequence of independent copies of some distribution~$(Y_t,Z_t)$
that depends on the `time'~$t=i/n$.
We thus expect the neighbourhood exploration process to closely resemble a two-type branching process~$\bp_t$
with offspring distribution~$(Y_t,Z_t)$, corresponding to $V_L$ and $V_S$ vertices. 
In this branching process,
vertices in $V_S$ have no children (they are counted `in the middle' of a step). Of course, we need to modify the start
of the process to account for the possibility that the initial vertex is in $V_S$.
Writing $\bp_t$ for the (final modified) branching processes, it should seem plausible that
the expected numbers of vertices in components of size~$k$ and in components of size at least~$k$ approximately satisfy
\begin{equation}\label{eq:po:Nk:Ngek}
\E N_{k}(tn) \approx \Pr(|\bp_t|=k) n \qquad \text{and} \qquad \E N_{\ge k}(tn) \approx \Pr(|\bp_t| \ge k) n,
\end{equation}
ignoring technicalities (see Sections~\ref{sec:nep}--\ref{sec:dom} and~\ref{sec:mom:large} for the details).

In view of~\eqref{eq:po:Nk:Ngek}, we need to understand the behaviour of the branching process $\bp_t$. 
Here one difficulty is that we only have very limited explicit knowledge about the offspring distribution $(Y_t,Z_t)$. 
To partially remedy this, we prove that several key variables determined by the first exposure round have exponential tails (see, e.g., inequalities \eqref{eq:Nk:t0:tail}--\eqref{eq:rhok:t0:tail} and \eqref{eq:Qkr:tail}--\eqref{eq:qkr:tail} of Theorems~\ref{thm:init} and~\ref{thm:Qkr}).  
Combining calculus with ODE and PDE techniques (the Cauchy--Kovalevskaya Theorem; see Appendix~\ref{apx:CK}),
this allows us to eventually show that the probability generating function 
\begin{equation}\label{eq:po:MGF}
 \gf(t,\alpha,\beta) := \E\bigl(\alpha^{Y_t}\beta^{Z_t}\bigr)
\end{equation}
is extremely well-behaved, i.e., \emph{(real) analytic} in a neighbourhood of $(\tc,1,1)$, say (see Sections~\ref{sec:DEM}--\ref{sec:AP} and~\ref{sec:BPO}). 
In a companion paper~\cite{BPpaper} written with Svante Janson 
(see also Section~\ref{sec:bpresults} and Appendix~\ref{sec:BP}), we show that the probability of~$\bp_t$ generating~$k$ particles is roughly of the form
\begin{equation}\label{eq:po:bpk:tail}
\Pr(|\bp_{t}|=k) \approx A k^{-3/2} e^{-\psi(t) k} \qquad \text{with} \qquad \psi(\tc \pm \eps) \approx a \eps^2 .
\end{equation}
Turning to the survival probability $\Pr(|\bp_t|=\infty)$, for this the $V_S$--vertices counted by $Z_t$ are irrelevant (since these do not have children, the only possible exception being the first vertex). 
Combining a detailed analysis of~$Y_t$
with standard methods for single-type branching processes, we eventually show that $\E Y_{\tc}=1$, and (in~\cite{BPpaper}) that the survival probability of $\bp_t$ is roughly of the form 
\begin{equation}\label{eq:po:bp:survival}
\Pr(|\bp_t|=\infty) \approx \begin{cases}
	0 , & ~~\text{if $t \le \tc$}, \\
		c \eps  , & ~~\text{if $t=\tc+\eps$},
	\end{cases}
\end{equation}
for small $\eps$ (see Sections~\ref{sec:BPO}--\ref{sec:bpresults}, Appendix~\ref{sec:BP} and~\cite{BPpaper} for the details).

In the above discussion we have ignored a number of technical issues. 
For example, in certain parts of the analysis we need to incorporate various approximation errors: simple coupling arguments would, e.g., break down for large component sizes. (Such errors are not an artifact of our analysis. For example, the number of isolated vertices changes with probability $\Theta(1)$ in each step, so after $\Theta(n)$ steps we indeed expect random fluctuations of order $\Theta(\sqrt{n})$.) 
To deal with such errors we shall use (somewhat involved) 
domination arguments, exploiting that the exploration process usually finds `typical' subsets of the underlying graph (see Section~\ref{sec:dom:dom}). 
Perhaps surprisingly, this allows us to employ dominating distributions $(Y^{\pm}_{t},Z^{\pm}_{t})$
that have probability generating functions which are extremely close to the `ideal' one in~\eqref{eq:po:MGF}:
the dominating branching processes are effectively indistinguishable from the actual exploration process.
In this context one of our main technical contributions is that we are able to carry out (with \emph{uniform} error bounds) 
the point probability analysis~\eqref{eq:po:bpk:tail}
and the survival probability analysis~\eqref{eq:po:bp:survival} despite having only some `approximate information' about the underlying (family of) offspring distributions.
This is key for determining the asymptotic size of the largest component in the entire subcritical and supercritical phases.

\subsection{Outline proofs of the main results}\label{sec:po:main}
Using the setup (and technical preparation) outlined above, we prove our main results for $L_1(i)$, $N_k(i)$ and~$S_r(i)$ by adapting approaches that work for the classical Erd{\H o}s--R{\'e}nyi random graph. Of course, in this more complicated setup many technical details become more involved.
In this subsection we briefly outline the main high-level
ideas that are spread across 
Sections~\ref{sec:setup}--\ref{sec:proof}
(the actual arguments are complicated, for example, by the fact that parts of the branching process analysis rely on Poissonized variants of~$G_i$).

We start with the number $N_k(i)$ of vertices in components of size~$k$. 
After conditioning on the outcome of the first exposure round, we first use McDiarmid's bounded differences inequality~\cite{McDiarmid1989} to show that whp $N_k(i)$ is close to its expected value (here we exploit that the second exposure rounds consists of many independent random choices), and then approximate $\E N_k(i)$ via the branching process results~\eqref{eq:po:Nk:Ngek} and~\eqref{eq:po:bpk:tail}.
The full details of this approach are given in Sections~\ref{sec:BPI} and~\ref{sec:small}, 
and here we just mention one technical point: conditioning allows us to bring concentration inequalities into play, but we must then show that (except for unlikely `atypical' outcomes) conditioning on the first exposure round does not substantially shift the expected value of $N_k(i)$.

Next we turn to the size $L_1(i)$ of the largest component in the subcritical and supercritical phases, i.e., where the step $i=\tc n \pm \eps n$ satisfies $\eps^3 n \to \infty$.  
Intuitively, our arguments hinge on the fact that the expected component size distribution has an exponential cutoff after size $\eps^{-2}=\Theta(\psi(\tc\pm\eps)^{-1})$, see~\eqref{eq:po:Nk:Ngek} and~\eqref{eq:po:bpk:tail}. 
Indeed,~\eqref{eq:po:bpk:tail} and $\int_{k}^{\infty}e^{-a x}\dx = \Theta(a^{-1}e^{-a k})$
suggest that for $k \gg \eps^{-2}$ we roughly have 
\begin{equation}\label{eq:po:bp:uppertail}
\Pr(k \le |\bp_{\tc \pm \eps}| < \infty) = \sum_{j \ge k} \Pr(|\bp_{\tc \pm \eps}|=j) \approx A \sum_{j \ge k} j^{-3/2} e^{-\psi(\tc\pm\eps) j } = \Theta(\eps^{-2} k^{-3/2}) e^{-\psi(\tc\pm\eps) k } .
\end{equation}
In the subcritical phase we have $\Pr(|\bp_{\tc -\eps}| = \infty)=0$ by~\eqref{eq:po:bp:survival}. Using~\eqref{eq:po:Nk:Ngek} we thus expect that for $k \gg \eps^{-2}$ we have
\begin{equation*}
\E N_{\ge k}(\tc n -\eps n) \approx \Pr(|\bp_{\tc -\eps}| \ge k) n = \Pr(k \le |\bp_{\tc - \eps}| < \infty) n \approx \Theta(\eps^{-2} k^{-3/2}) e^{-\psi(\tc-\eps) k } n.
\end{equation*}
By considering which sizes~$k$ satisfy $\E N_{\ge k}(\tc n -\eps n) = \Theta(k)$, this suggests that~whp 
\begin{equation*}
L_1(\tc n-\eps n) \approx \psi(\tc-\eps)^{-1}\log(\eps^3 n).
\end{equation*}
We make this rigorous via the first- and second-moment methods, using a van den Berg--Kesten (BK)-inequality 
like argument for estimating the variance (see Sections~\ref{sec:varsub}, \ref{sec:mom:large} and~\ref{sec:L1:sub} for the details).
Turning to the more interesting supercritical phase, where $i=\tc n + \eps n$, note that the right hand side of~\eqref{eq:po:bp:uppertail} is $o(\eps)$ for $k \gg \eps^{-2} =\Theta(\psi(\tc+\eps)^{-1})$,
and that $\Pr(|\bp_{\tc +\eps}| = \infty) \approx c \eps $ by~\eqref{eq:po:bp:survival}. 
Using~\eqref{eq:po:Nk:Ngek} we thus expect that for $k \gg \eps^{-2}$ we have
\begin{equation*}
\E N_{\ge k}(\tc n +\eps n) \approx \Pr(|\bp_{\tc +\eps}| \ge k) n \approx \Pr(|\bp_{\tc +\eps}| = \infty) n \approx c \eps n.
\end{equation*}
Applying the first- and second-moment methods we then show that whp $N_{\ge \Lambda}(i) \approx \E N_{\ge \Lambda}(i)$ for suitable $\eps^{-2} \ll \Lambda \ll \eps n$, adapting a `typical exploration' argument of Bollob{\'a}s and Riordan~\cite{BR2012} for bounding the variance (see Sections~\ref{sec:SME}, \ref{sec:mom:large} and~\ref{sec:L1:super} for the details).
Mimicking the Erd{\H o}s--R\'enyi sprinkling argument from~\cite{ER1960},
we then show that whp most of these size~$\ge \Lambda$ components quickly join, i.e., form one big component in $o(\eps n)$ steps (see Sections~\ref{sec:SP} and~\ref{sec:L1:super}).  
Using continuity of $\Pr(|\bp_{\tc +\eps}| = \infty)$, this heuristically suggests that~whp 
\begin{equation*}
L_1(\tc n+\eps n) \approx \Pr(|\bp_{\tc +\eps}| = \infty) n \approx c \eps n ,
\end{equation*}
ignoring technicalities (see Section~\ref{sec:L1:super} for the details).

For the subcritical susceptibility $S_r(\tc n-\eps n)$ with $\eps^3 n \to \infty$ we proceed similarly. 
Indeed, substituting the estimates~\eqref{eq:po:Nk:Ngek} and~\eqref{eq:po:bpk:tail} into the definition~\eqref{eq:def:Sr} of $S_r(i)$, since $\psi(\tc-\eps)=\Theta(\eps^2)$ we expect that for~${r \ge 2}$ we have
\begin{equation}\label{eq:po:Sr:E}
\E S_r(\tc n -\eps n) 
\approx A \sum_{k \ge 1} k^{r-5/2} e^{-\psi(\tc-\eps) k } = \Theta\Bigl(\bigl(\psi(\tc-\eps)^{-1}\bigr)^{r-3/2}\Bigr)  = \Theta( \eps^{-2r+3}).
\end{equation}
In fact, comparing the sum with an integral, we eventually find that $\E S_r(\tc n -\eps n) \approx B_r \eps^{-2r+3}$ for small~$\eps$ (see Lemma~\ref{lem:SrE:sub}).
Applying the second-moment method we then show that whp $S_r(i) \approx \E S_r(i)$, using a BK-inequality like argument for bounding the variance (see Sections~\ref{sec:varsub}, \ref{sec:mom:sus} and~\ref{sec:Sj} for the details).

Finally, one non-standard feature of our arguments is that we can prove concentration of the size of the largest component in \emph{every} step outside of the critical window (cf.~Theorem~\ref{thm:L1}).
The idea is to fix a sequence~$(m_j)$ of not-too-many steps that are close enough together that we expect
\begin{equation}\label{eq:po:L1:aux}
L_1(m_j) \approx L_1(m_{j+1}). 
\end{equation}
Since there are not too many steps in the sequence, we can show that whp $L_1(m_j)$ is close to its expected value
for every step $m_j$ in the sequence.
By monotonicity, in all intermediate steps $m_j \le i \le m_{j+1}$ we have 
\begin{equation*}
L_1(m_j) \le L_1(i) \le L_1(m_{j+1}), 
\end{equation*}
which together with~\eqref{eq:po:L1:aux} establishes the desired concentration (related arguments are sometimes implicitly used in the context of the differential equation method).
As we shall see in Section~\ref{sec:L1}, the choice of the step sizes $m_{j+1}-m_{j}$ requires some care, since we need to take a union bound over all auxiliary steps, but this idea can be made to work by proving sufficiently sharp error bounds in various intermediate estimates. 
A~similar proof strategy applies to the susceptibility~$S_r(i)$, which is also monotone (see Section~\ref{sec:Sj} for the~details).

\newoddpage

\section{Preparation and setup}\label{sec:setup}
In this section we formally introduce the proof setup, together with some preparatory results. 
Throughout we fix a bounded-size $\ell$-vertex rule $\cR$ with cut-off $K$, and 
study the graph $G_i=G^{\cR}_{n,i}$ after $i$ steps, where $i=tn$ with $t\approx\tc$. We refer to $t$ (or in general $i/n$) as `time'.
As noted in Remark~\ref{rem:rounding}, whenever we pass from a continuous parameter $t$ to a step number $i$, we round down to the nearest integer, taking $i=\floor{tn}$. We omit this in the notation, since such rounding does not change any of our formulae. This is because all relevant results are insensitive to changing $t$ by $O(1/n)$, or changing $i$ by $O(1)$. This is easy but tedious to check, so we omit the details, noting only that the relevant bounds are all based on the differential equations method, and such insensitivity to rounding is usual (and not always commented on) when it applies, relating to the individual discrete steps begin `small' at the level where the continuous approximation holds.

As discussed in Section~\ref{sec:overview},
we stop the process after the first $i_0 \approx (\tc-\sigma)n$ steps, where $\sigma>0$ is a small constant, and then analyze the evolution of the component structure from step $i_0$ to step $tn$ via a two-round exposure argument. 
The main goals of this section are to formally introduce the two-round exposure, and to relate the second round of the exposure to a random graph model which is easier to analyze. 

Turning to the details, for concreteness let
\begin{equation}\label{def:sigma}
\sigma := \min\left\{\frac{1}{2\ell^2(K+1)}, \: \frac{\tc}{3}\right\} . 
\end{equation}
Set
\begin{equation}\label{def:t0t1t2}
t_0:=\tc-\sigma \qquad \text{and} \qquad t_1 := \tc + \sigma ,
\end{equation}
and 
\begin{equation}\label{def:i0i1i2}
i_0:= t_0 n \qquad \text{and} \qquad i_1 :=  t_1 n ,
\end{equation}
omitting from now on the irrelevant (see above) rounding to integers.
After $i_0$ steps we partition the vertex set into~$V_S$ and~$V_L$, where~$V_S$ contains all vertices in components
of $G_{i_0}$ having size at most $K$. Here the labels $S$ and $L$ correspond to `small' and `large' component sizes. This partition is defined at step~$i_0$, and does not change as our graph evolves.

In Section~\ref{sec:exp} we explain our two-round exposure argument in detail. Then,
in Section~\ref{sec:DEM}, we use the differential equation method to track the number of vertices in small components, as well as parts of the evolution of the graphs induced by~$V_S$ and $V_L$. 
Next, in Section~\ref{sec:BPA} we use branching process techniques to track the evolution of the $V_S$--graph in more detail, which also yields exponential tail bounds for certain key quantities. 
In Section~\ref{sec:AP} we then use PDE theory to show that an associated generating function is analytic.
In Section~\ref{sec:SP} we introduce a convenient form of the Erd{\H o}s--R\'enyi sprinkling argument.
Finally, in Section~\ref{sec:period} we define and study the set $\cSR$ of component sizes that the $\ell$-vertex rule~$\cR$
can produce, and the `period' $\per$ of the rule; for `edge-based' rules such as Achlioptas processes 
these technicalities are not needed.

\subsection{Two-round exposure and conditioning}\label{sec:exp}
Recall that we first condition on $G_{i_0}$. Our aim now is to analyze the steps~$i$ with $i_0 < i \le i_1$. 
Recall that $\vv_i = (v_{i,1}, \ldots, v_{i,\ell})$ denotes the uniformly random $\ell$-tuple of vertices offered to the rule
in step $i$.
Given $G_{i_0}$ we expose the information about steps $i_0<i\le i_1$ in two rounds. 
In the \emph{first exposure round} $\fE_1(i_0,i_1)$, for every step $i_0 < i \le i_1$ we (i) reveal which vertices of $\vv_i = (v_{i,1}, \ldots, v_{i,\ell})$ are in~$V_S$ and which in~$V_L$, and (ii) for those vertices $v_{i,j}$ in~$V_S$, we also reveal precisely which vertex $v_{i,j}$ is. 
In the \emph{second exposure round} $\fE_2(i_0,i_1)$, for every step $i_0 < i \le i_1$ we reveal the choices of all so-far unrevealed vertices in~$V_L$. 

The `added edges', i.e., edges of $G_i\setminus G_{i_0}$, are of three types:
$V_S$--$V_S$ edges (where both endvertices are in $V_S$), $V_L$--$V_L$ edges (where both endvertices are in $V_L$, but still unrevealed after the first exposure round) and $V_S$--$V_L$ edges (where the endvertex in $V_L$ is still unrevealed). 
To be pedantic, we formally mean pairs of vertices, allowing for loops and multiple edges; the term `edge' allows for a more natural and intuitive discussion of the arguments. 
The following lemma encapsulates the key properties of the two-round exposure discussed informally in Section~\ref{sec:po:exp}.
\begin{lemma}
\label{lem:exp}
Given $G_{i_0}$, the information revealed by the first exposure round $\fE_1(i_0,i_1)$ is enough to make all decisions of $\cR$, i.e., to determine for every $i_0 < i \le i_1$ the indices $j_1=j_1(i)$ and $j_2=j_2(i)$ such that $v_{i,j_1}$ and $v_{i,j_2}$ are joined by the rule~$\cR$. 
Furthermore, conditional on $G_{i_0}$ and on the first exposure round, all vertices revealed in the second exposure round $\fE_2(i_0,i_1)$ are chosen independently and uniformly at random from $V_L$. 
\end{lemma}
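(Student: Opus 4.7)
The plan is to exploit the bounded-size property of~$\cR$: its decision at step~$i$ depends only on the truncated component sizes $\min\{c_{i,j},K+1\}$ (any size exceeding~$K$ is reported as~$\omega$). The central observation is that for any $v\in V_L$ and any $i\ge i_0$, the component of~$v$ in~$G_i$ has size~$>K$, because $G_{i_0}\subseteq G_i$ and, by definition of~$V_L$, $v$~already lies in a $G_{i_0}$-component of size $>K$. Hence the truncated size of a $V_L$-vertex always equals~$\omega$, regardless of \emph{which} $V_L$-vertex it is, so knowledge of the identity of a $V_L$-vertex is never needed for the rule's decision; only the $V_S$/$V_L$-label matters. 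This immediately suggests an inductive proof of the first assertion.

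For Part~1, I would prove by induction on $i$ from $i_0+1$ to $i_1$ that $(G_{i_0},\fE_1(i_0,i_1))$ determines $\{j_1(i'),j_2(i')\}$ for every $i_0<i'\le i$, and therefore also determines the induced subgraph $G_i[V_S]$ together with the list of $V_S$-endpoints of all added $V_S$--$V_L$ edges. For the inductive step at~$i$: if $v_{i,j}\in V_L$ then its truncated size is~$\omega$ by the observation above; if $v_{i,j}\in V_S$, then its $G_{i-1}$-component either (a)~lies entirely in~$V_S$ and none of the recorded $V_S$--$V_L$ edges of steps $i_0<i'<i$ is incident to it, in which case the component is exactly its connected component in the known graph $G_{i-1}[V_S]$ and its size is directly available, or (b)~some recorded $V_S$--$V_L$ edge is incident to its $V_S$-connected-component, in which case the $G_{i-1}$-component meets~$V_L$ and therefore has size~$>K$, i.e.\ reports~$\omega$. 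In either case the truncated~$\vc_i$ is determined, the rule fires, and the $V_S$/$V_L$-labels of $v_{i,j_1}$ and $v_{i,j_2}$ together with the already-known identities of the $V_S$-vertices among them suffice to classify the newly added edge and extend the invariant.

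Part~2 is essentially a product-measure argument. The family $(v_{i,j})_{i_0<i\le i_1,\,1\le j\le\ell}$ is i.i.d.\ uniform on~$[n]$ and independent of $\vv_1,\ldots,\vv_{i_0}$, hence of~$G_{i_0}$. By Part~1 the $\sigma$-algebra generated by $(G_{i_0},\fE_1)$ is the same as the one generated by $G_{i_0}$ together with, for each coordinate $(i,j)$ separately, either the exact value of~$v_{i,j}$ (when $v_{i,j}\in V_S$) or the event $\{v_{i,j}\in V_L\}$. Because these are independent conditionings of independent uniform variables, the conditional joint law of the unrevealed coordinates factorizes, and each such $v_{i,j}$ becomes uniform on~$V_L$, independently of all others. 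The main obstacle is really the bookkeeping in Part~1 — in particular, the need to verify that once a $V_S$-component becomes `contaminated' by a $V_S$--$V_L$ edge it contributes size~$\omega$ for all subsequent steps, a fact detectable from $\fE_1$ alone using only the $V_S$-endpoints of such edges — but once this invariant is stated precisely, both the induction and Part~2 become routine.
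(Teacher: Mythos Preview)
Your proposal is correct and follows essentially the same approach as the paper: an induction on~$i$ maintaining exactly the invariant you describe (the paper calls the relevant data $E_i$ and $V_i$, the added $V_S$--$V_S$ edges and the $V_S$-endpoints of added $V_S$--$V_L$ edges), with the same case split for determining the truncated size of each $v_{i,j}$. The paper treats Part~2 in a single sentence (``immediate, since in each step the vertices $\vv_i$ are chosen independently and uniformly at random''), so your product-measure elaboration is somewhat more detailed but entirely in the same spirit.
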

\begin{proof}
The claim concerning the second exposure round is immediate, since in each step the vertices $\vv_i=(v_{i,1}, \ldots, v_{i,\ell})$ are chosen independently and uniformly random.

Turning to the first exposure round, 
we now make the heuristic arguments of Section~\ref{sec:po:exp} rigorous.
For $i_0\le i\le i_1$ let $E_i$ be the set of edges of $G_i\setminus G_{i_0}$ with
both ends in $V_S$ (the edges added inside $V_S$), and let~$V_i$ be the (multi-)set of vertices of~$V_S$ in at least
one $V_S$--$V_L$ edge in $G_i\setminus G_{i_0}$ (the set of~$V_S$ endvertices of the added $V_S$--$V_L$ edges).
We claim that the information revealed in the first exposure round
determines~$E_i$ and~$V_i$ for each $i_0\le i\le i_1$. The proof is by induction on $i$;
of course, $E_{i_0}=V_{i_0}=\emptyset$. 

Suppose then that $i_0 < i \le i_1$ and that the claim holds for $i-1$. The information revealed
in the first exposure round determines which of the vertices $v_{i,1},\ldots,v_{i,\ell}$ 
are in $V_S$ as opposed to $V_L$, and precisely which vertices those in $V_S$ are.
Let $\vc_i=(c_{i,1}, \ldots, c_{i,\ell}) \in \{1, \ldots, K, \omega\}^\ell$ 
list the sizes of the components of $G_{i-1}$
containing $v_{i,1},\ldots,v_{i,\ell}$, with all
sizes larger than~$K$ replaced by $\omega$. We shall show that $\vc_i$ is determined
by the information revealed in the first exposure round. By the definition of a bounded-size rule,
$\vc_i$ determines the choice made by the rule $\cR$, i.e., the
indices $j_1=j_1(\vc_i)$ and $j_2=j_2(\vc_i)$ such that $v_{i,j_1}$ and $v_{i,j_2}$ are joined by~$\cR$ in step~$i$,
which is then enough to determine $E_i\setminus E_{i-1}$ and $V_i\setminus V_{i-1}$, completing the proof by induction. 

If $v_{i,j} \in V_L$, then $v_{i,j}$ is in a component of $G_{i-1}\supseteq G_{i_0}$ of size at least $K+1$,
so we know that $c_{i,j}=\omega$, even without knowing the particular choice of $v_{i,j}\in V_L$.
Suppose then that $v_{i,j}\in V_S$.
Since we know $G_{i_0}$ and $E_{i-1}$, we know the entire graph $G_{i-1}[V_S]$.
Furthermore, we know exactly which components of $G_{i-1}[V_S]$ are connected to $V_L$ in $G_{i-1}$, namely
those containing one or more vertices of $V_{i-1}$.
Let $C$ be the component of $G_{i-1}[V_S]$ containing $v_{i,j}$.
If $C$ is not connected to $V_L$ in $G_{i-1}$, then $C$ is also a component $G_{i-1}$, whose
size we know. If $C$ is connected to $V_L$
then in $G_{i-1}$ the component containing $C$ has size at least $K+1$, so~$c_{i,j}=\omega$. 
This shows that $c_{i,j}$ is indeed known in all cases, completing the proof.
\end{proof}
Intuitively speaking, after the first exposure round $\fE_1(i_0,i_1)$, for $i_0\le i\le i_1$ we are left
with a `marked' auxiliary graph~$H_i$, as described in Figure~\ref{fig:exp}. 
More precisely, for $i_0\le i\le i_1$ let $H_i$ be the `marked graph' obtained as follows.
Starting from $G_{i_0}$, (i) insert all $V_S$--$V_S$ edge added in steps $i_0<j\le i$,
and (ii) for each $V_S$--$V_L$ edge added in steps $i_0<j\le i$, add a `stub' or `half-edge' to
its endvertex in $V_S$.
Thus, in the (temporary) notation of the proof above, $H_i$ is formed from $G_i$ by adding
the edges in $E_i$ and stubs corresponding to the multiset $V_i$.
Each mark or stub represents an edge to a so-far unrevealed vertex in $V_L$, and 
a $V_S$--vertex can be incident to multiple stubs. 
For $i_0\le i\le i_1$ let $Q_{0,2}(i)$ denote the number of $V_L$--$V_L$ edges
(including loops and repeated edges)
added in total in steps $i_0 < j \le i$, so by definition
\begin{equation}\label{eq:U:init}
Q_{0,2}(i_0)=0.
\end{equation} 
By Lemma~\ref{lem:exp} the information revealed in the first exposure round $\fE_1(i_0,i_1)$
determines the graphs $(H_i)_{i_0\le i\le i_1}$ and the sequence $\bigl(Q_{0,2}(i)\bigr)_{i_0 \le i \le i_1}$.
Furthermore, in the second exposure round we may generate~$G_i$ from $H_i$ by
replacing each stub associated to a vertex $v\in V_S$ by an edge $vw$ to a vertex $w$
chosen independently and uniformly at random from~$V_L$, and
adding $Q_{0,2}(i)$ random $V_L$--$V_L$ edges to $H_i$, where
the endvertices are chosen independently and uniformly at random from~$V_L$. 
(To clarify: there is a version of this exposure argument which constructs all~$G_i$, $i_0\le i\le i_1$, simultaneously, but we shall not need this; we only use that the described exposure argument gives the correct marginal distribution for a single~$G_i$.)  

\begin{figure}[t]
\centering
  \setlength{\unitlength}{1bp}%
  \begin{picture}(278.80, 115.05)(0,0)
  \put(0,0){\includegraphics{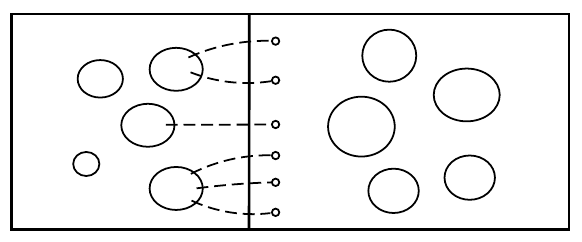}}
  \put(19.27,96.65){\fontsize{11.38}{13.66}\selectfont \makebox[0pt]{$V_S$}}
  \put(258.92,96.65){\fontsize{11.38}{13.66}\selectfont \makebox[0pt]{$V_L$}}
  \end{picture}%
	\caption{\label{fig:exp} An example (the same as in Figure~\ref{fig:po2}) of the auxiliary graph $H_i$, which is determined by the first exposure round. 
A component of type~$(k,r)$ contains~$k$ vertices from $V_S$ and has~$r$ incident $V_S$--$V_L$ edges, whose so-far unrevealed $V_L$--vertices are represented by stubs (here depicted by the circle-shaped endpoints in $V_L$). 
We obtain $G_i$ by (i)~connecting each stub to a randomly chosen vertex from $V_L$, and (ii)~adding $Q_{0,2}(i)$ random $V_L$--$V_L$ edges to $H_i$.} 
\end{figure}

Since our focus is on the component sizes of~$G_i$, the internal structure of the components of~$G_{i_0}$ and~$H_i$
is irrelevant; all we need to know is the size of each component, and how many stubs it contains.
Any component~$C$ of~$H_i$ is either contained in~$V_L$ (in which case~$|C|>K$) or in~$V_S$.
If~$C\subseteq V_S$, then we say that~$C$ has \emph{type~$(k,r)$} if it has size
 $|C|=k$ and  
contains~$r$ stubs, i.e., is incident to $r$ $V_S$--$V_L$ edges in $G_i\setminus H_i$, cf.~Figure~\ref{fig:exp}. (Note that we may have $k=|C|>K$: in $G_{i_0}$ all vertices in $V_S$ are in components of size $k$, but in passing to $H_i$ we in general add $V_S$--$V_S$ edges from steps~$i_0 < j \le i$.)
As usual, for $i \ge 0$ and $k \ge 1$, we write $N_k(i)$ for the number of vertices of~$G_i$ which are in components of size exactly~$k$.  
For $i \ge i_0$, $k \ge 1$ and $r \ge 0$, we write~$Q_{k,r}(i)$ for the number of components of~$H_i$ of type~$(k,r)$.\footnote{Note that $N_k$ counts vertices, and $Q_{k,r}$ counts components; the different normalizations are convenient in different contexts.}
Thus
\begin{equation}
\label{eq:Qkr:init}
Q_{k,r}(i_0) = \indic{r=0, \: 1 \le k\le K} N_{k}(i_0)/k .
\end{equation}
We may think of an added $V_L$--$V_L$ edge as a component of type $(0,2)$: it contains no vertices, but has two stubs
associated to it.
Hence the notation $Q_{0,2}(i)$ above; we let $Q_{0,r}:=0$ for $r\ne 2$.

For $i_0 \le i \le i_1$, let 
\begin{equation}\label{def:fS}
\fS_i := \Bigl(\bigl(N_{k}(i_0)\bigr)_{k > K}, \: \bigl(Q_{k,r}(i)\bigr)_{k,r \ge 0}\Bigr) .
\end{equation}
This \emph{parameter list} contains the essential information about $H_i$.
Given (a possible value of) $\fS_i$, 
treating~$\fS_i$ as deterministic we construct a \emph{random graph $J_i=J(\fS_i)$} as follows:
start with a graph $\tH_i=\tH(\fS_i)$ 
consisting of $Q_{k,r}(i)$ type-$(k,r)$ components for all $k \ge 1$ and $r \ge 0$, and $N_{k}(i_0)/k$ components of size $k$ for all $k > K$.
Let~$V_S$ be the set of vertices in components of the first type, and $V_L$ the set in components of the second type. 
Given $\tH_i$, we then (i)~connect each stub of $\tH_i$ to an independent random vertex in $V_L$, and (ii)~add $Q_{0,2}(i)$ random $V_L$--$V_L$ edges to $\tH_i$. 
By construction and Lemma~\ref{lem:exp} we have the following result.
\begin{lemma}[Conditional equivalence]
\label{lem:cond}
Given $\fS_i$, the random graph~$J_i=J(\fS_i)$ has the same component size distribution as $G_i$ conditioned on the parameter list~$\fS_i$.
\noproof
\end{lemma}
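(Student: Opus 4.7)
The plan is to derive Lemma~\ref{lem:cond} as an essentially immediate corollary of Lemma~\ref{lem:exp}, combined with the exchangeability of the uniform measure on~$V_L$.

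First I would condition on the full outcome of the first exposure round $\fE_1(i_0,i_1)$ and on~$G_{i_0}$. By Lemma~\ref{lem:exp} this determines the marked graph~$H_i$ and the integer~$Q_{0,2}(i)$ (and hence $\fS_i$), and specifies the recipe for producing $G_i$ from $H_i$: each stub is independently replaced by an edge to a uniform random vertex in~$V_L$, and then $Q_{0,2}(i)$ further edges with independent uniform random endvertices in~$V_L$ are inserted. This is exactly the completion rule used to build $J_i=J(\fS_i)$ from~$\tH_i$; the only apparent difference is that $H_i$ carries concrete vertex labels in~$[n]$, whereas $\tH_i$ is constructed abstractly from the type counts recorded in~$\fS_i$.

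To close this cosmetic gap I would invoke the symmetry of the uniform sampling on~$V_L$. Since all random attachments pick endpoints independently and uniformly at random from $V_L$, the joint distribution of the completed graph is invariant under any permutation of~$V_L$; similarly, permuting the components of~$H_i$ of the same type, and the large components from $G_{i_0}$ of the same size, leaves the law of the completed graph unchanged up to relabelling. Consequently the conditional distribution of the multiset of component sizes of~$G_i$ given $\fE_1(i_0,i_1)$ depends on $H_i$ only through the isomorphism type of the marked graph, together with the sizes of the large components of $G_{i_0}$, which by~\eqref{def:fS} is precisely~$\fS_i$. Hence one may substitute $\tH_i$ for $H_i$ without altering the distribution of component sizes, which is what $J(\fS_i)$ does.

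Averaging over the outcomes of $\fE_1(i_0,i_1)$ consistent with a fixed value of $\fS_i$ then yields the claim. I do not anticipate any real obstacle here: the substantive content is already contained in Lemma~\ref{lem:exp}, and what remains is the bookkeeping observation that, when only component sizes matter, the labelling of~$V_L$ and the identification of isomorphic pieces of~$H_i$ can be discarded without changing the law.
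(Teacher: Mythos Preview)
Your proposal is correct and matches the paper's approach exactly: the paper states this lemma without proof, noting only that it follows ``by construction and Lemma~\ref{lem:exp}'', and you have correctly spelled out the implicit argument (second-round attachments are i.i.d.\ uniform on~$V_L$, so the component-size law factors through the type counts recorded in~$\fS_i$). One minor wording quibble: the ``isomorphism type of the marked graph'' is strictly finer than~$\fS_i$, since $\fS_i$ only records $(k,r)$-counts and $V_L$-component sizes, not internal structure; but the component-size distribution factors through~$\fS_i$ regardless, so your conclusion stands.
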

Our strategy for analyzing the component size distribution of~$G_i$ will be as follows. 
In Sections~\ref{sec:DEM}--\ref{sec:BPA} we will show that the random parameter list~$\fS_i$, which is revealed
in the first exposure round, is concentrated, i.e., nearly deterministic.
Then, in the second round (so having conditioned on~$\fS_i$) we use the random model~$J(\fS_i)$
to construct~$G_i$. 
The advantage is that~$J(\fS_i)$ is very well suited to branching process approximation, since
it is defined by a number of independent random choices.

Note for later that, by definition of the discrete variables, for $i \ge i_0$ we have
\begin{equation}\label{eq:NkQkr:sum}
|V_L| = \sum_{k>K} N_k(i_0) ,\qquad |V_S| = \sum_{k \ge 1,\, r \ge 0}k Q_{k,r}(i) \qquad\text{and}\qquad n = |V_L|+|V_S|.
\end{equation}

\subsection{Differential equation approximation}\label{sec:DEM}
In this subsection we study the (random) parameter list~$\fS_i$ defined in Definition~\ref{def:fS}.
We shall track the evolution of several associated random variables
using Wormald's differential equation method~\cite{DEM,DEM99},
which intuitively shows that the trajectories of the random variables stay (after suitable rescaling) 
close to the solution of a corresponding system of ODEs.
This proof method usually works in situations where the expected one-step changes of each random variable 
can approximately be written as a smooth function of the random variables from the collection,
and the worst case one-step changes of each variable are not too big. 
In fact, we shall rely on a variant of the differential equation method due to Warnke~\cite{DEMLW}, 
in order to obtain sufficiently small approximation~errors. 

\subsubsection{Small components}\label{sec:DEM:small}
We start by tracking the number of vertices of $G_i$ which are in components of size $k \in \cC=\{1, \ldots, K,\omega\}$, which we denote by $N_k(i)$. 
Here, as usual, `size~$\omega$' means size at least~$K+1$. 
The following result intuitively shows $N_{k}(i) \approx \rho_k(i/n) n$ for a smooth (infinitely differentiable) function~$\rho_k$, 
whose derivative~$\rho'_k$ is suggested by the expected one-step changes of~$N_k(i)$, cf.~\eqref{eq:der:rhok} and~\eqref{eq:Nk:E:change} below.   
Later we shall show that the $\rho_k$, and the related functions appearing in the next few lemmas, are in fact analytic. 
Recall that we are considering a bounded-size $\ell$-vertex rule~$\cR$. 
\begin{lemma}\label{lem:Nk:t}
With probability at least $1-n^{-\omega(1)}$ we have 
\begin{equation}\label{eq:Nk:t}
\max_{0 \le i \le i_1}\max_{k \in \cC} |N_{k}(i) -\rho_k(i/n) n| \le (\log n) n^{1/2} ,
\end{equation}
where the functions $(\rho_k)_{k \in \cC}$ from $[0,t_1]$ to $[0,1]$ are smooth.
They satisfy $\sum_{k \in \cC}\rho_k(t)=1$ and $\rho'_{\omega}(t)\ge 0$, and are given by the unique solution to  
\begin{equation}\label{eq:der:rhok}
\rho_{k}(0)=\indic{k=1} \qquad \text{and} \qquad \rho'_k(t) = \sum_{\vc \in \cC^{\ell}}\Delta_{\rho}^{\cR}(k,\vc) \prod_{j \in [\ell]} \rho_{c_j}(t) ,
\end{equation}
for certain coefficients $\Delta_{\rho}^{\cR}(k,\vc)\in\ZZ$ with $|\Delta_{\rho}^{\cR}(k,\vc)| \le 2K$.
\end{lemma}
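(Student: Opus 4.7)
My plan is to apply the sharp differential equation method of~\cite{DEMLW} to the finite $(K+1)$-dimensional system of variables $(N_k(i))_{k\in\cC}$, with the bounded-size property as the key structural input. I would begin with the one-step conditional drift. Given $G_{i-1}$, each profile $\vc\in\cC^{\ell}$ of the $\ell$-tuple $\vv_i$ (sampled uniformly with replacement) arises with probability exactly $\prod_{j\in[\ell]}N_{c_j}(i-1)/n$, where $N_\omega(i-1):=\sum_{k>K}N_k(i-1)$. By the bounded-size property the profile alone determines the indices $(j_1,j_2)$ picked by $\cR$, and hence also the truncated sizes $c_{j_1},c_{j_2}$ of the two components about to be merged. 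Whenever those two components are distinct (which fails with probability $O(1/n)$ by a union bound over pairs), the merger produces a deterministic change $\Delta^\cR_\rho(k,\vc)\in\ZZ$ in $N_k$; case analysis on whether the merged sizes lie in $\{1,\ldots,K\}$ or equal $\omega$ shows $|\Delta^\cR_\rho(k,\vc)|\le 2K$. The `same-component' contributions (where nothing happens) absorb into an $O(1/n)$ error, giving
\[
\E\bigl[N_k(i)-N_k(i-1)\mid G_{i-1}\bigr] = \sum_{\vc\in\cC^{\ell}}\Delta^\cR_\rho(k,\vc)\prod_{j\in[\ell]}\frac{N_{c_j}(i-1)}{n} + O(1/n).
\]

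With the drift in hand, I would set up the ODE $\rho'_k=F_k(\rho):=\sum_{\vc\in\cC^{\ell}}\Delta^\cR_\rho(k,\vc)\prod_{j\in[\ell]}\rho_{c_j}$ with initial condition $\rho_k(0)=\indic{k=1}$. Since $|\cC|=K+1$ is a constant and $F_k$ is polynomial in $|\cC|$ variables, standard ODE theory delivers a unique smooth solution, and $F_k$ is globally Lipschitz on the compact box $[0,1]^{\cC}$. I would then verify the three hypotheses of the differential equation method in the form of~\cite{DEMLW}: the one-step change $|N_k(i)-N_k(i-1)|\le 2K$ is bounded by a constant, the deterministic drift error in the identity above is $O(1/n)$, and $F_k$ is Lipschitz with constant independent of $n$. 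The conclusion is that, with probability $1-n^{-\omega(1)}$, $\max_{0\le i\le i_1}|N_k(i)-\rho_k(i/n)n|\le(\log n)n^{1/2}$ for each $k\in\cC$; a union bound over the $O(1)$ values of $k$ yields~\eqref{eq:Nk:t}.

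It remains to verify the claimed qualitative properties. Smoothness of $\rho_k$ follows by bootstrapping the polynomial right-hand side. The identity $\sum_{k\in\cC}\rho_k(t)=1$ holds because any merger conserves the total vertex count, so $\sum_{k\in\cC}\Delta^\cR_\rho(k,\vc)=0$ for every $\vc$; hence $\frac{d}{dt}\sum_k\rho_k\equiv 0$, while equality is built into the initial condition. The inequality $\rho'_\omega(t)\ge 0$ follows from the stronger pointwise fact $\Delta^\cR_\rho(\omega,\vc)\ge 0$: a merger either leaves $V_\omega$ untouched (both components already of size $\omega$) or absorbs small components into $V_\omega$. Finally, $\rho_k(t)\ge 0$ is a boundary-flow argument: on $\{\rho_k=0\}$ any term of $F_k$ with $\Delta^\cR_\rho(k,\vc)<0$ forces $c_{j_1}=k$ or $c_{j_2}=k$, hence carries a factor $\rho_k=0$, so $F_k\ge 0$ on this face and the solution cannot escape $[0,1]^\cC$.

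The main obstacle will be matching the stringent hypotheses of~\cite{DEMLW} that deliver the sharp $(\log n)n^{1/2}$ rate, rather than the softer $O(n^{2/3})$ bound of the classical Wormald method; concretely, this means tracking carefully that the deterministic drift error really is $O(1/n)$ uniformly over the state space and that the Lipschitz constant of each $F_k$ on $[0,1]^{\cC}$ is a rule-dependent constant independent of $n$. All other ingredients are routine.
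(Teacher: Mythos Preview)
Your proposal is correct and follows essentially the same approach as the paper: a standard application of the sharp differential equation method from~\cite{DEMLW} to the finite system $(N_k)_{k\in\cC}$, using the bounded one-step change $|N_k(i+1)-N_k(i)|\le 2K$ and the $O(1/n)$ drift error from the small-component collision event. The only minor difference is that the paper deduces $\rho_k(t)\in[0,1]$ and $\sum_k\rho_k(t)=1$ from the concentration bound itself (since $N_k(i)\in[0,n]$ and $\sum_k N_k(i)=n$ deterministically), whereas you argue these directly from the ODE; both are valid and equally routine.
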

\begin{proof}
This follows from a nowadays standard (see e.g.,~\cite{SW,BK}) application 
of the differential equation method~\cite{DEM,DEM99}, 
with the extra twist that the particularly `small' approximation error term $(\log n) n^{1/2}$ relies on a variant of Warnke~\cite{DEMLW}. 
Let us briefly sketch the details.

To calculate the expectation of~$N_{k}(i+1)-N_{k}(i)$, in step~$i+1$ 
let $\vc_{i+1}={(c_{i+1,1}, \ldots, c_{i+1,\ell})} \in \cC^\ell$ list the sizes of the components of $G_{i}$
containing the random vertices $v_{i+1,1},\ldots,v_{i+1,\ell}$, with all sizes larger than~$K$ replaced by~$\omega$ (as before). 
Noting that edges connecting two vertices in components of size~$\omega$ leave all~$N_k$ with $k \in \cC$ unchanged,
it is easy to check from the definition of a bounded-size rule that if 
all vertices~$v_{i+1,j}$ with~$c_{i+1,j} \neq \omega$ lie in different components, then for each $k \in \cC$ the number of vertices in components of size $k$ changes by a deterministic function $\Delta_{\rho}^{\cR}(k,\vc_{i+1})$ that only depends on~$k$ and~$\vc_{i+1}$, with $|\Delta_{\rho}^{\cR}(k,\vc_{i+1})| \le 2K$.
Furthermore, in step~$i+1$, the probability that at least two of the~$\ell$~randomly chosen vertices lie in the same component of size at most $K$ is at most $\ell^2K/n$. 
So, if~${(\cF_i)_{i\ge 0}}$ denotes the natural filtration associated to our random graph process, using $|N_{k}(i+1)-N_{k}(i)| \le 2K$ it follows as in \cite{SW,RWapunique} that  
\begin{equation}\label{eq:Nk:E:change}
\biggl| \E(N_{k}(i+1)-N_{k}(i) \mid \cF_i) - \sum_{\vc \in \cC^{\ell}}\Delta_{\rho}^{\cR}(k,\vc) \prod_{j \in [\ell]} \frac{N_{c_j}(i)}{n} \biggr| \le \frac{4\ell^2K^2}{n} .
\end{equation}
Since $|\cC^{\ell}| = (K+1)^{\ell} = O(1)$, $|\Delta_{\rho}^{\cR}(k,\vc)| \le 2K$ and $N_{k}(0)=\indic{k=1}n$, 
similar to~\cite{SW,BK,RWapcont} a routine application of the differential equation method variant from~\cite{DEMLW} 
(with the parameter choice~$\lambda := (\log n)^{2/3}n^{-1/2}$, say, so that the approximation error term satisfies~$O(\lambda n) \ll (\log n) n^{1/2}$) 
implies that \eqref{eq:Nk:t} holds with probability at least $1-n^{-\omega(1)}$, where the $(\rho_k(t))_{k \in \cC}$ are the unique solution to~\eqref{eq:der:rhok}. 

Now we turn to properties of the functions $(\rho_k)_{k \in \cC}$. 
By induction on $j$ we see that the $j$th derivatives $\rho^{(j)}_k(t)$ exist for all $k\in \cC$ and $j\ge 0$, i.e., that the $(\rho_k)_{k \in \cC}$ are smooth. 
Since $N_{k}(i) \in [0,n]$ and $\sum_{k \in \cC}N_k(i)=n$, it follows from \eqref{eq:Nk:t} that
$\rho_k(t) \in [0,1]$ and $\sum_{k \in \cC}\rho_k(t)=1$. (This also follows directly from the differential equations, similar to Theorem~2.1 in~\cite{SW}.)  
Finally, $\Delta_{\rho}^{\cR}(\omega,\vc) \ge 0$ and $\rho_k(t) \ge 0$ imply $\rho'_{\omega}(t)\ge 0$. 
\end{proof}

For later reference we now extend the results of Lemma~\ref{lem:Nk:t} to any \emph{fixed} component size $k'$.
One way to do this is to note that any bounded-size rule with cut-off~$K$ can be interpreted as a bounded-size rule with cut-off~$\max\{k',K\}$, and apply Lemma~\ref{lem:Nk:t} to this rule. 
This approach has the minor drawback that as $k'$ varies, the resulting form of the formula for~$\rho_k'$ changes, even though the the function~$\rho_k$ stays the same, of course. 
In the next lemma we take a different approach which avoids this, leading to conceptually simpler differential equations. 
The key point is that the functions $(\rho_{k})_{k\ge 1}$ in~\eqref{eq:Nk2:t}--\eqref{eq:Nk3:t} below are the unique solution of a system of~ODEs. 
Recall that $N_{\ge k}(i) = \sum_{k'\ge k} N_{k'}(i)$.
\begin{lemma}\label{lem:Nk2:t}
Given $k' \ge 1$, with probability at least $1-n^{-\omega(1)}$ we have 
\begin{align}
\label{eq:Nk2:t}
\max_{0 \le i \le i_1} \max_{1 \le k \le k'}|N_{k}(i) - \rho_{k}(i/n)n| & \le (\log n) n^{1/2} , \\
\label{eq:Nk3:t}
\max_{0 \le i \le i_1} \max_{1 \le k \le k'}|N_{\ge k}(i) - \rho_{\ge k}(i/n)n| & \le (\log n) n^{1/2} 
\end{align}
where the functions $\rho_k:[0,t_1]\to [0,1]$ are given by the unique solution to 
the system of differential equations~\eqref{eq:der:rhok} for $1\le k\le K$ and~\eqref{eq:der:rhok3} below for $k>K$,
and we write
\begin{equation}\label{eq:der:rhok2}
\rho_{\ge k}(t) = 1- \sum_{1 \le j < k}\rho_j(t)
\end{equation}
and interpret $\rho_\omega$ as $\rho_{\ge K+1}$. 
Furthermore, the functions $(\rho_k)_{k \ge 1}$ are smooth on~$[0,t_1]$,
with $\rho_{k}(t),\rho_{\ge k}(t) \in [0,1]$. 
\end{lemma}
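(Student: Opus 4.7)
The case $1\le k\le K$ of~\eqref{eq:Nk2:t} is already contained in Lemma~\ref{lem:Nk:t}, so the task is to extend the tracking to each fixed $K<k\le k'$. My plan is to derive an explicit formula for the one-step conditional expectation $\E\bigl(N_k(i+1)-N_k(i)\mid\cF_i\bigr)$ in terms of $(N_j(i)/n)_{1\le j\le k}$, and then to invoke the sharp differential equation method of~\cite{DEMLW} on the joint vector $(N_j)_{1\le j\le k'}$.

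To identify the trend term I would first condition on the vector $\bar\vc_i=(\min\{c_{i,j},K+1\})_{j\in[\ell]}$ of truncated component sizes. Since $\cR$ is bounded-size the pair $\{j_1,j_2\}=\cR(\bar\vc_i)$ joined in step $i$ is determined by $\bar\vc_i$. A vertex $v_{i,j}$ with $\bar c_{i,j}\le K$ lies in a component of that deterministic size, whereas the probability that a uniformly random vertex lies in a size-$a$ component with $a>K$ is \emph{unconditionally} $N_a(i)/n$. Splitting $N_k(i+1)-N_k(i)$ according to the actual sizes $(a,b)$ of the components of $v_{i,j_1},v_{i,j_2}$---contributions $+k$ from creation events $a+b=k$ and $-k$ from destruction events $a=k$ or $b=k$---one obtains, modulo the $O(1/n)$ correction for two of the $\ell$ chosen vertices sharing a common component,
\begin{equation}\label{eq:plan:trend}
\E\bigl(N_k(i+1)-N_k(i)\mid \cF_i\bigr) \;=\; \sum_{\vc} \Delta(k,\vc)\prod_{j\in[\ell]} \frac{N_{c_j}(i)}{n} \;\pm\; \frac{O(1)}{n},
\end{equation}
where $\vc$ ranges over vectors with $c_j\in\{1,\ldots,k\}\cup\{\omega\}$, the integer coefficients satisfy $|\Delta(k,\vc)|\le 2k$, and $N_\omega(i)=n-\sum_{j\le K}N_j(i)$. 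The crucial structural point is that the right-hand side is \emph{linear} in $N_k(i)/n$ (destruction) and otherwise involves only $N_j(i)/n$ with $j<k$ (creation); because $\{v_{i,j}$ lies in a size-$k$ component$\}$ has probability $N_k(i)/n$ unconditionally when $k>K$, no division by $N_\omega/n$ appears, which matters at $t=0$ where $\rho_\omega(0)=0$.

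Together with Lemma~\ref{lem:Nk:t} these trend formulae define the ODE~\eqref{eq:der:rhok3} for $\rho_k$ with $k>K$, and the combined system is lower-triangular in $k$: one solves inductively for $\rho_{K+1},\rho_{K+2},\ldots,\rho_{k'}$, each being a scalar linear ODE whose coefficients are smooth by the inductive hypothesis. This delivers existence, uniqueness and smoothness of $(\rho_k)_{k\le k'}$ on $[0,t_1]$, while $\rho_k(t),\rho_{\ge k}(t)\in[0,1]$ are inherited from $N_k(i)/n,\,N_{\ge k}(i)/n\in[0,1]$ via~\eqref{eq:Nk2:t} and~\eqref{eq:Nk3:t}. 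Since $|N_k(i+1)-N_k(i)|\le 2k=O(1)$ and each RHS in~\eqref{eq:plan:trend} is polynomial---hence Lipschitz---on the compact set $[0,1]^{k'+1}$, the DEM variant of~\cite{DEMLW} applied to $(N_j)_{1\le j\le k'}$ then yields~\eqref{eq:Nk2:t} at the stated probability and error. The estimate~\eqref{eq:Nk3:t} follows from~\eqref{eq:Nk2:t} and the identity $N_{\ge k}(i)=n-\sum_{j<k}N_j(i)$ by the triangle inequality, with the constant $k\le k'=O(1)$ absorbed into the error term.

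The main obstacle here is combinatorial bookkeeping: correctly enumerating the contributions for each case $\bar c_{j_1},\bar c_{j_2}\in\{1,\ldots,K,\omega\}$ in the trend computation (in particular distinguishing $K<k\le 2K$ from $k>2K$, where the allowable creation pairs $(a,b)$ with $a+b=k$ differ), and verifying that the destruction term indeed carries the factor $\rho_k$ with no $\rho_\omega$ in the denominator. Once this is set up the differential equation method takes over mechanically.
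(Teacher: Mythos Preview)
Your approach is essentially the paper's: derive the one-step trend for each $N_k$, note the lower-triangular structure in $k$, and apply the DEM of~\cite{DEMLW} to the finite system $(N_j)_{1\le j\le k'}$. The paper's bookkeeping is a little cleaner, however: rather than first conditioning on the truncated sizes $\bar\vc\in\{1,\ldots,K,\omega\}^\ell$ and then refining for the two selected indices, it simply categorizes all $\ell$ component sizes into $\{1,\ldots,k,\ge k+1\}$; since $k\ge K$ this categorization already determines the rule's decision, and it yields~\eqref{eq:der:rhok3} directly without the case distinction $K<k\le 2K$ versus $k>2K$ that you worry about. (There is also a notational slip in your display: you let $c_j\in\{1,\ldots,k\}\cup\{\omega\}$ while defining $N_\omega(i)=n-\sum_{j\le K}N_j(i)$, so the categories overlap for $K<j\le k$; you presumably mean $\ge k+1$ in place of $\omega$ there.)
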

\begin{proof}
The proof is a minor generalization of that of~\eqref{eq:Nk:t}, so let us omit the details and only outline
how the differential equations are obtained. 
For $1\le k\le K$ the equation \eqref{eq:der:rhok} remains valid; here we may either 
interpret $\rho_\omega$ as $\rho_{\ge K+1}=1-\sum_{k\le K}\rho_k$, or include an equation for $\rho_\omega$
itself; this makes no difference. For~${k>K}$, arguing as for~\eqref{eq:der:rhok} but now with `size~$\ge k+1$'
playing the role of size~$\omega$ we have
\begin{equation}\label{eq:der:rhok3}
\rho_{k}(0)=0 
\qquad \text{and} \qquad 
\rho'_k(t) = \sum_{\vc \in \{1, \ldots, k, \ge k+1\}^{\ell}}\Delta_{\rho}^{\cR}(k,\vc) \prod_{j \in [\ell]} \rho_{c_j}(t) , 
\end{equation}
where the $\Delta_{\rho}^{\cR}(k,\vc)$ are constants with $|\Delta_{\rho}^{\cR}(k,\vc)| \le 2k$. 
Recalling~\eqref{eq:der:rhok} and~\eqref{eq:der:rhok2}, the key observation is that each $\rho'_{k}$ depends only on $\rho_{j}$ with $1 \le j \le \max\{k,K\}$.  
Hence standard results imply that the infinite system of differential equations~\eqref{eq:der:rhok} and~\eqref{eq:der:rhok2}--\eqref{eq:der:rhok3} has a unique solution on~$[0, t_1]$. 
Mimicking the proof of Lemma~\ref{lem:Nk:t}, it then follows that the functions $(\rho_{k})_{k \ge 1}$ are smooth, with ${\rho_{k}(t) \in [0,1]}$ and~${\sum_{1 \le j < k}\rho_j(t) \le 1}$.
\end{proof}

Recall that after $i_0$ steps we partition the set of vertices into $V_S \cup V_L$, where $V_S$ contains all vertices in components of size at most~$K$.
Our later arguments require that whp $|V_S|,|V_L|=\Theta(n)$; in the light of Lemma~\ref{lem:Nk:t}, to show this
it is enough to show that $\min\{\rho_1(t_0),\rho_{\omega}(t_0)\}>0$. 
This is straightforward for $\rho_1(t_0)$; for $\rho_{\omega}(t_0)$ the key observation is that a new component of size $2r$ is certainly formed in any step~$i$ where all vertices $v_{i,1}, \ldots, v_{i,\ell}$ lie in distinct components of size~$r$. 
Hence, via successive doublings, by time~$t_0$ we create many components of size $2^j > K$; 
Lemma~\ref{lem:rhokder} makes this idea rigorous. 

\begin{lemma}\label{lem:rhokder}
Define the functions $(\rho_k)_{k \in \cC}$ as in Lemma~\ref{lem:Nk:t}.
For all $t \in (0,t_1]$ we have $\min\{\rho_1(t),\rho_\omega(t)\}>0$. 
\end{lemma}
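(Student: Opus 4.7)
I would prove this via two ingredients: a Gr\"onwall-type lower bound for $\rho_1$ (ensuring isolated vertices never disappear completely), followed by an inductive ``successive doubling'' argument producing components of ever larger sizes $2^j$, eventually exceeding $K$ and thereby forcing $\rho_\omega > 0$. Throughout I would work directly with the ODE system~\eqref{eq:der:rhok} from Lemma~\ref{lem:Nk:t}, using that each $\rho_c(t) \in [0,1]$.

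For the first step, note that $N_1$ can only decrease, and only when the edge selected by $\cR$ has at least one endvertex in a size-$1$ component. Hence $\Delta_\rho^\cR(1,\vc) \le 0$ always, and it vanishes unless some coordinate of $\vc$ equals $1$, so every non-zero term of $\rho_1'(t) = \sum_{\vc} \Delta_\rho^\cR(1,\vc) \prod_j \rho_{c_j}(t)$ contains a factor $\rho_1(t)$. Bounding the remaining factors by $1$ yields $\rho_1'(t) \ge -C_0 \rho_1(t)$ for some $C_0 = C_0(\cR, K, \ell) > 0$, and since $\rho_1(0) = 1$ a standard integrating-factor argument gives $\rho_1(t) \ge e^{-C_0 t_1} > 0$ on $[0, t_1]$.

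For the doubling argument, set $J := \lceil \log_2(K+1) \rceil$ (so that $2^{J-1} \le K < 2^J$), and for $r \ge 1$ let $\vc_r := (r, \ldots, r) \in \cC^\ell$. Applying $\cR$ to $\vc_r$ returns some pair $\{j_1, j_2\}$; under the assumption underlying~\eqref{eq:Nk:E:change} that the corresponding vertices lie in distinct components, the added edge joins two size-$r$ components into a single size-$2r$ component. Therefore $\Delta_\rho^\cR(2r, \vc_r) \ge 2r$ when $2r \le K$, and $\Delta_\rho^\cR(\omega, \vc_r) \ge 2r$ when $2r > K$. The same reasoning as for $\rho_1$ shows that for each $k \in \{2, 4, \ldots, 2^{J-1}\}$, the negative contributions to $\rho_k'$ carry a factor $\rho_k$, so there exists $C_k > 0$ with
\[
\rho_k'(t) \ge k \, \rho_{k/2}(t)^{\ell} - C_k \rho_k(t).
\]

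Multiplying by $e^{C_k t}$ gives $\bigl(e^{C_k t}\rho_k(t)\bigr)' \ge k\, e^{C_k t}\rho_{k/2}(t)^\ell$, so once $\rho_{k/2}(s) > 0$ on $(0, t_1]$, the initial value $\rho_k(0) = 0$ combined with continuity of $\rho_{k/2}$ yields $\rho_k(t) > 0$ on $(0, t_1]$. Starting from the positivity of $\rho_1$ proved above, iterating over $j = 1, \ldots, J-1$ gives $\rho_{2^{J-1}}(t) > 0$ on $(0, t_1]$. Finally, Lemma~\ref{lem:Nk:t} guarantees that every term in the ODE for $\rho_\omega'$ is non-negative, and isolating the contribution of $\vc_{2^{J-1}}$ gives $\rho_\omega'(t) \ge 2^J \rho_{2^{J-1}}(t)^\ell$, which is strictly positive on $(0, t_1]$; integrating from $\rho_\omega(0) = 0$ yields $\rho_\omega(t) > 0$ on $(0, t_1]$, as required. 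The only subtle point is the ``negative terms contain a factor $\rho_k$'' observation, which is immediate once one notes that $N_k$ can only decrease when the rule's chosen edge is incident to a size-$k$ component.
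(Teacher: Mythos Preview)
Your proof is correct and follows essentially the same strategy as the paper's: a Gr\"onwall-type bound for $\rho_1$ followed by an inductive doubling argument to reach sizes beyond $K$. The one minor difference is that the paper continues the induction to $\rho_{2^j}$ with $2^j > K$ (implicitly using the extended functions from Lemma~\ref{lem:Nk2:t}) and then invokes $\rho_\omega \ge \rho_{2^j}$, whereas you stop the doubling at $2^{J-1} \le K$ and feed $\rho_{2^{J-1}}^\ell$ directly into the ODE for $\rho_\omega$; your route stays entirely within the functions $(\rho_k)_{k\in\cC}$ of Lemma~\ref{lem:Nk:t} and is in that sense slightly cleaner.
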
 

\begin{proof}
As noted above, if $k$ is even then $\Delta_{\rho}^{\cR}(k,(k/2, \ldots, k/2)) =k\ge 1$. 
Furthermore, $\Delta_{\rho}^{\cR}(k,\vc) \ge 0$ if $\vc$ does not contain~$k$.  
Since $\rho_j(t) \ge 0$, $|\Delta_{\rho}^{\cR}(k,\vc)| \le 2 k$ and $\sum_j \rho_j(t)=1$, by the 
form of~$\rho'_k$ in~\eqref{eq:der:rhok} and~\eqref{eq:der:rhok3} 
it readily follows for any integer~$k \ge 1$ that 
\begin{equation}\label{eq:rhokder}
\rho'_k(t) \ge \indic{2 | k} \bigl(\rho_{k/2}(t)\bigr)^{\ell} - 2 k \cdot \ell \rho_k(t) \ge - 2 \ell k \rho_k(t).
\end{equation}

We claim that, for every $j \in \NN$ and $t \in (0,t_1]$, we have $\rho_{2^{j}}(t)>0$;
the proof is by induction on~$j$.
For the base case~$j=0$, from~\eqref{eq:rhokder} we have $(\rho_1(t)e^{2\ell t})'=(\rho_1'(t)+2\ell\rho_1(t))e^{2\ell t}\ge 0$.
Hence $\rho_1(t) \ge \rho_1(0)e^{-2\ell t}=e^{-2\ell t}$.
For the induction step~$j \ge 1$, we write $k=2^j$ to avoid clutter. 
It follows from~\eqref{eq:rhokder} that for $t'\ge t$ we have $\rho_{k/2}(t') \ge \rho_{k/2}(t/2)e^{-2\ell k (t'-t/2) }$.
Since $\rho_{k/2}(t/2)>0$ by induction, we deduce that there is a
$\delta=\delta(k,t)>0$ such that $\rho_{k/2}(t') \ge \delta$ for all $t' \in [t/2,t]$.  
The first inequality in~\eqref{eq:rhokder} implies that $(\rho_{k}(t')e^{2\ell kt})' \ge \delta^{\ell}$ in~$[t/2,t]$, 
which readily implies $\rho_{k}(t) \ge e^{-2 \ell k t} \cdot \delta^\ell t/2 >0$ for~$k=2^j$. 

This completes the proof by noting that $\rho_\omega(t)  \ge \rho_{k}(t)$ whenever~$k>K$. 
\end{proof}%
As we shall discuss in Section~\ref{sec:period}, for
$\ell$-vertex rules it is not true in general 
that $\min_{k \in \cC}\rho_k(t)>0$ for~$t>0$ 
(in contrast to the usual `edge-based' Achlioptas processes
considered in~\cite{SW,BK,BBW11,BBW12b,BBW12a}).

\subsubsection{Random $V_L$--$V_L$ edges}\label{sec:DEM:VL}
Next we focus on the evolution of $Q_{0,2}(i)$, which counts the number of $V_L$--$V_L$ edges added in steps $i_0 < j \le i$. 
\begin{lemma}\label{lem:U:t}
With probability at least $1-n^{-\omega(1)}$ we have 
\begin{equation}\label{eq:U:t}
\max_{i_0 \le i \le i_1} |Q_{0,2}(i) -q_{0,2}(i/n) n| \le (\log n)^2 n^{1/2} ,
\end{equation}
where the function $q_{0,2}:[t_0,t_1] \to [0,1]$ is smooth, with $q_{0,2}(t_0)=0$ and $q_{0,2}'(t) > 0$.
It is given by the unique solution to the differential equation~\eqref{eq:der:u}. 
\end{lemma}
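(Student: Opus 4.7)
The plan is to apply the differential equation method in the form of~\cite{DEMLW}, much as in the proof of Lemma~\ref{lem:Nk:t}, with one additional twist: the partition $V_S\cup V_L$ was fixed at time~$t_0$, whereas the rule~$\cR$ operates on apparent component sizes at the current step. The single edge added in step~$i+1$ contributes to $Q_{0,2}$ iff the rule's selection $(j_1,j_2)=\cR(\vc_{i+1})$ satisfies $v_{i+1,j_1},v_{i+1,j_2}\in V_L$. Since components only grow, every vertex in $V_L$ lies in a component of $G_i$ of size $>K$, so its apparent size is $\omega$; in particular, the edge can only be $V_L$--$V_L$ if $c_{j_1}=c_{j_2}=\omega$. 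Conversely, among all vertices of apparent size~$\omega$ in $G_i$, the fraction lying in $V_L$ is $|V_L|/N_\omega(i)$, which by Lemma~\ref{lem:Nk:t} and \eqref{eq:NkQkr:sum} equals $\rho_\omega(t_0)/\rho_\omega(i/n) + O((\log n)n^{-1/2})$. Since the $\ell$ vertices sampled in step $i+1$ are i.i.d.\ uniform in $[n]$, and since apparent-size profile and $V_L$-membership decouple conditionally, we obtain for $i_0 \le i < i_1$ that
\[
\E\bigl(Q_{0,2}(i+1)-Q_{0,2}(i)\mid \cF_i\bigr) = \Bigl(\frac{\rho_\omega(t_0)}{\rho_\omega(i/n)}\Bigr)^{2}\sum_{\vc \in \cC^\ell}\Delta^{\cR}_{0,2}(\vc)\prod_{j\in[\ell]}\rho_{c_j}(i/n) \,\pm\, O\bigl((\log n)n^{-1/2}\bigr),
\]
where $\Delta^{\cR}_{0,2}(\vc)\in\{0,1\}$ is the indicator that $\cR(\vc)=(j_1,j_2)$ with $c_{j_1}=c_{j_2}=\omega$.

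This suggests the ODE~\eqref{eq:der:u}, namely
\[
q_{0,2}'(t) = \Bigl(\frac{\rho_\omega(t_0)}{\rho_\omega(t)}\Bigr)^{2}\sum_{\vc \in \cC^\ell}\Delta^{\cR}_{0,2}(\vc)\prod_{j\in[\ell]}\rho_{c_j}(t), \qquad q_{0,2}(t_0)=0.
\]
The RHS is smooth on $[t_0,t_1]$: the $\rho_k$ are smooth by Lemma~\ref{lem:Nk:t}, and $\rho_\omega'\ge 0$ combined with Lemma~\ref{lem:rhokder} give $\rho_\omega(t)\ge\rho_\omega(t_0)>0$ throughout, so the denominator is bounded away from zero. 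Standard ODE theory then yields a unique smooth solution $q_{0,2}$ on $[t_0,t_1]$. Combining the one-step estimate above with the sharp differential equation method of~\cite{DEMLW}, and using the bounded increment $|Q_{0,2}(i+1)-Q_{0,2}(i)|\le 1$, yields the super-polynomial concentration~\eqref{eq:U:t}; the extra logarithmic factor compared with \eqref{eq:Nk:t} reflects the propagation, via a Gr\"onwall-type estimate, of the $(\log n)n^{1/2}$ error in the driving coefficient through the ODE.

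For the remaining properties: $q_{0,2}(t_0)=0$ is the initial condition, and $q_{0,2}(t)\in[0,1]$ follows from $Q_{0,2}(i)\in[0,n]$ together with \eqref{eq:U:t}. For strict positivity of $q_{0,2}'$, take $\vc=(\omega,\ldots,\omega)$: whichever pair $\cR$ selects automatically has $c_{j_1}=c_{j_2}=\omega$, so $\Delta^{\cR}_{0,2}(\vc)=1$, and this single term contributes $\rho_\omega(t_0)^{2}\rho_\omega(t)^{\ell-2}>0$ to $q_{0,2}'(t)$ by Lemma~\ref{lem:rhokder}; all remaining terms are non-negative. The main obstacle is the correct conditional bookkeeping in the one-step estimate: one must cleanly separate the rule's decision (determined solely by apparent sizes $\vc$ at step $i$) from $V_L$-membership (determined by the graph state at time~$t_0$). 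The key simplification is that apparent size $\omega$ is a necessary condition for $V_L$-membership, and conditional on $\vc$ the $V_L$-memberships of distinct sampled vertices are independent, yielding the clean factor $(\rho_\omega(t_0)/\rho_\omega(i/n))^{2}$.
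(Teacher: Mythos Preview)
Your proof is correct and follows the same differential-equation-method approach as the paper. The only difference is cosmetic: where the paper enlarges the type space to $\cC\cup\{L\}$, splitting size-$\omega$ vertices into those in $V_S$ (type $\omega$, density $\rho_\omega(t)-\rho_\omega(t_0)$) and those in $V_L$ (type $L$, density $\rho_\omega(t_0)$), you instead condition first on the rule's view $\vc\in\cC^\ell$ and then factor out the conditional probability $(\rho_\omega(t_0)/\rho_\omega(t))^2$ that the two selected $\omega$-vertices lie in $V_L$. A short computation shows the two drift expressions coincide term by term, and the remaining verifications (smoothness, $q_{0,2}'>0$ via the all-$\omega$ tuple, range $[0,1]$) match the paper's.
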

\begin{proof}
This follows again by a routine application of the differential equation method~\cite{DEMLW}, so we only outline the argument. Formally, we use Lemma~\ref{lem:Nk:t} to obtain bounds at step $i_0$, and then track $Q_{0,2}(i)$ and $(N_k(i))_{k \in \cC}$ from step~$i_0$ onwards. We obtain a worse approximation error term than in Lemma~\ref{lem:Nk:t} because here there is already an `initial' error term at step~$i_0$ (from Lemma~\ref{lem:Nk:t}). With some work we could tighten the bounds, but this would not affect our key results.

Analogous to \eqref{eq:Nk:E:change} we consider $\E(Q_{0,2}(i+1)-Q_{0,2}(i) \mid \cF_i)$, i.e., the conditional one-step expected change in~$Q_{0,2}(i)$. This time we need to consider vertices in components of size $\omega$ that are in $V_S$
separately from those in $V_L$.
Let 
\begin{align}
\label{eq:Nk:vL}
\vartheta_{L}(t) := \rho_{\omega}(t_0) ,
\end{align}
which corresponds to the idealized rescaled number of vertices in $V_L$.
Noting that for $i\ge i_0$ there are $N_\omega(i)-|V_L|=N_\omega(i)-N_\omega(i_0)$ vertices in $V_S$ that are in components
of $G_i$ of size $\omega$ (i.e., size at least $K+1$), let
\begin{align}
\label{eq:Nk:vk}
\vartheta_k(t) := \begin{cases}
\rho_k(t) & \text{if $1 \le k \le K$}, \\
\rho_{\omega}(t)-\rho_{\omega}(t_0) & \text{if $k = \omega$}, 
\end{cases}
\end{align}
corresponding to the idealized rescaled number 
of vertices in $V_S$ which are in components of size $k \in \cC=\{1,\ldots, K, \omega\}$. 
Since $\cR$ is a bounded-size rule, $Q_{0,2}(i+1)-Q_{0,2}(i)$ is determined by the following information: 
the sizes $c_{i,j}\in \cC=\{1,\ldots,K,\omega\}$ of the components containing the vertices $v_{i,1}, \ldots, v_{i,\ell}$
and, where $c_{i,j}=\omega$, the information whether $v_{i,j}$ is in $V_L$ or not.
(It does not matter whether any of these vertices lie in the same component or not). 
So, with $|V_L| = N_{\omega}(i_0)$ and~\eqref{eq:U:init} in mind, it is straightforward to see that $q_{0,2}(t)$ is given by the unique solution to
\begin{equation}\label{eq:der:u}
q_{0,2}(t_0)=0 \qquad \text{and} \qquad q_{0,2}'(t) = \sum_{\vc=(c_1, \ldots, c_{\ell}) \in (\cC \cup\{L\})^{\ell}} \Delta^{\cR}(\vc) \prod_{j \in [\ell]} \vartheta_{c_j}(t) ,
\end{equation}
where $\Delta^{\cR}(\vc)=1$ if we have $\vc_{j_1}=\vc_{j_2}=L$ for the indices $\{j_1,j_2\}=\cR(\vc)$ selected by the
rule, and $\Delta^{\cR}(\vc)=0$ otherwise. 

Now we turn to properties of $q_{0,2}(t)$. 
By Lemma~\ref{lem:Nk:t}, all $\vartheta_{k}(t)$ are smooth, so $q_{0,2}(t)$ is smooth by~\eqref{eq:der:u}. 
Similarly, recalling $\rho'_{\omega}(t) \ge 0$ and $\vartheta_{\omega}(t_0) = 0$, we see that $\vartheta_{k}(t) \in [0,1]$ and $\sum_{k \in \cC \cup\{L\}} \vartheta_{k}(t) = 1$. 
Now, if $\ell$ distinct vertices from $V_L$ are chosen, then a $V_L$--$V_L$ edge is added. Hence $\Delta^{\cR}((L, \ldots, L))=1$, which implies $q_{0,2}'(t) \ge \bigl(\rho_{\omega}(t)\bigr)^\ell > 0$ for all $t \in [t_0,t_1]$, see Lemma~\ref{lem:rhokder}.  
Finally, using $\Delta^{\cR}(\vc) \le 1$ and $\sum_{k \in \cC \cup\{L\}} \vartheta_{k}(t) = 1$ we deduce that $q_{0,2}'(t) \le 1$, so by~\cref{def:t0t1t2,def:sigma} we have $q_{0,2}(t) \le q_{0,2}(t_0) + t-t_0 \le 2\sigma \le 1$ for all $t \in [t_0,t_1]$.
\end{proof}

\subsubsection{Components in $V_S$}\label{sec:VS:finite}
We now study the `marked graph'~$H_i$ defined in Section~\ref{sec:exp}, see also Figure~\ref{fig:exp}. 
For $k \ge 1$ and $r \ge 0$, recall that $Q_{k,r}(i)$ counts the number of type-$(k,r)$ components in $H_i$,
i.e., components of $H_i$ which contain~$k$ vertices from $V_S$ and have~$r$ stubs (and so are incident to $r$
$V_S$--$V_L$ edges in $G_i\setminus H_i$).
As usual, we expect that $Q_{k,r}(tn)/n$ can be approximated by a smooth function $q_{k,r}(t)$, 
and our next goal is to derive a system of differential equations that these $q_{k,r}$ must satisfy (again based on the expected one-step changes). 
Note that~\eqref{eq:Qkr:t} below only implies $Q_{k,r}(i)/n \approx q_{k,r}(i/n)$ for \emph{fixed}~$k$ and~$r$ 
(see Section~\ref{sec:VS} for an extension to all $k \ge 1$ and $r \ge 0$). 
\begin{lemma}\label{lem:Qkr:t}
The system of differential equations~\eqref{eq:der:qkr:init} and~\eqref{eq:der:qkr} below has a unique solution $(q_{k,r})_{k \ge 1, r \ge 0}$ on~$[t_0,t_1]$, with each $q_{k,r}:[t_0,t_1] \to [0,1]$ a smooth function. 
Given $k' \ge 1$ and $r' \ge 0$, with probability at least $1-n^{-\omega(1)}$ we have 
\begin{equation}\label{eq:Qkr:t}
\max_{i_0 \le i \le i_1} \max_{\substack{1 \le k \le k' \\ 0 \le r \le r'}}|Q_{k,r}(i) - q_{k,r}(i/n)n| \le (\log n)^{2} n^{1/2} .
\end{equation}
\end{lemma}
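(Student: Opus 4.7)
I would mirror the differential equation method arguments used for Lemmas~\ref{lem:Nk:t}, \ref{lem:Nk2:t} and \ref{lem:U:t}, adding the bookkeeping needed to track components of $H_i$ refined by their type~$(k,r)$ rather than just by size.

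First I would identify the ODE system by computing the conditional one-step expected change of $Q_{k,r}(i)$. At step $i$, assign to each vertex $v$ a refined label $\chi_i(v)$: the pair $(j,s)$ if $v$ lies in a type-$(j,s)$ component of~$H_i$, or the symbol $L$ if $v\in V_L$. A uniformly random vertex has $\chi_i(v)=(j,s)$ with probability $jQ_{j,s}(i)/n$, and $\chi_i(v)=L$ with probability $N_\omega(i_0)/n$; the $\ell$ vertices offered in step $i+1$ are, up to an $O(1/n)$ coincidence correction, independent. The truncated labels seen by $\cR$ are $\tau((j,0))=j$ for $j\le K$ and $\tau=\omega$ otherwise, so $\cR$ deterministically selects an unordered pair $\{j_1,j_2\}\subseteq[\ell]$ from these. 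Given $\vec\chi=(\chi_1,\ldots,\chi_\ell)$, the change in $Q_{k,r}$ is then an integer $\Delta^\cR_{k,r}(\vec\chi)$ determined by $\chi_{j_1}$ and $\chi_{j_2}$: a $V_S$--$V_S$ edge merges two $V_S$-components by adding their $(j,s)$ coordinates (a self-loop leaves $Q$ unchanged); a $V_S$--$V_L$ edge turns a type $(j,s)$ into $(j,s+1)$; a $V_L$--$V_L$ edge leaves every $Q_{k,r}$ with $k\ge 1$ unchanged. Setting $\vartheta_{(j,s)}(t)=jq_{j,s}(t)$ and $\vartheta_L(t)=\rho_\omega(t_0)$, this motivates a system~\eqref{eq:der:qkr} with initial conditions~\eqref{eq:der:qkr:init} inherited from~\eqref{eq:Qkr:init} and Lemma~\ref{lem:Nk:t}.

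Second, I would verify existence, uniqueness, smoothness and the bound $q_{k,r}\in[0,1]$ on the interval $[t_0,t_1]$. The key structural point is that a type-$(k,r)$ $V_S$-component can only be produced by merging two $V_S$-components of types $(j_1,s_1)$ and $(j_2,s_2)$ with $j_1+j_2=k$ and $s_1+s_2=r$, or by a single stub turning a type-$(k,r-1)$ component into type~$(k,r)$. Hence $q'_{k,r}$ is a polynomial in $\{q_{j,s}:j\le k,\ s\le r\}$ and the constant $\rho_\omega(t_0)$, so the infinite system is triangular. Iterating Picard--Lindel\"of in lexicographic order on $(k,r)$ gives existence, uniqueness and smoothness, and since all the $\vartheta_\chi$ stay in $[0,1]$ no solution blows up inside $[t_0,t_1]$. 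The bound $0\le kQ_{k,r}(i)\le n$ then forces $q_{k,r}(t)\in[0,1]$.

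Finally, to establish~\eqref{eq:Qkr:t}, fix $k'$ and $r'$ and apply the sharpened differential equation method of~\cite{DEMLW} to the finite collection $\{Q_{j,s}:1\le j\le k',\ 0\le s\le r'\}$, augmented by $(N_k)_{k\in\cC}$ already controlled by Lemma~\ref{lem:Nk:t}. Each one-step change is bounded by a constant depending only on $k'$; the conditional expected changes agree with the ODE up to $O(1/n)$; and the initial data at step $i_0$ match $q_{k,r}(t_0)n$ to within $(\log n)n^{1/2}$ by~\eqref{eq:Qkr:init} and Lemma~\ref{lem:Nk:t}. The standard martingale concentration from~\cite{DEMLW} then yields the claimed $(\log n)^2 n^{1/2}$ error with probability at least $1-n^{-\omega(1)}$. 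The main technical point will be the triangular closure in the second step: one must confirm that only finitely many $q_{j,s}$ influence any given target $(k,r)$, which relies crucially on $\cR$ being bounded-size so that the rule sees only the coarse labels~$\tau$; once this is in hand, the argument is a genuine extension of the earlier lemmas rather than something new.
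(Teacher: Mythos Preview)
Your approach is essentially the same as the paper's: compute one-step expected changes, derive the ODE system, exploit a triangular structure for existence/uniqueness/smoothness, and apply the differential equation method of~\cite{DEMLW} tracking the $Q_{j,s}$ together with $(N_k)_{k\in\cC}$ from step~$i_0$ onwards.

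One detail needs correcting. You claim that $q'_{k,r}$ is a polynomial in $\{q_{j,s}:j\le k,\ s\le r\}$ and the \emph{constant} $\rho_\omega(t_0)$, but this overlooks the spectator vertices $j\notin\{j_1,j_2\}$. The rule~$\cR$ selects $\{j_1,j_2\}$ based on the coarse sizes of \emph{all} $\ell$ sampled vertices, so the one-step expected change of $Q_{k,r}$ involves the probabilities that a random vertex has coarse size~$c$ \emph{at step~$i$}, namely $\rho_c(i/n)$ for each $c\in\cC$. These are time-dependent, not just $\rho_\omega(t_0)$; equivalently, $q'_{k,r}$ depends on $(q_{c,0})_{1\le c\le K}$ via the identity $cq_{c,0}(t)=\rho_c(t)$ for $c\le K$, and when $k<K$ these are not contained in your set $\{q_{j,s}:j\le k,\ s\le r\}$. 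The paper makes this explicit: the factor $\prod_{j\notin\{j_1,j_2\}}\rho_{c_j}(t)$ in~\eqref{eq:der:qkr} carries the spectators, and the triangular structure is stated as ``$q'_{k,r}$ depends only on $(q_{\tilde k,\tilde r})_{\tilde k\le k,\,\tilde r\le r}$ and $(\rho_j)_{j\in\cC}$''. With this correction (treat $(\rho_j)_{j\in\cC}$ as known analytic inputs from Lemma~\ref{lem:Nk:t}, exactly as you already do in your third step) your Picard--Lindel\"of induction goes through and the rest of your argument is intact.
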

\begin{proof}
As in the proof of Lemma~\ref{lem:U:t} we only sketch the differential equation method~\cite{DEMLW} argument. Again, we use Lemma~\ref{lem:Nk:t} to obtain bounds at step $i_0$, and then track $(N_k(i))_{k \in \cC}$ and $(Q_{k,r}(i))_{1 \le k \le k',\,1 \le r \le r'}$
from step~$i_0$ onwards; here, as usual, $\cC=\{1,\ldots,K,\omega\}$.

Since $|Q_{k,r}(i+1)-Q_{k,r}(i)| \le 2$, the `exceptional event' that two of the $\ell$ random vertices
lie in the same $(k,r)$--component of~$H_i$ with $k \le k'$ contributes at most, say, $4\ell^2 k'/n = O(1/n)$ to $\E(Q_{k,r}(i+1)-Q_{k,r}(i) \mid \cF_i)$. 
Hence, recalling the definition of $\rho_k(t)$, by considering the expected one-step changes of $Q_{k,r}(i)$, it is not difficult to see that $q'_{k,r}(t)$ is a polynomial function of $\rho_{\omega}(t_0)$, the $\rho_{\tilde{k}}(t)$ with $\tilde{k} \in \cC$, and the $q_{\tilde{k},\tilde{r}}(t)$ with $1 \le \tilde{k} \le k$ and $0 \le \tilde{r} \le r$ (edges connecting two vertices from $V_L$ or two vertices in $(\tilde{k},\tilde{r})$--components with $\tilde{k} > k$ or $\tilde{r} > r$ leave $Q_{k,r}(i)$ unchanged). 
For later reference, we now spell out these differential equations explicitly. 
By \eqref{eq:Qkr:init}, the initial conditions are 
\begin{equation}\label{eq:der:qkr:init}
q_{k,r}(t_0) = \indic{r=0, \: 1 \le k\le K} \rho_k(t_0)/k .
\end{equation}
Turning to $q'_{k,r}$, set
\begin{equation}\label{def:skr}
s(k,r) := \begin{cases}
	\omega , & ~~\text{if $k \ge K+1$ or $r \ge 1$}, \\
		k  , & ~~\text{otherwise}.\\	
	\end{cases}
\end{equation}
From the relationship between $H_i$ and $G_i$ established in Section~\ref{sec:exp} (see Figure~\ref{fig:exp}),
a vertex $v\in V_S$ in a type-$(k,r)$ component of~$H_i$ is in a component of $G_i$ with size~$s(k,r)$,
where size $\omega$ means size $\ge K+1$. 
Recall that in step $i$ the rule $\cR$ connects $v_{i,j_1}$ with $v_{i,j_2}$, where $\{j_1,j_2\}=\cR(\vc_i)$.
In the following formulae we sum over all possibilities $\vc\in \cC^\ell$ for $\vc_i$, and always tacitly define
\begin{equation*}
\{j_1,j_2\}=\cR(\vc).
\end{equation*}
Bearing in mind that $|V_L| = N_{\omega}(i_0)$, similar arguments to those leading to~\eqref{eq:der:rhok} and~\eqref{eq:Nk:E:change} show that 
\begin{equation}\label{eq:der:qkr}
q'_{k,r}(t)  = \sum_{\vc \in \cC^{\ell}} \Bigl[\prod_{j \in [\ell] \setminus \{j_1,j_2\}} \rho_{c_j}(t)\Bigr] \cdot \Bigl[\sum_{1 \le h \le 3} F_h(k,r,\vc)\Bigr] ,
\end{equation}
where 
\begin{equation}\label{eq:der:qkr:F1}
F_1(k,r,\vc) := \sum_{\substack{k_1+k_2=k: \: k_1,k_2 \ge 1\\r_1+r_2=r: \: r_1,r_2 \ge 0}} k_1 q_{k_1,r_1}(t) k_2 q_{k_2,r_2}(t) \indic{c_{j_1}=s(k_1,r_1), \: c_{j_2}=s(k_2,r_2)} ,
\end{equation}
corresponding to creating a new $(k,r)$--component by adding an edge
within $V_S$, 
\begin{equation}\label{eq:der:qkr:F2}
F_2(k,r,\vc) := \indic{r \ge 1}k q_{k,r-1}(t) \rho_{\omega}(t_0) \bigl[ \indic{c_{j_1}=s(k,r-1), \: c_{j_2}=\omega} + \indic{c_{j_1}=\omega, \: c_{j_2}=s(k,r-1)}\bigr] ,
\end{equation}
corresponding to adding a $V_S$--$V_L$ edge to a $(k,r-1)$--component, and 
\begin{equation}\label{eq:der:qkr:F3}
F_3(k,r,\vc) := -k q_{k,r}(t)\bigl[ \indic{c_{j_1}=s(k,r)} \rho_{c_{j_2}}(t)  + \rho_{c_{j_1}}(t)\indic{c_{j_2}=s(k,r)}\bigr] ,
\end{equation}
corresponding to destroying a $(k,r)$--component by connecting one of its vertices in $V_S$ to something else.
(The normalization is different for $q_{k,r}$ and $\rho_k$ since $Q_{k,r}$ counts components, whereas $N_k$ counts vertices.) 

Turning to properties of the $q_{k,r}(t)$, recall that the $\rho_k(t)$ are smooth on $[0,t_1]$. 
The key observation is that~$q'_{k,r}$ depends only on $(q_{\tk,\tr})_{1 \le \tk \le k,\, 0 \le \tr \le r}$ and $(\rho_j)_{j \in \cC}$, see~\eqref{eq:der:qkr}--\eqref{eq:der:qkr:F3}. 
So, using $|\cC^{\ell}| = (K+1)^{\ell} = O(1)$, standard results imply that the infinite system of differential equations~\eqref{eq:der:qkr:init}--\eqref{eq:der:qkr} has a unique solution on~$[t_0, t_1]$. 
Furthermore, by induction on $j \ge 0$ (and $k+r \ge 1$) we see that all the $q_{k,r}(t)$ are $j$~times differentiable; thus the $(q_{k,r})_{k\ge 1,\,r \ge 0}$ are smooth on $[t_0,t_1]$. 
Finally, since $0 \le Q_{k,r}(i) \le n/k$, standard comparison arguments yield $q_{k,r}(t) \in [0,1]$, say.  
\end{proof}

\subsection{Exploration tree approximation}\label{sec:BPA}
In this subsection we continue studying the (random) parameter list~$\fS_i$ defined in Definition~\ref{def:fS}.
More concretely, we shall track the evolution of several associated random variables using 
the exploration tree method developed in~\cite{RWapsubcr}, which intuitively shows that these variables (i)~are concentrated, and (ii)~have exponential tails. 
This proof method is based on branching process approximation techniques, and it usually works in situations where the quantities in question can be determined by a subcritical neighbourhood exploration process.

\subsubsection{Component size distribution}\label{sec:BPA:init}
We first revisit the number $N_k(i)$ of vertices of $G_i$ in components of size~$k$ in the subcritical phase,
which we studied in~\cite{RWapsubcr} for size rules 
(for bounded-size rules the quantity~$\tcx$ appearing in Theorem~1 of~\cite{RWapsubcr} is equal to~$\tc$ by Theorem~15 of~\cite{RWapsubcr}).
Since $t_0 < \tc$, Theorem~1 of~\cite{RWapsubcr} implies the following result, which applies to \emph{all} component sizes $k \ge 1$ 
and steps $i\le i_0=t_0 n$, showing concentration and exponential tail bounds. We write $D_N$ for the power of $\log n$ in the error term, since we will prove a similar result for the $Q_{k,r}(i)$ later with a different power $D_Q$. 
\begin{theorem}\label{thm:init}
Let $(\rho_k)_{k \ge 1}$ be the functions defined in Lemma~\ref{lem:Nk2:t}. 
There are constants $a,A,D_N,n_0 > 0$ with $A \ge 1$ 
such that for $n \ge n_0$, with probability at least $1-n^{-99}$, the following holds for all $k \ge 1$:
\begin{gather}
\label{eq:Nk:t0}
\max_{0 \le i \le i_0}|N_k(i)-\rho_k(i/n)n| \le (\log n)^{D_N} n^{1/2} ,\\
\label{eq:Nk:t0:tail}
\max_{0 \le i \le i_0}N_{\ge k}(i) \le A e^{-ak}n,\\
\label{eq:rhok:t0:tail}
\sup_{t \in [0,t_0]} \rho_k(t) \le A e^{-ak} .
\end{gather}
Furthermore, we have $\sum_{k \ge 1} \rho_k(t)=1$ for all $t \in [0,t_0]$. \noproof
\end{theorem}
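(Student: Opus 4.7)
The plan is to invoke the exploration tree / branching process approximation framework developed in \cite{RWapsubcr}, which handles exactly this kind of subcritical component size analysis for $\ell$-vertex rules. Since $t_0<\tc$ (with $\tc$ characterized by divergence of the susceptibility), it should be possible to couple the neighbourhood exploration of a typical vertex at time $t\in[0,t_0]$ with a strictly subcritical branching process, yielding both the concentration and the exponential tail bounds.

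First, I would set up a suitable exploration tree of the component of a uniformly chosen vertex $v$ in $G_{tn}$: at each stage, explore a single boundary vertex, revealing which of the $\ell$-tuples of random vertices chosen by the process in past steps actually touch the explored region. For bounded-size rules with cut-off $K$, only the truncated sizes in $\cC_K$ are visible to the rule, so the offspring distribution at an exploration step depends only on the functions $(\rho_k(t))_{k\in\cC}$ from Lemma~\ref{lem:Nk:t}. I would then show that for $t\le t_0$ the associated (multi-type) branching process $\bp_t$ is subcritical: this is the content of the characterization of $\tc$ in \cite{SW,BK} via susceptibility blow-up, together with the fact that $t_0=\tc-\sigma$. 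Standard branching process theory then gives constants $a,A>0$ with $\Pr(|\bp_t|\ge k)\le Ae^{-ak}$ uniformly in $t\in[0,t_0]$, which identifies $\rho_k(t)=\Pr(|\bp_t|=k)$ and yields \eqref{eq:rhok:t0:tail} as well as $\sum_k\rho_k(t)=1$.

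Second, for the exponential tail \eqref{eq:Nk:t0:tail} on the actual process, I would couple the exploration of the component $C_v(G_i)$ with a slightly dominating branching process $\bp^+_t$ (still subcritical for $t\le t_0$) obtained by absorbing the small $O(1/n)$ corrections coming from repeated vertex choices and finite-$n$ effects. Standard large deviation bounds for Galton--Watson trees, combined with a union bound over $v\in[n]$ and $i\le i_0$, then convert the pointwise statement $\Pr(|C_v(G_i)|\ge k)\le 2Ae^{-ak}$ into the whp uniform bound $N_{\ge k}(i)\le Ae^{-ak}n$ (absorbing a factor of $2$ into $A$).

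Third, for the concentration statement \eqref{eq:Nk:t0} I would employ the Azuma--Hoeffding / McDiarmid framework along the edge-exposure martingale for $N_k(i)$. The bounded-difference constant requires some care: a single resampled step can cascade through later rounds, but on the `good event' that no component ever exceeds size $O(\log n)$ (which holds whp by \eqref{eq:Nk:t0:tail}) the change in $N_k(i)$ caused by altering one step is at most polylogarithmic. Combining this with the approximation $\E N_k(i)\approx \rho_k(i/n)n$ (which comes from the same exploration coupling, exploiting that the expected offspring generating functions match to within $O(1/n)$) yields the $(\log n)^{D_N}n^{1/2}$ bound, with the exponent $D_N$ arising from the polylog Lipschitz constant and the union bound over $k\ge 1$ and $i\le i_0$ (tail bounds truncate the relevant range of $k$ to $k\le C\log n$).

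The main obstacle is the bounded-differences / coupling argument: one must control how a single perturbation propagates through $\Theta(n)$ subsequent rule decisions, and then combine this with the branching-process approximation uniformly in both $k$ and $i$. This is precisely the technical content of \cite{RWapsubcr}, and the theorem as stated is obtained by specializing that general result to bounded-size $\ell$-vertex rules and noting that $\tcx=\tc$ by Theorem~15 there, so $[0,t_0]$ lies strictly inside the subcritical window.
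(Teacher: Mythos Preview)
Your proposal is correct and matches the paper's approach: the paper does not give a proof at all, simply stating that since $t_0<\tc$, Theorem~1 of \cite{RWapsubcr} implies the result (using Theorem~15 of \cite{RWapsubcr} to identify $\tcx=\tc$ for bounded-size rules), which is exactly what you conclude in your final paragraph. Your sketch of the exploration-tree machinery is more detailed than what the paper provides, and broadly accurate, though the concentration argument in \cite{RWapsubcr} uses a two-phase exposure (first reveal which $\ell$-tuples appear in a short interval, then their order) rather than a direct edge-exposure martingale as you suggest; this distinction does not affect the validity of your reduction to \cite{RWapsubcr}.
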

\begin{remark}\label{rem:Nk:t}
To be pedantic, the functions $(\rho_k)_{k \ge 1}$ of Theorem~1 of~\cite{RWapsubcr} could potentially differ from those considered in Lemma~\ref{lem:Nk2:t}. 
However, since both are defined \emph{without} reference to $n$, by~\eqref{eq:Nk2:t} and~\eqref{eq:Nk:t0} these must be equal. 
This justifies (with hindsight) our slight abuse of notation. 
Furthermore, from Lemmas~\ref{lem:Nk2:t} and~\ref{lem:rhokder} and the fact that $\sum_{k \ge 1} \rho_k(t_0)=1$, we see that 
\[
\rho_{\omega}(t_0)=1-\sum_{1 \le k \le K} \rho_k(t_0)=\sum_{k > K}\rho_k(t_0) > 0 .
\]
\end{remark}
Let us briefly outline the high-level proof strategy from Section~2 of~\cite{RWapsubcr}, which we will adapt to~$H_i$ (as defined in Section~\ref{sec:exp}) in a moment. 
The basic idea is to generalize slightly, and establish concentration starting from an initial graph~$F$. 
Using induction, it then suffices to prove concentration during an interval consisting of a small (linear) number of steps. 
For this purpose we use a two-phase\footnote{We use the word phase rather than round to avoid any confusion with the main two-round exposure argument described in Section~\ref{sec:exp}.} 
exposure argument:
we first reveal which $\ell$-tuples appear in the entire interval, and then expose their order (in which they are presented to the rule $\cR$). 
Given a vertex $v$, via the first exposure phase we can severely restrict the set of components (of $F$) and tuples (that appear in the interval) which can influence the size of the component containing $v$ under the evolution of any size rule. 
Indeed, the only components/tuples which can possibly be relevant are those which can be reached from $v$ after adding all $\binom{\ell}{2}$ edges in each $\ell$-tuple appearing in the first exposure phase; we include \emph{all} of these to form an `upper bound' for the component containing $v$, which we will `thin' in the second phase.
Of course, all these tuples and components can be determined by a neighbourhood exploration process. 
If the interval has length $\delta n = \Theta(n)$, then (since at most $\ell n^{\ell-1}$ of the $n^{\ell}$ total $\ell$-tuples contain any given vertex, and each contains at most $\ell-1$ new vertices) it seems plausible that the expected size of the associated offspring distribution is at most roughly  
\begin{equation}\label{eq:init:EOD}
\delta \cdot \ell(\ell-1) \cdot \sum_{k \ge 1}k N_{k}(F)/n = \delta \ell(\ell-1)S_2(F),
\end{equation}
where, as usual, $S_2(F)$ denotes the susceptibility of the graph $F$,
i.e., the expected size of the component containing a random vertex. 

In~\cite{RWapsubcr} our inductive argument hinges on the fact that the associated branching process remains subcritical (i.e., quickly dies out) as long as $\delta \ell(\ell-1)S_2(F) < 1$. 
In the first exposure phase this allows us to couple the neighbourhood exploration process giving the `upper bound' component with an `idealized' branching process that is defined without reference to~$n$. 
In particular, this gives rise to a so-called \emph{exploration tree}~$\cT_{v,\delta}$
(see page~187 in~\cite{RWapsubcr}), which itself contains enough information to reconstruct all relevant tuples and components. 
In the second exposure phase we then reveal the order of the relevant tuples, using the rule $\cR$ to construct the actual component containing~$v$ (see Section~2.4.3 in~\cite{RWapsubcr}). 
We can eventually establish tight concentration since, by the subcritical first phase, $\cT_{v,\delta}$ typically contains rather few components and tuples (see Lemma~14 in~\cite{RWapsubcr}). 
Finally, the above discussion also explains why the inductive argument breaks down around $\tc$, since then a giant component emerges (in which case $S_2(F) = \omega(1)$, so for any $\delta=\Theta(1)$ the branching process just described will be supercritical).

\subsubsection{Distribution of the $V_S$--components}\label{sec:VS}
We now turn, for $k \ge 1$ and $r \ge 0$, to the number $Q_{k,r}(i)$ of components of $H_i$ of type $(k,r)$.
We shall prove that, starting from $F=G_{i_0}$, these random variables remain tightly concentrated for all $i_0 < i \le i_1$ (with exponential tails).
The basic idea is to apply the argument outlined in Section~\ref{sec:BPA:init} for one interval of length $\delta n = i_1-i_0$, see \eqref{def:t0t1t2}--\eqref{def:i0i1i2},
using a minor twist to ensure that the corresponding exploration process remains subcritical even beyond~$\tc$, exploiting the fact that we are restricting to bounded-size rules. 
Recall that in the definition of~$Q_{k,r}(i)$ we do not care about the endpoints in~$V_L$ of the incident $V_S$--$V_L$ edges, see also Figure~\ref{fig:exp}. 
With this in mind, the key observation is that the evolution of the components in~$V_L$ is irrelevant for the evolution of~$Q_{k,r}(i)$: it suffices to know that these have size~$\omega$ (i.e., size $>K$).  
So, starting with a vertex $v \in V_S$, in the exploration process associated with the first exposure phase (which finds all relevant tuples and components) we do not further test reached vertices $w \in V_L$, since we already know that these vertices are in components of size~$\omega$. 
To keep the differences to~\cite{RWapsubcr} minimal, we shall simply pretend that all vertices in $V_L$ lie in distinct `dummy' components of size $K+1$, say, although it would be more elegant to mark reached $V_L$--vertices, by introducing a new vertex type in Section~2.4.2 of~\cite{RWapsubcr}. 
Since $\delta =t_1-t_0=2\sigma \le [\ell^2(K+1)]^{-1}$, see~\eqref{def:sigma}--\eqref{def:t0t1t2},
the branching-out rate of \eqref{eq:init:EOD} thus changes to at most 
\begin{equation*}
\delta \cdot \ell(\ell-1) \cdot \Bigl(\sum_{1 \le k \le K} k N_{k}(F)/n + (K+1)N_{\omega}(F)/n\Bigr) \le \delta \ell(\ell-1)(K+1) < 1 , 
\end{equation*}
suggesting that the exploration process indeed remains subcritical. 
This makes it plausible that, by a minor variant of the proof used in~\cite{RWapsubcr}, we can track the evolution of the $\bigl(Q_{k,r}(i)\bigr)_{k \ge 1, r \ge 0}$ for all $i_0 \le i \le i_1$, i.e., show that they are tightly concentrated around deterministic trajectories, see~\eqref{eq:Qkr} below. 
Furthermore, since the associated exploration process is subcritical, we also expect that these have exponential tails, see~\eqref{eq:Qkr:tail} and~\eqref{eq:qkr:tail} below. 
\begin{theorem}\label{thm:Qkr}
Let $(q_{k,r})_{k \ge 1,r \ge 0}$ be the functions defined in Lemma~\ref{lem:Qkr:t}. 
There are constants $b,B,D_Q,n_0 > 0$ with $B \ge 1$ 
such that for $n \ge n_0$, with probability at least $1-n^{-99}$, the following hold for all $k \ge 1$ and $r \ge 0$:
\begin{gather}
\label{eq:Qkr}
\max_{i_0 \le i \le i_1}|Q_{k,r}(i)-q_{k,r}(i/n)n| \le (\log n)^{D_Q} n^{1/2} ,\\
\label{eq:Qkr:tail}
\max_{i_0 \le i \le i_1}Q_{\ge k, \ge r}(i) \le B e^{-b(k+r)}n,\\
\label{eq:qkr:tail}
\sup_{t \in [t_0,t_1]} q_{k,r}(t) \le B e^{-b(k+r)} ,
\end{gather}
where $Q_{\ge k, \ge r}(i) := \sum_{k' \ge k,\, r' \ge r} Q_{k',r'}(i)$. 
\end{theorem}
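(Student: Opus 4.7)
The plan is to adapt the exploration-tree method developed in \cite{RWapsubcr} to the random variables $Q_{k,r}(i)$, taking $F := G_{i_0}$ as the starting graph (for which Theorem~\ref{thm:init} already supplies concentration and exponential tails with probability $1-O(n^{-99})$) and running the method over the single interval $[i_0,i_1]$ of length $\delta n = 2\sigma n$. Since $i_1 > \tc n$, this interval extends past criticality, so a naive exploration starting from a vertex $v$ would be supercritical and the argument of \cite{RWapsubcr} would break down. The crucial observation, foreshadowed in the preceding discussion, is that $Q_{k,r}(i)$ is insensitive to the internal evolution of the components inside $V_L$: because $\cR$ is a bounded-size rule, every vertex of $V_L$ has component size $\omega$ throughout $[i_0,i_1]$, and only the mere fact that a $V_S$-vertex has acquired a stub (not which $V_L$-vertex it points to) is needed to update $Q_{k,r}$. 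I would therefore replace each component in $V_L$ by a dummy component of size $K+1$ and halt the neighbourhood exploration the instant it reaches any vertex of $V_L$.

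With this modification, the branching-out rate of the associated exploration process is bounded by
\[
\delta \cdot \ell(\ell-1) \cdot \Bigl(\sum_{1\le k\le K} k N_k(F)/n + (K+1) N_\omega(F)/n\Bigr) \le 2\sigma\, \ell(\ell-1)(K+1) < 1,
\]
where the last inequality uses the choice $\sigma \le [2\ell^2(K+1)]^{-1}$ from~\eqref{def:sigma}. This uniform subcriticality is exactly what is needed to run the two-phase exposure argument of \cite{RWapsubcr} on $[i_0,i_1]$: in the first phase I would reveal the unordered multiset of $\ell$-tuples presented during the interval and grow the modified exploration tree $\cT_{v,\delta}$ from a vertex $v \in V_S$; in the second phase I would reveal the order in which the relevant tuples are presented, then simulate the rule~$\cR$ to read off the type $(k,r)$ of the component of $v$ in $H_{i_1}$. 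Subcriticality of the exploration gives geometric decay of $|\cT_{v,\delta}|$, and the exponential bounds~\eqref{eq:Qkr:tail} and~\eqref{eq:qkr:tail} then follow by comparing $Q_{\ge k,\ge r}(i)$ to the tail probability of $|\cT_{v,\delta}|$, in the spirit of Lemma~14 of \cite{RWapsubcr}.

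For the concentration statement~\eqref{eq:Qkr}, I would apply the same bounded-differences / martingale argument as in Section~2.5 of \cite{RWapsubcr}: the one-step Lipschitz constants for $Q_{k,r}(i)$ are controlled via the geometric tail of $|\cT_{v,\delta}|$, while the drift matches the right-hand side of~\eqref{eq:der:qkr} up to $O(1/n)$ errors. A union bound over $i_0 \le i \le i_1$ and over $k,r$ in the polylogarithmic range (with the range $k+r \gg \log n$ absorbed by~\eqref{eq:Qkr:tail}) then yields the uniform $(\log n)^{D_Q} n^{1/2}$ error. The limit functions $q_{k,r}$ and their defining ODEs are already provided by Lemma~\ref{lem:Qkr:t}, so no additional differential-equation analysis is required here.

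The main obstacle, as I see it, is the joint bookkeeping over the two indices $k$ and $r$: each explored $V_S$-vertex may either spawn further $V_S$-neighbours (feeding into $k$) or stubs to $V_L$ (feeding into $r$), and one has to track both contributions so that the two-variable tail $Q_{\ge k,\ge r}(i) \le B e^{-b(k+r)} n$ can be extracted from the subcritical branching dynamics. Halting the exploration at every $V_L$-hit is what cleanly decouples the two counts, essentially because a stub counts both as a unit of `$r$' and as a terminal leaf of the exploration, so that a large $k+r$ forces a large $|\cT_{v,\delta}|$. Verifying this joint decay, and then carrying the $(\log n)^{D_Q}$ factor through a uniform union bound over all $(k,r)$ and all $i \in [i_0,i_1]$, will be the main technical work.
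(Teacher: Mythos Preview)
Your proposal is correct and essentially identical to the paper's approach: both adapt the exploration-tree method of \cite{RWapsubcr} to the single interval $[i_0,i_1]$ by replacing each $V_L$-component with a dummy component of size $K+1$ (so the exploration halts there), use the bound $2\sigma\,\ell(\ell-1)(K+1)<1$ from~\eqref{def:sigma} to ensure subcriticality, and observe that the exploration tree dominates $k+r$ (in fact $k+(K+1)r$) to obtain the joint exponential tail. The paper's proof is likewise only an outline that defers the remaining details to \cite{RWapsubcr}, so your level of detail matches.
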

\begin{remark}\label{rem:Qkr:sum}
Recall that, by definition of the discrete variables, for $i \ge i_0$ we have  $\sum_{k \ge 1,\, r \ge 0}k Q_{k,r}(i) = |V_S|=n-|V_L| = \sum_{1 \le k \le K} N_k(i_0)$; see \eqref{eq:NkQkr:sum}.
Hence a standard comparison argument (using Lemma~\ref{lem:Nk:t} and exponential tails) shows that for $t \in [t_0,t_1]$ we have 
\[
\sum_{k \ge 1,\, r \ge 0} k q_{k,r}(t) = \sum_{1 \le k \le K} \rho_k(t_0) .
\]
\end{remark}
\begin{proof}[Outline proof of Theorem~\ref{thm:Qkr}]
Let $\delta_t = t - t_0$. 
Starting with $F=G_{i_0}$ satisfying the conclusions of Lemma~\ref{lem:Nk:t}, the core argument is a minor modification of the proof of Theorem~3 in~\cite{RWapsubcr}, with $Q=K+1$.  
As discussed, the main idea is to pretend that all vertices of $V_L$ are in distinct (dummy) components of size~$K+1$. 
In particular, when constructing the exploration tree $\cT_{v,\delta_t}$, in the case $|C_{u_i}(F)| > K$ at the top of page~188 
in~\cite{RWapsubcr}, we simply add $K+1$ new `dummy' vertex nodes as the children of~$u_i$ (from the tree structure of~$\cT_{v,\delta_t}$ it then is clear that~$u_i$ lies in a component of size~$>K$, which is all we need in Section~2.4.3 of~\cite{RWapsubcr} to make the decisions of~$\cR$). 
Of course, later on in the exploration argument these dummy vertices need not be further tested for neighbours, but in the domination arguments of Lemma~7 and~9 of~\cite{RWapsubcr} we shall pretend that they are tested (this only generates more fictitious vertices, which is safe for upper bounds). 
Let $(k',r')$ be the `worst case' type of the component containing~$v$, which results after adding all $\binom{\ell}{2}$ edges in each $\ell$-tuple appearing (the actual type $(k,r)$ satisfies $k \le k'$ and $r \le r'$). 
One important observation is that the number of vertex nodes of~$\cT_{v,\delta_t}$ dominates $k'+(K+1)r' \ge k'+r'$.  
With this in mind, the corresponding variants of Lemma~7 and~9 of~\cite{RWapsubcr} yield exponential tails in~$k+r$. 
All other parts of the proof of Theorem~3 in~\cite{RWapsubcr} carry over with only obvious minor changes; we leave the details to the interested reader. 
\end{proof}
We close this section by noting a simple bound on the derivatives $q_{k,r}'(t)$.
\begin{lemma}\label{lem:Qkrd}
Define $b>0$ as in Theorem~\ref{thm:Qkr}. There is a constant $B'$ such that for all $k\ge 1$ and $r\ge 0$ we have $\sup_{t\in [t_0,t_1]}|q_{k,r}'(t)| \le B' k^3(r+1) e^{-b(k+r)}$. 
\end{lemma}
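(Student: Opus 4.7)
The plan is to bound $|q_{k,r}'(t)|$ directly from the explicit differential equation~\eqref{eq:der:qkr}, using the exponential tail bound~\eqref{eq:qkr:tail} from Theorem~\ref{thm:Qkr} together with the trivial bounds $\rho_{c}(t)\le 1$ and $\rho_\omega(t_0)\le 1$. The only combinatorial input needed is that $|\cC|^{\ell} = (K+1)^\ell = O(1)$, so the outer sum over $\vc\in\cC^\ell$ contributes only a multiplicative constant, and for each fixed pair of `target' sizes $(c_{j_1},c_{j_2})$ there are at most $|\cC|^{\ell-2}=O(1)$ choices of the remaining coordinates of $\vc$. Hence it suffices to bound each of the three contributions $F_1$, $F_2$, $F_3$ from~\eqref{eq:der:qkr:F1}--\eqref{eq:der:qkr:F3}, summed over $\vc$, by a constant times $k^3(r+1)e^{-b(k+r)}$.

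For $F_1$, the main term, I would substitute $q_{k_j,r_j}(t)\le Be^{-b(k_j+r_j)}$ to factor out $e^{-b(k+r)}$ from the convolution, and then estimate
\[
\sum_{\substack{k_1+k_2=k\\k_1,k_2\ge 1}}\sum_{\substack{r_1+r_2=r\\r_1,r_2\ge 0}} k_1 k_2 \;\le\; (r+1)\sum_{k_1=1}^{k-1} k_1(k-k_1) \;\le\; (r+1) k^3/6,
\]
giving $\sum_{\vc}|F_1(k,r,\vc)|\le C_1 k^3(r+1)e^{-b(k+r)}$ after accounting for the $O(1)$ combinatorial factors. For $F_2$ and $F_3$ the analogous substitutions give $\sum_{\vc}|F_h(k,r,\vc)|\le C_h\, k\, e^{-b(k+r)}$ (using $e^{b}$ as an extra constant to absorb the shift $r-1\mapsto r$ in $F_2$). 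Combining these three bounds with $\prod_{j\notin\{j_1,j_2\}}\rho_{c_j}(t)\le 1$ and the $O(1)$ outer sum over $\vc$ yields the claimed inequality with some universal~$B'$.

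The argument is essentially bookkeeping; the only nontrivial input is the exponential tail bound~\eqref{eq:qkr:tail}. The mildly delicate step is the convolution in $F_1$, which is where the factor $k^3(r+1)$ comes from; a cruder pointwise bound would only give $k(r+1)$ (times the number of terms) but with weight $k_1k_2$ on each term one cannot avoid the cubic power in $k$, and this is what forces the exponent in the statement. No further analytic machinery is needed, so the proof can be presented as a short direct calculation.
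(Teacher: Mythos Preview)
Your proposal is correct and follows essentially the same approach as the paper: bound each $F_h$ using the exponential tail~\eqref{eq:qkr:tail}, use $\rho_c(t)\le 1$, and absorb the $O(1)$ sum over $\vc\in\cC^\ell$. The paper's proof is a one-line sketch of exactly this argument, while you have helpfully spelled out the convolution estimate for $F_1$ that produces the $k^3(r+1)$ factor.
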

\begin{proof}
By \eqref{eq:qkr:tail}, each of the quantities~$F_h$ appearing in~\eqref{eq:der:qkr} and defined in
\eqref{eq:der:qkr:F1}--\eqref{eq:der:qkr:F3} is (crudely) at most a constant times $k^3(r+1)e^{-b(k+r)}$.
Recalling that $|\rho_k(t)|\le 1$ for all $k\in\cC=\{1,2,\ldots,K,\omega\}$, the claimed bound follows from~\eqref{eq:der:qkr}. 
\end{proof}

\subsection{Analyticity}\label{sec:AP}
In this subsection we use PDE theory to establish analytic properties of an idealized version of the parameter list~$\fS_i$, which will later be important for our branching processes analysis (see Section~\ref{sec:BPO} and Appendix~\ref{ss:BPsetup}).
In fact, we believe that our fairly general approach for establishing analyticity may be of independent interest. 
Before turning to the main result, we give a simple preparatory lemma. For the definition of (real) analytic, see Definition~\ref{def:analytic}.
\begin{lemma}\label{lem:ODE}
The functions $(\rho_k)_{k \in \cC}$, $q_{0,2}$ and $(q_{k,r})_{k\ge 1,r\ge 0}$ 
defined in Lemmas~\ref{lem:Nk:t}, \ref{lem:U:t} and~\ref{lem:Qkr:t} are analytic on~$(t_0,t_1)$.  
\end{lemma}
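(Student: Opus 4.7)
The plan is to invoke repeatedly the classical result that a finite-dimensional initial value problem $y'=F(t,y)$ with real-analytic right-hand side has a real-analytic solution on any open interval of existence (proved, e.g., by the majorant method; no PDE input is needed here). The three families of functions are handled in order, each step making use of the analyticity established in the previous one.

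\textbf{Step 1.} The equations~\eqref{eq:der:rhok} form a closed, finite, autonomous polynomial system of size $|\cC|=K+1$ in $(\rho_k)_{k\in\cC}$ with constant integer coefficients $\Delta^{\cR}_{\rho}(k,\vc)$. The right-hand side is therefore entire in the dependent variables, so the analytic ODE theorem provides a locally analytic solution near any $t_\ast\in(t_0,t_1)$; the uniqueness clause in Lemma~\ref{lem:Nk:t} identifies it with $(\rho_k)_{k\in\cC}$, so this family is analytic on $(t_0,t_1)$.

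\textbf{Step 2.} By~\eqref{eq:der:u}, $q'_{0,2}(t)$ is a polynomial in $(\vartheta_k(t))_{k\in\cC\cup\{L\}}$, which are in turn polynomials in $(\rho_k(t))_{k\in\cC}$ and the constant $\rho_\omega(t_0)$. Crucially $q'_{0,2}$ does not depend on $q_{0,2}$ itself, so the equation is a pure quadrature: $q_{0,2}$ is an antiderivative of an analytic function (by Step~1), hence analytic on $(t_0,t_1)$.

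\textbf{Step 3.} Fix arbitrary integers $K_0\ge 1$ and $R_0\ge 0$ and consider the truncated system for $\{q_{k,r}:1\le k\le K_0,\ 0\le r\le R_0\}$. Inspecting $F_1,F_2,F_3$ in \eqref{eq:der:qkr:F1}--\eqref{eq:der:qkr:F3}, the only $q$-variables appearing in the equation for $q_{k,r}$ are $q_{\tilde k,\tilde r}$ with $\tilde k\le k$ and $\tilde r\le r$; hence the truncated system is closed. Its right-hand side is a polynomial in these $q_{k,r}$ with coefficients that are polynomials in the analytic functions $(\rho_j(t))_{j\in\cC}$ of Step~1, so it is jointly analytic in $t$ and the dependent variables on any neighbourhood of a point in $(t_0,t_1)$. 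The analytic ODE theorem together with uniqueness from Lemma~\ref{lem:Qkr:t} then gives analyticity of each $q_{k,r}$ with $k\le K_0$, $r\le R_0$ on $(t_0,t_1)$; since $K_0,R_0$ were arbitrary, all $q_{k,r}$ are analytic. There is no substantive obstacle here -- the only bookkeeping point is verifying closedness of the truncated subsystem in Step~3 from the explicit form of $F_1$--$F_3$.
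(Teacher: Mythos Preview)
Your proof is correct and follows essentially the same approach as the paper: both apply the analytic ODE theorem (the paper calls it the Cauchy--Kovalevskaya Theorem for ODEs) to the finite polynomial system for $(\rho_k)_{k\in\cC}$, treat $q_{0,2}$ as an antiderivative of an analytic function, and exploit the triangular structure $q'_{k,r}=q'_{k,r}((q_{\tilde k,\tilde r})_{\tilde k\le k,\,\tilde r\le r},(\rho_j)_{j\in\cC})$ to reduce to finite analytic systems. The only cosmetic difference is that the paper organizes Step~3 as an induction on $k+r$ rather than a truncation to a block $\{k\le K_0,\,r\le R_0\}$.
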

\begin{proof}
For $t \in [0,t_1]$ the functions $(\rho_k)_{k \in \cC}$ are the unique solution to a finite system~\eqref{eq:der:rhok} of ODEs.
The Cauchy--Kovalevskaya Theorem for ODEs (see Theorem~\ref{thm:CK:ODE} in Appendix~\ref{apx:CK}) thus implies that the $(\rho_k)_{k \in \cC}$ are analytic on $(0,t_1)$. 
Recalling~\eqref{eq:Nk:vL}--\eqref{eq:Nk:vk}, equation~\eqref{eq:der:u} shows that for $t \in [t_0,t_1)$ the derivative $q_{0,2}'$ is a polynomial 
function of known analytic functions. It follows that~$q_{0,2}$ is analytic on $(t_0,t_1)$. 
Finally, for $t \in [t_0,t_1)$, $k\ge 1$ and $r\ge 0$, the derivative $q'_{k,r}$ is a polynomial function of 
$(q_{\tk,\tr})_{1 \le \tk \le k,\, 0 \le \tr \le r}$ and known analytic functions, see~\eqref{eq:der:qkr}--\eqref{eq:der:qkr:F3}.  
By induction on $k+r$, the Cauchy--Kovalevskaya Theorem (see Theorem~\ref{thm:CK:ODE})
thus implies that each $q_{k,r}$ is
analytic on $(t_0,t_1)$, completing the proof. 
\end{proof}

With the functions $(q_{k,r})_{k \ge 1, r \ge 0}$ and $q_{0,2}$ as defined in Lemmas~\ref{lem:U:t} and~\ref{lem:Qkr:t},
our main aim in this section is to study the generating function
\begin{equation}\label{def:P}
P(t,x,y) := \sum_{k,r \ge 0} x^{k}y^{r} q_{k,r}(t) ,
\end{equation}
where (for notational convenience) we set
\begin{equation}\label{def:qkr:conv}
q_{0,r}(t) :\equiv 0 \quad \text{for all $r \neq 2$}. 
\end{equation}
Recall that $t_0=\tc-\sigma$ and $t_1=\tc+\sigma$. 
With~$b$ as in the exponential tail bound~\eqref{eq:qkr:tail}, let
\begin{equation}\label{def:D}
\cD := (t_0,t_1) \times \bigl(-e^{b/3},e^{b/3}\bigr)^2  \subset \RR^3.
\end{equation}
From~\eqref{eq:qkr:tail} it is easy to see that~$P=P(t,x,y)$ converges absolutely for $(t,x,y) \in \cD$. 
We shall now show that in fact $P$ is (real) analytic in this domain.%
\footnote{Substituting the exponential tails~\eqref{eq:qkr:tail} into the equation \eqref{eq:der:qkr} for $q'_{k,r}$, by analyzing the combinatorial structure of the derivatives $q^{(j)}_{k,r}$ it is possible to prove directly that $\sup_{t \in [t_0,t_1]}|q^{(j)}_{k,r}(t)| \le B_j e^{-b(k+r)/2}$, say. It is then not difficult to show smoothness (infinite differentiability) of~$P=P(t,x,y)$. What we prove in Theorem~\ref{thm:PDE} is stronger.}
In our approach the complementary conclusions of the differential equation method (the equations for $q'_{k,r}$) and the exploration tree approach (the exponential decay of $q_{k,r}$) work hand-in-hand with PDE theory (the Cauchy--Kovalevskaya Theorem). 
\begin{theorem}\label{thm:PDE}
The function $P(t,x,y)$ is analytic in the domain~$\cD$ defined in~\eqref{def:D}. 
More precisely, for each $\ttt_0\in (t_0,t_1)$ there is a $\delta>0$ such that the function $P(t,x,y)$
has an analytic extension to the complex domain $\fD_{\delta}(\ttt_0) := \{ (t,x,z) \in \CC^3: \: |t-\ttt_0| < \delta \text{ and } |x|,|y|<e^{b/3}\}$.
\end{theorem}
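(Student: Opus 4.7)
The plan is to derive a first-order quasi-linear PDE satisfied by $P$, whose coefficients are analytic in $t$ and polynomial in the other variables, and then invoke a Cauchy--Kovalevskaya-type theorem together with a coefficient-matching uniqueness argument to identify the resulting analytic solution with $P$.

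To derive the PDE I would multiply~\eqref{eq:der:qkr} by $x^k y^r$, sum over $k\ge 1,\,r\ge 0$, and add the separate term $y^2\,q'_{0,2}(t)$ coming from~\eqref{eq:der:u} (recalling the convention~\eqref{def:qkr:conv}). Each structural piece translates cleanly if one introduces the partial sums
\begin{equation*}
P^{(s)}(t,x,y) := \sum_{\substack{k\ge 1,\,r\ge 0:\\ s(k,r)=s}} x^k y^r q_{k,r}(t), \qquad s \in \cC,
\end{equation*}
for which, by~\eqref{def:skr}, $P^{(k')}(t,x,y) = x^{k'} q_{k',0}(t)$ for $1\le k'\le K$ (an $x$-monomial with analytic-in-$t$ coefficient, by Lemma~\ref{lem:ODE}) and $P^{(\omega)} = P - \sum_{k'\le K} P^{(k')}$. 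The convolution term $F_1$ then contributes $(x\partial_x P^{(c_{j_1})})(x\partial_x P^{(c_{j_2})})$; the shift term $F_2$ contributes $y\rho_\omega(t_0)$ times a sum of terms $x\partial_x P^{(c_{j_i})}$, with the factor $y$ arising from the index shift $r-1\mapsto r$; and the loss term $F_3$ contributes $-\rho_{c_{j_2}}(t)\,x\partial_x P^{(c_{j_1})} -\rho_{c_{j_1}}(t)\,x\partial_x P^{(c_{j_2})}$. Collecting everything and using Lemma~\ref{lem:ODE} for the analyticity of $\rho_k(t)$ and $q'_{0,2}(t)$ on $(t_0,t_1)$ yields
\begin{equation*}
\partial_t P(t,x,y) = F\bigl(t,x,y,\,P(t,x,y),\,\partial_x P(t,x,y)\bigr),
\end{equation*}
where $F$ is polynomial in $(x,y,P,\partial_x P)$ with coefficients analytic in $t$ on a complex neighbourhood of $(t_0,t_1)$. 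Crucially, no $\partial_y P$ appears: the indices $r_1,r_2$ in $F_1$ satisfy $r_1+r_2=r$, while $F_2$ and $F_3$ involve only $r-1$ or $r$, so summation produces multiplications by $y$ rather than differentiations.

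Next, I would fix $\ttt_0\in(t_0,t_1)$. By~\eqref{eq:qkr:tail} the initial datum $u_0(x,y) := P(\ttt_0,x,y) = \sum_{k,r\ge 0} q_{k,r}(\ttt_0)\,x^k y^r$ is analytic on the complex polydisk $\{|x|,|y|<e^b\}$, which \emph{strictly} contains the target $\{|x|,|y|<e^{b/3}\}$. Applying a Cauchy--Kovalevskaya-type theorem to the analytic PDE above then produces a holomorphic solution $\widetilde P$ on $\fD_\delta(\ttt_0)$ for some $\delta>0$, with $\widetilde P|_{t=\ttt_0} = u_0$. To identify $\widetilde P$ with $P$, I would expand $\widetilde P(t,x,y) = \sum_{k,r\ge 0} \widetilde q_{k,r}(t)\,x^k y^r$ around $(x,y)=(0,0)$ and substitute into the PDE: matching coefficients of $x^k y^r$ recovers exactly~\eqref{eq:der:qkr} for $k\ge 1$ and~\eqref{eq:der:u} for $(k,r)=(0,2)$ (with $\widetilde q_{0,r}\equiv 0$ for $r\ne 2$). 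Since $\widetilde q_{k,r}(\ttt_0) = q_{k,r}(\ttt_0)$ by construction and these ODE systems have unique solutions (Lemmas~\ref{lem:U:t} and~\ref{lem:Qkr:t}), one has $\widetilde q_{k,r} \equiv q_{k,r}$, hence $\widetilde P = P$ on the real slice of $\fD_\delta(\ttt_0)$, proving that $P$ extends analytically to $\fD_\delta(\ttt_0)$.

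The main obstacle is the Cauchy--Kovalevskaya step: one needs a single $\delta>0$ uniform in $(x,y)$ ranging over the full polydisk $\{|x|,|y|<e^{b/3}\}$, whereas a pointwise application of the classical version would give $\delta$ depending on the base point $(x_0,y_0)$ and potentially shrinking to zero as $|x_0|,|y_0|\to e^{b/3}$. Because the nonlinearity $x\partial_x$ loses one derivative in $x$, the natural tool is an Ovsyannikov/Nirenberg--Nishida-type abstract Cauchy--Kovalevskaya theorem in a scale of Banach spaces indexed by the polydisk radius $r\in(e^{b/3},e^b)$; the strict gap between the radius $e^{b/3}$ of the target and the radius $e^b$ of analyticity of the initial datum --- a gap ultimately provided by the uniform exponential tail~\eqref{eq:qkr:tail} --- is precisely what makes the scale argument close.
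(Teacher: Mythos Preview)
Your proposal is correct and follows essentially the same three-step strategy as the paper: derive a first-order PDE in $\partial_x P$ only, apply a Cauchy--Kovalevskaya result, then match power-series coefficients to identify the analytic solution with~$P$ via ODE uniqueness. Your $P^{(s)}$ decomposition is in fact somewhat cleaner than the paper's explicit $\tF_h$/$(F_h-\tF_h)$ case analysis, though it leads to the same PDE structure; one small imprecision is that the coefficient-matching does not recover \emph{exactly}~\eqref{eq:der:qkr} but rather a system in which the $q_{j,0}$ for $j\le 2K$ appear as known coefficients (the paper makes this explicit in~\eqref{eq:P:pDE}--\eqref{eq:P:qDE}), which is harmless since the $q_{k,r}$ satisfy that system too.

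The one substantive difference is in how you obtain~$\delta$ uniform over the polydisc $\{|x|,|y|<e^{b/3}\}$. You invoke an Ovsyannikov/Nirenberg--Nishida abstract Cauchy--Kovalevskaya theorem in a scale of Banach spaces, exploiting the gap between $e^{b/3}$ and~$e^b$. The paper instead takes a more elementary route (Theorem~\ref{thm:CK:PDE}): apply the classical pointwise Cauchy--Kovalevskaya at each point of the \emph{compact} closure $\{|x|,|y|\le e^{b/3}\}$, extract a finite subcover, and patch the local solutions using the fact that any two analytic solutions with the same initial data have identical Taylor expansions (determined recursively from the PDE). Both approaches work; the compactness argument avoids the abstract machinery but requires the patching step, while your approach packages this into a single black-box theorem.
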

Our proof strategy is roughly as follows. 
Using the `nice' form of the differential equations $q'_{k,r}$ (given by Lemma~\ref{lem:Qkr:t}), we show that a minor modification of~$P$ satisfies a first-order~PDE of the form $P_t = F(t,x,y,P_x)$. 
A general result from the theory of partial differential equations (the Cauchy--Kovalevskaya Theorem, see Appendix~\ref{apx:CK}) then allows us to deduce that this PDE has an analytic local solution, say $\tP=\tP(t,x,y)$. 
Here the exponential tail of the $q_{k,r}$ (given by Theorem~\ref{thm:Qkr}) will be a crucial input, ensuring that the boundary data of the corresponding PDE are analytic. 
To show that~$\tP$ and~$P$ coincide for real~$t$ (first-order PDEs can, in general, have additional non-analytic solutions), we substitute the Taylor series $\tP(t,x,y)=\sum_{k,r,s}c_{k,r,s}x^ky^r(t-\ttt_0)^s$  back into both sides of the PDE, and essentially show that the functions 
$\tq_{k,r}(t) = \sum_{s}c_{k,r,s}(t-\ttt_0)^s$ satisfy the same system of ODEs as the functions $q_{k,r}(t)$. 
Exploiting that `nice' systems of ODEs have unique solutions we obtain $\tq_{k,r}(t)=q_{k,r}(t)$, which by~\eqref{def:P} establishes $\tP(t,x,y)=\sum_{k,r}x^ky^r\tq_{k,r}(t) = P(t,x,y)$ for real~$t$, as desired. 
\begin{proof}[Proof of Theorem~\ref{thm:PDE}]
With the definition~\eqref{def:P} of~$P$ and the convention~\eqref{def:qkr:conv} in mind, let 
\begin{equation}\label{def:R}
R(t,x,y) := \sum_{k \ge 1,\, r \ge 0} x^{k}y^{r} q_{k,r}(t) = P(t,x,y)- y^2q_{0,2}(t).
\end{equation}
We shall show that, for each real $\ttt_0 \in (t_0,t_1)$, the function $R=R(t,x,y)$ has an analytic extension to a complex domain $\cD_\delta(\ttt_0)$
as in the statement of the theorem.
By Lemma~\ref{lem:ODE} it follows that $P=P(t,x,y)$ has such an extension (after decreasing $\delta>0$, if necessary),
and in particular that~$P$ is analytic in~$\cD$.

We start with some basic properties of $R$ in the slightly larger domain
\[
 \cD^+ := \{ (t,x,z) \in \RR \times \CC^2: \: t \in (t_0,t_1) \text{ and } |x|,|y|<e^{b/2}\} ,
\] 
where $b$ is as in~\eqref{eq:qkr:tail} and~\eqref{def:D}. 
Since $|x^ky^r|\le e^{b(k+r)/2}$ and $|q_{k,r}(t)|\le B e^{-b(k+r)}$, the sum in~\eqref{def:R} converges
uniformly in $\cD^+$. For $(t,x,y)\in \cD^+$ we claim that the partial derivative $R_t$ satisfies
\begin{equation}\label{eq:Rt}
R_t := \frac{\partial}{\partial t} R(t,x,y) = \sum_{k \ge 1,\, r \ge 0} x^{k}y^{r} q'_{k,r}(t) .
\end{equation} 
By basic analysis, to see this it suffices to show uniform convergence of the sum on the right of~\eqref{eq:Rt}.
But this follows from bound $|q'_{k,r}(t)|\le B' k^3(r+1)e^{-b(k+r)}$ from Lemma~\ref{lem:Qkrd}. 
For $(t,x,y)\in \cD^+$ we similarly see~that 
\begin{equation}\label{eq:Rx}
R_x := \frac{\partial}{\partial x}R(t,x,y) = \sum_{k \ge 1,\, r \ge 0} kx^{k-1}y^{r} q_{k,r}(t) .
\end{equation}

The plan now is to substitute our formulae for the derivatives $q'_{k,r}$ into~\eqref{eq:Rt}
and then rewrite the resulting expression in terms of known expressions and functions (in order to eventually obtain a PDE for $R$).  
Turning to the details, for $\vc \in \cC^\ell$ with $\cC=\{1, \ldots, K, \omega\}$ we define for brevity
\begin{equation}\label{eq:der:Psi}
\Psi_{\vc} :=\prod_{j \in [\ell] \setminus \{j_1,j_2\}} \rho_{c_j}(t),
\end{equation}
where, as usual in this section, $\{j_1,j_2\}=\cR(\vc)$. Thus we may write~\eqref{eq:der:qkr} as
\begin{equation}\label{eq:der:qkr2}
q'_{k,r}(t)  = \sum_{\vc \in \cC^{\ell}} \Psi_{\vc} \sum_{1 \le h \le 3} F_h(k,r,\vc),
\end{equation}
with the $F_h$ defined in~\eqref{eq:der:qkr:F1}--\eqref{eq:der:qkr:F3}.
Substituting the formula for $q'_{k,r}$ given by~\eqref{eq:der:qkr2} into~\eqref{eq:Rt}, we see that
\begin{equation}\label{eq:Pt}
R_t = \sum_{\vc \in \cC^{\ell}} \Psi_{\vc}  \sum_{k \ge 1,\, r \ge 0} x^{k}y^{r} \: \sum_{1 \le h \le 3} F_h(k,r,\vc).
\end{equation}

Recalling that components with more than~$K$ vertices are formally assigned size~$\omega$,
we expect that in~\eqref{eq:Pt} almost all terms in the multiple sum 
come from the case $(c_{j_1},c_{j_2})=(\omega,\omega)$. 
With this in mind, let~$\tF_1$--$\tF_3$ be modified versions of~$F_1$--$F_3$ where all conditions $c_{j_x}=s(\cdot,\cdot)$ in~\eqref{eq:der:qkr:F1}--\eqref{eq:der:qkr:F3}  are replaced by $c_{j_x}=\omega$.
Thus
\begin{equation}\label{eq:tF1}
 \tF_1(k,r,\vc) = \sum_{\substack{k_1+k_2=k: \: k_1,k_2 \ge 1\\r_1+r_2=r: \: r_1,r_2 \ge 0}} k_1 q_{k_1,r_1} k_2 q_{k_2,r_2} \indic{c_{j_1}=\omega, \: c_{j_2}=\omega},
\end{equation}
recalling that $\{j_1,j_2\}=\cR(\vc)$.
Since the indicator function can be moved outside the sum, using $x^{k-2}y^rk_1k_2 = k_1x^{k_1-1}y^{r_1}k_2x^{k_2-1}y^{r_2}$ and then~\eqref{eq:Rx} we see that 
\begin{equation}\label{eq:Pt:F1:0}
\begin{split}
\sum_{k \ge 1,\, r \ge 0} x^{k}y^{r} \tF_1(k,r,\vc) &= \indic{c_{j_1}=c_{j_2}=\omega}x^2\sum_{k \ge 1,\, r \ge 0} \ \sum_{\substack{k_1+k_2=k: \: k_1, k_2 \ge 1\\ r_1+r_2=r: \: r_1, r_2 \ge 0}} x^{k-2}y^r k_1q_{k_1,r_1} k_2q_{k_2,r_2} \\
& = \indic{c_{j_1}=c_{j_2}=\omega} x^2 (R_x)^2 .
\end{split}
\end{equation}
Proceeding analogously, since
\[
 \tF_2(k,r,\vc) = 2 \indic{r \ge 1}k q_{k,r-1} \rho_{\omega}(t_0) \indic{c_{j_1}=\omega, \: c_{j_2}=\omega},
\]
using $kx^ky^r = xy k x^{k-1}y^{r-1}$ and~\eqref{eq:Rx} we deduce 
\begin{equation}\label{eq:Pt:F2:0}
\begin{split}
\sum_{k \ge 1,\, r \ge 0} x^{k}y^{r} \tF_2(k,r,\vc) &= 2 \rho_{\omega}(t_0) \indic{c_{j_1}=c_{j_2}=\omega} xy \sum_{k,r\ge 1}  kx^{k-1}y^{r-1}q_{k,r-1} \\ 
& = 2 \rho_{\omega}(t_0) \indic{c_{j_1}=c_{j_2}=\omega} xy R_x .
\end{split}
\end{equation}
Similarly, but more simply, 
\begin{equation}\label{eq:Pt:F3:0}
 \sum_{k \ge 1,\, r \ge 0} x^{k}y^{r} \tF_3(k,r,\vc) = -xR_x \bigl[ \indic{c_{j_1}=\omega} \rho_{c_{j_2}}(t)  + \rho_{c_{j_1}}(t)\indic{c_{j_2}=\omega}\bigr] .
\end{equation}
Since $\Psi_{\vc}$, defined in~\eqref{eq:der:Psi}, is a polynomial in the functions $(\rho_k)_{k \in \cC}$, multiplying \eqref{eq:Pt:F1:0}--\eqref{eq:Pt:F3:0} by $\Psi_{\vc}$ and summing
over the finite set $\cC^\ell$, we see that
\begin{equation}\label{eq:Pt:F1F2F3}
\sum_{\vc \in \cC^{\ell}} \Psi_{\vc} \sum_{k \ge 1,\, r \ge 0} x^{k}y^{r} \: \sum_{1 \le h \le 3} \tF_h(k,r,\vc) = x^2 (R_x)^2f_1(t)  +  xy R_x f_2(t)  +  x R_x f_3(t),
\end{equation}
where each $f_h(t)$ is a polynomial in the functions $(\rho_j)_{j \in \cC}$.

We now turn to the differences $F_h-\tF_h$ for $h \in \{2,3\}$. 
Recalling~\eqref{def:skr}, for~$F_2$ defined in~\eqref{eq:der:qkr:F2} we note that $s(k,r-1)\ne\omega$ if and only if $k \le K$ and $r=1$. Hence $F_2(k,r,\vc)=\tF_2(k,r,\vc)$ whenever $k > K$ or $r \neq 1$. 
For~$F_3$ defined in~\eqref{eq:der:qkr:F3} we note that $s(k,r)\ne\omega$ 
if and only if $k \le K$ and $r=0$. Hence $F_3(k,r,\vc)=\tF_3(k,r,\vc)$ whenever $k > K$ or $r >0$. 
Considering the finite number of cases with $F_h \neq \tF_h$, it follows that 
\begin{equation}\label{eq:Pt:F2F3}
\sum_{\vc \in \cC^{\ell}} \Psi_{\vc} \sum_{k \ge 1,\, r \ge 0} x^{k}y^{r} \sum_{h \in \{2,3\}}\bigl[F_h(k,r,\vc)-\tF_h(k,r,\vc)\bigr]= \sum_{1 \le k \le K}\sum_{0 \le r \le 1} x^ky^r f_{k,r}(t) ,
\end{equation}
where each $f_{k,r}(t)$ is a polynomial in the functions $(\rho_j)_{j \in \cC}$ and $(q_{j,0})_{1 \le j \le K}$.

For the difference $F_1-\tF_1$ more care is needed. 
Subtracting~\eqref{eq:tF1} from~\eqref{eq:der:qkr:F1} we have
\begin{equation}\label{eq:F1d}
F_1(k,r,\vc) -\tF_1(k,r,\vc) 
 = \sum_{\substack{k_1+k_2=k: \: k_1,k_2 \ge 1\\r_1+r_2=r: \: r_1,r_2 \ge 0}} k_1 q_{k_1,r_1} k_2 q_{k_2,r_2} \bigl[\indic{c_{j_1}=s(k_1,r_1), \: c_{j_2}=s(k_2,r_2)} - \indic{c_{j_1}=c_{j_2}=\omega}\bigr], 
\end{equation}
where $\{j_1,j_2\}=\cR(\vc)$, as usual.
Since $s(k_h,r_h)\ne\omega$ implies $k_h \le K$ and $r_h=0$, 
for $(k,r)=(k_1+k_2, r_1+r_2)$ with $k > 2K$ or $r \ge 1$ \emph{at most one} of the two events $s(k_1,r_1) \neq \omega$ and $s(k_2,r_2) \neq \omega$ can occur. 
Thus, considering these events in turn,
when $(k,r)$ satisfies $k>2K$ or $r\ge 1$ we have
\[ 
 F_1(k,r,\vc) -\tF_1(k,r,\vc)  = \Delta_1(k,r,\vc)+\Delta_2(k,r,\vc)
\]
where 
\[
 \Delta_1(k,r,\vc) := \sum_{1 \le k_1 \le K} \indic{k > k_1} k_1 q_{k_1,0} (k-k_1) q_{k-k_1,r} \bigl[\indic{c_{j_1}=k_1}-\indic{c_{j_1}=\omega}\bigr] \indic{c_{j_2}=\omega},
\]
and $\Delta_2= \Delta_2(k,r,\vc)$ is defined similarly, swapping the roles of $(k_1,r_1,j_1)$ and $(k_2,r_2,j_2)$.
Relabeling $k_2$ in the sum in $\Delta_2$ as $k_1$, and changing the order of the product, we can write
$\Delta_2$ in the same form as $\Delta_1$ but with different indicator functions. It follows that
when $(k,r)$ satisfies $k>2K$ or $r\ge 1$, then 
\begin{equation}\label{eq:F1tF1}
 F_1(k,r,\vc) -\tF_1(k,r,\vc) = \sum_{1 \le k_1 \le K} \indic{k > k_1} k_1 q_{k_1,0} (k-k_1) q_{k-k_1,r} I(\vc,k_1) 
\end{equation}
for some coefficients $I(\vc,k_1)\in \{-2,-1,0,1,2\}$.
Writing $x^ky^r(k-k_1)=x^{k_1+1} (k-k_1)x^{k-k_1-1}y^r$, we have
\begin{equation}\label{eq:F1tF1m}
 \sum_{k \ge 1,\,r\ge 0} x^ky^r \indic{k > k_1} k_1 q_{k_1,0} (k-k_1) q_{k-k_1,r} I(\vc,k_1)= 
I(\vc,k_1) x^{k_1+1} k_1q_{k_1,0}  R_x.
\end{equation}
Now the formula~\eqref{eq:F1tF1} does not apply to $(k,r)$ with $k\le 2K$ and $r=0$. 
However, there are only finitely many such terms in~\eqref{eq:F1d}. 
Using~\eqref{eq:F1tF1}--\eqref{eq:F1tF1m} it thus follows that
\begin{equation}\label{eq:Pt:F1}
\begin{split}
\sum_{\vc \in \cC^{\ell}} \Psi_{\vc} \sum_{k \ge 1,\, r \ge 0} x^{k}y^{r} \bigl[F_1(k,r,\vc)-\tF_1(k,r,\vc)\bigr] &= \sum_{1 \le k_1 \le K} x^{k_1+1}R_x g_{k_1}(t)  + \sum_{1 \le k \le 2K}x^k g_{k,0}(t), 
\end{split}
\end{equation}
where each $g_{k_1}(t)$ and $g_{k,0}(t)$ is a polynomial in the functions $(\rho_j)_{j \in \cC}$ and $(q_{j,0})_{1 \le j \le 2K}$, say. 
 
To sum up, writing $F_h = \tF_h + (F_h-\tF_h)$ and substituting~\eqref{eq:Pt:F1F2F3}, \eqref{eq:Pt:F2F3} and~\eqref{eq:Pt:F1} and into~\eqref{eq:Pt}, for $(t,x,y) \in \cD^+$ we arrive at a first-order PDE of the form 
\begin{equation}\label{eq:Pt:PDE}
R_t = \sum_{1 \le k \le 2K}\sum_{0 \le r \le 1}\sum_{0 \le s \le 2} f_{k,r,s}(t)x^k y^r (R_x)^s,
\end{equation}
where each function $f_{k,r,s}(t)$ is a polynomial in the functions $(\rho_j)_{j \in \cC}$ and $(q_{j,0})_{1 \le j \le 2K}$. 
Since finite sums and products of analytic functions are analytic, by Lemma~\ref{lem:ODE} it follows that the functions $f_{k,r,s}(t)$ are analytic in $(t_0,t_1)$. 
Given $\ttt_0\in (t_0,t_1)$, we shall formally define the initial data of the PDE via
\begin{equation}\label{eq:P:u0}
 R(\ttt_0,x,y)=R^{\ttt_0}(x,y) ,
\end{equation}
where $R^{\ttt_0}(x,y):=R(\ttt_0,x,y)$ is a two-variable function. 
For each $t \in (t_0,t_1)$ we claim that $R^t(x,y):=R(t,x,y)$ is analytic in the \emph{complex} domain 
\[
\cX = \cX(b/2) := \bigl\{(x,y) \in \CC^2 : \: |x|,|y|<e^{b/2}\bigr\} .
\]
This is routine: with~$t \in (t_0,t_1)$ fixed, \eqref{def:R} defines $R^t(x,y)$ 
as a power series which, as noted earlier, converges absolutely if $|x|,|y| \le e^{b/2}$. 
By standard results, $R^t$ is thus analytic in~$\cX$.

The plan now is to fix $\ttt_0\in (t_0,t_1)$ and construct an
analytic local solution $\tR=\tR(t,x,y)$ to the PDE~\eqref{eq:Pt:PDE} 
with initial data \eqref{eq:P:u0}. Then we shall show that for real $t$ near $\ttt_0$ and complex $x,y$ with $|x|,|y| \le e^{b/3}$,
this solution coincides with $R=R(t,x,y)$ as defined in~\eqref{def:R}. 
This shows that $\tR$ is the required analytic extension of $R$. 
Turning to the details, fix $\ttt_0 \in (t_0,t_1)$. 
Since the functions $(\rho_j)_{j \in \cC}$ and $(q_{j,0})_{1 \le j \le 2K}$ are real analytic, each
has a complex analytic extension to a neighbourhood of $\ttt_0$. Thus we may extend these functions
simultaneously to a complex domain of the form 
\[
\cT = \cT(\eps) : = \{t \in \CC: \: |t-\ttt_0| < \eps\} ,
\]
where $\eps=\eps(\ttt_0)>0$. Recalling that functions $f_{k,r,s}(t)$ appearing in~\eqref{eq:Pt:PDE}
are polynomials of $(\rho_j)_{j \in \cC}$ and $(q_{j,0})_{1 \le j \le 2K}$, the $f_{k,r,s}(t)$ also extend to $\cT$, and from \eqref{eq:Pt:PDE} we see that there is thus a (complex)
analytic function $F:\cT \times \cX \times \CC \to \CC$ such that for $(t,x,y)\in\cT\times \cX$ the first-order PDE~\eqref{eq:Pt:PDE}--\eqref{eq:P:u0} may be written as $R_t = F(t,x,y,R_x)$, with analytic initial data~\eqref{eq:P:u0} for $(x,y) \in \cX$. 
Applying a convenient version of the Cauchy--Kovalevskaya Theorem for first-order PDEs (see Theorem~\ref{thm:CK:PDE} in Appendix~\ref{apx:CK}), there exists $0 < \delta < \eps$ such that with
\[
\fD = \fD_{\delta}(\ttt_0) := \cT(\delta) \times \cX(b/3)\subset \CC^3
\] the following holds: there is a function 
\begin{equation*}
\tR(t,x,y) = \sum_{k,r,s \ge 0}c_{k,r,s}x^ky^r(t-\ttt_0)^s
\end{equation*}
which is analytic in the complex domain~$\fD$, 
and which satisfies~\eqref{eq:Pt:PDE}--\eqref{eq:P:u0} for all $(t,x,y) \in \fD$ (with~$R$ replaced by~$\tR$). 
Define
\[
\tq_{k,r}(t) := \sum_{s \ge 0}c_{k,r,s}(t-\ttt_0)^s ,
\] 
so that~$\tR$ can be written as 
\begin{equation}\label{eq:P:Q1}
\tR(t,x,y) = \sum_{k,r \ge 0}x^ky^r\tq_{k,r}(t) .
\end{equation}
Now $\tR(\ttt_0,x,y)$ and $R^{\ttt_0}(x,y)=R(\ttt_0,x,y)$ are both analytic for $(x,y) \in \cX(b/3)$,
and they agree on this domain. By the uniqueness of Taylor series it follows that 
\begin{equation}\label{eq:P:pq0}
\tq_{k,r}(\ttt_0) = 
\begin{cases} 
q_{k,r}(\ttt_0) & \text{if $k \ge 1$ and $r \ge 0$,}\\ 
0  & \text{otherwise.}
\end{cases}
\end{equation}
Note that all terms on the right hand side of the partial time-derivative~\eqref{eq:Pt:PDE} contain some power $x^j$ with~${j \ge 1}$. 
Since $\tR=\tR(t,x,y)$ satisfies~\eqref{eq:Pt:PDE} (with~$R$ replaced by~$\tR$), using $\frac{\partial}{\partial t}\tR(t,x,y) = \sum_{k,r \ge 0}x^ky^r\tq'_{k,r}(t)$ we readily infer $\tq'_{0,r}(t)=0$, which by~\eqref{eq:P:pq0} implies $\tq_{0,r}(t) \equiv 0$. 
Note that~$\tR$ from~\eqref{eq:P:Q1} can thus be simplified~to  
\begin{equation}\label{eq:P:Q2}
\tR(t,x,y) = \sum_{k \ge 1,\,r \ge 0}x^ky^r\tq_{k,r}(t) ,
\end{equation}
which closely mimics the form of~$R$ defined in~\eqref{def:R}. 
For brevity, we shall henceforth always tacitly assume~${k \ge 1}$ and~${r \ge 0}$. 
Substituting~$\tR=\tR(t,x,y)$ as written in~\eqref{eq:P:Q2} into both sides of~\eqref{eq:Pt:PDE} (with~$R$ replaced by~$\tR$), we now compare the coefficients of the $x^ky^r$ terms on both sides. 
By tracing back how we arrived at~\eqref{eq:Pt:F1F2F3}--\eqref{eq:Pt:F2F3} and \eqref{eq:Pt:F1}--\eqref{eq:Pt:PDE}, it is not difficult to see (by effectively doing all our calculations `in reverse') that we obtain differential equations of the form\footnote{Note that $\tq'_{k,r}$ in~\eqref{eq:P:pDE} contains some $q_{j,0}$ terms, which arise due to the functions $f_{k,r,s}$ in~\eqref{eq:Pt:PDE}.} 
\begin{equation}\label{eq:P:pDE}
\tq'_{k,r}(t) = J_{k,r}\Bigl(\bigl(\rho_j(t)\bigr)_{j \in \cC}, \: \bigl(\tq_{i,j}(t)\bigr)_{\substack{1 \le i \le k\\0 \le j \le r}}, \: \bigl(q_{j,0}(t)\bigr)_{1 \le j \le 2K}\Bigr) ,
\end{equation}
where the structure of the polynomial functions $J_{k,r}$ coincides with the derivatives of the $q_{k,r}$ in the sense that these satisfy  
\begin{equation}\label{eq:P:qDE}
q'_{k,r}(t) = J_{k,r}\Bigl(\bigl(\rho_j(t)\bigr)_{j \in \cC}, \: \bigl(q_{i,j}(t)\bigr)_{\substack{1 \le i \le k\\0 \le j \le r}}, \: \bigl(q_{j,0}(t)\bigr)_{1 \le j \le 2K}\Bigr) .
\end{equation}
Analogous to Lemma~\ref{lem:Qkr:t}, the form of the infinite system of real 
differential equations~\eqref{eq:P:pq0} and~\eqref{eq:P:pDE} ensures that it has a unique solution $(\tq_{k,r})_{k \ge 1,r\ge 0}$ in~$(\ttt_0-\delta,\ttt_0+\delta)$.
From~\eqref{eq:P:qDE}, the functions $q_{k,r}(t)$ satisfy this system of differential equations (the boundary condition holds
trivially), so $q_{k,r}(t)=\tq_{k,r}(t)$ in this (real) interval, 
and hence $R(t,x,y)=\tR(t,x,y)$ for all $(t,x,y) \in \fD=\fD_{\delta}(\ttt_0)$ with real~$t \in \RR$.
Since~$\tR$ is analytic in~$\fD_{\delta}(\ttt_0)$, this shows that~$R$ has the required analytic extension, completing the proof.
\end{proof}
Note that if the PDE~\eqref{eq:Pt:PDE}--\eqref{eq:P:u0} had a unique solution, then the last part of the above proof would be redundant (where we show that analytic local solutions extend~$R$).
For the interested reader we mention that $S:=xR_x$ satisfies a quasi-linear PDE, where $S(t,1,1)=\sum_{k \ge 1,\,r\ge 0} kq_{k,r}(t)$ is constant by Remark~\ref{rem:Qkr:sum}.

Keeping the notational convention~\eqref{def:qkr:conv}, we now derive some basic properties of 
\begin{equation}\label{def:W}
u(t) := \sum_{k,r \ge 0}r (r-1) q_{k,r}(t) =P_{yy}(t,1,1) .
\end{equation}
\begin{lemma}\label{lem:w}
The function $u(t)$ is (real) analytic for $t \in (t_0,t_1)$, with $u(t)\ge u(t_0)=0$.
Furthermore, $u'(t) > 0$ for all $t \in (t_0,t_1)$.  
\end{lemma}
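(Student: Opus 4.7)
The plan is to establish the three claims about $u$---analyticity, the lower bound $u(t)\ge u(t_0)=0$, and $u'(t)>0$---largely independently. For analyticity, note that $1<e^{b/3}$ since $b>0$, so the point $(t,1,1)$ lies in the domain $\cD$ of Theorem~\ref{thm:PDE} for every $t\in(t_0,t_1)$; hence $t\mapsto P_{yy}(t,1,1)$ is analytic on $(t_0,t_1)$. For the initial value and a convenient decomposition, split
\[
u(t)=2q_{0,2}(t)+U(t), \qquad U(t):=\sum_{k\ge 1,\,r\ge 0} r(r-1)\, q_{k,r}(t) .
\]
Then $q_{0,2}(t_0)=0$ by Lemma~\ref{lem:U:t}, while \eqref{eq:der:qkr:init} forces $q_{k,r}(t_0)=0$ whenever $r\ge 1$, so only the trivially zero factor $r(r-1)$ at $r\in\{0,1\}$ survives in $U(t_0)$, yielding $u(t_0)=0$.

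For the strict inequality $u'(t)>0$, the main idea is to combine the strict growth of $q_{0,2}$ from Lemma~\ref{lem:U:t} with a monotonicity argument showing $U'(t)\ge 0$. The key observation is that $i\mapsto\sum_{k\ge 1,\,r\ge 0} r(r-1)Q_{k,r}(i)$ is non-decreasing. Indeed, inspecting the three possible effects on $H_i$ of the edge added by $\cR$ in step $i+1$ one finds: a $V_L$--$V_L$ edge alters only $Q_{0,2}$ and so leaves the sum unchanged; a $V_S$--$V_S$ edge merging components of types $(k_1,r_1)$ and $(k_2,r_2)$ changes the sum by $(r_1+r_2)(r_1+r_2-1)-r_1(r_1-1)-r_2(r_2-1)=2r_1r_2\ge 0$; and a $V_S$--$V_L$ edge attached to a $(k,r)$-component changes it by $(r+1)r-r(r-1)=2r\ge 0$. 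By Theorem~\ref{thm:Qkr} together with the exponential tail bounds \eqref{eq:Qkr:tail}--\eqref{eq:qkr:tail}, with probability at least $1-n^{-99}$ the rescaled sums $\sum_{k,r}r(r-1)Q_{k,r}(tn)/n$ converge uniformly on $[t_0,t_1]$ to $U(t)$, so $U$ inherits monotonicity; analyticity of $U$ (inherited from analyticity of $u$ and $q_{0,2}$) then forces $U'(t)\ge 0$. Combining with the strict bound $q'_{0,2}(t)\ge\bigl(\rho_\omega(t)\bigr)^\ell>0$ from the proof of Lemma~\ref{lem:U:t} (positivity of $\rho_\omega$ being guaranteed by Lemma~\ref{lem:rhokder}) gives $u'(t)=2q'_{0,2}(t)+U'(t)>0$, and strict positivity of $u'$ together with $u(t_0)=0$ immediately yields $u(t)\ge u(t_0)=0$.

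The only mild subtlety is the passage from discrete monotonicity to the continuous $U'(t)\ge 0$: one must be sure both that the infinite sum defining $U$ converges well enough for the monotone limit of $\sum_{k,r}r(r-1)Q_{k,r}(\cdot)/n$ to equal $U$, and that the resulting limit is differentiable with non-negative derivative. Both points are handled by the exponential tails already available: \eqref{eq:Qkr:tail} provides a uniform (in $n$) summable majorant for the individual tails, and Lemma~\ref{lem:Qkrd}, which bounds $|q'_{k,r}(t)|$ by $B'k^3(r+1)e^{-b(k+r)}$, legitimizes term-by-term differentiation of $U$ if one prefers to compute $U'$ directly from~\eqref{eq:der:qkr} rather than via the monotone-limit argument.
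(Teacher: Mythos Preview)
Your proof is correct and follows essentially the same approach as the paper: analyticity via $u=P_{yy}(t,1,1)$ and Theorem~\ref{thm:PDE}; the decomposition $u=2q_{0,2}+U$ with $q'_{0,2}>0$ from Lemma~\ref{lem:U:t}; and the transfer of discrete monotonicity of $\sum_{k\ge1,r\ge0}r(r-1)Q_{k,r}(i)$ to $U'(t)\ge 0$ via the concentration bounds of Theorem~\ref{thm:Qkr}. Your case analysis of the one-step change is more explicit than the paper's, and you deduce $u(t)\ge 0$ from $u'>0$ rather than directly from $q_{k,r}\ge 0$, but these are cosmetic differences.
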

That $u'(t) > 0$ is fairly intuitive by (i) noting that we have $q'_{0,2}(t) > 0$ by Lemma~\ref{lem:U:t}, and (ii) observing that the discrete random variable $W(i)=\sum_{k \ge 1,\,r \ge 0} r(r-1)Q_{k,r}(i)$ is non-decreasing. 
\begin{proof}[Proof of Lemma~\ref{lem:w}]
Observing that $u(t)=P_{yy}(t,1,1)$, Theorem~\ref{thm:PDE} immediately implies that $u(t)$ is analytic for $t \in (t_0,t_1)$. 
Furthermore, by Lemmas~\ref{lem:U:t} and~\ref{lem:Qkr:t} we have $q_{k,r}(t) \ge 0$, which implies $u(t) \ge 0$. 
By Lemma~\ref{lem:U:t} and~\eqref{eq:der:qkr:init} of Lemma~\ref{lem:Qkr:t} we also have $q_{k,r}(t_0)=0$ for all $r \ge 1$, which readily yields $u(t_0) = 0$.  

It remains to establish that $u'(t)>0$. Recalling the convention~\eqref{def:qkr:conv} we have $u(t) = 2 q_{0,2}(t) + w(t)$ where $w(t) = \sum_{k \ge 1,\,r \ge 2}r (r-1) q_{k,r}(t)$.
Since $q'_{0,2}(t) > 0$ by Lemma~\ref{lem:U:t}, it suffices to prove $w'(t) \ge 0$ for $t \in (t_0,t_1)$. 
The basic strategy is to compare $w(t)$ with $W(i)=\sum_{k \ge 1,\,r \ge 2} r(r-1)Q_{k,r}(i)$. 
Combining the inequalities \eqref{eq:Qkr}--\eqref{eq:qkr:tail} of Theorem~\ref{thm:Qkr} (which imply $Q_{k,r}(i) = 0$ if $k+r \ge (\log n)^2$, say), it follows that~whp 
\begin{equation}\label{eq:W:approx}
\max_{i_0 \le i \le i_1}\bigl|W(i) - w(i/n) n\bigr| \le (\log n)^{D_Q + 9} n^{1/2}  .
\end{equation} 
To prove $w'(t) \ge 0$ for $t \in (t_0,t_1)$, it suffices to show $w(\tau_2) \ge w(\tau_1)$ for all $t_0 < \tau_1 \le \tau_2 < t_1$. 
Here the key observation is that $W(i)$ cannot decrease in any step (after adding a $V_S$--$V_L$ or $V_S$--$V_S$ edge we have $W(i+1) \ge W(i)$, and $V_L$--$V_L$ edges are irrelevant). 
Indeed, recalling $i_j = t_j n$, using~\eqref{eq:W:approx} it follows for all $t_0 < \tau_1 \le \tau_2 < t_1$ that 
\[
w(\tau_2)-w(\tau_1) \ge - 2(\log n)^{D_Q + 9} n^{-1/2} .
\]
Since $w(t)$ does not depend on $n$, we thus have $w(\tau_2) -w(\tau_1) \ge 0$, completing the proof. 
\end{proof}

\subsection{Sprinkling}\label{sec:SP}
In this subsection we introduce a dynamic variant of the classical Erd{\H o}s--R{\'e}nyi sprinkling argument from~\cite{ER1960} (see Lemmas~4 and~6 of~\cite{RWapcont} for related arguments), which will later be key for studying the size of the largest component of~$G_i$ (see Section~\ref{sec:L1:super}). 
Intuitively, sprinkling quantifies the following idea: if there are many vertices in large components, then most of these components should quickly merge to form a `giant' component as the process evolves.
Later we shall apply Lemma~\ref{lem:SP} below with $\Lambda=\omega(\eps^{-2})$, $x=\Theta(\eps n)$ and $\xi= o(1)$ chosen such that $\Delta_{\Lambda,x,\xi} = o(\eps n)$ and $x/\Lambda = \omega(1)$. 
\begin{lemma}
\label{lem:SP}
For any bounded size $\ell$-vertex rule with cut-off $K$ there are constants $\lambda,\eta>0$ such that the following holds. 
Let $\cS_{i,\Lambda,x,\xi}$ denote the event that $N_{\ge \Lambda}(i) \ge x$ implies $L_1(i+\Delta_{\Lambda,x,\xi}) \ge (1-\xi) N_{\ge \Lambda}(i)$, where $\Delta_{\Lambda,x,\xi} = \lambda n^2/(\xi \Lambda x)$. 
Then for all $i \ge i_0$, $x \ge \Lambda > K$ and $\xi >0$ we have $\Pr(\neg\cS_{i,\Lambda,x,\xi}) \le \exp(-\eta x/\Lambda) + n^{-\omega(1)}$. 
\end{lemma}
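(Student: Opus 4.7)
The plan is to condition on $G_i$ and assume $T := N_{\ge\Lambda}(i) \ge x$ (otherwise the conclusion is vacuous). Let $\cC_1,\ldots,\cC_m$ be the components of $G_i$ of size at least $\Lambda$, so that $m\le T/\Lambda$, and set $V_L':=\bigcup_a\cC_a$, a set of $T$ vertices. Since components only grow, each vertex of $V_L'$ remains in a component of size $>K$ at every later step, and so is always seen by the rule as having component size $\omega$. Call a sprinkling step $j\in(i,i+\Delta]$ \emph{useful} if the rule adds an edge with both endpoints in $V_L'$; the aim is to accumulate enough useful edges to merge most of the $\cC_a$ into a component of size $\ge (1-\xi)T$.

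The first substantive step is to bound the per-step rate of usefulness by $c_{\cR}(T/n)^2$ for a rule-dependent $c_{\cR}>0$ (\emph{independent} of $\ell$), while simultaneously showing that on the useful event the endpoints are independent uniform samples from $V_L'$. The main obstacle here is that the naive event ``all $\ell$ chosen vertices lie in $V_L'$'' has probability only $(T/n)^\ell$, which is too small for $\ell\ge 3$. To fix this, I restrict to the larger event $E_j$ that all $\ell$ vertices at step $j$ lie in size-$\omega$ components of $G_{j-1}$: this has conditional probability $(N_\omega(j-1)/n)^\ell$, and on $E_j$ the rule sees $(\omega,\ldots,\omega)$ and deterministically picks $\{j_1^*,j_2^*\}=\cR(\omega,\ldots,\omega)$, while the $v_k$'s are iid uniform on the size-$\omega$ vertex set. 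Exploiting that $V_L'$ is a subset of this size-$\omega$ set, the conditional probability that $v_{j_1^*},v_{j_2^*}\in V_L'$ is $(T/N_\omega(j-1))^2$. The product $(N_\omega(j-1)/n)^{\ell-2}(T/n)^2$ is at least $c_{\cR}(T/n)^2$ outside a `typicality' event of probability $n^{-\omega(1)}$ which secures $N_\omega(j-1)/n\ge\rho_\omega(t_0)/2>0$ via Lemmas~\ref{lem:Nk:t} and~\ref{lem:rhokder}; and on this pathway the endpoints are indeed iid uniform in $V_L'$. A standard lower-tail Chernoff bound then shows that, except with probability $\exp(-\Omega(\lambda(x/\Lambda)/\xi))$, the total number $N_U$ of useful edges in the $\Delta$ sprinkling steps satisfies $N_U\ge N:=(1/2)c_{\cR}\Delta(T/n)^2\ge c_{\cR}\lambda T/(2\xi\Lambda)$, with their endpoints iid uniform in $V_L'$.

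Finally, assuming WLOG $\xi\le 1/2$ (the conclusion for $\xi>1/2$ is weaker and follows from the $\xi=1/2$ case), I complete the argument by a union bound over `balanced cuts'. If the conclusion fails, the connected components of the useful-edge multigraph on $[m]$ partition it into pieces each of weight $<(1-\xi)T$, and one can greedily select a union of these pieces to obtain a subset $S\subseteq[m]$ with $w(S):=\sum_{a\in S}s_a\in[\xi T,(1-\xi)T]$ across which no useful edge passes. For any fixed balanced $S$, each useful edge crosses $S$ independently with conditional probability $2w(S)w(S^c)/T^2\ge 2\xi(1-\xi)\ge\xi$, so the probability that none of the $N_U$ useful edges crosses $S$ is at most $\exp(-N_U\xi)$. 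A union bound over the at most $2^m$ subsets of $[m]$ yields failure probability at most $\exp(m\log 2-N_U\xi)$. Using $N_U\xi\ge c_{\cR}\lambda T/(2\Lambda)\ge (c_{\cR}\lambda/2)\max(m,x/\Lambda)$ (from $T/\Lambda\ge m$ and $T\ge x$), this exponent is at most $m\log 2-(c_{\cR}\lambda/4)(m+x/\Lambda)\le -(c_{\cR}\lambda/4)(x/\Lambda)$ once $\lambda$ is chosen with $c_{\cR}\lambda/4\ge\log 2$. Combining with the earlier typicality bound $n^{-\omega(1)}$ and the Chernoff bound $\exp(-\Omega(\lambda(x/\Lambda)/\xi))$ (absorbed into $\exp(-\eta x/\Lambda)$ for $\lambda$ large enough) then yields $\Pr(\neg\cS_{i,\Lambda,x,\xi})\le\exp(-\eta x/\Lambda)+n^{-\omega(1)}$ with $\eta=c_{\cR}\lambda/8$.
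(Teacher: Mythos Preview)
Your approach is essentially correct and shares with the paper the crucial device of lower-bounding the per-step merging rate by restricting to the event that all $\ell$ sampled vertices lie in size-$\omega$ components (so the rule's choice $\{j_1^*,j_2^*\}$ is deterministic), combined with the typicality bound $N_\omega\ge\alpha n$ from Lemmas~\ref{lem:Nk:t} and~\ref{lem:rhokder}. The main difference is how you conclude: you collect i.i.d.\ uniform `useful edges' on $V_L'$ and then run a balanced-cut union bound over the $\le 2^m$ subsets of $[m]$, whereas the paper instead counts \emph{joining steps} --- steps at which the added edge merges two distinct current components meeting $V_L'$. Since there are at most $T/\Lambda$ such components initially and each joining step reduces this count by one, a single Chernoff bound showing that $\ge T/\Lambda$ joining steps would occur (were no large component formed) yields an immediate contradiction; the paper's per-step joining rate $q\ge\alpha^{\ell-2}(T/n)(\xi T/n)$ already has the factor $\xi$ built in (from `second endpoint in a different current component'), so the cut argument is avoided entirely. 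The paper's route is shorter and sidesteps two small glitches in yours. First, the greedy claim that one can find a union $S$ of useful-edge components with $w(S)\in[\xi T,(1-\xi)T]$ fails for $\xi\in(1/3,1/2]$ (take $\xi=0.4$ and component weights $0.35T,0.35T,0.3T$); you should instead assume $\xi\le 1/3$, where the greedy does work. Second, the reduction `$\xi>1/2$ follows from the $\xi=1/2$ case' is not immediate since $\Delta_{\Lambda,x,\xi}$ is \emph{decreasing} in $\xi$; it does go through, but only after enlarging $\lambda$ by a constant factor so that $\Delta_\xi$ with the new $\lambda$ dominates $\Delta_{1/3}$ with the old one. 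Both are easily repaired, so your argument stands.
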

\begin{proof}
We may assume $\xi \in (0,1)$, since the claim is trivial otherwise. 
As $N_{\omega}(i)$ is monotone increasing in~$i$, 
by~\eqref{eq:Nk:t} and Lemma~\ref{lem:rhokder} there is a constant 
$\alpha>0$ such that, with probability $1-n^{-\omega(1)}$, we have 
\begin{equation*}
\min_{i \ge i_0}N_{\omega}(i) \ge N_{\omega}(i_0) \ge \alpha n . 
\end{equation*}
Let $\lambda = 4/\alpha^{\ell-2}$ and $\eta=1/9$. 
Note that, conditional on $G_i$ satisfying $N_{\omega}(i) \ge \alpha n$ and $N_{\ge \Lambda}(i) \ge x$, 
it suffices to show that $\cS_{i,\Lambda,x,\xi}$ fails with (conditional) probability at most $\exp(-\eta x/\Lambda)$. 
Turning to the details, let~$W$ denote the union of all components of~$G_i$ with size at least~$\Lambda$.  
Clearly, the number of components of~$G_{j}$ meeting~$W$ is (a)~at most $|W|/\Lambda$ in step~$j=i$, and (b)~monotone decreasing as~$j$ increases.
Moreover, until there is a component containing at least $(1-\xi)|W|$ vertices, in each step we have probability at least
\begin{equation}\label{eq:SP:q}
\min_{i' \ge i}\left(\frac{N_{\omega}(i')}{n}\right)^{\ell-2} \frac{|W|}{n} \frac{\xi|W|}{n} \ge \alpha^{\ell-2} \xi \left(\frac{|W|}{n}\right)^2 =: q
\end{equation}
of joining two vertices from~$W$ that are in distinct components (equation~\eqref{eq:SP:q} exploits that the bounded size $\ell$-vertex rule has cut-off $K < \Lambda$), 
in which case the number of components meeting~$W$ reduces by one; for later reference we call such steps \emph{joining}. 
Recalling $|W|= N_{\ge \Lambda}(i) \ge x$ and $x \ge \Lambda$, define 
\begin{equation*}
M := \ceilL{\frac{2}{q} \cdot \frac{|W|}{\Lambda}} \le \frac{4 n^2}{\alpha^{\ell-2}\xi \Lambda |W|} \le \frac{4 n^2}{\alpha^{\ell-2}\xi \Lambda x} = \Delta_{\Lambda,x,\xi},
\end{equation*}
and note that $qM \ge 2x/\Lambda$. 
Starting from~$G_i$, using standard Chernoff bounds (and stochastic domination) it follows that, with probability at least $1-\exp\bigl(- \eta x/\Lambda\bigr)$, say,   
after at most $M$ additional steps either 
(i)~at least $qM/2 \ge |W|/\Lambda$ joining steps occurred, which is impossible, 
or
(ii)~there is a component containing at least $(1-\xi)|W|$ vertices. 
In case (ii) we have $L_1(i+M) \ge (1-\xi)|W| = (1-\xi)N_{\ge \Lambda}(i)$, which completes the~proof.  
\end{proof}
%


\subsection{Periodicity and reachable component sizes}\label{sec:period}
In this subsection we study the component sizes that can appear  
in the random graph process~$(G^{\cR}_{n,i})_{i \ge 0}$.
In a standard Achlioptas process (i.e., an `edge rule'), all component sizes
are possible, since if the rule is presented with two potential edges each
of which would join an isolated vertex to a component of size $k$, then it
must form a component of size $k+1$. Indeed, this observation easily leads
to an inductive lower bound on the rate of formation of components of size $k$
as a function of $k$ and $t$ (see below). For $\ell$-vertex rules, this need not be the case.
For example, there are $3$-vertex rules that never join components of size~$1$ and~$2$,
and so never form a component of size~$3$. 
Indeed, the rule may always choose 
a larger component to join to something else, if available, and if presented
with three components of sizes $1$ and~$2$, may join two of the same size.

This phenomenon affects not only small component sizes: there are bounded-size rules which never join an
isolated vertex to any component other than another isolated vertex, for example, and so
only create components (other than isolated vertices) of even size.  This effect needs to be
taken into account when considering the large-$k$ asymptotics of $\rho_k(t)$ and $N_k(i)$.
Fortunately, for the asymptotics, there is only one relevant parameter, the `period' $\per$ of the rule, defined below.
In an Achlioptas process we have $\per=1$; handling the general case requires no major
new ideas, but is a little fiddly. The reader may thus wish to skip the rest of this section,
and to focus on the (most important) case $\per=1$ throughout the paper.

Recall that an $\ell$-vertex size rule $\cR$ is defined by a function $\cR:(\NN^+)^\ell \to \binom{[\ell]}{2}$
giving the (distinct) indices of the vertices that the rule will join when
presented with vertices in components of size $(c_1,\ldots,c_\ell)$. For each such size-vector
$\vc$, let $s(\vc)$ denote the size of the resulting component, in the case that the edge added
does join two components. (If not, no new component size results.) Thus $s(\vc)=c_{j_1}+c_{j_2}$,
where $\cR(\vc)=\{j_1,j_2\}$. Define the set $\cS=\cSR$ of \emph{reachable component sizes}
to be the smallest subset of the positive integers such~that
\[ 
 1 \in \cS  \qquad \text{and} \qquad \vc\in \cS^\ell \implies s(\vc)\in \cS.
\]
In other words, $\cS=\bigcup_{r\ge 0} \cS_r$, where
\begin{equation}\label{eq:cSr}
 \cS_0=\{1\} \qquad \text{and} \qquad \cS_{r+1}=\cS_r \cup \{s(\vc):\vc\in \cS_r^\ell\}.
\end{equation}

If $\cR$ is an Achlioptas process, then we have $\cSR=\NN^+$.
Indeed, an Achlioptas process is a 4-vertex
rule in which $\{j_1,j_2\}$ is always either $\{1,2\}$ or $\{3,4\}$. For such a rule
$s(k,1,k,1)=k+1$, and it follows by induction that $k\in \cSR$ for all $k\ge 1$. 
For bounded-size $\ell$-vertex rules we now record that, beyond the cut-off~$K$, all elements of~$\cSR$ are multiples of~$\per$, 
and that beyond some perhaps larger integer, all multiples of~$\per$ are in~$\cSR$.

\begin{lemma}\label{lem:allowed}
Let $\cR$ be a bounded-size $\ell$-vertex rule with cut-off $K$.
Then there are integers $\per,\kR \ge 1$, 
with~$\per$ a power of two,
such that for $k\ge \kR$ we have $k\in \cSR$ if and only if $k$ is a multiple of $\per$.
Furthermore, if $k\in \cSR$ and $k>K$, then $k$ is a multiple of $\per$. 
\end{lemma}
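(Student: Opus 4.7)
The plan is to study the set $T := \cSR \cap (K,\infty)$ of reachable component sizes exceeding the cut-off, show it is an additive subsemigroup of $\NNP$, and verify that its gcd is a power of two; both assertions of the lemma then reduce to a standard numerical-semigroup fact.

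First I would show that every power of two lies in $\cSR$. Since $\cR$ always joins some pair, applying it to $(1,\ldots,1) \in \cS_0^\ell$ yields a component of size $2$, so $2 \in \cS_1$. Inductively, feeding $(2^r,\ldots,2^r) \in \cS_r^\ell$ to $\cR$ gives $2^{r+1} \in \cS_{r+1}$, hence $2^r \in \cSR$ for every $r \ge 0$. Setting $r_0 := \lceil \log_2(K+1) \rceil$, both $2^{r_0}$ and $2^{r_0+1}$ lie in $T$, so $T \neq \emptyset$ and $\per := \gcd(T)$ divides $\gcd(2^{r_0}, 2^{r_0+1}) = 2^{r_0}$; in particular $\per$ is a power of two.

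Next I would show $T$ is closed under addition. Because $\cR$ is bounded-size, any input $\vc \in (\NNP)^\ell$ with all coordinates $> K$ elicits the same response, which I denote $\{j_1,j_2\} \in \binom{[\ell]}{2}$. Given $a, b \in T$, I construct $\vc \in \cSR^\ell$ with $c_{j_1} = a$, $c_{j_2} = b$, and every other coordinate equal to any fixed element of $T$ (e.g.\ $a$ itself, which is in $\cSR$ and exceeds $K$). The rule joins indices $j_1, j_2$ to produce a component of size $s(\vc) = a + b \in \cSR$, and since $a + b > K$ we have $a + b \in T$.

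Finally, since $T$ is an additive subsemigroup of $\NNP$ with $\gcd(T) = \per$, a standard numerical-semigroup argument shows $T$ contains every sufficiently large multiple of $\per$: pick finitely many elements of $T$ whose gcd is $\per$, divide through by $\per$, and invoke the classical result (proved via B\'ezout) that a subsemigroup of $\NN$ with gcd one contains all sufficiently large integers. Taking $\kR > K$ to be any threshold beyond which every multiple of $\per$ lies in $T$ gives the `if' direction of the equivalence for $k \ge \kR$; the `only if' direction for $k \ge \kR$ and the final sentence of the lemma (which in fact holds for all $k > K$) follow immediately, since any such $k$ lies in $T$ and $\per$ divides every element of $T$. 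I do not anticipate any serious obstacle here; the only mildly delicate point is having $T \neq \emptyset$ available before invoking closure under addition, which is precisely what the doubling argument supplies.
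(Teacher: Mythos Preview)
Your proof is correct and takes essentially the same route as the paper's: both establish that powers of two lie in $\cSR$ via doubling, then show the set of reachable sizes exceeding $K$ is closed under addition (since the rule responds identically to all-large inputs), and extract the period from this additive structure. The paper phrases the final extraction in terms of residue classes modulo $2^a$ (the least power of two exceeding $K$) forming a subgroup of $\ZZ/2^a\ZZ$, whereas you cite the numerical-semigroup fact directly and set $\per = \gcd(T)$, which has the minor advantage that the last assertion ($\per \mid k$ for all $k \in T$) is immediate from the definition rather than requiring a separate step.
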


\begin{proof}
We shall write $\cS$ for $\cSR$ to avoid clutter. 
Let $a$ be the smallest integer such that $2^a > K$, 
and define $\cS^+=\{i\in \cS: i\ge 2^a\}$.

For any rule, we have $s(i,i,\ldots,i)=2i$, so $i\in \cS$ implies $2i\in \cS$ and $\cS$
contains all powers of two (as used earlier in the proof of Lemma~\ref{lem:rhokder}). 
If a size-vector $\vc$ has all $c_i\ge 2^a$, then the bounded-size rule `sees' only large components (size $>K$), 
and so makes some fixed choice, say (relabeling if needed)
$\{j_1,j_2\}=\{1,2\}$. It follows that if $i\in \cS^+$ then $i+2^a\in \cS^+$,
since $s(i,2^a,\ldots,2^a)=i+2^a$. 

Let $\bSp$ be the set of residue classes modulo~$2^a$ that appear
in $\cS^+$. Within each residue class in $\bSp$, if $k$ is the smallest element of the class included
in $\cS^+$, then we have $k,k+2^a,k+2\cdot 2^a,\ldots \in \cS^+$.
It follows that beyond some constant $\IR$ (which may be significantly
larger than~$2^a$) we have $i\in \cS^+$ if and only if $i$ is in one of the classes in $\bSp$.

Considering the case when one component has size $i$ and the others have size $j$, we see
(as above) that $i,j\in \cS^+$ implies $i+j\in \cS^+$. Hence $\bSp$ is closed
under addition, and is thus a subgroup of $\ZZ/2^a\ZZ$. Hence
there is some number $\per$, a divisor of~$2^a$, such that
$\bSp$ consists of all multiples of $\per$. Hence, 
for $i\ge \kR := \max\{\IR,2^a\}$, 
we have $i\in \cS$ (which is equivalent to $i\in \cS^+$) if and only if $i$
is a multiple of~$\per$. 

For the final statement, if $k > K$, then we have $k+m2^a\in \cS$ for all
$m\ge 0$, by induction on $m$ (using $s(k+i2^a,2^a,\ldots,2^a)=k+(i+1)2^a$, as above). 
Since $k+m2^a\ge \kR$ for $m$ large enough, 
we have that $\per$ divides $k+m2^a$. 
Since $\per$ is a divisor of $2^a$, we deduce that~$\per$ divides~$k$. 
\end{proof}

We call $\per$, which is uniquely defined by the properties given in Lemma~\ref{lem:allowed}, the \emph{period} of the rule.
The constant $\kR$ is not uniquely defined; for definiteness, we may take it to be the minimal integer
with the given property. 
For Achlioptas process we have $\per=1$ and $\kR=1$ since $\cSR=\NN^+$ (as noted above). 
We state this observation as a lemma for ease of reference.

\begin{lemma}\label{lem:allowed:AP}
If $\cR$ corresponds to a bounded-size Achlioptas process, then $\cSR=\NN^+$, and $\per=\kR=1$. \noproof
\end{lemma}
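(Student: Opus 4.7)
The plan is to reduce everything to the single observation already hinted at just after~\eqref{eq:cSr}: for an Achlioptas process (a $4$-vertex rule with $\cR(\vc)\in\{\{1,2\},\{3,4\}\}$), evaluating at $\vc=(k,1,k,1)$ gives $s(\vc)=k+1$ \emph{regardless} of which of the two admissible pairs $\cR$ chooses. Combined with $1\in\cSR$, a trivial induction on $k$ then yields $k\in\cSR$ for all $k\ge 1$, i.e.\ $\cSR=\NN^+$. This is the only real content; the remaining two claims follow mechanically from Lemma~\ref{lem:allowed}.

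Concretely, I would first verify $\cSR=\NN^+$ by induction: the base case $k=1$ is the defining condition $1\in\cSR$, and for the inductive step I assume $k\in\cSR$ so that $(k,1,k,1)\in\cSR^{\ell}$ with $\ell=4$, and then apply the closure property $\vc\in\cSR^\ell\Rightarrow s(\vc)\in\cSR$ using $s(k,1,k,1)=k+1$ (which holds uniformly in the rule's choice because both $c_1+c_2$ and $c_3+c_4$ equal $k+1$). Next, for the period, Lemma~\ref{lem:allowed} gives an integer $\per\ge 1$ such that for all $k\ge\kR$, $k\in\cSR$ iff $\per\mid k$. Since $\cSR=\NN^+$ contains in particular every integer $k\ge\kR$, every such $k$ must be divisible by $\per$; taking two consecutive values $k$ and $k+1$ shows $\per\mid 1$, hence $\per=1$.

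Finally, with $\per=1$ established, the characterizing equivalence ``$k\in\cSR$ iff $\per\mid k$'' becomes ``$k\in\cSR$ iff $k\in\NN^+$'', which holds for every $k\ge 1$; so the minimal valid choice of $\kR$ is $\kR=1$. There is no genuine obstacle here — the only thing to be careful about is the convention that $s(\vc)$ is only defined ``in the case that the edge added does join two components'', which is automatic in our application since at $\vc=(k,1,k,1)$ the two endvertices selected by the rule necessarily lie in distinct components (a size-$1$ component and a size-$k$ component).
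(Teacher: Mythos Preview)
Your proposal is correct and matches the paper's approach exactly: the paper states the key identity $s(k,1,k,1)=k+1$ in the paragraph preceding the lemma, notes that induction gives $\cSR=\NN^+$, and then records $\per=\kR=1$ as an immediate consequence (the lemma is marked \noproof). Your only addition is spelling out why $\cSR=\NN^+$ forces $\per=1$ and $\kR=1$, which the paper leaves implicit; note that your closing caveat about $s(\vc)$ being defined only when two components are joined is unnecessary, since the paper defines $s(\vc)=c_{j_1}+c_{j_2}$ unconditionally as a function on $(\NN^+)^\ell$.
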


The component sizes in $\cSR$ are all those than can possibly appear in $G^{\cR}_{n,i}$; we next note
for any $k\in \cSR$, we will see many components of this size -- for $i=\Theta(n)$ and $n$ large enough, on average
a constant fraction of the vertices will be in components of size $k$. 
Recall that $N_{k}(tn) \approx \rho_k(t) n$ (see Lemma~\ref{lem:Nk2:t} and Theorem~\ref{thm:init}).

\begin{lemma}\label{lem:rhokS1}
Let $\cR$ be a bounded-size rule.
Let $(\rho_k)_{k \ge 1}$ be the functions defined in Lemma~\ref{lem:Nk2:t}.
If $t \in (0,t_1]$ and $k \ge 1$, then $\rho_k(t)>0$ if and only if $k \in \cSR$.
\end{lemma}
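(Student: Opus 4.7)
The plan is to establish both directions separately.

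For the \emph{only if} direction, suppose $k \notin \cSR$. By definition of the set of reachable sizes, the process $(G^{\cR}_{n,i})_{i \ge 0}$ never produces a component of size~$k$, so $N_k(i) = 0$ deterministically for all $i \ge 0$. The concentration estimate~\eqref{eq:Nk2:t} then forces $\rho_k(i/n) n \le (\log n) n^{1/2}$ whp in every step, and since $\rho_k$ does not depend on~$n$, letting $n \to \infty$ yields $\rho_k \equiv 0$ on $[0,t_1]$.

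For the \emph{if} direction, I argue by induction on the smallest $r \ge 0$ with $k \in \cS_r$, where $\cS_r$ is as in~\eqref{eq:cSr}. The base case $r=0$ is $k=1$, for which the proof of Lemma~\ref{lem:rhokder} already gives $\rho_1(t) \ge e^{-2\ell t} > 0$ on $(0,t_1]$. For the inductive step, fix $\vc^* \in \cS_r^\ell$ with $s(\vc^*) = k$ and write $\{j_1,j_2\} = \cR(\vc^*)$, so $c^*_{j_1} + c^*_{j_2} = k$ with both summands strictly less than $k$ (in particular less than $K$ when $k \le K$). The key is to isolate, in the ODE for $\rho_k$ (namely~\eqref{eq:der:rhok} when $k \le K$ or~\eqref{eq:der:rhok3} when $k > K$), one strictly positive term. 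Reduce $\vc^*$ to $\vc^\dagger$ in the appropriate index set by replacing each entry exceeding $K$ (respectively $k$) by $\omega$ (respectively $\ge k{+}1$). Because the rule is bounded-size the decision $\cR(\vc^\dagger) = \{j_1,j_2\}$ is preserved, and since both $c^*_{j_1}, c^*_{j_2}$ survive the reduction as exact values strictly less than $k$, the coefficient $\Delta_\rho^\cR(k,\vc^\dagger)$ equals exactly $+k$ (a new size-$k$ component is created, and no existing one can be destroyed). The corresponding product $\prod_j \rho_{\vc^\dagger_j}(t)$ is strictly positive on $(0,t_1]$: factors with $\vc^\dagger_j \in \cS_r$ are positive by the inductive hypothesis, factors equal to $\omega$ are positive by Lemma~\ref{lem:rhokder}, and factors equal to $\ge k{+}1$ (only occurring when $k > K$ and some $c^*_j > k$) are bounded below by $\rho_{c^*_j}(t)$, using $\rho_{\ge k+1}(t) = 1 - \sum_{j \le k}\rho_j(t) \ge \sum_{j > k}\rho_j(t) \ge \rho_{c^*_j}(t)$.

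To conclude, the only terms in the ODE for $\rho_k$ that can be negative come from destroying an existing size-$k$ component, and each such term carries a factor $\rho_k(t)$; since the sums in~\eqref{eq:der:rhok} and~\eqref{eq:der:rhok3} are finite and all involved densities are bounded, there is a constant $C_k$ with
\[
\rho'_k(t) \ge k\,\varepsilon(t) - C_k\,\rho_k(t), \qquad \varepsilon(t) := \prod_{j}\rho_{\vc^\dagger_j}(t) > 0 \text{ on } (0,t_1].
\]
Multiplying by $e^{C_k t}$ shows that $\rho_k(t) e^{C_k t}$ has strictly positive derivative on $(0,t_1]$, which together with $\rho_k(0) \ge 0$ gives $\rho_k(t) > 0$ throughout $(0,t_1]$. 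The main obstacle is the case distinction between $k \le K$ and $k > K$ and the verification that the ``lumped'' density ($\rho_\omega$ or $\rho_{\ge k+1}$) appearing in the reduced ODE is strictly positive; once these bookkeeping points are settled, the induction proceeds cleanly.
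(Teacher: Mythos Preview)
Your proof is correct, and both you and the paper handle the $k\notin\cSR$ direction identically and induct on the recursive level in~$\cS_r$ for the other direction. The difference lies in the inductive step. You work directly with the ODEs~\eqref{eq:der:rhok} and~\eqref{eq:der:rhok3}: you isolate a single strictly positive creation term, observe that every negative term carries a factor~$\rho_k(t)$, and integrate the resulting differential inequality $\rho_k'(t)\ge k\varepsilon(t)-C_k\rho_k(t)$, exactly in the spirit of~\eqref{eq:rhokder}. The paper instead argues probabilistically: it first establishes the survival bound $\rho_k(t)\ge\rho_k(t')e^{-k\ell(t-t')}$ by bounding the probability that an existing component is untouched over a time interval, then shows that on $[t/2,t]$ each step has probability $\Omega(\delta^\ell)$ of creating a fresh size-$k$ component that survives to time~$t$, yielding $\E N_k(tn)=\Omega(n)$ and hence $\rho_k(t)>0$ via~\eqref{eq:Nk:rhokS1}. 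The paper in fact remarks that your ODE route would work (``instead of adapting the differential inequality~\eqref{eq:rhokder} based proof of Lemma~\ref{lem:rhokder}\ldots''), but chooses the probabilistic one because it transfers more cleanly to the analogous statement for the~$q_{k,r}$ in Lemma~\ref{lem:allowed:kr}. Your approach has the advantage of staying entirely within the ODE framework, at the cost of the case distinction $k\le K$ versus $k>K$ and the bookkeeping for the reduction $\vc^*\mapsto\vc^\dagger$. One small point: your chain $1-\sum_{j\le k}\rho_j(t)\ge\sum_{j>k}\rho_j(t)$ uses $\sum_{j\ge 1}\rho_j(t)\le 1$, which does follow from $\rho_{\ge k}(t)\in[0,1]$ in Lemma~\ref{lem:Nk2:t}, but the cleaner justification is simply $\rho_{\ge k+1}(t)\ge\rho_{\ge c^*_j}(t)\ge\rho_{c^*_j}(t)$ directly from~\eqref{eq:der:rhok2} and non-negativity.
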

\begin{proof}
From Lemma~\ref{lem:Nk2:t} it follows that, for each fixed $k \ge 1$, whp we have
\begin{equation}\label{eq:Nk:rhokS1}
\max_{0 \le i \le t_1 n}|N_k(i)/n - \rho_k(i/n)| \le (\log n) n^{-1/2} = o(1) . 
\end{equation}
In the case $k \not\in \cSR$, by construction we have $N_k(i)=0$ for all $i \ge 0$ (with probability one), 
so $\rho_k(t)=0$ for all $t \in [0,t_1]$. 

We now turn to the case $k \in \cSR$.
Instead of adapting the differential inequality~\eqref{eq:rhokder} based proof of Lemma~\ref{lem:rhokder}, 
we here give a perhaps more intuitive alternative argument, which extends more easily to the functions $q_{k,r}$. 
If $0 \le i'<i$ and $C$ is a component of $G^{\cR}_{n,i'}$ with $k$ vertices, then $C$ is also a component of $G^{\cR}_{n,i}$
with (conditional) probability at least $(1-k/n)^{\ell(i-i')}$, simply by considering the event that none of the $\ell$ random
vertices in any of steps $i'+1,\ldots,i$ falls in $C$ (see Lemma~5 in~\cite{RWapcont} for similar reasoning). 
For~$k$ constant and $i-i'=O(n)$, this probability is $\exp(-k\ell (i-i')/n +o(1))$, 
so $\E(N_k(i) \mid G^{\cR}_{n,i'}) \ge N_k(i') \cdot \exp(-k\ell (i-i')/n +o(1))$. 
Together with~\eqref{eq:Nk:rhokS1} it follows easily that for $0 \le t' \le t$ we have
\begin{equation}\label{eq:decay}
 \rho_k(t)\ge \rho_k(t')e^{-k\ell(t-t')}.
\end{equation}

Define $\cS_r$ as in \eqref{eq:cSr}. We now show that for each $r\ge 0$, for every $k\in \cS_r$ and $t\in (0,t_1]$
we have $\rho_k(t)>0$. We prove this by induction on~$r$. The base case $r=0$ is immediate, since $\rho_1(0)=1$
and so $\rho_1(t)\ge e^{-\ell t}>0$ by~\eqref{eq:decay}. For the induction step, let $k\in \cS_{r+1}$ and $t\in (0,t_1]$.
Then there are $c_1,\ldots,c_\ell\in \cS_r$ with $s(\vc)=k$. By induction we have $\rho_{c_j}(t/2)>0$ for $j=1,\ldots,\ell$.
Hence, using \eqref{eq:decay}, there is some $\delta=\delta(k,t)>0$ such that $\rho_{c_j}(t')\ge \delta$ for all
$t'\in [t/2,t]$. For $n$ large enough,
in each step $i$ with $tn/2\le i\le tn$, by~\eqref{eq:Nk:rhokS1} we thus have probability at 
least, say, $\delta^\ell/2$ of selecting vertices in distinct components of sizes $c_1,\ldots,c_\ell$, and thus 
forming a component with $k$ vertices. Such a component, once formed, has probability at least $e^{-k \ell t/2+o(1)}$
of surviving to step $tn$, as above. It follows that $\E N_k(tn) \ge e^{-k\ell t/2+o(1)}\delta^\ell/2 \cdot t n/2=\Omega(n)$. 
By~\eqref{eq:Nk:rhokS1} this implies~$\rho_k(t) >0$. 
\end{proof}

As well as the possible component sizes, we need to consider the possible `sizes' of $(k,r)$-components that can
appear in the marked graph $H_i$ defined in Section~\ref{sec:exp} (see Figure~\ref{fig:exp}). 
Recall that $Q_{k,r}(tn) \approx q_{k,r}(t) n$ (see Lemmas~\ref{lem:U:t}--\ref{lem:Qkr:t} and Theorem~\ref{thm:Qkr}).

\begin{lemma}\label{lem:allowed:kr}
Let $\cR$ be a bounded-size rule with cut-off $K$. Define $(k,r)$-components as in Section~\ref{sec:exp},
and let $(q_{k,r})_{k\ge 1, r\ge 0}$ be the functions defined in Lemma~\ref{lem:Qkr:t}.
Then there is a set $\cS^*_{\cR}\subset \NN^2$ with the following properties:
\begin{enumerate}[label=\upshape(\roman*)] 
\item\label{allowed:1}  the marked graph $H_i$ can only contain $(k,r)$-components with $(k,r)\in \cSkr$,

\item\label{allowed:2} $(0,r)\in \cS^*_{\cR}$ if and only if $r=2$,

\item\label{allowed:pos} for $t\in (t_0,t_1]$, $k\ge 1$ and $r\ge 0$ we have $q_{k,r}(t)>0$ if and only if $(k,r)\in \cS^*_{\cR}$, 

\item\label{allowed:r0} $(k,0)\in \cSkr$ if and only if $k\in \cSR$,

\item\label{allowed:r1} if $k>K$, $r\ge 0$ and $k\in \cSR$, then $(k,r)\in \cSkr$, and 

\item\label{allowed:per} if $(k,r)\in \cSkr$ and $r\ge 1$, then $k$ is a multiple of $\per$.

\end{enumerate}
\end{lemma}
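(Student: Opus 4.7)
The plan is to define $\cSkr$ combinatorially as the set of all $(k,r) \in \NN^2$ such that a $(k,r)$-component can actually arise as a component of the marked graph $H_i$ for some $n$ large enough and some $i_0 \le i \le i_1$, together with the formal point $(0,2)$ that encodes an added $V_L$--$V_L$ edge. Item \ref{allowed:1} is then the definition itself, and \ref{allowed:2} follows because a size-zero entry in our counting arises only from an added $V_L$--$V_L$ edge, contributing exactly $(0,2)$.

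For \ref{allowed:r0}, a $(k,0)$-component of $H_i$ has no incident $V_S$--$V_L$ edge, and since $H_i$ already contains all $V_S$--$V_S$ edges of $G_i$, it is a component of $G_i$ lying entirely within $V_S$ of size $k$, hence $k \in \cSR$; conversely, every $k \in \cSR$ can be realized by a sequence of $V_S$--$V_S$ joins mirroring the inductive definition of $\cSR$, starting from isolated $V_S$-vertices present at step~$i_0$ (available by Lemma~\ref{lem:rhokder}), with the required vertex landings occurring with positive probability via the concentration in Lemma~\ref{lem:Nk:t}. For \ref{allowed:pos}, the direction $(k,r) \notin \cSkr \Rightarrow q_{k,r} \equiv 0$ is immediate from Theorem~\ref{thm:Qkr}, since then $Q_{k,r}(i) \equiv 0$. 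For the converse I would adapt the decay argument in the proof of Lemma~\ref{lem:rhokS1}: given $(k,r) \in \cSkr$ with $k \ge 1$ and any fixed $t \in (t_0, t_1]$, there is a finite combinatorial sequence producing a $(k,r)$-component from a small seed, and Lemmas~\ref{lem:Nk:t} and~\ref{lem:rhokder} ensure the required configuration of sizes is available with probability bounded below in each of $\Omega(n)$ disjoint windows of steps inside $[i_0, tn]$; combined with a constant lower bound on the probability that such a component survives, this gives $\E Q_{k,r}(tn) = \Omega(n)$, and hence $q_{k,r}(t) > 0$ via Theorem~\ref{thm:Qkr}.

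For \ref{allowed:r1}, item \ref{allowed:r0} supplies $(k,0) \in \cSkr$ whenever $k \in \cSR$ and $k > K$. To pass from $(k,r)$ to $(k,r+1)$ we need the rule to add a $V_S$--$V_L$ edge whose $V_S$-endpoint lies in the given $(k,r)$-component. Lemma~\ref{lem:w} yields $u(t)>0$ on $(t_0, t_1]$, forcing the rule to produce stubs at positive rate; since a $(k,r)$-component with $k > K$ has $G_i$-size $\omega$, it is indistinguishable to the rule from any other large $V_S$-component, and so stubs are attached to it at positive rate, giving $(k,r+1) \in \cSkr$ by induction on~$r$.

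The main obstacle is \ref{allowed:per}, which I would prove by induction on $i \ge i_0$ with the invariant that every $(k,r)$-component in $H_i$ with $r \ge 1$ satisfies $\per \mid k$. An easy auxiliary consequence of Lemma~\ref{lem:allowed} is that every $G_i$-component of size $> K$ has $V_L$-part of size divisible by~$\per$, since the $V_L$-part is a sum of $V_L$-components of $G_{i_0}$, each of which has size $> K$, lies in $\cSR$, and is therefore a multiple of $\per$. The induction splits into cases on the edge type added. The $V_L$--$V_L$ case is trivial. For a $V_S$--$V_L$ stub addition to a $(k,0)$-component with $k \le K$, the resulting $G_i$-component has size $k + M$ where $M > K$ is the size of the $V_L$-side's $G_{i-1}$-component, and by Lemma~\ref{lem:allowed} both $M$ and $k+M$ are multiples of $\per$, forcing $\per \mid k$. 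The other cases, including $V_S$--$V_S$ joins of $(k_1,r_1)$ with $(k_2,0)$ when $r_1 \ge 1$, reduce to the same idea of subtracting two multiples of $\per$ obtained from Lemma~\ref{lem:allowed} applied to size-$>K$ $G$-components, and the argument closes.
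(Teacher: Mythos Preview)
Your proposal is correct and, for \ref{allowed:1}--\ref{allowed:r0}, matches the paper almost exactly. Two points are worth flagging.

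For \ref{allowed:r1}, your appeal to Lemma~\ref{lem:w} is a red herring: $u(t)>0$ is already secured by the $q_{0,2}$ contribution (Lemma~\ref{lem:U:t}) and says nothing about stubs on $V_S$-components. The paper's argument is the direct one already implicit in your ``indistinguishability'' remark: since $k>K$, the $(k,r)$-component has size~$\omega$ to the rule, so in a step where $v_{j_1}$ lands in it and all other vertices land in~$V_L$ (all sizes~$\omega$), the fixed choice $\{j_1,j_2\}=\cR(\omega,\ldots,\omega)$ adds a $V_S$--$V_L$ edge to the component. Your argument closes once you drop the detour through~$u(t)$.

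For \ref{allowed:per} you take a genuinely different route. You argue by induction on~$i$, tracking the invariant that every $(k,r)$-component with $r\ge 1$ has $\per\mid k$, and handle each edge-addition case by expressing a suitable $k_j$ as the difference of two $G$-component sizes larger than~$K$ (hence both multiples of~$\per$ by Lemma~\ref{lem:allowed}). This works. The paper instead gives a one-shot argument: given $(k,r)\in\cSkr$ with $r\ge 1$, realize an outcome in which the $r$ stubs attach to $r$ distinct $V_L$-components each of size~$m\per$ (available for all large~$m$), not touched by any other stubs; the resulting $G_i$-component then has size exactly $k+rm\per\in\cSR$ for all large~$m$, and Lemma~\ref{lem:allowed} forces $\per\mid k$. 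Your inductive proof is more systematic but requires a case split; the paper's possibility trick is shorter and avoids tracking the process step by step.
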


\begin{proof}
We define $\cSkr$ to be the set of all pairs $(k,r)$ such that it is possible (for some $n$ and $i$)
for a $(k,r)$-component to appear in the marked graph $H_i$, together with the exceptional pair $(0,2)$, 
whose inclusion is convenient later. Thus properties \ref{allowed:1} and \ref{allowed:2} hold by definition;
pairs $(0,r)$ play no role in the other properties.

By Lemma~\ref{lem:Qkr:t} we have $q_{1,0}(t_0)=\rho_{1}(t_0)/1>0$, 
and whp $\max_{i_0 \le i \le i_1}|Q_{k,r}(i)/n-q_{k,r}(i/n)|=o(1)$ for fixed $k \ge 1$ and $r \ge 0$. 
It is clearly possible to give a recursive description of $\cSkr \setminus \{(0,2)\}$ similar to that for~$\cSR$, 
starting with~$\{(1,0)\}$.
Property \ref{allowed:pos} thus follows by an argument analogous to the proof of Lemma~\ref{lem:rhokS1}; 
we omit the details. 

For property \ref{allowed:r0}, note that a $(k,0)$ component in $H_i$ is a $k$-vertex component in $G_i$,
so $(k,0)\in \cSkr$ implies $k\in \cSR$.
In the reverse direction, if $k\in \cSR$ then there is a finite sequence of steps (corresponding
to the recursive description of $\cSR$) by which a $k$-vertex component may be `built' from isolated
vertices. This sequence is also possible starting after step $i_0$, using isolated vertices in $V_S$.
This shows that $(k,0)\in \cSkr$. 

A slight extension of the previous argument proves \ref{allowed:r1}. Indeed,
if $k\in \cSR$ and $k>K$, then it is possible for a $(k,0)$ component $C$ to form in $V_S$.
Since $k>K$ (so this component is large) it is possible in a later step for an edge to be added
joining $C$ to $V_L$; this can happen any number of times.

Finally, for \ref{allowed:per}, note that $V_L$ may contain components of any size $k\in \cSR$ with $k>K$. This includes
all large enough multiples of $\per$. Since a $(k,r)$-component may join to $r$ such components, which
may happen not to be joined to any other $(k',r')$-components,
we see that if $(k,r)\in \cSkr$, then $k+rm\per\in \cSR$ for all large
enough $m$. By Lemma~\ref{lem:allowed}, it follows that $k$ is itself a multiple of $\per$.
\end{proof}

\newoddpage

\section{Component size distribution: coupling arguments}\label{sec:cpl}
In this section we study a variant of the random graph~$J_i=J(\fS_i)$ introduced in Section~\ref{sec:exp}. 
Our goal is to relate the component size distribution of $J_i$ to a `well-behaved' branching process~$\bp_{i/n}$; our analysis hinges on a step-by-step neighbourhood exploration process.  
The general idea of comparing such exploration processes with a branching process is nowadays standard, although the details are more involved than usual. 
In this section we take a `static' viewpoint, considering a single value of $i=i(n)$, which we assume throughout to lie in the range
\[
 i_0\le i\le i_1
\]
with $i_0$ and $i_1$ defined as in \eqref{def:i0i1i2}; the associated `time' $t=i/n$ satisfies $t \in [t_0,t_1]$.
In terms of the overall plan, in the previous section we studied properties of the marked graph $H_i$ illustrated in Figure~\ref{fig:exp}, and here we study what happens in phase two of our exploration, when we connect each $(k,r)$-component to $r$ random vertices in $V_L$. It will be convenient to think of each $(k,r)$-component as similar to a hyperedge in $V_L$ with $r$ vertices; we will `Poissonize' so that we can work in a hypergraph with independence between edges, rather than a given number of edges.

Although in the end we wish to analyze $J(\fS_i)$, we will also consider 
random graphs constructed from other parameter lists $\fS$, motivating the following definition.
\begin{definition}\label{def:paramlist}
A \emph{parameter list} $\fS$ is an ordered pair
\begin{equation}\label{def:fS:rep}
 \fS=\Bigl(\bigl(N_{k}\bigr)_{k > K}, \: \bigl(Q_{k,r}\bigr)_{k,r \ge 0} \Bigr) 
\end{equation}
where each $N_k$ is an integer multiple of $k$ and each $Q_{k,r}$ is a non-negative real number.
We always assume, without further comment,
that only finitely many of the $N_k$ and $Q_{k,r}$ are non-zero.
\end{definition}

An important example is the random parameter list $\fS_i$ defined in \eqref{def:fS}, arising 
from the random graph process process $G^{\cR}_{n,i}$ after $i$ steps.
In this case, each $Q_{k,r}\in \NN$, but
it will be useful to allow non-integer values for the $Q_{k,r}$ later.
Given a parameter list as above, we 
denote the individual parameters in~$\fS$ by~$N_k=N_k(\fS)$ and~$Q_{k,r}=Q_{k,r}(\fS)$.

\begin{definition}[Initial graph~$H$ and random graph~$J$]\label{def:J}
Let $\fS=\bigl((N_{k})_{k > K}, \: (Q_{k,r})_{k,r \ge 0} \bigr)$ be a parameter list with $N_k,Q_{k,r} \in \NN$.
We define the \emph{initial graph} $H=H(\fS)$ as follows.
For each $k \ge 1$ and $r\ge 0$ take $Q_{k,r}$ type-$(k,r)$ components (i.e., components with~$k$ vertices and~$r$ `stubs');
the union of their vertex sets is $V_S=V_S(\fS)$. In addition, take~$N_{k}/k$
components of size~$k$ for each $k > K$;
the union of their vertex sets is $V_L=V_L(\fS)$. The \emph{order} of~$\fS$ is
\[
 |\fS| :=  |V_S| + |V_L| = \sum_{k\ge 1,\,r\ge 0} k Q_{k,r} + \sum_{k>K} N_k.
\]
We construct the \emph{random graph $J=J(\fS)$} by (i)~connecting each stub of each $(k,r)$-component in~$H$ to an independent random vertex in~$V_L$, and (ii)~for each $r \ge2$ adding $Q_{0,r}$ random hyperedges $(x_1, \ldots, x_r) \in (V_L)^r$ to $H$, where the vertices $x_j$ are all chosen independently and uniformly at random from~$V_L$.
\end{definition}

When $Q_{0,r}=0$ for $r\ne 2$, as is the case for the random parameter list $\fS_i$ defined in \eqref{def:fS},
the construction above is exactly the construction described in Section~\ref{sec:exp} (see also Figure~\ref{fig:exp});
the slightly more general form here avoids unnecessary case distinctions later.
Hence, we may restate Lemma~\ref{lem:cond} as follows.
\begin{lemma}[Conditional equivalence]\label{lem:cond'}
Let $i=i(n)$ satisfy $i_0\le i\le i_1$, and let $\fS_i$ be the random parameter list
generated by the random graph process $G^{\cR}_{n,i}$.
Then, conditional on $\fS_i$, the random graphs
$J(\fS_i)$ and $G_i=G^{\cR}_{n,i}$ have the same component size distribution. \noproof
\end{lemma}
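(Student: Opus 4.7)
The plan is to reduce the claim to Lemma~\ref{lem:cond} (and ultimately Lemma~\ref{lem:exp}) by checking that, for the specific random parameter list $\fS_i$ arising from~$G^{\cR}_{n,i}$, Definition~\ref{def:J} reproduces exactly the construction described informally in Section~\ref{sec:exp} and pictured in Figure~\ref{fig:exp}. The generalization in Definition~\ref{def:J} (allowing $Q_{0,r}$ with $r\neq 2$) plays no role here; it is included for convenience in later arguments.

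First I would observe that $\fS_i$ as defined in \eqref{def:fS} has $Q_{0,r}(i)=0$ for every $r\neq 2$ (this follows from the convention introduced just after \eqref{eq:Qkr:init}, since components of~$H_i$ with no $V_S$-vertices correspond to $V_L$--$V_L$ edges and thus carry exactly two stubs). Consequently, step~(ii) of Definition~\ref{def:J} amounts to adding $Q_{0,2}(i)$ random $V_L$--$V_L$ edges, and step~(i) amounts to connecting each stub of the marked graph to an independent uniform vertex of~$V_L$. Both operations coincide with those in Section~\ref{sec:exp}.

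Next I would invoke Lemma~\ref{lem:exp}: conditional on $G_{i_0}$ and on the first exposure round $\fE_1(i_0,i_1)$, the marked graph~$H_i$ is determined, with component-type multiplicities given by $\bigl(Q_{k,r}(i)\bigr)_{k\ge 1,\,r\ge 0}$ and $V_L$-component sizes given by $\bigl(N_k(i_0)\bigr)_{k>K}$; moreover the $V_L$-endpoints revealed in the second exposure round are independent uniform samples from~$V_L$. Since these are precisely the data recorded in $\fS_i$, and since the internal labelling of vertices within each component of~$H_i$ (or within each abstract component built by Definition~\ref{def:J}) is irrelevant for the resulting component sizes, the conditional law of the component-size multiset of~$G_i$ given~$\fS_i$ agrees with the law of the component-size multiset of~$J(\fS_i)$.

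The only bookkeeping subtlety is the distinction between the abstract initial graph~$H(\fS_i)$ underlying~$J(\fS_i)$ and the concretely labelled marked graph~$H_i\subseteq G_i$; but this is handled by the vertex-relabelling symmetry just noted. I do not anticipate any real obstacle: the lemma is essentially a rewording of Lemma~\ref{lem:cond} in the slightly more general language of Definition~\ref{def:paramlist}, so the argument above is just a matter of matching definitions and invoking the already-proved Lemma~\ref{lem:exp}.
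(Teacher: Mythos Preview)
Your proposal is correct and matches the paper's approach exactly: the paper simply observes (in the sentence preceding the lemma) that when $Q_{0,r}=0$ for $r\neq 2$, as holds for $\fS_i$, Definition~\ref{def:J} reduces to the construction in Section~\ref{sec:exp}, so Lemma~\ref{lem:cond'} is just a restatement of Lemma~\ref{lem:cond}. The paper accordingly marks the lemma with \texttt{\textbackslash noproof}; your write-up spells out this identification in more detail than the paper does, but the content is the same.
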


For technical reasons it will often be convenient to work with a `Poissonized' version of~$J(\fS)$. 
\begin{definition}[Poissonized random graph~$\JP$]\label{def:F}
Let $\fS=\bigl((N_{k})_{k > K}, \: (Q_{k,r})_{k,r \ge 0} \bigr)$ be a parameter list with $N_k \in \NN$ and $Q_{k,r} \in [0,\infty)$.
The \emph{Poissonized random graph $\JP=\JP(\fS)$} 
is defined exactly as in Definition~\ref{def:J}, except that the numbers of type-$(k,r)$ components
are now independent Poisson random variables with mean~$Q_{k,r}$. 
\end{definition}
It will be useful to think of the $(k,r)$--components (together with their~$r$ adjacent edges) as distinct $r$-uniform hyperedges with weight~$k$, i.e., with $k$~attached $V_S$--vertices. 
Indeed, this point of view unifies~(i) and~(ii) from our construction: both then correspond to hyperedges $g=(x_1, \ldots, x_r) \in (V_L)^r$ of weight~$k$, with independent $x_j \in V_L$. 
Using standard splitting properties of Poisson processes, it now is easy to see that, for all $k \ge 0$ and $r \ge 0$, hyperedges $g=(x_1, \ldots, x_r) \in (V_L)^r$ of weight~$k$, henceforth referred to as \emph{$(k,r)$--hyperedges}, appear in $\JP(\fS)$ according to independent Poisson processes with rate
\begin{equation}\label{eq:rates:qkr}
\hlambda_{k,r} = \hlambda_{k,r}(\fS) := Q_{k,r}/|V_L|^r = Q_{k,r}(\fS)/|V_L(\fS)|^r ,
\end{equation}
where $Q_{k,r}$ and $|V_L|=\sum_{k > K}N_{k}$ are determined by the parameter list $\fS$. This is essentially a version of the inhomogeneous
random hypergraph model of Bollob\'as, Janson and Riordan~\cite{BJR}, though with the extra
feature of weights on the hyperedges, and built on top of the (deterministic, when $\fS$ is given) initial graph 
\begin{equation}\label{HLdef}
 H_L = H_L(\fS) := H[V_L] = H(\fS)[V_L(\fS)] ,
\end{equation}
i.e., the graph on $V_L$ consisting of $N_k/k$ components each size $k>K$, see also Figure~\ref{fig:initial}.
It is this model that we shall work with much of the time. 

\begin{figure}[t]
\centering
  \setlength{\unitlength}{1bp}%
  \begin{picture}(278.42, 95.58)(0,0)
  \put(0,0){\includegraphics{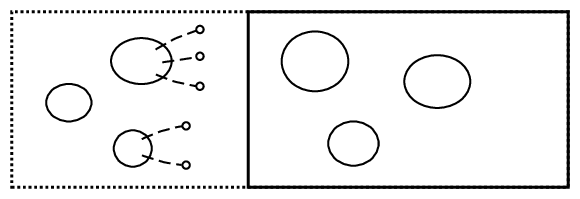}}
  \put(258.55,77.24){\fontsize{11.38}{13.66}\selectfont \makebox[0pt]{$V_L$}}
  \put(151.33,62.98){\fontsize{9.96}{11.95}\selectfont \makebox[0pt]{$30$}}
  \put(63.20,21.24){\fontsize{9.96}{11.95}\selectfont \makebox[0pt]{$0$}}
  \put(67.92,62.97){\fontsize{9.96}{11.95}\selectfont \makebox[0pt]{$9$}}
  \put(18.89,77.24){\fontsize{11.38}{13.66}\selectfont \makebox[0pt]{$V_S$}}
  \put(169.63,23.07){\fontsize{9.96}{11.95}\selectfont \makebox[0pt]{$15$}}
  \put(209.38,53.11){\fontsize{9.96}{11.95}\selectfont \makebox[0pt]{$20$}}
  \put(32.69,43.14){\fontsize{9.96}{11.95}\selectfont \makebox[0pt]{$5$}}
  \put(239.77,11.54){\fontsize{11.38}{13.66}\selectfont \makebox[0pt]{$H_L=H[V_L]$}}
  \end{picture}%
	\caption{\label{fig:initial} 
	Example of the initial (marked) graph~$H=H(\fS)$ and the induced subgraph~$H_L=H[V_L]$. $H_L$ is a standard (unmarked)
graph, consisting of $N_k(\fS)/k$ components of size $k$ for each $k>K$. In $H$, for each $k,r \ge 0$ there are also $Q_{k,r}$ (or $\Po(Q_{k,r})$ in the Poissonized
model) $(k,r)$-components: $k$-vertex components with $r$ stubs.}
\end{figure}

In this section our main focus is on parameter lists which satisfy the typical properties of those arising from
$(G^{\cR}_{i})_{i_0 \le i \le i_1}$ derived in Section~\ref{sec:setup}, which we now formalize.
\begin{definition}[$t$-nice parameter lists]\label{def:nice}
For $t\in [t_0,t_1]$ we say that a parameter list 
$\fS=\bigl((N_{k})_{k > K}, \: (Q_{k,r})_{k,r \ge 0} \bigr)$
is \emph{$t$-nice} if it satisfies $N_k,Q_{k,r} \in \NN$ and the following conditions,
where the constants $D_N, A, a$ and $D_Q, B, b$
are as in Theorems~\ref{thm:init} and~\ref{thm:Qkr}:
\begin{align}
\label{eq:Nk:t0'}
|N_k-\rho_k(t_0)n| &\le (\log n)^{D_N} n^{1/2} \hspace{-5.0em}&\hspace{-5.0em}&\hspace{-5.0em} \forall \: k>K,\\
\label{eq:Qkr'}
|Q_{k,r}-q_{k,r}(t)n| &\le (\log n)^{D_Q} n^{1/2} \hspace{-5.0em}&\hspace{-5.0em}&\hspace{-5.0em} \forall \: k,r\ge 0,\\
\label{eq:Nk:t0:tail'}
N_{\ge k} &\le A e^{-ak}n \hspace{-5.0em}&\hspace{-5.0em}&\hspace{-5.0em} \forall \: k>K,\\
\label{eq:Qkr:tail'}
Q_{k,r} &\le B e^{-b(k+r)}n \hspace{-5.0em}&\hspace{-5.0em}&\hspace{-5.0em} \forall \: k,r\ge 0,\\
\label{eq:nice:n}
|\fS| &= n,
\end{align}
\begin{equation}
\label{eq:nice:allowed}
 N_k>0 \implies k\in \cSR \quad\text{and}\quad Q_{k,r}>0 \implies (k,r)\in \cSkr.
\end{equation}
\end{definition}
Note that the definition involves~$n$, so formally we should write \emph{$(n,t)$-nice}. However, the value of~$n$ should always be clear from context.
When the value of~$t$ is not so important, we sometimes write \emph{nice} rather than $t$-nice. 
The final condition above is technical, and simply expresses that the component sizes/types in a $t$-nice 
parameter list are ones that could conceivably arise in our random graph process (see Section~\ref{sec:period}).
\begin{lemma}[Parameter lists of $G_i$ are typically nice]\label{lem:nice}
Let $\cN_i$ denote the event that the random parameter list $\fS_i=\fS(G^{\cR}_{n,i})$ defined in \eqref{def:fS} is $t$-nice, where~$t=i/n$,
and let $\cN = \bigcap_{i_0 \le i \le i_1} \cN_i$. Then
\begin{equation*}
 \Pr(\neg\cN) = O(n^{-99}) .
\end{equation*}
\end{lemma}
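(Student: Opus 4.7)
The plan is to prove Lemma~\ref{lem:nice} by a straightforward union bound: each of the six conditions in Definition~\ref{def:nice} corresponds almost verbatim to an estimate already established in Sections~\ref{sec:DEM} and~\ref{sec:BPA}. Recall that the random parameter list $\fS_i=\bigl((N_{k}(i_0))_{k>K},(Q_{k,r}(i))_{k,r\ge 0}\bigr)$ depends on $i$ only through its $Q$-components, and that by convention $Q_{0,r}(i)\equiv 0$ for $r\ne 2$, matching $q_{0,r}\equiv 0$; in particular \eqref{eq:Qkr'} is trivial for $(k,r)$ with $k=0$ and $r\ne 2$.

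First I would introduce the event $\cE_N$ that the conclusions \eqref{eq:Nk:t0}--\eqref{eq:Nk:t0:tail} of Theorem~\ref{thm:init} hold for all $k\ge 1$ (in particular at $i=i_0$), the event $\cE_Q$ that the conclusions \eqref{eq:Qkr}--\eqref{eq:qkr:tail} of Theorem~\ref{thm:Qkr} hold uniformly in $k\ge 1$, $r\ge 0$ and $i_0\le i\le i_1$, and the event $\cE_U$ of Lemma~\ref{lem:U:t}. By a direct union bound, $\Pr(\cE_N\cap \cE_Q\cap \cE_U)\ge 1-O(n^{-99})$ (since the probability in Lemma~\ref{lem:U:t} is $1-n^{-\omega(1)}$). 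On $\cE_N$, the bounds \eqref{eq:Nk:t0'} and \eqref{eq:Nk:t0:tail'} on $N_k=N_k(i_0)$ hold simultaneously for every $i\in[i_0,i_1]$, as these parameters do not depend on $i$. On $\cE_Q$, the bounds \eqref{eq:Qkr'} and \eqref{eq:Qkr:tail'} hold throughout the interval whenever $k\ge 1$, and on $\cE_U$ the concentration estimate \eqref{eq:Qkr'} also covers the special case $(k,r)=(0,2)$ (possibly after slightly enlarging $D_Q$). The exponential tail bound \eqref{eq:Qkr:tail'} in the sole remaining case $(k,r)=(0,2)$ is trivial: $Q_{0,2}(i)\le i-i_0\le 2\sigma n$, which is at most $Be^{-2b}n$ after enlarging $B$ (if necessary) to accommodate this single extra constant.

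The remaining two conditions of Definition~\ref{def:nice} are deterministic. Condition \eqref{eq:nice:n} is the identity $|\fS_i|=|V_L|+|V_S|=\sum_{k>K}N_k(i_0)+\sum_{k\ge 1,\,r\ge 0}kQ_{k,r}(i)=n$, which was already recorded in \eqref{eq:NkQkr:sum}. Condition \eqref{eq:nice:allowed} is immediate from the construction: $N_k(i_0)>0$ forces $k\in \cSR$ by the recursive definition \eqref{eq:cSr} of the set of reachable component sizes, while $Q_{k,r}(i)>0$ forces $(k,r)\in\cSkr$ by Lemma~\ref{lem:allowed:kr}\ref{allowed:1}.

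Combining these observations, on $\cE_N\cap \cE_Q\cap \cE_U$ every $\fS_i$ with $i_0\le i\le i_1$ is $t$-nice (for $t=i/n$), so $\cN\supseteq \cE_N\cap \cE_Q\cap \cE_U$ and hence $\Pr(\neg\cN)=O(n^{-99})$, as required. There is no substantive obstacle here beyond bookkeeping: the definition of a $t$-nice parameter list was tailored precisely to match the concentration and tail estimates of Theorems~\ref{thm:init} and~\ref{thm:Qkr} (together with Lemma~\ref{lem:U:t} for the one parameter they do not directly cover), so the lemma reduces to combining those ingredients.
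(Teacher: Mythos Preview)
Your proposal is correct and follows essentially the same approach as the paper's proof: both invoke Theorem~\ref{thm:init} for the $N_k$ conditions, Theorem~\ref{thm:Qkr} for the $Q_{k,r}$ conditions with $k\ge 1$, Lemma~\ref{lem:U:t} for the $(0,2)$ case (possibly enlarging $D_Q$), handle the tail bound at $(0,2)$ trivially by enlarging $B$, and obtain the deterministic conditions \eqref{eq:nice:n} and \eqref{eq:nice:allowed} from \eqref{eq:NkQkr:sum} and the definitions of $\cSR$, $\cSkr$.
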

\begin{proof}
By Theorem~\ref{thm:init}, relations  \eqref{eq:Nk:t0'} and \eqref{eq:Nk:t0:tail'} hold
with the required probability. Similarly, Theorem~\ref{thm:Qkr} gives
\eqref{eq:Qkr'} and \eqref{eq:Qkr:tail'} for $k\ge 1$. For $k=0$, we have $Q_{k,r}(i)=0$
unless $r=2$. When $k=0$ and $r=2$, the bound \eqref{eq:Qkr'} holds with the required probability by
Lemma~\ref{lem:U:t} (increasing $D_Q$ if necessary), and \eqref{eq:Qkr:tail'} holds trivially
if $B\ge e^{2b}$, which we may assume. Finally, \eqref{eq:nice:n} follows from~\eqref{eq:NkQkr:sum},
and \eqref{eq:nice:allowed} from the definitions of~$\cSR$ and~$\cSkr$ in Section~\ref{sec:period}.
\end{proof}

For later reference, we collect some convenient properties of nice parameter lists. 
First, since there exists $k^*\in \cSR \setminus [K]$ with $\rho_{k^*}(t)>0$ (see Lemmas~\ref{lem:allowed} and~\ref{lem:rhokS1}),
using~\eqref{eq:Nk:t0'} and~\eqref{eq:nice:n} we deduce that there is an absolute constant $\zeta > 0$ such that
for $n$ large enough, every $t$-nice parameter list satisfies
\begin{equation}\label{eq:nice:VL}
 \zeta n \le |V_L| \le n .
\end{equation}
Second, letting $B_0=2/b$ where $b$ is the constant in~\eqref{eq:Qkr:tail'}, since $Q_{k,r}$ is an integer,
for $n$ large enough~\eqref{eq:Qkr:tail'} implies 
\begin{equation}\label{eq:Qkr:stop}
 Q_{k,r} =0 \quad\text{whenever}\quad k+r\ge B_0\log n,
\end{equation}
say. Arguing similarly for $N_k$ (using \eqref{eq:Nk:t0:tail'}), and stating only a crude bound to avoid dealing with the constants,
for~$n \ge n_0(a,A,b,B,\zeta)$ any $t$-nice parameter list satisfies
\begin{equation}\label{eq:small:Psi}
 \max_{k \ge \Psi}N_{k}=0, \quad \max_{k+r \ge \Psi}Q_{k,r}=0 \quad\text{and}
\quad \max_{k,r} Q_{k,r}\le \Psi |V_L|,
\end{equation}
where $\Psi := (\log n)^2$; for the final bound we simply use $Q_{k,r}\le Bn$ from \eqref{eq:Qkr:tail'} and \eqref{eq:nice:VL}. 
Throughout this section we shall always, without further comment,
assume that $n$ is large enough such that every $t$-nice parameter list 
satisfies~\eqref{eq:nice:VL}--\eqref{eq:small:Psi}.

The remainder of this section is organized as follows; throughout we consider $\fS$ which is $t$-nice, for some $t\in [t_0,t_1]$.
First, in Section~\ref{sec:nep} we introduce a neighbourhood exploration process for $\JP(\fS)$, which we couple with an `idealized' branching process~$\bp_t$ in Section~\ref{sec:BPI}. 
Next, in Section~\ref{sec:dom} we show that $J(\fS)$ can be `sandwiched' between two instances of~$\JP(\cdot)$, say $\JP(\fS^-_t) \subseteq J(\fS) \subseteq \JP(\fS^+_t)$, which we are able to study via associated `dominating' branching processes~$\bp^{\pm}_t$. 
Later, in Section~\ref{sec:proof}, we will use Lemmas~\ref{lem:cond'} and~\ref{lem:nice} to transfer properties of $J(\fS)$ back to the original random graph
process.

\subsection{Neighbourhood exploration process}\label{sec:nep} 
In this subsection we introduce a neighbourhood exploration process which initially may be coupled \emph{exactly} with a certain branching process (defined in Section~\ref{sec:BPI}).
With an eye on our later arguments we shall consider an arbitrary parameter list $\fS=\bigl((N_{k})_{k > K}, \: (Q_{k,r})_{k,r \ge 0} \bigr)$.
We intuitively start the exploration of $\JP=\JP(\fS)$ with a random vertex from $V_S \cup V_L$, but in the Poissonized model this requires some care,
since $V_S$ is a random set. 
For this reason we first discuss the exploration starting from a \emph{set} $W$ of vertices from $V_L$, where $W$ is a union of components of~$H_L=H_L(\fS)$, deferring the details of the initial generation to Section~\ref{sec:nep:fg}.

\subsubsection{Exploring from an initial set}\label{sec:nep:set}

Writing, as usual, $C_v(G)$ for the (vertex set of) the component of the graph $G$ containing a given vertex $v$, 
let 
\[
 C_W(\JP) := \bigcup_{v\in W} C_v(\JP).
\]
The basic idea is that each vertex $v \in V_L$ has neighbours in~$V_S$ and in~$V_L$. 
Indeed, via each $(k,r)$--hyperedge $(x_1, \ldots, x_r)$ containing~$v$ we reach $k$~new $V_S$--vertices and find up to $r-1$ new neighbours $\{x_1, \ldots, x_r\} \setminus \{v\}$ in~$V_L$ (there could be fewer if there are clashes). 
Repeating this exploration iteratively, we eventually find $C_W(\JP)$, see Figure~\ref{fig:nep} (see also Figure~\ref{fig:exp} for the related graph~$H_i$).

Turning to the details of the exploration process, we shall maintain sets of \emph{active} and \emph{explored} vertices in $V_L=V_L(\fS)$, as well as the \emph{number} of reached vertices from~$V_S=V_S(\fS)$.
After step $j$ of the exploration we denote these by
$\cA_j$, $\cE_j$ and $S_j$, respectively; note that $\cA_j\cup \cE_j$, the set of `reached' vertices in $V_L$, 
will always be a union of components of $H_L$.
Initially, given a union $W \subseteq V_L$ of components of~$H_L$, we start with the active set $\cA_0=W$, the explored set $\cE_0=\emptyset$, and some initial number $S_0 \in \NN$ (it will later be convenient to allow $S_0>0$). 
In step $j \ge 1$, we pick an active vertex $v_j \in \cA_{j-1}$.
For each $k \ge 0$ and $r \ge 1$ we then proceed as follows, see also Figure~\ref{fig:nep}. 
We sequentially test the presence and multiplicity of each so-far untested $(k,r)$-hyperedge $g\in (V_L)^r$ of the form $(v_j,w_1, \ldots, w_{r-1})$, \ldots, $(w_1, \ldots, w_{r-1},v_j)$, and denote the resulting multiset of `newly found' hyperedges by $\cH_{j,k,r}$.
Now, for each hyperedge $g \in \cH_{j,k,r}$ we increase the number of $V_S$--vertices reached by~$k$, and
mark all `newly found' vertices, i.e., all vertices in $\bigcup_{1 \le h \le r-1}C_{w_h}(H_L) \setminus (\cA_{j-1} \cup \cE_{j-1})$, as active.
Finally, we move the vertex~$v_j$ from the active set to the explored set. 
As usual, we stop the above exploration process if $|\cA_{j}|=0$, in which case 
\begin{gather}
\label{eq:inv:final}
\cE_j = C_W(\JP) \cap V_L \quad \text{ and } \quad S_j = |C_W(\JP) \cap V_S|+S_0 .
\end{gather}

\begin{figure}[t]
\centering
  \setlength{\unitlength}{1bp}%
  \begin{picture}(187.29, 95.60)(0,0)
  \put(0,0){\includegraphics{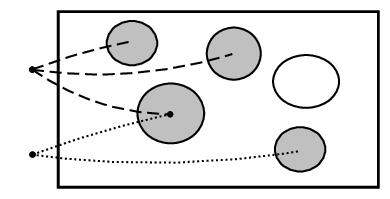}}
  \put(167.42,77.29){\fontsize{11.38}{13.66}\selectfont \makebox[0pt]{$V_L$}}
  \put(84.98,43.97){\fontsize{9.96}{11.95}\selectfont \makebox[0pt]{$v$}}
  \put(8.15,59.85){\fontsize{9.96}{11.95}\selectfont \makebox[0pt]{$k$}}
  \put(8.15,19.06){\fontsize{9.96}{11.95}\selectfont \makebox[0pt]{$0$}}
  \end{picture}%
	\caption{\label{fig:nep} Example of the neighbourhood exploration process in $\JP=\JP(\fS)$ starting with $W=C_v(H_L)$, i.e., with $W$ the vertex set of the component of~$H_L$ containing~$v$.
Assuming that $v=v_1$, in step one we find the weighted hyperedges containing $v$; the weight corresponds to the size of the associated $V_S$--component. 
Here the vertex~$v$ is connected to three $V_L$--components via two hyperedges, which have weight~$k$ and~$0$, respectively (the weight~$0$ hyperedge simply corresponds to a $V_L$--$V_L$ edge). 
The exploration process then marks the vertices of the newly discovered components as active, and increases the number of reached $V_S$--vertices by $k+0=k$. 
Afterwards it marks $v=v_1$ as explored, and repeats the same procedure for the next active vertex $v_2$, and so on.}
\end{figure}

It will be convenient to extend the definitions of $\cA_j$, $\cE_j$ and $S_j$ to all $j \ge 0$. 
Namely, if $|\cA_j|=0$ then we set $X_{j'}=X_j$ for all $j' > j$ and $X \in \{\cA,\cE,S\}$.
Note that, by construction, the following properties hold for all $j \ge 0$:
\begin{gather}
\label{eq:inv:disj}
\cA_j \cup \cE_j  \subseteq  \cA_{j+1} \cup \cE_{j+1} \subseteq V_L, \quad S_j \le S_{j+1}, \quad |\cA_j \cup \cE_j| + S_j \le |C_W(\JP)| + S_0 .
\end{gather}
Furthermore, since the vertex $v_j$ is moved from the active to the explored set in step $j \ge 1$ whenever $|\cA_{j-1}| \ge 1$, it is not difficult to see that for all $j \ge 0$ we have 
\begin{equation}\label{eq:inv:A}
|\cA_j| = \max\bigl\{|\cA_j \cup \cE_j|-j,0\bigr\}.
\end{equation}
Note that, in view of~\cref{eq:inv:final,eq:inv:disj,eq:inv:A},
to study the size of $C_W(\JP)$ it is enough to track the evolution of $(|\cA_j \cup \cE_j|,S_j)_{j \ge 0}$. 
For this reason we shall study
\begin{equation}
\label{def:Mj}
M_j := |\cA_j \cup \cE_j|,
\end{equation}
the number of vertices of $V_L$ reached by the exploration process after~$j$ steps. 

In Section~\ref{sec:nep:fg} we shall specify the initial distribution of $S_0$ and $W$, which then in turn defines the process 
\begin{equation}\label{def:Ti}
 \cT = \cT(\fS) := (M_j,S_j)_{j \ge 0} .
\end{equation}
Intuitively, $\cT$ corresponds to a random walk which counts the number of $V_S$ and $V_L$ vertices reached by the exploration process.
For convenience we also define
\begin{equation}\label{def:Ti:size}
|\cT| := |C_W(\JP)|+S_0 = |C_W(\JP) \cap V_L| + \bigl(|C_W(\JP) \cap V_S|+S_0\bigr)  ,
\end{equation}
the total number of reached vertices, including those that we started with (see Section~\ref{sec:nep:fg}).
Note that~$\cT$ determines $|\cT|$, cf.~\eqref{eq:inv:final}--\eqref{def:Ti} above. 
One main goal of this section is to show that the distribution of the random variable $|\cT|$ determines the expected component size distribution of~$\JP=\JP(\fS)$, see~\eqref{eq:def:NjT:E}--\eqref{eq:def:NgejR:E} below.

\subsubsection{Initial generation and first moment formulae}\label{sec:nep:fg}
We now turn to the `initial generation', which yields the input $W$ and $S_0$ for our exploration process.
Recall that the vertex set of the Poissonized model $\JP=\JP(\fS)$ is \emph{not} deterministic (in contrast to that of~$J(\fS)$).  
We are eventually interested in $N_j(\JP)$ with $j \ge 1$, which denotes the number of vertices in components of size~$j$ in~$\JP$. 
Writing $\cQ_{k,r}$ for the (random) set
of all $(k,r)$--hyperedges in $\JP$, note that
\begin{equation}\label{eq:def:Nj}
 N_j(\JP) =  \sum_{v \in V_L}\indic{|C_v(\JP)|=j} + \sum_{k \ge 1,\, r \ge 0} \sum_{g \in \cQ_{k,r}} k \indic{|C_{g}(\JP)|=j} ,
\end{equation}
where $C_{\cX}(\JP)$ denotes the component of $\JP$ which contains $\cX \in \{v,g\}$. 
Using standard results from the theory of point processes (see, e.g., Lemma~\ref{lem:palm} in Appendix~\ref{apx:palm}) we have 
\begin{equation}\label{eq:Qkr:palm}
\E\Bigl( \sum_{\tC \in \cQ_{k,r}} \indic{|C_{\tC}(\JP)|=j} \Bigr) = Q_{k,r} \cdot \Pr(|C(\JP_{k,r})|=j) ,
\end{equation}
where $C(\JP_{k,r})$ is defined as follows.
We add an `extra' $V_S$--component $\tC$ with $|\tC|=k$, and connect $\tC$ to $\JP=\JP(\fS)$ via~$r$ random vertices in~$V_L$, i.e., we add an extra $(k,r)$--hyperedge~$g$. 
We then write $C(\JP_{k,r})$ for the component of the resulting graph which contains~$\tC$.  
It follows that 
\begin{equation}\label{eq:def:Nj:E}
\E N_j(\JP) = \sum_{k > K}\sum_{\substack{v \in V_L:\\ |C_{v}(H_L)|=k}}\Pr(|C_v(\JP)|=j) + \sum_{k \ge 1,\,r \ge 0} k Q_{k,r}\Pr(|C(\JP_{k,r})|=j) .
\end{equation}
By definition,
\begin{equation}\label{eq:def:Nj:sum}
 \sum_{k >K} N_{k}(H_L) + \sum_{k \ge 1,\, r \ge 0} k Q_{k,r} = |V_L| + |V_S| = |\fS| .
\end{equation}
Thus~\eqref{eq:def:Nj:E} implicitly defines the desired initial distribution of $\cT=(M_j,S_j)_{j \ge 0}$.
Namely, for any $v \in V_L$, with probability~$1/|\fS|$ we start the exploration process with 
\begin{equation}\label{eq:def:SW:Y}
S_0:=0 \quad \text{and} \quad W:= C_{v}(H_L) ,
\end{equation}
and for any $k \ge 1, r \ge 0$, with probability $k Q_{k,r}/|\fS|$ we select $w_1, \ldots, w_r \in V_L$ independently and uniformly at random, and then start the exploration process with
\begin{equation}\label{eq:def:SW:ZR}
S_0:=k \quad \text{and} \quad W:= \bigcup_{1 \le h \le r} C_{w_h}(H_L) .
\end{equation}
In the first case, from~\eqref{def:Ti:size} we have
\begin{equation}\label{eq:def:Ti:Y}
|\cT|=|C_W(\JP)|+S_0=|C_v(\JP)|,
\end{equation}
while in the second case~\eqref{def:Ti:size} and the construction of $C(\JP_{k,r})$ yield 
\begin{equation}\label{eq:def:Ti:ZR}
|\cT|=|C_W(\JP)|+S_0 = |C_W(\JP)|+k=|C(\JP_{k,r})|.
\end{equation}
Combining this discussion with~\eqref{eq:def:Nj:E}--\eqref{eq:def:Nj:sum}, we thus obtain
\begin{equation}\label{eq:def:NjT:E}
\begin{split}
\E N_j(\JP) & = \biggl[\sum_{k > K}\sum_{\substack{v \in V_L:\\ |C_{v}(H_L)|=k}}\Bigl(\frac{1}{|\fS|} \cdot \Pr(|C_v(\JP)|=j)\Bigr) + \sum_{k \ge 1,\,r \ge 0} \Bigl(\frac{k Q_{k,r}}{|\fS|} \cdot \Pr(|C(\JP_{k,r})|=j)\Bigr)\biggr] \cdot |\fS| \\
& = \Pr(|\cT| = j) |\fS| .
\end{split}
\end{equation}
Finally, defining $N_{\ge j}(\JP)$ in the obvious way, for future reference we similarly have
\begin{equation}\label{eq:def:NgejR:E}
\E N_{\ge j}(\JP) = \Pr(|\cT| \ge j) |\fS| .
\end{equation}

\subsubsection{Variance estimates}\label{sec:varsub}

One application of the exploration process described above is the following bound on the 
variance of the number of vertices in a range of component sizes.
It will turn out later that, for the parameters~$\Lambda_j$ we are interested in, 
in the subcritical case
the upper bound proved below is small compared to $(\E X)^2$, allowing us to apply Chebyshev's inequality
to establish concentration (see Sections~\ref{sec:mom:large} and~\ref{sec:L1:sub}).
\begin{lemma}[Truncated variance of $N_{\ge \Lambda}$]
\label{lem:Nkvar:sub}
Let $\fS$ be an arbitrary parameter list, and define $\JP=\JP(\fS)$ as in Definition~\ref{def:F}.
For all $0 \le \Lambda_1 \le \Lambda_2$, setting $X=N_{\ge \Lambda_1}(\JP)-N_{\ge \Lambda_2}(\JP)$ we have
\begin{equation}\label{eq:Nkvar:sub}
\Var X \le \E X \bigl(\E N_{\ge \Lambda_2}(\JP) + \Lambda_2\bigr) .
\end{equation}
\end{lemma}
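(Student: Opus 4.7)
The plan is to analyze the factorial moment of $X=\sum_v \indic{A_v}$, where $A_v := \{|C_v(\JP)| \in [\Lambda_1,\Lambda_2)\}$ and the sum runs over vertices of~$\JP$, by splitting the pairs $(u,v)$ with $u\ne v$ into those lying in the same component of~$\JP$ and those in disjoint components. Since $\E X^2 = \E X + S_1 + S_2$, with
\[
 S_1 := \E\Bigl[\sum_{u\ne v,\,C_u=C_v}\indic{A_u\cap A_v}\Bigr],\qquad S_2 := \E\Bigl[\sum_{u,v:\,C_u\cap C_v=\emptyset}\indic{A_u\cap A_v}\Bigr],
\]
and $A_u$ forces $|C_u|<\Lambda_2$, the same-component contribution is immediate: $S_1=\E[\sum_u \indic{A_u}(|C_u|-1)]\le (\Lambda_2-1)\E X$. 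The task thus reduces to proving the bound $S_2\le \E X\cdot(\E X+\E N_{\ge \Lambda_2}(\JP))$, which combined with $\Var X=\E X+S_1+S_2-(\E X)^2$ yields the desired inequality.

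For $S_2$ I would condition on the component $C_u$ and exploit the Poisson structure of $\JP(\fS)$. The key observation is that, conditionally on $C_u$, the collection of $(k,r)$-hyperedges with \emph{no} $V_L$-vertex in $C_u\cap V_L$ forms an independent Poisson process with the original rates $\hlambda_{k,r}$ from~\eqref{eq:rates:qkr} — the exploration of $C_u$ tests exactly the hyperedges that \emph{do} have a vertex in $C_u\cap V_L$. Writing $V_L':= V_L\setminus (C_u\cap V_L)$ and denoting by $\JP^{-u}$ the subgraph of $\JP$ consisting of these untouched hyperedges (together with the restriction of~$H_L$ to $V_L'$), it follows that conditional on $C_u$ the graph $\JP^{-u}$ is distributed as a fresh Poissonized model on $V_L'$ with rates $\hlambda_{k,r}$. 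Moreover, for every $v\notin C_u$ the entire component $C_v(\JP)$ already lives inside $\JP^{-u}$, so $C_v(\JP)=C_v(\JP^{-u})$, and hence $\sum_{v\notin C_u}\indic{A_v(\JP)} = \sum_v \indic{A_v(\JP^{-u})}$.

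To conclude I would run a monotone coupling between a fresh Poissonized model $\JP^{V_L'}$ on $V_L'$ and $\JP$ on $V_L$, realising $\JP^{V_L'}$ as the restriction of $\JP$'s Poisson point process to hyperedges in $(V_L')^r$. Then $\JP^{V_L'}\subseteq \JP$ and $C_v(\JP^{V_L'})\subseteq C_v(\JP)$ for every $v\in V(\JP^{V_L'})$. A short case analysis (using that $A_v$ is the intersection of the increasing event $\{|C_v|\ge \Lambda_1\}$ with the decreasing event $\{|C_v|<\Lambda_2\}$) yields the pointwise inequality
\[
 \indic{A_v(\JP^{V_L'})}\le \indic{A_v(\JP)}+\indic{|C_v(\JP)|\ge \Lambda_2}.
\]
Summing over $v\in V(\JP^{V_L'})$ and taking expectations gives $\E\sum_v\indic{A_v(\JP^{V_L'})}\le \E X+\E N_{\ge \Lambda_2}(\JP)$, a bound that is \emph{uniform} in $V_L'\subseteq V_L$. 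Plugging this uniform bound into the conditional expectation $\E[\sum_{v\notin C_u}\indic{A_v(\JP)}\mid C_u]$ from the previous paragraph and then multiplying by $\indic{A_u}$ (which is $C_u$-measurable) before summing over $u$ delivers $S_2\le \E X\cdot(\E X+\E N_{\ge \Lambda_2}(\JP))$, as required. The main technical subtlety, rather than a serious obstacle, will be carefully tracking the $V_S$-vertices — which are not a fixed ground set but are introduced by the hyperedges themselves — to verify that the vertices of $\JP$ outside $C_u$ are precisely the vertices of $\JP^{-u}$, so the two expressions for the sum over $v\notin C_u$ really coincide.
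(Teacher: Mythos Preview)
Your proposal is correct and follows essentially the same approach as the paper: decompose $X^2$ into same-component and different-component pairs, bound the former by $\Lambda_2\E X$, and for the latter explore one component, use the Poisson structure to see that the remainder is an independent Poisson model on the smaller ground set, and then compare monotonically with a fresh copy of $\JP$ (relaxing the window event to $|C_v|\ge\Lambda_1$). The paper packages this comparison as a van den Berg--Kesten-type correlation inequality $\Pr(|C_{R_1}|\in\cI_1,\,|C_{R_2}|\in\cI_2,\,C_{R_1}\cap C_{R_2}=\emptyset)\le\Pr(|C_{R_1}|\in\cI_1)\Pr(|C_{R_2}|\ge\Lambda_1)$ stated for arbitrary starting sets $R_1,R_2\subseteq V_L$, then applies it pair-by-pair, while you phrase the same idea as a single global coupling bound on $\E\sum_v\indic{A_v(\JP^{V_L'})}$; the resulting inequalities are identical since $\E N_{\ge\Lambda_1}=\E X+\E N_{\ge\Lambda_2}$.

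The technical subtlety you flag at the end is exactly where the paper does the extra work: because $V_S$ is random, ``condition on $C_u$'' for a $V_S$-vertex $u$ is not directly meaningful, so the paper writes $X=\sum_{v\in V_L}\indic{|C_v|\in\cI}+\sum_{k,r}\sum_{g\in\cQ_{k,r}}k\indic{|C_g|\in\cI}$ and handles the cross- and double-hyperedge terms via Palm calculus (Lemmas~\ref{lem:palm}--\ref{lem:palm:2}), which reduces to exploring from an \emph{added} extra hyperedge in $\JP_{k,r}$ or $\JP_{++}$. Your ``explore $C_u$'' plan becomes rigorous precisely after this Palm step, and the reason the BK inequality is formulated for starting sets $R_j\subseteq V_L$ rather than single vertices is to accommodate these added hyperedges.
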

\begin{proof}
The key step in the proof is the following van den Berg--Kesten-type estimate: we claim 
that for all $R_1,R_2 \subseteq V_L$ and $\cI_1,\cI_2\subseteq \NN$ with $I_2\subseteq [\Lambda,\infty)$ we have
\begin{equation}\label{eq:pr:var}
\Pr\bigl(|C_{R_1}(\JP)| \in \cI_1, \: |C_{R_2}(\JP)| \in \cI_2, \:  C_{R_1}(\JP) \cap C_{R_2}(\JP) =\emptyset \bigr)
    \le \Pr(|C_{R_1}(\JP)| \in \cI_1) \Pr(|C_{R_2}(\JP)| \ge \Lambda) .
\end{equation}
To prove this claim, given $U \subseteq V_L$ we define $\JPU$ as the subgraph of $\JP$ obtained by deleting 
all vertices and hyperedges involving vertices from $U$. 
Clearly, if $C_{R_1}(\JP) \cap V_L = U$ and $C_{R_1}(\JP)$ and $C_{R_2}(\JP)$ are disjoint, then $C_{R_2}(\JP) = C_{R_2}(\JPU)$. 
Exploring as in Section~\ref{sec:nep:set}, starting from $W_1=C_{R_1}(H_L)$,
we can determine $C_{R_1}(\JP)=C_{W_1}(\JP)$ while only revealing information about hyperedges involving vertices from  $U=C_{R_1}(\JP) \cap V_L$. 
By construction, the remaining weighted hyperedges $g=(x_1, \ldots, x_r) \in (V_L \setminus U)^r$ have not yet been tested, i.e., still appear according to independent Poisson processes.
So, since $\JPU$ does \emph{not} depend on the status of the revealed hyperedges (which each involve at least one vertex from~$U$), it follows 
(by conditioning on all possible sets~$U$) 
that the left hand side of~\eqref{eq:pr:var} is at most
\begin{equation}\label{eq:pr:var:2}
 \Pr\bb{|C_{R_1}(\JP)| \in \cI_1} \cdot \max_{R_1 \subseteq U \subseteq V_L \setminus R_2} \Pr\bb{|C_{R_2}(\JPU)| \in \cI_2}.
\end{equation}
Since $\JPU \subseteq \JP$ and $\cI_2 \subseteq [\Lambda, \infty)$, this implies~\eqref{eq:pr:var}. 

Having established the claim, we turn to the bound on $\Var X$; the main complication involves dealing with the first
step of our exploration, i.e., with the possibility of starting in the random set $V_S$.
Analogous to~\eqref{eq:def:Nj} we write $X=N_{\ge \Lambda_1}(\JP)-N_{\ge \Lambda_2}(\JP)$ as a sum of terms of the form $\indic{|C_v(\JP)| \in \cI}$ or $k\indic{|C_{g}(\JP)| \in \cI}$, where
\[
 \cI :=[\Lambda_1,\Lambda_2).
\]  
Note that $X^2$ involves pairs $\bigl(C_1(\JP),C_2(\JP)\bigr)$ of components of the form $C_j(\JP) \in \{C_{v}(\JP),C_{g}(\JP)\}$. 
Let
\begin{equation}\label{eq:lem:Nkvar:X2}
X^2=Y+Z, 
\end{equation} 
where $Y$ contains all summands with pairs of equal components, i.e., $C_1(\JP)=C_2(\JP)$, and $Z$ all summands with pairs of distinct components, i.e., $C_1(\JP) \neq C_2(\JP)$. 
Since each relevant component contains at most~$\Lambda_2$ vertices, we have~$Y \le X \cdot \Lambda_2$, so
\begin{equation*}
 \E Y \le \E X \cdot \Lambda_2.
\end{equation*} 
For $Z$ we proceed similarly as for~\eqref{eq:def:Nj:E}. In particular, using standard results from the theory of point processes (see, e.g., Lemmas~\ref{lem:palm}--\ref{lem:palm:2} in Appendix~\ref{apx:palm}), we have 
\begin{equation}\label{eq:Qkr2:palm}
\begin{split}
& \E\Bigl( \sum_{f\in \cQ_{k_1,r_1}}\sum_{g\in \cQ_{k_2,r_2}} k_1 k_2 \indic{|C_{f}(\JP)| \in \cI, \: |C_{g}(\JP)|\in \cI , \: C_{f}(\JP) \neq C_{g}(\JP)} \Bigr) \\
& \qquad = k_1Q_{k_1,r_1}k_2Q_{k_2,r_2} \Pr\big(|C_1(\JP_{++})| \in \cI, \: |C_2(\JP_{++})| \in \cI , \: C_1(\JP_{++}) \neq C_2(\JP_{++}) \bigr) ,
\end{split}
\end{equation}
where $C_1(\JP_{++})$ and $C_2(\JP_{++})$ arise analogous to Section~\ref{sec:nep:fg}, 
by adding an extra $(k_j,r_j)$-hyperedge for $j=1,2$. (Here it is important that $C_{f}(\JP) \neq C_{g}(\JP)$ implies $f \neq g$, so
we add two distinct extra hyperedges.) More precisely, we form $\JP_{++}$ by adding, for each $j=1,2$, an `extra'
component $\tC_j$ with $k_j$ vertices, joined to a random set $R_j=\{w_{j,1}, \ldots, w_{j,r_j}\}$ of vertices of $\JP$,
where the $w_{j,h}$ are chosen independently and uniformly from $V_L$. Then $C_j(\JP_{++})$ is the component of
$\JP_{++}$ containing $\tC_j$. (Since the definition of $\JP_{++}$ depends on $(k_1,r_1)$ and $(k_2,r_2$),
the notation $C_j(\JP_{k_1,r_1,k_2,r_2})$ analogous to that used in Section~\ref{sec:nep:fg} would also be appropriate;
we avoid this as being too cumbersome.)

In the case we are interested in, the components $C_j(\JP_{++})$ in the augmented graph $\JP_{++}$ are distinct and thus disjoint.
Thus, for each $j$, 
$C_j(\JP_{++})$ consists of the $k_j$ vertices in $\tC_j$ together with all vertices in $C_{R_j}(\JP)$. Hence, in this case,
$|C_j(\JP_{++})| \in \cI$ if and only if $|C_{R_j}(\JP)| \in \cI_j = [\Lambda_1-k_j,\Lambda_2-k_j)$.
Using~\eqref{eq:pr:var} it follows that, conditional on $R_1$ and $R_2$, we have
\[
 \Pr\bigl(|C_1(\JP_{++})| \in \cI, \: |C_2(\JP_{++})| \in \cI, \:  C_1(\JP_{++}) \neq C_2(\JP_{++})\bigr) \le 
  \Pr(|C_{R_1}(\JP)| \in \cI_1) \Pr(|C_{R_2}(\JP)| \ge \Lambda_1-k_2) .
\]
Define $C(\JP_{k_j,r_j})$ as in the previous subsection, adding only one extra component with $k_j$ vertices
joined to a set $R_j$ consisting of $r_j$ random vertices from $V_L$, so $|C(\JP_{k_j,r_j})|=|C_{R_j}(\JP)|+k_j$.
Then, taking the expectation over the independent random sets $R_j$ and applying \eqref{eq:Qkr:palm} twice,
we deduce that the right hand side of~\eqref{eq:Qkr2:palm} is at most 
\begin{equation*}
 k_1 Q_{k_1,r_1} \Pr(|C(\JP_{k_1,r_1})| \in \cI) \cdot k_2Q_{k_2,r_2} \Pr(|C(\JP_{k_2,r_2})| \ge \Lambda_1 ) .
\end{equation*}
The estimates for the other terms of $\E Z$ involving $|C_{v}(\JP)|,|C_{w}(\JP)|$ and $|C_{v}(\JP)|,|C_{g}(\JP)|$ are similar, but much simpler,
and we conclude that
\begin{equation}\label{eq:lem:Nkvar:Z}
 \E Z \le \E X \cdot \E N_{\ge \Lambda_1}(\JP) = (\E X)^2 + \E X \cdot \E N_{\ge \Lambda_2}(\JP).
\end{equation} 
Hence
\[ 
 \E X^2 = \E Z + \E Y \le (\E X)^2 + \E X \bb{ \E N_{\ge \Lambda_2}(\JP) + \Lambda_2 },
\]
completing the proof. 
\end{proof}

Using related arguments, we next prove an upper bound on the ($r$th order) susceptibility of $\JP(\fS_i)$. 
Since $\JP=\JP(\fS_i)$ has approximately (rather than exactly)~$n$ vertices,
for any graph~$G$ it will be convenient to define the `modified' susceptibility 
\begin{equation}\label{eq:def:SrG}
 S_{r,n}(G) := \sum_{C}|C|^r/n = \sum_{k \ge 1} k^{r-1} N_k(G)/n . 
\end{equation}
Note that we divide by $n$, rather than by the actual number of vertices of $G$.
For later reference we collect the following basic properties of this parameter.
\begin{remark}\label{rem:Snr}
Let $r \ge 1$. For any $n$-vertex graph~$G$ we have $S_r(G)=S_{r,n}(G)$. 
For any two graphs $F \subseteq H$ we have $S_{r,n}(F) \le S_{r,n}(H)$; to see this,
it suffices to check the case where~$F$ and~$H$ differ by a single edge or isolated vertex. 
\end{remark}
In the subcritical case
it will turn out that the bound~\eqref{eq:Srvar:sub} below is small enough to establish concentration via Chebyshev's inequality (see Sections~\ref{sec:mom:sus} and~\ref{sec:Sj}).
\begin{lemma}[Variance of $S_{r,n}$]
\label{lem:Srvar:sub}
Let $\fS$ be an arbitrary parameter list, and define $\JP=\JP(\fS)$ as in Definition~\ref{def:F}. 
For $r \ge 2$ we have 
\begin{equation}\label{eq:Srvar:sub}
\Var S_{r,n}(\JP) \le n^{-1} \E S_{2r,n}(\JP).
\end{equation} 
\end{lemma}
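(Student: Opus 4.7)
The plan is to mimic the BK-style argument from the proof of Lemma~\ref{lem:Nkvar:sub}, but with $(r-1)$-th powers of component sizes replacing indicator functions. Writing the sum over components as a sum over ``starting points'' (vertices of $V_L$ or the $(k,r')$-hyperedges from which the $V_S$-components hang) of weight~$1$ or~$k$ respectively, and using the identity $\sum_C |C|^r = \sum_P w(P)\,|C_P|^{r-1}$, I would decompose
\[
\bigl(n S_{r,n}(\JP)\bigr)^2 \;=\; \sum_{C} |C|^{2r} \;+\; \sum_{C_1\ne C_2}|C_1|^r|C_2|^r \;=\; n S_{2r,n}(\JP) + \Xi,
\]
where the first sum collects the ``diagonal'' contribution from pairs of starting points lying in the same component (the total weight in component~$C$ being~$|C|$). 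Taking expectations and rearranging, the claimed bound $\Var S_{r,n}(\JP) \le n^{-1}\E S_{2r,n}(\JP)$ reduces to
\[
\E\Xi \;\le\; \bigl(\E[nS_{r,n}(\JP)]\bigr)^{2}.
\]

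To establish this, I would show the pointwise BK-type inequality
\[
 \E\bigl[\indic{C_{P_1}(\JP)\cap C_{P_2}(\JP)=\emptyset}\,|C_{P_1}(\JP)|^{r-1}\,|C_{P_2}(\JP)|^{r-1}\bigr] \;\le\; \E|C_{P_1}(\JP)|^{r-1}\cdot\E|C_{P_2}(\JP)|^{r-1}
\]
for any two (distinct) starting points $P_1,P_2$, and then sum via the Palm calculus exactly as in \eqref{eq:Qkr:palm}--\eqref{eq:Qkr2:palm}. For the pointwise bound, I condition on $C_{P_1}(\JP)=U$ with~$P_2$ disjoint from~$U$ (otherwise the indicator vanishes): by the splitting property of the Poisson hyperedge processes, the subgraph on $V_L\setminus U$ is distributed exactly as $\JPU$ and is independent of $C_{P_1}(\JP)$, so that $\E[|C_{P_2}(\JPU)|^{r-1}\mid C_{P_1}(\JP)=U]$ equals the unconditional expectation $\E|C_{P_2}(\JPU)|^{r-1}$. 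Since $\JPU\subseteq\JP$ we have $|C_{P_2}(\JPU)|\le|C_{P_2}(\JP)|$, and because $r\ge 2$ the function $x\mapsto x^{r-1}$ is monotone on $\NN$, giving $\E|C_{P_2}(\JPU)|^{r-1}\le \E|C_{P_2}(\JP)|^{r-1}$. Summing over~$U$ with $|U|^{r-1}\,\Pr(C_{P_1}(\JP)=U)\le\E|C_{P_1}(\JP)|^{r-1}$ yields the stated inequality.

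The main obstacle is the bookkeeping needed when one or both starting points are hyperedges of the same or different type~$(k,r')$, so that $P_i$ is itself random: as in the treatment of~\eqref{eq:Qkr2:palm} one must handle the Palm measure for two distinct hyperedges carefully, verifying that, conditional on having ``added'' two distinguished type-$(k_j,r_j)$ hyperedges, the remaining Poisson structure is unchanged, so that the BK-type domination still applies. All other aspects of the argument are direct adaptations of the proof of Lemma~\ref{lem:Nkvar:sub}, and crucially the inequality requires no other hypothesis on~$\fS$ (in particular, no niceness is needed).
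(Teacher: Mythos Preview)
Your proposal is correct and follows essentially the same approach as the paper: write $X=nS_{r,n}(\JP)$ as a weighted sum over starting points, decompose $X^2$ into the diagonal contribution (pairs in the same component, which equals $nS_{2r,n}(\JP)$ exactly) and the off-diagonal part, and bound the expectation of the latter by $(\E X)^2$ via the BK-type inequality combined with Palm calculus exactly as in Lemma~\ref{lem:Nkvar:sub}. The only cosmetic difference is that the paper phrases the BK inequality for subsets $R_1,R_2\subseteq V_L$ (deferring the hyperedge-start case to the Palm step), whereas you state it directly for starting points; both formulations are equivalent here.
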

\begin{proof}
We shall mimic the basic proof strategy of Lemma~\ref{lem:Nkvar:sub}, but treat pairs of equal components with more care. 
Turning to the details, analogous to~\eqref{eq:pr:var} we first claim that for all $R_1,R_2 \subseteq V_L$ we have 
\begin{equation}\label{eq:E:var}
\E\bigl(|C_{R_1}(\JP)|^{r-1} |C_{R_2}(\JP)|^{r-1} \indic{C_{R_1}(\JP) \cap C_{R_2}(\JP) = \emptyset}\bigr) \le \E|C_{R_1}(\JP)|^{r-1} \E|C_{R_2}(\JP)|^{r-1}.
\end{equation}
Indeed, using the conditioning argument leading to~\eqref{eq:pr:var:2} we see that the left hand side of~\eqref{eq:E:var} is at most 
\begin{equation*}
\E|C_{R_1}(\JP)|^{r-1} \cdot \max_{R_1 \subseteq U \subseteq V_L \setminus R_2} \E|C_{R_2}(\JPU)|^{r-1} ,
\end{equation*}
where $\JPU$ is the subgraph of $\JP$ obtained by deleting all vertices and hyperedges involving vertices from $U$. 
Since $\JPU \subseteq \JP$ and so $|C_{R_2}(\JPU)| \le |C_{R_2}(\JP)|$, this establishes inequality~\eqref{eq:E:var}. 

Next we focus on $\Var S_{r,n}(\JP)$. Inspired by~\eqref{eq:def:Nj}, using $|C|^r=\sum_{v \in C}|C|^{r-1}$ we rewrite $nS_{r,n}(\JP)$ as 
\begin{equation*}
X: = nS_{r,n}(\JP) = \sum_{v \in V_L}|C_v(\JP)|^{r-1} + \sum_{k \ge 1,\, r \ge 0} \sum_{g \in \cQ_{k,r}} k |C_g(\JP)|^{r-1} ,
\end{equation*}
where $C_{\cX}(\JP)$ denotes the component of $\JP$ which contains $\cX \in \{v,g\}$. 
Since $X^2$ involves pairs $\bigl(C_1(\JP),C_2(\JP)\bigr)$ of components, analogous to~\eqref{eq:lem:Nkvar:X2} we may write 
\begin{equation}\label{eq:lem:Srvar:sub:X2}
X^2=Y+Z, 
\end{equation} 
where $Y$ contains all summands with pairs of equal components, i.e., $C_1(\JP)=C_2(\JP)$, and $Z$ all summands with pairs of distinct components, i.e., $C_1(\JP) \neq C_2(\JP)$. 
Now, in any graph $G$, the sum corresponding to~$Y$ is $\sum_{v,w\in V(G)}|C_v|^{r-1}|C_w|^{r-1}\indic{C_v=C_w}$, which counts
$|C|^{2r}$ for each component of $G$. Thus, specializing to $G=\JP$, we have $Y=nS_{2r,n}(\JP)$.

For $Z$ we proceed analogously to~\eqref{eq:Qkr2:palm}--\eqref{eq:lem:Nkvar:Z} in the proof of Lemma~\ref{lem:Nkvar:sub}. 
Indeed, combining standard results from the theory of point processes with inequality~\eqref{eq:E:var}, as in that proof we see that
\begin{equation*}
\E Z \le \E X \cdot \E X = (\E X)^2.
\end{equation*} 
From the bounds above we have $\E X^2=\E Y+\E Z \le (\E X)^2+n\E(S_{2r,n}(\JP))$. Hence $\Var X\le n\E(S_{2r,n}(\JP))$.
Since $X=n S_{r,n}(\JP)$, this completes the proof.
\end{proof}

\subsubsection{Some technical properties}\label{sec:nep:ed}
To facilitate the coupling arguments to come in Sections~\ref{sec:BPI:cpl} and~\ref{sec:dom:dom}, we next derive some technical properties of the random walk $\cT=(M_j,S_j)_{j \ge 0}$ associated to the exploration process.\footnote{We apologize for the clash of notation between $S_j$, the number of vertices picked up in $V_S$ after $j$ steps of our exploration, and $S_r(G)$, the $r$-th order susceptibility of a graph $G$.}
We expect that the neighbourhoods in $\JP=\JP(\fS)$ are initially `tree-like', which suggests that in the exploration process we can initially replace the sets $\cH_{j,k,r}$  of reached hyperedges by multisets which do \emph{not} depend on the so-far tested tuples.
The technical lemma below formalizes this intuition via the random multisets $\fH_{k,r}$. 
Recalling~\eqref{eq:inv:A}, note that $M_{j-1} \ge j$ if and only if $|\cA_{j-1}| \ge 1$, i.e., if the exploration has not yet finished after step~$j-1$. 
\begin{lemma}\label{lem:dom}
Let $\fS=\bigl((N_{k})_{k > K}, \: (Q_{k,r})_{k,r \ge 0} \bigr)$ be a parameter list.
Define $\cT(\fS) = (M_j,S_j)_{j \ge 0}$ as in~\eqref{def:Ti}. 
Independently for each $k,r$, 
let $\fH_{k,r}=\fH_{k,r}(\fS)$ be a random multiset where tuples $g=(w_{1}, \ldots, w_{r-1}) \in (V_L)^{r-1}$ appear according to independent Poisson processes with rate $r\hlambda_{k,r}$, where $\hlambda_{k,r}=\hlambda_{k,r}(\fS)$ is defined as in~\eqref{eq:rates:qkr}. 
Given $j \ge 1$, condition on $(M_i,S_i)_{0 \le i \le j-1}$. 
Then there is a coupling (of the conditional distribution) of 
$(M_j,S_j)$ with the $\fH_{k,r}$ such that, with probability one, we have  
\begin{align}
\label{eq:dom:Mj}
M_{j}-M_{j-1} & \le \sum_{k \ge 0,\, r \ge 2}
 \ \ \sum_{(w_1, \ldots, w_{r-1}) \in \fH_{k,r}}
 \ \ \sum_{1 \le h \le r-1}|C_{w_h}(H_L(\fS))|, \\
\label{eq:dom:Sj}
S_{j}-S_{j-1} & \le \sum_{k,r \ge 1}k|\fH_{k,r}| .
\end{align}
If, in addition, $\fS$ satisfies~\eqref{eq:small:Psi} 
and $M_{j-1} \ge j$ holds, then
we have equality in~\eqref{eq:dom:Mj} and~\eqref{eq:dom:Sj}
with probability at least $1-O((\log n)^{22}M_{j-1}/|V_L|)$, 
where the implicit constant is absolute.
\end{lemma}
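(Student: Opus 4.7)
The plan is to exploit the Poisson structure of $\JP(\fS)$ together with the thinning (memorylessness) property. Conditional on the full exploration history up to step $j - 1$, which includes $\cE_{j-1} = \{v_1, \ldots, v_{j-1}\}$, $\cA_{j-1}$, and the outcomes of every hyperedge tested (namely those with a coordinate in $\cE_{j-1}$), the counts of all untested hyperedges remain independent Poissons with their original intensities $\hlambda_{k,r}$. Since this conditioning refines the one generated by $(M_i, S_i)_{0 \le i \le j-1}$, any bound proved conditionally on it automatically descends to the coarser conditioning required by the lemma, so I would work with the finer $\sigma$-algebra throughout.

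Fix $(k, r)$. Each $(k,r)$-hyperedge containing $v_j$ is specified by the position $p \in [r]$ of $v_j$ and the ordered $(r-1)$-tuple $(w_1, \ldots, w_{r-1})$ of remaining coordinates. Summing over $p$ the $r$ independent Poisson counts of rate $\hlambda_{k,r}$, the multiset of \emph{all} $(k,r)$-hyperedges containing $v_j$ is, in distribution, a Poisson process on $V_L^{r-1}$ with intensity $r\hlambda_{k,r}$. Thinning to those untested, i.e.\ with no coordinate in $\cE_{j-1}$, gives the conditional distribution of $\cH_{j,k,r}$: a Poisson process on $(V_L \setminus \cE_{j-1})^{r-1}$ with the same intensity. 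Letting $\fH_{k,r}$ be a Poisson process on all of $V_L^{r-1}$ with intensity $r \hlambda_{k,r}$, we may couple so that $\cH_{j,k,r}$ is exactly the restriction of $\fH_{k,r}$ to $\cE_{j-1}$-avoiding tuples; in particular $\cH_{j,k,r} \subseteq \fH_{k,r}$ as multisets. The bound~\eqref{eq:dom:Sj} then follows from $S_j - S_{j-1} = \sum_{k,r \ge 1} k|\cH_{j,k,r}|$, and~\eqref{eq:dom:Mj} from
\[
 M_j - M_{j-1} = \Bigl|\bigcup_{k, r}\bigcup_{g \in \cH_{j,k,r}}\bigcup_{h=1}^{r-1} C_{w_h}(H_L) \setminus (\cA_{j-1} \cup \cE_{j-1})\Bigr| \le \sum_{k, r}\sum_{g \in \cH_{j,k,r}}\sum_{h=1}^{r-1} |C_{w_h}(H_L)|,
\]
where $k$ and $r$ range over $k\ge 0$ and $r\ge 2$, and $\cH_{j,k,r}$ may finally be replaced by the larger $\fH_{k,r}$.

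For the equality claim under~\eqref{eq:small:Psi} and $M_{j-1}\ge j$, both inequalities above are tight unless one of three ``bad events'' occurs: (i) some $g\in \fH_{k,r}$ has a coordinate in $\cE_{j-1}$, so $\cH_{j,k,r}\subsetneq \fH_{k,r}$; (ii) some coordinate $w_h$ of some $g\in \fH_{k,r}$ lies in $\cA_{j-1}\cup \cE_{j-1}$, in which case $C_{w_h}(H_L)\subseteq \cA_{j-1}\cup \cE_{j-1}$ has been wholly reached and contributes $0$ rather than $|C_{w_h}(H_L)|$; (iii) two coordinates drawn from the tuples in $\bigcup_{k,r}\fH_{k,r}$ land in the same $H_L$-component. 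I would bound the probabilities of these by Markov's inequality. Using~\eqref{eq:small:Psi}, the expected number of tuples in $\bigcup_{k,r}\fH_{k,r}$ is $\sum_{k,r} r Q_{k,r}/|V_L|=O(\Psi^4)$ (at most $\Psi^2$ nonzero $(k,r)$, each with $rQ_{k,r}/|V_L|\le \Psi^2$), giving $O(\Psi^5)$ expected coordinates and $O(\Psi^{10})$ expected coordinate-pairs. A uniform $V_L$-coordinate hits $\cA_{j-1}\cup\cE_{j-1}$ with probability $M_{j-1}/|V_L|$, bounding (i)--(ii) by $O(\Psi^5 M_{j-1}/|V_L|)$; and two independent uniform coordinates share an $H_L$-component with probability $\sum_C (|C|/|V_L|)^2\le \Psi/|V_L|\le \Psi M_{j-1}/|V_L|$ (using $M_{j-1}\ge 1$), bounding (iii) by $O(\Psi^{11}M_{j-1}/|V_L|)=O((\log n)^{22}M_{j-1}/|V_L|)$. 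The main subtlety is the $O(\Psi^{10})$-size union bound for (iii); the coordinates in distinct tuples are jointly i.i.d.\ uniform on $V_L$ by the Poisson product structure, so each pair contributes exactly the asserted probability.
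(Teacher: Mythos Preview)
Your proof is correct and follows essentially the same line as the paper's: embed the step-$j$ hyperedges into $\fH_{k,r}$ via Poisson superposition, then bound the probability that equality fails by first- and second-moment estimates on the total number of coordinates appearing. Two small points worth patching: handle the trivial case $M_{j-1}<j$ explicitly (then $(M_j,S_j)=(M_{j-1},S_{j-1})$ and both sides of \eqref{eq:dom:Mj}--\eqref{eq:dom:Sj} are zero), and note that your ``$r$ independent Poisson counts'' are not literally independent when some $w_i$ equals $v_j$ (the same $r$-tuple is then counted from two positions) --- the paper sidesteps this by projecting via ``delete the \emph{first} coordinate equal to $v_j$'', but in your framing the wrinkle is harmless since such tuples are already covered by bad event~(ii).
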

For later reference we remark that, using standard splitting properties of Poisson processes, $\fH_{k,r}$ may be generated by a more tractable two-stage process. 
Namely, we first determine 
\begin{equation}\label{def:tlkr}
|\fH_{k,r}| \sim \Po(\tlambda_{k,r}) \quad \text{ with } \quad \tlambda_{k,r} = \tlambda_{k,r}(\fS) :=|V_L|^{r-1} \cdot r\hlambda_{k,r} = rQ_{k,r}/|V_L|.
\end{equation}
Then, given $|\fH_{k,r}|=y_{k,r}$, we set 
\begin{equation}\label{def:hkr}
\fH_{k,r}=\fH_{k,r}(\fS):=\bigl\{(w_{k,r,1,1}, \ldots, w_{k,r,1,r-1}), \ldots, (w_{k,r,y_{k,r},1}, \ldots, w_{k,r,y_{k,r},r-1})\bigr\}, 
\end{equation}
where all the vertices $w_{k,r,y,h} \in V_L$ are chosen independently and uniformly at random. 
\begin{proof}[Proof of Lemma~\ref{lem:dom}]
Throughout we condition on the `history' $(M_i,S_i)_{0 \le i \le j-1}$. 
If $M_{j-1} < j$ then (by \eqref{eq:inv:A}) we have $|\cA_{j-1}| =0$ and thus $(M_j,S_j)=(M_{j-1},S_{j-1})$, so all claimed bounds hold trivially. 
We may thus assume $M_{j-1} \ge j$. 

We start with the upper bounds~\eqref{eq:dom:Mj}--\eqref{eq:dom:Sj}. The plan is to embed the found hyperedges~$\cH_{j,k,r}$ into the potentially larger multiset~$\fH_{k,r}$. 
To this end we map each tuple $g \in \cH_{j,k,r}$ of the form $(v_j,w_1, \ldots, w_{r-1})$, \ldots, $(w_1, \ldots, w_{r-1},v_j)$ to~$(w_1, \ldots, w_{r-1})$; this can be done in a unique way by deleting the first coordinate which is equal to~$v_j$, say. 
Let~$\chH_{j,k,r}$ denote the resulting multiset of tuples $(w_1, \ldots, w_{r-1}) \in (V_L)^{r-1}$. 
Since~$\fH_{k,r}$ uses rate $r \cdot \hlambda_{k,r}$, by standard superposition properties of Poisson processes 
there is a natural coupling such~that 
\begin{equation}\label{eq:cpl:PjHj}
\chH_{j,k,r} \subseteq \fH_{k,r} .
\end{equation}
By definition of the exploration process we have
\begin{align}
\label{eq:cpl0:M}
M_{j}-M_{j-1} & = \Bigl|\Bigl(\bigcup_{k \ge 0,r \ge 2\ }\bigcup_{(w_1, \ldots, w_{r-1}) \in \chH_{j,k,r}}\ \bigcup_{1 \le h \le r-1} C_{w_h}(H_L) \Bigr) \setminus \Bigl(\cA_{j-1} \cup \cE_{j-1}\Bigr)\Bigr| , \\
\label{eq:cpl0:S}
S_{j}-S_{j-1} & = \sum_{k,r \ge 1}k|\chH_{j,k,r}| .
\end{align}
Together~\cref{eq:cpl:PjHj,eq:cpl0:M,eq:cpl0:S} imply~\eqref{eq:dom:Mj}--\eqref{eq:dom:Sj}.

Turning to the question of equality in~\eqref{eq:dom:Mj}--\eqref{eq:dom:Sj}, assume from now on that \eqref{eq:small:Psi} holds.
Recall that in each step $j' \le j$ the exploration process inspects all so-far untested tuples containing~$v_{j'}$. 
Hence, in the natural coupling, equality holds in~\eqref{eq:cpl:PjHj} whenever $\fH_{k,r}$ contains no vertices from 
$\cE_{j}=\{v_1, \ldots, v_{j}\} \subseteq \cA_{j-1} \cup \cE_{j-1}$. 
Set 
\begin{equation*}
 X: = \sum_{k \ge 0,\, r \ge 2}|\fH_{k,r}| 
 \qquad \text{and} \qquad
 X': = \sum_{k \ge 0,\, r \ge 2} (r-1) |\fH_{k,r}|.
\end{equation*}
Let~$\cG$ be the `good' event that, as $w_\tau$ runs over the $X'$ vertices appearing in the random multiset~$\bigcup_{k,r}\fH_{k,r}$ (cf.~\eqref{def:hkr}), 
we have
(i)~each $C_{w_\tau}(H_L)$ is disjoint from $\cA_{j-1} \cup \cE_{j-1}$ (which is itself a union of components of~$H_L$), and 
(ii)~the $X'$ components $C_{w_\tau}(H_L)$ of~$H_L$ are pairwise distinct (and so pairwise disjoint).
As noted above, property~(i) of~$\cG$ implies $\chH_{j,k,r} = \fH_{k,r}$. 
If $\cG$ holds, then the union in~\eqref{eq:cpl0:M} is disjoint, and it follows that we have equality in~\eqref{eq:dom:Mj} and~\eqref{eq:dom:Sj}.

Let $\Psi=(\log n)^2$.
To estimate the probability that $\cG$ fails to hold, we use the two stage construction of~$\fH_{k,r}$ discussed around~\eqref{def:tlkr}--\eqref{def:hkr}: we first reveal all the sizes~$|\fH_{k,r}|$, and then sequentially reveal the $X' \le (\Psi-1) X  \le \Psi X$ random vertices $w_{\tau} \in V_L$ appearing in all of the sets~$\fH_{k,r}$.
Since \eqref{eq:small:Psi} implies that all components of $H_L$ have size at most $\Psi$,
for each random vertex~$w_\tau \in V_L$ 
it then is enough to consider the event that
(i)~$w_\tau$ equals one of the $M_{j-1}$ vertices in $\cA_{j-1} \cup \cE_{j-1}$, 
or (ii)~$w_\tau$ equals one of the at most $(\tau-1)\Psi \le X' \Psi \le \Psi^2 X$ so far `discovered' vertices in $\bigcup_{1 \le x < \tau} C_{w_x}(H_L)$. 
Using conditional expectations it follows that 
\begin{equation*}
\begin{split}
\Pr(\neg\cG) \le \E\biggl( \E\Bigl( \Psi X \cdot \Bigl(\frac{M_{j-1}}{|V_L|} + \frac{\Psi^2 X}{|V_L|}\Bigr) \; \Big| \; (|\fH_{k,r}|)_{k,r \ge 0}\Bigr)\biggr) \le \Psi^3 \bigl(M_{j-1}\E X + \E X^2\bigr) / |V_L| . 
\end{split}
\end{equation*}
Noting that $X$ is a Poisson random variable (by standard superposition properties), using $\E|\fH_{k,r}| = r Q_{k,r}/|V_L| \le\Psi^2$ we see that $\Var X = \E X \le (\Psi+1)^2 \cdot \Psi^2 = O(\Psi^4)$ and $\E X^2 = \Var X + (\E X)^2 = O(\Psi^8)$. 
Recalling $M_{j-1} \ge j \ge 1$ we infer $\Pr(\neg\cG) = O(\Psi^{11} M_{j-1}/|V_L|)$, completing the proof. 
\end{proof}

We now turn to properties of the random initial values $(M_0,S_0)$. 
Since the exploration process starts with $\cA_0 = W$ and $\cE_0=\emptyset$, by~\eqref{def:Mj} we have $M_0 = |W|$. 
The next lemma is an immediate consequence of the constructions~\cref{eq:def:SW:Y,eq:def:Ti:Y,eq:def:SW:ZR,eq:def:Ti:ZR} in Section~\ref{sec:nep:fg}. The interpretation of \eqref{def:MSR} is that for $y>K$ we start at a component in $V_L$ with size $y$ (see \eqref{eq:def:SW:Y}), while for $y=0$ we essentially start from a $(z,r)$ component -- more precisely, as in \eqref{eq:def:SW:ZR} we start with $z$ vertices in $V_S$ and the union of the components of $V_L$ containing $r$ random vertices $w_1,\ldots,w_r$. For the final estimate below we use \eqref{eq:small:Psi} to bound both the number of random vertices 
and the component sizes by $\Psi=(\log n)^2$.
\begin{lemma}\label{lem:dom:init}
Let $\fS=\bigl((N_{k})_{k > K}, \: (Q_{k,r})_{k,r \ge 0} \bigr)$ be any parameter list, and
define $\cT(\fS) = (M_j,S_j)_{j \ge 0}$ as in~\eqref{def:Ti}. Let 
$(Y_{0,\fS},Z^0_{\fS},R_{\fS})$ be the probability distribution on $\NN^3$ given by 
\begin{equation}\label{def:MSR}
\Pr\bigl((Y_{0,\fS},Z^0_{\fS},R_{\fS})=(y,z,r)\bigr) = \frac{N_{y}(\fS) \indic{y>K, \: z=0, \: r=0}  + z Q_{z,r}(\fS) \indic{y=0, \: z \ge 1}}{|\fS|} ,
\end{equation}
and let $w_1,w_2,\ldots \in V_L=V_L(\fS)$ be chosen uniformly at random, independently of each other and of $(Y_{0,\fS},Z^0_{\fS},R_{\fS})$.
Then there is a coupling between $\cT(\fS)$ and $(Y_{0,\fS},Z^0_{\fS},R_{\fS},w_1,w_2,\ldots )$ such that with probability one we have 
\begin{align}
\label{eq:dom:M0}
M_0 &\le Y_{0,\fS} + \sum_{1 \le h \le R_{\fS}} |C_{w_h}(H_L(\fS))| ,\\
\label{eq:dom:S0}
S_0 & = Z^0_{\fS}.
\end{align}
Furthermore, if $\fS$ satisfies~\eqref{eq:small:Psi}, then we have equality in~\eqref{eq:dom:M0} with probability at least $1-\Psi^3/|V_L|$. \noproof 
\end{lemma}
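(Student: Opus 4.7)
The plan is to construct the coupling by directly matching the two-case description of the initial distribution from Section~\ref{sec:nep:fg} with the two cases $(y,0,0)$ and $(0,z,r)$ in the definition~\eqref{def:MSR}. First I would note that~\eqref{def:MSR} is a genuine probability distribution: summing $N_y(\fS)$ over $y > K$ gives $|V_L(\fS)|$, and summing $zQ_{z,r}(\fS)$ over $z \ge 1$, $r \ge 0$ gives $|V_S(\fS)|$, which together sum to $|\fS|$ by~\eqref{eq:NkQkr:sum}. Matching probabilities term-by-term will then identify the two cases precisely with the two alternatives in the initial generation~\cref{eq:def:SW:Y,eq:def:SW:ZR}.

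In the $V_L$-case (when $(Y_{0,\fS},Z^0_{\fS},R_{\fS})=(y,0,0)$ for some $y > K$), the construction starts with $W = C_v(H_L)$ for a vertex $v$ in a size-$y$ component, so $M_0 = |W| = y = Y_{0,\fS}$ and $S_0 = 0 = Z^0_{\fS}$; since $R_{\fS}=0$, the sum in~\eqref{eq:dom:M0} is empty and equality holds trivially. In the $V_S$-case (when $(Y_{0,\fS},Z^0_{\fS},R_{\fS})=(0,z,r)$), the construction takes $r$ independent uniform vertices $w_1,\ldots,w_r\in V_L$ and sets $W = \bigcup_h C_{w_h}(H_L)$, so $S_0 = z = Z^0_{\fS}$ exactly, while
\[
 M_0 = \Bigl|\bigcup_{h=1}^r C_{w_h}(H_L)\Bigr| \le \sum_{h=1}^r |C_{w_h}(H_L)| = Y_{0,\fS} + \sum_{h=1}^{R_{\fS}} |C_{w_h}(H_L)|,
\]
using $Y_{0,\fS}=0$. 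This gives~\eqref{eq:dom:M0} and~\eqref{eq:dom:S0} in both cases.

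For equality in~\eqref{eq:dom:M0} under the additional assumption~\eqref{eq:small:Psi}, the only obstacle is a collision, i.e., two of the random vertices $w_1,\ldots,w_{R_{\fS}}$ landing in the same $H_L$-component; when no such collision occurs the $C_{w_h}(H_L)$ are pairwise disjoint and $|W|=\sum_h |C_{w_h}(H_L)|$. Under~\eqref{eq:small:Psi} we have $R_{\fS} \le \Psi$ (since $Q_{z,r}=0$ for $z+r \ge \Psi$) and every component of $H_L$ has size at most $\Psi$, so conditional on $w_1$ the probability that a given later $w_h$ lies in $C_{w_1}(H_L)$ is at most $\Psi/|V_L|$. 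A union bound over the at most $\binom{\Psi}{2} \le \Psi^2$ pairs gives a collision probability of at most $\Psi^3/|V_L|$, which is the claimed error. The main (minor) subtlety is simply keeping the bookkeeping straight between the two cases of~\eqref{def:MSR} and the two cases of the initial generation; everything else is immediate from the definitions.
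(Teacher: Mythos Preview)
Your proposal is correct and follows exactly the approach the paper indicates: the lemma is stated there as an immediate consequence of the constructions~\cref{eq:def:SW:Y,eq:def:SW:ZR} in Section~\ref{sec:nep:fg}, with the final estimate obtained by bounding both the number of random vertices and the component sizes by~$\Psi$ via~\eqref{eq:small:Psi}. You have simply spelled out the case-matching and the union bound over the at most $\binom{\Psi}{2}\le \Psi^2$ pairs, which is precisely what the paper leaves implicit.
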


\subsection{Idealized process}\label{sec:BPI}
Let $\fS$ be a $t$-nice parameter list (see Definition~\ref{def:nice}), which we recall roughly corresponds to the data describing a marked graph $H_i$, $i=tn$, from which we will construct $G_i$. In this subsection we compare the random walk $\cT=\cT(\fS)=(M_j,S_j)_{j \ge 0}$ of the exploration process with a closely related `idealized' branching process~$\bp_t$ that is defined \emph{without} reference to $\fS$, or indeed to~$n$.
The precise definitions (given below) are rather involved. However, given how our exploration process
treats vertices in $V_S$ and $V_L$, and that the first step of the exploration process is special,
it is not surprising that $\bp_t$ will be a special case of the following general class of branching processes.
\begin{definition}\label{def:bp:new}
Let $(Y,Z)$ and $(Y^0,Z^0)$ be probability distributions on $\NN^2$. We write
$\bp^1=\bp^1_{Y,Z}$ 
for the Galton--Watson branching process started with a single particle of type~$L$, in which each particle of type~$L$ has $Y$ children of type~$L$ and $Z$
of type~$S$. Particles of type~$S$ have no children, and the children of different particles are independent. 
We write
$\bp=\bp_{Y,Z,Y^0,Z^0}$ for the branching process defined as follows: 
start in generation one with $Y^0$ particles of type~$L$ and $Z^0$ of type~$S$.
Those of type~$L$ have children according to~$\bp^1_{Y,Z}$, independently of each other 
and of the first generation. Those of type~$S$ have no children.
We write~$|\bp|$ ($|\bp^1|$) for the total number particles in~$\bp$ $(\bp^1)$. 
\end{definition}

\subsubsection{Two probability distributions}\label{sec:BPI:distr}
For $t \in [t_0,t_1]$, to define our branching process $\bp_t$, we shall define a distribution $(Y_t,Z_t)$ on $\NN^2$ that 
gives the `idealized' (limiting) behaviour of the numbers of $V_L$ and $V_S$ vertices found in one step of our exploration
process, and a corresponding distribution $(Y_t^0,Z_t^0)$ for the first step.
Recall from Definition~\ref{def:nice}, or from Theorems~\ref{thm:init} and~\ref{thm:Qkr}, that the quantities 
$\rho_k(t_0)$ and $q_{k,r}(t)$ defined in Lemmas~\ref{lem:Nk2:t} and~\ref{lem:Qkr:t}, are (informally speaking),
the idealized versions of $N_k/n$ and $Q_{k,r}/n$ arising in $t$-nice parameter lists. Recall
also (from Remark~\ref{rem:Nk:t}) that $\rho_\omega(t_0)=\sum_{k>K} \rho_k(t_0)$.

Let~$N$ be the probability distribution on~$\NN$ with
\begin{equation}\label{def:N}
\Pr(N=k) = \indic{k > K} \rho_{k}(t_0)/\rho_\omega(t_0) .
\end{equation}
Intuitively, this corresponds to an idealized version of the distribution of $|C_w(H_L(\fS_{tn}))|$, 
where $\fS_{tn}=\fS(G^{\cR}_{n,tn})$ is the random parameter list defined in \eqref{def:fS} and
the vertex~$w$ is chosen uniformly at random from~$V_L$ (see~\eqref{HLdef} for the definition of
the `initial graph' $H_L$).
We henceforth write~$N_h$ and $N_{k,r,y,h}$ for independent copies of~$N$. 

We also define
\begin{equation}\label{def:Hkt:l}
H_{k,r,t} \sim \Po(\lambda_{k,r}(t)) \quad \text{ with } \quad \lambda_{k,r}(t) := rq_{k,r}(t)/\rho_{\omega}(t_0) ,
\end{equation}
which corresponds to an idealized version of $|\fH_{k,r}|$, see~\eqref{def:tlkr}. 
Of course, we take the random variables $H_{k,r,t}$ to be independent. 
We define 
\begin{align}
\label{def:Yt2Zt}
(Y_t,Z_t) & := \Bigl(\sum_{k \ge 0,\,r \ge 2}\  \sum_{1 \le j \le H_{k,r,t}}\  \sum_{1 \le h \le r-1} N_{k,r,j,h}, \: \sum_{k,r \ge 1} k H_{k,r,t} \Bigr) ,
\end{align}
in analogy with the quantities appearing on the right-hand side in~\eqref{eq:dom:Mj}--\eqref{eq:dom:Sj}.  
Turning to $(Y^0_t,Z^0_t)$, we
define $(Y_{0,t},Z^0_t,R_t)$ as the probability distribution on~$\NN^3$ with 
\begin{equation}\label{def:YZR}
\Pr\bigl((Y_{0,t},Z^0_t,R_t)=(y,z,r)\bigr) = \rho_y(t_0) \indic{y>K, \: z=0, \: r=0}  + z q_{z,r}(t) \indic{y=0, \: z \ge 1} ,
\end{equation}
and set
\begin{equation}\label{def:Yt0}
Y^0_t := Y_{0,t} + \sum_{1 \le h \le R_t} N_h ,
\end{equation}
in analogy with~\eqref{def:MSR}--\eqref{eq:dom:M0}.
That \eqref{def:YZR} indeed defines a probability distribution follows from Remark~\ref{rem:Qkr:sum} and $\sum_{k \ge 1}\rho_k(t_0)=1$ of Theorem~\ref{thm:init}.

For $t\in [t_0,t_1]$ we can now formally define the `idealized' branching process $\bp_t$: it is simply
\begin{equation}\label{def:bp:t}
 \bp_t := \bp_{Y_t,Z_t,Y_t^0,Z_t^0}.
\end{equation}
Of course, we define $\bp_t^1=\bp^1_{Y_t,Z_t}$ also.

Our main goal in the rest of this subsection is to prove the following result, showing that we can 
approximate the expected number of vertices in small components of $J=J(\fS)$ via~$\bp_t$. 
\begin{theorem}[Approximating the expectations of~$N_j$ and~$N_{\ge j}$]\label{thm:Nk:E} 
Let $t\in [t_0,t_1]$, and let~$\fS$ be a $t$-nice parameter list. 
Set $D_{\cT}:=\max\{D_N,D_Q\}+25$, where~$D_N, D_Q>0$ are as in Definition~\ref{def:nice}. 
Define~$J=J(\fS)$ as in Definition~\ref{def:J}.
Then
\begin{align}
\label{eq:ENkC}
\bigl|\E N_j(J) - \Pr(|\bp_t|=j) n\bigr|  & = O(j (\log n)^{D_{\cT}}n^{1/2})\hbox{ and}\\
\label{eq:ENgekC}
\bigl|\E N_{\ge j}(J) - \Pr(|\bp_t| \ge j) n\bigr|  & = O(j (\log n)^{D_{\cT}}n^{1/2}),
\end{align}
uniformly over all $j\ge 1$, $t\in [t_0,t_1]$ and all $t$-nice parameter lists $\fS$.
\end{theorem}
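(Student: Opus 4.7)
The strategy is to pass through the Poissonized graph $\JP(\fS)$ and couple the associated exploration process $\cT$ from Section~\ref{sec:nep} with the idealised branching process $\bp_t$. Since $\fS$ is $t$-nice, \eqref{eq:nice:n} gives $|\fS|=n$, so the identities \eqref{eq:def:NjT:E} and \eqref{eq:def:NgejR:E} read $\E N_j(\JP)=\Pr(|\cT|=j)\,n$ and $\E N_{\ge j}(\JP)=\Pr(|\cT|\ge j)\,n$. The proof therefore reduces to two estimates: (a)~$|\E N_j(J)-\E N_j(\JP)|=O(j(\log n)^{C_1}\sqrt n)$, and (b)~$|\Pr(|\cT|=j)-\Pr(|\bp_t|=j)|=O(j(\log n)^{C_2}/\sqrt n)$ for suitable constants $C_1,C_2$; the $\ge j$ variants are handled identically.

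For (a), I would construct $J(\fS)$ and $\JP(\fS)$ on a common probability space by fixing all stub-endpoint choices and, independently for each $(k,r)$, adjusting the component count from $Q_{k,r}$ to an independent $\Po(Q_{k,r})$. By~\eqref{eq:small:Psi} the sum is over at most $O(\Psi^2)$ indices with $\Psi=(\log n)^2$, each with $Q_{k,r}\le \Psi|V_L|$, so the expected total number of added-or-removed components is
\[
  \sum_{k,r}\E|\Po(Q_{k,r})-Q_{k,r}| \le \sum_{k,r}\sqrt{Q_{k,r}} = O(\Psi^{5/2}\sqrt n).
\]
A single add or remove merges or splits one connected component of the union graph into at most $r+1\le \Psi+1$ pieces. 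Writing $N_j=\sum_C|C|\indic{|C|=j}$ (and similarly for $N_{\ge j}$), a short case analysis shows that the change is at most $(r+1)j$: the original component contributes at most $j$ to $N_j$ and at most $r$ new pieces can each contribute $j$ (for $N_{\ge j}$, one checks that in the `bad' case where all new pieces are $<j$ but the original has a `big' neighbour, the cancellation is still bounded by $(r+1)j$). Multiplying gives $\E|N_j(J)-N_j(\JP)|=O(j\Psi^{7/2}\sqrt n)$, yielding (a).

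For (b), I would couple $\cT$ with $\bp_t$ one step at a time. At the initial step, Lemma~\ref{lem:dom:init} represents $(M_0,S_0)$ in terms of $(Y_{0,\fS},Z^0_\fS,R_\fS)$ and $R_\fS$ uniform draws from $V_L$, while the definitions~\eqref{def:YZR}--\eqref{def:Yt0} express $(Y_t^0,Z_t^0)$ in exactly the same form but using the limits $\rho_y(t_0)$, $q_{z,r}(t)$, and $\rho_\omega(t_0)$. By the niceness bounds~\eqref{eq:Nk:t0'}, \eqref{eq:Qkr'} and~\eqref{eq:nice:VL}, each atomic probability is perturbed by $O((\log n)^{D_N+D_Q}/\sqrt n)$, and after truncating to $y+z+r<\Psi$ via~\eqref{eq:small:Psi} a standard coupling matches the initial pair with that error. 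For each subsequent step $j\ge 1$, conditional on $M_{j-1}\ge j$ (equivalently $|\cA_{j-1}|\ge 1$ by~\eqref{eq:inv:A}), Lemma~\ref{lem:dom} shows that with probability $1-O((\log n)^{22}M_{j-1}/|V_L|)$ the increment $(M_j-M_{j-1},S_j-S_{j-1})$ is \emph{exactly} the functional appearing in~\eqref{def:Yt2Zt} of independent Poissons $|\fH_{k,r}|\sim\Po(\tlambda_{k,r})$ and iid uniform $V_L$-draws, while $(Y_t,Z_t)$ in~\eqref{def:Yt2Zt} has the same form with rates $\lambda_{k,r}(t)$ and component-size distribution $N$ from~\eqref{def:N}; niceness again gives per-step total variation of order $(\log n)^{D_N+D_Q}/\sqrt n$ after truncation.

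Summing both per-step error sources over the first $j$ steps, and using standard Poisson-tail bounds on the sums $\sum_{k,r}|\fH_{k,r}|$ (whose means are uniformly bounded thanks to~\eqref{eq:qkr:tail'} and~\eqref{eq:nice:VL}) to guarantee $M_{j'-1}\le j'(\log n)^{C'}$ whp, yields a total coupling-failure probability of $O(j(\log n)^{C''}/\sqrt n)$. Since both $\{|\cT|=j\}$ and $\{|\bp_t|=j\}$ (and their $\ge j$ analogues) require at most $j$ L-vertices/particles to be processed, this establishes (b) and hence~\eqref{eq:ENkC}--\eqref{eq:ENgekC} after choosing $D_\cT$ large enough to absorb all arising polylog factors. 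The main obstacle I anticipate is the per-step total-variation estimate itself: the two distributions are formally over the infinite index set $(k,r)\in\NN^2$, and one must invoke the niceness tails~\eqref{eq:Qkr:tail'} and~\eqref{eq:Nk:t0:tail'} to truncate to an $O(\Psi)$-sized support, control both the Poisson-rate differences $|\tlambda_{k,r}-\lambda_{k,r}(t)|$ and the component-size-distribution differences $|N_k/|V_L|-\rho_k(t_0)/\rho_\omega(t_0)|$ simultaneously, and sum the resulting pointwise errors coherently.
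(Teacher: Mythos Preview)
Your approach is essentially the paper's: pass through $\JP(\fS)$ via \eqref{eq:def:NjT:E}--\eqref{eq:def:NgejR:E}, de-Poissonize to compare $J$ with $\JP$, and couple $\cT$ with $\bp_t$ step by step. The per-step total-variation estimates you flag as the ``main obstacle'' are precisely Lemma~\ref{lem:dtv}, and the resulting step-by-step coupling is packaged as Theorem~\ref{thm:cpl:btT}.

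Two points deserve correction. First, your Lipschitz bound $(r+1)j$ is fine for $N_j$ but fails for $N_{\ge j}$: adding a $(k,r)$-component with $k>j$ introduces $k$ new vertices, all of which land in a component of size at least $k\ge j$, so $N_{\ge j}$ can jump by $k$ even before any stubs are attached. The paper instead uses the bound $2jr+k$ (add the bare $k$-vertex component, then add $r$ edges one at a time, each changing $N_j$ or $N_{\ge j}$ by at most $2j$); combined with the exponential tail~\eqref{eq:Qkr:tail'} this gives $\sum_{k,r}(2jr+k)\sqrt{Q_{k,r}}=O(j\sqrt n)$ directly, with no polylog factor. Second, your detour through Poisson-tail bounds to control $M_{j'-1}$ is unnecessary and somewhat circular (you need the coupling to hold in order to bound the increments by copies of $Y_t$). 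The paper's argument is cleaner: setting $\Lambda=j$, if at any step $M_{j-1}>\Lambda$ then under the coupling built so far both $|\cT|$ and $|\bp_t|$ already exceed $\Lambda$, so the events $\{|\cT|=j\}$ and $\{|\bp_t|=j\}$ (and their $\ge j$ analogues) already agree; one may therefore simply assume $M_{j-1}\le\Lambda=O(\sqrt n)$ throughout, which makes the $O((\log n)^{22}M_{j-1}/|V_L|)$ error term from Lemma~\ref{lem:dom} harmless.
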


Before embarking on the proof, we establish the `parity constraints' that the distributions of $(Y_t,Z_t)$
and $(Y_t^0,Z_t^0)$ satisfy. 
Recall (see Lemma~\ref{lem:allowed}) that $\cSR$ denotes the set of all component sizes that the rule
$\cR$ can possibly produce, and $\per$ the period of the rule. 
Below, the precise form of the finite `exceptional set' $\{0\}\times [K]$ is irrelevant for our later argument;
it arises only due to the generality of $\ell$-vertex rules -- for Achlioptas processes $\per=1$ by Lemma~\ref{lem:allowed:AP}
and so the next lemma holds trivially.

\begin{lemma}\label{lem:inlattice}
For any $t \in (t_0,t_1]$, the following hold always: 
\[
 (Y_t,Z_t)\in (\per\NN)^2  \qquad\text{and}\qquad
 (Y_t^0,Z_t^0)\in (\per\NN)^2 \ \cup\  (\{0\}\times [K]).
\]
\end{lemma}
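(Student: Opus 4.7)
\medskip

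\noindent\textbf{Proof plan for Lemma~\ref{lem:inlattice}.} The plan is to read off the parity constraints on $(Y_t,Z_t)$ and $(Y_t^0,Z_t^0)$ directly from the structural results of Section~\ref{sec:period}. The two key inputs are: (a) Lemma~\ref{lem:rhokS1} and Lemma~\ref{lem:allowed:kr}(iii) identify the supports of the weights $\rho_k(t_0)$ and $q_{k,r}(t)$ (for $t>t_0$) with $\cSR$ and $\cSkr$ respectively; (b) Lemma~\ref{lem:allowed} states that every $k\in\cSR$ with $k>K$ is a multiple of $\per$, while Lemma~\ref{lem:allowed:kr}(vi) states that if $(k,r)\in\cSkr$ with $r\ge 1$ then $k$ is a multiple of $\per$. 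So the plan is just to trace each summand in \eqref{def:Yt2Zt} and \eqref{def:Yt0} and check which of these two bounds applies.

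First I would handle the auxiliary variable $N$ in~\eqref{def:N}: its support is $\{k>K:k\in\cSR\}$ (by Lemma~\ref{lem:rhokS1}), so Lemma~\ref{lem:allowed} gives $N\in\per\NN$ with probability one. Next, for $(Y_t,Z_t)$: the summands $kH_{k,r,t}$ in $Z_t$ are nonzero only when $(k,r)\in\cSkr$ with $k,r\ge 1$, so Lemma~\ref{lem:allowed:kr}(vi) gives $k\in\per\NN$; the summands in $Y_t$ are copies of $N$, so they lie in $\per\NN$. Hence $(Y_t,Z_t)\in(\per\NN)^2$.

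For $(Y_t^0,Z_t^0)$ the distribution~\eqref{def:YZR} splits into two mutually exclusive pieces.
On the piece $(Y_{0,t},Z_t^0,R_t)=(y,0,0)$ with $y>K$ we have $\rho_y(t_0)>0$, so $y\in\cSR$ and Lemma~\ref{lem:allowed} yields $y\in\per\NN$; then $Y_t^0=y$ and $Z_t^0=0$, so $(Y_t^0,Z_t^0)\in(\per\NN)^2$. On the piece $(Y_{0,t},Z_t^0,R_t)=(0,z,r)$ with $z\ge 1$ we have $q_{z,r}(t)>0$, hence $(z,r)\in\cSkr$ by Lemma~\ref{lem:allowed:kr}(iii). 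If $r\ge 1$, then Lemma~\ref{lem:allowed:kr}(vi) gives $z\in\per\NN$, and $Y_t^0=\sum_{h\le r}N_h$ is a sum of copies of $N\in\per\NN$, so again $(Y_t^0,Z_t^0)\in(\per\NN)^2$. If $r=0$, then $R_t=0$ forces $Y_t^0=0$; moreover $(z,0)\in\cSkr$ gives $z\in\cSR$ by Lemma~\ref{lem:allowed:kr}(iv), so either $z>K$ (in which case $z\in\per\NN$ and $(Y_t^0,Z_t^0)\in(\per\NN)^2$) or $z\in[K]$, yielding $(Y_t^0,Z_t^0)\in\{0\}\times[K]$.

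The argument is essentially bookkeeping; the main obstacle (already overcome in Section~\ref{sec:period}) is establishing the periodicity lemmas themselves and linking the support of the limiting distributions to $\cSR$, $\cSkr$. The small exceptional set $\{0\}\times[K]$ is unavoidable and reflects precisely the possibility that the first step of the exploration starts at a tiny $V_S$-component whose size need not be divisible by $\per$. For Achlioptas processes $\per=1$ by Lemma~\ref{lem:allowed:AP}, and the statement is trivial.
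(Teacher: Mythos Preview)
Your proposal is correct and follows essentially the same route as the paper's proof: both trace the definitions \eqref{def:N}, \eqref{def:Yt2Zt}, \eqref{def:YZR}--\eqref{def:Yt0} back to the support characterizations of Lemmas~\ref{lem:allowed}, \ref{lem:rhokS1} and~\ref{lem:allowed:kr}, using parts~\ref{allowed:pos}, \ref{allowed:r0} and~\ref{allowed:per} of the latter exactly as you do. The paper's write-up is slightly terser in the final step (it observes $\cSR\setminus\per\NN\subseteq[K]$ once rather than splitting the $r=0$ case on $z\le K$ versus $z>K$), but the content is identical.
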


\begin{proof}
We first consider the distribution $N$ defined in \eqref{def:N}.
If $k\notin \cSR$ then $\rho_k(t)=0$ and so $\Pr(N=k)=0$.
By Lemma~\ref{lem:allowed}, if $k\in \cSR$ and $k>K$, then $k$
is a multiple of $\per$. It follows that $N$ can only take values in $\per\NN$.
The set of values $(k,r)$ for which $q_{k,r}(t)>0$ is described in Lemma~\ref{lem:allowed:kr}.
In \eqref{def:Yt2Zt}, we can have a contribution from a particular pair of values
$(k,r)$ only if $r\ge 1$ and $\lambda_{k,r}(t)>0$, which implies $q_{k,r}(t)>0$, i.e., $(k,r)\in \cSkr$.
By Lemma~\ref{lem:allowed:kr} this implies that $k$ is a multiple of $\per$. Since each random
variable $N$ is always a multiple of $\per$ (this follows from Lemma~\ref{lem:allowed} since $N>K$ always holds by the definition of $N$), 
it follows that $(Y_t,Z_t)\in (\per\NN)^2$ holds always.

We now turn to $(Y_t^0,Z_t^0)$. Using again that for $r\ge 1$ we can only have $q_{k,r}(t)>0$ if $k$
is a multiple of~$\per$, we see from \eqref{def:YZR} that both $Y_{0,t}$ (and hence, from \eqref{def:Yt0}, $Y_t^0$)
and $Z_t^0$ will be multiples of~$\per$ unless $(Y_{0,t},Z_t^0,R_t)=(0,z,0)$ for some~$z$
with $q_{z,0}(t)>0$. But in this case, by Lemma~\ref{lem:allowed:kr} we have $(z,0)\in \cSkr$ and 
so $z\in \cSR$. And since $R_t=0$, we have $Y^0_t=0$. 
To sum up, $(Y_t^0,Z_t^0)\in (\per\NN)^2 \ \cup\  (\{0\}\times \cSR)$ holds always. 
This completes the proof since Lemma~\ref{lem:allowed} implies $\cSR \setminus \per\NN \subseteq [K]$. 
\end{proof}

\subsubsection{Coupling}\label{sec:BPI:cpl}
In this section we prove the key coupling result relating our exploration process to $\bp_t$. We start with a technical lemma. 
The constants $b$, $B$, $D_N$ and $D_Q$ here (and throughout the section) are those in Definition~\ref{def:nice}.
Recall that $H_{k,r,t}$ is defined in \eqref{def:Hkt:l} and $\fH_{k,r}$ in Lemma~\ref{lem:dom}. 
\begin{lemma}\label{lem:dtv}
Let $t\in [t_0,t_1]$, and let~$\fS$ be a $t$-nice parameter list. 
Define a probability distribution $N'$ on $\NN$ by $N' \sim |C_{w}(H_L(\fS))|$, where $w \in V_L$ is chosen uniformly at random. 
Then, writing $\Psi = (\log n)^2$, for $n \ge n_0(b,B)$ we have
\begin{align}
\label{eq:dtvN}
\dtv{N}{N'} &= O((\log n)^{D_N+2}n^{-1/2}), \\
\label{eq:dtvUH}
\sum_{k,r \ge 0}\dtv{H_{k,r,t}}{|\fH_{k,r}|} & = O((\log n)^{D_Q+4}n^{-1/2}) , \\
\label{dtvYZR}
\dtv{(Y_{0,t},Z^0_{t},R_{t})}{(Y_{0,\fS},Z^0_\fS,R_\fS)} & = O((\log n)^{D_Q+4}n^{-1/2}) ,\\
\label{eq:UH:Ch}
\sum_{k,r \ge 0 \::\: k+r \le \Psi}\Pr(H_{k,r,t} \ge \Psi) & \le n^{-\omega(1)},\\
\label{eq:UH:tr}
 |\fH_{k,r}| &=0 \hbox{ whenever }k+r\ge \Psi.
\end{align}
\end{lemma}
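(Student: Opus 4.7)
The plan is to derive all five bounds directly from the $t$-niceness approximations $(N_k,Q_{k,r})\approx(\rho_k(t_0)n,q_{k,r}(t)n)$, using the exponential tails \eqref{eq:Nk:t0:tail'}, \eqref{eq:Qkr:tail'} (and their idealized counterparts from Theorems~\ref{thm:init} and~\ref{thm:Qkr}) to truncate all sums to $O(\log n)$ nonzero terms. The extra $+2$ and $+4$ log factors appearing in the exponents are absorbed by this truncation together with the error in approximating $|V_L|$ by $\rho_\omega(t_0)n$.

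First I would dispose of the two easier bounds. For \eqref{eq:UH:tr}: by \eqref{eq:Qkr:stop} (a consequence of \eqref{eq:Qkr:tail'} and integrality of $Q_{k,r}$) we have $Q_{k,r}=0$ whenever $k+r\ge B_0\log n$ for $n$ large; hence $\tlambda_{k,r}=rQ_{k,r}/|V_L|=0$ and $|\fH_{k,r}|=0$ almost surely, and since $\Psi=(\log n)^2\gg B_0\log n$ this immediately gives \eqref{eq:UH:tr}. For \eqref{eq:UH:Ch}, the rate $\lambda_{k,r}(t)=rq_{k,r}(t)/\rho_\omega(t_0)$ is bounded by $O(r)$ via \eqref{eq:qkr:tail} and \eqref{eq:nice:VL}; for pairs with $k+r\le\Psi$ this gives $\lambda_{k,r}(t)=O(\Psi)$. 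The Chernoff bound $\Pr(\Po(\lambda)\ge x)\le(e\lambda/x)^x e^{-\lambda}$ with $x=\Psi$ then yields $\Pr(H_{k,r,t}\ge\Psi)\le e^{-\Omega(\Psi\log\Psi)}=n^{-\omega(1)}$, and summing over the $O(\Psi^2)$ relevant pairs preserves this.

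For \eqref{eq:dtvN} I would use the standard identity $\dtv{P}{Q}=\tfrac{1}{2}\sum_k|P(k)-Q(k)|$. Writing $N_k':=N_k(\fS)$, both $\Pr(N=k)$ and $\Pr(N'=k)$ vanish for $k\le K$, and by~\eqref{eq:small:Psi} together with the idealized tail $\rho_k(t_0)\le A e^{-ak}$ the contribution from $k\ge\Psi$ is $n^{-\omega(1)}$. For the $O(\log n)$ remaining indices, \eqref{eq:Nk:t0'} gives $|N_k-\rho_k(t_0)n|\le(\log n)^{D_N}n^{1/2}$, and summing this over the $O(\log n)$ nonzero indices gives $||V_L|-\rho_\omega(t_0)n|\le O((\log n)^{D_N+1}n^{1/2})$. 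The elementary quotient estimate $|a/b-c/d|\le|a-c|/b+c|b-d|/(bd)$, combined with $|V_L|\ge\zeta n$ from \eqref{eq:nice:VL} and $\rho_\omega(t_0)>0$, then yields $|N_k/|V_L|-\rho_k(t_0)/\rho_\omega(t_0)|=O((\log n)^{D_N+1}n^{-1/2})$ per index, and summing over $O(\log n)$ indices establishes \eqref{eq:dtvN} with a log factor to spare. The bound \eqref{dtvYZR} is proved in exactly the same way from the explicit PMFs in \eqref{def:YZR} and \eqref{def:MSR}: the only possibly nonzero contributions are from triples with $(y,0,0)$ (controlled as in \eqref{eq:dtvN}) or with $(0,z,r)$, $z+r\le B_0\log n$ (controlled using \eqref{eq:Qkr'} and the quotient estimate above).

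For \eqref{eq:dtvUH} the key ingredient is the standard coupling bound $\dtv{\Po(\lambda)}{\Po(\mu)}\le|\lambda-\mu|$, obtained by writing $\Po(\max(\lambda,\mu))=\Po(\min(\lambda,\mu))+\Po(|\lambda-\mu|)$ with independent summands. Applying this, $\dtv{H_{k,r,t}}{|\fH_{k,r}|}\le|\lambda_{k,r}(t)-\tlambda_{k,r}|=r|q_{k,r}(t)/\rho_\omega(t_0)-Q_{k,r}/|V_L||$, and the same quotient argument as above (using \eqref{eq:Qkr'} in place of \eqref{eq:Nk:t0'}) bounds this by $O(r(\log n)^{D_Q+1}n^{-1/2})$. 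By \eqref{eq:Qkr:stop} and the idealized exponential tail from \eqref{eq:qkr:tail}, the sum has only $O((\log n)^2)$ nonzero terms and the factor $r$ is at most $O(\log n)$; multiplying gives \eqref{eq:dtvUH}. The main obstacle is simply bookkeeping: ensuring the log-factor accounting works out, which hinges on the fact that truncation to $O(\log n)$ nonzero indices, together with one extra log factor from the quotient estimate for $|V_L|$, exactly accounts for the gap between $D_N$ (resp.~$D_Q$) and the claimed exponent.
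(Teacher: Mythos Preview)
Your approach is essentially the same as the paper's: truncate using the exponential tails, use the elementary quotient estimate for \eqref{eq:dtvN} and \eqref{dtvYZR}, the Poisson total-variation bound $\dtv{\Po(\lambda)}{\Po(\mu)}\le|\lambda-\mu|$ for \eqref{eq:dtvUH}, and Chernoff for \eqref{eq:UH:Ch}. The paper's own proof is terser but proceeds identically.

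There is, however, one genuine slip in your treatment of \eqref{eq:UH:Ch}. You bound $\lambda_{k,r}(t)=rq_{k,r}(t)/\rho_\omega(t_0)$ by $O(r)$ and then, for $k+r\le\Psi$, weaken this to $O(\Psi)$. But the Chernoff bound $\Pr(\Po(\lambda)\ge x)\le(e\lambda/x)^x e^{-\lambda}$ with $x=\Psi$ and $\lambda$ potentially as large as $c\Psi$ does \emph{not} yield $e^{-\Omega(\Psi\log\Psi)}$; indeed for $\lambda$ comparable to $\Psi$ the bound is useless. The fix is immediate: from \eqref{eq:qkr:tail} one has $\lambda_{k,r}(t)\le Cr e^{-b(k+r)}\le Cre^{-br}=O(1)$ uniformly in $k,r$, since $r\mapsto re^{-br}$ is bounded. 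With $\lambda=O(1)$ the Chernoff bound gives $\Pr(H_{k,r,t}\ge\Psi)\le (O(1)/\Psi)^\Psi=e^{-\Omega(\Psi\log\Psi)}$ as you claim. The paper uses exactly this $O(1)$ bound.
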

\begin{proof}
For $k>K$, by the definition \eqref{def:N} of $N$ we have $\Pr(N=k)=\rho_k(t_0)/\rho_\omega(t_0)$, while 
by definition $\Pr(N'=k)=N_k(\fS)/|V_L(\fS)|$. Note also that $\sum_{k>K}\rho_k(t_0)=\rho_\omega(t_0)>0$
by Remark~\ref{rem:Nk:t}.
By the condition~\eqref{eq:Nk:t0'} of $\fS$ being $t$-nice, each $N_k(\fS)$ is within $(\log n)^{D_N} n^{1/2}$
of $\rho_k(t_0)n$. For $k>\Psi$ we have $N_k(\fS)=0$ by \eqref{eq:small:Psi}, while $\sum_{k>\Psi} \rho_k(t_0)=n^{-\omega(1)}$
by~\eqref{eq:rhok:t0:tail}. Inequality~\eqref{eq:dtvN} follows easily from these bounds.

In preparation for inequality~\eqref{eq:dtvUH}, note that $\dtv{\Po(x)}{\Po(y)} \le |x-y|$. 
Since $|\fH_{k,r}| \sim \Po(\tlambda_{k,r})$ for $\tlambda_{k,r}=rQ_{k,r}/|V_L|$ as in~\eqref{def:tlkr}, 
now~\eqref{eq:dtvUH} follows by combining~\eqref{eq:Nk:t0'}, \eqref{eq:Qkr'}, \eqref{eq:Qkr:tail'} and \eqref{eq:qkr:tail}.
The proof of~\eqref{dtvYZR} is analogous.

Since $\E H_{k,r,t} = r q_{k,r}(t)/\rho_\omega(t) = O(r e^{-b(k+r)}) = O(1)$ by~\eqref{eq:qkr:tail},
the bound~\eqref{eq:UH:Ch} follows from standard Chernoff bounds (for Poisson random variables).

The final inequality~\eqref{eq:UH:tr} is a simple consequence of $\E |\fH_{k,r}| = rQ_{k,r}/|V_L| = 0$, cf.~\eqref{def:tlkr} and~\eqref{eq:small:Psi}.  
\end{proof}

In preparation for the proof of Theorem~\ref{thm:Nk:E}, we now show that the number $|\cT|$ of vertices reached by the exploration
process defined in \eqref{def:Ti} is comparable to the number of particles~$|\bp_t|$, unless both are fairly big. 
In the light of~\eqref{eq:def:NjT:E}--\eqref{eq:def:NgejR:E}, this will be key for understanding the number of vertices 
in components of a given size in $\JP=\JP(\fS)$.
\begin{theorem}[Coupling of the exploration process and the branching process~$\bp_t$]
\label{thm:cpl:btT}
Let $t\in [t_0,t_1]$, and let~$\fS$ be a $t$-nice parameter list. 
Set $D_{\cT}:=\max\{D_N,D_Q\}+25$, where~$D_N, D_Q>0$ are as in Definition~\ref{def:nice}. 
Then there is a coupling of $\cT=\cT(\fS)$ and $\bp_t$ such that for every $\Lambda=\Lambda(n) \in \NN$,
with probability at least $1-O(\Lambda(\log n)^{D_{\cT}}n^{-1/2})$ we have $|\cT|=|\bp_t|$ or $\min\{|\cT|,|\bp_t|\}>\Lambda$. Here the implicit constant is uniform over the choice of $t$, $\fS$ and $\Lambda$.
\end{theorem}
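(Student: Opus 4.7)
The plan is to construct an explicit step-by-step coupling of the exploration $\cT=(M_j,S_j)_{j\ge 0}$ with a breadth-first search of $\bp_t$. Concretely, set $(M_0^{\bp},S_0^{\bp})=(Y_t^0,Z_t^0)$, and for $j\ge 1$ with $M_{j-1}^{\bp}\ge j$ let $(\Delta Y_j,\Delta Z_j)$ be independent copies of $(Y_t,Z_t)$ and put $M_j^{\bp}=M_{j-1}^{\bp}+\Delta Y_j$, $S_j^{\bp}=S_{j-1}^{\bp}+\Delta Z_j$, freezing both sequences once $M_{j-1}^{\bp}<j$; then $|\bp_t|=\lim_j(M_j^{\bp}+S_j^{\bp})$. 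It suffices to exhibit a coupling under which, outside the claimed bad event, $(M_i,S_i)=(M_i^{\bp},S_i^{\bp})$ for every $i$ with $M_{i-1}\le\Lambda$: this yields case (a) $|\cT|=|\bp_t|$ when both chains freeze, and case (b) $\min\{|\cT|,|\bp_t|\}>\Lambda$ when a coupled iterate first crosses $\Lambda$.

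For the initialization, I apply Lemma~\ref{lem:dom:init} to identify $(M_0,S_0)$ with $(Y_{0,\fS}+\sum_{h\le R_\fS}|C_{w_h}(H_L(\fS))|,\,Z_\fS^0)$ outside an event of probability $O((\log n)^6/n)$. Next, the TV bound~\eqref{dtvYZR} of Lemma~\ref{lem:dtv} couples $(Y_{0,\fS},Z_\fS^0,R_\fS)$ with the idealized triple $(Y_{0,t},Z_t^0,R_t)$, and~\eqref{eq:dtvN} couples each actual component size $|C_{w_h}(H_L(\fS))|$ with an independent copy $N_h$ of $N$. The exponential tail~\eqref{eq:Qkr:tail'}, combined with the form~\eqref{def:YZR} of $R_t$, forces $R_\fS,R_t\le(\log n)^2$ outside an $n^{-\omega(1)}$ event, so only polylogarithmically many such samples are required. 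In view of~\eqref{def:Yt0} this yields $(M_0,S_0)=(Y_t^0,Z_t^0)=(M_0^{\bp},S_0^{\bp})$ with total failure $O((\log n)^{D_{\cT}}n^{-1/2})$.

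For the inductive step, suppose the coupling has agreed through step $j-1$ and $j\le M_{j-1}\le\Lambda$. Apply Lemma~\ref{lem:dom}: using $|V_L|=\Theta(n)$ from~\eqref{eq:nice:VL}, the equality versions of~\eqref{eq:dom:Mj}--\eqref{eq:dom:Sj} hold with the Poisson multisets $\fH_{k,r}$ outside an event of probability $O((\log n)^{22}\Lambda/n)$. Then couple $(|\fH_{k,r}|)_{k,r}$ with $(H_{k,r,t})_{k,r}$ via~\eqref{eq:dtvUH}, at cost $O((\log n)^{D_Q+4}n^{-1/2})$, exploiting the independence of the Poisson marginals. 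Bounds~\eqref{eq:UH:Ch} and~\eqref{eq:UH:tr} confine the total number of $V_L$-vertex samples produced by these multisets to a polylogarithmic quantity outside an $n^{-\omega(1)}$ event; a union bound applied to~\eqref{eq:dtvN} then couples their $H_L(\fS)$-component sizes with independent copies of $N$, with total error $O((\log n)^{D_N+C}n^{-1/2})$ for some absolute constant $C$. By~\eqref{def:Yt2Zt}, the coupled increments $(M_j-M_{j-1},S_j-S_{j-1})$ now coincide with an independent copy of $(Y_t,Z_t)$, which I identify with $(\Delta Y_j,\Delta Z_j)$, completing the induction.

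Summing failure probabilities over the at most $\Lambda$ coupled steps yields a total bound of $O(\Lambda^2(\log n)^{22}/n)+O(\Lambda(\log n)^{\max(D_N,D_Q)+C}n^{-1/2})$. The conclusion is vacuous when $\Lambda\ge(\log n)^{D_{\cT}}\sqrt n$; otherwise $\Lambda/n\le n^{-1/2}$ and both terms are absorbed into $O(\Lambda(\log n)^{D_{\cT}}n^{-1/2})$ on using $D_{\cT}=\max(D_N,D_Q)+25$. The main obstacle is precisely this cumulative error bookkeeping: the per-step Lemma~\ref{lem:dom} loss scales as $\Lambda/n$ rather than $1/\sqrt n$, so its $\Lambda$-fold accumulation relies on both the polylog margin built into $D_{\cT}$ and the restriction $\Lambda\le\sqrt n$ to be absorbed into the claimed $\Lambda/\sqrt n$ rate.
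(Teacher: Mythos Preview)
Your approach is essentially identical to the paper's: the same inductive step-by-step coupling via Lemmas~\ref{lem:dom:init} and~\ref{lem:dom}, with the same use of Lemma~\ref{lem:dtv} to pass from the empirical to the idealized distributions, and the same observation that only $O(\Lambda)$ steps need be coupled.

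There is one minor arithmetic slip in your final paragraph. The threshold at which the conclusion becomes vacuous is $\Lambda = \Omega\bigl(n^{1/2}(\log n)^{-D_{\cT}}\bigr)$, not $\Lambda \ge (\log n)^{D_{\cT}}\sqrt{n}$; with your stated threshold the deduction ``otherwise $\Lambda/n \le n^{-1/2}$'' does not follow, and the term $\Lambda^2(\log n)^{22}/n$ then carries an extra $(\log n)^{22}$ factor that is not absorbed. The paper simply restricts to $\Lambda = O(n^{1/2})$ at the outset (the statement being trivial otherwise), so that the per-step Lemma~\ref{lem:dom} error $O((\log n)^{22}M_{j-1}/n)$ is already $O((\log n)^{22}n^{-1/2})$, and summing over $\Lambda$ steps gives the claimed bound directly.
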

The proof is based on a standard inductive coupling argument, exploiting that a natural one-by-one breadth-first search exploration of~$\bp_t$ induces a random walk (with respect to the number of reached vertices). 
The idea is to show that the numbers of vertices from~$V_S$ and~$V_L$ found by the exploration process are equal to the numbers of type~$S$ and~$L$ particles generated by $\bp_t$.
More formally, starting with $(M_0,S_0)$ and $(Y^0_t,Z^0_t)$, the plan is to step-by-step couple $(M_j-M_{j-1},S_{j}-S_{j-1})$ with a new independent copy of $(Y_t,Z_t)$ at each step.
With the proof of Lemma~\ref{lem:dom} in mind, the basic line of reasoning is roughly as follows: we can construct each $(M_j-M_{j-1},S_{j}-S_{j-1})$ by sampling at most $(\log n)^{O(1)}$ random vertices $\tw \in V_L$, and by~\eqref{eq:dtvN} the corresponding component sizes $|C_{\tw}(H_L)|$ can each be coupled with~$N$ up to $(\log n)^{O(1)}n^{-1/2}$ errors; a similar remark applies to the other variables, see Lemma~\ref{lem:dtv}. 
So, we expect that the coupling fails during the first $O(\Lambda)$ steps with probability at most $O(\Lambda (\log n)^{O(1)}n^{-1/2})$.

\begin{proof}[Proof of Theorem~\ref{thm:cpl:btT}]
As the statement is trivial for $\Lambda = 0$ or $\Lambda = \Omega(n^{1/2})$, we assume throughout that $1 \le \Lambda=\Lambda(n) = O(n^{1/2})$. 
The basic idea is to construct the coupling inductively, revealing $\bp_{t}$ and $\cT=\cT(\fS)=(M_j,S_j)_{j \ge 0}$ step-by-step.
We shall in fact couple the numbers of type~$L$ and type~$S$ particles of $\bp_{t}$ with the number of vertices from~$V_L$ and~$V_S$ found by the exploration process.
We shall consider only steps $0 \le j \le \Lambda$. This suffices since, 
if the coupling succeeds this far, then after $\Lambda$ steps, either the exploration process has stopped, and we have $|\cT|=|\bp_t|$,
or it has not, in which case $|\cT|>\Lambda$ and $|\bp_t|>\Lambda$.

For the base case $j=0$ we claim that there is a coupling such that, with probability $1-O((\log n)^{D_\cT}n^{-1/2})$, we have $M_0=Y^0_t$ and $S_0 = Z^0_t$. 
Writing $\Psi=(\log n)^2$ for brevity as usual,
by the final statement of Lemma~\ref{lem:dom:init} we see that, with probability $1-O(\Psi^3/n)$, equality holds in~\eqref{eq:dom:M0} and~\eqref{eq:dom:S0}.
Now the desired coupling of $(M_0,S_0)$ with $(Y^0_{t},Z^0_{t})$ is straightforward. 
Indeed, we first couple 
$(Y_{0,\fS},Z^0_\fS,R_\fS)$ with $(Y_{0,t},Z^0_{t},R_{t})$, and then for $1 \le h \le R_\fS \le \Psi$ we sequentially couple $|C_{w_h}(H_L)|$ with independent copies of~$N$. 
Using Lemma~\ref{lem:dtv} it follows that the described coupling
fails with probability at most  
\[
O(\Psi^3/n) + \dtv{(Y_{0,t},Z^0_{t},R_{t})}{(Y_{0,\fS},Z^0_\fS,R_\fS)} + \Psi \cdot \dtv{N}{N'} = O((\log n)^{D_{\cT}}n^{-1/2}) .
\]

Turning to the inductive step, consider $0<j\le \Lambda$.
We may assume that $M_{j-1} \ge j$, since otherwise the exploration has stopped already, and
that $M_{j-1} \le \Lambda$, since otherwise $|\cT|,|\bp_t|>\Lambda$. 
It suffices to show that, conditioning on the first $j-1$ steps of the exploration,
there is a coupling such that, with probability $1-O((\log n)^{D_{\cT}} n^{-1/2})$, we have $(M_j-M_{j-1},S_{j}-S_{j-1})=(Y_{t},Z_{t})$. 
By Lemma~\ref{lem:dom}, with probability $1-O(\Psi^{11} M_{j-1}/n) = 1- O(\Psi^{11}n^{-1/2})$, 
we have equality in~\eqref{eq:dom:Mj} and~\eqref{eq:dom:Sj}.
Recalling the two-stage process generating $\fH_{k,r}$ described around~\eqref{def:tlkr}--\eqref{def:hkr}, the desired coupling is then straightforward in view of Lemma~\ref{lem:dtv}.
Indeed, we first couple each $|\fH_{k,r}|$ with $H_{k,r,t}$. 
Of course, we abandon our coupling whenever $\max_{k +r \le \Psi}H_{k,r,t} \ge \Psi$, 
which by Lemma~\ref{lem:dtv} occurs with probability at most $n^{-\omega(1)}$. 
After this first step, in order to couple $(M_j-M_{j-1},S_{j}-S_{j-1})$ with $(Y_{t},Z_{t})$, by the two-stage definition of $\fH_{k,r}$ it remains to sequentially couple at most $(\Psi+1)^2 \cdot \Psi \le \Psi^4$ independent copies of $N' \sim |C_{w}(H_L)|$ with independent copies of~$N$.
To sum up, using Lemma~\ref{lem:dtv} it follows that the described coupling 
fails with probability at most  
\[
O(\Psi^{11}n^{-1/2}) + \sum_{k \ge 0,\,r \ge 0}\dtv{H_{k,r,t}}{|\fH_{k,r}|} + n^{-\omega(1)} + \Psi^4 \cdot \dtv{N}{N'} = O((\log n)^{D_{\cT}}n^{-1/2}) .
\]
Since we only consider $\Lambda+1=O(\Lambda)$ 
steps $j$ in total, this completes the proof of Theorem~\ref{thm:cpl:btT}.
\end{proof}

\subsubsection{Proof, and consequences, of Theorem~\ref{thm:Nk:E}}\label{sec:proofs:Nk}
The proof of Theorem~\ref{thm:Nk:E} hinges on the basic observation that, for any graph, adding or deleting an edge changes the number of vertices in components of size $j$ (at least $j$) by at most $2j$. 
Since $\JP=\JP(\fS)$ is a Poissonized version of~$J=J(\fS)$ we thus expect that $\E N_{k}(J) \approx \E N_{k}(\JP)$. 
The approximation $\E N_{k}(\JP) \approx \Pr(|\bp_{t}|=k)n$ then follows from~\eqref{eq:def:NjT:E} and the coupling of Theorem~\ref{thm:cpl:btT}. 
\begin{proof}[Proof of Theorem~\ref{thm:Nk:E}]
Let $\JP=\JP(\fS)$ and $\cT=\cT(\fS)$. First, using~\eqref{eq:def:NjT:E} and Theorem~\ref{thm:cpl:btT} we have 
\begin{equation}\label{eq:Nk:E:0}
\bigl|\E N_{j}(\JP) - \Pr(|\bp_t|=j)n \bigr| = \bigl|\Pr(|\cT|=j) - \Pr(|\bp_t|=j)\bigr| \cdot n = O(j (\log n)^{D_{\cT}}n^{1/2}). 
\end{equation}
Next, we relate $\E N_{j}(J)$ and $\E N_{j}(\JP)$ by de-Poissonization, defining $Y_{k,r} \sim \Po(Q_{k,r}(\fS))$ for convenience (as usual, all these random variables are independent). 
From the definition of $J=J(\fS)$, 
we see that the effect of increasing of $Q_{k,r}$ by $1$ on the random 
graph $J=J(\fS)$ may be thought of as follows: we first add a new component of size $k$ (which changes any $N_j(J)$ by at most $k$), and
then we add $r$ edges. From the definition of
$\JP=\JP(\fS)$, using a natural coupling and our basic Lipschitz observation it follows that we may couple $J$ and $\JP$ so that
\begin{equation}\label{eq:Nk:E:1}
\begin{split}
|N_{j}(J) - N_{j}(\JP)| &\le \sum_{k,r \ge 0} \bigl(2jr+k\bigr) \cdot |Y_{k,r}-Q_{k,r}(\fS)| . 
\end{split}
\end{equation}
For any random variable $Z \sim \Po(\mu)$ we have $\E |Z-\mu| \le \sqrt{\Var Z} = \sqrt{\mu}$ by Jensen's inequality.  
Since $\E Y_{k,r} = Q_{k,r}(\fS) \le B e^{-b(k+r)}n$ by~\eqref{eq:Qkr:tail'}, using~\eqref{eq:Nk:E:1} it follows that $|\E N_{j}(J) - \E N_{j}(\JP)| = O(jn^{1/2})$, which together with~\eqref{eq:Nk:E:0} and $D_{\cT} \ge 1$ completes the proof of inequality~\eqref{eq:ENkC} for~$\E N_{j}(J)$.

Turning to $\E N_{\ge j}(J)$, from \eqref{eq:def:NgejR:E} and Theorem~\ref{thm:cpl:btT} we have 
\begin{equation*}
 \bigl|\E N_{\ge j}(\JP) - \Pr(|\bp_t| \ge j)n\bigr| = \bigl|\Pr(|\cT| \ge j) -\Pr(|\bp_t| \ge j)\bigr| \cdot n = O(j (\log n)^{D_{\cT}}n^{1/2}). 
\end{equation*}
Now, changing $N_{j}(\cdot)$ to $N_{\ge j}(\cdot)$, the rest of the argument for~\eqref{eq:ENkC} carries over to prove~\eqref{eq:ENgekC}.
\end{proof}
We next prove two corollaries to Theorem~\ref{thm:Nk:E}, relating~$\bp_t$ to (i)~the solutions to certain differential equations from Section~\ref{sec:DEM:small} and (ii)~the functions $\rho$ and $s_r$ from Sections~\ref{sec:intro:L1} and~\ref{sec:intro:Sj}. 
\begin{corollary}
\label{cor:Nk}
Let $(\rho_{k})_{k \ge 1}$ and $(\rho_{\ge k})_{k \ge 1}$ be the functions defined in Lemma~\ref{lem:Nk2:t}.
Then $\rho_{k}(t) = \Pr(|\bp_t|=k)$ and $\rho_{\ge k}(t) = \Pr(|\bp_t| \ge k)$ for all $t \in [t_0,t_1]$ and $k \ge 1$. 
\end{corollary}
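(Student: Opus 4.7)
My plan is to combine the conditional equivalence of Lemma~\ref{lem:cond'} with Theorem~\ref{thm:Nk:E} and the concentration in Lemma~\ref{lem:Nk2:t} to obtain two different asymptotic expansions for $\E N_k(G^{\cR}_{n,i})$, and then equate them.

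Fix $t\in[t_0,t_1]$ and $k\ge 1$. For each large $n$, set $i=i(n)=\lfloor tn\rfloor$, so $i/n\to t$. On the one hand, by \eqref{eq:Nk2:t} of Lemma~\ref{lem:Nk2:t}, the event $|N_k(i)-\rho_k(i/n)n|\le (\log n)n^{1/2}$ holds with probability $1-n^{-\omega(1)}$, and $0\le N_k(i)\le n$ always, so taking expectations gives
\[
\E N_k(i) \;=\; \rho_k(i/n)\,n + O\bigl((\log n)n^{1/2}\bigr).
\]
On the other hand, let $\cN_i$ be the event that $\fS_i$ is $(i/n)$-nice, so $\Pr(\neg\cN_i)=O(n^{-99})$ by Lemma~\ref{lem:nice}. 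By Lemma~\ref{lem:cond'} we have $\E(N_k(G^{\cR}_{n,i})\mid\fS_i)=\E N_k(J(\fS_i))$; on $\cN_i$, Theorem~\ref{thm:Nk:E} yields $\E N_k(J(\fS_i))=\Pr(|\bp_{i/n}|=k)n+O(k(\log n)^{D_\cT}n^{1/2})$, while on $\neg\cN_i$ we bound $\E N_k(J(\fS_i))\le |\fS_i|=n$, contributing $O(n^{-98})$ overall. Hence
\[
\E N_k(i) \;=\; \Pr(|\bp_{i/n}|=k)\,n + O\bigl(k(\log n)^{D_\cT}n^{1/2}\bigr).
\]
Equating the two displays and dividing by $n$ gives $\rho_k(i/n)-\Pr(|\bp_{i/n}|=k)=o(1)$ as $n\to\infty$.

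Continuity of $\rho_k$ on $[t_0,t_1]$ (from Lemma~\ref{lem:Nk2:t}) yields $\rho_k(i/n)\to\rho_k(t)$. To conclude I need continuity of $t\mapsto \Pr(|\bp_t|=k)$ on $[t_0,t_1]$: enumerating all finite rooted type-labelled trees $T$ with exactly $k$ vertices, the probability that $\bp_t$ realises a given $T$ is a finite product of factors of the form $\rho_{k'}(t_0)$, $q_{z,r}(t)$, $\Pr(\Po(\lambda_{k',r}(t))=m)$, and $\rho_{k'}(t_0)/\rho_\omega(t_0)$, each of which is continuous (indeed analytic, by Lemma~\ref{lem:ODE} and Theorem~\ref{thm:PDE}) in $t$. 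The sum over all such $T$ converges uniformly in $t$ thanks to the exponential tails of $N$ and of the Poisson variables $H_{k',r,t}$ (using \eqref{eq:rhok:t0:tail}, \eqref{eq:qkr:tail} and a Chernoff bound uniform on $[t_0,t_1]$), so $\Pr(|\bp_t|=k)$ is continuous. Hence $\Pr(|\bp_{i/n}|=k)\to\Pr(|\bp_t|=k)$, and the identity $\rho_k(t)=\Pr(|\bp_t|=k)$ follows.

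The second identity $\rho_{\ge k}(t)=\Pr(|\bp_t|\ge k)$ is proved by the verbatim same argument, using \eqref{eq:Nk3:t} of Lemma~\ref{lem:Nk2:t} in place of \eqref{eq:Nk2:t} and \eqref{eq:ENgekC} in place of \eqref{eq:ENkC}, together with the corresponding continuity of $t\mapsto \Pr(|\bp_t|\ge k)=\sum_{j\le k-1}\bigl(\indic{j=0}-\Pr(|\bp_t|=j)\bigr)$, which follows from the continuity of the point probabilities just established. The only subtle step is the continuity of $\Pr(|\bp_t|=k)$, which is really just a uniform-tail argument; everything else is bookkeeping and two invocations of the machinery already built up in Sections~\ref{sec:setup}--\ref{sec:cpl}.
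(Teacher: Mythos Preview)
Your proof is correct and follows essentially the same route as the paper: derive two asymptotic expressions for $\E N_k(i)$ (one via Lemma~\ref{lem:Nk2:t}, one via Lemma~\ref{lem:cond'}, Lemma~\ref{lem:nice} and Theorem~\ref{thm:Nk:E}) and equate them. The paper streamlines one step: rather than working at $i/n$ and then invoking continuity of $t\mapsto \Pr(|\bp_t|=k)$, it observes that the rounding error $|t-i/n|\le 1/n$ is absorbed in the niceness and smoothness bounds, so Theorem~\ref{thm:Nk:E} can be applied directly at the fixed $t$; then, since neither $\rho_k(t)$ nor $\Pr(|\bp_t|=k)$ depends on~$n$, the $o(1)$ bound on their difference forces equality outright, with no continuity argument needed.
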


\begin{proof}
Fix $t \in [t_0,t_1]$ and $k \ge 1$. 
Using the fact, proved in Lemma~\ref{lem:nice}, that the random parameter list $\fS_{tn}=\fS(G^\cR_{n,tn})$ is almost always nice, 
and the conditioning lemma, Lemma~\ref{lem:cond'}, it is easy to deduce from Theorem~\ref{thm:Nk:E}
that $|\E N_k(tn) - \Pr(|\bp_t|=k) n| = O(k (\log n)^{D_{\cT}}n^{1/2})$. 
Also, from Lemma~\ref{lem:Nk2:t} it is easy to see that $|\E N_k(tn) - \rho_k(t) n| = O((\log n)^{2}n^{1/2})$. 
As $n \to \infty$, it follows that 
\[
\bigl|\Pr(|\bp_t|=k) - \rho_k(t)\bigr| = O\bigl(k (\log n)^{D_{\cT}}n^{-1/2}+(\log n)^{2}n^{-1/2}\bigr) = o(1) .
\]
Since~$\bp_t$ and~$\rho_k(t)$ are both defined without reference to~$n$, it follows that $\Pr(|\bp_t|=k)=\rho_k(t)$. 
The same argument (with obvious notational changes) gives $\rho_{\ge k}(t) = \Pr(|\bp_t| \ge k)$.
\end{proof} 

\begin{corollary}
\label{cor:rhosr}
Let the functions $\rho$ and $(s_r)_{r \ge 2}$ be as in~\eqref{eq:L1:pto} and~\eqref{eq:Sj:pto}. 
Then $\rho(t) = \Pr(|\bp_t|=\infty)$ for $t \in [t_0,t_1]$, and $s_r(t) = \E |\bp_t|^{r-1} \in [1,\infty)$ for all $t \in [t_0,\tc)$ and $r \ge 2$.  
\end{corollary}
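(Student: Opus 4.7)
The plan is to establish the two equalities separately. For part~(i), I would begin with the easy upper bound $\rho(t) \le \Pr(|\bp_t| = \infty)$: the deterministic inequality $L_1(G) \le N_{\ge k}(G) + k$ combined with \eqref{eq:L1:pto}, Lemma~\ref{lem:Nk2:t}, and Corollary~\ref{cor:Nk} yields $\rho(t) \le \rho_{\ge k}(t) = \Pr(|\bp_t| \ge k)$ for every fixed $k$, so letting $k \to \infty$ completes this direction. The matching lower bound is the delicate step. Setting $p := \Pr(|\bp_t| = \infty)$ (the case $p = 0$ is vacuous), for each small $\delta \in (0, p/4)$ I would pick $k = k(\delta) > K$ with $\rho_{\ge k}(t) \ge p - \delta$, invoke Lemma~\ref{lem:Nk2:t} to deduce that whp $N_{\ge k}(tn) \ge (p - 2\delta)n$, and then apply the sprinkling Lemma~\ref{lem:SP} with $\Lambda = k$, $x = (p-2\delta)n$, and $\xi = \delta$. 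Its output is that, whp, after $\Delta = O(n/(k\delta))$ additional steps a component of size at least $(1-\delta)(p-2\delta)n$ has formed. Using \eqref{eq:L1:pto} at time $t + \Delta/n$, and sending $n \to \infty$ with $k, \delta$ fixed, this gives $\rho(t + \Delta/n) \ge (1-\delta)(p-2\delta)$. The continuity of $\rho$, stated just after~\eqref{eq:L1:pto}, then allows $k \to \infty$ and subsequently $\delta \to 0$ to conclude $\rho(t) \ge p$.

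For part~(ii), fix $t \in [t_0, \tc)$. Since $\rho(t) = 0$ by prior work~\cite{SW,BK,RWapsubcr}, part~(i) forces $\Pr(|\bp_t| = \infty) = 0$; hence by Corollary~\ref{cor:Nk},
\[
 \E |\bp_t|^{r-1} \;=\; \sum_{k \ge 1} k^{r-1} \Pr(|\bp_t| = k) \;=\; \sum_{k \ge 1} k^{r-1} \rho_k(t).
\]
For the lower bound $s_r(t) \ge \E|\bp_t|^{r-1}$ I would truncate: for any fixed $K_0$, Lemma~\ref{lem:Nk2:t} gives $\sum_{k \le K_0} k^{r-1} N_k(tn)/n \pto \sum_{k \le K_0} k^{r-1} \rho_k(t)$, and since this is dominated by $S_r(tn) \pto s_r(t)$ via~\eqref{eq:Sj:pto}, sending $K_0 \to \infty$ yields the bound. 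For the reverse, using that $s_r(t)$ is finite by the definition in~\eqref{eq:Sj:pto}, I would show $\E S_r(tn) \to s_r(t)$ via Lemma~\ref{lem:Srvar:sub} (to obtain $L^2$-boundedness and hence uniform integrability of $S_r(tn)$) and then pass to the limit term by term in $\E S_r(tn) = \sum_k k^{r-1} \E N_k(tn)/n$ using Theorem~\ref{thm:Nk:E} combined with Lemmas~\ref{lem:cond'} and~\ref{lem:nice}.

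The main obstacle will be the lower bound in part~(i): the sprinkling parameters $(k, \delta)$ must be coupled so that the shift $\Delta/n$ vanishes, the failure probability in Lemma~\ref{lem:SP} is $o(1)$, and the concentration of $N_{\ge k}(tn)$ holds simultaneously, before continuity of $\rho$ can be used to absorb the residual time shift. In part~(ii), the subtlety is controlling the tail contribution $\sum_{k > K_0} k^{r-1} \E N_k(tn)/n$ uniformly in $n$; one workable approach is to leverage the exponential tails of Theorem~\ref{thm:init} (which hold on $[0, t_0]$) together with the variance bound of Lemma~\ref{lem:Srvar:sub} applied to the Poissonized model $\JP(\fS_i)$, but some care is needed to extend such control uniformly up to $\tc$.
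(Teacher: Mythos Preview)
Your route to part~(i) is correct, but the paper's is a single line: Theorem~3 and Section~5 of~\cite{RWapcont} give the identity $\rho(t) = 1 - \sum_{k \ge 1} \rho_k(t)$, and Corollary~\ref{cor:Nk} turns this into $\rho(t) = 1 - \sum_{k \ge 1} \Pr(|\bp_t| = k) = \Pr(|\bp_t| = \infty)$. Your sprinkling-plus-continuity argument for the lower bound is a valid self-contained alternative, effectively re-deriving (for this one quantity) the relevant content of~\cite{RWapcont} without citing it, at the cost of more work.

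For part~(ii), your truncation bound $s_r(t) \ge \sum_{k} k^{r-1}\rho_k(t) = \E|\bp_t|^{r-1}$ is fine. The reverse inequality, however, has a gap you correctly flag but do not close: uniform-in-$n$ control of the tail $\sum_{k > K_0} k^{r-1} \E N_k(tn)/n$ for $t \in (t_0, \tc)$. The tools you name do not supply this. Theorem~\ref{thm:init} covers only $[0,t_0]$; Lemma~\ref{lem:Srvar:sub} bounds $\Var S_{r,n}(\JP)$ by $n^{-1}\E S_{2r,n}(\JP)$, so using it for $L^2$-boundedness presupposes a bound on $\E S_{2r,n}$, a harder instance of the same tail problem; and the exponential-tail results for $\bp_t$ in Section~\ref{sec:bpresults} cannot be invoked here, since their proof (via Lemma~\ref{lem:Zt:E}, which establishes $\E Y_{\tc}=1$) uses Corollary~\ref{cor:rhosr} itself. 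The paper sidesteps all of this by citing~\cite{RWapsubcr}, which already establishes $S_r(tn) \pto \sum_{k \ge 1} k^{r-1}\rho_k(t) \in [1,\infty)$ for $t < \tc$; combined with Corollary~\ref{cor:Nk} and $\Pr(|\bp_t|=\infty)=0$ this gives $s_r(t) = \E|\bp_t|^{r-1}$ immediately.
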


\begin{proof}
For $t \in [0,\infty)$, Theorem~3 and Section~5 of~\cite{RWapcont} imply $\rho(t)=1-\sum_{k \ge 1} \rho_k(t)$.  
For $t \in [t_0,t_1]$, by Corollary~\ref{cor:Nk} we conclude 
\begin{equation}\label{eq:rho:sum}
\rho(t)=1-\sum_{k \ge 1} \rho_k(t) = 1-\sum_{k \ge 1} \Pr(|\bp_t|=k)= \Pr(|\bp_t|=\infty) .
\end{equation}

For $t \in [0,\tc)$, the main result of~\cite{RWapsubcr} implies $\sum_{k \ge 1}\rho_k(t) = 1$ and $S_r(tn) \pto \sum_{k \ge 1}k^{r-1}\rho_k(t) \in [1,\infty)$ for $r \ge 2$. 
For $t \in [t_0,\tc)$, using~\eqref{eq:rho:sum} and Corollary~\ref{cor:Nk} we infer $\Pr(|\bp_t|=\infty) = 0$ and $\sum_{k \ge 1}k^{r-1}\rho_k(t)=\E |\bp_t|^{r-1}$, so that~\eqref{eq:Sj:pto} implies $s_r(t) = \E |\bp_t|^{r-1} \in [1,\infty)$. 
\end{proof}

\subsection{Dominating processes}\label{sec:dom} %
In this subsection we relate the random graph~$J=J(\fS)$ more directly to the Poissonized model~$\JP=\JP(\fS)$. 
Loosely speaking, for any `time' $t \in [t_0,t_1]$ the plan is to slightly adjust (decrease or increase) the parameters of~$\fS$, see~\eqref{def:fS:rep}, and `sandwich' $J=J(\fS)$ between $\JPpm=\JP(\fS^{\pm}_t)$ such that \emph{typically} $\JPm \subseteq J \subseteq \JPp$. 
Using stochastic domination this will allow us to avoid one major drawback of the coupling arguments from Section~\ref{sec:BPI}, namely, that the `coupling errors' deteriorate for moderately large component sizes (see Theorems~\ref{thm:Nk:E} and~\ref{thm:cpl:btT}).
The basic idea is that the number of vertices of $\JPpm$ found by the exploration process is bounded from above and below by perturbed variants~$\bp^{\pm}_t$ of the idealized branching process~$\bp_{t}$, formalized in Definition~\ref{def:bp2pm} below.

\subsubsection{Sandwiching}\label{sec:dom:sw}
Given parameter lists $\fS$ and $\fS'$, we write $\fS\preceq\fS'$
if $\fS=\bigl((N_{k})_{k > K}, \: (Q_{k,r})_{k,r \ge 0} \bigr)$
and $\fS'=\bigl((N_{k})_{k > K}, \: (Q_{k,r}')_{k,r \ge 0} \bigr)$ with $Q_{k,r}\le Q'_{k,r}$
for all $k,r\ge 0$. From the definition (Definition~\ref{def:J}) of the random graph $J(\fS)$, if $\fS\preceq\fS'$
then there is a coupling such that
\begin{equation}\label{eq:J:mon}
 J(\fS) \subseteq J(\fS').
\end{equation}
Similarly, 
if $\fS\preceq\fS'$
then (see Definition~\ref{def:F}) 
there is a coupling such that
\begin{equation}\label{eq:F:mon}
 \JP(\fS) \subseteq \JP(\fS').
\end{equation}
Our next lemma states that, for any $t$-nice~$\fS$,
we can whp sandwich~$J(\fS)$ between the more tractable Poissonized random graphs~$\JPpm = \JP(\fS^{\pm}_t)$, for parameter lists $\fS^\pm_t$ which
we now define. 
Recall that in this case $Q_{k,r}=Q_{k,r}(\fS) \approx q_{k,r}(t) n$, with $q_{k,r}(t)=0$ when $(k,r)\not\in \cSkr$ (see~\eqref{eq:Qkr'} and Lemma~\ref{lem:allowed:kr}).

\begin{definition}\label{def:cpl}
Set $B_0=2/b$ and $b_0=b/400$, where~$b$ is as in Definition~\ref{def:nice}. 
Given $t\in [t_0,t_1]$ and a $t$-nice parameter list~$\fS$, let
\begin{gather}
\label{eq:qkr:pm}
q^{\pm}_{k,r,n}(t) := \indic{k,r \ge 0, \: k+r \le B_0 \log n}\indic{(k,r)\in \cSkr} \max\left\{q_{k,r}(t) \pm e^{-b_0(k+r)}n^{-0.49},0\right\} ,\\
\label{eq:Qkr:pm}
Q^{\pm}_{k,r}(t) := q^{\pm}_{k,r,n}(t)n, 
\end{gather}
and define the parameter lists $\fS^+_t$ and $\fS^-_t$ by
\begin{equation}\label{eq:fS:pm}
 \fS^{\pm}_t := \Bigl(\bigl(N_{k}\bigr)_{k > K}, \: \bigl(Q^{\pm}_{k,r}(t)\bigr)_{k,r \ge 0} \Bigr) .
\end{equation}
\end{definition}
Note that the definition of $\fS^\pm_t$ depends not only on $\fS$ (via the $N_k$), but also on $t$ and on $n$.
The parameters~$Q^{\pm}_{k,r}(t)$ defined in~\eqref{eq:Qkr:pm} do \emph{not} depend on~$\fS$.
Of course, in~\eqref{eq:qkr:pm} the precise numerical value~$0.49$ is irrelevant for our later arguments (any $\gamma \in (1/3,1/2)$ suffices). 
The second indicator function in \eqref{eq:qkr:pm} simply restricts the types of $(k,r)$-components to ones that can possibly 
appear in $G^{\cR}_{n,i}$; see Lemma~\ref{lem:allowed:kr}.

\begin{lemma}[Sandwiching between Poissonized random graphs]
\label{lem:cpl}
Let $t\in [t_0,t_1]$, and let $\fS=\bigl((N_{k})_{k > K}, \: (Q_{k,r})_{k,r \ge 0} \bigr)$ be a $t$-nice parameter list. 
Define $\fS^\pm_t$ as in Definition~\ref{def:cpl}. Then
there is a coupling such that we have
\begin{equation}\label{eq:cpl}
 \JP(\fS^{-}_t) \subseteq J(\fS) \subseteq \JP(\fS^{+}_t) 
\end{equation}
with probability $1-n^{-\omega(1)}$.
\end{lemma}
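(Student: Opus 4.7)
The plan is to couple the Poisson counts of $(k,r)$--components in $\JP(\fS^{\pm}_t)$ with the exact count $Q_{k,r}(\fS)$ used in $J(\fS)$, exploiting that $\fS$, $\fS^{-}_t$ and $\fS^{+}_t$ share the same $(N_k)_{k>K}$ and hence the same initial graph $H_L$. Once we have, for each $(k,r)$, integers $X^{\pm}_{k,r}$ with $X^{\pm}_{k,r}\sim \Po(Q^{\pm}_{k,r}(t))$ marginally and $X^{-}_{k,r}\le Q_{k,r}(\fS)\le X^{+}_{k,r}$, the sandwich \eqref{eq:cpl} follows by drawing $X^{+}_{k,r}$ i.i.d.\ uniform endpoint vectors in $V_L^r$ and declaring the first $X^{-}_{k,r}$ (respectively, the first $Q_{k,r}(\fS)$) of them to realise the $(k,r)$--hyperedges of $\JP(\fS^{-}_t)$ (respectively, $J(\fS)$), all of them realising those of $\JP(\fS^{+}_t)$. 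By Definitions~\ref{def:J} and~\ref{def:F} this gives the correct joint distribution and the required containment.

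Before coupling, I would cut the relevant index set down to $O((\log n)^2)$ pairs. If $(k,r)\notin \cSkr$ then $Q_{k,r}(\fS)=0$ by \eqref{eq:nice:allowed} and $Q^{\pm}_{k,r}(t)=0$ by the indicator in \eqref{eq:qkr:pm}. If $k+r>B_0\log n$ then the tail~\eqref{eq:Qkr:stop} forces $Q_{k,r}(\fS)=0$ and the truncation in~\eqref{eq:qkr:pm} forces $Q^{\pm}_{k,r}(t)=0$. So all three random graphs agree on these $(k,r)$ without any coupling.

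For a pair $(k,r)\in \cSkr$ with $k+r\le B_0\log n$, set $c_{k,r}=e^{-b_0(k+r)}n^{0.51}$. Niceness gives $|Q_{k,r}(\fS)-q_{k,r}(t)n|\le (\log n)^{D_Q}n^{1/2}\ll c_{k,r}$, so \eqref{eq:qkr:pm} yields $Q^{+}_{k,r}(t)-Q_{k,r}(\fS)\ge c_{k,r}/2$, and whenever $Q^{-}_{k,r}(t)>0$ also $Q_{k,r}(\fS)-Q^{-}_{k,r}(t)\ge c_{k,r}/2$ (when $Q^{-}_{k,r}(t)=0$ the inequality $X^{-}_{k,r}\le Q_{k,r}(\fS)$ is automatic, since $X^{-}_{k,r}=0$). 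The tail bound $Q^{\pm}_{k,r}(t)\le 2Be^{-b(k+r)}n$ combined with a standard Chernoff estimate for Poisson variables then yields
\begin{equation*}
 \Pr\bigl(X^{+}_{k,r}<Q_{k,r}(\fS)\bigr)+\Pr\bigl(X^{-}_{k,r}>Q_{k,r}(\fS)\bigr) \le \exp\bigl(-\Omega(e^{(b-2b_0)(k+r)}n^{0.02})\bigr),
\end{equation*}
which, since $b>2b_0$, is $n^{-\omega(1)}$ uniformly in $k,r$. A union bound over the $O((\log n)^2)$ relevant pairs preserves the $n^{-\omega(1)}$ failure probability, so on the intersection of the corresponding good events we can realise the joint coupling described above.

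The main technical point is the quantitative choice of the slack $e^{-b_0(k+r)}n^{0.51}$ in \eqref{eq:qkr:pm}: it must simultaneously (i) dominate the niceness error $(\log n)^{D_Q}n^{1/2}$, (ii) exceed the Poisson standard deviation $O(\sqrt{Be^{-b(k+r)}n})$ by a polynomial-in-$n$ factor, uniformly over the truncated range $k+r\le B_0\log n$. The calibration $b_0=b/400$ and $B_0=2/b$ ensures that the Chernoff exponent $e^{(b-2b_0)(k+r)}n^{0.02}$ tends to infinity faster than any power of $\log n$ throughout this range, which is the only delicate inequality in the argument.
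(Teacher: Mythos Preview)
Your proof is correct and follows essentially the same route as the paper: reduce to showing that independent Poisson variables with means $Q^{\pm}_{k,r}(t)$ sandwich the deterministic $Q_{k,r}(\fS)$ with probability $1-n^{-\omega(1)}$, handle the pairs with $(k,r)\notin\cSkr$ or $k+r>B_0\log n$ trivially, and use niceness plus a Chernoff bound for the rest. The paper streamlines the Chernoff step slightly by using a single uniform deviation $(\log n)^2 n^{1/2}$ (valid for any Poisson with mean $O(n)$) rather than the $(k,r)$-dependent exponent you display, but this is purely cosmetic.
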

\begin{proof}
We can construct $\JP(\fS_t^\pm)$ by first exposing the associated `Poissonized' parameters $Y_{k,r} \sim \Po(Q_{k,r}^\pm)$, and then setting
\begin{equation}\label{eq:lem:cpl:Fi}
\JP(\fS_t^\pm) = J(\fS^{*}) \quad \text{ with } \quad \fS^{*}=\Bigl(\bigl(N_{k}\bigr)_{k > K}, \: \bigl(Y_{k,r}\bigr)_{k, r \ge 0}\Bigr).
\end{equation} 
By \eqref{eq:J:mon} it thus suffices to show that the associated random parameters used in typical realizations of $\JP(\fS^{\pm}_t)$ sandwich
the parameter list $\fS$ from above and below.
We shall prove this using standard Chernoff bounds, which imply that any Poisson random variable~$Y$ with mean $\mu = O(n)$ satisfies $\Pr(|Y-\mu| \ge (\log n)^2n^{1/2}) \le n^{-\omega(1)}$, say. 

Turning to the details, let $b,B>0$ be as in~\eqref{eq:Qkr:tail'}, and $B_0=2/b$, $b_0=b/400$ as in Definition~\ref{def:cpl}.
Define $Y^{\pm}_{k,r} \sim \Po(Q^{\pm}_{k,r}(t))$. 
We henceforth consider only $(k,r)\in \cSkr$, since otherwise $Y_{k,r}^\pm=0$ by definition and $Q_{k,r}=0$ by~\eqref{eq:nice:allowed}.
Note that, with $D_Q>0$ as in~\eqref{eq:Qkr'}, for $n \ge n_0(D_Q)$ we have 
\begin{equation}\label{eq:cpl:lb}
\min_{k+r \le B_0 \log n}  e^{-b_0(k+r)}n^{-0.49} \ge n^{-0.495} \ge 4 (\log n)^{\max\{D_Q,2\}}n^{-1/2}. 
\end{equation}
So, using the discussed Chernoff bounds, with probability $1-n^{-\omega(1)}$ we thus obtain 
\begin{gather}
\label{eq:qkr:pm:ub}
Y^{+}_{k,r} \ge Q^{+}_{k,r}(t) -(\log n)^2n^{1/2} \ge q_{k,r}(t)n + 2 (\log n)^{D_Q}n^{1/2} ,\\
\label{eq:qkr:pm:lb}
Y^{-}_{k,r} \le \indic{Q^{-}_{k,r}(t) > 0}\bigl(Q^{-}_{k,r}(t) +(\log n)^2n^{1/2}\bigr) \le \indic{Q^{-}_{k,r}(t) > 0}\bigl(q_{k,r}(t) n -2 (\log n)^{D_Q}n^{1/2}\bigr) 
\end{gather}
simultaneously for all $k,r \ge 0$ with $k+r \le B_0 \log n$. 
Comparing \eqref{eq:qkr:pm:ub}--\eqref{eq:qkr:pm:lb} with~\eqref{eq:Qkr'}, using $Q_{k,r} \ge 0$ it follows that with probability at least $1-n^{-\omega(1)}$ we have
\begin{equation}\label{eq:Y-QY+}
 Y^{-}_{k,r} \le Q_{k,r} \le Y^{+}_{k,r} 
\end{equation}
for all $k,r$ with $k+r\le B_0\log n$.

Turning to $k+r> B_0\log n$, 
if $n$ is large enough ($n \ge n_0(b_0,B_0,B)$), then we have $Q_{k,r}=0$ by \eqref{eq:Qkr:stop}, while
$q_{k,r,n}^\pm=0$ by definition.
Hence the inequalities \eqref{eq:Y-QY+} hold trivially, completing the proof.
\end{proof}
\begin{remark}\label{rem:cpl2}
In view of~\eqref{eq:qkr:tail} and \eqref{eq:qkr:pm}, setting $B_1 = B + 1$ and $b_0 = b/400$, for $n \ge 1$ we have
\begin{equation}
\label{eq:qkr:pm:tail}
\sup_{t \in [t_0,t_1]}q^{\pm}_{k,r,n}(t) \le B_1 e^{-b_0(k+r)}.
\end{equation} 
\end{remark}

\subsubsection{Stochastic domination}\label{sec:dom:dom}
Let~$\fS$ be a $t$-nice parameter list, and define $\fS^{\pm}_t$ as in Definition~\ref{def:cpl}. 
Aiming at studying the component size distribution of $\JPpm=\JP(\fS^\pm_t)$,
we shall next define branching processes~$\bp_t^{1,\pm}$ and~$\bp_t^{\pm}$. 
Since~$\fS^\pm_t$ depends on~$\fS$ and on~$n$, these branching processes will also depend on~$\fS$ and~$n$,
in contrast to the idealized processes~$\bp^1_t$ and~$\bp_t$ defined in Section~\ref{sec:BPI}.
To define our branching processes, we need to define the corresponding offspring distributions. 
For $\bp_t^-$ this is a little involved, 
since we need to deal with `clashes' in the exploration process.
The main idea is that the exploration process finds subsets of the vertices which resemble `typical' subgraphs of~$\JP$. For example, in view of~\eqref{eq:Nk:t0:tail'} we expect that only a $e^{-\Omega(k)}$-fraction of the discovered vertices originate from size-$k$ components of $V_L$ (for any~$k \ge 1$), 
which intuitively explains the definition of~$N^{-}=N^{-}(\fS)$ given in~\eqref{eq:N:LB} below, bearing in mind that we
will only consider up to $O(n^{2/3})$ exploration steps. 

First, let us fix some constants.
Given $a,A>0$ as in~\eqref{eq:Nk:t0:tail'}, let
\begin{equation}\label{eq:def:D0a0}
 D_0 := 2/a \qquad \text{and} \qquad a_0:=\min\{a/2,1/(6 D_0)\}.
\end{equation}
Given~$\zeta>0$ as in~\eqref{eq:nice:VL}, let
\begin{equation}\label{def:A0}
A_0 := 16/\zeta \cdot \sum_{k \ge 1}kA e^{-a_0 k} \qquad \text{and} \qquad A_1 := A_0/\zeta .
\end{equation}

\begin{definition} \label{def:cpl:UB}
Given $t\in [t_0,t_1]$ and a $t$-nice parameter list~$\fS$, define a probability distribution $N^+$ on $\NN$ by
\begin{equation}\label{eq:N:UB}
 \Pr(N^{+}=k) = \indic{k > K}N_k(\fS)/|V_L(\fS)|,
\end{equation}
and set
\begin{equation}\label{eq:lambda:kr:UB}
 \lambda^{+}_{k,r}(t) := r Q^+_{k,r}(t)/|V_L(\fS)| ,
\end{equation}
where $Q^{\pm}_{z,r}(t)$ is defined in Definition~\ref{def:cpl}.
Similarly, define a probability distribution $N^-$ on $\NN$ by
\begin{equation}\label{eq:N:LB}
\Pr(N^{-}=k) = \begin{cases}
	\indic{k > K}\max\bigl\{N_k(\fS)-A_0 e^{-a_0 k} n^{2/3},0\bigr\}/|V_L(\fS)|, & ~~\text{if $k \ge 1$}, \\ 
	1-\Pr(N^{-} \ge 1) , & ~~\text{if $k = 0$},
	\end{cases}
\end{equation}
and set
\begin{equation}\label{eq:lambda:kr:LB}
 \lambda^{-}_{k,r}(t) := \max\bigl\{\bigl(1- A_1 r n^{-1/3}\bigr)rQ^-_{k,r}(t)/|V_L(\fS)|,0\bigr\}.
\end{equation}
Define two probability distributions $(Y^{\pm}_{0,t},Z^{0,\pm}_t,R^{\pm}_t)$ on $\NN^3$ by
\begin{equation}\label{eq:YZR:dom}
 \Pr\bigl((Y^{\pm}_{0,t},Z^{0,\pm}_t,R^{\pm}_t)=(y,z,r)\bigr) = \frac{N_y(\fS)\indic{y > K, \: z=0, \: r=0}  + z Q^{\pm}_{z,r}(t)\indic{y=0, \: z \ge 1} }{|\fS^{\pm}_t|} ,
\end{equation}%
where $Q^{\pm}_{z,r}(t)$ and $\fS^\pm_t$ are defined in Definition~\ref{def:cpl}.
Finally, define $(Y_t^{\pm},Z_t^{\pm})$ and $(Y_t^{0,\pm},Z_t^{0,\pm})$
analogous to $(Y_t,Z_t)$ and $(Y_t^{0},Z_t^{0})$ in
\eqref{def:Hkt:l}--\eqref{def:Yt2Zt} and \eqref{def:YZR}--\eqref{def:Yt0}, but with $N$, $\lambda_{k,r}(t)$ and $(Y_{0,t},Z^0_t,R_t)$
replaced by $N^{\pm}$, $\lambda^{\pm}_{k,r}(t)$ and $(Y^{\pm}_{0,t},Z^{0,\pm}_t,R^{\pm}_t)$.
\end{definition}
\begin{definition}\label{def:bp2pm}
Given $t\in [t_0,t_1]$ and a $t$-nice parameter list~$\fS$, we define $\bp_t^{\pm}=\bp_t^{\pm}(\fS)$ as $\bp_{Y_t^\pm,Z_t^\pm,Y^{0,\pm}_t,Z^{0,\pm}_t}$,
where the general branching process definition is given in Definition~\ref{def:bp:new},
and the offspring distributions $(Y_t^{\pm},Z_t^{\pm})$ and $(Y_t^{0,\pm},Z_t^{0,\pm})$
are defined as above. Similarly, we define $\bp_t^{1,\pm}=\bp_t^{1,\pm}(\fS)$ as $\bp^1_{Y_t^\pm,Z_t^\pm}$.
\end{definition}

One of our main goals is to show that we can approximate the expected number of vertices in components of at least a given 
(large) size in $\JPpm=\JP(\fS^{\pm}_t)$ via the branching processes~$\bp_t^{\pm}$; see Theorem~\ref{thm:ENgekD} below.
To do this, we need to relate, via stochastic domination, the exploration processes in $\JPpm$ to the branching 
processes $\bp_t^{\pm}$.

Before turning to the domination arguments, we first record two simple observations.
Firstly, $|\fS^{\pm}_t| \approx n$ for $t$-nice~$\fS$. 
More precisely, using~\eqref{eq:nice:n}, \eqref{eq:Qkr'}, \eqref{eq:qkr:pm}--\eqref{eq:Qkr:pm}, \eqref{eq:Qkr:stop}, with
$B_0 = 2/b$ we have 
\begin{equation}\label{def:n:pm:diff}
\begin{split}
& \Bigl||\fS^\pm_t|-n\Bigr| = \Bigl||\fS^\pm_t|-|\fS|\Bigr|  = \Bigl| |V_S(\fS^{\pm}_t)| - |V_S(\fS)| \Bigr| \le \sum_{k \ge 1,\, r \ge 0}k |Q^\pm_{k,r}(t)-Q_{k,r}(\fS)| \\
& \qquad \le \sum_{\substack{k \ge 1, r \ge 0:\\k+r \le B_0 \log n}} k \bigl[e^{-b_0(k+r)}n^{0.51} + (\log n)^{D_Q} n^{1/2}\bigr] = O(n^{0.51}) .
\end{split}
\end{equation}

The next observation concerns parity constraints.
\begin{lemma}\label{lem:inlattice:dom}
For any $t$-nice parameter list $\fS$ with $t \in (t_0,t_1]$, the random variables defined above satisfy
\[
 (Y_t^{\pm},Z_t^{\pm}) \in (\per\NN)^2 \quad\text{and}\quad
 (Y_t^{0,\pm},Z_t^{0,\pm}) \in (\per\NN)^2 \ \cup\  (\{0\}\times [K])
\]
with probability~$1$.
\end{lemma}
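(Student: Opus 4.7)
The plan is to mimic the proof of Lemma~\ref{lem:inlattice} for the idealized random variables, replacing each appeal to $\rho_k(t_0)>0 \Rightarrow k\in \cSR$ and $q_{k,r}(t)>0 \Rightarrow (k,r)\in \cSkr$ by the analogous support properties of $N^{\pm}$, the Poisson counts $H^{\pm}_{k,r,t}$ (with parameters $\lambda^\pm_{k,r}(t)$), and the initial triple $(Y_{0,t}^{\pm}, Z^{0,\pm}_t, R^{\pm}_t)$ that are built directly into Definitions~\ref{def:cpl} and~\ref{def:cpl:UB}. Two structural facts drive everything: (a) every nonzero atom of $N^{\pm}$ is a multiple of $\per$; and (b) whenever $\lambda^{\pm}_{k,r}(t)>0$ with $r\ge 1$, we have $k\in \per\NN$.

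For (a), note that $N^{\pm}$ puts mass on $k\ge 1$ only at values with $k>K$ and $N_k(\fS)>0$; by the niceness assumption~\eqref{eq:nice:allowed} this forces $k\in \cSR$, and then Lemma~\ref{lem:allowed} gives $\per\mid k$. The extra mass that $N^{-}$ may carry at $k=0$ is harmless since $0\in \per\NN$. For (b), by~\eqref{eq:lambda:kr:UB} and~\eqref{eq:lambda:kr:LB} the assumption $\lambda^{\pm}_{k,r}(t)>0$ with $r\ge 1$ forces $Q^{\pm}_{k,r}(t)>0$, and the indicator $\indic{(k,r)\in\cSkr}$ in~\eqref{eq:qkr:pm} then gives $(k,r)\in \cSkr$; Lemma~\ref{lem:allowed:kr}\ref{allowed:per} yields $\per\mid k$. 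An identical argument applies to any atom of $(Y_{0,t}^{\pm}, Z^{0,\pm}_t, R^{\pm}_t)$ of the form $(0,z,r)$ with $r\ge 1$ via~\eqref{eq:YZR:dom}.

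With (a) and (b) in hand, the two claims are a short bookkeeping. First, the analogue of~\eqref{def:Yt2Zt} shows that $Y_t^{\pm}$ is a sum of independent copies of $N^{\pm}$ (each in $\per\NN$ by (a)), so $Y_t^{\pm}\in \per\NN$; and $Z_t^{\pm}=\sum_{k,r\ge 1} k H^{\pm}_{k,r,t}$ has nonzero contributions only from pairs $(k,r)$ with $r\ge 1$ and $\lambda^{\pm}_{k,r}(t)>0$, so (b) gives $Z_t^{\pm}\in \per\NN$. Second, for $(Y_t^{0,\pm},Z_t^{0,\pm})$ we condition on which atom of $(Y_{0,t}^{\pm}, Z^{0,\pm}_t, R^{\pm}_t)$ occurs: atoms of the form $(y,0,0)$ with $y>K$ contribute $(Y_t^{0,\pm},Z_t^{0,\pm})=(y,0)\in (\per\NN)^2$ by (a); atoms $(0,z,r)$ with $r\ge 1$ give $Z_t^{0,\pm}=z\in \per\NN$ by (b), while $Y_t^{0,\pm}=\sum_{h\le r} N_h^\pm\in \per\NN$ by (a); and the remaining atoms $(0,z,0)$ with $(z,0)\in \cSkr$ give $Y_t^{0,\pm}=0$ and $Z_t^{0,\pm}=z\in \cSR$ by Lemma~\ref{lem:allowed:kr}\ref{allowed:r0}, so by Lemma~\ref{lem:allowed} either $z\in \per\NN$ or $z\in [K]$ --- matching the exceptional set $\{0\}\times [K]$ in the statement.

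No serious obstacle arises: the argument is essentially bookkeeping around the structural results of Section~\ref{sec:period}. The only slightly delicate point is the case $r=0$ in~\eqref{eq:YZR:dom}, which is precisely what produces the exceptional set $\{0\}\times [K]$, exactly mirroring the corresponding exception in Lemma~\ref{lem:inlattice}. Note also that the $n$-dependent truncations appearing in the definitions of $N^{-}$ and $\lambda^{-}_{k,r}(t)$ in~\eqref{eq:N:LB} and~\eqref{eq:lambda:kr:LB} only \emph{shrink} the supports, so the inclusions above remain valid uniformly in~$n$ (and the $\max\{\cdot,0\}$ truncations in~\eqref{eq:qkr:pm} and~\eqref{eq:lambda:kr:LB} cannot manufacture new support points outside the allowed lattice).
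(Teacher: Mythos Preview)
Your proof is correct and follows essentially the same approach as the paper's: establish that $N^\pm$ is supported on $\per\NN$ via the niceness condition~\eqref{eq:nice:allowed} and Lemma~\ref{lem:allowed}, that $\lambda^\pm_{k,r}(t)>0$ forces $(k,r)\in\cSkr$ via the indicator in~\eqref{eq:qkr:pm}, and then carry over the bookkeeping from Lemma~\ref{lem:inlattice}. The paper's version is more terse, simply noting these two support facts and then saying ``the remaining argument of Lemma~\ref{lem:inlattice} carries over,'' whereas you spell out the case analysis explicitly.
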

\begin{proof}
For $k > 0$, note that $\Pr(N^{\pm}=k)>0$ implies~$k>K$ and $N_k(\fS)>0$, so $k\in \cSR \setminus [K]$ by~\eqref{eq:nice:allowed}. 
By Lemma~\ref{lem:allowed} it follows that $N^{\pm}$ can only take values in $\per \NN$. 
Furthermore, $\lambda_{k,r}^\pm(t)>0$ implies $Q_{k,r}^\pm(t)>0$, and hence, by \eqref{eq:qkr:pm}, that $(k,r)\in \cSkr$.
Now the remaining argument of Lemma~\ref{lem:inlattice} carries over. 
\end{proof}

The following theorem states that the number of vertices found by the exploration process $\cT^+=\cT(\fS_t^+)$ is dominated from above by the total size of the branching process $\bp^{+}_{t}$. 

\begin{theorem}[Stochastic domination of the exploration process `from above']
\label{thm:cpl:UB}
Let $t\in [t_0,t_1]$, and let $\fS$ be a $t$-nice parameter list. Define $\fS^+_t$ as in Definition~\ref{def:cpl},
and the branching process $\bp^+_t=\bp_t^+(\fS)$ as in Definition~\ref{def:bp2pm}.
Define the exploration process $\cT^+=\cT(\fS^+_t)$ as in~\eqref{def:Ti}--\eqref{def:Ti:size} of Section~\ref{sec:nep}. 
Then there is a coupling such that $|\cT^+| \le |\bp^{+}_{t}|$. 
\end{theorem}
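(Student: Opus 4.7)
The plan is to couple the exploration process $\cT^+ = \cT(\fS^+_t)$ step-by-step with a breadth-first traversal of $\bp^+_t$, using Lemmas~\ref{lem:dom:init} and~\ref{lem:dom} applied to the parameter list $\fS^+_t$ to obtain one-step upper bounds on $(M_j - M_{j-1}, S_j - S_{j-1})$ that match, in distribution, independent copies of $(Y^+_t, Z^+_t)$. The heart of the argument is the observation that the various random quantities appearing in Lemmas~\ref{lem:dom:init}--\ref{lem:dom} when applied to $\fS^+_t$ coincide precisely with the ones used in Definition~\ref{def:cpl:UB} to define $\bp^+_t$.

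First I would record three simple alignments. Since $\fS^+_t$ and $\fS$ agree on the $N_k$'s and differ only in the $Q_{k,r}$'s (see Definition~\ref{def:cpl}), we have $V_L(\fS^+_t) = V_L(\fS)$ and $H_L(\fS^+_t) = H_L(\fS)$; consequently (i)~the rate $\tlambda_{k,r}$ from~\eqref{def:tlkr} applied to $\fS^+_t$ equals $\lambda^+_{k,r}(t)$ from~\eqref{eq:lambda:kr:UB}, (ii)~for $w$ uniform in $V_L(\fS)$ the size $|C_w(H_L(\fS))|$ has exactly the distribution $N^+$ of~\eqref{eq:N:UB}, and (iii)~the initial triple $(Y_{0,\fS^+_t}, Z^0_{\fS^+_t}, R_{\fS^+_t})$ from~\eqref{def:MSR} has the same law as $(Y^{+}_{0,t}, Z^{0,+}_t, R^+_t)$ of~\eqref{eq:YZR:dom}.

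Next I would apply Lemma~\ref{lem:dom:init} to $\fS^+_t$ to produce a coupling under which $S_0 = Z^{0,+}_t$ and $M_0 \le Y^{0,+}_t$; the latter is obtained by matching the right-hand side of~\eqref{eq:dom:M0} to the definition~\eqref{def:Yt0} via (ii)--(iii). Then, at each subsequent step $j \ge 1$, conditional on the history $(M_i,S_i)_{i<j}$, Lemma~\ref{lem:dom} applied to $\fS^+_t$ furnishes independent Poisson multisets $\fH^+_{k,r}$ bounding $M_j - M_{j-1}$ and $S_j - S_{j-1}$ via~\eqref{eq:dom:Mj}--\eqref{eq:dom:Sj}. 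Using (i)--(ii), together with the two-stage construction~\eqref{def:tlkr}--\eqref{def:hkr} in which the vertices $w_{k,r,y,h}$ are i.i.d.~uniform in $V_L(\fS)$ and hence the sizes $|C_{w_{k,r,y,h}}(H_L(\fS))|$ are i.i.d.~copies of $N^+$, these right-hand sides have exactly the joint law of a fresh copy $(Y^+_{t,j}, Z^+_{t,j})$ of $(Y^+_t, Z^+_t)$ constructed as in~\eqref{def:Yt2Zt}. Hence we may couple so that $M_j - M_{j-1} \le Y^+_{t,j}$ and $S_j - S_{j-1} \le Z^+_{t,j}$, with the pairs across different $j$ independent.

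To conclude, I would generate $\bp^+_t$ simultaneously in breadth-first order using the same $(Y^{0,+}_t, Z^{0,+}_t)$ for the first generation and $(Y^+_{t,j}, Z^+_{t,j})$ as the offspring of the $j$-th processed $L$-particle. Writing $\tilde L_j := Y^{0,+}_t + \sum_{i=1}^j Y^+_{t,i}$ and $\tilde S_j := Z^{0,+}_t + \sum_{i=1}^j Z^+_{t,i}$, a one-line induction from the one-step bounds gives $M_j \le \tilde L_j$ and $S_j \le \tilde S_j$ for all $j \ge 0$. Let $L^*$ denote the total number of $L$-particles of $\bp^+_t$ (the case $L^* = \infty$ being trivial). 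Since BFS processes each $L$-particle exactly once, $\tilde L_{L^*} = L^*$, so $M_{L^*} \le L^*$ and by~\eqref{eq:inv:A} the exploration has terminated by some step $j^* \le L^*$. Combining with~\eqref{eq:inv:final}--\eqref{def:Ti:size} and $|\cA_{j^*}| = 0$ yields $|\cT^+| = M_{j^*} + S_{j^*} \le \tilde L_{L^*} + \tilde S_{L^*} = L^* + S^* = |\bp^+_t|$, as required. The main conceptual point, rather than any hard calculation, is verifying that the Poisson rates, the component-size samples, and the initial-triple distribution arising from Lemmas~\ref{lem:dom:init}--\ref{lem:dom} line up with the offspring structure of $\bp^+_t$; by design, Definition~\ref{def:cpl:UB} is engineered to make this alignment essentially tautological.
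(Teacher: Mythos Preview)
Your proposal is correct and follows essentially the same approach as the paper: apply Lemma~\ref{lem:dom:init} for $j=0$ and Lemma~\ref{lem:dom} inductively for $j\ge 1$, after checking that the quantities these lemmas produce for the parameter list $\fS^+_t$ coincide with the ingredients of Definition~\ref{def:cpl:UB}. The paper's proof is terser, recording only the key identifications $(Y^{+}_{0,t},Z_t^{0,+},R_t^{+})=(Y_{0,\fS^+_t},Z^0_{\fS^+_t},R_{\fS^+_t})$ and $N_k(\fS^+_t)=N_k(\fS)$ and leaving the final BFS-coupling step (your last paragraph) implicit in the phrase ``by definition of $\cT^+$ and $\bp^+_t$ it suffices to show\ldots''.
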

\begin{proof}
By definition of $\cT^+$ and $\bp^{+}_{t}$ it suffices to show that there is a coupling satisfying the following properties with probability one: 
(i)~for $j=0$ the initial values satisfy $M_0 \le Y^{0,+}_{t}$ and $S_0 \le Z^{0,+}_{t}$, 
and (ii)~for every $j \ge 1$ the step-wise differences satisfy $M_{j}-M_{j-1} \le Y^{+}_{t}$ and $S_{j}-S_{j-1} \le Z^{+}_{t}$. 
This claim is an immediate consequence of Lemmas~\ref{lem:dom} and~\ref{lem:dom:init}. 
Indeed, the case~$j=0$ follows from Lemma~\ref{lem:dom:init} (noting that $(Y^{+}_{0,t},Z_t^{0,+},R_t^{+})=(Y_{0,\fS^+_t},Z^0_{\fS^+_t},R_{\fS^+_t})$ and $N_k(\fS^+_t) = N_k(\fS)$ hold), and the case $j \ge 1$ follows by applying Lemma~\ref{lem:dom} inductively.
\end{proof}

The next theorem states that the exploration process $\cT^-$ is dominated from below by the branching process $\bp^{-}_{t}$ until both have found many vertices, namely at least~$n^{2/3}$ (this cutoff aims at simplicity rather than the best bounds).  
\begin{theorem}[Stochastic domination of the exploration process `from below']
\label{thm:cpl:LB}
Let $t\in [t_0,t_1]$, and let $\fS$ be a $t$-nice parameter list.
Define $\fS^-_t$ as in Definition~\ref{def:cpl},
and the branching process $\bp^-_t=\bp_t^-(\fS)$ as in Definition~\ref{def:bp2pm}.
Define the exploration process $\cT^-=\cT(\fS^-_t)$ as in~\eqref{def:Ti}--\eqref{def:Ti:size} of Section~\ref{sec:nep}. 
Then there is a coupling such that, with probability $1-n^{-\omega(1)}$, we have $|\cT^-| \ge |\bp^-_{t}|$ or $\min\{|\cT^-|,|\bp^-_{t}|\} > n^{2/3}$. 
\end{theorem}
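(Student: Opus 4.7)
The plan is to mirror the step-by-step coupling used in the proof of Theorem~\ref{thm:cpl:UB}, but now building the coupling so that the exploration process \emph{dominates} the branching process. The role of the discounts $A_1 r n^{-1/3}$ in \eqref{eq:lambda:kr:LB} and $A_0 e^{-a_0 k} n^{2/3}$ in \eqref{eq:N:LB} is precisely to absorb the `clash' losses that arise in the exploration when a sampled $V_L$-vertex falls in a region that has already been visited. As long as $M_{j-1}\le n^{2/3}$, the per-vertex clash probability is at most $M_{j-1}/|V_L|=O(n^{-1/3})$ by \eqref{eq:nice:VL}, so the clash corrections are tiny, and the aim is to show they are strictly dominated by the built-in discounts. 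We stop the coupling as soon as $M_{j-1}>n^{2/3}$; at that point the conclusion of the theorem holds regardless.

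For the base step, Lemma~\ref{lem:dom:init} applied to $\fS^-_t$ gives $M_0\ge Y_{0,\fS^-_t}+\sum_{1\le h\le R_{\fS^-_t}} |C_{w_h}(H_L(\fS^-_t))|$ with equality with probability $1-O(\Psi^3/|V_L|)$. Since $(Y_{0,\fS^-_t},Z^0_{\fS^-_t},R_{\fS^-_t})$ has the same distribution as $(Y^-_{0,t},Z^{0,-}_t,R^-_t)$ by construction (using $Q_{k,r}(\fS^-_t)=Q^-_{k,r}(t)$), and each $|C_{w_h}(H_L(\fS^-_t))|\sim N^+$ stochastically dominates $N^-$ by the definitions \eqref{eq:N:UB}--\eqref{eq:N:LB}, we obtain the desired coupling $M_0\ge Y^{0,-}_t$ and $S_0\ge Z^{0,-}_t$. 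For the inductive step at $j\ge 1$, condition on the history and expose the Poisson counts $|\fH_{k,r}|\sim\Po(\tlambda_{k,r})$ together with the random $V_L$-vertices of each hyperedge. By Poisson thinning we split $|\fH_{k,r}|=H^-_{k,r,t}+\Delta_{k,r,j}$ with $H^-_{k,r,t}\sim\Po(\lambda^-_{k,r}(t))$ and $\Delta_{k,r,j}$ an independent Poisson, which is possible since $\tlambda_{k,r}\ge \lambda^-_{k,r}(t)$ by \eqref{eq:lambda:kr:LB}. For each hyperedge in the `matched' portion we couple the $r-1$ component sizes $|C_{w_h}(H_L)|$ with independent copies of $N^-$ using $N^+\succeq N^-$.

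The key technical point is that on a `clean' event $\cG_j$ (none of the matched vertices at step $j$ coincides with a previously-explored vertex or with another matched vertex), Lemma~\ref{lem:dom} yields equality in \eqref{eq:dom:Mj}--\eqref{eq:dom:Sj}, so the exploration contribution equals $\sum \sum |C_{w_h}(H_L)|\ge Y^-_t$ by the component-size coupling, and similarly $S_j-S_{j-1}\ge Z^-_t$. Off $\cG_j$ the exploration contribution could be strictly less than what the branching process asks for, but the point is that the two discounts are calibrated so that a failure of dominance has probability $n^{-\omega(1)}$ per step. Concretely, the rate discount $A_1 r n^{-1/3}$ absorbs the chance that any of the $H^-_{k,r,t}$ matched hyperedges picks up a vertex of $\cA_{j-1}\cup\cE_{j-1}$: with $A_1 = A_0/\zeta$ and $M_{j-1}\le n^{2/3}$, the expected number of such clashes per $(k,r)$-block is $O(r n^{-1/3}\lambda^-_{k,r}/(1-A_1 rn^{-1/3}))$, and a Chernoff bound for Poisson variables upgrades this to $n^{-\omega(1)}$ because the built-in multiplicative gap between $\tlambda_{k,r}$ and $\lambda^-_{k,r}(t)$ is at least $n^{-1/3}$, which is far larger than any Poisson fluctuation on this scale after multiplying by the enforced cut-off $k+r\le B_0\log n$ via \eqref{eq:qkr:pm} (equivalently, \eqref{eq:UH:tr}). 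The truncation $A_0 e^{-a_0 k} n^{2/3}$ in $N^-$ plays the analogous role for the component-size coupling: since by \eqref{eq:Nk:t0:tail'} there are at most $A e^{-ak}n$ vertices in components of size $k$, fewer than $A_0 e^{-a_0 k}n^{2/3}$ of them can have been `used up' in at most $n^{2/3}$ previous exploration steps, so the conditional distribution of $|C_w(H_L)|$ for a freshly sampled $w$ still dominates $N^-$ with probability $1-n^{-\omega(1)}$ via a Chernoff/hypergeometric tail bound, with the exponential factor $e^{-a_0 k}$ chosen small enough (see \eqref{eq:def:D0a0}) relative to the $e^{-ak}$ tail to leave room for this.

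The main obstacle is therefore purely quantitative: getting the per-step failure probability down to $n^{-\omega(1)}$ so that the union bound over the at most $n^{2/3}$ exploration steps still gives a $1-n^{-\omega(1)}$ overall success probability. This is what forces the precise polynomial scales $n^{-1/3}$, $n^{2/3}$, $n^{-0.49}$ in the definitions of $Q^\pm_{k,r}(t)$, $\lambda^\pm_{k,r}(t)$ and $N^-$, and the relation $a_0\le a/2$: the `slack' between the true and discounted quantities must strictly exceed the typical fluctuation of the associated Poisson and hypergeometric random variables on all relevant scales, and the tail bound $N_{\ge k}\le A e^{-ak}n$ from Theorem~\ref{thm:init} must dominate the decay rate $a_0$ of the truncation so that the truncation never `bites' on the bulk of the distribution. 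Once these quantitative estimates are in hand, the inductive coupling carries through for all $j\le n^{2/3}$ with total failure probability $n^{-\omega(1)}$, establishing $|\cT^-|\ge |\bp^-_t|$ whenever the stopping condition $M_{j-1}>n^{2/3}$ is not triggered.
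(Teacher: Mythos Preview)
Your per-step failure probability claim is the gap. You assert that ``a failure of dominance has probability $n^{-\omega(1)}$ per step'', but this is not achievable by the argument you sketch. When $M_{j-1}$ is of order $n^{2/3}$, the probability that a \emph{single} freshly sampled vertex $w\in V_L$ lands in $\cA_{j-1}\cup\cE_{j-1}$ is of order $M_{j-1}/|V_L|=\Theta(n^{-1/3})$; no Chernoff bound turns this into $n^{-\omega(1)}$. The Poisson thinning $|\fH_{k,r}|=H^-_{k,r,t}+\Delta_{k,r,j}$ is fine, but the matched hyperedges still sample their $w_h$ uniformly from $V_L$, so clashes among them occur with probability $\Theta(n^{-1/3})$ per vertex, and a union bound over $n^{2/3}$ steps blows up. Your appeal to Lemma~\ref{lem:dom} for equality in \eqref{eq:dom:Mj}--\eqref{eq:dom:Sj} has the same defect: that lemma only gives equality with probability $1-O((\log n)^{22}M_{j-1}/|V_L|)=1-O(n^{-1/3+o(1)})$.

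The paper's proof does not attempt a per-step probabilistic bound. Instead it introduces a \emph{restricted} exploration $\cT^*\subseteq\cT^-$ which only tests hyperedges in $(V_{L,j})^{r-1}$ with $V_{L,j}=V_L\setminus(\cE_{j-1}\cup\{v_j\})$; this makes each hyperedge tested at most once and gives the \emph{deterministic} rate inequality $\tlambda^-_{k,r,j}\ge \lambda^-_{k,r}(t)$ for all $j\le A_0 n^{2/3}$, which is precisely what the factor $(1-A_1 r n^{-1/3})$ in \eqref{eq:lambda:kr:LB} is calibrated for. For the component-size coupling, the discount $A_0 e^{-a_0 k}n^{2/3}$ in \eqref{eq:N:LB} is likewise not meant to make clashes rare; it guarantees that $|C^*_{w_h}|\succeq N^-$ \emph{deterministically} as long as the global budget condition (P2) --- at most $A_0 e^{-a_0 k}n^{2/3}$ reached vertices in size-$\ge k$ components --- holds. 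The only probabilistic step is then to show (P2) is violated with probability $n^{-\omega(1)}$ \emph{in total}, which the paper does via a separate Chernoff argument over the entire run (events $\cB_1$, $\cB_2$), not per step. Your proposal has the right intuition for what the discounts accomplish but misidentifies the mechanism: they buy deterministic domination conditional on a global event, not small per-step clash probability.
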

\begin{proof}
We follow the approach of Theorem~\ref{thm:cpl:btT}, and think of the exploration process as sequence of random vertex sampling steps. 
Intuitively, to achieve domination we (i)~only sample from a subset of all vertices, and (ii)~give up as soon as certain unlikely events occur (corresponding to `atypical' explorations). 

Turning to the details, for brevity, define 
\[
 \Lambda := n^{2/3} .
\]
Given $\zeta>0$ as in~\eqref{eq:nice:VL}, note that for $n \ge n_0(\zeta,A_0)$ large enough we have
\begin{equation}\label{def:Nk:xi}
 |V_L|-A_0\Lambda \ge \zeta n - A_0n^{2/3} \ge \zeta/2 \cdot n .
\end{equation}
Note that, in view of~\eqref{eq:Nk:t0:tail'} and the definition \eqref{eq:def:D0a0} of $a_0,D_0$,
for $n \ge n_0(a,A)$  we have
\begin{gather}
\label{eq:cpl:B0}
N_k(\fS) = 0 \quad\text{for all}\quad k \ge D_0 \log n , \\
\label{eq:cpl:a0}
\min_{k \le D_0 \log n} e^{-2a_0k}\Lambda  \ge n^{1/3} .
\end{gather}
We now introduce~$\cT^*=\cT^*(\fS^-_t)$, which is a slight modification of the exploration process for $\JPm=\JP(\fS^-_t)$ described in Section~\ref{sec:nep}.
The initial set $W \subseteq V_L$ and initial value $S_0$ are chosen exactly as for $\cT(\fS^-_t)$. 
Given these, $\cT^*$ finds a subset 
\begin{equation}
\label{eq:cpl:CW}
C^-_{W}(\JPm) \subseteq C_{W}(\JPm) 
\end{equation}
of the set $C_{W}(\JPm)$ of vertices of $\JPm$ reachable from $W$. 
The only difference from $\cT(\fS^-_t)$ is that in each step with $j \ge 1$, when we explore $v_j \in \cA_{j-1}$, we only test for new $V_L$--vertices from 
\[
 V_{L,j} := V_{L} \setminus \bigl(\cE_{j-1} \cup \{v_{j}\}\bigr), 
\] 
i.e., only consider a \emph{subset} of the hyperedges tested by the original process. 
More precisely, we consider (test for their presence in $\JPm=\JP(\fS^-_t)$) all $k$-weighted hyperedges $g\in (V_L)^r$ of the form $(v_j,w_1, \ldots, w_{r-1})$, \ldots, $(w_1, \ldots, w_{r-1},v_j)$ with all $w_{h} \in V_{L,j}$. 
A key point is that (since $v_j$ is added to the `explored' set after step~$j$) a given hyperedge is tested at most once in this process. 
Comparing the numbers of vertices found by $\cT^-=\cT(\fS^-)$ and $\cT^*$, see~\eqref{def:Ti:size} and~\eqref{eq:cpl:CW},  we infer that 
\begin{equation}
\label{eq:cpl:T}
|\cT^-| = |C_{W}(\JPm)| + S_0 \ge |C^-_{W}(\JPm)| + S_0 = |\cT^*| .
\end{equation}

Analogous to the proof of Theorem~\ref{thm:cpl:btT}, in view of~\eqref{eq:cpl:T} the basic idea is to inductively couple $\cT^*=(M_j,S_j)_{j \ge 0}$ with $\bp^-_{t}$, such that $M_j$ and $S_j$ dominate (from above) the corresponding number of type~$L$ and~$S$ particles found in the exploration of $\bp^-_{t}$.
Within each step $j$ of this coupling we shall perform a (random) number of \emph{vertex sampling steps}. 
Recalling that $\cE_j\cup \cA_j$ is the set of vertices reached after $j$ steps, in each vertex sampling step we shall
(i) reveal a random vertex and add either zero or one components of $H_L$ to the set of reached vertices, and (ii) reveal
a new independent random variable with distribution $N^-$ (for the details see below). 
We stop constructing the coupling as soon as, after \emph{any} vertex sampling step, either of the following properties holds: 
\begin{itemize}\itemsep1pt \parskip0pt \parsep0pt
	\item[(P1)] we can already witness that $|\bp_t^-| > \Lambda$, or 
	\item[(P2)] the set of reached vertices contains, for some $k \ge 1$, more than $A_0 e^{-a_0 k} \Lambda$ vertices from $V_L$--components of size at least~$k$.
\end{itemize}%
Of course, we also stop constructing the coupling if after the end of some step $j$ we have $\cA_j=0$, i.e., the exploration has finished.
Note that (P1) says that, in our coupled exploration of the branching process $\bp_t^-$, we have already reached more than $\Lambda$
particles. If we either complete the coupling, or stop due to (P1), then we say the coupling \emph{succeeds}; if we stop
due to (P2), the coupling \emph{fails}. From the way the coupling is defined (below), if
we reach the end of the exploration we have $|\cT^*|\ge |\bp_t^-|$, while if we stop due to (P1) then, because the coupling succeeded
up to this point (so we have reached at least as many vertices in $\cT^*$ as particles in $\bp_t^-$),
we have $|\cT^*|,|\bp_t^-|>\Lambda$. We shall show that the probability of failure is $n^{-\omega(1)}$.

We start with step~$j=0$, considering the initial set~$W$ arising in the definition of~$\cT^*$,
chosen exactly as in $\cT(\fS_t^-)$. 
Recall from~\eqref{eq:def:SW:Y}--\eqref{eq:def:SW:ZR} that $W$ is the union of the components of $H_L$ containing
a certain number (either $1$ or $r$ in the two cases) of vertices $w_h$ chosen uniformly at random from 
$V_L$. Let
\begin{equation}\label{eq:dom:C}
 C^*_{w_h} := C_{w_h}(H_L) \setminus \bigl(\bigcup_{1 \le s < h} C_{w_s}(H_L) \bigr) .
\end{equation}
Since components of $H_L$ are disjoint, this definition simply says that $C^*_{w_h}=C_{w_h}(H_L)$ if this is a `new' component, 
and $C^*_{w_h}=\emptyset$ if it is a `repeated' component.
In particular, $|\bigcup_{1 \le h \le r} C_{w_h}(H_L)|=\sum_{1 \le h \le r} |C^*_{w_h}|$. 
As long as (P2) does not hold, for all $k\ge 1$ we have  
\begin{equation}\label{eq:dom:C:Pr}
 \Pr\bb{|C^*_{w_h}| =k \mid w_1, \ldots, w_{h-1}} \ge \frac{\max\bigl\{N_k(\fS)-A_0 e^{-a_0 k} \Lambda,0\bigr\}}{|V_L|} = \Pr(N^-=k) ,
\end{equation}
so that each random variable $|C^*_{w_h}|$ stochastically dominates~$N^-$. 
Consequently, there is a coupling such that either~(P2) occurs, or $M_0 =|W|\ge Y^-_{0,t} + \sum_{1 \le h \le R^-_t} N^-_h=Y^{0,-}_t$ and $S_0 \ge Z^{0,-}_t$ both hold, establishing the base case. 

Next we turn to step $j \ge 1$. We may assume that the exploration has not finished (i.e., $j \le M_{j-1}$), and that
(P2) does not hold; otherwise the coupling has already either succeeded or failed. 
Since (P2) does not hold, taking $k=1$, we have reached at most $A_0\Lambda$ vertices in $V_L$,
and hence $j\le A_0\Lambda$.
As in Section~\ref{sec:nep:ed}, we analyze the modified exploration process using auxiliary variables that may be constructed via a two-stage process.
Turning to the details, let~$\fH^-_{k,r,j}$
denote the multi-set of $(r-1)$-tuples $(w_1,\ldots,w_{r-1})$ of vertices in~$V_{L,j}$
corresponding to $k$-weighted hyperedges $(v_j,w_1, \ldots, w_{r-1})$, \ldots, $(w_1, \ldots, w_{r-1},v_j)$ found in step~$j$
of our exploration. By standard properties of Poisson random variables, we have
$|\fH^-_{k,r,j}| \sim \Po(\tlambda^-_{k,r,j})$ with
\[
 \tlambda^-_{k,r,j} := \frac{|V_{L,j}|^{r-1}rQ_{k,r}^-(t)}{|V_L|^r} = \frac{(|V_L|-j)^{r-1}rQ_{k,r}^-(t)}{|V_L|^r} \ge \lambda^-_{k,r}(t),
\]
where we use the bound $j/|V_L| \le A_0 \Lambda/|V_L| \le A_0/\zeta \cdot n^{-1/3}= A_1 n^{-1/3}$ (see~\eqref{eq:nice:VL} and~\eqref{def:A0}) to establish the final inequality.
Moreover, conditional on $|\fH^-_{k,r,j}|=y_{k,r}$, we have
\[
\fH^-_{k,r,j}=\bigl\{(w_{k,r,1,1}, \ldots, w_{k,r,1,r-1}), \ldots, (w_{k,r,y_{k,r},1}, \ldots, w_{k,r,y_{k,r},r-1})\bigr\},
\]
where each $w_{k,r,y,h} \in V_{L,j}$ is chosen independently and uniformly at random. 
Recalling the definition of the modified exploration process, it is not difficult to see that 
\begin{align}
\label{eq:dom:M}
 M_{j}-M_{j-1} &= \Bigl|\Bigl(\bigcup_{k \ge 0, r \ge 2}\ \bigcup_{1 \le y \le |\fH^-_{k,r,j}|} \ \bigcup_{1 \le h \le r-1} C_{w_{k,r,y,h}}(H_L) \Bigr) \setminus \Bigl(\cA_{j-1} \cup \cE_{j-1}\Bigr)\Bigr| , \\ 
\label{eq:dom:S}
S_{j}-S_{j-1} &= \sum_{k,r \ge 1} k |\fH^-_{k,r,j}| .
\end{align}
To bound $M_j-M_{j-1}$ from below, we write the right-hand side of~\eqref{eq:dom:M} as a disjoint union of sets $C^*_{w_{\tau}}$, similar to~\eqref{eq:dom:C}. Indeed, $\cA_{j-1}\cup\cE_{j-1}$ is a union of components of $H_L$, so each $C_{w_{\tau}}(H_L)$ is either
a `new' component (disjoint from $\cA_{j-1}\cup\cE_{j-1}$ and from those appearing before), or a `repeat'. In the latter
case we set $C^*_{w_\tau}=\emptyset$.
Since $|V_{L,j}| \le |V_L|$, arguing as for \eqref{eq:dom:C:Pr} we see that, as long as (P2) does not occur, each $|C^*_{w_{\tau}}|$ stochastically dominates~$N^-$. 
There is thus a coupling such that either (P1) or (P2) occurs (causing us to stop partway through step $j$),
or $M_{j}-M_{j-1} \ge Y^-_t$ and $S_{j}-S_{j-1} \ge Z^-_t$ both hold, completing the induction~step. 

If the coupling above succeeds (i.e., stops due to finishing, or due to (P1)), then it has the required properties. Thus it 
only remains to show that it is unlikely to fail, i.e., stop due to~(P2) occurring. Let $\cB_1$ be the `bad' event that
we stop due to (P2) after more than $2\Lambda$ vertex sampling steps, and let $\cB_2$ be the event that we stop
due to (P2) after at most $2\Lambda$ vertex sampling steps; to complete the proof we must show that $\Pr(\cB_i)=n^{-\omega(1)}$
for $i=1,2$; as we shall now see, this is straightforward.

From the definition \eqref{eq:N:LB} of $N^-$ and the fact that $|V_L(\fS)|=\sum_{k>K} N_k(\fS)$, we have
\[
 \Pr(N^-=0) \le \sum_{k>K} A_0 e^{-a_0 k} n^{2/3}/|V_L| = O(n^{2/3})/|V_L| = o(1),
\]
recalling \eqref{eq:nice:VL}. In particular, if $n$ is large enough then $\Pr(N^-\ge 1)\ge 3/4$.
In each vertex sampling step we `reach' $N^-$ new vertices in $\bp_t^-$; if $\cB_1$ holds then we do not stop (for any reason)
within $2\Lambda$ vertex sampling steps, and in particular (considering (P1)) we carry out $2\Lambda$ such steps
reaching at most $\Lambda$ particles in $\bp_t^-$. Hence
\[
 \Pr(\cB_1) \le \Pr\bb{ \Bin(2\Lambda,3/4) \le \Lambda }
\]
which, by a standard Chernoff bound, is $e^{-\Omega(\Lambda)} = n^{-\omega(1)}$.

Turning to $\cB_2$, let
\[
 \pi_k := \frac{2A e^{-2 a_0k}}{\zeta}.
\]
By the choice \eqref{def:A0} of $A_0$, for any $k'$ we have
\[
 \sum_{k\ge k'} 8 k \pi_{k} \le\sum_{k\ge k'} \frac{16k A e^{-a_0k}}{\zeta} \cdot e^{-a_0 k'}  \le A_0 e^{-a_0 k'}.
\]
Hence, if we stop due to (P2), there is some~$k$ such that we have reached more than~$8k\pi_k\Lambda$ vertices in components
of size exactly~$k$, and in particular, have reached a new component of size exactly~$k$ more than~$8\pi_k\Lambda$ times.

As noted above, if (P2) does not hold, then $j\le A_0\Lambda$.
Since $a \ge 2 a_0$ and $N_k(\fS)\le N_{\ge k}(\fS)\le A e^{-ak}n$ (from \eqref{eq:Nk:t0:tail'}),
as long as (P2) does not hold, then (for $n$ large enough) in each vertex sampling step we have
\begin{equation*}
 \Pr(|C_{\tilde{w}}(H_L)| = k \mid  \cdots) \le \frac{N_{k}(\fS)}{|V_{L,j}|} \le \frac{Ae^{-ak}n}{|V_L| -A_0\Lambda } \le \frac{2A e^{-2 a_0k}}{\zeta} = \pi_k ,
\end{equation*}
recalling \eqref{def:Nk:xi}. By~\eqref{eq:cpl:B0}, for $n$ large enough (which we always assume), all components of $H_L$ have size at most $D_0\log n$.
From the discussion above,
if $\cB_2$ holds then, considering when (P2) first holds, there is some $k\le D_0\log n$ such that within the first at most
$2\Lambda$ vertex sampling steps there are at least $8\pi_k\Lambda$ steps in which we choose a component of size~$k$, 
with (P2) not holding at the start of any of these steps. 
The probability of this (for a given $k$) is at most
\[
 \Pr\bb{ \Bin(2\Lambda,\pi_k) \ge 8\Lambda\pi_k } = e^{-\Omega(\Lambda\pi_k)},
\]
using a standard Chernoff bound.
From~\eqref{eq:cpl:a0} we have $\Lambda \pi_k = \Omega(n^{1/3})$, so this probability is $n^{-\omega(1)}$,
and summing over $k\le D_0\log n$ we conclude that $\Pr(\cB_2)=n^{-\omega(1)}$, completing the proof.
\end{proof}
We are now ready to bound the expected number of vertices of $\JPpm$ in components of at least a certain size.
The lower bound in~\eqref{eq:ENgekD} below is only non-trivial for $k \le n^{2/3}$, but this suffices for our purposes. 
\begin{theorem}[Sandwiching the expectation of~$N_{\ge k}$]
\label{thm:ENgekD}
Let $t\in [t_0,t_1]$, and let $\fS$ be a $t$-nice parameter list. 
Define~$\fS_t^{\pm}$ as in Definition~\ref{def:cpl}, $\JPpm=\JP(\fS^{\pm}_t)$ as in Definition~\ref{def:F}, and $\bp_t^{\pm}=\bp_t^{\pm}(\fS)$ as in Definition~\ref{def:bp2pm}.
Then for all $k \ge 1$ we have 
\begin{equation}\label{eq:ENgekD}
\indic{k \le n^{2/3}}\Bigl(\Pr(|\bp^-_t| \ge k) |\fS^-_t|- n^{-\omega(1)}\Bigr) \le \E N_{\ge k}(\JPm) \le \E N_{\ge k}(\JPp) \le \Pr(|\bp^+_t| \ge k) |\fS^+_t|.
\end{equation}
\end{theorem}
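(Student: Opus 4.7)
The plan is to reduce all three inequalities to the Palm-type identity~\eqref{eq:def:NgejR:E}, applied to the parameter lists $\fS^{\pm}_t$ rather than $\fS$; this gives $\E N_{\ge k}(\JP(\fS^{\pm}_t)) = \Pr(|\cT^{\pm}| \ge k) \cdot |\fS^{\pm}_t|$, where $\cT^{\pm}=\cT(\fS^{\pm}_t)$ denotes the exploration random walks from Section~\ref{sec:nep}. The problem then becomes a purely probabilistic comparison between the $|\cT^{\pm}|$ and the branching-process totals $|\bp^{\pm}_t|$, which is exactly what Theorems~\ref{thm:cpl:UB}~and~\ref{thm:cpl:LB} are designed to deliver.

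For the middle inequality $\E N_{\ge k}(\JPm) \le \E N_{\ge k}(\JPp)$, the definitions~\eqref{eq:qkr:pm}--\eqref{eq:Qkr:pm} immediately give $Q^-_{k,r}(t) \le Q^+_{k,r}(t)$, so $\fS^-_t \preceq \fS^+_t$. The monotonicity observation~\eqref{eq:F:mon} then produces a coupling with $\JPm \subseteq \JPp$, whence $N_{\ge k}(\JPm) \le N_{\ge k}(\JPp)$ pointwise. For the upper bound, I would apply \eqref{eq:def:NgejR:E} to $\fS^+_t$ and combine it with Theorem~\ref{thm:cpl:UB}, which yields a coupling in which $|\cT^+| \le |\bp^+_t|$ almost surely; taking probabilities and multiplying by $|\fS^+_t|$ gives $\E N_{\ge k}(\JPp) \le \Pr(|\bp^+_t| \ge k) \cdot |\fS^+_t|$.

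The left (lower) bound is where a little extra care is needed, because Theorem~\ref{thm:cpl:LB} only provides a coupling on which, with probability $1-n^{-\omega(1)}$, either $|\cT^-|\ge |\bp^-_t|$ or else both $|\cT^-|$ and $|\bp^-_t|$ exceed $n^{2/3}$. Under the indicator $\indic{k\le n^{2/3}}$ this second alternative forces $|\cT^-| \ge n^{2/3} \ge k$, so in both alternatives we have $\indic{|\bp^-_t|\ge k} \le \indic{|\cT^-|\ge k}$ on the good event. Taking expectations and using $\Pr(\text{coupling fails}) = n^{-\omega(1)}$ gives
\[
 \Pr(|\bp^-_t|\ge k) \le \Pr(|\cT^-|\ge k) + n^{-\omega(1)}.
\]
Multiplying through by $|\fS^-_t|$ and invoking~\eqref{eq:def:NgejR:E} yields
\[
 \Pr(|\bp^-_t|\ge k) \cdot |\fS^-_t| \le \E N_{\ge k}(\JPm) + n^{-\omega(1)} \cdot |\fS^-_t|,
\]
and since $|\fS^-_t| \le 2n$ by \eqref{def:n:pm:diff}, the error remains $n^{-\omega(1)}$ when absorbed into a single term, which is exactly the form of the stated lower bound once multiplied by $\indic{k\le n^{2/3}}$.

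I expect the write-up to be short and essentially bookkeeping: all the real work sits in the exploration/domination framework developed earlier (Lemmas~\ref{lem:dom}--\ref{lem:dom:init}, Theorems~\ref{thm:cpl:UB}~and~\ref{thm:cpl:LB}). The only non-trivial step is the case analysis for the lower-bound coupling, and in particular the observation that the assumption $k \le n^{2/3}$ is precisely what is needed to convert the conclusion ``$|\cT^-|\ge |\bp^-_t|$ or $\min\{|\cT^-|,|\bp^-_t|\}>n^{2/3}$'' of Theorem~\ref{thm:cpl:LB} into the clean domination $\indic{|\bp^-_t|\ge k} \le \indic{|\cT^-|\ge k}$.
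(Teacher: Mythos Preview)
Your proposal is correct and follows exactly the same approach as the paper: use $\fS^-_t \preceq \fS^+_t$ and~\eqref{eq:F:mon} for the middle inequality, the Palm identity~\eqref{eq:def:NgejR:E} to rewrite $\E N_{\ge k}(\JPpm)$ as $\Pr(|\cT^\pm|\ge k)|\fS^\pm_t|$, and then Theorems~\ref{thm:cpl:UB}--\ref{thm:cpl:LB} together with $|\fS^\pm_t|=O(n)$ for the outer bounds. In fact your write-up is more explicit than the paper's (which compresses all of this into three sentences), and your case analysis for the lower bound is exactly the right unpacking of how the $k\le n^{2/3}$ condition interacts with the conclusion of Theorem~\ref{thm:cpl:LB}.
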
 
\begin{proof}
Since $\fS^-_t \preceq\fS^+_t$, from~\eqref{eq:F:mon} we have $\E N_{\ge k}(\JPm) \le \E N_{\ge k}(\JPp)$. 
Furthermore,
we have $\E N_{\ge k}(\JP(\fS^\pm_t)) = \Pr(|\cT(\fS^\pm_t)| \ge k) |\fS^\pm_t|$ by~\eqref{eq:def:NgejR:E}. 
Noting that $|\fS^\pm_t| = O(n)$, the result follows from
Theorems~\ref{thm:cpl:UB}--\ref{thm:cpl:LB}.
\end{proof}

\subsubsection{Second moment estimate}\label{sec:SME}
In this subsection we use domination arguments (as in the previous section) to prove an upper bound on the second moment of 
the number $N_{\ge\Lambda}(\JPp)$ of vertices in `large' components, where $\JPp=\JP(\fS_t^+)$ with $\fS$ a $t$-nice parameter list, and `large' means containing at least $\Lambda$ vertices for some cut-off value $\Lambda$ (often, but not always, $n^{2/3}$). This will later be key for analyzing the size of the giant 
component in the supercritical case (see Sections~\ref{sec:mom:large} and~\ref{sec:L1:super}).

Before turning to the formal details, we shall outline the argument considering the simpler quantity $X_L$, the number of vertices in $V_L=V_L(\fS)$ that,
in $\JPp$, are in components of size at least $\Lambda$. 
Note that we may write the second moment of $X_L$ as 
\[
  \E X_L^2 = \sum_{v_1\in V_L}\sum_{v_2\in V_L} \Pr\bb{|C_{v_1}|\ge \Lambda,\, |C_{v_2}|\ge \Lambda}.
\]
In turn, we can express this as $|V_L|^2$ times
\[
 p := \Pr\bb{|C_{v_1}|\ge \Lambda,\, |C_{v_2}|\ge \Lambda},
\]
where the `starting vertices' $v_1$ and $v_2$ are chosen independently and uniformly from $V_L$.
To estimate $p$, the basic plan is to explore $\JPp$ outwards from the vertices $v_1$ and $v_2$ as usual,
comparing each exploration to a (dominating) branching process $\bp_{t,i}^+$, $i=1,2$. Here, as in previous sections, we view 
$\JPp$ as a (weighted) hypergraph on $H_L$, consisting of the original edges inside $H_L$ plus some $(k,r)$-hyperedges,
each of which connects $r$ random vertices of $V_L$ and contributes $k$ extra vertices (in $V_S$) of its own.

The main difficulty we encounter is dependence between the two explorations:
we seek a moment bound applicable in the supercritical case, when the main contribution is from $v_1$ and $v_2$
lying in the same component, the giant component. So, unchecked, the explorations are very likely to interfere
with each other. To deal with this, we first explore the graph from $v_1$, but stopping the exploration early if we reach $\Lambda$
vertices (so far, this is how we estimated $\E X_L$, or rather $\E N_{\ge \Lambda}$). Let $U_1$ be the set of vertices reached by
the first exploration, and $A_1\subseteq U_1$ the `boundary', i.e., those vertices reached but not yet fully explored (tested
for new neighbours).
If the exploration `succeeds' (reaches $\Lambda$ vertices), then we
start exploring from $v_2$. It may be that $v_2\in U_1$, in which case $|C_{v_2}|=|C_{v_1}|\ge \Lambda$. Otherwise, to avoid dependence,
we explore from $v_2$ but only within $V_L\setminus U_1$. The tricky
case is when the second exploration stops, revealing the component $C'$ of $v_2$ in the subgraph on $V_L\setminus U_1$,
and it turns out that $|C'|<\Lambda$.
In this case it might still be true that $|C_{v_2}|\ge \Lambda$; this happens if and only if there is a hyperedge
joining $A_1$ to $C'$. (We have not yet tested these hyperedges.) We can bound the probability of this by an estimate
proportional to $|A_1||C'|/n$. It turns out that this can be too large, so we use an idea of Bollob\'as and Riordan~\cite{BR2012}: we introduce a second early stopping condition for the first exploration, if the boundary at any point becomes
too large (see also Figure~\ref{fig:Var1}). As usual, we couple our explorations step-by-step with a dominating branching process, $\bp^+_t$,
using bounds on the probability that~$\bp^+_t$ is large to bound the probability we are looking for.
We end up with an overestimate, but since we only claim an upper
bound (and the estimate turns out to be tight enough), this is no problem. 

Let us now turn to the details. 
We present here the combinatorial part of the argument, leading to the rather complicated bound~\eqref{eq:Nkvar:sup1} below.
In the supercritical case ($t=\tc+\eps$ with $\eps^3n \to \infty$) we shall later use it together with results about the
branching processes $\bp_t^{+}$ and $\bp_t^{1,+}$ to derive that $X:=N_{\ge \Lambda}(\JPp)$ satisfies $\Var X = o( (\E X)^2 )$ for $\Lambda = \omega(\eps^{-2})$, see Lemma~\ref{lem:NkE:sup}. 
In particular, the quantities $\tau$, $\nu$ and $\rho_1$ appearing below (which depend on $\fS$) 
will then satisfy $\tau\sim \Pr\bb{|\bp_t^+|\ge \Lambda} \sim \Pr\bb{|\bp^+_t|=\infty} =\Theta(\eps)$, $\nu=O(\eps^{-1})$,  
and~$\rho_1 = \Theta(\eps)$. 
The assumption~$\Lambda \ge \eps^{-2}$ is not optimal here, but will be natural in the later probabilistic~arguments. 
%
\begin{lemma}[Second moment of $N_{\ge \Lambda}$]
\label{lem:Nkvar:sup}
Let $t \in [t_0,t_1]$ with $\eps=t-\tc>0$, and let $\fS$ be any $t$-nice parameter list.
Define~$\fS_t^{+}$ as in Definition~\ref{def:cpl}, $\JPp=\JP(\fS^{+}_t)$ as in Definition~\ref{def:F}, and $\bp_t^{+}=\bp_t^{+}(\fS)$ and $\bp^{1,+}_t=\bp^{1,+}(\fS)$
as in Definition~\ref{def:bp2pm}. 
Then, setting $X:=N_{\ge \Lambda}(\JPp)$, for all $\eps^{-2} \le \Lambda \le n^{2/3}$ we have
\begin{equation}\label{eq:Nkvar:sup1}
 \E X^2  \le 
(\log n)^2  \E X +
 |\fS_t^+|^2\Bigl(
  \tau \Pr\bb{|\bp_t^+|\ge \Lambda} + O\bb{ \tau \eps\nu n^{-1/3} + \nu n^{-1} + n^{-1/3} \Pr(|\bp_t^+|=\infty)} \Bigr),
\end{equation}
where
\[
 \nu := \E(|\bp^+_{t}|  \indic{ |\bp^+_{t}|<\infty }) 
\]
and $\tau$ is a certain quantity related to the branching process which, whenever $\rho_1 =\Pr(|\bp^{1,+}_t| = \infty)$ is positive, satisfies
\begin{equation}\label{eq:Nkvar:sup:sigma}
 \tau \le  \bb{1-e^{-\rho_1\ceil{\eps\Lambda}}}^{-1} \cdot \Pr\bb{|\bp_t^+| \ge \Lambda} .
\end{equation}
\end{lemma}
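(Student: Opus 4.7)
The plan is to bound $\E X^2$ by combining a Palm--Mecke-style decomposition of $X^2$ (in the spirit of Lemma~\ref{lem:Nkvar:sub}) with a two-exploration argument in $\JPp$ adapted from the Bollob\'as--Riordan paper \cite{BR2012}. Writing $X = \sum_{v\in V_L}\indic{|C_v(\JPp)|\ge\Lambda}+\sum_{k,r}\sum_{g\in \cQ_{k,r}^+}k\indic{|C_g(\JPp)|\ge\Lambda}$ as in~\eqref{eq:def:Nj} and expanding $X^2$, the Campbell--Mecke formulas from Appendix~\ref{apx:palm} split the second moment into (i) a \emph{diagonal} part, where both attachment points come from the same vertex or the same hyperedge, and (ii) an \emph{off-diagonal} part which, after summing hyperedge coefficients, takes the form $|\fS^+_t|^2 \cdot p$ with $p$ the probability that two independent augmentations of $\JPp$ rooted at random sets $R_1,R_2\subseteq V_L$ produce two \emph{distinct} components each of size $\ge\Lambda$. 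The diagonal is handled by noting that every hyperedge weight appearing in $\fS^+_t$ is at most $\Psi=(\log n)^2$ by~\eqref{eq:small:Psi} combined with~\eqref{eq:qkr:pm}, so a single hyperedge of weight~$k$ contributes at most $k^2\le \Psi k$ to $X^2$; summing across vertices and hyperedges produces the $(\log n)^2 \E X$ term of~\eqref{eq:Nkvar:sup1}.

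The heart of the argument is the estimate on $p$. We first explore $\JPp$ outward from $R_1$ as in Section~\ref{sec:nep:set}, but with two early-stopping rules: stop as soon as either (i) the reached set $U_1\subseteq V_L$ has size $\Lambda$, or (ii) the active boundary $\cA_j\subseteq U_1$ reaches size $M:=\ceil{\eps\Lambda}$. Let $\cA^+$ denote the event of stopping via~(i). By the stochastic-domination coupling of Theorem~\ref{thm:cpl:UB}, $\Pr(\cA^+)\le\tau$, where $\tau$ is defined via the analogous event for $\bp^+_t$. Given $\cA^+$, we then explore from $R_2$ within $V_L\setminus U_1$ only; since only hyperedges touching $U_1$ have been tested, the remaining hyperedges with all endpoints in $V_L\setminus U_1$ are untouched and independent, so this second exploration is stochastically dominated by an independent instance of the $\JPp$-exploration (hence by $\bp^+_t$). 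A three-case analysis then yields the error terms of~\eqref{eq:Nkvar:sup1}: \emph{Case A} ($R_2\cap U_1\ne\emptyset$) forces $C_{R_1}=C_{R_2}$ and contributes nothing to the off-diagonal $p$; the over-counting of infinite components produces the $n^{-1/3}\Pr(|\bp^+_t|=\infty)$ term via $|U_1|\le\Lambda\le n^{2/3}$ and $|V_L|=\Theta(n)$. \emph{Case B} ($R_2\notin U_1$ and the restricted exploration yields $|C'|\ge\Lambda$) contributes at most $\tau\cdot\Pr(|\bp^+_t|\ge\Lambda)$ by domination, which is the main term. \emph{Case C} ($|C'|<\Lambda$, so $|C_{R_2}(\JPp)|\ge\Lambda$ requires an untested hyperedge from the boundary $A_1$ to $C'$) has conditional probability $O(|A_1|\cdot|C'|/|V_L|)$; using $|A_1|\le M$ and $\E(|C'|\indic{|C'|<\Lambda})\le\nu$ (by a further application of Theorem~\ref{thm:cpl:UB}) gives the $O(\tau\eps\nu n^{-1/3})$ term, while the $O(\nu n^{-1})$ absorbs coupling-failure contributions of order $n^{-\omega(1)}$ multiplied by the $\Theta(n^2)$ prefactor.

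Finally, for the refined bound~\eqref{eq:Nkvar:sup:sigma} on $\tau$, the key observation is purely about branching processes: whichever of~(i) or~(ii) triggers the BFS stopping rule for $\bp^+_t$ that defines $\tau$, at the stopping time the active set contains at least $M=\ceil{\eps\Lambda}$ type-$L$ particles. Each such active particle independently spawns a copy of $\bp^{1,+}_t$, which survives with probability $\rho_1$, so at least one subtree is infinite with probability $\ge 1-(1-\rho_1)^M\ge 1-e^{-\rho_1 M}$, in which case $|\bp^+_t|=\infty\ge\Lambda$. Hence $\Pr(|\bp^+_t|\ge\Lambda)\ge\tau\cdot(1-e^{-\rho_1\ceil{\eps\Lambda}})$, which rearranges to~\eqref{eq:Nkvar:sup:sigma}. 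The main obstacles are expected to be (a)~book-keeping the conditional independence in the second exploration under the Bollob\'as--Riordan double-stopping rule, and (b)~ensuring that the weighted-hypergraph structure (the $V_S$-vertices carried by hyperedges) does not disrupt the clean ``at least $M$ active $V_L$-particles at stopping'' argument used for~\eqref{eq:Nkvar:sup:sigma}, which requires verifying that type-$S$ leaves in $\bp^+_t$ do not contaminate the independent-subtree count at the stopping time.
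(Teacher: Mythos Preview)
Your high-level structure matches the paper's, but two steps are not right as stated.

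First, the off-diagonal part after Palm--Mecke is \emph{not} restricted to distinct components. The decomposition gives $\E Z = |\fS_t^+|^2\,\Pr\bigl(|C_{x_1}(\JPp_*)|\ge\Lambda,\,|C_{x_2}(\JPp_*)|\ge\Lambda\bigr)$ with no distinctness condition, and in the supercritical regime the dominant contribution is precisely from $x_1,x_2$ landing in the same (giant) component. Your Case~A statement that ``$C_{R_1}=C_{R_2}$ contributes nothing to the off-diagonal $p$'' is therefore wrong, and is in any case inconsistent with then attributing the $n^{-1/3}\Pr(|\bp^+_t|=\infty)$ term to it. The paper handles this by passing from the augmented graph $\JPp_*$ to explorations $\cT_1,\cT_2$ in $\JPp$, and bounding the target event by $\cB\cup\cP$ where $\cP=\{W_1,W_2\text{ connected in }\JPp\}$; one then shows $\cP=\cD_2\cup\cE_2$ with $\cD_2=\{W_2\cap U_1\ne\emptyset\}$. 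Both the $n^{-1/3}\Pr(|\bp^+_t|=\infty)$ term \emph{and} the $\nu n^{-1}$ term come from $\Pr(\cD_2)=O(n^{-1})\,\E|U_1|$ together with $\E|U_1|\le \nu + 2n^{2/3}\Pr(|\bp^+_t|=\infty)$; the $\nu n^{-1}$ has nothing to do with coupling failures.

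Second, your argument for~\eqref{eq:Nkvar:sup:sigma} asserts that ``whichever of~(i) or~(ii) triggers, at the stopping time the active set contains at least $M$ type-$L$ particles''. This is false for~(i): the exploration can reach $\Lambda$ particles while the active boundary is tiny. The correct case split is that~(i) already gives $|\bp^+_t|\ge\Lambda$, while~(ii) gives $\ge M$ active type-$L$ particles whose subtrees are conditionally independent copies of $\bp^{1,+}_t$, so survival occurs with probability at least $1-e^{-\rho_1 M}$; either way $\Pr(|\bp^+_t|\ge\Lambda\mid\cS)\ge 1-e^{-\rho_1 M}$. The paper runs this via the \emph{width}: it takes the boundary threshold to be $2\ceil{\eps\Lambda}$, notes that a BFS active set spans at most two consecutive generations, deduces $w(\bp^+_t)\ge\ceil{\eps\Lambda}$, and then applies the independent-subtree argument at a single generation.
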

Note that the branching process interpretation of $\tau$, in \eqref{eq:SMM:key:2} below, is not essential to the statement of the lemma, whose content is the combination of the bounds \eqref{eq:Nkvar:sup1} and \eqref{eq:Nkvar:sup:sigma}.
\begin{proof}
We follow the strategy outlined above the statement of the theorem, with the main additional complication being that we must
consider also vertices in $V_S$, which is a random set; as in previous sections this leads to exploration and branching 
processes with a more complicated start.

We write $V_L=V_L(\fS^+_t)$, $N_k = N_k(\fS_t^+)=N_k(\fS)$ and $Q_{k,r}=Q_{k,r}^+(t)$ to avoid clutter in the notation.
Let $X=N_{\ge \Lambda}(\JPp)$; our goal is to approximate the second moment~$\E X^2$. 
Analogous to~\eqref{eq:def:Nj} we write
\[
 X  =  \sum_{v \in V_L}\indic{|C_v(\JPp)|\ge\Lambda} 
 + \sum_{k \ge 1,\, r \ge 0}\, \sum_{g \in \cQ_{k,r}} k \indic{|C_{g}(\JPp)|\ge\Lambda},
\]
where $\cQ_{k,r}$ is the (random) set of $(k,r)$-hyperedges in $\JPp$.
We shall expand $X^2$, 
putting all summands with terms from the same $g\in \cQ_{k,r}$ into the sum $Y$ below.
More precisely, we write 
\[
 X^2 = Y + Z \quad\text{where}\quad Z=Z_1 + Z_2 + Z_3 +Z_4,
\]
with
\begin{align*}
 Y &:= \sum_{k,r} \sum_{g \in \cQ_{k,r}} k^2 \indic{|C_{g}(\JPp)|\ge\Lambda}, \\
 Z_1 &:=  \sum_{v_1 \in V_L}\ \sum_{v_2 \in V_L} \indic{|C_{v_1}(\JPp)|\ge\Lambda} \indic{|C_{v_2}(\JPp)|\ge\Lambda}, \\
 Z_2 &:=  \sum_{v_1 \in V_L}\  \sum_{k_2,r_2} \sum_{g_2 \in \cQ_{k_2,r_2}} k_2 \indic{|C_{v_1}(\JPp)|\ge\Lambda}  \indic{|C_{g_2}(\JPp)|\ge\Lambda}, \\
 Z_3 &:=  \sum_{k_1,r_1} \sum_{g_1 \in \cQ_{k_1,r_1}}\  \sum_{v_2 \in V_L} k_1 \indic{|C_{g_1}(\JPp)|\ge\Lambda} \indic{|C_{v_2}(\JPp)|\ge\Lambda} , \\
 Z_4 &:=  \sum_{k_1,r_1} \sum_{g_1 \in \cQ_{k_1,r_1}}\ \sum_{k_2,r_2}  \sum_{g_2 \in \cQ_{k,r}} k_1k_2 \indic{|C_{g_1}(\JPp)|\ge\Lambda}  \indic{|C_{g_2}(\JPp)|\ge\Lambda}\indic{g_1\ne g_2}.
\end{align*}
Note that the decomposition above is very different from that used in \eqref{eq:lem:Nkvar:X2} and \eqref{eq:lem:Srvar:sub:X2}, where $Y$ contained
all terms with the two $g_i$ (or $v_i$) in the same component; here we only put the $g_1$, $g_2$ term into $Y$ if $g_1=g_2$.
The reason for using a different decomposition here is that the previous decomposition is only useful when we have
an upper bound on the size of the relevant components. The term $Y$ will be easy to handle (see below); we first discuss
how to evaluate the expectations of the $Z_i$.

For $Z_1$ we of course have
\[
 \E Z_1 = |V_L|^2\ \Pr\bb{ |C_{v_1}(\JPp)| \ge \Lambda,\,|C_{v_2}(\JPp)| \ge \Lambda },
\]
where $v_1$ and $v_2$ are independently chosen uniformly at random from $V_L$. For the remaining terms we have
the complication that the sets $\cQ_{k,r}$ being summed over are random sets, with Poisson sizes. For $Z_2$, arguing
exactly as for \eqref{eq:Qkr:palm}, by standard results on point processes we have
\[
 \E Z_2 = |V_L| \sum_{k_2,r_2} k_2 Q_{k_2,r_2}  \Pr\bb{ |C_{v_1}(\JPp_2)|\ge\Lambda,\, |C_{g_2}(\JPp_2)|\ge\Lambda}, \\
\]
where as before $v_1$ is a random vertex from $V_L$, but now $\JPp_2$ is formed from $\JPp$ by adding an `extra' $(k_2,r_2)$-hyperedge
$g_2$, joined (as always) to $r_2$ vertices of $V_L$ chosen independently at random. There is of course a similar formula for $\E Z_3$.
Finally, arguing as for \eqref{eq:Qkr2:palm}, we have
\[
 \E Z_4 =  \sum_{k_1,r_1}  \sum_{k_2,r_2} k_1 Q_{k_1,r_1} k_2 Q_{k_2,r_2} 
   \Pr\bb{ |C_{g_1}(\JPp_{12})|\ge\Lambda,\, |C_{g_2}(\JPp_{12})|\ge\Lambda}, \\
\]
where $\JPp_{12}$ is formed from $\JPp$ by adding an extra $(k_1,r_1)$-hyperedge $g_1$ and a \emph{distinct} extra $(k_2,r_2)$-hyperedge~$g_2$, 
both joined to $\JPp$ in the usual way.
(Here we used the condition $g_1\ne g_2$ in the definition of $Z_4$.)

At this point it seems that we might have several cases to consider; fortunately, we can group them back together, using the argument
for \eqref{eq:def:NjT:E} in Section~\ref{sec:nep:fg}, but considering the (random) starting points $x_1$, $x_2$ for two explorations;
each $x_i$ will play the role of either $v_i$ or $g_i$.

To define $x_1$, for any $v\in V_L$, with probability $1/|\fS_t^+|$ we set $x_1=v_1=v$,
while for each $k_1\ge 1$ and $r_1\ge 0$, with probability $k_1Q_{k_1,r_1}/|\fS_t^+|$ we set $x_1=g_1$ with $g_1$ of
type $(k_1,r_1)$. Since $\fS_t^+$ is a parameter list, this defines a probability distribution.
Define $x_2$ in the same way, but independently from $x_1$. Then it is straightforward to see from the formulae
for $\E Z_i$ above that
\[
 \E Z = |\fS_t^+|^2\ \Pr\bb{ |C_{x_1}(\JPp_*)|\ge\Lambda,\, |C_{x_2}(\JPp_*)|\ge\Lambda},
\]
where $\JPp_*$ is obtained from $\JPp$ as follows: if $x_1=g_1$ is of type $(k_1,r_1)$, we add $g_1$ as an extra $(k_1,r_1)$ hyperedge,
and similarly if $x_2=g_2$. If both hold, the added extra hyperedges are distinct (and independent).

As before, we deal with starting our exploration from an extra hyperedge by passing to the set $W$ of vertices reachable
in one step. More precisely, for each $i$, if $x_i=v_i$ we set
\[
  S_{0,i}:=0 \quad\text{and}\quad W_i := C_{v_i}(H_L),
\]
while if $x_i$ is a $(k_i,r_i)$-hyperedge $g_i$, we set 
\[
  S_{0,i}:=k_i \quad\text{and}\quad W_i := \bigcup_{1\le h\le r_i} C_{w_{h,i}}(H_L),
\]
with all $w_{h,i}$ chosen independently and uniformly from $V_L$. For each $i$, this is exactly the starting rule
used in~\eqref{eq:def:SW:Y}--\eqref{eq:def:SW:ZR}, with the two starts independent.

As in Section~\ref{sec:nep:fg}, we shall explore \emph{within $\JPp=\JP(\fS^{+}_t)$}, from each starting set~$W_i$. 
Let us write~$\cT_i$ for this exploration, defined exactly as $\cT=\cT(\fS_t^+)$ is defined in Section~\ref{sec:nep:fg},
with $(W_i,S_{0,i})$ as the initial values. Of course, the two explorations are far from independent,
since they explore the \emph{same} random hypergraph~$\JPp$. There is also another problem: in the exploration
$\cT_1$, say, the initial values account for the possibility of an added $(k_1,r_1)$-hyperedge $g_1$ -- if we
do add such a hyperedge in forming $\JPp_*$, then in $\cT_1$ we already count its~$k_1$ extra vertices from
the start, and account for the fact that~$g_1$ connects the vertices in~$W_1$ by marking these as `reached' 
right from the start. Unfortunately, $\cT_1$ does not account for any connections formed by possibly adding $g_2$.
Still, if the exploration $\cT_1$ does not reach any vertex in~$W_2$, then we do have $|\cT_1|=|C_{x_1}(\JPp_*)|$.
To formalize this, let
\[
 \cP := \bigl\{ W_1\text{ and }W_2\text{ are connected in }\JPp \}
\]
be the event that there is a path from some vertex in $W_1$ to some vertex in $W_2$ within the hypergraph $\JPp$.
This is exactly the event that the complete exploration $\cT_1$ meets $W_2$, or vice versa.
When $\cP$ does not hold, then $|\cT_i| = |C_{x_i}(\JPp_*)|$. Hence, setting
\[
 \cB : = \bigl\{ |\cT_1|\ge\Lambda \text{ and } |\cT_2|\ge \Lambda \bigr\}
\]
we have
\[
 \bigl\{ |C_{x_1}(\JPp_*)|\ge\Lambda \text{ and } |C_{x_2}(\JPp_*)|\ge\Lambda \bigl\} \: \subseteq \: \cB \cup \cP,
\]
and so
\begin{equation}\label{eq:EX2:BP}
 \E X^2 = \E Y  + \E Z \le \E Y + |\fS_t^+|^2 \cdot \Pr(\cB\cup\cP).
\end{equation}

The easiest term to bound on the right-hand side above is $\E Y$. As in previous sections, set
\[
 \Psi := (\log n)^2,
\]
which is simply a convenient `cut-off' for the various distributions involved.
Indeed, recalling our conventions $N_k=N_k(\fS)$ and $Q_{k,r}=Q^+_{k,r}(t)$, by~\eqref{eq:small:Psi}, \eqref{eq:Qkr:pm} and~\eqref{eq:qkr:pm:tail}
we see that  
\begin{equation}\label{eq:small:Psi'} 
 \max_{k \ge \Psi}N_{k}=0 \quad \text{and} \quad \max_{k+r \ge \Psi}Q_{k,r}=0.
\end{equation}
In other words, all components of our initial marked graph $H$ have size at most $\Psi$, where in a $(k,r)$-hyperedge we count
the $k$ vertices and the $r$ stubs in determining its size. It follows immediately that $Y \le \Psi \cdot X$, so 
\begin{equation}\label{eq:SMM:Ya}
 \E Y \le \Psi \cdot \E X . 
\end{equation}

\begin{figure}[t]
\centering
  \setlength{\unitlength}{1bp}%
  \begin{picture}(285.71, 106.98)(0,0)
  \put(0,0){\includegraphics{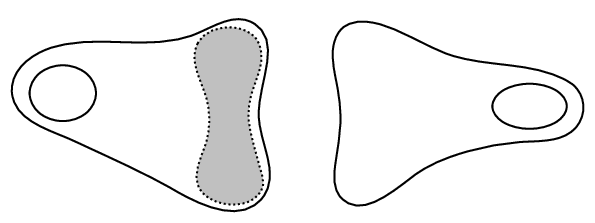}}
  \put(29.83,59.19){\fontsize{11.38}{13.66}\selectfont \makebox[0pt]{$W_1$}}
  \put(109.59,76.92){\fontsize{11.38}{13.66}\selectfont \makebox[0pt]{$A_1$}}
  \put(40.91,92.43){\fontsize{11.38}{13.66}\selectfont \makebox[0pt]{$U_1$}}
  \put(254.04,52.77){\fontsize{11.38}{13.66}\selectfont \makebox[0pt]{$W_2$}}
  \put(244.73,82.47){\fontsize{11.38}{13.66}\selectfont \makebox[0pt]{$U_2$}}
  \end{picture}%
	\caption{\label{fig:Var1}%
A rough sketch of the different cases which underlie the second moment analysis of Lemma~\ref{lem:Nkvar:sup}, ignoring $V_S$--vertices for simplicity (and a number of other technical details). 
By construction, $|C_{W_1}(\JPp)| \ge \Lambda$ implies the `stopping event' $\cS$. 
When~$\cS$ holds, then the exploration process for the first component $C_{W_1}(\JPp)$ has found the $V_L$--vertices in $U_1 \subseteq V_L$, but the vertices in $A_1 \subseteq U_1$ have not yet been (fully) explored (i.e., have not yet been (fully) tested for neighbours). 
In particular, $\cS$ implies that either $|U_1| \approx \Lambda$ or $|A_1| \approx 2\ceil{\eps \Lambda}$. 
For the initial generation of the second component $C_{W_2}(\JPp)$, there are then two possibilities: $W_2$ is either (i) disjoint from $U_1$, or (ii) contains at least one vertex from $U_1$. 
In the more interesting case (i) we then explore $C_{W_2}(\JPp)$ \emph{without} testing any hyperedges containing vertices from~$U_1$. 
If $|C_{W_2}(\JPp)| \ge \Lambda$, then there are two cases for the `restricted' subcomponent $C^-_{W_2}(\JPp) \subseteq C_{W_2}(\JPp)$.
Either $C^-_{W_2}(\JPp)$ already contains at least $\Lambda$ vertices, or there must be a connection from $C^-_{W_2}(\JPp)$ to~$U_1$. 
By construction all vertices in $U_1 \setminus A_1$ have already been explored (i.e., tested for neighbours), so in the second case at least one so-far untested hyperedge must connect~$A_1$ to~$U_2$.}
\end{figure}

It remains to bound $\Pr(\cB\cup \cP)$, which we do by considering the explorations
$\cT_1$ and $\cT_2$ defined within $\JPp$ (not $\JPp_*$);
our aim is to compare the explorations with two independent copies $\bp^+_{t,i}$, $i=1,2$, of 
the branching process~$\bp^+_t=\bp^+_t(\fS)$ given by Definition~\ref{def:bp2pm}. 
In order to retain sufficient independence in the analysis, we will need to consider restricted
explorations $\cT_i^-\subseteq \cT_i$, defined in different ways for~$i=1$ and~$i=2$.

Since each exploration $\cT_i$ has the same distribution as the exploration $\cT(\fS_t^+)$ defined
in previous sections, by Theorem~\ref{thm:cpl:UB} we may couple $\cT_1$ with $\bp_{t,1}^+$ so that
the latter dominates the former. As outlined above, we may wish to abandon the exploration/coupling
part way through; in fact, it will be convenient to construct $\cT_1^-\subseteq \cT_1$, and couple it with $\bp_{t,1}^+$, in an `edge-by-edge' way,
to allow for stopping part way through step~$j$.
Thus, in step $j \ge 1$ (where we process $v_j \in \cA_{j-1}$) we \emph{sequentially} consider, for all $k \ge 0$ and $r \ge 1$ with $k+r \le \Psi$, each so-far untested $k$-weighted hyperedge $g\in (V_L)^r$ of the form $(v_j,w_1, \ldots, w_{r-1})$, \ldots, $(w_1, \ldots, w_{r-1},v_j)$. 
For each such hyperedge~$g$ we test the presence and multiplicity~$m_g$; if $m_g \ge 1$ we (i) mark the vertices $\bigcup_{1 \le h \le r-1}C_{w_h}(H_L) \setminus (\cA_{j-1} \cup \cE_{j-1})$ as active, and (ii) increase the number of found $V_S$--vertices by $k m_g$. Finally, at the end of step~$j$ we move~$v_j$ from the set of active vertices to the set of explored vertices.
We stop the exploration/coupling if either (i) at the beginning (when the reached and active sets are both equal to $W_1$),
or (ii) \emph{after} completely processing any particular hyperedge~$g$, one of the following two conditions holds: 
\begin{itemize}\itemsep1pt \parskip0pt \parsep0pt
	\item[(P1)] the exploration has reached at least $\Lambda$ vertices in $V_L$, or
	\item[(P2)] there are currently at least $2\ceil{\eps \Lambda}$ active vertices (vertices in $V_L$ that have been reached but not yet fully explored).
\end{itemize}
For later reference we define the event
\[
\cS := \{\text{we stop $\cT_1$ due to (P1) or (P2)}\} .
\]
If $\cS$ does not hold then we complete the coupling,
exploring the entirety of $\cT_1$. Thus
\[
 \neg\cS \quad\text{implies}\quad \cT_1^-=\cT_1.
\]
If $|\cT_1|\ge \Lambda$ then $\cS$ holds: at the latest we stop when we have reached
$\Lambda$ vertices in the exploration of $\cT_1$. In other words,
\begin{equation}\label{eq:SMM:key}
 |\cT_1| \ge \Lambda \quad \text{implies} \quad \cS.
\end{equation}

Define $|\cT_1^-|$ to be the total number of vertices reached by the possibly truncated exploration $\cT_1^-$,
including the $|W_1|+S_{0,1}$ initial vertices. 
From the domination in the coupling we certainly have $|\bp_{t,1}|\ge |\cT_1^-|$.
Let $w(\bp^+_{t,1})$ denote the maximum (supremum) of the number of $L$--particles in any generation of the branching process~$\bp^+_{t,1}$.
Since we constructed our coupling in breadth-first search order, if~(P2) holds then the at least $2\ceil{\eps \Lambda}$ active vertices
at this point correspond to particles in $\bp^+_{t,1}$ that are contained in two consecutive generations of~$\bp^+_{t,1}$. 
Allowing for the fact that we might stop due to (P2) partway through the exploration of some vertex~$v_j$, it follows that 
\begin{equation}\label{eq:SMM:width}
\text{(P2)} \quad \text{implies} \quad w(\bp^+_{t,1}) \ge \ceilL{{\frac{2\ceil{\eps \Lambda}-1}{2}}} \ge \ceil{\eps \Lambda} .
\end{equation}
By the domination in the coupling, (P1) implies $|\bp^+_{t,1}|\ge \Lambda$. Thus if $\cS$ holds,
either $|\bp^+_{t,1}|\ge \Lambda$ or $w(\bp^+_{t,1}) \ge \ceil{\eps \Lambda}$.
Since $|\bp^+_{t,1}| < \Lambda$ implies $|\bp^+_{t,1}| < \infty$, 
we thus obtain 
\begin{equation}\label{eq:SMM:key:2}
 \Pr(\cS) \: \le \: \Pr(|\bp^+_{t}| \ge \Lambda) + \Pr\bb{|\bp^+_{t}| < \infty, \: w(\bp^+_{t}) \ge \ceil{\eps \Lambda}} =: \tau,
\end{equation}
where we think of $\Pr(|\bp^+_{t}| \ge \Lambda)$ as the `main term'. Here we have dropped the subscripted $1$'s, since
$\bp_{t,1}^+$ has the same distribution as $\bp_t^+$.

Let $U_1\subseteq V_L$ denote the set of $V_L$-vertices reached by the first exploration $\cT_1^-$,
the possibly truncated version of $\cT_1$. Also, let $A_1\subseteq U_1$ denote the set of active (not yet fully explored)
vertices at the end of $\cT_1^-$, so $A_1\ne\emptyset$ only if we stopped the exploration early.
Since we only stop \emph{after} completely processing any hyperedge, we can `overshoot' our stopping criteria somewhat,
but, from~\eqref{eq:small:Psi'}, only by at most $(\Psi-1) \cdot \Psi \le \Psi^2$, say.
It follows that
\begin{align}
\label{eq:W1L}
 |U_1| & \le \Lambda + \Psi^2  \le 2 n^{2/3},\\
\label{eq:W1A}
 |A_1| & \le 2\ceil{\eps \Lambda} + \Psi^2 \le 5 \eps n^{2/3},
\end{align} 
where we tacitly used $\max\{\Lambda,1,\Psi^2\} \le n^{2/3}$ and $\eps^{-1} \cdot \Psi^2 \le \sqrt{\Lambda} \cdot \Psi^2 \le n^{2/3}$.

After the first exploration, by definition all potential hyperedges meeting $U_1\setminus A_1$ have already been tested
for their presence in $\JPp$. Furthermore, no such hyperedges containing any vertices outside $U_1$ are present in $\JPp$. (Such vertices
would have been reached by the exploration.) The remaining untested potential hyperedges are of two types: those
entirely outside $U_1$, and those meeting $A_1$. Let~$\fE$ denote the set of potential hyperedges of the latter type.

Turning to the second exploration $\cT_2$, we define $\cT_2^-\subseteq \cT_2$ as follows:
we explore as usual starting from the initial set $W_2$ and initial value $S_{0,2}$,
but only testing hyperedges outside $U_1$. More precisely, if~$W_2$ meets~$U_1$ we shall not explore at all (defining $\cT_2^-$ to
be empty and have size 0, say); otherwise,
we run our second exploration in the subgraph $\JPp(U_1^\cc)$ obtained by deleting all vertices in $U_1$ and all incident
hyperedges, as in the proof of Lemma~\ref{lem:Nkvar:sub}. We write $|\cT_2^-|$
for the size of $\cT_2^-$, i.e., the number of vertices reached, counting vertices in $V_S$, and including the initial $|W_2|+S_{0,2}$
vertices. 
Since $\JPp(U_1^\cc)$ is an induced subgraph of $\JPp$, there is a coupling such that $\cT_2^-$ is dominated by $\bp_{t,2}^+$; we construct this coupling exactly
as for $\cT_1$, except that some tests (of edges meeting $A_1$) are simply omitted. This coupling gives
\begin{equation}\label{eq:C2:cpl}
 |\cT_2^-| \le |\bp^+_{t,2}| ,
\end{equation}
where $\bp^+_{t,2}$ has the distribution of $\bp^+_t$ and is independent of $\bp^+_{t,1}$. Indeed, we obtain independence
of the coupled branching processes for the same reason that we can couple with a branching process in each case: in each step 
our arguments show that the \emph{conditional} distribution of the number of new vertices we reach in the hypergraph exploration is dominated
by the distribution arising in the branching process. For~$\cT_2^-$ the conditioning here is on the entire exploration~$\cT_1^-$
as well as all earlier steps of $\cT_2^-$.

Let $U_2$ be the set of vertices in $V_L$ reached by the (restricted) exploration $\cT_2^-$. Let
\[
 \cD_2 := \{ W_2\cap U_1\ne\emptyset \},
\]
and let 
\[
 \cE_2 := \{\text{at least one hyperedge in $\fE$ meeting $U_2$ is present in $\JPp$}\}. 
\]
We claim that
\[
 \cP = \cD_2\cup \cE_2.
\]
To see this, suppose $\cP$ holds but not $\cD_2$. Then the set $U_2$ of vertices in $V_L$ found by $\cT_2^-$ consists of
all vertices in $U_1^\cc$ connected, in $\JPp[U_1^\cc]$, to $W_2$. Since there is a path from $W_2$
to $W_1$ in $\JPp$, there must be a hyperedge in $\JPp$  containing some vertex in $U_2$ and some vertex in $U_1$. But such a hyperedge
must be in $\fE$. Hence $\cP\subseteq \cD_2\cup \cE_2$. The reverse containment is immediate.

If $\cP=\cD_2\cup \cE_2$ does \emph{not} hold, then $\cT_2^-=\cT_2$. Hence, if $\cB\setminus\cP$ holds,
we have $|\cT_2^-|=|\cT_2|\ge\Lambda$. Since $|\bp^+_{t,2}|\ge |\cT_2^-|$ by \eqref{eq:C2:cpl}, using \eqref{eq:SMM:key} we see that
\[
 \cB\setminus \cP \quad\text{implies}\quad \cS \cap \{|\bp^+_{t,2}|\ge \Lambda \}.
\]
If $\cS$ does not hold, then the exploration $\cT_1^-$ runs to completion, and in particular $A_1=\emptyset$ and hence $\fE=\emptyset$,
so~$\cE_2$ cannot hold.
Thus
\[
 \cE_2 = \cS \cap \cE_2.
\]
Recalling that $\cP=\cD_2\cup \cE_2$, we conclude that
\[
 \cB\cup \cP  \subseteq   \bb{ \cS \cap \{|\bp^+_{t,2}|\ge \Lambda \} } \cup (\cS \cap \cE_2) \cup \cD_2.
\]
Hence
\begin{equation}\label{eq:SMM:tot}
 \Pr(\cB\cup\cP) \le \Pr\bb{ \cS,\ |\bp^+_{t,2}|\ge \Lambda }
 + \Pr\bb{\cS,\ \cE_2,\ |\bp^+_{t,2}|<\Lambda }  + \Pr(\cD_2).
\end{equation}

The first (main) term in \eqref{eq:SMM:tot} is easy to bound: the branching process $\bp^+_{t,2}$ has the distribution of $\bp_t^+$
and is independent of our first exploration and hence of $\cS$. Thus
\begin{equation}\label{eq:SMM:term2}
 \Pr\bb{ \cS,\ |\bp^+_{t,2}|\ge \Lambda } = \Pr(\cS) \Pr\bb{|\bp_t^+|\ge\Lambda}.
\end{equation}

We now turn to $\Pr\bb{\cS,\ \cE_2,\ |\bp^+_{t,2}|<\Lambda }$. We will evaluate this by conditioning on the result
of the two explorations $\cT_1^-$ and $\cT_2^-$ as well as the coupled branching process $\bp_{t,2}^+$.
(More formally, we condition on all information revealed during these explorations.)
The first key observation is that, for any two distinct vertices $x_1,x_2 \in V_L$, the probability $\pi$ that they are connected by some so-far untested hyperedge satisfies~$\pi=O(n^{-1})$. 
To see this, note that there are at most $r^2 |V_L|^{r-2} = O(r^2 n^{r-2})$ hyperedges containing $x_1,x_2$, and each so-far untested hyperedge appears independently according to a Poisson process with rate $\hlambda^+_{k,r}= \hlambda_{k,r}(\fS^+_t) =Q_{k,r}^+(t)/|V_L|^r = O(e^{-b_1(k+r)}n^{1-r})$, see~\eqref{eq:rates:qkr}, \eqref{eq:Qkr:pm} and~\eqref{eq:qkr:pm:tail}. 
Using a union bound argument, it follows that
\[
 \pi \le \sum_{k\ge 0,\, r \ge 0} \Bigl[r^2 |V_L|^{r-2} \cdot \hlambda^+_{k,r}\Bigr] = O\Bigl(\sum_{k\ge 0,\, r \ge 0} r^2 e^{-b_1(k+r)}/n\Bigr) = O(n^{-1}) .
\]
Recall that $U_2$ is the set of vertices in $V_L$ reached by $\cT_2^-$. Since $|U_2|\le |\cT_2^-|\le |\bp_{t,2}^+|$,
recalling \eqref{eq:W1A} the total number of pairs of vertices $(x_1,x_2) \in A_1 \times U_2$ is at most
\[
|A_1 \times U_2|= |A_1| \cdot |U_2| \le |A_1|\cdot |\bp^+_{t,2}| \le   5 \eps n^{2/3} |\bp_{t,2}^+| .
\]
If $\cE_2$ holds, at least one of these pairs is connected by some so-far untested hyperedge.
Hence, by a union bound argument, using $\pi=O(1/n)$ we infer 
\[
 \Pr\bb{ \cE_2 \mid \cT_1^-,\cT_2^-,\bp_{t,2}^+ } 
\le |A_1 \times U_2| \cdot \pi = O(\eps n^{-1/3}) \cdot |\bp^+_{t,2}| ,
\]
and so
\[
 \Pr\bb{ \cE_2,\ |\bp^+_{t,2}|<\Lambda \mid \cT_1^-,\cT_2^-,\bp^+_{t,2} } = 
\indic{ |\bp^+_{t,2}|<\Lambda } \cdot \Pr\bb{ \cE_2 \mid \cT_1^-,\cT_2^-,\bp^+_{t,2} } 
= O(\eps n^{-1/3}) \cdot |\bp^+_{t,2}| \indic{ |\bp^+_{t,2}|<\Lambda }.
\]
Taking the expectation over $\cT_2^-$ and $\bp^+_{t,2}$ (which are coupled with each other), 
since $\bp^+_{t,2}$ is independent of~$\cT_1^-$ we conclude that
\[
 \Pr\bb{ \cE_2,\ |\bp^+_{t,2}|<\Lambda \mid \cT_1^- } = O(\eps n^{-1/3}) \cdot \E(|\bp^+_{t,2}|  \indic{ |\bp^+_{t,2}|<\Lambda }) .
\]
Since $\bp_{t,2}^+$ has the distribution of $\bp_t^+$, we have
\[
\E(|\bp^+_{t,2}|  \indic{ |\bp^+_{t,2}|<\Lambda }) = \E\bigl(|\bp^+_{t}|  \indic{ |\bp^+_{t}|<\Lambda }\bigr) \le \E\big(|\bp^+_{t}|  \indic{ |\bp^+_{t}|<\infty }\big) =:\nu,
\]
so $\Pr\bb{ \cE_2,\ |\bp^+_{t,2}|<\Lambda \mid \cT_1^- } = O(\eps\nu n^{-1/3})$.
Since this holds whatever the outcome of $\cT_1^-$, and this outcome determines whether $\cS$ holds, we conclude that
\begin{equation}\label{eq:cE2}
 \Pr\bb{\cS,\ \cE_2,\ |\bp^+_{t,2}|<\Lambda }  = O(\eps\nu n^{-1/3}) \cdot \Pr(\cS).
\end{equation}

Next we bound the probability of the event $\cD_2$ that the random starting set $W_2$ of our second exploration intersects
the set $U_1$ of $V_L$-vertices reached by the first, truncated exploration.
Now $W_2=C_{R_2}(H_L)$ is the union of the components of the initial graph $H_L$ containing the random vertices in $R_2$,
where $R_2$ consists either of a single vertex random vertex of $V_L$, or of $r_2$ independent random vertices of $V_L$.
Since $U_1$ is a union of components of $V_L$,
the event $\cD_2$ holds if and only if $R_2$ contains at least one vertex from $U_1$. Since $R_2$ is independent of $U_1$, 
using conditional expectations we thus infer
\[
 \Pr(\cD_2) \le \Pr(|R_2\cap U_1| \ge 1) \le \E\biggl( \E\biggl( |R_2| \cdot \frac{|U_1|}{|V_L|} \; \bigg| \; |R_2|, U_1 \biggr)\biggr) = \frac{\E |R_2|\cdot \E |U_1| }{|V_L|}.
\]
Recall that $|\fS_t^+|= \Theta(n)$ by~\eqref{def:n:pm:diff}, and that $V_L=V_L(\fS^+_t)$ satisfies $|V_L|=\Theta(n)$ by~\eqref{eq:nice:VL} and the definition of $\fS_t^+$, see~\eqref{eq:fS:pm}.
Since the variables $Q_{k,r}=Q_{k,r}^+(t)$ have exponential tails, see~\eqref{eq:Qkr:pm} and~\eqref{eq:qkr:pm:tail}, we have
\[
 \E |R_2| = 
\sum_{v_2 \in V_L} \frac{1}{|\fS_t^+|}
+ 
\sum_{k_2,r_2} r_2 \frac{k_2Q_{k_2,r_2}}{|\fS_t^+|} = O(1)\cdot \frac{n + \sum_{k,r} kr e^{-b_1(k+r)}n}{n}
 = O(1).
\]
Since $|V_L|=\Theta(n)$, we thus obtain
\[
 \Pr(\cD_2) = O(n^{-1}) \cdot \E |U_1|  = O(n^{-1}) \cdot\E( \min\{|\bp_{t,1}^+|,2n^{2/3} \} ) = O(n^{-1}) \cdot \E( \min\{|\bp_t^+|,2n^{2/3} \} ),
\]
where in second step we used $|U_1|\le |\cT_1^-| \le |\cT_1| \le |\bp_{t,1}^+|$ and the bound \eqref{eq:W1L},
and in the final step we used that $\bp_{t,1}^+$ has the distribution of $\bp_t^+$.
Now
\[
 \E( \min\{|\bp_t^+|,2n^{2/3} \} ) \le \E( |\bp_t^+|\indic{|\bp_t^+|<\infty} ) + \E( 2n^{2/3}\indic{|\bp_t^+|=\infty} )
 = \nu + 2n^{2/3}\Pr(|\bp_t^+|=\infty).
\]
Hence
\begin{equation}\label{eq:cD2}
 \Pr(\cD_2) \le O\bb{\nu n^{-1} + n^{-1/3}\Pr(|\bp_t^+|=\infty)}.
\end{equation}

Combining \cref{eq:SMM:tot,eq:SMM:term2,eq:cE2,eq:cD2} yields
\[
 \Pr(\cB\cup\cP) \le \Pr(\cS) \cdot \bb{ \Pr(|\bp_t^+|\ge \Lambda) + O(\eps\nu n^{-1/3})}
 + O\bb{\nu n^{-1}+n^{-1/3} \Pr(|\bp_t^+| = \infty)}.
\]
Together with \cref{eq:EX2:BP,eq:SMM:Ya,eq:SMM:key:2}, this completes the proof of inequality~\eqref{eq:Nkvar:sup1}. 

It remains to prove the claimed upper bound~\eqref{eq:Nkvar:sup:sigma} for $\tau$ defined in~\eqref{eq:SMM:key:2}. 
Recall that the width~$w(\bp)$ of a branching process is defined as the supremum of the number of particles in any generation.
We first estimate the probability of the event involving $w(\bp^+_{t}) \ge \ceil{\eps \Lambda}$ in~\eqref{eq:SMM:key:2}.
Analogous to Section~2 of~\cite{BR2012}, by sequentially exploring~$\bp_{t}^{+}$ generation-by-generation, we can stop at the first generation with at least~$\ceil{\eps \Lambda}$ particles of type~$L$. 
The children of each of these particles form independent copies of the branching process~$\bp^{1,+}_t=\bp^{1,+}(\fS)$
 defined in Definition~\ref{def:bp2pm} (note that this process differs from $\bp_{t}^{+}$), so the conditional probability of dying out is at most $(1-\rho_1)^{\ceil{\eps \Lambda}}$ for $\rho_1 =\Pr(|\bp^{1,+}_t| = \infty)$.
Since $\rho_1>0$, it follows that
\begin{equation*}
\Pr\bb{|\bp^+_{t}| < \infty \: | \: w(\bp^+_{t}) \ge \ceil{\eps \Lambda}} \le (1-\rho_1)^{\ceil{\eps \Lambda}} \le e^{-\rho_1 \ceil{\eps \Lambda}} < 1 .
\end{equation*}
Note that, for any two events $\cX,\cY$ with $\Pr(\neg \cX \mid \cY)>0$, we have 
\[
 \Pr(\cX, \: \cY) = \Pr(\cY) \cdot \Pr(\cX \mid \cY)  = \frac{\Pr(\neg \cX , \: \cY)}{\Pr(\neg \cX \mid \cY)} \cdot \Pr(\cX \mid \cY) \le \Pr(\neg \cX) \cdot \frac{\Pr(\cX \mid \cY)}{1-\Pr(\cX \mid \cY)} .
\] 
Since $x/(1-x)$ is monotone increasing for $x < 1$, we thus obtain  
\begin{equation*}
\Pr\bb{|\bp^+_{t}| < \infty, \: w(\bp^+_{t}) \ge \ceil{\eps \Lambda}} \le \Pr(|\bp^+_{t}| = \infty) \cdot \frac{e^{-\rho_1 \ceil{\eps \Lambda}}}{1-e^{-\rho_1 \ceil{\eps \Lambda}}} ,
\end{equation*}
which together with $\Pr(|\bp^+_{t}| = \infty) \le \Pr(|\bp^+_{t}| \ge \Lambda)$ and $1+x/(1-x)=1/(1-x)$ completes the proof of inequality~\eqref{eq:Nkvar:sup:sigma}. 
\end{proof}

\newoddpage

\section{Component size distribution: qualitative behaviour}\label{sec:cpl2}
In this section we study the Poissonized random graphs $\JPpm=\JP(\fS^{\pm}_t)$ introduced in Section~\ref{sec:dom}. 
Our goal is to use properties of the closely related branching processes~$\bp_t$ and~$\bp^{\pm}_t$, together
with results from the previous section, to estimate various moments of the component size distribution of~$\JPpm$.

In Section~\ref{sec:BPO}
we establish several technical properties of the offspring distributions of~$\bp_t$ and~$\bp^{\pm}_t$.
In Section~\ref{sec:bpresults} we state results for the survival and point probabilities of these branching processes, 
which 
in Section~\ref{sec:mom} are then used to estimate the first moment and variance of (a)~the number of vertices of~$\JPpm=\JP(\fS^{\pm}_t)$ in components of at least certain sizes and (b)~the $r$th order susceptibility of~$\JPpm$.  
As a by-product, we also establish several results (Theorems~\ref{thm:L1rho}, \ref{thm:rhok} and~\ref{thm:sjfkt}) describing the qualitative behavior of various limiting functions appearing in Section~\ref{sec:results}. 
Finally, as mentioned earlier, in Section~\ref{sec:proof} we will use Lemmas~\ref{lem:cond'}, \ref{lem:nice} and~\ref{lem:cpl} to transfer properties of $\JP(\fS^{\pm}_t)$ back to the original random graph process~$G^{\cR}_{n,tn}$.

\subsection{Properties of the offspring distributions}\label{sec:BPO}
In this subsection we revisit the branching processes~$\bp_t,\bp_t^1$ and~$\bp_t^\pm,\bp_t^{1,\pm}$ defined in Sections~\ref{sec:BPI} and~\ref{sec:dom:dom},
and derive properties of their offspring distributions.
We start with the `idealized' offspring distributions $(Y_t,Z_t)$ and $(Y^0_t,Z^0_t)$ defined in Section~\ref{sec:BPI:distr},
studying the probability generating functions 
\begin{equation}\label{eq:f:mgf}
\gf(t,\alpha,\beta) := \E\bigl( \alpha^{Y_{t}} \beta^{Z_{t}}\bigr) \qquad \text{and} \qquad \gf^0(t,\alpha,\beta) := \E\bigl( \alpha^{Y^0_{t}}\beta^{Z^0_{t}}\bigr).
\end{equation}
These expectations (infinite sums) make sense for complex $\alpha$ and $\beta$ whenever the corresponding sum converges
absolutely. A priori, they make sense only for real $t$; however, we shall show that both probability generating functions
extend to analytic functions in a certain complex domain.

\begin{theorem}\label{thm:f:mgf}
There exist $\delta>0$ and~$R>1$ such that the functions $\gf(t,\alpha,\beta)$ and $\gf^0(t,\alpha,\beta)$ 
are defined for all real $t$ with $|t-\tc|<\delta$ and complex $\alpha,\beta$ with $|\alpha|,|\beta|<R$. 
Furthermore, each of these functions has an analytic extension
to the complex domain $\fD_{\delta,R}:=\{(t,\alpha,\beta) \in \CC^3: \: |t-\tc|<\delta \text{ and } |\alpha|,|\beta|<R\}$. 
\end{theorem}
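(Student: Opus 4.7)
The plan is to express $\gf$ and $\gf^0$ as closed-form combinations of three building blocks whose analyticity we already have in hand: (i) the probability generating function $g_N(\alpha):=\E\alpha^N$ of the `component size' distribution defined in~\eqref{def:N}; (ii) the generating function $P(t,x,y)=\sum_{k,r}x^ky^r q_{k,r}(t)$ studied in Section~\ref{sec:AP}; and (iii) the exponential function. Because $\Pr(N=k)=\rho_k(t_0)/\rho_\omega(t_0)\le Ae^{-ak}/\rho_\omega(t_0)$ by~\eqref{eq:rhok:t0:tail}, with $\rho_\omega(t_0)>0$ by Remark~\ref{rem:Nk:t}, the series for $g_N(\alpha)$ converges absolutely and defines an analytic function on $\{|\alpha|<e^a\}$. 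Since $g_N(1)=1<e^{b/3}$, by continuity there is some $R\in(1,e^{b/3})$ such that $|g_N(\alpha)|<e^{b/3}$ for all complex $|\alpha|<R$. Applying Theorem~\ref{thm:PDE} at $\ttt_0=\tc\in(t_0,t_1)$ furnishes a $\delta>0$ such that $P$, and hence its partial derivatives $P_x$ and $P_y$, is analytic on $\{|t-\tc|<\delta\}\times\{|x|,|y|<e^{b/3}\}$.

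For $\gf$, I will use that $(Y_t,Z_t)$ is built, via~\eqref{def:Yt2Zt}, as a superposition of independent compound Poisson contributions, one for each pair $(k,r)$ with $r\ge1$ and $\lambda_{k,r}(t)=rq_{k,r}(t)/\rho_\omega(t_0)$ as their rates. Treating the $r=1$ and $r\ge2$ contributions uniformly (the empty product $g_N(\alpha)^{0}=1$ matches the $r=1$ case), the standard compound-Poisson identity gives
\begin{equation*}
 \gf(t,\alpha,\beta)=\exp\!\Bigl(\tfrac{1}{\rho_\omega(t_0)}\bigl[P_y(t,\beta,g_N(\alpha))-P_y(t,1,1)\bigr]\Bigr).
\end{equation*}
By the choice of $R$ and $\delta$ above, for $(t,\alpha,\beta)\in\fD_{\delta,R}$ the triple $(t,\beta,g_N(\alpha))$ lies in the domain of analyticity of $P_y$. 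Thus $S(t,\alpha,\beta)$ is analytic in $\fD_{\delta,R}$ as the composition of analytic functions, and $\gf=\exp S$ is also analytic. On the real slice, the sum defining $\gf$ is a generating function of a probability distribution with exponentially decaying tails (from the $q_{k,r}$ bounds and those for $g_N$), so the series converges absolutely on the real part of $\fD_{\delta,R}$ and agrees there with the analytic extension $\exp S$; this justifies that the analytic function we produced is indeed the claimed extension.

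For $\gf^0$, the structure of $(Y^0_t,Z^0_t)$ coming from~\eqref{def:YZR}--\eqref{def:Yt0} splits into two terms: one arising from the case $(Y_{0,t},Z^0_t,R_t)=(y,0,0)$ with probability $\rho_y(t_0)$, and one from the case $(0,z,r)$ with probability $zq_{z,r}(t)$. A direct computation yields
\begin{equation*}
 \gf^0(t,\alpha,\beta)=g_L(\alpha)+\beta\,P_x\bigl(t,\beta,g_N(\alpha)\bigr),\qquad g_L(\alpha):=\sum_{y>K}\rho_y(t_0)\alpha^y,
\end{equation*}
where $g_L$ is analytic on $\{|\alpha|<e^a\}$ by~\eqref{eq:rhok:t0:tail} and $P_x$ is analytic on the same domain as $P_y$. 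Shrinking $R$ and $\delta$ if necessary so that all three building blocks are simultaneously analytic yields the desired extension of $\gf^0$.

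The main obstacle to watch for is ensuring that the composition $(t,\alpha,\beta)\mapsto(t,\beta,g_N(\alpha))$ stays inside the complex polydisc where Theorem~\ref{thm:PDE} gives analyticity of $P$; everything else is bookkeeping. The key quantitative point is the strict inequality $|g_N(1)|=1<e^{b/3}$ from Theorem~\ref{thm:Qkr}, which provides the slack needed to take $R$ strictly larger than~$1$. Finally, I would note that checking the power series representation on the real polydisc agrees with the analytic extension (so that the extension is of the probability generating function and not merely of some other analytic function matching on a smaller set) is automatic from the identity principle, since both functions agree on a real neighbourhood of $(\tc,0,0)$.
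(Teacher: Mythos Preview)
Your proposal is correct and follows essentially the same route as the paper: both express $\gf$ and $\gf^0$ in closed form via the generating function $\Phi(\alpha)=\E\alpha^N$ (your $g_N$) and the partial derivatives $P_x,P_y$ of the generating function from Section~\ref{sec:AP}, obtaining $\gf=\exp\bigl((P_y(t,\beta,\Phi(\alpha))-P_y(t,1,1))/\rho_\omega(t_0)\bigr)$ and $\gf^0=\rho_\omega(t_0)\Phi(\alpha)+\beta P_x(t,\beta,\Phi(\alpha))$, and then invoke Theorem~\ref{thm:PDE} together with the exponential tail of $N$ to choose $R>1$ so that $|\Phi(\alpha)|<e^{b/3}$ keeps the composition inside the domain of analyticity. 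The only minor slip is that when choosing $R\in(1,e^{b/3})$ you should also explicitly ensure $R<e^a$ so that $g_N$ is defined there (the paper takes $R=\min\{\alpha_0,e^{b/3}\}$ with $\alpha_0\in(1,e^a)$); your identity-principle remark at the end is unnecessary since the closed-form identities are derived directly from the definitions.
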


\begin{proof}
The proof hinges on the following two facts: (i)~that the probability generating function of the distribution~$N$ defined in~\eqref{def:N} is analytic due to the exponential tails of Theorem~\ref{thm:init}, 
and (ii)~that the generating function $P(t,x,y)$ defined in~\eqref{def:P} is analytic by Theorem~\ref{thm:PDE}. 

Turning to the details, we first study 
\begin{equation}\label{eq:fN}
\Phi(\alpha):=\E \alpha^N = \sum_{k > K}\alpha^k \rho_{k}(t_0)/\rho_\omega(t_0).
\end{equation}
Let $\beta_0:=e^{b/3}>1$, where $b>0$ is the constant in~\eqref{eq:Qkr:tail'}. 
Recalling the exponential tail bound $|\rho_{k}(t_0)| \le A e^{-ak}$ of~\eqref{eq:rhok:t0:tail},
standard results for power series yield that $\Phi(\alpha)=\E \alpha^N$ is analytic for all
$\alpha \in \CC$ with~$|\alpha| < e^{a}$.
Since $\Phi(1)=1$, we may pick $\alpha_0 \in (1,e^a)$ such that $\Phi(\alpha_0) < \beta_0$. Since $\Phi$ is a power series
with non-negative coefficients, it follows that $|\Phi(\alpha)| < \beta_0$ for all $\alpha \in \CC$ with $|\alpha|\le \alpha_0$. 
We shall prove the result with~${R:=\min\{\alpha_0,\beta_0\}>1}$.

Recalling the definition of $(Y_t,Z_t)$, see~\eqref{def:Yt2Zt}, by independence and 
using that $H_{k,r,t} \sim \Po(\lambda_{k,r}(t))$ 
it follows that  
\begin{equation}\label{eq:ff}
\gf(t,\alpha,\beta) 
= \prod_{k \ge 0,\, r \ge 1} \E\Bigl(\Bigl[\bigl(\E \alpha^N\bigr)^{r-1}\beta^k\Bigr]^{H_{k,r,t}}\Bigr) 
= \exp\Bigl\{\sum_{k \ge 0,\, r \ge 1}\lambda_{k,r}(t)\Bigl(\bigl(\Phi(\alpha)\bigr)^{r-1}\beta^{k}-1\Bigr)\Bigr\} .
\end{equation}
Recalling $\lambda_{k,r}(t)= r q_{k,r}(t)/\rho_{\omega}(t_0)$ and the definition of $P(t,x,y)$, see~\eqref{def:P}, we see that
\begin{equation*}
 \gf(t,\alpha,\beta) = \exp\Bigl\{ \Bigl(P_y(t,\beta,\Phi(\alpha))-P_y(t,1,1)\Bigr) / \rho_{\omega}(t_0)\Bigr\}. 
\end{equation*}
By Theorem~\ref{thm:PDE} there is some $\delta>0$ such that
$P(t,x,y)$ has an analytic extension to the complex domain~$\fD_{\delta,\beta_0}$.
Replacing $P$ by this extension in the formula above gives the required analytic extension of~$\gf$,
since derivatives, compositions and products of analytic functions are analytic.

Finally we consider $\gf^0(t,\alpha,\beta)= \E ( \alpha^{Y^0_{t}} \beta^{Z^0_{t}})$, which from \eqref{def:YZR}--\eqref{def:Yt0} satisfies
\begin{equation}\label{eq:f0}
\gf^0(t,\alpha,\beta)  = \sum_{k > K} \rho_k(t_0) \alpha^k + \sum_{z \ge 1,\, r \ge 0} z q_{z,r}(t) \bigl(\E \alpha^N\bigr)^{r} \beta^z = \rho_{\omega}(t_0)\Phi(\alpha) + \beta P_x(t,\beta,\Phi(\alpha)).
\end{equation}
Using again that derivatives, products and compositions of analytic functions are analytic, we see that $\gf^0(t,\alpha,\beta)$ also has
an analytic extension of the claimed form.
\end{proof}

Since $\gf_{\alpha}(t,1,1)=\E Y_t$ and $\gf_{\alpha\alpha}(t,1,1)=\E Y_t(Y_t-1)$, Theorem~\ref{thm:f:mgf} implies that
$\E Y_t$, $\E Y_t^2$ and thus $\Var Y_t$ are analytic for $t \in (\tc-\eps,\tc+\eps)$. 
A similar argument applies to $Z_{t}$, $Y^0_t$ and $Z^0_t$. 
Intuitively, we now show that $\tc$~is the `critical point' of the 
branching process $\bp_t= \bp_{Y_t,Z_t,Y_t^0,Z_t^0}$ defined in Section~\ref{sec:BPI:distr} 
(as expected, since a linear size giant component appears after time~$\tc$ in the random graph process). 

\begin{lemma}\label{lem:Zt:E}
We have $\E Y_{\tc}=1$. 
Furthermore, for all $t \in (t_0,t_1)$ we have
\begin{equation}\label{eq:lem:Zt:E:pos}
 \E Y^0_t >0 \qquad\text{and}\qquad \ddt \E Y_t > 0 . 
\end{equation}
\end{lemma}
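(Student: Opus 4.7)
The plan is to derive closed-form expressions for $\E Y_t$ and $\E Y^0_t$ in terms of the function $u(t)$ from~\eqref{def:W} and the first moment of $N$, and then read off all three conclusions. Using the independence of the Poisson variables $H_{k,r,t}\sim\Po(\lambda_{k,r}(t))$ and of the copies of $N$ in the definition~\eqref{def:Yt2Zt} of $Y_t$, the identity $\lambda_{k,r}(t)=rq_{k,r}(t)/\rho_\omega(t_0)$, and $r(r-1)=0$ for $r\in\{0,1\}$, Wald's identity yields
\[
 \E Y_t \;=\; \E N\cdot\sum_{k\ge 0,\,r\ge 2}(r-1)\lambda_{k,r}(t) \;=\; \frac{\E N}{\rho_\omega(t_0)}\cdot\sum_{k,r\ge 0} r(r-1)q_{k,r}(t) \;=\; \frac{\E N\cdot u(t)}{\rho_\omega(t_0)}.
\]
All sums converge absolutely by the exponential tails~\eqref{eq:qkr:tail} and~\eqref{eq:rhok:t0:tail}. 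The analogous computation from~\eqref{def:YZR}--\eqref{def:Yt0} gives
\[
 \E Y^0_t \;=\; \sum_{y>K} y\,\rho_y(t_0) \;+\; \E N\cdot\sum_{z\ge 1,\,r\ge 0} rz\,q_{z,r}(t) \;\ge\; \rho_\omega(t_0)\,\E N \;>\;0,
\]
using Remark~\ref{rem:Nk:t} together with the fact that $N$ is supported on $\{K+1,K+2,\ldots\}$ (so $\E N>0$).

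With these formulae in hand, the claim $\E Y^0_t>0$ for all $t\in(t_0,t_1)$ is immediate, and the monotonicity $\ddt\E Y_t>0$ on $(t_0,t_1)$ follows directly from Lemma~\ref{lem:w}, which provides $u'(t)>0$ on this interval (the prefactor $\E N/\rho_\omega(t_0)$ is a positive constant independent of $t$).

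For the critical value $\E Y_{\tc}=1$, I would invoke Corollary~\ref{cor:rhosr}, which identifies $s_2(t)=\E|\bp_t|$ for $t\in[t_0,\tc)$, together with the established fact that $\lim_{t\nearrow\tc}s_2(t)=\infty$ (noted below~\eqref{eq:Sj:pto}, from~\cite{SW,BK}). Since type-$S$ particles of $\bp_t$ are childless, a generation-by-generation summation yields $\E|\bp_t|=\E Z^0_t+\E Y^0_t(1+\E Z_t)/(1-\E Y_t)$ when $\E Y_t<1$, and $\E|\bp_t|=\infty$ when $\E Y_t\ge 1$ (here using $\E Y^0_t>0$ from the previous step). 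Theorem~\ref{thm:f:mgf} ensures that $\E Z_t$, $\E Y^0_t$ and $\E Z^0_t$ are analytic, and in particular bounded, on a neighbourhood of $\tc$, so the divergence $s_2(t)\to\infty$ can only come from $\E Y_t\to 1^-$; continuity of $\E Y_t$ then forces $\E Y_{\tc}=1$. The only real subtlety is the dichotomy ``$\E Y_t<1$ or $\E|\bp_t|=\infty$'' for the two-type process in which one type is childless, but once $\E Y^0_t>0$ has been established it reduces to the standard single-type analysis of $\bp^1_t$, where it is classical that $\E|\bp^1_t|<\infty$ if and only if $\E Y_t<1$.
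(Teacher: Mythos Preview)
Your derivations of $\E Y_t = \E N\cdot u(t)/\rho_\omega(t_0)$ and $\E Y^0_t \ge \rho_\omega(t_0)\E N>0$, and the deduction of $\ddt\E Y_t>0$ from Lemma~\ref{lem:w}, are exactly what the paper does. For $\E Y_{\tc}=1$, however, you take a genuinely different route. The paper uses the survival-probability part of Corollary~\ref{cor:rhosr}: since $\rho(t)=\Pr(|\bp_t|=\infty)$ is zero on $[t_0,\tc]$ and positive on $(\tc,t_1]$, the standard dichotomy for single-type processes (applied to the type-$L$ marginal, using $\E Y^0_t>0$) forces $\E Y_t\le 1$ for $t<\tc$ and $\E Y_t\ge 1$ for $t>\tc$, whence $\E Y_{\tc}=1$ by continuity. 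You instead use the susceptibility part of Corollary~\ref{cor:rhosr}: from $s_2(t)=\E|\bp_t|<\infty$ on $[t_0,\tc)$ and its divergence as $t\nearrow\tc$, your closed formula $\E|\bp_t|=\E Z^0_t+\E Y^0_t(1+\E Z_t)/(1-\E Y_t)$ (valid once $\E Y_t<1$, which you correctly infer from finiteness and $\E Y^0_t>0$) pins down $\E Y_t\to 1^-$, since the coefficient $\E Y^0_t(1+\E Z_t)\ge \rho_\omega(t_0)\E N>0$ is bounded below and the other terms are bounded by Theorem~\ref{thm:f:mgf}. Your argument is correct and slightly more self-contained in that it only looks at the subcritical side; the paper's argument is a shade shorter because it avoids computing $\E|\bp_t|$ explicitly.
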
 

\begin{proof}
Fix $t \in (t_0,t_1)$. 
Recalling the definition of $Y_t$ and $u(t)$, see~\eqref{def:Yt2Zt} and~\eqref{def:W}, using independence,
the fact that $H_{k,r,t} \sim \Po(\lambda_{k,r}(t))$ and that $\E H_{k,r,t} = \lambda_{k,r}(t)= r q_{k,r}(t)/\rho_{\omega}(t_0)$, we see that
\begin{equation*}
\E Y_t = \sum_{k \ge 0,\,r \ge 2} \Bigl[ \E H_{k,r,t} \cdot (r-1) \cdot \E N  \Bigr]= \Bigl[\sum_{k,r \ge 0}r(r-1) q_{k,r}(t) \Bigr] \cdot \E N/ \rho_{\omega}(t_0) = u(t) \E N/ \rho_{\omega}(t_0) .
\end{equation*}
Since $\E N > 0$, Lemma~\ref{lem:w} thus entails $\ddt\E Y_t = u'(t) \cdot \E N/ \rho_{\omega}(t_0) > 0$. 
By~\eqref{def:YZR}--\eqref{def:Yt0} we similarly have $\E Y^0_t \ge \E Y_{0,t} = \E N \cdot \rho_{\omega}(t_0)  > 0$.

We next prove $\E Y_{\tc}=1$.
By Corollary~\ref{cor:rhosr} we have $\Pr(|\bp_t|=\infty)=\rho(t)$ for $t \in [t_0,t_1]$, so the discussion below~\eqref{eq:L1:pto} implies 
\begin{equation}\label{eq:lem:tc:bptinfty}
\Pr(|\bp_t|=\infty)=0 \text{ for $t \in [t_0,\tc]$} \quad \text{and} \quad \Pr(|\bp_t|=\infty)>0 \text{ for $t \in (\tc,t_1]$.} 
\end{equation}
Recall that the branching process $\bp_t$ has (except for the initial generation) a two-type offspring distribution~$(Y_t,Z_t)$, which corresponds to particles of type~$L$ and~$S$, respectively.  
Since only type~$L$ particles (which are counted by~$Y_t$) have children, by~\eqref{eq:lem:tc:bptinfty} standard branching process results imply $\E Y_{t} \le 1$ for $t \in [t_0,\tc)$ and~$\E Y_{t} \ge 1$ for $t \in (\tc,t_1]$. 
Now~$\E Y_{\tc} = 1$ follows since $\E Y_t$ is analytic and thus continuous at~$t=\tc$.
\end{proof}

Intuitively speaking, we next show that no linear relation of the form $aY_t + b Z_t=c$ holds.

\begin{lemma}\label{lem:YtZtlb}
Define $\per$ as in Lemma~\ref{lem:allowed}. 
There exists $k_0 > K$ such that, for all $t \in (t_0,t_1)$,
\begin{equation}\label{utiltlower}
 \min\Bigl\{\Pr(Y_t=k_0,Z_t=k_0), \: \Pr(Y_t=k_0+\per,Z_t=k_0), \: \Pr(Y_t=k_0,Z_t=k_0+\per)\Bigr\} > 0 .
\end{equation}
\end{lemma}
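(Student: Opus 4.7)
The plan is to find $k_0 > K$ and to exhibit, for each of the three pairs in \eqref{utiltlower}, a positive-probability configuration of the independent Poisson variables $H_{k,r,t}$ and of the i.i.d.\ copies of $N$ that together define $(Y_t, Z_t)$ via \eqref{def:Yt2Zt}. The guiding structural observation is that each variable $H_{k,1,t}$ feeds only $Z_t$ (since the outer sum defining $Y_t$ ranges over $r \ge 2$), while $H_{0,2,t}$ feeds only $Y_t$ (since its $k=0$ factor kills the $Z_t$ contribution); these two decoupled mechanisms let me tune $Y_t$ and $Z_t$ independently, and the copies of $N$ will contribute the required shift by $\per$.

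First I would pick suitable atoms of the underlying distributions. By Lemma~\ref{lem:allowed} one can choose $k_1 \in \cSR$ with $k_1 > \max\{K, \kR\}$, so that $k_2 := k_1 + \per$ also lies in $\cSR$; set $k_0 := 2k_1 > K$. By parts \ref{allowed:2} and \ref{allowed:r1} of Lemma~\ref{lem:allowed:kr}, the types $(k_1,1)$, $(k_2,1)$ and $(0,2)$ all lie in $\cSkr$, so part \ref{allowed:pos} together with \eqref{def:Hkt:l} yields $\lambda_{k_1,1}(t), \lambda_{k_2,1}(t), \lambda_{0,2}(t) > 0$ for every $t \in (t_0, t_1]$. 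Likewise, \eqref{def:N} and Lemma~\ref{lem:rhokS1} give $\Pr(N = k_1), \Pr(N = k_2) > 0$.

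Next I would specify three configurations of the offspring variables. For $(Y_t, Z_t) = (k_0, k_0)$: take $H_{k_1,1,t} = 2$ and $H_{0,2,t} = 2$, with both associated copies of $N$ equal to $k_1$. For $(k_0 + \per, k_0)$: keep $H_{k_1,1,t} = 2$ and $H_{0,2,t} = 2$, but let one copy of $N$ equal $k_1$ and the other equal $k_2 = k_1 + \per$. For $(k_0, k_0 + \per)$: take $H_{k_1,1,t} = H_{k_2,1,t} = 1$ and $H_{0,2,t} = 2$, with both copies of $N$ equal to $k_1$. A direct check against \eqref{def:Yt2Zt}, using that $H_{k,1,t}$ does not contribute to $Y_t$ and $H_{0,2,t}$ does not contribute to $Z_t$, confirms that each configuration produces the advertised pair.

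The one mild obstacle is that \eqref{def:Yt2Zt} involves infinitely many Poisson variables, so I must also insist that every $H_{k,r,t}$ outside my finite prescribed list equals $0$. The probability of this extra event is $\exp\bigl(-\sum_{(k,r) \notin S} \lambda_{k,r}(t)\bigr)$, where $S$ denotes the finite set of prescribed pairs; the exponential tail bound \eqref{eq:qkr:tail} on $q_{k,r}$ makes $\sum_{k,r} \lambda_{k,r}(t) < \infty$, so this probability is strictly positive. Each of the three events is then a finite intersection of independent positive-probability events, completing the proof.
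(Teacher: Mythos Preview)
Your proof is correct and takes a genuinely different route from the paper's. The paper chooses $k_0\in\cSR$ with $k_0>K$ directly, and realizes all three events using a \emph{single} $(k^*,2)$-hyperedge with $k^*\in\{k_0,k_0+\per\}$: setting $H_{k^*,2,t}=1$ and all other $H$'s zero gives $Z_t=k^*$ and $Y_t=N_{k^*,2,1,1}$, so varying $k^*$ and the single copy of $N$ over $\{k_0,k_0+\per\}$ produces the three pairs. Your approach instead sets $k_0=2k_1$ and exploits the clean decoupling that $H_{\cdot,1,t}$ contributes only to $Z_t$ while $H_{0,2,t}$ contributes only to $Y_t$; this is conceptually pleasant and makes the independence of the two coordinates explicit, at the cost of using four hyperedges rather than one. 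The paper's argument is a bit more economical and avoids the $k=0$ case altogether.

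One small citation slip: part~\ref{allowed:pos} of Lemma~\ref{lem:allowed:kr} is stated only for $k\ge 1$, so it does not directly give $q_{0,2}(t)>0$. You should instead cite Lemma~\ref{lem:U:t}, which gives $q_{0,2}(t_0)=0$ and $q_{0,2}'(t)>0$, hence $q_{0,2}(t)>0$ for $t\in(t_0,t_1]$ and thus $\lambda_{0,2}(t)>0$. With that correction your argument is complete.
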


\begin{proof}
Fix $t \in (t_0,t_1)$. 
By Lemma~\ref{lem:allowed} there exists $k_0 \in \cSR$ with $k_0 \ge \max\{K+1,\kR\}$ and $k_0+\per \in \cSR$.  
By Lemma~\ref{lem:rhokS1}, $\rho_{k_0}(t_0)$ and $\rho_{k_0+\per}(t_0)$ are positive.
Furthermore, since $k_0>K$, by Lemma~\ref{lem:allowed:kr}\ref{allowed:r1}
we have $(k_0,2)\in \cSkr$ and $(k_0+\per,2)\in \cSkr$,
and hence $q_{k_0,2}(t)$ and $q_{k_0+\per,2}(t)$ are positive.
We consider the cases $\sum_{k,r \ge 0}H_{k,r,t} \in \{H_{k_0,2,t},H_{k_0+\per,2,t}\}$ in the definition~\eqref{def:Yt2Zt} of $(Y_t,Z_t)$. 
For $k^* \in \{k_0,k_0+\per\}$ we then focus on the event $H_{k^*,2,t}=1$, and consider the cases $N_{k^*,1,1,1} \in \{k_0,k_0+\per\}$ in the definition~\eqref{def:Yt2Zt} of $(Y_t,Z_t)$. 
Recalling that $\sum_{k,r \ge 0}\lambda_{k,r}(t) \in (0,\infty)$, it follows that~\eqref{utiltlower} holds. 
\end{proof}

We now turn to the `perturbed' offspring distributions $(Y^{\pm}_t,Z^{\pm}_t)$ and $(Y^{0,\pm}_t,Z^{0,\pm}_t)$ defined in Section~\ref{sec:dom:dom}.
Note that these distributions depend not only on~$t$, but also on the ($t$-nice) parameter list~$\fS$, see Definition~\ref{def:cpl:UB}. 
The next result intuitively states that \emph{all} such probability generating functions $\gf^{\pm}$ and $\gf^{0,\pm}$, defined in~\eqref{eq:fPT:mgf} below, are almost indistinguishable from the corresponding `idealized' $\gf$ and $\gf^0$ defined in~\eqref{eq:f:mgf}. 

\begin{theorem}\label{thm:fPT:mgf}
There exist $C,n_0>0$ and $R>1$ such that the following holds for all $n\ge n_0$,
all $t \in [t_0,t_1]$ and all $t$-nice parameter lists~$\fS$.
Define $(Y^{\pm}_t,Z^{\pm}_t)$ and $(Y^{0,\pm}_t,Z^{0,\pm}_t)$ as in Definition~\ref{def:cpl:UB}, and set
\begin{equation}\label{eq:fPT:mgf}
 \gf^{\pm}(t,\alpha,\beta) := \E\bigl( \alpha^{Y^{\pm}_{t}}\beta^{Z^{\pm}_{t}}\bigr)
 \qquad \text{and} \qquad \gf^{0,\pm}(t,\alpha,\beta) := \E\bigl( \alpha^{Y^{0,\pm}_{t}}\beta^{Z^{0,\pm}_{t}}\bigr) .
\end{equation}
Then, writing $\fR := \{x \in \CC: |x| \le R\}$, we have
\begin{gather}
\label{eq:fPT:mgf:ff0}
\sup_{\alpha,\beta \in \fR} \max\Bigl\{\bigl|\gf(t,\alpha,\beta)\bigr|, \: \bigl|\gf^{\pm}(t,\alpha,\beta)\bigr|, \: \bigl|\gf^0(t,\alpha,\beta)\bigr| , \: \bigl|\gf^{0,\pm}(t,\alpha,\beta)\bigr|\Bigr\} \le C ,\\
\label{eq:fPT:mgf:f}
\sup_{\alpha,\beta \in \fR}\Bigl|\gf(t,\alpha,\beta) -\gf^{\pm}(t,\alpha,\beta) \Bigr| \le C n^{-1/3}, \\
\label{eq:fPT:mgf:f0}
\sup_{\alpha,\beta \in \fR} \Bigl|\gf^{0}(t,\alpha,\beta) - \gf^{0,\pm}(t,\alpha,\beta)\Bigr| \le C n^{-1/3} .
\end{gather}
\end{theorem}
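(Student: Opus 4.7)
The plan is to derive explicit formulae for $\gf^{\pm}$ and $\gf^{0,\pm}$ that mirror \eqref{eq:ff} and \eqref{eq:f0}, and then to compare them term-by-term with $\gf$ and $\gf^0$. Writing $\Phi(\alpha):=\E\alpha^N$ and $\Phi^{\pm}(\alpha):=\E\alpha^{N^{\pm}}$, the construction of $(Y_t^{\pm},Z_t^{\pm})$ in Definition~\ref{def:cpl:UB} (which mirrors \eqref{def:Yt2Zt} with $N^{\pm}$ and $\lambda^{\pm}_{k,r}(t)$ in place of $N$ and $\lambda_{k,r}(t)$) yields, by independence and the Poisson probability generating function,
\begin{equation*}
 \gf^{\pm}(t,\alpha,\beta)\:=\:\exp\Bigl\{\sum_{k\ge 0,\,r\ge 1}\lambda^{\pm}_{k,r}(t)\bigl((\Phi^{\pm}(\alpha))^{r-1}\beta^{k}-1\bigr)\Bigr\}.
\end{equation*}
An analogous explicit formula is available for $\gf^{0,\pm}$, namely
\begin{equation*}
 \gf^{0,\pm}(t,\alpha,\beta)\:=\:\frac{1}{|\fS^{\pm}_{t}|}\Bigl[\sum_{k>K}N_{k}(\fS)\alpha^{k}\:+\:\sum_{z\ge 1,\,r\ge 0}zQ^{\pm}_{z,r}(t)(\Phi^{\pm}(\alpha))^{r}\beta^{z}\Bigr].
\end{equation*}

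Picking $R>1$ sufficiently close to $1$, I would first establish the uniform boundedness \eqref{eq:fPT:mgf:ff0}. The bound $N_{k}(\fS)\le Ae^{-ak}n$ from \eqref{eq:Nk:t0:tail'} together with $|V_{L}(\fS)|\ge\zeta n$ from \eqref{eq:nice:VL} give $|\Phi^{\pm}(\alpha)|\le R'$ for some $R'>1$ (as long as $R<e^{a}$); similarly $|\Phi(\alpha)|\le R'$ by \eqref{eq:rhok:t0:tail}. Choosing $R$ and~$R'$ below $e^{b_{0}/2}$, the exponential tails $\lambda^{\pm}_{k,r}(t)=O(re^{-b_{0}(k+r)})$ and $\lambda_{k,r}(t)=O(re^{-b(k+r)})$ (from \eqref{eq:qkr:pm}--\eqref{eq:qkr:pm:tail} and \eqref{eq:qkr:tail}) make the sums in the exponents of $\gf$ and $\gf^{\pm}$ absolutely convergent and uniformly bounded, independent of $n$, $t$ and $\fS$. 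Together with $|\fS^{\pm}_{t}|=\Theta(n)$ from \eqref{def:n:pm:diff}, this also bounds $\gf^{0}$ and $\gf^{0,\pm}$.

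For the difference bound \eqref{eq:fPT:mgf:f}, the two key estimates are (a)~$\sup_{|\alpha|\le R}|\Phi^{\pm}(\alpha)-\Phi(\alpha)|=O(n^{-1/3})$ and (b)~$\sum_{k,r\ge 0}|\lambda^{\pm}_{k,r}(t)-\lambda_{k,r}(t)|R^{k+r-1}=O(n^{-1/3})$. For~(a), niceness~\eqref{eq:Nk:t0'} combined with $|V_{L}|=\rho_{\omega}(t_{0})n+O((\log n)^{D_{N}}n^{1/2})$ controls $N_{k}(\fS)/|V_{L}|-\rho_{k}(t_{0})/\rho_{\omega}(t_{0})$ term-by-term; splitting the sum over $k$ at the crossover where the uniform error $(\log n)^{D_{N}}n^{-1/2}$ and the tail bound $2Ae^{-ak}$ meet, and choosing $R$ small enough (e.g.\ $R<e^{a/3}$), yields an error of $O((\log n)^{D_{N}}n^{-1/3})$ uniformly in $|\alpha|\le R$; in the $-$ case, the subtraction $A_{0}e^{-a_{0}k}n^{2/3}$ in \eqref{eq:N:LB} contributes a comparable $O(n^{-1/3})$ term (using $R<e^{a_{0}}$). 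For~(b), the bound $|Q^{\pm}_{k,r}(t)/n-q_{k,r}(t)|=O(e^{-b_{0}(k+r)}n^{-0.49})$ from \eqref{eq:qkr:pm} together with the extra factor $(1-A_{1}rn^{-1/3})$ in the $-$ case \eqref{eq:lambda:kr:LB} — the latter contributing $O(r^{2}e^{-b_{0}(k+r)}n^{-1/3})$, which dominates — gives the claim after summing against the geometric weights.

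With (a) and~(b) in hand, the telescoping identity $(\Phi^{\pm})^{r-1}-\Phi^{r-1}=(\Phi^{\pm}-\Phi)\sum_{j=0}^{r-2}(\Phi^{\pm})^{j}\Phi^{r-2-j}$, combined with the exponential decay of the rates absorbing the polynomial factor $r-1$, shows that the difference of the exponents of $\gf$ and $\gf^{\pm}$ is $O(n^{-1/3})$ uniformly in $\alpha,\beta\in\fR$. Since both exponents are uniformly bounded in modulus, the elementary bound $|e^{x}-e^{y}|\le e^{\max\{\Re x,\Re y\}}|x-y|$ (applied along the segment from $y$ to $x$) then yields \eqref{eq:fPT:mgf:f}. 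The argument for \eqref{eq:fPT:mgf:f0} is analogous but purely algebraic, comparing the two explicit formulae above term-by-term using the same estimates together with $||\fS^{\pm}_{t}|-n|=O(n^{0.51})$ from \eqref{def:n:pm:diff}. The main technical obstacle is the $-$ variant, where two simultaneous perturbations (the $A_{0}e^{-a_{0}k}n^{2/3}$ subtraction in $N^{-}$ and the $(1-A_{1}rn^{-1/3})$ truncation in $\lambda^{-}_{k,r}$) each contribute errors of order $n^{-1/3}$; calibrating $R>1$ small enough that the exponential tails $e^{-ak}$ and $e^{-b_{0}(k+r)}$ absorb all the polynomial factors $k$, $r$, $r^{2}$ and $n^{2/3}$ that arise — while still leaving $R>1$, as required — is the crux.
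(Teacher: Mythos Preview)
Your proposal is correct and follows essentially the same route as the paper: write $\gf^{\pm}=\exp\{\Gamma^{\pm}\}$ and $\gf^{0,\pm}$ explicitly, bound $|\Phi^{\pm}-\Phi|$ and $|\lambda^{\pm}_{k,r}-\lambda_{k,r}|$, use the telescoping identity $(\Phi^{\pm})^{r-1}-\Phi^{r-1}=(\Phi^{\pm}-\Phi)\sum_{j}(\Phi^{\pm})^{j}\Phi^{r-2-j}$ to control $|\Gamma^{\pm}-\Gamma|$, and finish with $|e^{x}-e^{y}|\le e^{\max\{|x|,|y|\}}|x-y|$. The paper organizes the comparison a little differently: rather than splitting the sum over $k$ at a crossover, it first isolates clean pointwise estimates
\[
\bigl|\Pr(N=k)-\Pr(N^{\pm}=k)\bigr|\le De^{-dk}n^{-1/3}
\quad\text{and}\quad
\bigl|\lambda_{k,r}(t)-\lambda^{\pm}_{k,r}(t)\bigr|\le De^{-d(k+r)}n^{-1/3}
\]
(its \eqref{eq:f:N:diff} and \eqref{eq:f:l:diff}), obtained by taking the minimum of the niceness bound and the exponential-tail bound and checking that this minimum is at most $De^{-dk}n^{-1/3}$ for suitably small $d$. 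Summing against $R^{k}$ (resp.\ $R^{k+r}$) with $R<e^{d}$ then gives $O(n^{-1/3})$ directly, with no log factors. Your splitting argument reaches the same conclusion, but note that your stated intermediate bound ``$O((\log n)^{D_{N}}n^{-1/3})$'' does not literally yield the constant-$C$ conclusion of the theorem; the fix is that with $R<e^{a/3}$ \emph{strictly} the small-$k$ sum is actually $O(n^{-1/3-\delta})$ for some $\delta>0$, absorbing the polylog.
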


\begin{proof}
We start by showing that $N^{\pm}$ and $\lambda^{\pm}_{k,r}(t)$ are very good approximations to $N$ and $\lambda_{k,r}(t)$. 
Here and throughout the proof, all constants do not depend on $t\in [t_0,t_1]$ or on the choice of $\fS$. 
Combining the  definitions of $N$ and $N^{\pm}$ (see~\eqref{def:N} and Definition~\ref{def:cpl:UB}),
with the exponential tails of $\rho_k(t_0)$ and $N_{\ge k}$ (see~\eqref{eq:rhok:t0:tail} and~\eqref{eq:Nk:t0:tail'}), 
we see that 
there are absolute constants~$d,D,n_0>0$ 
such that, for $n \ge n_0$,  
\begin{align}
\label{eq:f:N:exp}
\max\bigl\{\Pr(N=k), \: \Pr(N^{\pm}=k)\bigr\} &= O(e^{-ak}) \le D e^{-d k}.
\end{align}
Recall that $\Pr(N^{\pm}=k)$ approximates $N_k/|V_L|=N_k(\fS)/|V_L(\fS)|$, and that $N_k$ approximates $\rho_k(t_0)n$ (see Definition~\ref{def:cpl:UB} and~\eqref{eq:Nk:t0'}). 
After decreasing~$d$ and increasing~$D,n_0$ (if necessary), 
using ${\rho_k(t) \le A e^{-ak}}$ and~${a \ge a_0}$ (see~\eqref{eq:rhok:t0:tail} and~\eqref{eq:def:D0a0}) 
together with the upper bound~\eqref{eq:f:N:exp} and \eqref{def:n:pm:diff},
it is routine (but slightly messy) to see that, for~$n \ge n_0$,  
\begin{align}
\label{eq:f:N:diff}
\bigl|\Pr(N=k)-\Pr(N^{\pm}=k)\bigr| &= O\bigl(\min\bigl\{(\log n)^{D_{\bp}}n^{-1/2}+e^{-a_0k}n^{-1/3}, \: e^{-a k}\bigr\}\bigr) \le D e^{-d k} n^{-1/3} .
\end{align}
For $\lambda_{k,r}(t)$ and $\lambda^{\pm}_{k,r}(t)$ as defined in~\eqref{def:Hkt:l} and Definition~\ref{def:cpl:UB}, 
similar reasoning shows that (again after decreasing~$d$ and increasing~$D,n_0$, if necessary), for $n \ge n_0$, 
\begin{align}
\label{eq:f:l:exp}
\max\bigl\{|\lambda_{k,r}(t)|, \: |\lambda^{\pm}_{k,r}(t)|\bigr\} &\le D e^{-d(k+r)}, \\
\label{eq:f:l:diff}
\bigl|\lambda_{k,r}(t) - \lambda^{\pm}_{k,r}(t)\bigr| &\le D e^{-d (k+r)} n^{-1/3} .
\end{align}

With the above estimates in hand, the proof boils down to routine calculations (analogous to those from Theorem~\ref{thm:f:mgf}).
Turning to the details, let
\begin{equation*}
\Phi(\alpha):=\E \alpha^N \qquad \text{and} \qquad \Phi^{\pm}(\alpha):=\E \alpha^{N^{\pm}} . 
\end{equation*}
Using~\eqref{eq:ff} we write 
\begin{equation}\label{eq:f:mod}
 \gf(t,\alpha,\beta) = \exp\Bigl\{\sum_{k \ge 0,\, r \ge 1}\lambda_{k,r}(t)\Bigl(\bigl(\Phi(\alpha)\bigr)^{r-1}\beta^k-1\Bigr)\Bigr\} = : \exp\Bigl\{\Gamma(t,\alpha,\beta)\Bigr\}.
\end{equation}
Recalling the definition of $(Y^{0,\pm}_t,Z^{0,\pm}_t)$, see Definition~\ref{def:cpl:UB}, arguing as for~\eqref{eq:ff} we obtain
\begin{equation}\label{eq:ff:mod}
 \gf^{\pm}(t,\alpha,\beta) = \exp\Bigl\{\sum_{k \ge 0,\, r \ge 1}\lambda^{\pm}_{k,r}(t)\Bigl(\bigl(\Phi^{\pm}(\alpha)\bigr)^{r-1}\beta^k-1\Bigr)\Bigr\} = : \exp\Bigl\{\Gamma^{\pm}(t,\alpha,\beta)\Bigr\}.
\end{equation}
Now $\Phi(1)=\Phi^\pm(1)=1$. Using the (uniform) exponential tail bound~\eqref{eq:f:N:exp} to bound 
the derivatives of $\Phi$ and of $\Phi^\pm$, we may find a constant $1<R<e^{d/2}$ such that
$\Phi(R),\Phi^\pm(R)<e^{d/2}$. Writing $\fR := \{x \in \CC: |x| \le R\}$ as in the statement of the theorem,
since $\Phi$ and $\Phi^\pm$
are power series with non-negative coefficients it follows~that
\begin{equation}\label{eq:fPT:alpha_0}
\sup_{\alpha \in \fR}\max\Bigl\{\bigl|\Phi(\alpha)\bigr|, \: \bigl|\Phi^{\pm}(\alpha)\bigr|\Bigr\} \le e^{d/2} .
\end{equation}
Together with the exponential tail bound~\eqref{eq:f:l:exp} and $R \le e^{d/2}$, it follows that there is a $C_1 \ge 1$ such that 
\begin{equation}\label{eq:fPT:Gamma:abs}
\sup_{\alpha, \beta \in \fR} \max\Bigl\{ \bigl|\Gamma(t,\alpha,\beta)\bigr|, \:  \bigl|\Gamma^{\pm}(t,\alpha,\beta)\bigr| \Bigr\} \le \sum_{k \ge 0,\, r \ge 1} De^{-d (k+r)}\Bigl(e^{d(k+r)/2}+1\Bigr) \le C_1 .
\end{equation}
Furthermore, using~$R \le e^{d/2}$ and the exponential difference estimate~\eqref{eq:f:N:diff}, there is a $C_2 >0$ such that 
\begin{equation}\label{eq:fPT:Phi}
\sup_{\alpha \in \fR}\left|\Phi(\alpha)-\Phi^{\pm}(\alpha)\right| \le \sum_{k \ge 0}e^{dk/2} \cdot De^{-d k}n^{-1/3} \le C_2 n^{-1/3}.
\end{equation}
Note that (as easily seen by induction), for all $I \in \NN$ we have 
\begin{equation}\label{eq:product}
\Bigl|\prod_{h \in [I]} y_h-\prod_{h \in [I]} z_h\Bigr| \le \sum_{j \in [I]} |y_j-z_j| \cdot \prod_{1 \le h < j} |y_h|\prod_{j < h \le I} |z_h|.
\end{equation}
Together with the bound~\eqref{eq:fPT:alpha_0} and the difference estimate~\eqref{eq:fPT:Phi}, it now follows for $r \ge 1$ that 
\begin{equation}\label{eq:fPT:Phi:partial}
\begin{split}
\sup_{\alpha \in \fR} \left|\bigl(\Phi(\alpha)\bigr)^{r-1}-\bigl(\Phi^{\pm}(\alpha)\bigr)^{r-1}\right| & \le r \cdot C_2 n^{-1/3} \cdot (e^{d/2})^{\max\{r-2,0\}}\le C_2 r e^{dr/2} n^{-1/3}. 
\end{split}
\end{equation}
Together with the difference estimates~\eqref{eq:f:l:diff}, the upper bound~\eqref{eq:fPT:alpha_0} and $R \le e^{d/2}$, using~\eqref{eq:product} we also infer that there is a $C_3 \ge 1$ such that, say, 
\begin{equation}\label{eq:fPT:Gamma:diff}
\sup_{\alpha, \beta \in \fR}  \left|\Gamma(t,\alpha,\beta) - \Gamma^{\pm}(t,\alpha,\beta) \right| \le 
\sum_{k \ge 0,\, r \ge 1} D (C_2r+2) e^{-d(k+r)/2} n^{-1/3} \le C_3 n^{-1/3} .
\end{equation}
Together with~\eqref{eq:fPT:Gamma:abs} and~\eqref{eq:f:mod}--\eqref{eq:ff:mod}, 
setting $C :=2 C_3 e^{C_1}$, say, for $n \ge n_0(C)$ large enough this readily establishes~\eqref{eq:fPT:mgf:f} and the upper bounds for~$\gf$ and $\gf^{\pm}$ in~\eqref{eq:fPT:mgf:ff0}.  

Finally, we omit the analogous arguments for $\gf^{0}(t,\alpha,\beta)$ and $\gf^{0,\pm}(t,\alpha,\beta)$.
\end{proof}

\subsection{Branching process results}\label{sec:bpresults}
In this subsection we state a number of results concerning the branching processes~$\bp_t$ and~$\bp_t^\pm$, which 
we are proved in a companion paper~\cite{BPpaper} written with Svante Janson 
(modulo a reduction given in Appendix~\ref{sec:BP}).
As we shall see, their survival and point probabilities are qualitatively similar to standard Galton--Watson branching process arising in the context of classical Erd\H os--R\'enyi random graphs. 
In particular, for $t=\tc+\eps$ the survival probabilities grow linearly in~$\eps$, and for $t=\tc\pm\eps$ the size-$k$ point probabilities decay exponentially in~$\Theta(\eps^2k)$. 

We start with our results for the `idealized' branching process $\bp_t= \bp_{Y_t,Z_t,Y_t^0,Z_t^0}$ defined in Section~\ref{sec:BPI:distr}.
\begin{theorem}[Survival probability of $\bp_t$]\label{thsurv-simple}
There exists $\eps_0>0$ such that the survival probability $\rho(t)=\Pr(|\bp_t|=\infty)$ is zero for $\tc-\eps_0\le t\le \tc$, 
is positive for $\tc < t \le \tc + \eps_0$. 
Furthermore, $\rho(t)$ is analytic on $[\tc,\tc+\eps_0]$; 
more precisely, there are constants $a_i$ with $a_1>0$ such that
\begin{equation*}
 \rho(\tc+\eps) = \sum_{i=1}^\infty a_i\eps^i .
\end{equation*}
for $\eps \in [0,\eps_0]$.
Moreover, an analogous statement holds for $\rho_1(t)=\Pr(|\bp^1_t|=\infty)$, where $\bp_t^1=\bp^1_{Y_t,Z_t}$ is defined as in Section~\ref{sec:BPI:distr}. 
\end{theorem}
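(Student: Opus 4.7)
The plan is to reduce everything to the single-type process $\bp^1_t = \bp^1_{Y_t,Z_t}$ and apply an analytic implicit-function argument at the critical point. Since type-$S$ particles of $\bp_t$ have no descendants, the process $\bp_t$ survives if and only if at least one of its $Y_t^0$ initial type-$L$ particles founds an infinite line, and each such line is an independent copy of $\bp^1_t$. Thus
\begin{equation*}
\rho(t) = 1 - \E\bigl((1-\rho_1(t))^{Y^0_t}\bigr) = 1 - \gf^0\bigl(t,\,1-\rho_1(t),\,1\bigr),
\end{equation*}
where $\gf^0$ is the probability generating function from Theorem~\ref{thm:f:mgf}. So once $\rho_1(t)$ is understood, the analytic properties of $\rho(t)$ follow from those of $\gf^0$ (which is analytic on the complex domain $\fD_{\delta,R}$).

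I would then focus on $\rho_1(t)$. Writing $f_t(\alpha) := \gf(t,\alpha,1) = \E\alpha^{Y_t}$, the extinction probability of $\bp^1_t$ is the smallest fixed point $q_1(t)\in[0,1]$ of $f_t$, and $\rho_1(t)=1-q_1(t)$. The naive attempt to apply the implicit function theorem to $F(t,\sigma)=\sigma - (1-f_t(1-\sigma))$ at $(\tc,0)$ fails because $F_\sigma(\tc,0)=\E Y_\tc -1=0$ by Lemma~\ref{lem:Zt:E}. To handle this, I would factor out the trivial root: set
\begin{equation*}
h_t(\sigma) := 1 - f_t(1-\sigma), \qquad g_t(\sigma) := h_t(\sigma)/\sigma,
\end{equation*}
so that $g_t(0)=h_t'(0)=\E Y_t$, and $g_t$ is (jointly) analytic in $(t,\sigma)$ on a complex neighbourhood of $(\tc,0)$ by Theorem~\ref{thm:f:mgf}. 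Define $G(t,\sigma):=1-g_t(\sigma)$. The non-trivial fixed-point equation becomes $G(t,\sigma)=0$.

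At $(\tc,0)$ we have $G(\tc,0)=1-\E Y_\tc = 0$. Direct computation gives $G_\sigma(\tc,0) = -g_t'(0)|_{t=\tc} = \tfrac12 f_{\tc}''(1) = \tfrac12\E Y_\tc(Y_\tc-1)$, and $G_t(\tc,0) = -\tfrac{d}{dt}\E Y_t|_{t=\tc}$. Lemma~\ref{lem:YtZtlb} gives two distinct values $k_0,k_0+\per$ of $Y_\tc$ with positive probability, so $Y_\tc$ is non-degenerate and $\Var Y_\tc>0$; since $\E Y_\tc=1$ this yields $f_\tc''(1)=\Var Y_\tc>0$, hence $G_\sigma(\tc,0)>0$. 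Lemma~\ref{lem:Zt:E} gives $G_t(\tc,0)<0$. The analytic implicit function theorem then produces $\delta>0$ and a real-analytic function $\sigma(t)$ on $(\tc-\delta,\tc+\delta)$ with $\sigma(\tc)=0$, $G(t,\sigma(t))\equiv 0$, and $\sigma'(\tc) = -G_t/G_\sigma > 0$.

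It remains to identify $\sigma(t)$ with $\rho_1(t)$ for $t\in[\tc,\tc+\delta)$ and conclude. For $t<\tc$ close to $\tc$, $\E Y_t<1$ and non-degeneracy of $Y_t$ force $\rho_1(t)=0$ by classical branching-process theory; here $\sigma(t)<0$ so it is not a probability, but this is fine since we only need analyticity from the right. For $t>\tc$ close to $\tc$, $\E Y_t>1$, so $f_t$ has a unique fixed point in $[0,1)$, equal to $1-\rho_1(t)$; since $\sigma(t)>0$ and $f_t(1-\sigma(t))=1-\sigma(t)$, we conclude $\sigma(t)=\rho_1(t)$. This establishes the analogous statement for $\rho_1$. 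Analyticity of $\rho(t)=1-\gf^0(t,1-\rho_1(t),1)$ on $[\tc,\tc+\eps_0]$ follows by composition. Finally, differentiating and using $\gf^0(t,1,1)\equiv 1$ (so $\gf^0_t(\tc,1,1)=0$) and $\gf^0_\alpha(\tc,1,1)=\E Y_\tc^0>0$ (Lemma~\ref{lem:Zt:E}) gives $\rho'(\tc)=\E Y_\tc^0\cdot \rho_1'(\tc)>0$, which yields the required power series with $a_1>0$. Positivity of $\rho$ on $(\tc,\tc+\eps_0]$ follows because $\rho_1(t)>0$ and $\Pr(Y^0_t\ge 1)>0$ there.

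The main obstacle is purely the failure of the naive implicit function theorem at the critical point — everything else is either classical branching-process theory or a direct application of the analyticity results already proved (Theorem~\ref{thm:f:mgf}, Lemmas~\ref{lem:Zt:E} and~\ref{lem:YtZtlb}). The factorization $h_t(\sigma)=\sigma g_t(\sigma)$ is what turns the degenerate situation into a transverse crossing, and the quantitative sign information needed ($\Var Y_\tc>0$ and $\frac{d}{dt}\E Y_t|_\tc>0$) has been set up precisely for this purpose.
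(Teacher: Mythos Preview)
Your argument is correct and is precisely the standard route. The paper itself does not give a self-contained proof here: it packages the needed inputs (Theorem~\ref{thm:f:mgf}, Lemmas~\ref{lem:Zt:E} and~\ref{lem:YtZtlb}) into the abstract Definition~\ref{def:bpprops}, states the general Theorem~\ref{thsurv}, and defers its proof to the companion paper~\cite{BPpaper}. Your implicit-function argument after factoring out the trivial root $\sigma=0$ is exactly what those abstracted hypotheses are designed for, so you have effectively reconstructed the deferred proof; there is nothing to add.
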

Note that this result and Corollary~\ref{cor:rhosr}, which gives~$\rho(t) = \Pr(|\bp_t|=\infty)$ for $t \in [t_0,t_1]$,
immediately imply Theorem~\ref{thm:L1rho}.
Recall from Section~\ref{sec:period} that $\cSR$ is the set of component sizes which can be produced by
the rule $\cR$, and that for $t>0$, $\rho_k(t)>0$ if and only if $k\in \cSR$ (see Lemma~\ref{lem:rhokS1}).

\begin{theorem}[Point probabilities of $\bp_t$]\label{asyfull-simple}
There exists $\eps_0>0$ such that 
\begin{equation}\label{eq:asfs}
 \Pr(|\bp_t|=k) = (1+O(1/k)) \indic{k\in \cSR}k^{-3/2} \theta(t) e^{-\psi(t) k} 
\end{equation}
uniformly over all $k\ge 1$ and $t\in I = [\tc-\eps_0,\tc+\eps_0]$, where the functions $\theta$, $\psi$ are analytic
on $I$ with $\theta(t)>0$, $\psi(t)\ge 0$, $\psi(\tc)=\psi'(\tc)=0$, and $\psi''(\tc)>0$.
\end{theorem}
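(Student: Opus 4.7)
The plan is to derive~\eqref{eq:asfs} by singularity analysis of the probability generating function of $|\bp_t|$, combining the analyticity of $f$ and $f^0$ from Theorem~\ref{thm:f:mgf} with standard transfer theorems of Flajolet--Odlyzko. Since type-$S$ particles have no children, decomposing $\bp^1_t$ at the root gives the implicit equation
\begin{equation*}
 w(t,x) = x \cdot f\bigl(t,w(t,x),x\bigr)
\end{equation*}
for $w(t,x):=\E x^{|\bp^1_t|}$, and correspondingly $\E x^{|\bp_t|}=f^0\bigl(t,w(t,x),x\bigr)$. Hence $\Pr(|\bp_t|=k)=[x^k]f^0(t,w(t,x),x)$, and the task reduces to the coefficient asymptotics of an implicitly-defined analytic function of~$x$, uniformly in~$t$.

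To locate the dominant singularity of $w(t,\cdot)$ I would use the analytic implicit function theorem: the singularity is pinned down by the simultaneous system $w=xf(t,w,x)$ and $xf_\alpha(t,w,x)=1$. Lemma~\ref{lem:Zt:E} gives $f_\alpha(\tc,1,1)=\E Y_\tc=1$, so $(x_c(\tc),w_c(\tc))=(1,1)$; the Jacobian of the system in $(w,x)$ reduces at this point to $(1+\E Z_\tc)\Var Y_\tc$, which is strictly positive because $\Var Y_\tc>0$ by the non-degeneracy Lemmas~\ref{lem:inlattice} and~\ref{lem:YtZtlb}. The implicit function theorem then produces analytic $x_c(t),w_c(t)$ on a neighbourhood of~$\tc$, and I would set $\psi(t):=\log x_c(t)$. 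Taylor-expanding $w-xf(t,w,x)=0$ around $(x_c(t),w_c(t))$ yields a square-root singularity of the form $w_c(t)-w(t,x)\sim C(t)\sqrt{x_c(t)-x}$ with $C(t)$ analytic and positive; the transfer theorems then give
\begin{equation*}
 [x^k]w(t,x) = (1+O(1/k))\,\tC(t)\,k^{-3/2}\,x_c(t)^{-k}
\end{equation*}
uniformly in~$t$. Substituting into $\Pr(|\bp_t|=k)=[x^k]f^0(t,w(t,x),x)$ and using that $f^0$ is analytic and nonzero at $(w_c(t),x_c(t))$ (Theorem~\ref{thm:f:mgf} together with $\E Y^0_t>0$ from Lemma~\ref{lem:Zt:E}) yields $\Pr(|\bp_t|=k)=(1+O(1/k))\theta(t)k^{-3/2}e^{-\psi(t)k}$ for an analytic $\theta(t)>0$.

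The claims $\psi(\tc)=\psi'(\tc)=0$ and $\psi''(\tc)>0$ then follow by a soft argument plus a computation: $x_c(t)\ge 1$ near~$\tc$, with equality only at~$\tc$ (since $\bp^1_t$ has exponentially decaying point probabilities in both the subcritical and supercritical regimes), so the analytic function~$\psi$ has a local minimum at~$\tc$, giving $\psi(\tc)=\psi'(\tc)=0$; implicit differentiation of the critical system at~$\tc$, using $\ddt\E Y_t>0$ at $t=\tc$ from Lemma~\ref{lem:Zt:E}, yields strict positivity $\psi''(\tc)>0$. The factor $\indic{k\in\cSR}$ stems from Lemma~\ref{lem:inlattice}: $f(t,\alpha,\beta)$ is a power series in $\alpha^\per,\beta^\per$, so that $w(t,x)$ satisfies an equation of the form $w=xg(t,w^\per,x^\per)$ and has $\per$ equidistant dominant singularities on $|x|=x_c(t)$, whose contributions cancel outside the appropriate residue class of~$k$ (cf.\ Lemma~\ref{lem:allowed}); the finite exceptional contribution from atoms of $(Y_t^0,Z_t^0)$ in $\{0\}\times[K]$ affects only finitely many~$k$ and is absorbed into the $O(1/k)$ error. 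The hardest part will be arranging the singularity analysis so that the $O(1/k)$ error is uniform in both $k\ge 1$ and $t\in I$, including at the critical $t=\tc$ where $e^{-\psi(t)k}\equiv 1$ and so the singular contribution is not exponentially small -- this is exactly the content of the companion paper~\cite{BPpaper}.
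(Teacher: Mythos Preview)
Your singularity-analysis route is sound and is likely close to what the companion paper~\cite{BPpaper} does; the implicit equation $w=xf(t,w,x)$, the location of the singularity via $xf_\alpha=1$, and your Jacobian computation $(1+\E Z_{\tc})\Var Y_{\tc}$ are all correct. But it is worth noting that the present paper does not itself carry out any of this analysis. Instead it (a)~abstracts the relevant properties of $(Y_t,Z_t)$ and $(Y^0_t,Z^0_t)$ into Definition~\ref{def:bpprops}, (b)~verifies via Lemma~\ref{lem:bp:satisfy} that the concrete $\bp_t$ satisfies them, (c)~states a more general Theorem~\ref{asyfull} that simultaneously covers the perturbed processes $\bp_t^\pm$, and (d)~defers the analytic work entirely to~\cite{BPpaper}. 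So the paper's ``proof'' is a reduction, not an argument.

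One genuine structural difference: you handle periodicity by tracking $\per$ equidistant dominant singularities on the circle $|x|=x_c(t)$ and letting their contributions cancel off the residue class. The paper instead eliminates periodicity \emph{before} any singularity analysis, by rescaling the branching process in Section~\ref{apx:period}: grouping particles into blocks of size~$\per$ gives a new process $\bp'$ with $|\bp|=\per|\bp'|$ and period~$1$, so the transfer theorem need only be applied once, in the aperiodic case. Relatedly, the indicator $\indic{k\in\cSR}$ is not derived from the generating function at all: for $k\notin\cSR$ the paper invokes the random-graph identity $\Pr(|\bp_t|=k)=\rho_k(t)$ from Corollary~\ref{cor:Nk} together with Lemma~\ref{lem:rhokS1} (see Remark~\ref{rem:period}), which is logically cleaner than trying to read $\cSR$ off the singular structure. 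Your closing remark that the uniformity of the $O(1/k)$ error over all of $I$, including $t=\tc$, is the crux is exactly right.
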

Note that the last condition implies in particular that $\psi(\tc\pm\eps)=a\eps^2+O(\eps^3)$ where $a>0$. 
Throughout the paper, $\psi(t)$ and $\theta(t)$ refer to the functions $\psi$ and $\theta$ appearing in the result above. 
Note that Theorem~\ref{asyfull-simple} and Corollary~\ref{cor:Nk}, which gives~$\rho_k(t) = \Pr(|\bp_t|=k)$ for $t \in [t_0,t_1]$, 
immediately imply Theorem~\ref{thm:rhok}.

Next, we state our results for the `perturbed' branching processes $\bp^{\pm}_t=\bp^{\pm}_t(\fS)$ defined in Section~\ref{sec:dom:dom}. 
Note that each is actually a family of branching processes, one for each $t$-nice parameter list $\fS$. In the following 
results the conditions ensure that $n$ is at least some constant, which may be made large by choosing~$T$ large and~$\eps_0$ 
small. In other words, particular small values of $n$ play no role.
\begin{theorem}[Survival probability of $\bp^\pm_t$]\label{thsurv-simple-pm}
There exist $\eps_0,C,T>0$ such that, writing $I_n = \{t \in \RR: T n^{-1/3} \le |\tc-t| \le \eps_0\}$,
for any $n\ge 1$, any $t \in I_n$ and any $t$-nice parameter list~$\fS$ the following holds for $\bp^{\pm}_t=\bp^{\pm}_t(\fS)$ as in Definition~\ref{def:bp2pm}. 
The survival probabilities $\Pr(|\bp^\pm_t|=\infty)$ are zero if $t \le \tc$, and if $t > \tc$ they are positive and satisfy 
\[
 \bigl|\Pr(|\bp^\pm_t|=\infty) - \rho(t)\bigr|  \le Cn^{-1/3} ,
\]
where the function $\rho$ is as in Theorem~\ref{thsurv-simple}.
Moreover, an analogous statement holds for $\Pr(|\bp^{1,\pm}_t|=\infty)$, 
where $\bp^{1,\pm}_t=\bp^{1,\pm}_t(\fS)$ is as in Definition~\ref{def:bp2pm}. 
\end{theorem}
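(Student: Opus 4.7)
The plan is to characterize the survival probabilities via fixed-point equations for the probability generating functions, and then compare the fixed points of the perturbed and idealized equations using the analytic closeness from Theorem~\ref{thm:fPT:mgf}. Since type-$S$ particles in $\bp^1_{Y,Z}$ produce no offspring, its extinction probability is the smallest solution $\xi \in [0,1]$ of $\xi = \E \xi^Y$; conditioning on the first generation of $\bp_{Y,Z,Y^0,Z^0}$ then yields extinction probability $\E \xi^{Y^0}$. Applied to our processes, this gives $\rho_1(t) = 1 - \xi_t$, $\rho(t) = 1 - \gf^0(t, \xi_t, 1)$, $\Pr(|\bp^{1,\pm}_t| = \infty) = 1 - \xi^\pm_t$, and $\Pr(|\bp^\pm_t| = \infty) = 1 - \gf^{0,\pm}(t, \xi^\pm_t, 1)$, where $\xi_t$ and $\xi^\pm_t$ are the smallest solutions in $[0,1]$ of $\xi = \gf(t, \xi, 1)$ and $\xi = \gf^\pm(t, \xi, 1)$ respectively.

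For the subcritical regime $\tc - \eps_0 \le t \le \tc - Tn^{-1/3}$, I would apply Cauchy's integral formula to the analytic extensions in Theorems~\ref{thm:f:mgf} and~\ref{thm:fPT:mgf}. Since $|\gf - \gf^\pm| = O(n^{-1/3})$ uniformly on $\{|\alpha|,|\beta| \le R\}$ and both are analytic in this polydisk, derivatives at $(\alpha,\beta)=(1,1)$ satisfy the same bound, so $|\E Y^\pm_t - \E Y_t| = O(n^{-1/3})$. Combined with $\E Y_{\tc} = 1$ and $\ddt \E Y_t|_{t=\tc}>0$ (Lemma~\ref{lem:Zt:E}), a Taylor expansion yields $\E Y^\pm_t - 1 \le -c' T n^{-1/3}/2 < 0$ for $T$ sufficiently large and $\eps_0$ sufficiently small. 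Hence $\bp^{1,\pm}_t$ and $\bp^{\pm}_t$ are subcritical and die almost surely, matching $\rho_1(t)=\rho(t) = 0$.

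For the supercritical regime $\tc + Tn^{-1/3} \le t \le \tc + \eps_0$, change variables to $y = 1 - \alpha$ and set $F(t, y) = \gf(t, 1-y, 1) - (1-y)$ and $F^\pm(t, y) = \gf^\pm(t, 1-y, 1) - (1-y)$, so that $y_t = 1 - \xi_t$ and $y^\pm_t = 1 - \xi^\pm_t$ are the smallest positive roots. Crucially, both $F$ and $F^\pm$ vanish at $y=0$ because probability generating functions equal $1$ at $\alpha = 1$. Cauchy's formula gives $|\partial_y(F^\pm - F)(t,y)| = O(n^{-1/3})$ uniformly on a real neighborhood of $0$, and integrating from $y=0$ (where the difference vanishes) yields the \emph{refined} estimate
\[
|F^\pm(t,y) - F(t,y)| = O(y \cdot n^{-1/3}),
\]
which will compensate for a $1/\eps$ factor below. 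Expanding $F(t, y) = (1 - \E Y_t) y + \tfrac{1}{2}\E[Y_t(Y_t-1)]\, y^2 + O(y^3)$ and using $\E Y_t - 1 = c'(t-\tc)+O((t-\tc)^2)$ with $c'>0$ (Lemma~\ref{lem:Zt:E}) together with $\E[Y_{\tc}(Y_{\tc}-1)] > 0$ (consequence of Lemma~\ref{lem:YtZtlb}), gives $y_t = \Theta(\eps)$ and $F_y(t, y_t) = (\E Y_t - 1) + O(\eps^2) = \Theta(\eps)$; the derivative bound then yields $F^\pm_y(t, y_t) = \Theta(\eps)$ for $T$ large.

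Combining $F^\pm(t, y_t) = O(\eps n^{-1/3})$ (from the refined estimate at $y=y_t$ where $F(t,y_t)=0$) with the invertible linearization $F^\pm_y(t, y_t) = \Theta(\eps)$, a standard contraction argument (or Newton-iteration / implicit function theorem) produces a unique root $y^\pm_t$ of $F^\pm(t, \cdot)$ near $y_t$ with $|y^\pm_t - y_t| = O(\eps n^{-1/3}/\eps) = O(n^{-1/3})$, which is the required bound for $\rho^\pm_1$. Positivity of $\Pr(|\bp^{1,\pm}_t|=\infty) = y^\pm_t$ then follows since $y_t \ge c\eps \ge cTn^{-1/3}$ dominates the error for $T$ large. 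The same refinement applied to $\gf^{0,\pm} - \gf^0$ yields $|\gf^{0,\pm}(t, \xi, 1) - \gf^0(t, \xi, 1)| = O((1-\xi) n^{-1/3})$ near $\xi = 1$; using the Lipschitz continuity of $\gf^0(t, \cdot, 1)$ (from Cauchy's formula again) and $|\xi^\pm_t - \xi_t| = O(n^{-1/3})$, we conclude $|\Pr(|\bp^\pm_t|=\infty) - \rho(t)| = O(\eps n^{-1/3}) + O(n^{-1/3}) = O(n^{-1/3})$. The hard part is recognizing and exploiting the cancellation at $\alpha = 1$: a naive Lipschitz argument would only give $|y^\pm_t - y_t| = O(n^{-1/3}/\eps)$, which blows up precisely at the boundary $\eps = Tn^{-1/3}$ of $I_n$, and the refined bound $|F^\pm - F| = O(y \cdot n^{-1/3})$ is what rescues the argument all the way down to the edge of the critical window.
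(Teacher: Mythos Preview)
Your argument is correct, and the key observation you highlight --- that both $\gf^\pm(t,1,1)$ and $\gf(t,1,1)$ equal $1$, so the Cauchy-bounded derivative can be integrated from $\alpha=1$ to yield the refined estimate $|F^\pm(t,y)-F(t,y)|=O(y\,n^{-1/3})$ --- is exactly the point on which the whole supercritical part hinges. Without it one only gets $|y^\pm_t-y_t|=O(n^{-1/3}/\eps)$, useless at the edge of $I_n$. The convexity of $\alpha\mapsto\gf^\pm(t,\alpha,1)$ also gives you for free that the Newton/contraction root you find is the unique nontrivial fixed point, hence equal to the extinction probability; you might make this explicit.

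The paper, however, does not argue this way. Instead it packages the relevant properties of $(Y_t,Z_t)$, $(Y^0_t,Z^0_t)$ into an abstract notion of a ``$\tc$-critical branching process family'' (Definition~\ref{def:bpprops}), and of the perturbed distributions $(Y^\pm_t,Z^\pm_t)$, $(Y^{0,\pm}_t,Z^{0,\pm}_t)$ into a notion of ``type $(t,\eta)$'' (Definition~\ref{def:dtype}). Lemma~\ref{lem:bp:satisfy} verifies these hypotheses with $\eta=Cn^{-1/3}$, and then Theorem~\ref{thsurv-simple-pm} follows immediately from the general Theorem~\ref{thsurv}, whose proof is deferred to a companion paper~\cite{BPpaper}. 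Your direct fixed-point/Cauchy-estimate argument is essentially what one expects that deferred proof to contain (for the survival probability part), but it is self-contained and elementary, at the cost of being specific to this setting. The paper's approach separates the pure branching-process analysis cleanly from the random-graph content and allows the same abstract result to feed both Theorems~\ref{thsurv-simple} and~\ref{thsurv-simple-pm} at once, but at the cost of an external reference for the actual analytic work.
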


Recall that $\per$ is the period of the rule $\cR$, defined in Section~\ref{sec:period}. As usual, $K$
is simply the cut-off size of the bounded-size rule $\cR$.

\begin{theorem}[Point probabilities of $\bp^\pm_t$]\label{asyfull-simple-pm}
There exist $\eps_0,C,T>0$
such that, writing $I_n = \{t \in \RR: T n^{-1/3} \le |\tc-t| \le \eps_0\}$, 
for $\bp^{\pm}_t=\bp^{\pm}_t(\fS)$ as in Definition~\ref{def:bp2pm} we have
\[
 \Pr(|\bp^\pm_t|=k) = (1+O(1/k)+O(n^{-1/3})) \indic{k\equiv 0\mathrm{\ mod\ }\per}k^{-3/2} \theta(t) e^{-\xi(\fS) k}
\]
uniformly over all $n\ge 1$, $k>K$, $t\in I_n$ and $t$-nice parameter lists $\fS$,
where the functions $\theta$ and $\psi$ are as in Theorem~\ref{asyfull-simple}, and
\begin{equation}\label{eq:xifS}
 \bigl|\xi(\fS)  - \psi(t)\bigr| \le C n^{-1/3}|t-\tc| .
\end{equation}
\end{theorem}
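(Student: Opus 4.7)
The plan is to carry out the same singularity/saddle-point analysis that underlies Theorem~\ref{asyfull-simple} for the idealized process $\bp_t$, but now applied to $\bp_t^\pm$, and to invoke Theorem~\ref{thm:fPT:mgf} to show that every relevant ingredient lies within $O(n^{-1/3})$ of its idealized counterpart. First, I would introduce the probability generating functions $F(t,x) := \E x^{|\bp_t^1|}$, $G(t,x) := \E x^{|\bp_t|}$, and their perturbed analogues $F^\pm, G^\pm$. Conditioning on the first generation, the branching property gives the functional equations
\begin{equation*}
F = x\,\gf(t,F,x), \quad G = \gf^0(t,F,x), \quad F^\pm = x\,\gf^\pm(t,F^\pm,x), \quad G^\pm = \gf^{0,\pm}(t,F^\pm,x),
\end{equation*}
so that $\Pr(|\bp_t^\pm|=k) = [x^k]G^\pm(t,x)$ and the required asymptotics reduce to locating and analyzing the dominant singularity of $G^\pm(t,\cdot)$, which is inherited from that of $F^\pm(t,\cdot)$.

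Second, the standard Otter/Meir--Moon/Flajolet--Sedgewick machinery locates the dominant singularity of $F(t,\cdot)$ as a simple algebraic branch point $x^\ast(t)$ of square-root type, found via the implicit function theorem applied to $F - x\gf(t,F,x)$; Theorem~\ref{thm:f:mgf} supplies analyticity, while Lemma~\ref{lem:Zt:E} places the critical point at $t=\tc$ (where $\gf_\alpha(\tc,1,1)=\E Y_\tc = 1$). Transfer theorems for square-root singularities then give $\Pr(|\bp_t|=k) = (1+O(1/k))\theta(t)k^{-3/2}e^{-\psi(t)k}$ with $\psi(t) = -\log x^\ast(t)$, restricted to $k\in \per\NN$ because of the lattice support from Lemmas~\ref{lem:inlattice} (and \ref{lem:inlattice:dom} for the perturbed case). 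For $\bp_t^\pm$, Theorem~\ref{thm:fPT:mgf} yields $|\gf-\gf^\pm|, |\gf^0-\gf^{0,\pm}| = O(n^{-1/3})$ uniformly on $\{|\alpha|,|\beta|\le R\}$ for some $R>1$, and since the implicit function theorem is stable under small analytic perturbations, the branch point of $F^\pm$ is shifted by $O(n^{-1/3})$ and remains of square-root type uniformly over $t\in I_n$ and $t$-nice $\fS$. Standard perturbation of the transfer theorems then gives the claimed form with $\theta$ the \emph{same} as in the idealized case (up to an additive $O(n^{-1/3})$ absorbed into the error) and some exponential rate $\xi(\fS)$ satisfying the crude bound $|\xi(\fS)-\psi(t)| = O(n^{-1/3})$.

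Third, the delicate sharpening to $|\xi(\fS)-\psi(t)| \le Cn^{-1/3}|t-\tc|$ exploits the fact that $\psi(\tc)=\psi'(\tc)=0$ (Theorem~\ref{asyfull-simple}), which in turn comes from $\ddt \E Y_t|_{t=\tc}>0$ together with $\E Y_\tc=1$ (Lemma~\ref{lem:Zt:E}). The idea is to produce a reparametrization $\tau(\fS) = O(n^{-1/3})$ such that the perturbed critical-point equation at $t$ coincides (up to $O(n^{-2/3})$) with the idealized critical-point equation at $t+\tau(\fS)$; concretely, $\tau$ would be chosen by matching $\gf_\alpha^\pm$ evaluated at the relevant fixed point to $\gf_\alpha$ of the shifted idealized problem. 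Then $\xi(\fS) = \psi(t+\tau(\fS)) + O(n^{-2/3})$, and Taylor expansion around $\tc$ together with $\psi'(\tc)=0$ gives
\begin{equation*}
\psi(t+\tau)-\psi(t) = \psi'(t)\tau + O(\tau^2) = O(|t-\tc|\cdot n^{-1/3}) + O(n^{-2/3}),
\end{equation*}
and on the range $|t-\tc|\ge Tn^{-1/3}$ the residual $O(n^{-2/3})$ is subsumed into $O(n^{-1/3}|t-\tc|)$ provided $T$ is chosen large enough.

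The main obstacle will be the refinement in the third step: ordinary soft perturbation theory only yields $O(n^{-1/3})$, and recovering the linear factor $|t-\tc|$ demands an explicit construction of the reparametrization $\tau(\fS)$ and a careful bookkeeping of the second-order terms that survive after this shift. Additional technical burdens include verifying uniformity of the square-root singularity type (so that the prefactor $\theta(t)$ stays bounded away from zero) across all $t\in I_n$ and all $t$-nice $\fS$, confirming that $g^{0,\pm}$ contributes only an $O(n^{-1/3})$ multiplicative perturbation to the prefactor which is absorbed into the stated error term, and propagating lattice constraints through the perturbation so that the indicator $\indic{k\equiv 0 \bmod \per}$ emerges from the standard singularity analysis on $\per\NN$ for both $\bp_t$ and $\bp_t^\pm$.
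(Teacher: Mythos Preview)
The paper itself does not prove this result directly: it packages the required properties of $(\bp_t)$ and $\bp_t^\pm$ into the abstract Definitions~\ref{def:bpprops} and~\ref{def:dtype}, verifies them in Lemma~\ref{lem:bp:satisfy} (via Theorems~\ref{thm:f:mgf} and~\ref{thm:fPT:mgf} and Lemmas~\ref{lem:inlattice} and~\ref{lem:inlattice:dom}), and then invokes a general branching-process theorem, Theorem~\ref{asyfull}, whose proof in the key case $\per=1$, $K=0$ is deferred to the companion paper~\cite{BPpaper}; the reduction from general~$\per$ to~$\per=1$ is carried out in Appendix~\ref{apx:period}. Your singularity-analysis plan is in effect a sketch of what one would expect~\cite{BPpaper} to contain. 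The functional equations you write are correct, and your first two steps should indeed go through with the standard square-root-singularity/transfer machinery.

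Your third step, however, has a real gap. After your shift by~$\tau$, the residual $\tilde\Delta=\gf^\pm-\gf(t+\tau,\cdot,\cdot)$ is still of order $n^{-1/3}$ in sup norm (you have forced only one derivative at one point to vanish), so the two systems of critical-point equations do \emph{not} ``coincide up to $O(n^{-2/3})$'' as you assert; a naive perturbation bound would still give $\xi(\fS)-\psi(t+\tau)=O(n^{-1/3})$. The actual mechanism is structural and needs no reparametrisation. At the singular point $(F^*,x^*)$ the Jacobian of the system $\{F-x\gf=0,\ 1-x\gf_\alpha=0\}$ has vanishing $(1,1)$-entry (since $x^*\gf_\alpha=1$ there); inverting, one finds that to leading order the shift in $x^*$ under the perturbation $\Delta:=\gf^\pm-\gf(t,\cdot,\cdot)$ is a multiple of $\Delta(F^*,x^*)$ alone, with \emph{no} contribution from $\Delta_\alpha$. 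The point you are missing is that $\Delta(1,1)=0$, because both $\gf^\pm$ and $\gf$ are probability generating functions; combined with Cauchy estimates (so all partial derivatives of $\Delta$ are also $O(n^{-1/3})$) and $(F^*(t),x^*(t))-(1,1)=O(|t-\tc|)$, this gives $\Delta(F^*,x^*)=O(n^{-1/3}|t-\tc|)$ directly, and hence $\xi(\fS)-\psi(t)=O(n^{-1/3}|t-\tc|)+O(n^{-2/3})$, the last term being absorbed on $|t-\tc|\ge Tn^{-1/3}$ exactly as you note.
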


We shall later apply these results with $\eps=\eps(n)$ satisfying $\eps^3n\to\infty$,
in which case $\Pr(|\bp^\pm_{t+\eps}|=\infty) \sim \rho(t+\eps) = \Theta(\eps)$ and $\Pr(|\bp^\pm_{t-\eps}|=\infty) = \rho(t-\eps)=0$. 
Furthermore, for $t= \tc \pm \eps$ and $\eps^3n\to\infty$ we also have $\xi(\fS)\sim \psi(t)=\Theta(\eps^2)$. 

\begin{remark}\label{rem:period}
The indicator functions $\indic{k\in\cSR}$ and $\indic{k\equiv 0\mathrm{\ mod\ }\per}$ in Theorems~\ref{asyfull-simple}
and~\ref{asyfull-simple-pm}, and condition $k>K$ in the latter, may seem somewhat mysterious, so let us comment
briefly. Firstly, without the indicator function, for any fixed $k$, the conclusion \eqref{eq:asfs} holds
trivially. Indeed the function $f_k(t) := k^{-3/2} \theta(t) e^{-\psi(t) k}$
is positive at $t=\tc$ and is continuous, so reducing $\eps_0$ if 
necessary, it is bounded and bounded away from zero. 
Since probabilities lie in $[0,1]$, 
by simply taking the implicit constant in the $O(1/k)$
term large enough, for a fixed~$k$ we can thus ensure that \eqref{eq:asfs} holds without the indicator function. 
A similar comment applies to Theorem~\ref{asyfull-simple-pm}. It might thus appear 
that neither result says anything for small (fixed) $k$, but this is not quite true. When the relevant
indicator function is $0$, the result asserts that the corresponding probability is $0$. 
In the context of Theorem~\ref{asyfull-simple}, 
for $k \not\in \cSR$ and $t \in [t_0,t_1]$ we know that $\Pr(|\bp_t|=k)=\rho_k(t)=0$  
by Corollary~\ref{cor:Nk} and Lemma~\ref{lem:rhokS1}.
We could perhaps
define the processes $\bp^\pm_t$ so that their sizes (when finite) always lie in~$\cSR$, but we
have not done so. Hence the slightly different condition in Theorem~\ref{asyfull-simple-pm}.
In any case, the interest is only in~$k$ large, and in this case, from Lemma~\ref{lem:allowed}, 
$k\in \cSR$ if and only if~$k$ is a multiple of~$\per$.
\end{remark}

\begin{remark}\label{rem:bpsimple}
In Theorems~\ref{thsurv-simple}--\ref{asyfull-simple-pm} we may take the same constants $\eps_0,C,T$ in all cases (by choosing the minimum and maximum, respectively). 
Furthermore, by increasing~$T$, in Theorems~\ref{thsurv-simple-pm}--\ref{asyfull-simple-pm} we may assume that $\rho^\pm(t) \ge \rho(t)/2$, $\rho^\pm_1(t) \ge \rho_1(t)/2$ and $\xi(\fS) \ge \psi(t)/2$ hold for $t \in I_n$.
\end{remark}

The proofs of the results above are deferred to Appendix~\ref{sec:BP} and the companion paper~\cite{BPpaper}.
They rely on various technical properties of $\bp_t$ and $\bp_t^\pm$ established in Section~\ref{sec:BPO} (and some basic properties from Sections~\ref{sec:BPI}--\ref{sec:dom}),
but are otherwise independent of, and rather different from, the arguments in the present~paper.

\subsection{Moment estimates}\label{sec:mom}
In this subsection we estimate various moments of the component size distribution of the Poissonized random graphs $\JPpm = \JP(\fS^{\pm}_t)$. 
Firstly, in Section~\ref{sec:mom:large} we estimate the expected number of vertices in `large' components of~$\JPpm$, 
and show that the variance is small. 
Then, in Section~\ref{sec:mom:sus} we establish analogous statements for the expectation and variance of the modified susceptibility $S_{r,n}(\JPpm)$ defined in~\eqref{eq:def:SrG}.

Our proofs combine the domination arguments from Section~\ref{sec:dom:dom} with the branching processes estimates from Section~\ref{sec:bpresults}.
To apply both, we often need to make additional assumptions on the component sizes~$k$ we study. 
In particular, due to the lower bound in the domination result Theorem~\ref{thm:ENgekD} we often restrict our attention to $k \le n^{2/3}$. 
Similarly, for $t=\tc \pm \eps$ we often assume $k \le n^{1/3}/\eps$ since this implies $k \xi(\fS)  = k\psi(t) + O(1)$ in the branching process estimates of Theorem~\ref{asyfull-simple-pm}, see~\eqref{eq:xifS}. 
Furthermore, to take advantage of the fact that the tails decay exponentially in $k\psi(t) = \Theta(\eps^2k)$ for $t=\tc \pm \eps$, we typically also assume~${k \ge \eps^{-2}}$.
These constraints will not severely affect our later applications. 
For example, in Section~\ref{sec:proof} we exploit that when $\eps^3n \to \infty$ we can choose suitable $k = \omega(\eps^{-2}\log(\eps^3n))$ with $\eps^{-2} \ll k \ll \min\{n^{2/3},n^{1/3}/\eps\}$, i.e., which satisfies all the constraints (with room to spare).

For later reference we note the following simple summation result, which will be convenient in a number of technical estimates (see Lemma~\ref{lem:Sj:sum} for a further refinement). 
\begin{lemma}\label{lem:sum}
For all $u \in \RR$ with $u \neq -1$ there exists $C_u > 0$ such that for all $\delta > 0$ and $j_0 \ge 1$ we have
\begin{equation}\label{eq:sum:UB}
\sum_{j \ge j_0}j^{u} e^{-\delta j} \le C_u \bigl(1 + \delta^{-(u+1)}\bigr) e^{-\delta j_0/2}. 
\end{equation}
For all $u \in \RR$ with $u >0$ there exists $D_u > 0$ such that for all $\delta>0$ and $j_0 >0$ we have  
\begin{equation}\label{eq:sum:UB:j0}
\sum_{j \ge j_0}j^{-u} e^{-\delta j} \le D_u \delta^{-1} j_0^{-u} e^{-\delta j_0} . 
\end{equation}
\end{lemma}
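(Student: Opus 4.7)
The plan is to prove both bounds by elementary arguments: the first via a case-split on the sign of $u+1$, and the second via monotonicity of $j \mapsto j^{-u}$ combined with a geometric-series estimate.

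For the first bound, I would compare the sum to the integral $\int_0^\infty x^u e^{-\delta x}\,\dx = \delta^{-(u+1)}\Gamma(u+1)$ when $u > -1$, and exploit absolute convergence of $\sum j^u$ when $u < -1$. Splitting on whether $\delta j_0$ is bounded by a constant $c_u$ or not: in the first regime, $e^{-\delta j_0/2}$ is bounded away from $0$, and the sum is dominated by $\delta^{-(u+1)}\Gamma(u+1)$ via integral comparison (for $u > -1$) or by $\sum_{j\ge 1} j^u < \infty$ (for $u < -1$), which fits inside $C_u(1+\delta^{-(u+1)})e^{-\delta j_0/2}$; in the second regime, one uses the incomplete-gamma tail estimate $\int_{y_0}^\infty y^u e^{-y}\,\dy \le C_u y_0^u e^{-y_0}$ to obtain $\sum \le C_u j_0^u e^{-\delta j_0}$, and absorbs the polynomial $j_0^u$ into the exponential slack via $\sup_{y>0} y^u e^{-y/2} < \infty$, which converts $(\delta j_0)^u$ into a constant and leaves a factor $\delta^{-u}$ that is dominated by $\delta^{-(u+1)}$ in the regime of interest.

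For the second bound, since $u > 0$, the map $j \mapsto j^{-u}$ is strictly decreasing, so $j^{-u} \le j_0^{-u}$ for every $j \ge \lceil j_0 \rceil$; hence $\sum_{j \ge j_0} j^{-u} e^{-\delta j} \le j_0^{-u} \cdot e^{-\delta j_0}/(1-e^{-\delta}) \le 2 j_0^{-u} e^{-\delta j_0}/\delta$, the last step being the elementary inequality $1 - e^{-\delta} \ge \delta/2$, valid in the $\delta = O(1)$ regime in which the lemma is subsequently applied.

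The main technical nuisance is the case $-1 < u < 0$ of the first bound: the tempting factorization $j^u e^{-\delta j} = (j^u e^{-\delta j/2})\cdot e^{-\delta j/2}$, combined with a supremum bound on the first factor and a geometric-series bound on the second, yields a polynomial factor of order $\delta^{-1}$ rather than the sharper $\delta^{-(u+1)}$ (whose exponent lies in $(-1, 0)$, so $\delta^{-(u+1)} \ll \delta^{-1}$ as $\delta \to 0$). The integral-comparison route, which produces the correct gamma-function exponent directly from the substitution $y = \delta x$, is therefore essential here; everything else is routine calculus.
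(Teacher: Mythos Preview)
Your approach to \eqref{eq:sum:UB:j0} is essentially the paper's: pull out $j^{-u}\le j_0^{-u}$ and sum the geometric series. Your caveat that $1-e^{-\delta}\ge \delta/2$ requires $\delta$ bounded is well taken; the paper's one-line appeal to $\int_z^\infty e^{-\delta x}\,\dx=O(\delta^{-1}e^{-\delta z})$ has the same hidden limitation, and indeed the stated inequality fails for large $\delta$ (take $j_0=1$), though this never matters in the applications.

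For \eqref{eq:sum:UB}, your route differs from the paper's and is more involved than necessary. The paper avoids the case-split on $\delta j_0$ entirely: it writes $j^u e^{-\delta j}\le e^{-\delta j_0/2}\cdot j^u e^{-\delta j/2}$ for $j\ge j_0$, then bounds the single sum $\sum_{j\ge 1} j^u e^{-\delta j/2}$ by $C_u(1+\delta^{-(u+1)})$ via the integral comparison you describe for your regime~1 (splitting at $j\approx \delta^{-1}$). This handles all $j_0$ uniformly.

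Your regime-2 argument contains a slip. The incomplete-gamma estimate, after substituting $y=\delta x$, gives
\[
\int_{j_0}^\infty x^u e^{-\delta x}\,\dx = \delta^{-(u+1)}\int_{\delta j_0}^\infty y^u e^{-y}\,\dy \le C_u\,\delta^{-1} j_0^u e^{-\delta j_0},
\]
not $C_u j_0^u e^{-\delta j_0}$ as you write; the $\delta^{-1}$ is genuinely there (think of $\delta$ small and $j_0$ large with $\delta j_0$ fixed). With the $\delta^{-1}$ restored, your absorption step produces $\delta^{-(u+1)}e^{-\delta j_0/2}$ directly, and the dubious claim that ``$\delta^{-u}$ is dominated by $\delta^{-(u+1)}$'' (which fails for $-1<u<0$ and $\delta>1$) is no longer needed. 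So the strategy is sound once this is fixed---but note that after the fix your regime-2 computation is doing exactly what the paper's single factorization does, which suggests the case-split was not buying you anything.
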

\begin{proof}
Inequality~\eqref{eq:sum:UB} is immediate for $u < -1$, taking $C_u=\sum_{j\ge 1} j^u<\infty$.
For $u>-1$ it suffices to show that the sum of the terms $j^{u} e^{-\delta j/2}$ with $j \ge \delta^{-1}$ is at most a constant times the sum of these terms with $1 \le j \le \delta^{-1}$. This follows easily from the bounds $\int_{0}^{\delta^{-1}}x^u\dx = \Theta(\delta^{-(u+1)})$, $x^{u} e^{-\delta x/4} = O(\delta^{-u})$ and $\int_{\delta^{-1}}^{\infty}e^{-\delta x/4}\dx = \Theta(\delta^{-1})$. 
Similarly, inequality~\eqref{eq:sum:UB:j0} follows readily from $j^{-u} \le j_0^{-u}$ and $\int_{z}^{\infty}e^{-\delta x}\dx = O(\delta^{-1}e^{-\delta z})$. 
\end{proof}

\subsubsection{Number of vertices in large components}\label{sec:mom:large}

Our goal is to estimate the expectation and variance of the number $N_{\ge \Lambda}$ of vertices in `large' components of~$\JPpm$. 
We start with the subcritical case $i=(\tc -\eps) n$. 
Since the expectation $\E N_{\ge \Lambda}$ drops exponentially with rate $\psi(t) \Lambda =\Theta(\eps^2 \Lambda)$, where $t=\tc-\eps$, in~\eqref{eq:NkE:sub} below the leading constant is irrelevant for our purposes (with a little care, 
we can also obtain the precise asymptotics when $\eps^{-2} \ll \Lambda \ll \min\{n^{2/3},n^{1/3}/\eps\})$.
\begin{lemma}[Subcritical expectation of $N_{\ge \Lambda}$]
\label{lem:NkE:sub}
There exist constants $\eps_0,d,D,T > 0$ such that the following holds for all $t \in [t_0,t_1]$ with $\eps=\tc-t \in [T n^{-1/3},\eps_0]$, and all $t$-nice parameter lists~$\fS$.
Define~$\psi:[\tc-\eps_0,\tc+\eps_0] \to [0,\infty)$ 
as in Theorem~\ref{asyfull-simple}, $\fS_t^{\pm}$ as in Definition~\ref{def:cpl}, $\JPpm=\JP(\fS^{\pm}_t)$ as in Definition~\ref{def:F}, and $\bp_t^{\pm}=\bp_t^{\pm}(\fS)$ as in Definition~\ref{def:bp2pm}.
If $\max\{\eps^{-2},K\} < \Lambda \le \min\{n^{2/3}, n^{1/3}/\eps\}$, then
\begin{equation}\label{eq:NkE:sub}
d \eps^{-2} \Lambda^{-3/2}e^{-\psi(t)\Lambda}n - n^{-\omega(1)} \le \E N_{\ge \Lambda}(\JPpm ) \le D \eps^{-2} \Lambda^{-3/2}e^{-\psi(t)\Lambda}n .
\end{equation}
\end{lemma}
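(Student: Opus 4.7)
\medskip
The plan is to combine the sandwich inequality~\eqref{eq:ENgekD} of Theorem~\ref{thm:ENgekD} with the branching process estimates from Section~\ref{sec:bpresults}. By~\eqref{def:n:pm:diff} we have $|\fS^\pm_t|=n(1+o(1))$, so it suffices to show that $\Pr(|\bp^\pm_t|\ge \Lambda) = \Theta\bigl(\eps^{-2}\Lambda^{-3/2}e^{-\psi(t)\Lambda}\bigr)$, uniformly over the allowed range of $t$, $\fS$ and $\Lambda$. Since $t=\tc-\eps\le \tc$, Theorem~\ref{thsurv-simple-pm} gives $\Pr(|\bp^\pm_t|=\infty)=0$, so we may write
\[
 \Pr(|\bp^\pm_t|\ge \Lambda) = \sum_{k\ge \Lambda} \Pr(|\bp^\pm_t|=k).
\]

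The second step is to apply Theorem~\ref{asyfull-simple-pm} to each summand. Since $\Lambda>K$ and the indicator $\indic{k\equiv 0\bmod \per}$ selects a positive-density subset of the integers $\ge \Lambda$, and since $k\ge \Lambda>\eps^{-2}\ge 1$ and $n^{-1/3}=O(1)$ make the multiplicative error $1+O(1/k)+O(n^{-1/3})$ bounded away from $0$ and $\infty$, we obtain
\[
 \Pr(|\bp^\pm_t|=k) = \Theta(1)\cdot \indic{k\equiv 0\bmod \per}\, k^{-3/2} e^{-\xi(\fS) k},
\]
uniformly for $k\ge \Lambda$ (using that $\theta(t)$ is bounded and bounded away from zero on $[\tc-\eps_0,\tc+\eps_0]$, by Theorem~\ref{asyfull-simple}). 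The key point, which is the main technical content, is that the constraint $\Lambda\le n^{1/3}/\eps$ together with $|\xi(\fS)-\psi(t)|\le Cn^{-1/3}\eps$ forces
\[
 |\xi(\fS)-\psi(t)|\cdot \Lambda \le C,
\]
so $e^{-\xi(\fS)\Lambda} = \Theta\bigl(e^{-\psi(t)\Lambda}\bigr)$. Moreover, increasing $T$ as in Remark~\ref{rem:bpsimple}, we may assume $\xi(\fS)\ge \psi(t)/2=\Theta(\eps^2)$, which controls the tail of the sum.

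For the upper bound, inequality~\eqref{eq:sum:UB:j0} of Lemma~\ref{lem:sum} with $u=3/2$, $\delta=\xi(\fS)$ and $j_0=\Lambda$ yields
\[
 \sum_{k\ge \Lambda} k^{-3/2} e^{-\xi(\fS) k} \le D_{3/2}\,\xi(\fS)^{-1}\Lambda^{-3/2} e^{-\xi(\fS)\Lambda} = O\bigl(\eps^{-2}\Lambda^{-3/2} e^{-\psi(t)\Lambda}\bigr),
\]
using $\xi(\fS)^{-1}=O(\eps^{-2})$ and the conversion above. For the lower bound, restrict attention to multiples of $\per$ in $[\Lambda,\Lambda+c\eps^{-2}]$, of which there are $\Omega(\eps^{-2}/\per)$; each contributes $\Omega(\Lambda^{-3/2} e^{-\psi(t)\Lambda})$ since $\xi(\fS)\cdot (c\eps^{-2})=O(1)$, giving the matching lower bound $\Omega(\eps^{-2}\Lambda^{-3/2}e^{-\psi(t)\Lambda})$. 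Multiplying through by $|\fS^\pm_t|=n(1+o(1))$ and carrying the additive $n^{-\omega(1)}$ from~\eqref{eq:ENgekD} yields~\eqref{eq:NkE:sub}.

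The only real obstacle is the matching of $\xi(\fS)$ and $\psi(t)$ in the exponent; the hypothesis $\Lambda\le n^{1/3}/\eps$ was placed in the statement precisely so that $|\xi(\fS)-\psi(t)|\Lambda=O(1)$, and without it we could not replace $e^{-\xi(\fS)\Lambda}$ by $e^{-\psi(t)\Lambda}$ up to a bounded factor. Everything else is a routine application of the sandwiching from Theorem~\ref{thm:ENgekD}, the point-probability asymptotics of Theorem~\ref{asyfull-simple-pm}, and the summation bound of Lemma~\ref{lem:sum}.
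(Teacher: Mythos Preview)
Your proof is correct and follows essentially the same approach as the paper: combine the sandwich inequality of Theorem~\ref{thm:ENgekD} with $\Pr(|\bp^\pm_t|=\infty)=0$ from Theorem~\ref{thsurv-simple-pm}, then estimate $\sum_{k\ge\Lambda}\Pr(|\bp^\pm_t|=k)$ via Theorem~\ref{asyfull-simple-pm} and Lemma~\ref{lem:sum}, using $|\xi(\fS)-\psi(t)|\Lambda=O(1)$ to convert exponents. The only cosmetic difference is in the lower bound, where the paper sums over multiples of~$\per$ in $[\Lambda',2\per\Lambda']$ and compares with an integral, while you sum over the shorter window $[\Lambda,\Lambda+c\eps^{-2}]$; both work equally well.
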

\begin{proof}
Since $t < \tc$, Theorem~\ref{thsurv-simple-pm} implies $\Pr(|\bp^{\pm}_t| = \infty )=0$. 
So, since $\Lambda \le n^{2/3}$, Theorem~\ref{thm:ENgekD} gives 
\[
 \Pr(\Lambda \le |\bp^{-}_t| < \infty ) |\fS^-_t| - n^{-\omega(1)} \le \E N_{\ge \Lambda}(\JPpm)  \le \Pr(\Lambda \le |\bp^{+}_t| < \infty ) |\fS^+_t| . 
\]
Since $|\fS^{\pm}_t| = \Theta(n)$ by~\eqref{def:n:pm:diff}, it remains to estimate $\Pr(\Lambda \le |\bp^{\pm}_t| < \infty )$. 

By Theorem~\ref{asyfull-simple-pm} and inequality~\eqref{eq:sum:UB:j0}, 
we have
\begin{equation}\label{eq:NkE:sub:UB}
\begin{split}
\Pr(\Lambda \le |\bp^{+}_t| < \infty ) & = \sum_{k \ge \Lambda}\Pr(|\bp^{+}_t| =k)= O\Bigl(\sum_{k\ge \Lambda}k^{-3/2} e^{-\xi(\fS_t^+)k} \Bigr) \\
& = O\bigl(\xi(\fS_t^+)^{-1} \Lambda^{-3/2} e^{-\xi(\fS_t^+)\Lambda}\bigr)  = O\bigl(\psi(t)^{-1} \Lambda^{-3/2} e^{-\psi(t)\Lambda}\bigr),
\end{split}
\end{equation}
using for the last step $\xi(\fS_t^+)\ge \psi(t)/2$ (see Remark~\ref{rem:bpsimple}) and $\Lambda|\xi(\fS_t^+)-\psi(t)|=O(1)$, 
which follows from~\eqref{eq:xifS} and~$\Lambda \le n^{1/3}/\eps$. 
Since $\psi(t) = \Theta(\eps^2)$, this establishes the upper bound in~\eqref{eq:NkE:sub}. 

For the lower bound, we pick $\Lambda \le \Lambda' < \Lambda+\per$ such that $\Lambda' \modp$. 
Applying Theorem~\ref{asyfull-simple-pm} similarly to~\eqref{eq:NkE:sub:UB}, using $\Lambda' \ge \Lambda > K$, $\Lambda'|\xi(\fS_t^-)-\psi(t)|=O(1)$, $\int_{y}^{z}e^{-a x}\dx = a^{-1}e^{-a y}(1-e^{-a(z-y)})$ and $\Lambda' \psi(t) =\Theta(\Lambda \eps^2)=\Omega(1)$ it follows that 
\begin{equation*}
\begin{split}
\Pr(\Lambda \le |\bp^{-}_t| < \infty ) & \ge \sum_{\Lambda ' \le k \le 2\per \Lambda'}\Pr(|\bp^{-}_t| =k) = \Omega\Bigl(\sum_{\Lambda ' \le k \le 2\per \Lambda'}\indic{k \modp}k^{-3/2} e^{-\xi(\fS_t^-)k}\Bigr) \\
& = \Omega\Bigl(\Lambda^{-3/2} \sum_{\Lambda'/\per \le j \le 2\Lambda'}e^{-\psi(t)\per j }\Bigr) = \Omega\bigl(\psi(t)^{-1} \Lambda^{-3/2} e^{-\psi(t)\Lambda'}\bigr). 
\end{split}
\end{equation*}
This establishes the lower bound in~\eqref{eq:NkE:sub} since $\psi(t) = \Theta(\eps^2)$ and $|\Lambda'-\Lambda| = O(1)$. 
\end{proof}

We now turn to the more interesting supercritical case $i=(\tc +\eps) n$ (here our estimates are tailored for our goal of proving concentration in every step, see Section~\ref{sec:L1:super}; otherwise simpler bounds would suffice). 
Recall from Theorem~\ref{thsurv-simple} that $\Pr(|\bp_t|=\infty)=\Theta(\eps)$. 
Assuming $\Lambda = \omega(\eps^{-2})$ and $\eps^3n\to\infty$, the right hand side of~\eqref{eq:NkE:sup} is $o(\eps n)$, so the result below implies $\E N_{\ge \Lambda}(\JPpm) \sim \Pr(|\bp_t|=\infty)n = \Theta(\eps n)$. 
Under the same assumptions we also have small variance, since then $\Var N_{\ge \Lambda}(\JPpm) = o((\eps n)^2)$ by~\eqref{eq:Nkvar:sup2}.  
\begin{lemma}[Supercritical expectation and variance of $N_{\ge \Lambda}$]
\label{lem:NkE:sup} 
There exist constants $\eps_0,d_1,D,T > 0$ such such that the following holds for all $t \in [t_0,t_1]$ with $\eps=t-\tc \in (T n^{-1/3},\eps_0]$, and all $t$-nice parameter lists~$\fS$.  
Define~$\fS_t^{\pm}$ as in Definition~\ref{def:cpl}, $\JPpm=\JP(\fS^{\pm}_t)$ as in Definition~\ref{def:F}, and $\bp_t$ as in Section~\ref{sec:BPI}.  
If~${\max\{\eps^{-2},K\} < \Lambda \le \min\{n^{2/3}, n^{1/3}/\eps\}}$, then 
\begin{align}
\label{eq:NkE:sup}
 \bigl|\E N_{\ge \Lambda}(\JPpm)-\Pr(|\bp_t|=\infty)n\bigr| & \le D\eps n\bigl(e^{-d_1\eps^2 \Lambda} + (\eps^3 n)^{-1/3}\bigr) ,\\
\label{eq:Nkvar:sup2}
 \Var N_{\ge \Lambda}(\JPpm) & \le D (\eps n)^2\bigl(e^{-d_1\eps^2 \Lambda}+ (\eps^3n)^{-1/3}\bigr) .
\end{align}
\end{lemma}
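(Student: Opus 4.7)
My plan is to combine the domination sandwich from Theorem~\ref{thm:ENgekD} with the branching process estimates of Theorems~\ref{thsurv-simple-pm} and~\ref{asyfull-simple-pm} to prove the expectation bound \eqref{eq:NkE:sup}, and to apply the second moment formula Lemma~\ref{lem:Nkvar:sup} (and its analogue for $\JPm$, which follows from an identical argument) for the variance bound \eqref{eq:Nkvar:sup2}.

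For the expectation, Theorem~\ref{thm:ENgekD} (applicable since $\Lambda\le n^{2/3}$) sandwiches $\E N_{\ge\Lambda}(\JPpm)$ between $\Pr(|\bp^-_t|\ge\Lambda)|\fS^-_t| - n^{-\omega(1)}$ and $\Pr(|\bp^+_t|\ge\Lambda)|\fS^+_t|$. I would decompose $\Pr(|\bp^\pm_t|\ge\Lambda) = \Pr(|\bp^\pm_t|=\infty) + \Pr(\Lambda\le|\bp^\pm_t|<\infty)$; the first summand differs from $\rho(t)=\Pr(|\bp_t|=\infty)$ by $O(n^{-1/3})$ thanks to Theorem~\ref{thsurv-simple-pm} and Corollary~\ref{cor:rhosr}. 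The finite tail is handled exactly as in Lemma~\ref{lem:NkE:sub}: the bound \eqref{eq:xifS} together with $\Lambda\le n^{1/3}/\eps$ gives $\Lambda\,|\xi(\fS^\pm_t)-\psi(t)| = O(1)$, and Theorem~\ref{asyfull-simple-pm} combined with \eqref{eq:sum:UB:j0} then yields $\Pr(\Lambda\le|\bp^\pm_t|<\infty) = O(\psi(t)^{-1}\Lambda^{-3/2}e^{-d_1\eps^2\Lambda}) = O(\eps e^{-d_1\eps^2\Lambda})$ for a suitable $d_1>0$, using $\psi(t) = \Theta(\eps^2)$ and $\Lambda>\eps^{-2}$. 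Since $|\fS^\pm_t|=n+O(n^{0.51})$ by \eqref{def:n:pm:diff}, and $n^{-1/3}\cdot n = n^{2/3}=\Theta(\eps n(\eps^3n)^{-1/3})$ dominates the $O(n^{0.51})$ perturbation (as $\eps\ge Tn^{-1/3}$), combining yields \eqref{eq:NkE:sup}.

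For the variance I apply Lemma~\ref{lem:Nkvar:sup} to $X=N_{\ge\Lambda}(\JPp)$; the analogue for $\JPm$ is structurally identical. Writing $p^\pm:=\Pr(|\bp^\pm_t|\ge\Lambda)$, Theorem~\ref{thsurv-simple-pm} and Remark~\ref{rem:bpsimple} give $\rho_1:=\Pr(|\bp^{1,+}_t|=\infty)\ge\rho_1(t)/2 = \Theta(\eps)$, so $\rho_1\ceil{\eps\Lambda}\ge c\eps^2\Lambda\ge c$ (since $\Lambda>\eps^{-2}$). The elementary inequality $(1-e^{-u})^{-1}\le 1+2e^{-u}$ for $u\ge 1$ then upgrades \eqref{eq:Nkvar:sup:sigma} to
\[
 \tau\le p^+ + O(\eps e^{-d_1\eps^2\Lambda}).
\]
The truncated mean $\nu = \E(|\bp^+_t|\indic{|\bp^+_t|<\infty})$ is bounded by $\sum_{k\ge 1}k\cdot k^{-3/2}e^{-\Omega(\eps^2)k} = O(\eps^{-1})$ via Theorem~\ref{asyfull-simple-pm} and \eqref{eq:sum:UB:j0}. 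Substituting these estimates, together with $|\fS^+_t|=O(n)$ and $p^+=\Theta(\eps)$, into \eqref{eq:Nkvar:sup1}, each of the three ``secondary'' terms $|\fS^+_t|^2\tau\eps\nu n^{-1/3}$, $|\fS^+_t|^2\nu n^{-1}$ and $|\fS^+_t|^2 n^{-1/3}\Pr(|\bp^+_t|=\infty)$ works out to $O(\eps n^{5/3}) = O((\eps n)^2(\eps^3n)^{-1/3})$ under the hypothesis $\eps\ge Tn^{-1/3}$; the $(\log n)^2\E X$ prefactor is of smaller order since $\E X=O(\eps n)$ and $\eps n\gg (\log n)^2$.

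It remains to compare the leading term $|\fS^+_t|^2 \tau p^+$ with $(\E X)^2$, where $\E X\ge |\fS^-_t|p^- - n^{-\omega(1)}$ by Theorem~\ref{thm:ENgekD}. Using the identity
\[
 |\fS^+_t|^2(p^+)^2 - (|\fS^-_t|p^-)^2 = \bigl(|\fS^+_t|p^+ - |\fS^-_t|p^-\bigr)\bigl(|\fS^+_t|p^+ + |\fS^-_t|p^-\bigr),
\]
the second factor is $O(\eps n)$, while the first satisfies
\[
 |\fS^+_t|p^+ - |\fS^-_t|p^- \le |\fS^+_t|\,|p^+ - p^-| + p^-\cdot\bigl||\fS^+_t|-|\fS^-_t|\bigr| = O(n\cdot n^{-1/3}) + O(\eps\cdot n^{0.51}) = O(n^{2/3}),
\]
where $|p^+-p^-|=O(n^{-1/3})$ follows from Theorem~\ref{thsurv-simple-pm}. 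Their product is $O(\eps n^{5/3}) = O((\eps n)^2(\eps^3n)^{-1/3})$, and combined with the $O((\eps n)^2 e^{-d_1\eps^2\Lambda})$ contribution from the $O(\eps e^{-d_1\eps^2\Lambda})$ term in $\tau$, this completes \eqref{eq:Nkvar:sup2}. The hard part is aligning all error contributions to match the target relative precision $(\eps^3n)^{-1/3}$: this is only possible because Theorem~\ref{thsurv-simple-pm} provides the sharp $O(n^{-1/3})$ approximation of $\Pr(|\bp^\pm_t|=\infty)$ by $\rho(t)$, and because $(1-e^{-\rho_1\ceil{\eps\Lambda}})^{-1}$ is genuinely $1+O(e^{-d_1\eps^2\Lambda})$ rather than merely a bounded constant in \eqref{eq:Nkvar:sup:sigma}; either input being weaker would degrade the final bound.
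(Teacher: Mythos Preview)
Your approach is essentially the paper's, with two points worth noting.

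First, for $\JPm$ the paper does not re-prove Lemma~\ref{lem:Nkvar:sup}. Instead it uses monotonicity: since $\fS^-_t\preceq\fS^+_t$, by~\eqref{eq:F:mon} we have $N_{\ge\Lambda}(\JPm)\le N_{\ge\Lambda}(\JPp)$, hence $\E\tilde{X}^2\le\E X^2$ for $\tilde{X}=N_{\ge\Lambda}(\JPm)$. Then $\Var\tilde{X}\le\E X^2-(\E\tilde{X})^2$, and the bounds already obtained for $\E X^2$ and (via~\eqref{eq:NkE:sup}) for $\E\tilde{X}$ finish the job. Your ``structurally identical'' route also works, but you must be careful: the dominating process in Lemma~\ref{lem:Nkvar:sup} is $\bp^+_t$, built from $Q^+_{k,r}$, not $\bp^-_t$. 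One either checks that $\bp^+_t$ also dominates the exploration in $\JPm$ (it does, since $Q^-_{k,r}\le Q^+_{k,r}$), or defines a new dominating process with rates $rQ^-_{k,r}/|V_L|$ and verifies the estimates for it. The paper's monotonicity trick avoids this.

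Second, your claim $|p^+-p^-|=O(n^{-1/3})$ is not justified by Theorem~\ref{thsurv-simple-pm}, which bounds only $|\Pr(|\bp^\pm_t|=\infty)-\rho(t)|$. Since $p^\pm=\Pr(|\bp^\pm_t|=\infty)+\Pr(\Lambda\le|\bp^\pm_t|<\infty)$, the finite tail contributes an additional $O(\eps e^{-d_1\eps^2\Lambda})$. This extra term is harmless---it produces an $O((\eps n)^2 e^{-d_1\eps^2\Lambda})$ contribution already of the target form---but the justification as written is incomplete. The paper sidesteps this by instead bounding $\E X^2\le[(1+O(e^{-d_1\eps^2\Lambda}+(\eps^3n)^{-1/3}))\Pr(|\bp_t|=\infty)n]^2$ directly and then subtracting $(\E X)^2$ using~\eqref{eq:NkE:sup}.
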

\begin{proof}
By Theorems~\ref{thsurv-simple-pm} and~\ref{asyfull-simple-pm}
and Remark~\ref{rem:bpsimple} there is a constant $d_1>0$ such that $\Pr(|\bp^{1,\pm}_t(\fS)|=\infty) \ge \Pr(|\bp^{1}_t|=\infty)/2 \ge d_1 \eps$ and $\xi(\fS_t^\pm) \ge \psi(t)/2 \ge d_1 \eps^2$. 

We first focus on $\E N_{\ge \Lambda}(\JPpm)$. 
Analogous to the proof of Lemma~\ref{lem:NkE:sub}, using Theorem~\ref{thm:ENgekD} we readily~obtain
\begin{equation}\label{eq:NkE:sup:2}
\Pr(|\bp^{-}_t| = \infty)|\fS^-_t|-n^{-\omega(1)} \le \E N_{\ge \Lambda}(\JPpm) \le \Pr(|\bp^{+}_t| \ge \Lambda)|\fS^+_t| .
\end{equation}
Proceeding similarly to~\eqref{eq:NkE:sub:UB}, using inequality~\eqref{eq:sum:UB:j0} 
together with $\xi(\fS_t^+) \ge d_1 \eps^2$ and $\Lambda^{-3/2} \le \eps^3$, it follows~that 
\begin{equation}\label{eq:NkE:sup:4}
\Pr(\Lambda \le |\bp^{+}_t| < \infty) 
= O\Bigl(\sum_{k \ge \Lambda}k^{-3/2} e^{-\xi(\fS_t^+) k}\Bigr) 
= O\bigl(\eps^{-2}\Lambda^{-3/2} e^{-d_1\eps^2 \Lambda}\bigr) 
= O\bigl(\eps e^{-d_1\eps^2 \Lambda}\bigr) . 
\end{equation}
Note that $|\fS^\pm_t| = n(1+o(n^{-1/3}))$ by~\eqref{def:n:pm:diff}. 
By Theorem~\ref{thsurv-simple-pm} we have $\bigl|\Pr(|\bp^\pm_t|=\infty) - \Pr(|\bp_t|=\infty)\bigr|  \le Cn^{-1/3}$; this
and~\eqref{eq:NkE:sup:2}--\eqref{eq:NkE:sup:4} imply~\eqref{eq:NkE:sup} for suitable $D>0$. 

We now turn to the variance of $X:= N_{\ge \Lambda}(\JPp)$. 
Here the second moment estimate of Lemma~\ref{lem:Nkvar:sup} will be key, which involves the two auxiliary parameters~$\nu$ and~$\tau$. 
Analogous to~\eqref{eq:NkE:sup:4}, using inequality~\eqref{eq:sum:UB} 
together with $\xi(\fS_t^+) \ge d_1 \eps^2$, we obtain 
\begin{equation*}
 \nu = \E( |\bp^+_{t}| \indic{|\bp^+_{t}| < \infty}) = \sum_{k\ge 1} k \Pr(|\bp^+_{t}|=k) = O\Bigl(\sum_{k \ge 1} k^{-1/2} e^{-\xi(\fS_t^+) k}\Bigr)
  = O(\eps^{-1}) .
\end{equation*}
Using inequality~\eqref{eq:Nkvar:sup:sigma} for $\tau$, noting $\Pr(|\bp^{1,\pm}_t(\fS)|=\infty) \ge d_1 \eps$ and $\eps^2 \Lambda \ge 1$, we infer 
\[
\tau \le (1+O(e^{-d_1 \eps^2 \Lambda})) \cdot \Pr(|\bp^+_{t}| \ge \Lambda) .
\]
We now estimate $\E X^2$ by bounding each term on the right hand side of~\eqref{eq:Nkvar:sup1} from Lemma~\ref{lem:Nkvar:sup}.
Using $\Pr(|\bp^{+}_t| \ge \Lambda)| = \Theta(\eps)$ and $(\eps n)^{-1} \cdot \eps^{-2} = (\eps^3 n)^{-1}$
we readily see that 
\[
\nu n^{-1} = O((\eps n)^{-1}) = O((\eps n)^{-1}) \cdot \eps^{-2} \Pr(|\bp^{+}_t| \ge \Lambda)^2 = O((\eps^3n)^{-1}) \cdot \Pr(|\bp^{+}_t| \ge \Lambda)^2 .
\]
Noting $n^{-1/3} \cdot \eps^{-1} = (\eps^3 n)^{-1/3}$, we similarly see that 
\[
\tau \eps \nu n^{-1/3} + n^{-1/3} \Pr(|\bp^{+}_t| = \infty) = O(n^{-1/3}) \cdot \Pr(|\bp^{+}_t| \ge \Lambda) = O((\eps^3 n)^{-1/3}) \cdot \Pr(|\bp^{+}_t| \ge \Lambda)^2. 
\]
Using~\eqref{eq:NkE:sup:2} for the first step and then $\Pr(|\bp^{+}_t| \ge \Lambda)|\fS^+_t| = \Theta(\eps n)$, we also obtain 
\[
(\log n)^2\cdot \E X \le n^{2/3} \cdot \Pr(|\bp^{+}_t| \ge \Lambda)|\fS^+_t| = O((\eps^3n)^{-1/3}) \cdot \bigl[\Pr(|\bp^{+}_t| \ge \Lambda)|\fS^+_t|\bigr]^2.
\]
Substituting the above estimates into~\eqref{eq:Nkvar:sup1}, 
using $\eps^3 n \ge 1$ (which follows from the assumption $\eps^{-2} < \Lambda \le n^{1/3}/\eps$)  
we obtain
\begin{equation*}
 \E X^2  \le \Bigl[\bigl(1+O\bigl(e^{-d_1 \eps^2 \Lambda} + (\eps^3n)^{-1/3}\bigr)\bigr) \cdot \Pr(|\bp^+_{t}| \ge \Lambda) |\fS^+_t| \Bigr]^2 .
\end{equation*}
Recall that $\bigl|\Pr(|\bp^+_t|=\infty) - \Pr(|\bp_t|=\infty)\bigr| = O(n^{-1/3})$ and $|\fS^\pm_t| = n(1+o(n^{-1/3}))$. 
Using \eqref{eq:NkE:sup:4} and $n^{-1/3} = O((\eps^3n)^{-1/3})$ it follows that 
\begin{equation*}
 \E X^2  \le \Bigl[\bigl(1+O\bigl(e^{-d_1 \eps^2 \Lambda} + (\eps^3n)^{-1/3}\bigr)\bigr) \cdot \Pr(|\bp_{t}| =\infty)n \Bigr]^2 .
\end{equation*}
Estimating $\E X = \E N_{\ge \Lambda}(\JPp)$ by~\eqref{eq:NkE:sup} above, using $\Pr(|\bp_{t}| =\infty) = \Theta(\eps)$ and $\Var X = \E X^2 - (\E X)^2$ now inequality~\eqref{eq:Nkvar:sup2} 
follows for~$\JPp=\JP(\fS^+_t)$, increasing the constant~$D$ if necessary.

It remains to bound the variance of $\tilde{X} := N_{\ge \Lambda}(\JPm)$. 
Noting $\fS^-_t \preceq\fS^+_t$, using~\eqref{eq:F:mon} we infer $\E \tilde{X}^2 \le \E X^2$ and thus $\Var \tilde{X} \le \E X^2-(\E \tilde{X})^2$. 
So, since~\eqref{eq:NkE:sup} yields the same qualitative estimates for $\E \tilde{X}$ and $\E X$, inequality~\eqref{eq:Nkvar:sup2} for $\JPm=\JP(\fS^-_t)$ follows analogously to our above estimates for~$\Var X$. 
\end{proof}

\subsubsection{Susceptibility}\label{sec:mom:sus} 
We now turn to the susceptibility in the subcritical case $i=(\tc-\eps)n$. 
Our goal is to approximate the expectation and variance of the (modified $r$th order) susceptibility $S_{r,n}(\JP)=\sum_{k \ge 1}k^{r-1}N_{k}(\JP)/n$ defined in~\eqref{eq:def:SrG}, 
exploiting that we have good control over $\E N_k(\JP) \approx \Pr(|\bp^{\pm}_t| = k)$. 
Similar to~\eqref{eq:po:Sr:E} and~\eqref{eq:sum:UB}, in view of~\eqref{eq:def:NjT:E} and $\psi(\tc-\eps)=\Theta(\eps^2)$ we expect for $r \ge 2$ that
\begin{equation}\label{eq:Sj:sub:heur:0}
\E S_{r,n}(\JP) \approx \sum_{k \ge 1}k^{r-1}\Pr(|\bp^{\pm}_t| = k) = \E |\bp^{\pm}_t|^{r-1} \approx \sum_{k \ge 1} \Theta(k^{r-5/2}) e^{-\psi(\tc-\eps) k} \approx \Theta(\eps^{-2r+3}) . 
\end{equation}
We shall obtain a sharper estimate by comparing the above sum with an integral. 
To avoid clutter, in~\eqref{eq:Sj:sum} below we use the convention that $x!!=\prod_{0 \le j < \ceil{x/2}}(x-2j)$ is equal to $1$ when $x=-1$.
\begin{lemma}\label{lem:Sj:sum}
For all $r \in \NN$ with $r \ge 2$ there exists $C_r > 0$ such that for all $\delta>0$ we have
\begin{equation}\label{eq:Sj:sum}
\biggl|\sum_{j \ge 1}j^{r-5/2} e^{-\delta j} - \frac{(2r-5)!! \sqrt{2\pi}}{(2\delta)^{r-3/2}}\biggr| \le C_r \bigl(1+\delta^{-(r-5/2)}\bigr) . 
\end{equation}
\end{lemma}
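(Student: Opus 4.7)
The plan is to identify the claimed main term as the value of a gamma integral, and then to estimate the difference between the sum and the corresponding integral. Using the standard identity $\Gamma(n-1/2) = (2n-3)!!\sqrt{\pi}/2^{n-1}$ for integer $n\ge 1$ (with the convention $(-1)!!=1$), applied with $n=r-1$, I obtain $\Gamma(r-3/2)=(2r-5)!!\sqrt{\pi}/2^{r-2}$. A direct computation using $\delta^{-(r-3/2)}=2^{r-3/2}(2\delta)^{-(r-3/2)}$ and $2^{r-3/2}/2^{r-2}=\sqrt{2}$ then confirms that
\[
 \Gamma(r-3/2)\,\delta^{-(r-3/2)} = \frac{(2r-5)!!\,\sqrt{2\pi}}{(2\delta)^{r-3/2}}.
\]
Since $\int_0^\infty x^{r-5/2}e^{-\delta x}\dx = \Gamma(r-3/2)\delta^{-(r-3/2)}$, it therefore suffices to show that the difference between the sum and this integral is $O(1+\delta^{-(r-5/2)})$.

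To do this, write $f(x)=x^{r-5/2}e^{-\delta x}$, and decompose
\[
 \int_0^\infty f(x)\dx - \sum_{j\ge 1}f(j) = \int_0^1 f(x)\dx + \sum_{j\ge 1}\int_j^{j+1}[f(x)-f(j)]\dx.
\]
The first piece is bounded by $\int_0^1 x^{r-5/2}\dx$, which is a constant depending only on $r$ (the exponent $r-5/2\ge -1/2$ for $r\ge 2$, so the integral converges even in the singular case $r=2$). For the second piece, the fundamental theorem of calculus gives $|f(x)-f(j)|\le \int_j^{j+1}|f'(y)|\dy$ on $[j,j+1]$, so the whole sum is bounded by $\int_1^\infty |f'(y)|\dy$. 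Since $|f'(y)| \le |r-5/2|\,y^{r-7/2}e^{-\delta y}+\delta y^{r-5/2}e^{-\delta y}$, the substitution $u=\delta y$ (or a direct application of Lemma~\ref{lem:sum}) shows that each of the two resulting integrals is $O(1+\delta^{-(r-5/2)})$, as required.

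There is no real obstacle: the argument is a routine sum-to-integral (Euler--Maclaurin style) comparison, and the only even mildly delicate point is the case $r=2$, where $f$ has an integrable singularity at $0$. But because the summation starts at $j=1$, the analysis of $\int_1^\infty|f'|$ stays away from the singularity, and the short interval $\int_0^1 f\dx$ is handled by the elementary integrability of $x^{-1/2}$.
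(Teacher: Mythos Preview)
Your proof is correct and takes essentially the same approach as the paper: compare the sum to the integral $\int_0^\infty x^{r-5/2}e^{-\delta x}\dx$ and bound the difference by $O(1+\delta^{-(r-5/2)})$. The executional details differ slightly—the paper bounds the sum-integral error via the monotonicity structure of $x^{r-5/2}e^{-\delta x}$ (it is decreasing for $r=2$ and unimodal with peak of size $\Theta(\delta^{-(r-5/2)})$ for $r\ge 3$), then evaluates the integral by a direct Gaussian integral and integration by parts; you instead bound the error by $\int_1^\infty|f'|$ and identify the integral via the gamma identity—but these are interchangeable routine arguments leading to the same conclusion.
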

\begin{proof}
The basic idea is to compare the sum in~\eqref{eq:Sj:sum} with the integral
\begin{equation*}
f(r) := \int_{0}^{\infty} x^{r-5/2}e^{-\delta j} \dx .
\end{equation*}
Let $g(x) := x^{r-5/2}e^{-\delta x}$. For $r = 2$ the function $g(x)$ is monotone decreasing, and for $r \ge 3$ there is $x_\delta = \Theta(\delta^{-1})$ such that $g(x)$ is increasing for $x \le x_\delta$ and decreasing for $x \ge x_{\delta}$.
It follows that 
\begin{equation*}
\Bigl|\sum_{j \ge 1}j^{r-5/2} e^{-\delta j} - f(r)\Bigr| \le O(1) + \indic{r \ge 3} O(\delta^{-(r-5/2)}) .
\end{equation*}
It remains to evaluate the integral $f(r)$; this is basic calculus.
For $r=2$ the substitution $y^2=\delta x$ allows us to determine $f(2)$ via the Gauss error function:
\begin{equation*}
f(2) = \int_{0}^{\infty} x^{-1/2}e^{-\delta x} \dx = \sqrt{\frac{\pi}{\delta}} \cdot \frac{2}{\sqrt{\pi}}\int_{0}^{\infty} e^{-y^2} \dy = \sqrt{\frac{\pi}{\delta}} .
\end{equation*}
For $r \ge 3$ we use integration by parts to infer 
\begin{equation*}
f(r) = -\frac{x^{r-5/2}e^{-\delta x}}{\delta} \bigg|_0^\infty + \frac{(r-5/2)}{\delta} \cdot \int_{0}^{\infty} x^{r-7/2} e^{-\delta x} \dx =  \frac{(2r-5) f(r-1)}{2\delta} .
\end{equation*}
Solving the above recurrence for $r \ge 2$ completes the proof.
\end{proof}
As a step towards making~\eqref{eq:Sj:sub:heur:0} rigorous, we now estimate the `idealized' moments~$\E |\bp_{\tc-\eps}|^{r-1}$. 
\begin{lemma}\label{lem:SjE:bp}
For $\theta(t)$ and $\psi(t)$ as defined in Theorem~\ref{asyfull-simple}, let 
\begin{equation}\label{eq:SjE:Br}
B_r := \frac{(2r-5)!! \sqrt{2\pi}\theta(\tc)}{\per { [\psi''(\tc)]^{r-3/2}}} .
\end{equation}
Then $B_r > 0$ for $r \ge 2$. Furthermore, there exists $\eps_0>0$ such
that, for all $r \ge 2$ and $\eps \in (0,\eps_0)$, 
\begin{equation}\label{eq:SrE:bp}
\E |\bp_{\tc-\eps}|^{r-1} = (1+O(\eps))B_r\eps^{-2r+3}. 
\end{equation}
\end{lemma}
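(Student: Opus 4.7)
The positivity of $B_r$ is immediate: by Theorem~\ref{asyfull-simple} we have $\theta(\tc)>0$ and $\psi''(\tc)>0$, while $\per\ge 1$ and, with the stated convention on double factorials, $(2r-5)!!\ge 1$ for all $r\ge 2$. For the asymptotic formula, the plan is to expand
\[
\E|\bp_{\tc-\eps}|^{r-1} = \sum_{k\ge 1} k^{r-1}\Pr(|\bp_{\tc-\eps}|=k)
\]
using Theorem~\ref{asyfull-simple}, reducing the moment to an explicit exponential sum, and then evaluate that sum via Lemma~\ref{lem:Sj:sum} after Taylor expanding $\psi$ and $\theta$ around $\tc$.

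In more detail, substituting the asymptotic expression from Theorem~\ref{asyfull-simple} into the series gives
\[
\E|\bp_{\tc-\eps}|^{r-1} = \theta(\tc-\eps)\sum_{k\in\cSR}\bigl(k^{r-5/2}+O(k^{r-7/2})\bigr)e^{-\psi(\tc-\eps)k}.
\]
By Lemma~\ref{lem:allowed} the set $\cSR$ differs from $\per\NN$ only in a finite set of exceptional indices, contributing a bounded amount to the sum; this discrepancy is absorbed into the error term since $\psi(\tc-\eps)=\Theta(\eps^2)$ and hence $e^{-\psi(\tc-\eps)k}$ stays bounded away from $0$ on any finite range. After this reduction, the substitution $k=\per j$ converts the leading sum into
\[
\sum_{j\ge 1}(\per j)^{r-5/2}e^{-\psi(\tc-\eps)\per j} = \per^{r-5/2}\sum_{j\ge 1}j^{r-5/2}e^{-\delta j},
\qquad \delta := \per\cdot\psi(\tc-\eps),
\]
to which Lemma~\ref{lem:Sj:sum} applies directly, producing a main term proportional to $\per^{r-5/2}(2\delta)^{-(r-3/2)}=\per^{-1}(2\psi(\tc-\eps))^{-(r-3/2)}$ and a remainder of order $1+\delta^{-(r-5/2)}$.

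Since $\psi$ is analytic near $\tc$ with $\psi(\tc)=\psi'(\tc)=0$ and $\psi''(\tc)>0$, Taylor's theorem yields $\psi(\tc-\eps)=\tfrac{1}{2}\psi''(\tc)\eps^2\bigl(1+O(\eps)\bigr)$, and analyticity of $\theta$ with $\theta(\tc)>0$ gives $\theta(\tc-\eps)=\theta(\tc)\bigl(1+O(\eps)\bigr)$. Combining these expansions with the main term,
\[
\theta(\tc-\eps)\cdot\per^{r-5/2}\cdot\frac{(2r-5)!!\sqrt{2\pi}}{(2\delta)^{r-3/2}}
= (1+O(\eps))\,\frac{(2r-5)!!\sqrt{2\pi}\,\theta(\tc)}{\per\,[\psi''(\tc)]^{r-3/2}}\,\eps^{-2r+3}
= (1+O(\eps))B_r\eps^{-2r+3}.
\]
It remains to verify that every error term is dominated by $\eps\cdot B_r\eps^{-2r+3}=O(\eps^{-2r+4})$. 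The $O(k^{r-7/2})$ contribution, estimated by Lemma~\ref{lem:sum}, is of order $\eps^{-(2r-5)}$ (for $r\ge 3$; for $r=2$ it is $O(1)$), giving a relative error of $O(\eps^2)$. The Lemma~\ref{lem:Sj:sum} remainder $\per^{r-5/2}(1+\delta^{-(r-5/2)})$ is likewise $O(\eps^{-(2r-5)})$, and the finite-set correction together with the $O(\eps)$ multiplicative factors from the Taylor expansions of $\psi$ and $\theta$ contribute at most $O(\eps)$ relatively.

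The main obstacle is the careful bookkeeping of all these error contributions simultaneously across $r\ge 2$: one must verify uniformly that the $O(1/k)$ tail in Theorem~\ref{asyfull-simple}, the sum-to-integral discrepancy in Lemma~\ref{lem:Sj:sum}, the finite correction between $\cSR$ and $\per\NN$, and the linearization of $\psi$ and $\theta$ near $\tc$ all yield a combined relative error of $O(\eps)$, and that the $\per^{r-5/2}$ factor cancels correctly against $\per^{r-3/2}$ to leave the single $\per$ in the denominator of $B_r$.
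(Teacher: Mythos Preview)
Your proposal is correct and follows essentially the same approach as the paper: expand the moment via Theorem~\ref{asyfull-simple}, reduce the $\cSR$ sum to a sum over $\per\NN$ using Lemma~\ref{lem:allowed}, apply Lemma~\ref{lem:Sj:sum} after the substitution $k=\per j$, and Taylor expand $\psi$ and $\theta$. One small omission: you should invoke Theorem~\ref{thsurv-simple} to record that $\Pr(|\bp_{\tc-\eps}|=\infty)=0$ for $\eps\in(0,\eps_0)$, which is what justifies writing $\E|\bp_{\tc-\eps}|^{r-1}=\sum_{k\ge 1}k^{r-1}\Pr(|\bp_{\tc-\eps}|=k)$ in the first place.
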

\begin{proof}
For brevity, let $t:=\tc-\eps$. 
Theorem~\ref{thsurv-simple} gives $\Pr(|\bp_t| = \infty)=0$, so Theorem~\ref{asyfull-simple} and Lemma~\ref{lem:allowed}~imply
\begin{equation}\label{eq:lem:SjE:bp:0}
\E |\bp_{t}|^{r-1} = \sum_{k \ge 1}k^{r-1}\Pr(|\bp_t| = k) = \sum_{k \ge \kR} \indic{k \modp} (1+O(1/k))k^{r-5/2} \theta(t) e^{-\psi(t)k} + O(1),
\end{equation}
where the implicit constants are independent of~$\eps$ and~$k$.
Taking the parity constraint into account, it follows~that 
\begin{equation}\label{eq:lem:SjE:bp:1}
\E |\bp_{t}|^{r-1} = \sum_{j \ge 1} (\per j)^{r-5/2} \theta(t) e^{-\psi(t)\per j} + O\Bigl(\sum_{j \ge 1} j^{r-7/2} e^{-\psi(t)\per j}\Bigr) + O(1).
\end{equation}
Estimating the first sum by~\eqref{eq:Sj:sum}, and the second sum by~\eqref{eq:sum:UB}, it follows that
\begin{equation}\label{eq:lem:SjE:bp:2}
\E |\bp_{t}|^{r-1} = \per^{r-5/2}\theta(t) \cdot \frac{(2r-5)!! \sqrt{2\pi}}{[2 \psi(t)\per]^{r-3/2}} + O\bigl(\psi(t)^{-(r-5/2)}\bigr) + O(1).
\end{equation}
Recalling $t=\tc-\eps$, note that $\psi(\tc-\eps) = \psi''(\tc)\eps^2/2 + O(\eps^3)$ and $\theta(\tc-\eps) = \theta(\tc)+O(\eps)$.
Since $\psi''(\tc),\theta(\tc)>0$, it follows that $\theta(t) = (1+O(\eps)) \theta(\tc)$, $[2 \psi(t)]^{r-3/2} = (1+O(\eps))[\eps^2 \psi''(\tc)]^{r-3/2}$ and $\psi(t)=\Theta(\eps^2)$. 
This completes the proof of~\eqref{eq:SrE:bp} since $O((\eps^2)^{-(r-5/2)}) + O(1) = O(\eps) \cdot \eps^{-2r+3}$.
\end{proof}
Note that, combined with Corollary~\ref{cor:rhosr}, which gives~$s_r(t) = \E |\bp_t|^{r-1}$ for $t \in [t_0,\tc)$, Lemma~\ref{lem:SjE:bp}
implies Theorem~\ref{thm:sjfkt}.
Mimicking the above calculations and using Lemma~\ref{lem:Srvar:sub}, we now approximate the expectation and variance of $S_{r,n}(\JP)$. 
Note that~\eqref{eq:SrE:sub} below yields $\E S_{r,n}(\JP) \sim B_r\eps^{-2r+3}$ whenever $\eps\to 0$ and $\eps^3 n \to \infty$. 
Furthermore, \eqref{eq:SrVar:sub} shows that we have small variance whenever~$\eps^3 n \to \infty$. 
\begin{lemma}[Subcritical expectation and variance of $S_{r,n}$]
\label{lem:SrE:sub}
There exist positive constants
$c,T,\eps_0 > 0$ and $(a_r,b_r)_{r \ge 2}$ such such that the following holds for all $t \in [t_0,t_1]$ with $\eps=\tc-t \in [Tn^{-1/3},\eps_0]$, and all $t$-nice parameter lists $\fS$.
Define~$\fS_t^{\pm}$ as in Definition~\ref{def:cpl}, and $\JPpm=\JP(\fS^{\pm}_t)$ as in Definition~\ref{def:F}. 
If $r \ge 2$ and $\eps^3 n \ge c$, then 
\begin{align}
\label{eq:SrE:sub}
|\E S_{r,n}(\JPpm)-B_r\eps^{-2r+3}| &\le a_r \bigl(\eps + (\eps^3 n)^{-1/3}\bigr) \eps^{-2r+3} ,\\
\label{eq:SrVar:sub}
\Var S_{r,n}(\JPpm) &\le b_r (\eps^3 n)^{-1}\bigl(\E S_{r,n}(\JPpm)\bigr)^2 ,
\end{align}
where $B_r > 0$ is defined as in~\eqref{eq:SjE:Br}.
\end{lemma}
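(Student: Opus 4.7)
The overall strategy will be to first express $\E S_{r,n}(\JPpm)$ in terms of tail probabilities of the dominating branching processes~$\bp^\pm_t$, and then compare these with the idealized $\bp_t$, for which Lemma~\ref{lem:SjE:bp} already gives the desired~$B_r\eps^{-2r+3}$ asymptotics. Starting from the Abel-summation identity
\[
 \E S_{r,n}(\JP) = \sum_{j\ge 1}\bigl(j^{r-1}-(j-1)^{r-1}\bigr) \E N_{\ge j}(\JP)/n,
\]
I will feed in the sandwich of Theorem~\ref{thm:ENgekD} and re-Abel-sum to obtain, uniformly in the $\pm$ choice,
\[
 \frac{|\fS^-_t|}{n}\,\E\bigl(|\bp^-_t|^{r-1}\indic{|\bp^-_t|\le n^{2/3}}\bigr) - n^{-\omega(1)} \;\le\; \E S_{r,n}(\JPpm) \;\le\; \frac{|\fS^+_t|}{n}\,\E\,|\bp^+_t|^{r-1}.
\]
Since $|\fS^\pm_t|/n = 1+O(n^{-0.49})$ by~\eqref{def:n:pm:diff}, this reduces everything to the comparison of $\E|\bp^\pm_t|^{r-1}$ with $\E|\bp_t|^{r-1}$.

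For that comparison, Theorem~\ref{asyfull-simple-pm} together with the estimate $|\xi(\fS)-\psi(t)|\,k\le Cn^{-1/3}\eps\,k$ from~\eqref{eq:xifS} gives, for $k>K$ with $k\modp$, that each point probability $\Pr(|\bp^\pm_t|=k)$ agrees with $\Pr(|\bp_t|=k)$ up to a factor $(1+O(1/k)+O(n^{-1/3}))\exp(O(n^{-1/3}\eps k))$. At the characteristic scale $k=\Theta(\psi(t)^{-1})=\Theta(\eps^{-2})$, where by Lemma~\ref{lem:Sj:sum} the sum defining $\E|\bp_t|^{r-1}$ concentrates, this multiplicative error is $O((\eps^3 n)^{-1/3})$. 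Splitting the sum at $k_0=C'\psi(t)^{-1}$ and bounding the tail $k>k_0$ via Lemma~\ref{lem:sum}, as well as the missing piece $k>n^{2/3}$ in the lower sandwich (using $\eps^2 n^{2/3} = (\eps^3n)^{2/3}$, which is large once $c$ is large), yields
\[
 \E|\bp^\pm_t|^{r-1} = \bigl(1+O((\eps^3 n)^{-1/3})\bigr)\,\E|\bp_t|^{r-1}.
\]
Combined with Lemma~\ref{lem:SjE:bp} this delivers~\eqref{eq:SrE:sub}.

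For the variance bound~\eqref{eq:SrVar:sub}, I will apply Lemma~\ref{lem:Srvar:sub} (which holds for any parameter list, in particular $\fS^\pm_t$) with $r$ replaced by $2r$, obtaining
\[
 \Var S_{r,n}(\JPpm) \le n^{-1}\,\E S_{2r,n}(\JPpm) = O(\eps^{-4r+3}/n) = O\bigl((\eps^3n)^{-1}\bigr)\cdot(\eps^{-2r+3})^2,
\]
where the middle equality reuses the just-established~\eqref{eq:SrE:sub} at order $2r$. Since \eqref{eq:SrE:sub} at order $r$ also gives $(\E S_{r,n}(\JPpm))^2 = \Theta((\eps^{-2r+3})^2)$ (shrinking $\eps_0$ so that $a_r(\eps+(\eps^3n)^{-1/3})\le 1/2$), this translates into~\eqref{eq:SrVar:sub}.

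The main technical obstacle will be the comparison $\E|\bp^\pm_t|^{r-1}\approx\E|\bp_t|^{r-1}$: one must show that the multiplicative perturbation $\exp(O(n^{-1/3}\eps k))$ in the point probabilities cannot blow up when weighted by $k^{r-1}$, and that both the truncation tail $k>n^{2/3}$ (dropped in the lower sandwich of Theorem~\ref{thm:ENgekD}) and the moderate tail $k>k_0=\Theta(\eps^{-2})$ contribute only an $O((\eps^3 n)^{-1/3})$ fraction of the main term. Once the splitting and bookkeeping is done carefully, everything else is routine assembly of results already established in Sections~\ref{sec:dom} and~\ref{sec:bpresults}.
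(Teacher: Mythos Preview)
Your overall strategy---Abel summation, the sandwich of Theorem~\ref{thm:ENgekD}, reduction to $\E|\bp^\pm_t|^{r-1}$, and Lemma~\ref{lem:Srvar:sub} for the variance---is exactly the paper's, and your treatment of the variance is correct.

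The gap is in the comparison $\E|\bp^\pm_t|^{r-1}=(1+O((\eps^3n)^{-1/3}))\E|\bp_t|^{r-1}$ via a split at $k_0=C'\psi(t)^{-1}$ for a \emph{fixed} $C'$. The sum $\sum_k k^{r-5/2}e^{-\psi(t)k}$ does not concentrate below $k_0$ in the sense you need: by the substitution $y=\psi(t)k$, the tail $k>k_0$ is a $\Theta(1)$ fraction of the whole sum (of order $e^{-\Theta(C')}$), independent of $\eps^3 n$. Hence bounding the two tails separately via Lemma~\ref{lem:sum} leaves an irreducible error $\Theta(e^{-\Theta(C')})\cdot\eps^{-2r+3}$, which cannot be absorbed into $a_r(\eps+(\eps^3n)^{-1/3})\eps^{-2r+3}$ for any choice of $a_r$. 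Letting $C'$ grow with $\eps^3n$ does not quite rescue this either: the head error $\exp(O(n^{-1/3}\eps k_0))$ then picks up a $\log(\eps^3n)$ factor and you only get $O((\eps^3n)^{-1/3}\log(\eps^3n))$, not the $O((\eps^3n)^{-1/3})$ claimed in~\eqref{eq:SrE:sub}.

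The paper avoids the term-by-term comparison altogether. It reruns the computation of Lemma~\ref{lem:SjE:bp} \emph{directly} for $\bp^\pm_t$, using Theorem~\ref{asyfull-simple-pm} with $\xi(\fS^\pm_t)$ in the exponent, to obtain a closed form $\E|\bp^\pm_t|^{r-1}=(1+O(n^{-1/3}))\cdot\mathrm{const}\cdot\xi^{-(r-3/2)}+O(\xi^{-(r-5/2)})+O(1)$. Only then does it compare the exponential rates: $|\xi-\psi(t)|\le Cn^{-1/3}\eps$ and $\psi(t)=\Theta(\eps^2)$ give $\xi=(1+O((\eps^3n)^{-1/3}))\psi(t)$, hence $\xi^{-(r-3/2)}=(1+O((\eps^3n)^{-1/3}))\psi(t)^{-(r-3/2)}$, and~\eqref{eq:SrE:sub} follows. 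The cutoff $\Lambda=\eps^{-2}(\log\eps^3n)^2$ that the paper introduces serves a different purpose: it controls the $k>n^{2/3}$ truncation in the lower bound of Theorem~\ref{thm:ENgekD}, not the $\bp^\pm_t$ versus $\bp_t$ comparison.
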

\begin{proof} 
Let $\Lambda := \eps^{-2}(\log \eps^3 n)^2$, which satisfies $2\max\{\eps^{-2},K\} < \Lambda \le n^{2/3}$ for $\eps^3 n$ large enough. 
Aiming at (monotone) coupling arguments, note that 
\begin{equation}\label{eq:def:SrG:2}
S_{r,n}(G) = \sum_{k \ge 1} k^{r-1} N_k(G)/n = \sum_{k \ge 1} \bigl[k^{r-1} - (k-1)^{r-1}\bigr] N_{\ge k}(G)/n .
\end{equation}
Recall that $|\fS^{\pm}_t|=(1+o(n^{-1/3})) n$ by~\eqref{def:n:pm:diff}. 
Estimating $\E N_{\ge k}(\JPpm)$ via Theorem~\ref{thm:ENgekD}, and noting that $\Pr(|\bp^{\pm}_t| = \infty)=0$ by Theorem~\ref{thsurv-simple-pm}, 
using $\Lambda \le n^{2/3}$ and $\E |\bp^{\pm}_t|^{r-1} = \sum_{k \ge 1} k^{r-1}\Pr(|\bp^{\pm}_t| = k)$ 
it follows that 
\begin{equation}\label{eq:SrE:sub:1}
\begin{split}
\E S_{r,n}(\JPpm) =  (1 +o(n^{-1/3})) \E |\bp^{\pm}_t|^{r-1}  + O\Bigl(\sum_{k \ge \Lambda} k^{r-1}\Pr(|\bp^{-}_t| = k)+ n^{-\omega(1)}\Bigr). 
\end{split}
\end{equation}

Proceeding analogously to~\eqref{eq:lem:SjE:bp:0}--\eqref{eq:lem:SjE:bp:2}, 
replacing the $\Pr(|\bp_t|=k)$ estimate of Theorem~\ref{asyfull-simple} with the $\Pr(|\bp^\pm_t|=k)$ estimate of Theorem~\ref{asyfull-simple-pm},  
it follows that 
\[
\E |\bp^{\pm}_t|^{r-1} = \bigl(1+O(n^{-1/3})\bigr) \cdot \per^{r-5/2}\theta(t) \frac{(2r-5)!! \sqrt{2\pi}}{[2 \xi(\fS^\pm_t)\per]^{r-3/2}} + O\bigl(\xi(\fS^\pm_t)^{-(r-5/2)}\bigr) + O(1) .
\]
Since $|\xi(\fS^\pm_t)-\psi(t)| = O(n^{-1/3} \eps)$ and $\psi(t) = \Theta(\eps^2)$, see Theorem~\ref{asyfull-simple-pm} and Remark~\ref{rem:bpsimple}, 
we have $\xi(\fS^\pm_t) = (1+O((\eps^3n)^{-1/3})) \cdot \psi(t)$. 
So, by proceeding analogously to the deduction of~\eqref{eq:SrE:bp} from~\eqref{eq:lem:SjE:bp:2}, using $\eps = \Omega(n^{-1/3})$ it follows that  
\begin{equation}\label{eq:SrE:sub:main}
\E |\bp^{\pm}_t|^{r-1} = \bigl(1+O(\eps) + O((\eps^3n)^{-1/3}) \bigr) \cdot B_r\eps^{-2r+3} .
\end{equation}

Estimating $\Pr(|\bp^{-}_t| = k)$ via Theorem~\ref{asyfull-simple} and recalling that $\xi(\fS^-_t) \ge \psi(t)/2 = \Theta(\eps^2)$ by Remark~\ref{rem:bpsimple}, 
using $\xi(\fS^-_t)\Lambda \ge 2\log(\eps^3 n)$ (for $\eps^3 n$ large enough) and inequality~\eqref{eq:sum:UB},
it follows that
\begin{equation*}
\sum_{k \ge \Lambda} k^{r-1}\Pr(|\bp^{-}_t| = k) = O\Bigl(\sum_{k \ge \Lambda} k^{r-5/2} e^{-\xi(\fS^-_t) k}\Bigr) 
= O(\eps^{-2r+3}) \cdot e^{-\xi(\fS^-_t) \Lambda/2}  = O((\eps^3 n)^{-1}) \cdot \eps^{-2r+3}.
\end{equation*}
Together with~\eqref{eq:SrE:sub:1} and~\eqref{eq:SrE:sub:main} this completes the proof of~\eqref{eq:SrE:sub}. 

A much simpler variant of the above calculations 
(using ${\E S_{r,n}(\JPpm)} \ge {\E S_{r,n}(\JPm)}= {\Omega(\sum_{1 \le k \le \Lambda}k^{r-1}\Pr(|\bp^{-}_t| = k))}$
and ${\sum_{1 \le k \le \eps^{-2}} k^{r-5/2}} = {\Omega(\eps^{-2r+3})}$, say) yields the crude lower bound
${\E S_{r,n}(\JPpm)} = {\Omega(\eps^{-2r+3})}$.
(This also follows directly from~\eqref{eq:SrE:sub} if we allow ourselves to impose an upper
bound on $\eps$ that depends on $r$.)
This lower bound, and the upper bound in~\eqref{eq:SrE:sub} (applied with $2r$ in place of $r$)
imply that ${\E S_{2r,n}(\JPpm)} \le {b_r \eps^{-3} \bigl(\E S_{r,n}(\JPpm)\bigr)^2}$.  
By Lemma~\ref{lem:Srvar:sub} we have $\Var S_{r,n}(\JPpm) \le n^{-1} \E S_{2r,n}(\JPpm)$, so~\eqref{eq:SrVar:sub} follows.
\end{proof}

\newoddpage

\section{Proofs of the main results}\label{sec:proof}
In this section we prove our main results for the size of the largest component, the number of vertices in small components, and the susceptibility. 
As discussed in the proof outline of Section~\ref{sec:po:main}, we shall establish these by adapting Erd{\H o}s--R{\'e}nyi proof strategies to the Achlioptas process setting, exploiting the setup and technical work of Sections~\ref{sec:setup}--\ref{sec:cpl2} 
(which are, of course, the meat of the proof). 
One non-standard detail is that we study~$G_i=G_{n,i}^{\cR}$
via the auxiliary random graphs~$J_i$ and~$\JPpm_i$ (see Lemmas~\ref{lem:cond}, \ref{lem:cond'} and~\ref{lem:cpl}),
using the following two key facts: (i)~that $J_i=J(\fS_i)$ has the same component size distribution as $G_i$ conditioned on the parameter list~$\fS_i$ defined in~\eqref{def:fS}, and (ii)~that we can whp sandwich~$J_i$ between two `Poissonized' random graphs~$\JPpm_i$, i.e., $\JPm_i \subseteq J_i \subseteq \JPp_i$ with $\JPpm_i=\JP(\fS^{\pm}_t)$, where $t=i/n$ and $\fS^{\pm}_t$ is as in Definition~\ref{def:cpl}.
For technical reasons our arguments require that the parameter list~$\fS_i$ is $t$-nice in the sense of Definition~\ref{def:nice}, which by Lemma~\ref{lem:nice} fails with probability at most $\Pr(\neg\cN) = O(n^{-99})$.

In Section~\ref{sec:small} we focus on the number of vertices in small components, and prove Theorem~\ref{thm:Nk}. 
In Section~\ref{sec:L1} we turn to the size of the largest component, and prove Theorems~\ref{thm:L1sub} and~\ref{thm:L1}. 
Finally, in Section~\ref{sec:Sj} we consider the susceptibility, and prove Theorem~\ref{thm:Sj}.
Note that Theorems~\ref{thm:L1rho}, \ref{thm:rhok} and~\ref{thm:sjfkt} have already been proved in Sections~\ref{sec:cpl}--\ref{sec:cpl2}; 
indeed, as noted there, in the light of Corollaries~\ref{cor:Nk}--\ref{cor:rhosr}, these results are immediate from Theorems~\ref{thsurv-simple}--\ref{asyfull-simple} 
and Lemma~\ref{lem:SjE:bp}, respectively.

\subsection{Small components}\label{sec:small} 
In this subsection we prove Theorem~\ref{thm:Nk}, i.e., estimate the number $N_k(i)$ of vertices in components of size~$k$ after~$i$ steps.  
Our arguments use the following three ideas:
(i)~that the random variable $N_k(i)$ is typically close to its expected value,  
(ii)~that we can approximate $\E N_k(i)$ using the `idealized' branching process~$\bp_{t}$, $t=i/n$, and
(iii)~that we have detailed results for the point probabilities of~$\bp_{t}$.

We start with a conditional concentration result.
The key observation is that, for any graph, adding or deleting an edge changes the number of vertices in components of size $k$ (at least $k$) by at most $2k$. 
\begin{lemma}\label{lem:Nk:C} 
Let $t\in [t_0,t_1]$, and let~$\fS$ be a $t$-nice parameter list, and define~$J=J(\fS)$ as in Definition~\ref{def:J}.
Then, with probability at least $1-n^{-\omega(1)}$, we have $|N_k(J)-\E N_k(J)| \le k (\log n)n^{1/2}$ and $|N_{\ge k}(J) - \E N_{\ge k}(J)| \le k (\log n)n^{1/2}$ for all $1 \le k \le n$.  
\end{lemma}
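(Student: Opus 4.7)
The plan is to apply McDiarmid's bounded differences inequality, viewing $N_k(J)$ and $N_{\ge k}(J)$ as functions of the finite collection of independent random objects used to construct $J=J(\fS)$. By Definition~\ref{def:J}, given $\fS$, the graph~$J$ is determined by: (a)~for each stub of each $(k',r)$-component of~$H$ with $k'\ge 1$, an independent uniformly random vertex in~$V_L$; and (b)~for each $r\ge 2$ and each of the $Q_{0,r}$ added hyperedges, an independent uniformly random tuple in~$V_L^r$. Taking these, numbered $1,\dots,M$, as our independent variables $\xi_1,\dots,\xi_M$, it remains to bound the individual Lipschitz constants and apply the inequality.

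First I would bound the Lipschitz constants. The key observation mentioned in the statement is the standard fact that adding or deleting a single edge changes $N_k$ and $N_{\ge k}$ each by at most $2k$; applied to a stub-endpoint swap (which removes one edge and adds another), this gives a bound of $4k$. For a tuple swap in case~(b), removing an $r$-hyperedge can split its ambient component (of some size $s$) into at most $r$ parts $s_1,\dots,s_p$ with $\sum_j s_j = s$, and a direct inspection gives $|\Delta N_k|\le k+pk\le (r+1)k$ and $|\Delta N_{\ge k}|\le rk$ (for the latter, note that vertices that leave size-$\ge k$ must lie in parts with $s_j<k$, of total size less than $rk$). Hence, swapping one $r$-tuple changes $N_k$ or $N_{\ge k}$ by at most $2(r+1)k=O(rk)$. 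Writing $C_i$ for the Lipschitz constant of $\xi_i$, and using the exponential tail $Q_{k',r}\le B e^{-b(k'+r)}n$ from~\eqref{eq:Qkr:tail'} of $t$-niceness,
\begin{equation*}
\sum_{i\le M} C_i^2 \;=\; O(k^2)\Bigl[\sum_{k'\ge 1,\,r\ge 0} r\,Q_{k',r} \;+\; \sum_{r\ge 2} r^2\, Q_{0,r}\Bigr] \;=\; O(k^2 n).
\end{equation*}

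McDiarmid's inequality with $t=k(\log n)n^{1/2}$ then yields
\begin{equation*}
\Pr\bb{|N_k(J)-\E N_k(J)|\ge k(\log n)n^{1/2}}\;\le\; 2\exp\bigl(-\Omega((\log n)^2)\bigr)\;=\;n^{-\omega(1)},
\end{equation*}
and the same bound for $N_{\ge k}(J)$. A union bound over $1\le k\le n$ (the case $k>n$ being trivial since $|\fS|=n$ by~\eqref{eq:nice:n}) then gives the lemma. The main obstacle is essentially bookkeeping: establishing the right Lipschitz constants $C_i=O(r_i k)$ for $r$-hyperedges and verifying, via the exponential decay of $Q_{k',r}$ in $k'+r$, that $\sum_i C_i^2 = O(k^2 n)$, so that the Azuma exponent $t^2/(2\sum_i C_i^2)$ is super-polylogarithmic. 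Since nothing in the argument relies on subtle properties of the exploration processes or branching processes developed earlier, this concentration step is the easy link between the expected-value estimates (Theorem~\ref{thm:Nk:E}) and the sharp almost-sure estimates sought in Theorem~\ref{thm:Nk}.
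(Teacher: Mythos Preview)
Your proof is correct and follows essentially the same route as the paper: apply McDiarmid's bounded differences inequality to the independent vertex choices defining $J(\fS)$, use the exponential tail \eqref{eq:Qkr:tail'} to bound $\sum_i C_i^2=O(k^2 n)$, and then take a union bound over $1\le k\le n$. The only difference is that the paper treats each of the $M=\sum_{k,r\ge 0} rQ_{k,r}(\fS)=O(n)$ individual vertex choices in $V_L$ as a separate independent variable (including the $r$ endpoints of each $(0,r)$-hyperedge), so that changing any one variable corresponds to deleting one edge and adding one edge, giving a uniform Lipschitz constant $4k$; this avoids your separate case analysis for tuple swaps in~(b).
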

\begin{proof}
By definition of $J=J(\fS)$ and \eqref{eq:Qkr:tail'}, the probability space $\Omega=\Omega(\fS)$ on which the random graph~$J(\fS)$ is defined consists of 
\[
M:=\sum_{k,r \ge 0} rQ_{k,r}(\fS)\le \sum_{k,r \ge 0} r Be^{-b(k+r)}n =O(n)
\] 
independent random variables, each corresponding to the uniform choice of a random vertex from $V_L$.
Furthermore, changing the outcome of one variable can be understood as (i)~first removing one edge and (ii)~then adding one edge. 
From the observation before the lemma, it follows that $|N_{k}(J)(\omega_1)-N_{k}(J)(\omega_2)| \le 4k$ whenever $\omega_1,\omega_2 \in \Omega$ differ in the outcome of a single random variable. 
An analogous remark applies to~$N_{\ge k}(J)$. 
By McDiarmid's bounded-differences inequality~\cite{McDiarmid1989} we thus have 
\[
\Pr\bb{|N_k(J)-\E N_k(J)| \ge k (\log n)n^{1/2}} \le 2 \exp\left(-\frac{2\bigl(k(\log n)n^{1/2}\bigr)^2}{M (4k)^2}\right) = n^{-\omega(1)} .
\]
An analogous bound holds for $N_{\ge k}(J)$, and a union bound over all $1 \le k \le n$ completes the proof. 
\end{proof}
\begin{theorem}[Concentration of $N_k$ and $N_{\ge k}$]
\label{thm:NkC}
Set $D_{\bp} := D_{\cT}+2$, where $D_{\cT}>0$ is as in Theorem~\ref{thm:cpl:btT},
and define $\bp_{t}$ as in Section~\ref{sec:BPI}. 
Then, with probability at least $1-O(n^{-99})$, the following hold for all $i_0\le i\le i_1$ and all $k \ge 1$:
\begin{align}
\label{eq:NkC}
 \bigl|N_{k}(i)-\Pr(|\bp_{i/n}|=k) n\bigr|  &\le k (\log n)^{D_{\bp}}n^{1/2},\\
\label{eq:NgekC}
 \bigl|N_{\ge k}(i)-\Pr(|\bp_{i/n}| \ge k) n\bigr|  &\le k (\log n)^{D_{\bp}}n^{1/2}.
\end{align}
\end{theorem}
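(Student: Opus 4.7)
The plan is to combine the conditional equivalence $J(\fS_i) \stackrel{d}{=} G_i \mid \fS_i$ from Lemma~\ref{lem:cond'} with the conditional concentration of $N_k(J(\fS_i))$ from Lemma~\ref{lem:Nk:C} and the expectation estimates from Theorem~\ref{thm:Nk:E}. By Lemma~\ref{lem:nice} the `good' event $\cN = \bigcap_{i_0\le i\le i_1}\{\fS_i \text{ is } (i/n)\text{-nice}\}$ holds with probability $1 - O(n^{-99})$; throughout we work on this event.

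For each $i_0 \le i \le i_1$, condition on the parameter list $\fS_i$ and assume it is $t$-nice with $t = i/n$. Lemma~\ref{lem:cond'} lets us identify the component size distribution of $G_i$ with that of $J = J(\fS_i)$. Applying Lemma~\ref{lem:Nk:C} gives, with conditional probability $1 - n^{-\omega(1)}$,
\[
\max_{1 \le k \le n}\bigl(|N_k(J)-\E N_k(J)| + |N_{\ge k}(J)-\E N_{\ge k}(J)|\bigr) \le 2k(\log n)n^{1/2}.
\]
Theorem~\ref{thm:Nk:E} then yields $|\E N_k(J) - \Pr(|\bp_t|=k)n| = O(k(\log n)^{D_\cT}n^{1/2})$ and the analogous statement for $N_{\ge k}$, uniformly over $k \ge 1$, $t \in [t_0,t_1]$ and nice~$\fS$. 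Combining these two bounds and using $D_\bp = D_\cT + 2$ (so that $(\log n)^{D_\bp}$ absorbs all multiplicative constants and the extra factor of $\log n$ from Lemma~\ref{lem:Nk:C}) gives \eqref{eq:NkC} and \eqref{eq:NgekC} for all $1 \le k \le n$.

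Taking a union bound over the $O(n)$ values of $i$ in $[i_0,i_1]$ costs only a factor of $n$, which is absorbed into the $n^{-\omega(1)}$ conditional failure probability from Lemma~\ref{lem:Nk:C}; together with the $O(n^{-99})$ probability that $\cN$ fails, the overall failure probability is $O(n^{-99})$. Finally, for $k > n$ we have $N_k(i) = N_{\ge k+1}(i) = 0$ and $\Pr(|\bp_{i/n}| = k)n \le n$, so \eqref{eq:NkC}--\eqref{eq:NgekC} hold trivially once $k \ge n^{1/2}/(\log n)^{D_\bp}$, say.

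There is no substantial obstacle here; the argument is a straightforward synthesis of the machinery already developed. The only point requiring mild care is the order of quantifiers: Lemma~\ref{lem:Nk:C} gives concentration uniformly in $k$ (for each fixed $i$) with superpolynomial failure probability, which is what allows us to absorb both the union bound over $i \in [i_0, i_1]$ and the union bound over $k$ without losing control of the $1 - O(n^{-99})$ success probability.
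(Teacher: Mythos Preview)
Your proof is correct and follows essentially the same approach as the paper's: condition on $\fS_i$ being $t$-nice (via Lemma~\ref{lem:nice}), use Lemma~\ref{lem:cond'} to transfer to $J(\fS_i)$, combine the conditional concentration from Lemma~\ref{lem:Nk:C} with the expectation estimate from Theorem~\ref{thm:Nk:E}, and take a union bound over the $O(n)$ values of~$i$. The paper's argument is organized slightly more formally around the events $\cE_i$ and $\cE_i(\fS)$, but the content is identical.
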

We have not tried to optimize the $k(\log n)^{D_{\bp}} n^{1/2}$ error term in~\eqref{eq:NkC}--\eqref{eq:NgekC}, which suffices for our purposes.
\begin{proof}
For $k>n$ the statement is trivial since $0 \le N_{k}(i),N_{\ge k}(i) \le n$.
For $i_0\le i\le i_1$, let $\cE_i$ be the event that \eqref{eq:NkC}--\eqref{eq:NgekC} hold for all $1 \le k \le n$, so
our goal is to estimate the probability that $\cE= \bigcap_{i_0 \le i \le i_1} \cE_i$ fails.
Given a parameter list $\fS$, let $\cE_i(\fS)$ be the that event \eqref{eq:NkC}--\eqref{eq:NgekC} hold for all $1 \le k \le n$, with $N_{k}(i)$ and~${N_{\ge k}(i)}$ replaced by~${N_{k}(J(\fS))}$ and~$N_{\ge k}(J(\fS))$.
Let $\fS_i$ be the random parameter list defined in \eqref{def:fS}, and let $\cN_i$ be the event that $\fS_i$ is $(i/n)$-nice.
By Lemma~\ref{lem:cond'} we have
\begin{equation}\label{eq:transfer}
\Pr(\neg\cE_i \text{ and } \cN_i) 
\le  \max_{\text{$(i/n)$-nice $\fS$}}\Pr(\neg\cE_i(\fS)) .
\end{equation}
Since $D_{\bp} > \max\{D_{\cT},1\}$, Lemma~\ref{lem:Nk:C} and Theorem~\ref{thm:Nk:E}
give $\Pr(\neg\cE_i(\fS)) \le n^{-\omega(1)}$ when $\fS$ is $(i/n)$-nice.
This, Lemma~\ref{lem:nice}, and a union bound over the $O(n)$ values of $i$ completes the proof.
\end{proof}
\begin{remark}
By combining the concentration result Theorem~\ref{thm:NkC} with the branching process results from Section~\ref{sec:bpresults} (see Theorems~\ref{thsurv-simple}--\ref{asyfull-simple}) and sprinkling (see Section~\ref{sec:SP}), we can easily prove that whp ${L_1(\tc n+\eps n)} \sim {\Pr(|\bp_{\tc+\eps}|=\infty)n}$ for $n^{-1/6+o(1)} \le \eps \le \eps_0$, say.
In Section~\ref{sec:L1:super} we shall use a more involved second moment argument to relax this assumption to the optimal condition $n^{-1/3} \ll \eps \le \eps_0$ (exploiting the sandwiching and domination arguments of Sections~\ref{sec:dom} and~\ref{sec:mom}).
\end{remark}
We now combine the concentration result above with the branching process results from Section~\ref{sec:bpresults}.
\begin{proof}[Proof of Theorem~\ref{thm:Nk}]
Let $\beta:=1/10$ and $\tau := n^{-\beta/3}$.
By Theorem~\ref{asyfull-simple} and Corollary~\ref{cor:Nk}, we have
\begin{equation}\label{eq:rhokasy}
\rho_k(t) =  \Pr(|\bp_t|=k) = (1+O(1/k)) \indic{k \in \cSR} k^{-3/2} \theta(t) e^{-\psi(t) k} 
\end{equation}
uniformly over all $k\ge 1$ and $t\in [\tc-\eps_0,\tc+\eps_0]$, where the functions $\theta$ and $\psi$ are analytic. 
As discussed in Section~\ref{sec:period}, if $k \not\in \cSR$, then $N_k(i)=0$ holds with probability one for all $i \ge 0$. 
Due to the indicator in the above bound on~$\rho_k(t)$, 
it follows that~\eqref{eq:Nk} holds trivially for~$k \not\in \cSR$.

We now focus on the main case $k \in \cSR$. 
Aiming at comparing the additive errors in Theorem~\ref{thm:NkC} with the above bound on~$\rho_k(t)$, 
note that by Theorem~\ref{asyfull-simple} we have $a:=\psi''(\tc)>0$, $\psi(\tc)=\psi'(\tc)=0$ and $b:=\theta(\tc)>0$.  
Hence, after decreasing~$\eps_0$ if necessary, we crudely have $\psi(t) \le a\eps^2$ and $\theta(t) \ge b/2>0$ for $t\in [\tc-\eps_0,\tc+\eps_0]$. 
When $1 \le k \le n^{\beta}$ and $a\eps^2k\le\beta\log n$, we thus have 
\[
 \frac{k n^{1/2}}{k^{-5/2}e^{-\psi(t)k}n} \le k^{7/2}e^{a\eps^2 k}n^{-1/2} \le n^{9\beta/2-1/2} = o(n^{-\beta/3-1/1000}) ,
\]
say.
Let $K_0$ be a constant such that for all $k\ge K_0$ the $O(1/k)$ error term in~\eqref{eq:rhokasy} is at most $0.1$ in magnitude.
Since $\theta(t) \ge b/2>0$, it follows that $k (\log n)^{D_{\bp}}n^{1/2} = o(\tau/k) \cdot \rho_k(t)n$ for $k \in \cSR \setminus [K_0]$ and $t\in [\tc-\eps_0,\tc+\eps_0]$. 
Using Theorem~\ref{thm:NkC}, this establishes~\eqref{eq:Nk} for $k \in \cSR \setminus [K_0]$. 

We now turn to the remaining case $k \in \cK := \cSR \cap [K_0]$ of~\eqref{eq:Nk}. 
By Lemma~\ref{lem:rhokS1}, for each $k\in \cK$ we have $\rho_k(\tc)>0$. 
Since each $\rho_k(t)$ is continuous, after decreasing~$\eps_0$ if necessary,
there is some constant $c>0$ such that $\rho_k(t)\ge c$ for all $k\in \cK$ and $t\in[\tc-\eps_0,\tc+\eps_0]$.  
Hence $k (\log n)^{D_{\bp}}n^{1/2} = o(\tau/k) \cdot \rho_k(t)n$ for~$k \in \cK$ and~$t\in [\tc-\eps_0,\tc+\eps_0]$, 
which by Theorem~\ref{thm:NkC} completes the proof of~\eqref{eq:Nk}. 

Finally, we omit the similar (but simpler) proof of~\eqref{eq:Ngek}. 
\end{proof}

\subsection{Size of the largest component}\label{sec:L1} 
In this subsection we prove Theorems~\ref{thm:L1sub} and~\ref{thm:L1}, i.e., estimate the size $L_1(i)$ of the largest component after~$i=\tc n \pm \eps n$ steps. 
Our arguments use the following three ideas:
(i)~that we can typically sandwich~$G_i$ between two Poissonized random graphs~$\JPpm_i$ from Section~\ref{sec:dom:sw},
(ii)~that we have $L_1(\JPm_i) \le L_1(G_i) \le L_1(\JPp_i)$ by sandwiching and monotonicity, and
(iii)~that we can estimate the typical size of $L_1(\JPpm_i)$ by first and second moment arguments combined with `sprinkling', exploiting that the component size distribution has an exponential cutoff after size~$\eps^{-2}$.

\subsubsection{The subcritical case (Theorems~\ref{thm:L1sub} and~\ref{thm:L1})}\label{sec:L1:sub}
In this subsection we estimate the size of the $r$-th largest component in the subcritical case $i=\tc n-\eps n$ (for constant~$r$). 
Before giving the technical details, let us first sketch the high-level proof structure of Theorem~\ref{thm:L1sub}, ignoring the difference between~$G_i$ and~$\JPpm_i$ for simplicity.
Note that for any $r \ge 1$ we have 
\begin{equation}\label{eq:Lr:mon}
\begin{split}
\Pr\bb{L_r(i) \notin (\Lambda^-,\Lambda^+)} &\le \Pr\bb{L_1(i) \ge \Lambda^+} + \Pr\bb{L_r(i) \le \Lambda^- \text{ and } L_1(i) < \Lambda^+} \\
& \le \Pr\bb{N_{\ge \Lambda^+}(i) \ge \Lambda^+} + \Pr\bb{N_{\ge \Lambda^-}(i) \le r\Lambda^+} .
\end{split}
\end{equation}
With the exponential decay of Lemma~\ref{lem:NkE:sub} in mind, the basic idea is now to pick $\Lambda^- \approx \Lambda^+$ such that,
roughly speaking,
$\E N_{\ge \Lambda^-}(i) \gg \Lambda^+ \gg \E N_{\ge \Lambda^+}(i)$. 
By Markov's inequality this will give
\begin{equation*}
\Pr\bb{N_{\ge \Lambda^+}(i) \ge \Lambda^+} \le \frac{\E N_{\ge \Lambda^+}(i)}{\Lambda^+} = o(1) .
\end{equation*}
Furthermore, $X=N_{\ge \Lambda^-}(i)-N_{\ge \Lambda^+}(i)$ will satisfy $\E X \approx \E N_{\ge \Lambda^-}(i) \gg \Lambda^+ \gg \E N_{\ge \Lambda^+}(i)$. From Chebychev's inequality and
the variance estimate of Lemma~\ref{lem:Nkvar:sub}, we will then obtain
\begin{equation}\label{eq:Lr:mon:LB}
\Pr\bb{N_{\ge \Lambda^-}(i) \le r\Lambda^+} \le \Pr\bb{X \le r \Lambda^+} \le \Pr\bb{X \le \E X/2} \le \frac{\E X \cdot O(\Lambda^+)}{(\E X)^2} = \frac{O(\Lambda^+)}{\E N_{\ge \Lambda^-}(i)} = o(1) .
\end{equation}
The proofs below make the outlined argument precise.
\begin{proof}[Proof of Theorem~\ref{thm:L1sub}]%
For $x=x(n)$ satisfying $1 \le x \le \log \log \log(\eps^3 n)$, say, set
\[
 \Lambda^{\pm} := \psi(\tc-\eps)^{-1}\Bigl(\log (\eps^3 n) -\tfrac{5}{2}\log \log(\eps^3 n) \pm x\Bigr) .
\]
Since $\psi(\tc-\eps)=\Theta(\eps^2)$ and $\eps^3n \to \infty$,
routine calculations yield
$\eps^{-2} \ll \Lambda^-\sim \Lambda^+ \ll \min\{n^{2/3},n^{1/3}/\eps\}$ (recall that $a_n \ll b_n$ means $a_n = o(b_n)$, 
cf.~Remark~\ref{rem:notation}). 
Furthermore, by the choice of $\Lambda^{\pm}$ we have
\[
 e^{-\psi(\tc-\eps)\Lambda^{\pm}} = \Theta\left(\frac{(\log(\eps^3n))^{5/2}e^{\mp x}}{\eps^3n}\right)
 = \Theta\left( \eps^2n^{-1}(\Lambda^\pm)^{5/2} e^{\mp x} \right).
\]
Similar to the argument for~\eqref{eq:transfer}, using Lemma~\ref{lem:cond'}
and writing $t=i/n$ we obtain 
\begin{equation*}
\Pr\bb{L_r(i) \notin (\Lambda^-,\Lambda^+) \text{ and } \cN_i} \le \max_{\text{$t$-nice $\fS$}} \Pr\bb{L_r(J(\fS)) \notin (\Lambda^-,\Lambda^+)} .
\end{equation*}
Combining the sandwiching of Lemma~\ref{lem:cpl} with the idea of~\eqref{eq:Lr:mon}, and writing $\JPpm=\JP(\fS^{\pm}_{t})$ for brevity,
using monotonicity we arrive at 
\begin{equation}\label{eq:transfer3}
\Pr\bb{L_r(i) \notin (\Lambda^-,\Lambda^+) \text{ and } \cN_i} \le n^{-\omega(1)} + \max_{\text{$t$-nice $\fS$}} \Bigl[\Pr\bb{N_{\ge \Lambda^+}(\JPp) \ge \Lambda^+} + \Pr\bb{N_{\ge \Lambda^-}(\JPm) \le r\Lambda^+}\Bigr].
\end{equation}
By Lemma~\ref{lem:NkE:sub} and the fact that $\Lambda^-\sim\Lambda^+$, for $\eps$ small enough we have
\begin{equation}\label{eq:ENkpm}
\E N_{\ge \Lambda^{\pm}}(\JPpm) = \Theta(1) \cdot \frac{e^{-\psi(\tc-\eps)\Lambda^{\pm}}n}{\eps^2(\Lambda^{+})^{5/2}} \cdot \Lambda^+ \pm n^{-\omega(1)} = \Theta(e^{\mp x} \Lambda^{+}) . 
\end{equation}
By Markov's inequality, it follows that
\begin{equation}\label{eq:MICI:1}
 \Pr\bb{N_{\ge \Lambda^+}(\JPp) \ge \Lambda^+}   \le \frac{\E N_{\ge \Lambda^+}(\JPp)}{\Lambda^+}  =  O(e^{-x}).
\end{equation}
Moreover, from \eqref{eq:ENkpm}, for $x$ sufficiently large (depending on the constant $r$) we have
$\E N_{\ge \Lambda^{+}}(\JPm) \le\Lambda^+$
and $\E N_{\ge \Lambda^{-}}(\JPm) \ge 4r\Lambda^+$. Let $X=N_{\ge \Lambda^{-}}(\JPm) - N_{\ge \Lambda^{+}}(\JPm)$.
Then $\E X \ge \E N_{\ge \Lambda^{-}}(\JPm)/2 \ge 2r\Lambda^+$.
By Lemma~\ref{lem:Nkvar:sub} we have
\begin{equation*}
  \Var X\le \E X \bb{ \E N_{\ge \Lambda^{+}}(\JPm) + \Lambda^+ } \le 2 \Lambda^+ \E X.
\end{equation*}
Proceeding analogously to~\eqref{eq:Lr:mon:LB}, using Chebychev's inequality, the variance bound above, and~\eqref{eq:ENkpm}, it follows that
\begin{equation}\label{eq:MICI:2}
 \Pr\bb{N_{\ge \Lambda^-}(\JPm) \le r\Lambda^+}  \le  \Pr\bb{X \le \E X/2}
 \le  \frac{O(\Lambda^+)}{\E N_{\ge \Lambda^-}(\JPm)} =  O(e^{-x}).
\end{equation}
The result follows from~\eqref{eq:transfer3}, the bounds \eqref{eq:MICI:1} and \eqref{eq:MICI:2},
and Lemma~\ref{lem:nice}.
\end{proof}
Next we estimate the sizes of the~$r$ largest components in \emph{every} subcritical step by a similar (but more involved) argument. 
For Theorem~\ref{thm:L1} the idea is to consider the graphs $G_{m_j}$ at a \emph{decreasing} sequence of intermediate steps $m_j=(\tc-\eps_j)n$, where $\eps_1^3n = \omega$;
we index from $j=1$ since $\eps_0$ plays a different role -- as usual it is a (small) constant upper bound on values
of $\eps=|i/n-\tc|$ that we consider.
We shall show that typically $L_1(m_j) \le \Lambda^+_j$ and $N_{\ge \Lambda^-_{j+1}}(m_{j+1}) \ge r \Lambda^+_j$ for all $j \ge 1$ with $\eps_j \le \eps_0$.
For steps $m_{j+1} \le i \le m_j$ we then argue similarly to~\eqref{eq:Lr:mon}: by monotonicity we have 
\begin{equation}\label{eq:L1:sub:mon:UB}
L_1(i) \le L_1(m_{j}) \le \Lambda^+_{j}, 
\end{equation}
which together with $N_{\ge \Lambda^-_{j+1}}(i) \ge N_{\ge \Lambda^-_{j+1}}(m_{j+1}) \ge r \Lambda^+_{j}$ then in turn implies 
\begin{equation}\label{eq:L1:sub:mon:LB}
L_r(i) \ge \Lambda^{-}_{j+1}. 
\end{equation}
The next proof implements this strategy, using parameters~$\eps_j \approx \eps_{j+1}$ and~$\Lambda^+_j \approx \Lambda^-_{j+1}$ that make the corresponding error probabilities summable. 
\begin{proof}[Proof of Theorem~\ref{thm:L1} (subcritical phase)]%
For concreteness, let
\begin{equation}
\label{def:xi}
\xi  = \xi(n) := (\log \omega)^{-2/3},
\end{equation}
so that $\xi \to 0$ as $n \to \infty$. 
To ensure $\eps_j \le \eps_0$, we define $j_0=j_0(n,\omega,\xi,\eps_0)$ as the smallest $j \in \NN$ such that $\omega^{1/3}n^{-1/3}(1+\xi)^{j-1} \ge \eps_0$. 
For all $j \ge 1$ we set 
\begin{align}
\label{def:lambdaj}
\eps_j & :=
\begin{cases}
	\omega^{1/3}n^{-1/3}(1+\xi)^{j-1} , & ~~\text{if $j < j_0$}, \\
	\eps_0  , & ~~\text{if $j \ge j_0$}, 
\end{cases}\\
\label{def:Lj}
\Lambda^{\pm}_j & := (1 \pm \xi) \psi(\tc-\eps_j)^{-1}\log (\eps_j^3 n) ,\\
\label{def:mj}
m_j & :=(\tc-\eps_j)n .
\end{align}
Since $\eps_j^3n\ge \eps_1^3n=\omega\to\infty$, as in the proof of Theorem~\ref{thm:L1sub} we have 
$\eps_j^{-2} \ll \Lambda^{\pm}_j \ll \min\{n^{2/3},n^{1/3}/\eps_j\}$. 
Moreover, since $\psi(\tc-\eps)=\Theta(\eps^{-2})$, by choice of $\Lambda_j^\pm$ we have
\begin{equation}\label{eq:LJpmasy}
 \eps_j^{-2}(\Lambda_j^\pm)^{-5/2}e^{-\psi(\tc-\eps_j)\Lambda_j^\pm n}n
 = \Theta\bb{ \eps_j^{-2}(\eps_j^{-2}\log(\eps_j^3n))^{-5/2} (\eps_j^3n)^{-(1\pm\xi)}n }
 = \Theta\bb{ (\log(\eps_j^3n))^{-5/2} (\eps_j^3n)^{\mp\xi}}.
\end{equation}

Since $\psi(\tc)=\psi'(\tc)=0$ and $\psi''(\tc)>0$, the Mean Value Theorem implies that for all
$\eps,\eps' \in [\eps_{j}, \eps_{j+1}]$ with $j \ge 1$ we have
\[
|\psi(\tc-\eps)-\psi(\tc-\eps')| \le |\eps_{j+1}-\eps_{j}| \cdot O(\eps) = O(\xi \eps) \cdot O(\eps)= O(\xi) \cdot  \psi(\tc-\eps) ,
\]
where the implicit constant does \emph{not} depend on~$j$. 
It follows that there exists a universal constant $d>0$ such that for all $\eps_{j} \le \eps \le \eps_{j+1}$ with $j \ge 1$ we have
\begin{align}
\label{def:Lj1:UB}
\Lambda^{+}_{j} & \le (1 +d \xi) \cdot \psi(\tc-\eps)^{-1}\log (\eps^3 n) ,\\
\label{def:Lj1:LB}
\Lambda^{-}_{j+1} & \ge (1 -d \xi) \cdot \psi(\tc-\eps)^{-1}\log (\eps^3 n) .
\end{align}
In view of these bounds and the proof strategy outlined above, it thus suffices to prove that whp the following event~$\cE$ holds: 
\begin{equation}
\label{def:Lj1:suff}
L_1(m_j) \le \Lambda^+_j \quad \text{and} \quad N_{\ge \Lambda^-_{j+1}}(m_{j+1}) \ge r \Lambda^+_j \quad \text{for all $1 \le j < j_0$.}
\end{equation}
Indeed, arguing as for~\eqref{eq:L1:sub:mon:UB}--\eqref{eq:L1:sub:mon:LB} above, if $\cE$ holds
then $\Lambda_{j+1}^-\le L_r(i)\le L_1(i)\le \Lambda_j^+$ for all $m_{j+1}\le i\le m_j$ with $1 \le j < j_0$, which,
in view of \eqref{def:Lj1:UB}--\eqref{def:Lj1:LB} implies~\eqref{eq:sub:Lj} with $\tau=(\log \omega)^{-1/2} \gg d\xi$, say.

As we shall see, the proof of $\Pr(\neg\cE)=o(1)$ is similar to the proof of Theorem~\ref{thm:L1sub}, but here we have more elbow room. 
By Lemma~\ref{lem:NkE:sub} and the estimate \eqref{eq:LJpmasy},
recalling that $\eps_j^3n\ge \eps_1^3n=\omega\to\infty$ and $\omega^{\xi/3} \to \infty$, there is a constant $D>0$
such if $n$ is large enough, then for all $1\le j\le j_0$ and all $(m_j/n)$-nice parameter lists $\fS$,
setting $\JPpm_{m_j}=\JP(\fS^{\pm}_{m_j/n})$ we have
\begin{equation} \label{eq:NKj:UB}
\frac{\E N_{\ge \Lambda^+_j}(\JPpm_{m_j})}{\Lambda^+_j} \le \frac{D}{\bigl(\log(\eps_j^3n)\bigr)^{5/2} (\eps_j^3n)^\xi} \le \frac{1}{(\eps_j^3n)^{\xi/3}} \to 0
\end{equation}
and
\begin{equation} \label{eq:NKj:LB}
 \frac{\E N_{\ge \Lambda^-_j}(\JPm_{m_j})}{\Lambda^-_j} \ge \frac{(\eps_j^3n)^\xi}{D \bigl(\log(\eps_j^3n)\bigr)^{5/2}} \ge (\eps_j^3n)^{\xi/3} \to\infty.
\end{equation}
Recalling that $L_1(\JPp_{m_j}) \ge \Lambda^+_j$ implies $N_{\ge \Lambda^+_j}(\JPp_{m_j}) \ge \Lambda^+_j$,
by \eqref{eq:NKj:UB} and Markov's inequality we have
\begin{equation}\label{eq:MICI2:1}
 \max_{\text{$(m_j/n)$-nice $\fS$}} \Pr\bigl(L_1(\JPp_{m_j}) \ge \Lambda^+_j\bigr)  \le \frac{1}{(\eps_j^3n)^{\xi/3}}.
\end{equation}

Arguing as for~\eqref{eq:MICI:2}, given a parameter list $\fS$ which is $(m_j/n)$-nice,
let $\JPm_{m_j}=\JP(\fS^-_{m_j/n})$, and let
\[
X_j := N_{\ge \Lambda_j^-}(\JPm_{m_j}) - N_{\ge \Lambda_j^+}(\JPm_{m_j}).
\]
Then by Lemma~\ref{lem:Nkvar:sub} and the crude final estimate in \eqref{eq:NKj:UB}, for $n$ large enough we have
\[
   \Var X_j\le \E X_j \bb{ \E N_{\ge \Lambda_j^{+}}(\JPm_{m_j}) + \Lambda_j^+ } \le 2 \Lambda_j^+ \E X_j.
\]
Since $\Lambda_{j-1}^+\sim \Lambda_j^+\sim \Lambda_j^-$, the estimate \eqref{eq:NKj:LB} easily implies
$\E X_j \ge \E N_{\ge \Lambda^-_j}(\JPm_{m_j})/2 \ge 2r\Lambda_{j-1}^+$ (for $n$ large). Hence
by Chebychev's inequality we have
\begin{equation}\label{eq:NLj-}
 \Pr\bigl(N_{\ge \Lambda^-_j}(\JPm_{m_j}) \le r\Lambda^+_{j-1}\bigr) \le \Pr\bb{X_j\le \E X_j/2} \le
 \frac{4\Var X_j}{(\E X_j)^2} \le \frac{8\Lambda_j^+}{\E X_j} \le \frac{16\Lambda_j^+}{\E N_{\ge \Lambda_j^{-}}(\JPm_{m_j})}
 \le \frac{17}{(\eps_j^3n)^{\xi/3}},
\end{equation}
say, using \eqref{eq:NKj:LB} in the last step.
Arguing as for \eqref{eq:transfer3} (using sandwiching and the idea of~\eqref{eq:Lr:mon}),
writing $\cN=\bigcap_{i_0\le i\le i_1}\cN_i$ for the event that every $\fS_i$ is $(i/n)$-nice,
from \eqref{eq:MICI2:1} and \eqref{eq:NLj-} we conclude that
\begin{equation*}
\Pr(\neg\cE \cap \cN) 
 \le \sum_{1 \le j \le j_0} \biggl[n^{-\omega(1)} + \frac{18}{(\eps_j^3n)^{\xi/3}}\biggr]
 \le \sum_{1 \le j \le j_0} \biggl[n^{-\omega(1)} + \frac{18}{\omega^{\xi/3} (1+\xi)^{(j-1)\xi}}\biggr],
\end{equation*}
recalling the definition of $\eps_j$ in the last step. The main term is a geometric progression with ratio
$(1+\xi)^{-\xi}=1-\Theta(\xi^2)$, so the sum is $O(\omega^{-\xi/3}\xi^{-2})=o(1)$ by choice of $\xi$.

This completes the proof since $\Pr(\neg\cN)=o(1)$ by Lemma~\ref{lem:nice}.
\end{proof}
Note that in the above proof we can allow for $r \to \infty$ at some slow rate (e.g., $r=\omega^{\xi/4}$ works readily).

\subsubsection{The supercritical case (Theorem~\ref{thm:L1})}\label{sec:L1:super} 
In this subsection we estimate the 
size of the largest component in the supercritical phase. 
We first outline the proof structure of Theorem~\ref{thm:L1} for step~$i=\tc n + \eps n$, ignoring several technicalities.
Given $\xi=o(1)$, let $i^*=\tc n + (1-\xi)\eps n$. 
From the variance estimate of Lemma~\ref{lem:NkE:sup} and the assumption $\eps^3n\to\infty$, we can eventually pick $\xi =o(1)$ and $(\xi\eps)^{-2} \ll \Lambda \ll \min\{n^{2/3}, n^{1/3}/\eps\}$ such that for $m \in \{i,i^*\}$ Chebychev's inequality yields
\begin{equation}\label{eq:L1:sup:SMM}
\Pr\bb{|N_{\ge \Lambda}(m)-\E N_{\ge \Lambda}(m)| \ge \xi \eps n} \le \frac{\Var N_{\ge \Lambda}(m)}{(\xi \eps n)^2} \le \frac{O(e^{-d_1 \eps^2\Lambda} + (\eps^{3}n)^{-1/3})}{\xi^2} = o(1) .
\end{equation}
Lemma~\ref{lem:NkE:sup} and continuity of $\Pr(|\bp_{t}|=\infty)$ thus suggest that for $m \in \{i,i^*\}$ whp 
\begin{equation}\label{eq:L1:sup:SMM:NL}
N_{\ge \Lambda}(m) \approx \E N_{\ge \Lambda}(m) \approx \Pr(|\bp_{m/n}|=\infty) n \approx \Pr(|\bp_{\tc+\eps}|=\infty) n = \Theta(\eps n).
\end{equation}
Now the upper bound for $L_1(i)$ is immediate by the standard observation that 
\begin{equation}\label{eq:L12}
 L_1(G)+L_2(G) \le N_{\ge \Lambda}(G)+2\Lambda
\end{equation}
for any graph $G$ and any $\Lambda\ge 1$.
Indeed, using $\Lambda = o(\eps n)$ we should whp have 
\begin{equation}\label{eq:L1L2:spr}
L_1(i) \le L_1(i) + L_2(i) \le N_{\ge \Lambda}(i) + 2 \Lambda \approx \Pr(|\bp_{\tc+\eps}|=\infty) n .
\end{equation}
For the lower bound we use `sprinkling', exploiting that $N_{\ge \Lambda}(i^*) \ge x= \Theta(\eps n)$ by~\eqref{eq:L1:sup:SMM:NL}. 
Applying Lemma~\ref{lem:SP}, using $\Delta_{\Lambda,x,\xi} = O(n^2/(\xi \Lambda x)) = o(\xi \eps n)$, $x/\Lambda = \Theta(\eps n/\Lambda) = \omega(1)$ and $i^*+\Delta_{\xi,\Lambda,x} \le i$ we expect that whp 
\[
L_1(i) \ge L_1(i^*+\Delta_{\xi,\Lambda,x}) \ge (1-\xi)N_{\ge \Lambda}(i^*) \approx \Pr(|\bp_{\tc+\eps}|=\infty) n ,
\]
which together with~\eqref{eq:L1L2:spr} also suggests $L_2(i) = o(L_1(i))$. 
Similar to the subcritical proof, for concentration in \emph{every} supercritical step, we shall use (a rigorous version of) the above line of reasoning for a carefully chosen \emph{increasing} sequence of intermediate steps~$m_j=(\tc+\eps_j)n$, relating $N_{\ge \Lambda}(m_{j-1})$ and $L_1(m_{j})$ via sprinkling. 
\begin{proof}[Proof of Theorem~\ref{thm:L1} (supercritical phase)]%
For concreteness and brevity, let
\begin{align}
\label{def:xi:sup}
\xi = \xi(n) & := (\log \omega)^{-1} ,\\
\label{def:rhox:sup}
\varphi(x) & :=  \Pr(|\bp_{\tc+x}|=\infty) .
\end{align}
Define $j_0=j_0(n,\omega,\xi,\eps_0)$ as the smallest $j \in \NN$ such that $\omega^{1/6}n^{-1/3}(1+\xi)^{j-1} \ge \eps_0$. 
For all $j \ge 1$ we set 
\begin{align}
\label{def:lambdaj:sup}
\eps_j & :=
\begin{cases}
	\omega^{1/6}n^{-1/3}(1+\xi)^{j-1} , & ~~\text{if $j < j_0$}, \\
	\eps_0  , & ~~\text{if $j \ge j_0$}, 
\end{cases}\\
\label{def:Lj:sup}
\Lambda_j & := \eps_j^{-2}\bigl(\log(\eps_j^3n)\bigr)^3  ,\\
\label{def:mj:sup}
m_j & :=(\tc+\eps_j)n .
\end{align}
Since $\eps_j^3n \ge \eps_1^3n =\omega^{1/2} \to \infty$, routine calculations yield
$\eps_j^{-2} \ll \Lambda_j \ll \min\{n^{2/3},n^{1/3}/\eps_j\}$.
By Theorem~\ref{thsurv-simple} there is a constant $c>0$ such that we have
\begin{equation}\label{eq:rho:ineq:sup}
 \varphi(\eps) \ge c \eps \quad \text{for all $0 \le \eps \le \eps_0$.}
\end{equation} 
Furthermore, $\varphi'$ is bounded on $[0,\eps_0]$, so for all $\eps_{j-1} \le \eps \le \eps_{j+1}$ with $j \ge 2$ the Mean Value Theorem yields
\[
|\varphi(\eps_{j\pm 1})-\varphi(\eps)| \le |\eps_{j+1}-\eps_{j-1}| \cdot O(1) = O(\xi \eps) = O(\xi) \cdot \varphi(\eps) ,
\]
using \eqref{eq:rho:ineq:sup} in the last step. Here the implicit constant does \emph{not} depend on~$j$. 
It follows that there exists a universal constant $d>0$ such that for all $\eps_{j-1} \le \eps \le \eps_{j+1}$ with $j \ge 2$ we have
\begin{equation}
\label{eq:Lj1:imv}
 (1-d \xi) \cdot \varphi(\eps) \le \varphi(\eps_{j\pm1}) \le (1 +d \xi) \cdot \varphi(\eps) .
\end{equation}
Note for later that, since $\eps_j^3n\ge \eps_1^3n=\omega^{1/2}$, for all $1\le j\le j_0$ we have
\begin{equation}\label{eq:Ljvphi}
 \frac{\Lambda_j}{\varphi(\eps_j) n} = \Theta\left( \frac{\Lambda_j}{\eps_j n} \right) = \Theta\left( \frac{(\log(\eps_j^3n))^3}{\eps_j^3n} \right)
 = O\left( \frac{(\log(\omega^{1/2}))^3}{\omega^{1/2}}\right) \le \omega^{-1/4},
\end{equation}
if $n$ is large enough.

Let  $\cE$ be the event that 
\begin{equation}\label{def:cE:sup}
 (1-\xi) \cdot \varphi(\eps_j)n \le N_{\ge \Lambda_j}(m_j) \le (1+\xi) \cdot \varphi(\eps_j)n \quad \text{for all $1 \le j \le j_0$.}
\end{equation}
To later use `sprinkling', for $c$ as in~\eqref{eq:rho:ineq:sup}, we define 
\begin{equation}
\label{def:xj}
x_j := c \eps_j n/2 .
\end{equation}
Recalling Lemma~\ref{lem:SP}, we now define $\cS_j := \cS_{m_j,\Lambda_j,x_j,\xi}$ and $\cS := \bigcap_{1 \le j \le j_0} \cS_j$.

\begin{claim}
If $\cE \cap \cS$ holds, then so do~\eqref{eq:super:L1}--\eqref{eq:super:L2}.
\end{claim}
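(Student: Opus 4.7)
The plan is to exploit the monotonicity of $L_1(i)$ and $L_1(i)+L_2(i)$ under edge additions, using the two-sided bounds on $N_{\ge\Lambda_j}(m_j)$ encoded by~$\cE$ together with the sprinkling events~$\cS_j$ to convert `many vertices in large components' into `one giant component'. First I would fix a step~$i$ with $\eps := i/n-\tc$ satisfying $\eps^3n\ge\omega$ and $\eps\le\eps_0$, and locate the index~$j$ with $\eps_j \le \eps \le \eps_{j+1}$ (interpreting $\eps_{j_0+1}:=\eps_0$ if needed). Since $\eps \ge \omega^{1/3}n^{-1/3} = \omega^{1/6}\eps_1$ and $(1+\xi)^{j-1} = \eps_j/\eps_1$ for $j<j_0$, the ratio $\eps/\eps_1\ge\omega^{1/6}$ forces~$j\ge 2$ for $\omega$ large, so $m_{j-1}$, $m_{j+1}$ and the sprinkling event $\cS_{j-1}$ are all available.

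For the upper bound I would use that $L_1+L_2$ is monotone non-decreasing under edge additions (a short case analysis on the type of merge confirms this), together with the elementary identity $L_1(G)+L_2(G) \le N_{\ge\Lambda}(G)+2\Lambda$. Applied at step $m_{j+1} \ge i$ with $\Lambda = \Lambda_{j+1}$, the upper half of~$\cE$ and the comparisons \eqref{eq:Lj1:imv} and \eqref{eq:Ljvphi} then give
\begin{equation*}
 L_1(i)+L_2(i) \: \le \: N_{\ge\Lambda_{j+1}}(m_{j+1}) + 2\Lambda_{j+1} \: \le \: \bigl[(1+\xi)(1+d\xi) + 2\omega^{-1/4}(1+d\xi)\bigr]\varphi(\eps)n.
\end{equation*}

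For the lower bound I would sprinkle from step $m_{j-1}$. The lower half of~$\cE$ combined with~\eqref{eq:rho:ineq:sup} yields $N_{\ge\Lambda_{j-1}}(m_{j-1}) \ge (1-\xi)c\eps_{j-1}n \ge x_{j-1}$ once $\xi<1/2$, which is exactly the hypothesis of~$\cS_{j-1}$, and therefore $L_1(m_{j-1}+\Delta_{\Lambda_{j-1},x_{j-1},\xi}) \ge (1-\xi)^2\varphi(\eps_{j-1})n$. The main obstacle is verifying that the sprinkling length fits inside the gap $m_j - m_{j-1} = \xi\eps_{j-1}n$; unpacking the definitions gives
\begin{equation*}
 \frac{\Delta_{\Lambda_{j-1},x_{j-1},\xi}}{m_j - m_{j-1}} \: = \: \Theta\Bigl( \frac{1}{\xi^2\bigl(\log(\eps_{j-1}^3n)\bigr)^3}\Bigr) \: = \: O\Bigl(\frac{1}{\log\omega}\Bigr) \: = \: o(1),
\end{equation*}
which is precisely what motivates the cubic-log factor in the definition~\eqref{def:Lj:sup} of~$\Lambda_j$. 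Hence $m_{j-1}+\Delta_{\Lambda_{j-1},x_{j-1},\xi} \le m_j \le i$ for $n$ large, and monotonicity of $L_1$ together with~\eqref{eq:Lj1:imv} yields $L_1(i) \ge (1-\xi)^2(1-d\xi)\varphi(\eps)n$.

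Finally I would conclude using $\varphi(\eps) = \rho(\tc+\eps)$ (Corollary~\ref{cor:rhosr} with Theorem~\ref{thsurv-simple}) together with the choices $\xi = (\log\omega)^{-1}$ and $\tau = (\log\omega)^{-1/2}$, noting that $\xi + \omega^{-1/4} = o(\tau)$. Combining the upper and lower bounds on $L_1(i)$ gives $L_1(i) = (1\pm\tau)\rho(\tc+\eps)n$, which is~\eqref{eq:super:L1}. Subtracting the lower bound on $L_1(i)$ from the upper bound on $L_1(i)+L_2(i)$ yields $L_2(i) \le O(\xi+\omega^{-1/4})\varphi(\eps)n \le \tau L_1(i)$, which is~\eqref{eq:super:L2}.
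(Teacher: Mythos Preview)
Your proof is correct and follows essentially the same approach as the paper: locate $j$ with $m_j\le i\le m_{j+1}$, use the elementary bound $L_1+L_2\le N_{\ge\Lambda}+2\Lambda$ at $m_{j+1}$ together with $\cE$ for the upper bound, and sprinkle from $m_{j-1}$ (checking that the sprinkling length fits into the gap $m_j-m_{j-1}$) for the lower bound. The only cosmetic difference is that you invoke monotonicity of $L_1+L_2$ to pass from step~$i$ to step~$m_{j+1}$, whereas the paper applies the elementary bound at step~$i$ and then uses the (more obvious) monotonicity of $N_{\ge\Lambda}$; both routes yield the same inequality.
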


To establish the claim, suppose that $\cE$ and $\cS$ hold, and let $i$ be such that $\eps=i/n-\tc$ satisfies $\eps^3n\ge \omega$ and $\eps\le \eps_0$.
Then from the definition of $j_0$ and the fact that $\eps_2^3n=(1+\xi)^3\omega^{1/2}\le\omega$, there is some $2\le j< j_0$
such that $m_j\le i\le m_{j+1}$.
From \eqref{eq:Ljvphi} we have $\Lambda_{j+1} \le \omega^{-1/4}\varphi(\eps_{j+1}) = o(\xi) \cdot \varphi(\eps_{j+1})$, say. 
So, using \eqref{eq:L12}, monotonicity, \eqref{def:cE:sup} and~\eqref{eq:Lj1:imv}, we have
\begin{equation}\label{eq:L1L2:sup}
\begin{split}
L_1(i)  \le L_1(i) + L_2(i) & \le N_{\ge \Lambda_{j+1}}(i) + 2\Lambda_{j+1} \le N_{\ge \Lambda_{j+1}}(m_{j+1}) + 2\Lambda_{j+1} \\
& \le (1+2\xi) \cdot \varphi(\eps_{j+1})n \le \bigl(1+(d+3)\xi\bigr) \cdot \varphi(\eps)n.
\end{split}
\end{equation}
From the lower bound in \eqref{def:cE:sup} and~\eqref{eq:rho:ineq:sup}, we have $N_{\ge \Lambda_{j-1}}(m_{j-1}) \ge c \eps_{j-1} n/2 = x_{j-1}$.
Since $\eps_{j-1}^3 n \ge \omega^{1/2} \to \infty$, using~\eqref{def:xi:sup} it is not difficult to check that the parameter
$\Delta_{\Lambda,x,\xi}=\Theta(n^2/(\xi\Lambda x))$ appearing in Lemma~\ref{lem:SP} satisfies 
\[
\Delta_{\Lambda_{j-1},x_{j-1},\xi}  = \frac{\Theta(n^2)}{\xi \Lambda_{j-1} x_{j-1}} = \frac{O(\eps_{j-1} n)}{\xi (\log \omega)^3} = o(\xi \eps_{j-1} n) < (\eps_j-\eps_{j-1}) n = m_j - m_{j-1} .
\]
Since the `sprinkling event'~$\cS_{j-1}=\cS_{m_{j-1},\Lambda_{j-1},x_{j-1},\xi}$, holds, from~\eqref{def:cE:sup} and~\eqref{eq:Lj1:imv} we thus deduce that 
\begin{equation}\label{eq:L1:sup}
\begin{split}
L_1(i) \ge L_1(m_j) & \ge  L_1(m_{j-1}+\Delta_{\Lambda_{j-1},x_{j-1},\xi}) \ge (1-\xi) \cdot N_{\ge \Lambda_{j-1}}(m_{j-1})\\
& \ge (1-2\xi) \cdot \varphi(\eps_{j-1})n \ge \bigl(1-(d+3)\xi\bigr) \cdot \varphi(\eps)n.
\end{split}
\end{equation}
Combining~\eqref{eq:L1L2:sup} and~\eqref{eq:L1:sup}, we have
\begin{equation}\label{eq:L2:sup}
L_2(i) \le 2(d+3)\xi \cdot \varphi(\eps)n \le 4 (d+3)\xi \cdot L_1(i) .
\end{equation}
Together,~\eqref{eq:L1L2:sup}--\eqref{eq:L2:sup} readily establish~\eqref{eq:super:L1}--\eqref{eq:super:L2} with $\tau=(\log \omega)^{-1/2} \gg 4 (d+3)\xi$, say, completing the proof of the claim.

Having proved the claim, it remains only to show that $\Pr(\neg\cE)=o(1)$ and $\Pr(\neg\cS)=o(1)$. 
As in previous subsections, by a simple application of our conditioning and sandwiching results (Lemmas~\ref{lem:cond'} and~\ref{lem:cpl}),
writing $\JPpm_{m_j}=\JP(\fS^{\pm}_{m_j/n})$ it follows that
\begin{equation}\label{eq:prEN:sup}
\Pr(\neg\cE \cap \cN) \le \sum_{1 \le j \le j_0}\biggl[n^{-\omega(1)} + 2 \max_{\text{$(m_j/n)$-nice $\fS$}} \: \max_{\JPpm_{m_j} \in \{\JPm_{m_j},\JPp_{m_j}\}} \Pr\bigl(|N_{\ge \Lambda_j}(\JPpm_{m_j})-\varphi(\eps_j)n| > \xi \varphi(\eps_j)n\bigr) \biggr] .
\end{equation}
To avoid clutter, we henceforth tacitly assume that~$\fS$ is $(m_j/n)$-nice. 
Since $\eps_j^2 \Lambda_j = (\log(\eps^3_j n))^3$ and $\eps_j^3 n \ge \omega^{1/2} \to \infty$, it is not difficult to check that
\begin{equation}\label{eq:lK:cond}
 e^{-d_1\eps_j^2 \Lambda_j} + (\eps_j^{3} n)^{-1/3} 
 \le \frac{2}{(\eps_j^3 n)^{1/3}} = o(\xi) ,
\end{equation}
where the constant $d_1>0$ is as in Lemma~\ref{lem:NkE:sup}.   
Recalling $\varphi(\eps_j)=\Pr(|\bp_{\tc+\eps_j}|=\infty) \ge c \eps_j$, see~\eqref{def:rhox:sup} and~\eqref{eq:rho:ineq:sup}, using~\eqref{eq:lK:cond} and Lemma~\ref{lem:NkE:sup} we infer that, say, 
\begin{equation*}
\Pr\Bigl(\bigl|N_{\ge \Lambda_j}(\JPpm_{m_j})-\varphi(\eps_j)n\bigr| > \xi \varphi(\eps_j)n\Bigr) \le \Pr\Bigl(\bigl|N_{\ge \Lambda_j}(\JPpm_{m_j})-\E N_{\ge \Lambda_j}(\JPpm_{m_j})\bigr| \ge c\xi\eps_j n/2\Bigr) .
\end{equation*}
Similar to~\eqref{eq:L1:sup:SMM}, using Chebychev's inequality, \eqref{eq:lK:cond}, and the variance estimate of Lemma~\ref{lem:NkE:sup} it now follows that there is a constant $C>0$ such that 
\begin{equation}\label{eq:Nk:sup:2}
\Pr\bigl(|N_{\ge \Lambda_j}(\JPpm_{m_j})-\varphi(\eps_j)n| \ge \xi \varphi(\eps_j)n\bigr) \le \frac{\Var N_{\ge \Lambda_j}(\JPpm_{m_j})}{(c\xi\eps_j n/2)^2} \le \frac{C}{\xi^2(\eps_j^3n)^{1/3}}.
\end{equation}
Substituting~\eqref{eq:Nk:sup:2} and $\eps_j^3n = \omega^{1/2}(1+\xi)^{3(j-1)}$ into~\eqref{eq:prEN:sup},
using $\sum_{\ell\ge 0} (1+\xi)^{-\ell}=O(\xi^{-1})$ we obtain
\begin{equation}\label{eq:prEN:sup:1}
\Pr(\neg\cE \cap \cN) \le n^{-\omega(1)} + \sum_{j \ge 1}\frac{2C}{\xi^2\omega^{1/6}(1+\xi)^{j-1}} \le n^{-\omega(1)} + \frac{O(1)}{\omega^{1/6}\xi^3} = o(1) .
\end{equation}
Since $\Pr(\neg\cN)=o(1)$ by~Lemma~\ref{lem:nice}, this establishes $\Pr(\neg\cE)=o(1)$. 
It remains to prove $\Pr(\neg\cS)=o(1)$. 
Since $\eps_j^3 n \ge \omega^{1/2} \to \infty$, using~\eqref{def:Lj:sup} and~\eqref{def:xj} it is routine to see that, say,
\[
\frac{x_j}{\Lambda_j} = \frac{\Theta(\eps^3_j n)}{\bigl(\log(\eps_j^3n)\bigr)^3} \ge \bigl(\log(\eps_j^3n)\bigr)^2 .
\]
Using Lemma~\ref{lem:SP} and $\eps_j^3 n = \omega^{1/2}(1+\xi)^{3j}$, for some constant $\eta>0$ we thus obtain
\begin{equation*}
\Pr(\neg\cS) \le \sum_{1 \le j \le j_0} \Pr(\neg \cS_{m_j,\Lambda_j,x_j,\xi}) \le \sum_{1 \le j \le j_0}\biggl[ \exp(-\eta x_j/\Lambda_j) + n^{-\omega(1)} \biggr] \le \sum_{1 \le j \le j_0}\frac{1}{\eps_j^3 n} + n^{-\omega(1)} = o(1), 
\end{equation*}
arguing as for~\eqref{eq:prEN:sup:1} in the final step. This was all that remained to complete the proof of Theorem~\ref{thm:L1}.
\end{proof}

\subsection{Susceptibility}\label{sec:Sj} 
In this subsection we prove Theorem~\ref{thm:Sj}, i.e., estimate the ($r$th order) susceptibility $S_r(i)=S_{r,n}(G_i)$ in the subcritical case $i=\tc n-\eps n$ (see~\eqref{eq:def:SrG} for the definition of the modified parameter~$S_{r,n}$). 
Similar to Section~\ref{sec:L1}, our arguments use the following two ideas:
(i)~that we typically have $S_{r,n}(\JPm_i) \le S_{r,n}(G_i) \le S_{r,n}(\JPp_i)$ by sandwiching and monotonicity, and
(ii)~that we can estimate the typical value of $S_{r,n}(\JPpm_i)$ by a second moment argument.

Ignoring the difference between $G_i$ and $\JPpm_i$ (and some other technical details), for Theorem~\ref{thm:Sj} the basic line of reasoning is as follows.
Using the variance estimate of Lemma~\ref{lem:SrE:sub} and the assumption $\eps^3n \to \infty$, the idea is that we can eventually pick $\xi = o(1)$ such that Chebychev's inequality intuitively gives
\begin{equation*}
\Pr\bb{|S_{r,n}(i)-\E S_{r,n}(i)| \ge \xi \E S_{r,n}(i)} \le \frac{\Var S_{r,n}(i)}{\bigl(\xi \E S_{r,n}(i)\bigr)^2} \le \frac{O(1)}{\xi^2 \eps^3 n} = o(1) .
\end{equation*}
To prove bounds in \emph{every} subcritical step, analogous to Section~\ref{sec:L1} we consider a \emph{decreasing} sequence of intermediate steps $m_j=(\tc-\eps_j) n$. 
Using (a rigorous version of) the above reasoning we show that typically
$\LS^{-}_{r,j} \le S_{r,n}(m_j) \le \LS^{+}_{r,j}$ for suitable $\LS^{\pm}_{r,j}$, which by monotonicity (see Remark~\ref{rem:Snr}) then translates into bounds for every step.
\begin{proof}[Proof of Theorem~\ref{thm:Sj}]
Fix $r\ge 2$. For concreteness, let, 
\begin{equation}\label{def:Sj:xi}
\xi = \xi(n) := \omega^{-1/4} ,
\end{equation}
so that $\xi \to 0$ as $n \to \infty$. 
To ensure $\eps_j \le \eps_0$, we define $j_0=j_0(n,\omega,\xi,\eps_0)$ as the smallest $j \in \NN$ such that $\omega^{1/3}n^{-1/3}(1+\xi)^{j-1} \ge \eps_0$. 
For all $j \ge 1$ we set
\begin{align}
\label{def:Sj:lambdaj}
\eps_j & :=
\begin{cases}
	\omega^{1/3}n^{-1/3}(1+\xi)^{j-1} , & ~~\text{if $j < j_0$}, \\
	\eps_0  , & ~~\text{if $j \ge j_0$}, 
\end{cases}\\
\label{def:Sjmj}
m_j & :=(\tc-\eps_j)n ,\\
\label{def:SjLj}
\LS^{\pm}_{r,j} & := \bigl(1 \pm (\eps_j^3n)^{-1/4}\bigr)\bigl(B_r \pm a_r\bigl(\eps_j + (\eps_j^3n)^{-1/3}\bigr)\bigr)\eps_j^{-2r+3} ,
\end{align}
where $B_r>0$ is the constant defined in~\eqref{eq:sjfkt:Dr} and $a_r>0$ is the constant in Lemma~\ref{lem:SrE:sub}.
It is routine to see that there is a constant $A_r>0$ such that for all
$1\le j\le j_0$ and all $\eps_j \le \eps \le \eps_{j+1}$ we have 	
\begin{align}
\label{def:Sr:UB}
\max\Bigl\{\bigl|\LS^{+}_{r,j}-B_r \eps^{-2r+3}\bigr|, \: \bigl|\LS^{-}_{r,j+1}-B_r \eps^{-2r+3}\bigl|\Bigr\} \le A_r \bigl(\eps + (\eps^3 n)^{-1/4}\bigr) B_r \eps^{-2r+3} .
\end{align}

Let $\cE$ be the event that
\begin{equation}\label{def:cE:Sj}
\LS^{-}_{r,j} \le S_{r,n}(m_j) \le \LS^{+}_{r,j}  \quad \text{for all $1 \le j \le j_0$.}
\end{equation}
Since $S_r(i)=S_{r,n}(G_i)$ is monotone (see Remark~\ref{rem:Snr}), the event $\cE$ implies that for all steps $m_{j+1} \le i \le m_j$ with $1 \le j < j_0$ we have have $\LS^{-}_{r,j+1} \le S_{r}(i) \le \LS^{+}_{r,j}$. 
Recalling the definition of the steps~$m_j$, 
in view of~\eqref{def:Sr:UB} it thus is immediate that~$\cE$ implies~\eqref{eq:Sjsub:uniform}. 

Since $\Pr(\neg\cN)=o(1)$ by Lemma~\ref{lem:nice}, it remains only to show that $\Pr(\neg\cE \cap \cN)=o(1)$.
As usual (for example, as for~\eqref{eq:prEN:sup}), by conditioning (Lemma~\ref{lem:cond'}),
sandwiching (Lemma~\ref{lem:cpl}) and monotonicity (Remark~\ref{rem:Snr}), writing $\JPpm_{m_j}=\JP(\fS^{\pm}_{m_j/n})$, we conclude that
\begin{equation}\label{eq:prEN:Sr}
\Pr(\neg\cE \cap \cN) \le \sum_{1 \le j \le j_0}\biggl[n^{-\omega(1)} + \max_{\text{$(m_j/n)$-nice $\fS$}} \Bigl[\Pr\bigl(S_{r,n}(\JPm_{m_j}) < \LS^{-}_{r,j}\bigr) + \Pr\bigl(S_{r,n}(\JPp_{m_j}) > \LS^{+}_{r,j}\bigr)\Bigr]\biggr] .
\end{equation}
Since $\eps_j^3n\ge \omega\to\infty$, if $n$ is large enough the assumptions of Lemma~\ref{lem:SrE:sub} are satisfied.
In particular, by the expectation bound~\eqref{eq:SrE:sub} and the choice of $\LS_{r,j}^\pm$ we infer that
\begin{equation}\label{eq:prCh:Sr:0}
 \LS^{+}_{r,j} \ge \bigl(1+(\eps_j^3n)^{-1/4}\bigr) \E S_{r,n}(\JPp_{m_j})   \quad \text{and} \quad 
 \LS^{-}_{r,j} \le \bigl(1-(\eps_j^3n)^{-1/4}\bigr) \E S_{r,n}(\JPm_{m_j}) .
\end{equation}
The variance bound~\eqref{eq:SrVar:sub} of Lemma~\ref{lem:SrE:sub} states that
\[
\frac{\Var S_{r,n}(\JPpm_{m_j}) }{\bb{\E S_{r,n}(\JPpm_{m_j})}^2} \le \frac{b_r}{\eps_j^3n}.
\]
By Chebychev's inequality and~\eqref{eq:prCh:Sr:0}, it follows for large $n$ that
\[
 \Pr\bigl(S_{r,n}(\JPm_{m_j}) < \LS^{-}_{r,j}\bigr) +  \Pr\bigl(S_{r,n}(\JPp_{m_j}) >\LS^{+}_{r,j} \bigr) \le 
\frac{2b_r}{(\eps_j^3 n)^{1/2}} \le \frac{1}{(\eps_j^3 n)^{1/3}} .
\]
Substituting this bound into~\eqref{eq:prEN:Sr} and using $\eps_j^3n = \omega (1+\xi)^{3(j-1)}$, it follows that 
\[
 \Pr(\neg\cE \cap \cN) \le n^{-\omega(1)} + \sum_{j \ge 1}\frac{1}{\omega^{1/3} (1+\xi)^{j-1}} \le n^{-\omega(1)} + \frac{O(1)}{\omega^{1/3} \xi} = o(1),
\]
completing the proof. 
\end{proof}

\newoddpage

\section{Open problems and extensions}\label{sec:open}

Our proof methods exploited that, via a two-round exposure argument, we could construct the random graphs we study using many independent (uniform) random vertex choices. This allowed us to bring branching process comparison arguments into play.
Although we have not checked the details, we believe that these methods adapt without problems
to, for example, the vertex immigration random graph model introduced by Aldous and Pittel~\cite{AldousPittel2000}, 
and its generalization proposed by Bhamidi, Budhiraja and Wang~\cite{BBW11,BBW12b,BBW12a},
where in each time-step either (i)~components of bounded size immigrate into the vertex set, or (ii)~an edge connecting two randomly chosen vertices is added. 
Indeed, for these models the key observation is that, similar to the present paper, near the critical point we can again partition the vertex set into $V_S \cup V_L$ in such a way that we can construct the random graph by (a)~joining components from~$V_S$ to a certain number of uniformly random vertices from~$V_L$ and (b)~adding uniformly random edges to~$V_L$. 
(To account for the fact that the vertex set grows over time, here~$V_L$ contains all components at some suitable time $t_0 \in (0,\tc)$, and~$V_S$ contains all new vertices and components which arrive by time $t \in (t_0,t_1)$, where $t_1>\tc$ and $t_1-t_0$ is small enough that the graph induced by~$V_S$ stays `subcritical'.)  
This makes it plausible that the methods of this paper can again be used to analyze the phase transition in these models.

In the light of the above discussion, the following \emph{open problems} might be more interesting for further work 
than simply adapting the methods used here to other models.
In (1)--(3) below, we consider only bounded-size rules.
\begin{enumerate}
	\item[(1)] {\bfseries Size of the second largest `subcritical' component:} 
	Show that, for $\eps=\eps(n)$ satisfying $\eps \to 0$ and $\eps^3 n \to \infty$ as $n \to \infty$, 
	the size of the second largest `supercritical' component whp satisfies $L_2(\tc n + \eps n) \sim a\eps^{-2}\log(\eps^3n)$, where the constant~$a = \Psi''(\tc)>0$ is as in Theorem~\ref{thm:rhok}. 
	Since Theorem~\ref{thm:L1} implies that the largest `subcritical' component whp satisfies $L_1(\tc n - \eps n) \sim a\eps^{-2}\log(\eps^3n)$, this would establish the `symmetry rule' (also called `discrete duality') that is well-known for Erd\H os--R\'enyi random graphs (see, e.g., Section~3 in~\cite{Luczak1990} or Section~5.6 in~\cite{JLR}); it  
	would also be consistent with the small component size distribution~\eqref{eq:intro:smallcpt} established in this paper.
	
	\item[(2)] {\bfseries Analyticity away from critical~$\tc$:} 
	Show that the asymptotic form~\eqref{eq:rhok} of the function $\rho_k(t)$ appearing in~\eqref{eq:Nk:pto} remains valid for any bounded time interval (excluding~$0$), 
	not just close to the critical time~$\tc$.
	One can also ask similar questions about the function $\rho(t)$ appearing in~\eqref{eq:L1:pto}. For example, Janson and Spencer~\cite{JS} were interested (for the Bohman--Frieze rule) in whether $\rho(t)$ is analytic (or, as they asked it, smooth) 
for any~$t \in [\tc,\infty)$, not just for time $t \in [\tc,\tc+\eps_0)$ as shown in this~paper.
	
	\item[(3)] {\bfseries Central limit theorem for size of the largest `supercritical' component:} 
	Show that, for $\eps=\eps(n)$ satisfying $\eps = O(1)$ and $\eps^3 n \to \infty$ as $n \to \infty$, the size of the largest `supercritical' component $L_1(\tc n + \eps n)$ satisfies a central limit theorem~(CLT). 
This is well-known for Erd\H os--R\'enyi random graphs (see, e.g.,~\cite{PittelWormald,BR2012RW} and the references therein); it would also complement the law of large numbers established in this paper. 

	\item[(4)] {\bfseries Size of the largest component in `explosive' size rules:} 
	Analyze, for fixed $\eps>0$ or $\eps=\eps(n) \to 0$, the qualitative behaviour of the rescaled size of the largest `supercritical' component $L_1(\tc n + \eps n)/n$ for `explosive' (unbounded) size rules such as the product rule, the sum rule, or the dCDGM rule (defined in~\cite{DRS,RW,RWapcont,dCDGM}). 
	As discussed in the introduction, see also Figure~\ref{fig:L1plots}, these rules seem to have an extremely steep growth, which most likely differs from the linear growth of bounded-size rules established in this paper (we believe that the corresponding scaling limits~$\rho(t)$ have an infinite right-hand derivative at the critical time~$\tc$, see also Section~\ref{sec:bg}).

\end{enumerate}

For the \emph{duality problem}~(1), similar to~\cite{BJR,JR2011}, we expect that taking out the giant component we obtain an instance of the random hypergraph model $J(\tilde{\fS})$ that is close enough to a natural dual `subcritical' version, which can be coupled to the supercritical branching process conditioned on not surviving.
Then it ought to be possible to prove results for the small component sizes that are similar to what we have below~$\tc$, though the technical challenges seem formidable. 
In work in preparation~\cite{RWapip}, 
we use a combinatorial multi-round exposure argument to prove a weaker result: that whp $L_2(\tc n + \eps n) = O(\min\{\eps^{-2},1\}\log n)$ for $n^{-1/3}(\log n)^{1/3} \ll \eps = O(1)$.

For the \emph{time interval problem}~(2), we speculate that variants of the methods of this paper might extend by some kind of step-by-step argument, but we did not investigate this closely as the present paper was already long enough, and the near-critical behaviour in any case seems the most interesting. In~\cite{RWapip} we exploit the PDE approach of Section~\ref{sec:AP} (among other ideas) to prove, for any~$t \in (\tc,\infty)$, that $\rho_k(t)$ decays exponentially in~$k$ and that $\rho(t)$ is analytic.  

For the \emph{CLT problem}~(3), we speculate that for fixed~$\eps \in (0,\eps_0)$ it might be possible to adapt the differential equation method based approach of Seierstad~\cite{Seierstad} (together with ideas of this paper and~\cite{RWapip}) to establish asymptotic normality after suitable rescaling, but we have not investigated this closely as our main focus is the more challenging $\eps=\eps(n) \to 0$ case.
Indeed, it seems that a CLT for $\eps=\eps(n) \to 0$ with $\eps^3 n \to \infty$ requires new ideas that go beyond~\cite{Seierstad} and the recent random walk based CLT approach~\cite{BR2012RW}.

The \emph{`unbounded' size rules problem}~(4) is conceptually perhaps the most important one, and it will most likely further stimulate the development of new tools and techniques in the area. 
Based on the partial results from~\cite{RWapsubcr}, we believe that it would be key to understand the effect of the edges which are added close to the `critical point' where the susceptibility diverges (e.g., if they have a similar effect to the addition of random edges).
An alternative approach might be to analyze the behaviour of the infinite system of differential equations derived in~\cite{RWapunique}, which however is not known to have a unique solution. For this one may perhaps need to augment the system by further typical properties of the associated random graph process; see also Section~3 in~\cite{RWapunique}.

\newoddpage

\small
\bibliographystyle{plain}
\addcontentsline{toc}{section}{References}

\normalsize

\newoddpage

\begin{appendix}

\section{Appendix}\label{sec:apx}

\subsection{Transferring results from $4$-vertex rules to Achlioptas processes}\label{apx:tfer} 
In this appendix we briefly present one possible way of transferring results from $4$-vertex processes to the original Achlioptas process (where in each step the two edges $e_1,e_2$ are chosen independently and uniformly at random from all edges not yet present, say).
Fixing some rule~$\cR$, the Achlioptas process $(G^{\cR}_{n,i})_{0 \le i \le 9n}$ is uniquely determined by the sequence of potential edges $\ve = (e_{1,i},e_{2,i})_{1 \le i \le 9n}$ offered during the first $9n$ steps. 
In the Achlioptas process any valid sequence $\ve$ occurs with probability 
at most 
\[
\prod_{0 \le i < 9n}\frac{1}{\left(\binom{n}{2}-i\right)^2} = \prod_{0 \le i < 9n} \frac{4}{n^4\left(1-1/n-2i/n^2\right)^2} \le \frac{\left(4/n^4\right)^{9n}}{\left(1-19/n\right)^{18n}} \le e^{400} \left(\frac{4}{n^4}\right)^{9n} 
\]
for $n \ge n_0$. 
Mapping $\vv_i=(v_1, \ldots, v_4)$ to the pairs $e_{1,i}=\{v_{i,1},v_{i,2}\}$ and $e_{2,i}=\{v_{i,3},v_{i,4}\}$, in the $4$-vertex process any edge sequence $\ve = (e_{1,i},e_{2,i})_{1 \le i \le 9n}$ occurs with probability exactly~$(4/n^4)^{9n}$. 
It follows that if an event~$\cE$ fails with probability at most~$\pi$ in the $4$-vertex process, then~$\cE$ fails with probability at most~$e^{400}\pi= O(\pi)$ in the Achlioptas process (tacitly assuming that the event~$\cE$ does \emph{not} depend on any graphs~$G^{\cR}_{n,i}$ with $i > 9n$, which of course holds in this paper).
Since our main results only concern events that fail with negligible probability $\pi \to 0$, 
this formally justifies the fact that we may treat the original Achlioptas processes as a $4$-vertex process.  
(Similar reasoning applies to other variations.)

\subsection{Cauchy--Kovalevskaya ODE and PDE theorems}\label{apx:CK}
In this appendix we present two `easy-to-apply' versions of the 
Cauchy--Kovalevskaya theorem, 
which are optimized for the (combinatorial) applications in this paper. 
These show that, under suitable regularity conditions,
certain systems of ODEs or PDEs have analytic solutions.

We first consider first-order PDEs, with $\bx=(x_1, \ldots, x_n)\in \CC^n$. 
Our starting point is the following standard version of the Cauchy--Kovalevskaya Theorem,
taken from pages~15--16 in~\cite{Petrovsky}. 
This states that a first order PDE has an analytic \emph{local} solution provided (i)~the time-derivative
of the function $u$ to be solved for is
given by an analytic function of $u$ and its space-derivatives as in~\eqref{eq:CK:PDE} below,
and (ii)~the initial data~\eqref{eq:CK:PDE:init} is analytic.  
Similar statements hold for more general PDEs, but we shall not need this.
%
\begin{lemma}[Cauchy--Kovalevskaya for first-order PDEs]\label{lem:CK:PDE:simple}
Let $n \ge 1$, let $t_0\in \CC$ and let $\bx_0 \in \CC^n$.
Suppose that the function $f:\CC^n \to \CC$ is analytic in some neighbourhood of~$\bx_0$,
and that $F:\CC^{2n+2} \to \CC$ is analytic in some neighbourhood of
$\bigl(t_0, \bx_0, f(\bx_0), \frac{\partial f}{\partial x_1}(\bx_0), \ldots, \frac{\partial f}{\partial x_n}(\bx_0)\bigr)$. 
Then there exists a neighbourhood $\cN$ of $(t_0,\bx_0)$ in $\CC^{n+1}$ and an analytic function
$u: \cN \to \CC$ which satisfies
\begin{align}
\label{eq:CK:PDE}
\frac{\partial}{\partial t} u(t,\bx) &= F\biggl(t,\bx,  \: u(t,\bx),   \: \frac{\partial}{\partial x_1}u(t,\bx),  \ldots,  \frac{\partial}{\partial x_n}u(t,\bx)\biggr) \hbox{\quad and} \\ 
\label{eq:CK:PDE:init}
u(t_0,\bx) & = f(\bx) .
\end{align} 
\noproof
\end{lemma}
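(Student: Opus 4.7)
The plan is to follow the classical Cauchy--Kovalevskaya strategy: reduce to a formal power series problem, determine all Taylor coefficients recursively from the PDE and the initial data, and then establish convergence by the method of majorants. First I would translate variables so that $(t_0,\bx_0)=(0,\mathbf{0})$, and replace $u$ by $u-f$ to reduce to the case of zero initial data $f\equiv 0$ (absorbing $f$ into a redefined $F$, which remains analytic by composition). We then seek a formal solution of the form $u(t,\bx)=\sum_{k\ge 1,\,\alpha} c_{k,\alpha}\,t^k\bx^\alpha$.

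The coefficients are determined recursively. Expanding both sides of \eqref{eq:CK:PDE} as power series around the origin and matching the coefficient of $t^{k-1}\bx^\alpha$ on both sides expresses $k\,c_{k,\alpha}$ as a polynomial, with non-negative coefficients depending only on the Taylor coefficients of $F$, in the coefficients $c_{k',\alpha'}$ with $k'<k$ (note that $\partial_t$ strictly raises the time-power, while $\partial_{x_j}$ does not, which is why the recursion is well-posed in $k$). Since $f\equiv 0$ we have $c_{0,\alpha}=0$ for all $\alpha$, and then the recursion produces a unique formal power series. The non-negativity and polynomial form of these recursions is the key structural point that enables the majorant argument below.

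The main obstacle, and the heart of the proof, is showing that this formal series actually converges on some neighbourhood of the origin. For this I would use the method of majorants. By analyticity of $F$ at the relevant base point, there exist constants $M,\varrho>0$ such that $F$ is majorized (in absolute value of Taylor coefficients) by a simple geometric series function of the form
\[
F^*(t,\bx,u,p_1,\ldots,p_n) \;=\; \frac{M}{1-(t+x_1+\cdots+x_n+u+p_1+\cdots+p_n)/\varrho}.
\]
One then considers the auxiliary scalar Cauchy problem $\partial_t U = F^*(t,\bx,U,\partial_{x_1}U,\ldots,\partial_{x_n}U)$ with $U(0,\bx)=0$. By the symmetry of $F^*$, one can look for a solution depending only on $s:=t+x_1+\cdots+x_n$, which reduces the PDE to a single ODE in $s$ that can be solved explicitly (quadratic formula), yielding a function $U(s)$ analytic in a neighbourhood of $0$. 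A coefficient-by-coefficient comparison of the two recursions (identical in structure, with each coefficient of $U$ dominating the absolute value of the corresponding coefficient of $u$) then shows that the formal series for $u$ converges absolutely on the domain of convergence of $U$, and the resulting analytic function satisfies \eqref{eq:CK:PDE}--\eqref{eq:CK:PDE:init} by construction.

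The hard part is the majorant construction, specifically verifying that the recursion for $U$ really does dominate that for $u$ term by term (this requires that the polynomial expressing $c_{k,\alpha}$ has non-negative coefficients in the Taylor coefficients of $F$, which follows from the form of the chain rule applied to $\partial_t u - F(t,\bx,u,\nabla_\bx u)=0$) and that the explicit majorant ODE indeed produces an analytic $U$ on a non-empty neighbourhood of the origin. Everything else --- uniqueness of the formal series, the reduction to zero initial data, and the passage from formal to analytic solution --- is routine once the majorant estimate is in hand.
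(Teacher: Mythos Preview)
The paper does not prove this lemma at all: it is stated as a standard result, explicitly ``taken from pages~15--16 in~\cite{Petrovsky}'', and is marked \texttt{\textbackslash noproof}. Your outline is a correct sketch of the classical majorant proof (essentially the argument in Petrovsky), but no comparison with the paper's own proof is possible since there is none.
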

Standard results also give uniqueness in this case (among analytic solutions). For our application, local existence as above
is not quite enough; we would like existence in a neighbourhood $\cN$ of a certain compact (`space')
domain rather than just of a point. Fortunately, this follows by a compactness argument.
Given $\by=(y_1,\ldots,y_n) \in \CC^n$ and $\br=(r_1,\ldots,r_n)$ with all $r_i>0$, we write
\[
 \cB(\by,\br) := \{ \bx\in \CC^n: \: |x_i-y_i|< r_i, 1\le i\le n \}
\]
for the \emph{polycylinder} (or polydisc)   
in $\CC^n$ with centre~$\by$ and polyradius~$\br$. 
With $t_0\in \CC$ fixed, for $r > 0$ we write
\[
 \cT(r) := \{ t\in \CC: \: |t-t_0| < r \}.
\]

\begin{theorem}[Convenient Cauchy--Kovalevskaya for first-order PDEs]\label{thm:CK:PDE}
Suppose that $n\ge 1$, $t_0\in \CC$, $\eps>0$, and
$0<a_i<b_i$ for $i=1,\ldots,n$.
Let\begin{align*}
\cT&:=\cT(\eps),\\
\cX_0&:=\cB((0,\ldots,0),\ba), \hbox{\quad and}\\
\cX_1&:=\cB((0,\ldots,0),\bbb).
\end{align*}
Suppose that the functions $f:\cX_1 \to \CC$ and $F:\cT \times \cX_1 \times \CC^{n+1} \to \CC$ are analytic.
Then there is a $\delta>0$ and an analytic function $u: \cT_0 \times\cX_0 \to \CC$ which
satisfies~\eqref{eq:CK:PDE}--\eqref{eq:CK:PDE:init}, where $\cT_0 := \cT(\delta)$.
Furthermore, the Taylor series of $u$ around $(t_0,0, \ldots, 0)$ converges (to $u$) 
in the domain~$\cT_0 \times \cX_0$.  
\end{theorem}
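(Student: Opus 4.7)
The plan is to bootstrap the pointwise existence statement of Lemma~\ref{lem:CK:PDE:simple} into a joint existence statement on $\cT_0 \times \cX_0$ by a compactness-plus-gluing argument, and then read off the Taylor convergence from the fact that the resulting domain is a polycylinder centered at $(t_0,0,\ldots,0)$.

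First I would apply Lemma~\ref{lem:CK:PDE:simple} at each point of the compact set $\overline{\cX_0}$. For each $\bx^* \in \overline{\cX_0}$, we have $\bx^* \in \cX_1$, so $f$ is analytic in a neighbourhood of $\bx^*$, and $F$ is analytic in a neighbourhood of the relevant jet of $f$ at $\bx^*$ because $F$ is analytic on all of $\cT \times \cX_1 \times \CC^{n+1}$. Lemma~\ref{lem:CK:PDE:simple} thus produces a polycylinder neighbourhood $\cT(\delta(\bx^*)) \times \cB(\bx^*,r(\bx^*)\mathbf{1})$ of $(t_0,\bx^*)$ (where $\mathbf{1}=(1,\ldots,1)$) and an analytic function $u_{\bx^*}$ on it which satisfies~\eqref{eq:CK:PDE} and~\eqref{eq:CK:PDE:init}. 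Covering $\overline{\cX_0}$ by finitely many half-size spatial balls $\cB(\bx^*_j,(r(\bx^*_j)/2)\mathbf{1})$ for $j=1,\ldots,N$ and setting $\delta := \min_j \delta(\bx^*_j) > 0$, every point of $\cX_0$ lies in some $\cB(\bx^*_j,r(\bx^*_j)\mathbf{1})$ on which the local solution $u_{\bx^*_j}$ is defined over all of $\cT(\delta) \times \cB(\bx^*_j,r(\bx^*_j)\mathbf{1})$.

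The main obstacle will be showing that these local solutions agree on overlaps, so that the obvious gluing yields a well-defined function $u$ on $\cT(\delta) \times \cX_0$. This amounts to uniqueness of analytic solutions of the Cauchy problem~\eqref{eq:CK:PDE}--\eqref{eq:CK:PDE:init}, which in turn follows from a standard inductive argument on the $t$-Taylor coefficients at an initial point. Indeed, if $u_1,u_2$ are both analytic on a connected open $\Omega \subseteq \CC^{n+1}$ meeting $\{t=t_0\}$ in an open subset $\Omega_0$ of that slice, and both satisfy~\eqref{eq:CK:PDE}--\eqref{eq:CK:PDE:init}, then at each $\bx \in \Omega_0$ the equalities $u_1(t_0,\bx) = u_2(t_0,\bx) = f(\bx)$ and thus $\partial_{x_i} u_1(t_0,\bx) = \partial_{x_i} u_2(t_0,\bx)$ hold, whence $\partial_t u_1(t_0,\bx) = \partial_t u_2(t_0,\bx)$ by~\eqref{eq:CK:PDE}; differentiating~\eqref{eq:CK:PDE} in $t$ repeatedly and inducting, all partial derivatives of $u_1$ and $u_2$ at $(t_0,\bx)$ coincide, so $u_1 \equiv u_2$ on $\Omega$ by the identity principle for analytic functions of several complex variables. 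Applied to each pairwise overlap of the polycylinders above, this lets us define $u(t,\bx) := u_{\bx^*_j}(t,\bx)$ for any $j$ with $\bx \in \cB(\bx^*_j,r(\bx^*_j)\mathbf{1})$; the result is an analytic function on $\cT_0 \times \cX_0$ satisfying~\eqref{eq:CK:PDE}--\eqref{eq:CK:PDE:init}.

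Finally, note that $\cT_0 \times \cX_0 = \cB((t_0,0,\ldots,0),(\delta,a_1,\ldots,a_n))$ is a polycylinder centered at $(t_0,0,\ldots,0)$. By a standard result in several complex variables (see, e.g.,~\cite{Range}), any function analytic on a polycylinder is represented by its Taylor series around the centre, with convergence throughout the polycylinder. This yields the final claim of the theorem.
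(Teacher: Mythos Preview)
Your proof is correct and follows essentially the same approach as the paper: apply Lemma~\ref{lem:CK:PDE:simple} pointwise over the compact set $\overline{\cX_0}$, use compactness to reduce to a finite cover with common $\delta$, glue the local solutions by invoking uniqueness (via the observation that the PDE and initial data determine all Taylor coefficients at points of the initial slice), and then appeal to the standard fact that an analytic function on a polycylinder is represented by its Taylor series about the centre. The only cosmetic differences are your use of half-radius balls in the covering step and a slightly more explicit induction for the uniqueness argument.
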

\begin{proof}
Let $\bcX_0\subset \cX_1$ be the closure of $\cX_0$, i.e., the set $\{\bx\in \CC^n: \: |x_i|\le a_i, i=1,\ldots,n\}$.
For any point $\pp \in \bcX_0$,
by Lemma~\ref{lem:CK:PDE:simple} there is an $r_\pp>0$ such that, defining
\[ 
 \cT_\pp:=\cT(r_\pp), \qquad  \cX_\pp:=\cB(\pp,(r_\pp, \ldots, r_\pp)) \qquad \text{and} \qquad \cN_\pp := \cT_\pp\times \cX_\pp,
\]
the following holds: (i)~we have $\cT_\pp \subseteq \cT$ and $\cX_\pp \subseteq \cX_1$,
and (ii)~there exists an analytic function $u_\pp: \cN_\pp \to \CC$ which satisfies~\eqref{eq:CK:PDE}--\eqref{eq:CK:PDE:init} for all $(t,\bx) \in \cN_\pp$ (with~$u$ replaced by~$u_\pp$). 

Suppose that $\cN_\pp$ and $\cN_\qq$ intersect; we claim that then $u_\pp$ and $u_\qq$ agree on $\cN_\pp\cap\cN_\qq$.
To see this, first note that $\cN_\pp\cap \cN_\qq$ is of the form $\cT(r)\times \cD$ for some
$r>0$ and some open $\cD\subset \CC^n$. Suppose that $(t,\by)\in \cN_\pp\cap \cN_\qq$.
Since $\cD$ is open, some open polycylinder $\cB:=\cB(\by,\br)$ is contained in $\cD$, so 
\[
 (t,\by) \in \cT(r)\times\cB \subset \cN_\pp\cap \cN_\qq.
\]
Since $u_\pp$ and $u_\qq$ are analytic in the polycylinder $\cT(r)\times \cB$, by
the complex version of the Taylor series expansion (see, e.g., Theorem~1.18 in~\cite{Range})
they both have Taylor series around~$(t_0,\by)$ which converge in this domain.
By construction, $u_\pp$ and $u_\qq$ and satisfy the initial condition~\eqref{eq:CK:PDE:init}
and the time-derivative equation~\eqref{eq:CK:PDE} for all $(t,\bx) \in \cT(r)\times \cB$.
These properties together uniquely determine all partial derivatives of $u_\pp$ and $u_\qq$ at the point~$(t_0,\by)$ (this observation also forms the basis of the Cauchy--Kovalevskaya theorem).
Thus $u_\pp$ and $u_\qq$ have the same Taylor expansion around $(t_0,\by)$ and hence agree in $\cT(r)\times\cB$
and in particular at $(t,\by)$.

The collection $\{\cX_\pp\}$ of polycylinders forms an open cover of $\bcX_0$. By compactness, there
is a finite subcover: $\bcX_0 \subset \bigcup_{\pp \in P} \cX_\pp$ with $P$ finite.
Let $\delta := \min_{\pp\in P} r_\pp>0$, and set $\cT_0:=\cT(\delta)$.
Let $\cN := \cT_0\times\cX_0$. Then
\[
 \cN\subseteq \bigcup_{\pp \in P} (\cT_0 \times \cX_\pp)
\subseteq \bigcup_{\pp \in P} (\cT_\pp \times \cX_\pp) =
 \bigcup_{\pp \in P}  \cN_\pp.
\]
Define $u:\cN \to\CC$ by $u(t,\bx):=u_\pp(t,\bx)$
for any $\pp\in P$ such that $(t,\bx)\in \cN_\pp$. This definition makes sense by the claim above.
Then $u$ is analytic: for any $(t,\bx)\in \cN$, we have $(t,\bx)\in \cN_\pp$ for some $\pp\in P$,
and since $\cN_\pp$ is open and $u_\pp$ agrees with $u$ in $\cN_\pp$, $u$ is analytic at $\pp$.
Similarly, $u$ satisfies \eqref{eq:CK:PDE}--\eqref{eq:CK:PDE:init} since the $u_\pp$ do.
This completes the proof of the first statement.

The second statement follows: since $u$ is analytic in the polycylinder $\cN$ centered at $(t_0,0,\ldots,0)$,
by e.g., Theorem~1.18 in~\cite{Range} its Taylor series about $(t_0,0,\ldots,0)$
converges in $\cN$.
\end{proof}

Turning to the ODE case, the following folklore theorem (see, e.g., Corollary~2 in Section 6.11 of~\cite{ODEGG}) states that 
functions $u_1, \ldots, u_s$ which satisfy a finite system of ODEs 
are real-analytic if their derivatives~\eqref{eq:CK:ODE} are based on real-analytic equations; 
the technical condition~\eqref{eq:CK:ODE:I} ensures 
that~\eqref{eq:CK:ODE} makes sense.  
\begin{theorem}[Cauchy--Kovalevskaya for ODEs]\label{thm:CK:ODE}
Let $s \ge 1$. 
Suppose that $\cT \subseteq \RR$ is an open interval, that $\cI \subseteq \RR^s$ is an open set, and that $F_j: \cT \times \cI \to \RR$ is real-analytic for $1 \le j \le s$. 
Suppose that the functions $u_1,\ldots,u_s$ from $\cT$ to $\RR$ satisfy 
\begin{gather}
\label{eq:CK:ODE:I}
 \Bigl(u_1(t), \ldots, u_s(t)\Bigr) \in \cI \hbox{\quad and} \\
\label{eq:CK:ODE}
\frac{d}{d t} u_j(t) = F_j\Bigl(t, \: u_1(t), \ldots, u_s(t)\Bigr) 
\end{gather}
for all $t\in \cT$. Then $u_1,\ldots,u_s$ are real-analytic in~$\cT$. \noproof
\end{theorem}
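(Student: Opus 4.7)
The plan is to reduce the statement to the complex-analytic existence theorem for ODEs, and then use uniqueness of ODE solutions to identify the given real solutions with real restrictions of complex-analytic ones.

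Fix an arbitrary $t_0 \in \cT$, and set $\pp = \bigl(u_1(t_0),\ldots,u_s(t_0)\bigr) \in \cI$. Since each $F_j$ is real-analytic at $(t_0,\pp)$, by the standard complexification of real-analytic functions (Taylor expansion at $(t_0,\pp)$ converges absolutely in some real polydisc, hence in the corresponding complex polydisc) each $F_j$ extends to a complex-analytic function $\tF_j$ on a complex open polycylinder $\fD$ around $(t_0,\pp) \in \CC^{s+1}$. I would then invoke the complex-analytic version of the Cauchy existence theorem for first-order systems of ODEs: there exist $\delta>0$ and complex-analytic functions $v_1,\ldots,v_s\colon \{z\in\CC: |z-t_0|<\delta\}\to\CC$ satisfying the complex initial value problem
\[
 v_j'(z) = \tF_j\bigl(z,v_1(z),\ldots,v_s(z)\bigr), \qquad v_j(t_0)=u_j(t_0), \qquad 1\le j\le s.
\]
This is a classical result, and could alternatively be deduced from Lemma~\ref{lem:CK:PDE:simple} applied in the degenerate case with no space variables. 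Writing $\tu_j(t) := v_j(t)$ for real $t$ with $|t-t_0|<\delta$, each $\tu_j$ is real-analytic in a neighbourhood of $t_0$ (since complex-analytic functions restricted to a real interval are real-analytic there), and by construction the $\tu_j$ satisfy the real ODE system~\eqref{eq:CK:ODE} together with $\tu_j(t_0)=u_j(t_0)$.

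The key remaining step is to identify $u_j$ with $\tu_j$ on a real neighbourhood of $t_0$. Since each $F_j$ is real-analytic on $\cT\times\cI$, it is in particular continuously differentiable there, and hence locally Lipschitz in the spatial variables on any compact subset of $\cT\times\cI$. By the Picard--Lindel\"of uniqueness theorem for the initial value problem~\eqref{eq:CK:ODE:I}--\eqref{eq:CK:ODE} with $u_j(t_0)=\tu_j(t_0)$, it follows that $u_j(t)=\tu_j(t)$ on some real interval $(t_0-\delta',t_0+\delta')\subseteq \cT$ with $0<\delta'\le \delta$ (chosen small enough that both solutions stay in a compact Lipschitz neighbourhood of $\pp$ inside $\cI$). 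Hence $u_j$ coincides with the real-analytic function $\tu_j$ near $t_0$, so $u_j$ is itself real-analytic at $t_0$.

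Since $t_0\in\cT$ was arbitrary, each $u_j$ is real-analytic on all of $\cT$. The main obstacle is the availability of the complex-analytic ODE existence theorem at the right level of generality; once this is in hand, the rest is routine. An alternative route, if one wishes to avoid citing complex Cauchy--Kovalevskaya for ODEs directly, would be to prove the analyticity of $u_j$ `by hand' by showing that the Taylor coefficients of $u_j$ at $t_0$ are forced by the ODE (differentiating~\eqref{eq:CK:ODE} repeatedly), and that they define a convergent power series via the classical method of majorants applied to the $\tF_j$; this is essentially how the cited Corollary in \cite{ODEGG} is proved, and gives the same conclusion.
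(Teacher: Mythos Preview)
The paper does not prove this theorem: it is stated with a \verb|\noproof| marker and attributed to a standard reference (Corollary~2 in Section~6.11 of~\cite{ODEGG}). So there is no proof in the paper to compare against; your proposal goes beyond what the paper does.

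Your argument is essentially the standard one and is correct in outline. One small point worth tightening: when you restrict the complex-analytic solutions $v_j$ to the real axis to get $\tu_j$, you need $\tu_j$ to be \emph{real-valued} in order to feed it into the real ODE~\eqref{eq:CK:ODE} and invoke Picard--Lindel\"of uniqueness against $u_j$. This follows because the Taylor coefficients of each $\tF_j$ at $(t_0,\pp)$ are real (being those of the real-analytic $F_j$), and the initial data are real, so the power-series solution of the complex IVP has real coefficients and hence is real on the real axis; equivalently, $\overline{v_j(\bar z)}$ solves the same complex IVP, so equals $v_j(z)$ by uniqueness. With that noted, the rest of your argument (complexify, solve analytically, restrict, identify via ODE uniqueness) is exactly the standard route, and your alternative via majorants is indeed how the cited reference proceeds.
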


\subsection{Palm theory for the Poisson process}\label{apx:palm}

In this appendix we present two elementary instances of palm theory for the Poisson process, 
which provide methods for calculating the mean of certain random sums.
In Lemma~\ref{lem:palm} below we write, as usual, $[N]=\{1,2,\ldots,N\}$ and $[0]=\emptyset$. 
The symmetry assumption~\eqref{eq:sym} holds for functions that are invariant under relabellings. 
In the right hand side of~\eqref{eq:palm}, we intuitively think of $N+1, \ldots, N+s$ either (i)~as `extra' elements that are added to the random set~$[N]$, or (ii)~as special elements of the `enlarged' random set~$[N+s]$. 

\begin{lemma}\label{lem:palm}
Let $N \sim \Po(\lambda)$ with $\lambda \in [0,\infty)$. 
Given $s \ge 1$, let $f$ 
be a measurable random function, independent of~$N$, defined on the product of~$(\NNP)^s$ and finite subsets of~$\NNP$. 
Assume that, for all $m \ge s$ and $x_1, \ldots, x_s \in [m]$, we have
\begin{equation}\label{eq:sym}
 \E\bigl(f(x_1, \ldots, x_s,[m]  \setminus \{x_1, \ldots, x_s\})\bigr) = \E\bigl(f(m-s+1, \ldots, m, [m-s])\bigr).
\end{equation} 
Then 
\begin{equation}\label{eq:palm}
\E\Bigl(\sideset{}{^*}\sum_{(x_1, \ldots, x_s) \in [N]^s} f(x_1, \ldots, x_s,[N]  \setminus \{x_1, \ldots, x_s\})\Bigr) = \lambda^s \E\bigl(f(N+1, \ldots, N+s, [N])\bigr) , 
\end{equation}
where $\sum^*$ means that we are summing over $s$-tuples with distinct $x_i$. 
\end{lemma}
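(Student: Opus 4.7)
The plan is to condition on the value $N=m$ and use the symmetry hypothesis to collapse the random sum into a single expectation, then reorganize the resulting series in~$m$ via the substitution $k=m-s$ to recognize the Poisson law of $N$ on the right-hand side. Since $f$ is independent of $N$, conditioning on $N=m$ turns the inner sum $\sum^*$ into a deterministic (non-random) sum over the $m!/(m-s)!$ ordered $s$-tuples of distinct elements of~$[m]$, and the symmetry assumption~\eqref{eq:sym} makes every summand contribute the same expected value $\E(f(m-s+1,\ldots,m,[m-s]))$.

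More precisely, first I would write
\[
 \E\Bigl(\sideset{}{^*}\sum_{(x_1,\ldots,x_s)\in[N]^s} f(x_1,\ldots,x_s,[N]\setminus\{x_1,\ldots,x_s\})\Bigr)
 = \sum_{m\ge s} \Pr(N=m)\, \frac{m!}{(m-s)!}\, \E\bigl(f(m-s+1,\ldots,m,[m-s])\bigr),
\]
using \eqref{eq:sym} to identify all $m!/(m-s)!$ per-tuple expectations. Next I would insert the Poisson probability $\Pr(N=m)=e^{-\lambda}\lambda^m/m!$ and substitute $k=m-s$, so that the factor $m!/(m-s)!$ combines with $\lambda^m/m!$ to produce $\lambda^s\cdot\lambda^k/k!$. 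This gives
\[
 \lambda^s \sum_{k\ge 0} e^{-\lambda}\frac{\lambda^k}{k!}\, \E\bigl(f(k+1,\ldots,k+s,[k])\bigr)
 = \lambda^s\, \E\bigl(f(N+1,\ldots,N+s,[N])\bigr),
\]
which is the claimed identity.

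There is no real obstacle; the only care needed is in handling the independence of $f$ and $N$ (so that the outer expectation commutes with the conditioning on $N=m$) and justifying the interchange of the expectation with the sum in the first display, which is immediate once one reduces to the case $f\ge 0$ by splitting $f$ into positive and negative parts (or invokes absolute integrability, which is implicit in writing $\E$). The symmetry hypothesis~\eqref{eq:sym} is exactly what makes the per-tuple contributions equal, and the combinatorial factor $m!/(m-s)!$ together with the Poisson weight is what produces the Palm shift $N\mapsto N+s$ on the right.
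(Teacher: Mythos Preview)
Your proof is correct and follows essentially the same approach as the paper: condition on $N=m$, use the symmetry assumption to replace the sum over $m!/(m-s)!$ ordered tuples by a single term, and then exploit the Poisson identity $\Pr(N=m)\cdot m!/(m-s)! = \lambda^s \Pr(N=m-s)$ to shift the index. The paper's proof is slightly terser but otherwise identical.
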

\begin{proof}
The argument is elementary: after conditioning on $N \ge s$ it suffices to rewrite terms, exploiting symmetry of~$f$ and the identity $\Pr(N=m) \binom{m}{s}s! = \lambda^s\Pr(N=m-s)$. 
More precisely, by the assumed independence, we see that the left hand side of \eqref{eq:palm} may be written as   
\begin{equation*}
\begin{split}
& \sum_{m \ge s} \Pr(N=m) \sideset{}{^*}\sum_{(x_1, \ldots, x_s) \in [m]^s} \E\bigl(f(x_1, \ldots, x_s,[m]  \setminus \{x_1, \ldots, x_s\})\bigr) \\
& \qquad = \sum_{m \ge s} \Pr(N=m) \binom{m}{s}s! \E\bigl(f(m-s+1, \ldots, m, [m-s])\bigr) ,
\end{split}
\end{equation*}
which by our above discussion equals $\lambda^s \E(f(N+1, \ldots, N+s, [N]))$. 
\end{proof}
We shall also use the following simple variant (again thinking of~$f$ as being symmetric w.r.t.\ the labels); 
the proof is very similar to Lemma~\ref{lem:palm} and thus omitted.
\begin{lemma}\label{lem:palm:2}
For $i \in [2]$, let $N_i \sim \Po(\lambda_i)$ be independent random variables. 
Let~$f$ be a measurable random function, independent of~$N_1$ and $N_2$, defined on the product of~$(\NNP)^2$ and finite subsets of~$\NNP \times \NNP$. 
Assume that, for all $m_1,m_2 \ge 1$, $x \in [m_1]$ and $y \in [m_2]$, we have 
${\E(f(x, y,[m_1]  \setminus \{x\}, [m_2] \setminus\{y\}))} = {\E(f(m_1, m_2,[m_1-1],[m_2-1]))}$. 
Then 
\begin{equation*}
\E\Big(\sum_{x \in [N_1],\,y \in [N_2]} f(x, y,[N_1]  \setminus \{x\}, [N_2] \setminus\{y\})\Bigr) = \lambda_1\lambda_2 \E\bigl(f(N_1+1, N_2+1, [N_1], [N_2])\bigr) .
\end{equation*}
\noproof
\end{lemma}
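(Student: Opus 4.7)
The plan is to mirror the proof of Lemma~\ref{lem:palm} almost verbatim, with the only novelty being that we now condition on the joint value of $(N_1,N_2)$ rather than a single Poisson variable, and use independence of $N_1$ and $N_2$ to factor the resulting identity.

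First I would condition on $(N_1, N_2) = (m_1, m_2)$, restricting attention to $m_1, m_2 \ge 1$ (since otherwise the inner sum is empty). Using the assumed independence of $f$ from $(N_1,N_2)$, the expectation on the left-hand side becomes
\begin{equation*}
\sum_{m_1, m_2 \ge 1} \Pr(N_1 = m_1) \Pr(N_2 = m_2) \sum_{x \in [m_1],\, y \in [m_2]} \E\bigl(f(x, y, [m_1]\setminus\{x\}, [m_2]\setminus\{y\})\bigr).
\end{equation*}
The symmetry assumption then collapses each of the $m_1 m_2$ inner terms to the common value $\E\bigl(f(m_1, m_2, [m_1-1], [m_2-1])\bigr)$.

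The second step is the standard Poisson identity $\Pr(\Po(\lambda) = m) \cdot m = \lambda \cdot \Pr(\Po(\lambda) = m-1)$, which applied once for each factor gives
\begin{equation*}
\Pr(N_1 = m_1)\Pr(N_2 = m_2) \cdot m_1 m_2 = \lambda_1 \lambda_2 \cdot \Pr(N_1 = m_1 - 1)\Pr(N_2 = m_2 - 1).
\end{equation*}
Substituting this into the double sum and reindexing via $m_i' := m_i - 1$ (so that $m_i$ becomes $m_i' + 1$ and the summation range becomes $m_i' \ge 0$) yields
\begin{equation*}
\lambda_1 \lambda_2 \sum_{m_1', m_2' \ge 0} \Pr(N_1 = m_1')\Pr(N_2 = m_2') \cdot \E\bigl(f(m_1'+1, m_2'+1, [m_1'], [m_2'])\bigr),
\end{equation*}
which by independence of $N_1, N_2$ and of $(N_1,N_2)$ from $f$ is exactly $\lambda_1 \lambda_2 \E\bigl(f(N_1+1, N_2+1, [N_1], [N_2])\bigr)$, as required.

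There is no real obstacle here; the only thing to watch is that the symmetry hypothesis is stated for $m_1, m_2 \ge 1$ and $x \in [m_1]$, $y \in [m_2]$, which is exactly the range in which the inner sum is nonempty, so the collapsing step is legitimate. Everything else is bookkeeping with the Poisson distribution and a change of index, paralleling the single-variable argument of Lemma~\ref{lem:palm}.
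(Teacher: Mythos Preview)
Your proof is correct and follows exactly the approach the paper intends: the paper omits the proof entirely, stating only that it is ``very similar to Lemma~\ref{lem:palm} and thus omitted,'' and your argument is precisely the natural two-variable analogue of that lemma's proof.
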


\subsection{Branching processes}\label{sec:BP}

The branching process results stated in Section~\ref{sec:bpresults}, namely Theorems~\ref{thsurv-simple}--\ref{asyfull-simple-pm},
are, in essence, proved in a separate paper~\cite{BPpaper} with Svante Janson. 
The reason for the split is that the proofs use very
different methods from those used in the present (already fairly long) paper;  
they are pure branching-process theory, with no random graph theory involved.
As formulated in Section~\ref{sec:bpresults}, however, these results involve rather complicated definitions
from Section~\ref{sec:cpl}.
Although our only aim is to analyze the specific branching processes~$\bp_t$ and~$\bp_t^\pm$ defined in Section~\ref{sec:cpl},
to avoid the need to repeat the full definitions in~\cite{BPpaper}, in Section~\ref{ss:BPsetup}
we review, and somewhat generalize, them. More precisely,
we gather together the properties of these processes (or rather,
the offspring distributions defining them) needed for the analysis into a formal definition, 
which is of course tailored to our context.
Then, in Section~\ref{ss:BPresults}, we state two results that, as we show,
imply Theorems~\ref{thsurv-simple}--\ref{asyfull-simple-pm}. 
The statements of these results are complicated by the parameter $\per$; in Section~\ref{apx:period} we show
that the general case may be deduced from the special case $\per=1$ proved in~\cite{BPpaper}.  

\subsubsection{Setup and assumptions}\label{ss:BPsetup}

Throughout this appendix,
we consider branching processes of the following general form, formally defined in Definition~\ref{def:bp:new}.
Each generation consists of some number of particles of type $L$ and some number of type $S$.
Particles of type $S$ never have children.
Given a probability distribution
$(Y,Z)$ on $\NN^2$, $\bp^1_{Y,Z}$ is the Galton--Watson process starting with a single particle of type $L$ (in generation~$0$)
in which each particle of type $L$ has $Y$ children of type $L$ and $Z$ of type $S$, independent of other particles in its generation
and of the history. Given a second probability distribution $(Y^0,Z^0)$ on $\NN^2$,
$\bp_{Y,Z,Y^0,Z^0}$ is the branching process defined in the same way, except that the first generation consists of $Y^0$ particles
of type $L$ and $Z^0$ of type $S$.

\begin{definition}
A \emph{branching process family} $(\bp_t)_{t\in (t_0,t_1)} = (\bp_{Y_t,Z_t,Y^0_t,Z^0_t})_{t\in (t_0,t_1)}$
is simply a family of branching processes as above, one for each real number $t$ in some interval $(t_0,t_1)$.
\end{definition}

Note that the branching process family $(\bp_{Y_t,Z_t,Y^0_t,Z^0_t})_{t\in (t_0,t_1)}$ is fully specified by the interval
$(t_0,t_1)$ and the distributions of $(Y_t,Z_t)$ and of $(Y^0_t,Z^0_t)$ for each $t$. We shall often describe
properties of these distributions via their probability generating functions. 
The next definition encapsulates those properties of the `idealized' branching process
$\bp_t$ defined in \eqref{def:bp:t} that we shall need.

\begin{definition}\label{def:bpprops}
Let $t_0<\tc<t_1$ be real numbers, and let $\per$ and $K$ be non-negative integers.
The branching process family
$(\bp_{Y_t,Z_t,Y^0_t,Z^0_t})_{t\in (t_0,t_1)}$
is \emph{$\tc$-critical with period $\per$ and offset $K$} if 
the following hold:
\begin{romenumerate}
  \item\label{def:bp:analytic} There exist $\delta>0$ and $R > 1$ with $(\tc-\delta,\tc+\delta) \subseteq (t_0,t_1)$ such that the functions 
	\begin{equation}\label{fdef}
 \gf(t,\alpha,\beta) := \E\bigl(\alpha^{Y_t}\beta^{Z_t}\bigr) \qquad\text{and}\qquad
 \gf^0(t,\alpha,\beta) := \E\bigl(\alpha^{Y_t^0}\beta^{Z_t^0}\bigr) 
\end{equation}
are defined for all real $t$ with $|t-\tc| < \delta$ and all complex $\alpha$, $\beta$ with $|\alpha|,|\beta| < R$.
Furthermore, these functions have analytic extensions to the complex domain
\[
 \fD_{\delta,R}:=\bigl\{(t,\alpha,\beta) \in \CC^3 : \: |t-\tc|<\delta \text{ and } |\alpha|,|\beta|<R\bigr\}.
\]
  \item\label{def:bp:lattice}  For each $t\in (t_0,t_1)$, with probability~$1$ we have 
\begin{gather}
\label{YZsupnew}
 (Y_t,Z_t) \in (\per\NN)^2 
\qquad\text{and}\qquad
 (Y_t^0,Z_t^0) \in (\per\NN)^2 \cup (\{0\}\times [K]).
\end{gather}
\item\label{def:bp:crit} We have
\begin{equation}
\label{means:Y}
 \E Y_{\tc} =1, 
\qquad \E Y^0_{\tc} > 0,  
\qquad\text{and}\qquad 
\left. \frac{\mathrm{d}}{\dt} \E Y_{t} \right|_{t=\tc} > 0 .
\end{equation}
\item\label{def:bp:nondeg} There exists some $k_0\in\NN$ such that
\begin{equation}\label{cnondeg}
\min\Bigl\{
\Pr\bb{Y_{\tc}=k_0,\,Z_{\tc}=k_0}, \: 
\Pr\bb{Y_{\tc}=k_0+\per,\,Z_{\tc}=k_0}, \:
\Pr\bb{Y_{\tc}=k_0,\,Z_{\tc}=k_0+\per}
\Bigr\} > 0 .
\end{equation}
\end{romenumerate}
\end{definition}

As we shall show in a moment, the results in Section~\ref{sec:cpl}--\ref{sec:cpl2} show that the branching process family $(\bp_t)_{t\in (t_0,t_1)}$
defined in \eqref{def:bp:t} is $\tc$-critical with period $\per$ and offset $K$,
where $\per$ is the period of the rule $\cR$, defined in Section~\ref{sec:period}
(see Lemma~\ref{lem:allowed}), and $K$ is the cut-off of $\cR$. 
We also consider `perturbed' distributions that differ from these `idealized' ones slightly.

\begin{definition}\label{def:dtype}
Let $(\bp_t)_{t\in (t_0,t_1)}$ be a $\tc$-critical branching process family with period~$\per$ and offset~$K$,
let~$\delta$, $R$ and $k_0$ be as in Definition~\ref{def:bpprops}.
Given $t,\eta \ge 0$ with $|t-\tc|< \delta$, 
we say that the branching process $\bp_{Y,Z,Y^0,Z^0}$ is \emph{of type $(t,\eta)$}
(with respect to $(\bp_t)$, $\delta$, $R$, and $k_0$) if the following hold: 
\begin{romenumerate}
  \item\label{def:bp2:analytic} Writing $\fN := \{(\alpha,\beta) \in \CC^2: \: |\alpha|,|\beta| < R\}$, 
the expectations
	\begin{equation}\label{tfdef}
 \tg(\alpha,\beta):=\E\bigl(\alpha^Y\beta^Z\bigr) \qquad\text{and}\qquad
 \tg^0(\alpha,\beta) := \E\bigl(\alpha^{Y^0}\beta^{Z^0}\bigr) 
\end{equation}
are defined (i.e., the sums converge absolutely) for all $(\alpha, \beta) \in \fN$.
  \item\label{def:bp2:support}  With probability~$1$ we have 
\begin{gather}
\label{dYZsupnew}
 (Y,Z) \in (\per\NN)^2 
\qquad\text{and}\qquad
 (Y^0,Z^0) \in (\per\NN)^2 \cup (\{0\}\times [K]).
\end{gather}
\item\label{def:bp2:close} 
For all $(\alpha,\beta) \in \fN$ we have
\begin{equation}
\label{dtype1}
\bigl|\tg(\alpha,\beta)-\gf(t,\alpha,\beta)\bigr|  \le \eta
\qquad\text{and}\qquad
\bigl|\tg^0(\alpha,\beta)-\gf^0(t,\alpha,\beta)\bigr| \le \eta .
\end{equation}
\end{romenumerate}
\end{definition}

Note that when $\per=1$ (the main case we are interested in), the offset $K$ plays no role in 
Definitions~\ref{def:bpprops} and~\ref{def:dtype}, 
so we may take $K=0$. 
Definition~\ref{def:dtype} says that, in some precise sense, the distributions of $(Y,Z)$ and of $(Y^0,Z^0)$
are `$\eta$-close' to those of $(Y_t,Z_t)$ and $(Y_t^0,Z_t^0)$, respectively. 
We shall only consider cases where 
\[
0\le \eta\le |t-\tc|.
\]
Note that our definition of `type $(t,\eta)$' is with reference to a branching process family $(\bp_t)$, as well as 
some additional constants. This branching process family will always be clear from context, so we shall
often omit referring to it explicitly; we shall always omit reference to the additional constants.

\begin{lemma}\label{lem:bp:satisfy}
Let $(\bp_t)_{t\in (t_0,t_1)}$ be the branching process family defined in~\eqref{def:bp:t}. 
Then $(\bp_t)$ is $\tc$-critical
with period $\per$ and offset~$K$, where $\per$ is defined in Lemma~\ref{lem:allowed},
and~$K$ is the cut-off size in the bounded-size rule~$\cR$.
Furthermore, there exist constants $\delta,C>0$ 
such that for any $t\in (t_0,t_1)$ with $Cn^{1/3}\le |t-\tc|\le \delta$
and any $t$-nice parameter list~$\fS$, 
the branching processes $\bp_t^\pm=\bp_t^\pm(\fS)$ defined in Definition~\ref{def:bp2pm}
are of type~$(t,Cn^{1/3})$ with respect to $(\bp_t)_{t\in (t_0,t_1)}$.
\end{lemma}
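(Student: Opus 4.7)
The proof reduces to a bookkeeping exercise: every condition in Definitions~\ref{def:bpprops} and~\ref{def:dtype} has already been established as a separate result in Section~\ref{sec:BPO}, and the lemma amounts to assembling these.

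For the first claim, that $(\bp_t)$ is $\tc$-critical with period $\per$ and offset $K$, I would verify the four conditions of Definition~\ref{def:bpprops} in turn. Condition~(i), the analytic extension of $\gf$ and $\gf^0$ to a polycylinder $\fD_{\delta_0, R_0}$, is exactly Theorem~\ref{thm:f:mgf}, which also supplies the required constants $\delta_0 > 0$ and $R_0 > 1$. Condition~(ii), the support constraints in~\eqref{YZsupnew}, is Lemma~\ref{lem:inlattice}, using that $\per$ is the period of $\cR$ (as defined in Lemma~\ref{lem:allowed}) and $K$ is its cut-off. Condition~(iii) is Lemma~\ref{lem:Zt:E}, and condition~(iv), the existence of $k_0$ satisfying~\eqref{cnondeg}, is Lemma~\ref{lem:YtZtlb}.

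For the second claim, I would fix the constants $\delta_0, R_0, k_0$ from the first part, let $C_0, R_1$ be the constants produced by Theorem~\ref{thm:fPT:mgf}, and then set $R := \min\{R_0, R_1\}$, $\delta := \min\{\delta_0, \sigma\}$, and $C := C_0$. For any $t$ with $Cn^{-1/3} \le |t - \tc| \le \delta$ and any $t$-nice parameter list $\fS$, I would then check the three conditions of Definition~\ref{def:dtype} for $\bp_t^\pm(\fS)$ with $\eta := Cn^{-1/3}$. The probability generating functions $\tg, \tg^0$ of $\bp_t^\pm(\fS)$ are by construction $\gf^\pm(t,\cdot,\cdot)$ and $\gf^{0,\pm}(t,\cdot,\cdot)$ in the notation of Theorem~\ref{thm:fPT:mgf}, so condition~(i) (absolute convergence on $\fN = \{|\alpha|,|\beta| < R\}$) is immediate from the uniform bound~\eqref{eq:fPT:mgf:ff0}; condition~(ii) (the lattice constraint~\eqref{dYZsupnew}) is Lemma~\ref{lem:inlattice:dom}; and condition~(iii) (the closeness bounds~\eqref{dtype1}) is exactly~\eqref{eq:fPT:mgf:f} and~\eqref{eq:fPT:mgf:f0} of Theorem~\ref{thm:fPT:mgf}. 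The standing convention $\eta \le |t-\tc|$ holds by the hypothesis on $|t-\tc|$.

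The only care needed is to choose $\delta$ and $R$ so that the complex polycylinder appearing in Definition~\ref{def:bpprops}(i) is compatible with the neighbourhood $\fN$ appearing in Definition~\ref{def:dtype}(i); this is arranged simply by taking the minimum of the radii and half-widths supplied by Theorems~\ref{thm:f:mgf} and~\ref{thm:fPT:mgf}, and every other ingredient transfers verbatim. Consequently no substantial obstacle is expected in the write-up.
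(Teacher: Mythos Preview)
Your proposal is correct and follows essentially the same approach as the paper: both proofs simply verify each condition of Definitions~\ref{def:bpprops} and~\ref{def:dtype} by citing Theorem~\ref{thm:f:mgf}, Lemmas~\ref{lem:inlattice}, \ref{lem:Zt:E}, \ref{lem:YtZtlb} for the first part, and Theorem~\ref{thm:fPT:mgf} (specifically \eqref{eq:fPT:mgf:ff0}--\eqref{eq:fPT:mgf:f0}) together with Lemma~\ref{lem:inlattice:dom} for the second part, taking $R$ as the smaller of the radii from Theorems~\ref{thm:f:mgf} and~\ref{thm:fPT:mgf}. Your write-up is slightly more explicit about the choice of constants, and you have (silently) corrected the typo $Cn^{1/3}$ to $Cn^{-1/3}$, but otherwise the arguments coincide.
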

\begin{proof}
Let $\delta$ be as in Theorem~\ref{thm:f:mgf}, and let $R$ be the smaller
of the radii $R$ appearing in Theorems~\ref{thm:f:mgf} and~\ref{thm:fPT:mgf}.

Considering first $(\bp_t)_{t\in (t_0,t_1)}$, the analyticity condition~\ref{def:bp:analytic} in Definition~\ref{def:bpprops}
is satisfied by Theorem~\ref{thm:f:mgf}.
Condition~\ref{def:bp:lattice} holds by Lemma~\ref{lem:inlattice}, 
the criticality condition~\ref{def:bp:crit} holds by Lemma~\ref{lem:Zt:E}, 
and the non-degeneracy condition~\ref{def:bp:nondeg} holds by Lemma~\ref{lem:YtZtlb}.

We now turn to $\bp_t^\pm(\fS)$ as defined in Definition~\ref{def:bp2pm}.  
Condition \ref{def:bp2:analytic} in Definition~\ref{def:dtype} is an immediate consequence of the uniform upper bound~\eqref{eq:fPT:mgf:ff0} from Theorem~\ref{thm:fPT:mgf}. 
Condition \ref{def:bp2:support} on the support holds by Lemma~\ref{lem:inlattice:dom},
and the `$\eta$-close' condition \ref{def:bp2:close} holds by~\eqref{eq:fPT:mgf:f}--\eqref{eq:fPT:mgf:f0} of Theorem~\ref{thm:fPT:mgf}, provided~$C$ is chosen large enough. 
\end{proof}

\subsubsection{Results}\label{ss:BPresults}

In this subsection we state two results, Theorems~\ref{asyfull} and~\ref{thsurv} below, which imply
the results in Section~\ref{sec:bpresults}. For each, the case $\per=1$ is proved in~\cite{BPpaper};
in Section~\ref{apx:period} we show how to reduce the general case to $\per=1$. For comparison to~\cite{BPpaper}, note that Definitions 3.2 and 3.3 there are exactly the $\per=1$ cases of Definitions~\ref{def:bpprops} and~\ref{def:dtype}.

We start with the tail asymptotics of the branching process, which simplifies when~$\per=1$: then~\eqref{eq:asyfull} holds for all~$k \ge 1$ without the indicator. 
\begin{theorem}\label{asyfull}
Let $(\bp_t)_{t\in (t_0,t_1)}$ be a $\tc$-critical branching process family with period $\per$ and offset $K$.
Then there exist constants $\eps_0,c>0$  
and analytic functions $\theta$, $\psi$
on the interval $I=[\tc-\eps_0,\tc+\eps_0]$
such that
\begin{equation}\label{eq:asyfull}
 \Pr(|\bp|=k) = (1+O(1/k)+O(\eta)) \indic{k\equiv 0\mathrm{\ mod\ }\per}k^{-3/2} \theta(t) e^{-\xi_{Y,Z} k}
\end{equation}
uniformly over all $k>K$, $t\in I$, $0\le\eta\le c|t-\tc|$ and
all branching processes $\bp=\bp_{Y,Z,Y^0,Z^0}$ of type~$(t,\eta)$ 
(with respect to~$(\bp_t)$),
where the constant $\xi_{Y,Z}$, which depends on the distribution of $(Y,Z)$, satisfies
\[
 \xi_{Y,Z} =  \psi(t) +O(\eta|t-\tc|).
\]
Moreover, $\theta>0$, $\psi\ge 0$, $\psi(\tc)=\psi'(\tc)=0$, and $\psi''(\tc)>0$.
\end{theorem}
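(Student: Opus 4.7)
The plan is to encode the distribution of $|\bp|$ via its probability generating function (PGF) and apply singularity analysis. Let $\Phi(x) := \E x^{|\bp^1_{Y,Z}|}$. Standard branching-process reasoning gives the functional equation
\[
 \Phi(x) = x\, \tg(\Phi(x), x),
\]
since the root particle contributes $x$, each of its $Y$ type-$L$ children independently seeds a copy of $\bp^1$ (contributing $\Phi(x)^Y$), and each of its $Z$ type-$S$ children contributes one leaf (contributing $x^Z$). Combining this with
\[
 \E x^{|\bp|} = \tg^0(\Phi(x), x),
\]
which follows by decomposing $\bp$ into its first generation $(Y^0, Z^0)$ and the independent $\bp^1_{Y,Z}$-subtrees hanging off the first-generation type-$L$ particles, the task reduces to extracting the $k$th coefficient of $\tg^0(\Phi(x), x)$.

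The dominant singularity of $\Phi(x)$ occurs where the implicit equation $F(x,\alpha) := \alpha - x\tg(\alpha,x) = 0$ and its derivative $F_\alpha$ meet. For the idealized family, condition~\ref{def:bp:crit} of Definition~\ref{def:bpprops} gives $\gf_\alpha(\tc, 1, 1) = \E Y_\tc = 1$, so the critical singular point at $t = \tc$ is $(x,\alpha) = (1,1)$. For nearby $t$ the singularity lies at $x = e^{\psi(t)}$, and real-analyticity of $\psi$ follows from the analytic implicit function theorem applied to the system $\{F = 0,\ F_\alpha = 0\}$, whose Jacobian is non-degenerate because $F_{\alpha\alpha}(\tc,1,1) = -\E Y_\tc(Y_\tc - 1) = -\Var Y_\tc \ne 0$ by the non-degeneracy condition~\eqref{cnondeg}. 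The properties $\psi(\tc) = 0$, $\psi'(\tc) = 0$, $\psi''(\tc) > 0$ follow respectively from criticality, from the fact that both sub- and super-critical perturbations push the singularity out past $x = 1$ (so $\psi \ge 0$ attains its minimum at $\tc$), and from combining $\frac{d}{dt}\E Y_t|_{t=\tc} > 0$ in~\eqref{means:Y} with $\Var Y_\tc > 0$. The square-root nature of the singularity yields a local expansion $\Phi(x) = a(t) - b(t)\sqrt{1 - xe^{-\psi(t)}} + O(1 - xe^{-\psi(t)})$ with $b(t) > 0$, and since $\gf^0_\alpha(\tc, 1, 1) = \E Y^0_\tc > 0$ by~\eqref{means:Y}, the composition $\tg^0(\Phi(x), x)$ inherits the square-root singularity. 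The Flajolet--Odlyzko transfer theorem then delivers the $k^{-3/2} \theta(t) e^{-\psi(t) k}$ asymptotic with a real-analytic, strictly positive function $\theta(t)$.

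The lattice restriction in~\eqref{eq:asyfull} comes straight from Definition~\ref{def:bpprops}\ref{def:bp:lattice}: the support condition~\eqref{YZsupnew} forces $|\bp|$ to lie in $\per\NN$ unless the first generation is of the form $(0,z)$ with $z \in [K]$, in which case $|\bp| = z \le K$. Hence for $k > K$ with $k \not\equiv 0 \pmod{\per}$ we have $\Pr(|\bp| = k) = 0$, matching the indicator. On the sublattice $\per\NN$, the analysis reduces to the aperiodic case $\per = 1$ via the substitution $x \mapsto x^\per$; this reduction is carried out in Section~\ref{apx:period}, so the core analytic argument is done for $\per = 1$ in the companion paper~\cite{BPpaper}. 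For the $\eta$-perturbation, the uniform closeness in condition~\ref{def:bp2:close} of Definition~\ref{def:dtype} together with Cauchy's estimates ensures that $\tg, \tg^0$ and all of their partial derivatives are uniformly $O(\eta)$-close to their idealized counterparts on a slightly shrunk polydisc. The singular point $\tilde x_c$ of the perturbed $\tilde\Phi$ therefore lies within $O(\eta)$ of $e^{\psi(t)}$, and tracking this shift via the implicit function theorem gives $\xi_{Y,Z} = -\log \tilde x_c = \psi(t) + O(\eta|t-\tc|)$; the extra factor $|t-\tc|$ arises because $\psi'(\tc) = 0$, so to leading order the sensitivity of the singular location to perturbations is proportional to $|t-\tc|$.

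The main obstacle will be carrying out the singularity analysis with the claimed \emph{uniformity} in $k$, $t$, $\eta$, and the choice of perturbed distribution. Pointwise transfer theorems are classical, but what is required here is a quantitative version in which the Camembert-shaped domain of analytic continuation of $\tilde\Phi$ past its dominant singularity, and all implied constants in the Cauchy-contour estimates, are controlled uniformly across the entire family of admissible $(t,\eta)$-perturbed distributions — depending only on the polydisc $\fN$ and on the quantitative non-degeneracy supplied by~\eqref{cnondeg}. Producing a single multiplicative error of the form $(1 + O(1/k) + O(\eta))$ that is simultaneously uniform in all these parameters, rather than a family of pointwise estimates with distribution-dependent constants, is the technical heart of the proof; this uniform singularity analysis (in the case $\per = 1$) will be established in the companion paper~\cite{BPpaper}, with the reduction from general $\per$ deferred to Section~\ref{apx:period}.
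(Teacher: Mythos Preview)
Your proposal aligns with the paper in structure: both defer the core analytic argument (the case $\per=1$, $K=0$) to the companion paper~\cite{BPpaper}, and both handle general $\per$ via the reduction in Section~\ref{apx:period}. You go further than the paper by sketching the singularity-analysis route --- the functional equation $\Phi(x)=x\,\tg(\Phi(x),x)$, the location of the singular point via $F=F_\alpha=0$, the square-root expansion, and the identification of \emph{uniformity} as the real obstacle --- and this sketch is largely sound.

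There is, however, a genuine gap in your justification of $\xi_{Y,Z}=\psi(t)+O(\eta|t-\tc|)$. You attribute the extra factor $|t-\tc|$ to $\psi'(\tc)=0$, but that does not by itself explain why an arbitrary $\eta$-perturbation of $\tg$ moves the singular point by only $O(\eta|t-\tc|)$ rather than $O(\eta)$. The linearized implicit-function calculation gives $\delta x_c \propto \eta\cdot h(\alpha^*(t),x^*(t))$ where $h=(\tg-\gf)/\eta$, and this is generically of order $\eta$. The mechanism that saves a factor $|t-\tc|$ is the PGF normalization: both $\tg$ and $\gf(t,\cdot,\cdot)$ are probability generating functions, so $\tg(1,1)=\gf(t,1,1)=1$ and hence $h(1,1)=0$. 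Cauchy estimates on the polydisc make $h$ uniformly Lipschitz, so
\[
 h(\alpha^*(t),x^*(t)) = h(\alpha^*(t),x^*(t))-h(1,1) = O\bigl(|(\alpha^*(t),x^*(t))-(1,1)|\bigr) = O(|t-\tc|),
\]
since $\alpha^*$ moves linearly in $t$ away from $1$ (its derivative at $\tc$ is $-\gf_{\alpha t}/\gf_{\alpha\alpha}\ne 0$, driven by $\tfrac{\dd}{\dd t}\E Y_t|_{\tc}>0$). This is the correct source of the factor $|t-\tc|$; your heuristic conflates sensitivity to $t$-shifts with sensitivity to transverse $\eta$-perturbations.

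One minor slip: the singular point $\tilde x_c$ satisfies $[x^k]\Phi(x)\sim C\,k^{-3/2}\tilde x_c^{-k}$, so $\xi_{Y,Z}=+\log\tilde x_c$, not $-\log\tilde x_c$.
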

The condition~$k>K$ in Theorem~\ref{asyfull} is needed only to account for the possibility that 
that for some small~$k$ which are not multiples of~$\per$ we may have $\Pr(|\bp|=k)>0$;
this can only happen if $(Y_t^0,Z_t^0)=(0,z)$ for some $z \in [K]$.
As discussed in Section~\ref{ss:BPsetup}, in the most important case with period~$\per=1$ we may take~$K=0$.
The case $\per=1$ of Theorem~\ref{asyfull} follows immediately from Theorem 3.4 in~\cite{BPpaper}, noting that here we have $\eta=O(|t-\tc|)$ by assumption, so the additional $O(\eta^2)$ error term there can be absorbed into the $O(\eta|t-\tc|)$ error term here.

We next turn to the survival probability of the branching process.

\begin{theorem}\label{thsurv}
Let $(\bp_t)_{t\in (t_0,t_1)}$ be a $\tc$-critical branching process family with period $\per$ and offset $K$. 
Then there exist constants~$\eps_0,c>0$  
with the following properties.
Firstly, the survival probability $\rho(t)={\Pr(|\bp_t|=\infty)}$ is zero for $\tc-\eps_0\le t\le \tc$, and is positive
for $\tc<t\le \tc+\eps_0$.
Secondly, $\rho(t)$ is analytic on $[\tc,\tc+\eps_0]$; more precisely,
there are constants $a_i$ with $a_1>0$ such that 
\begin{equation}\label{rexp}
 \rho(\tc+\eps) = \sum_{i=1}^\infty a_i\eps^i .
\end{equation}
for $\eps \in [0,\eps_0]$.  
Thirdly, for any $t$, $\eta$ with $|t-\tc|\le\eps_0$ and $\eta\le c|t-\tc|$,
and any branching process $\bp=\bp_{Y,Z,Y^0,Z^0}$ of type~$(t,\eta)$ (with respect to~$(\bp_t)$),
the survival probability $\trho=\Pr(|\bp|=\infty)$ is zero if $t\le \tc$, 
and is positive and satisfies 
\[
 \trho = \rho(t)+O(\eta)
\]
if $t>\tc$. 
Moreover, analogous statements hold for the survival probabilities
$\rho_1(t)=\Pr(|\bp^1_t|=\infty)$ and $\trho_1=\Pr(|\bp^1_{Y,Z}|=\infty)$. 
\end{theorem}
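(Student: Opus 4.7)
The plan is to first analyze the survival probability $\rho_1(t)$ of $\bp^1_t$, and then to transfer the results to $\rho(t)$ (and, in the perturbed setting, to~$\trho$) via a functional relation. Since type~$S$ particles never reproduce, $|\bp_t|=\infty$ occurs precisely when at least one of the $Y^0_t$ initial type-$L$ particles founds an infinite subtree, each being an independent copy of $\bp^1_t$. This yields the identity
$$ 1-\rho(t) = \E(1-\rho_1(t))^{Y^0_t} = \gf^0(t,1-\rho_1(t),1), $$
and analogously $1-\trho = \tg^0(1-\trho_1,1)$ in the perturbed setting. So it suffices to understand $\rho_1$ and $\trho_1$ and then compose with~$\gf^0$ or~$\tg^0$.

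To analyze $\rho_1(t)$, I would use classical Galton--Watson theory: $1-\rho_1(t)$ is the smallest fixed point in $[0,1]$ of $\phi(t,\alpha) := \gf(t,\alpha,1)$. Setting $x = 1-\alpha$ and defining
$$ F(t,x) := \frac{\phi(t,1-x) - (1-x)}{x}, $$
extended analytically to $x=0$ with $F(t,0) = 1 - \E Y_t$, condition~\ref{def:bp:analytic} makes $F$ analytic on a polydisk around $(\tc,0)$. By condition~\ref{def:bp:crit} we have $F(\tc,0) = 0$, and the non-degeneracy condition~\ref{def:bp:nondeg}---which forces $Y_\tc$ to take at least two distinct values $k_0$ and $k_0+\per$ with positive probability---gives $F_x(\tc,0) = \tfrac{1}{2}\E Y_\tc(Y_\tc-1) = \tfrac{1}{2}\Var Y_\tc > 0$ (the last equality using $\E Y_\tc = 1$). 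The analytic implicit function theorem then produces some $\eps_0>0$ and an analytic function $x(t)$ on $[\tc-\eps_0,\tc+\eps_0]$ with $x(\tc)=0$, $F(t,x(t))=0$, and $x'(\tc) = 2(\ddt \E Y_t)|_{t=\tc}/\Var Y_\tc > 0$ (positive by condition~\ref{def:bp:crit}). A standard convexity argument for $\phi(t,\cdot)$ then identifies $\rho_1(t)$: for $t \le \tc$ the process is (sub)critical so $\rho_1(t)=0$, while for $t \in (\tc,\tc+\eps_0]$ the value $1-x(t)$ is the unique fixed point of $\phi(t,\cdot)$ in $[0,1)$, giving $\rho_1(t) = x(t) > 0$. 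This establishes the analyticity of $\rho_1$ and hence of $\rho(t) = 1 - \gf^0(t,1-\rho_1(t),1)$ on $[\tc,\tc+\eps_0]$, with power series $\rho(\tc+\eps) = \E Y^0_\tc \cdot x'(\tc)\,\eps + O(\eps^2)$ and $a_1 = \E Y^0_\tc \cdot x'(\tc) > 0$.

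The main obstacle will be the perturbation analysis. Let $\tilde g(\alpha) := \tg(\alpha,1)$ and $\tilde F(x) := (\tilde g(1-x)-(1-x))/x$. The key observation is that $\tilde g(1) = \phi(t,1) = 1$ holds \emph{exactly}, since both are probability generating functions. Hence
$$ \tilde g(1-x) - \phi(t,1-x) = -\int_0^x \bigl(\tilde g'(1-y) - \phi_\alpha(t,1-y)\bigr)\,dy, $$
and Cauchy's integral formula applied to the closeness bound from condition~\ref{def:bp2:close} on a fixed subdisk of radius $(R-1)/2$ gives $|\tilde g' - \phi_\alpha| = O(\eta)$ uniformly, whence $|\tilde F(x) - F(t,x)| = O(\eta)$ uniformly for $|x|$ below an absolute constant. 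Since $F_x(t,x(t))$ is bounded below by a positive constant for $|t-\tc| \le \eps_0$ (shrinking $\eps_0$ if needed), a quantitative implicit function theorem---or equivalently a Rouché-type argument---then produces a root $\tilde x$ of $\tilde F$ with $|\tilde x - x(t)| = O(\eta)$. For $t \le \tc$, I would choose the constant $c$ small enough in the hypothesis $\eta \le c|t-\tc|$ to force $\E \tilde Y = \E Y_t + O(\eta) \le 1$ (using $\E Y_t \le 1 - c'(\tc-t)$ near $\tc$), so that $\bp$ is (sub)critical and $\trho_1 = \trho = 0$. For $t > \tc$, $\E \tilde Y > 1$, giving $\trho_1 = \tilde x = \rho_1(t)+O(\eta)$, and splitting
$$ |\trho - \rho(t)| \le |\tg^0(1-\trho_1,1) - \gf^0(t,1-\trho_1,1)| + |\gf^0(t,1-\trho_1,1) - \gf^0(t,1-\rho_1(t),1)| $$
bounds each term by $O(\eta)$ via closeness and Lipschitz continuity of $\gf^0$ on a compact domain. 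The essential difficulty is that the naive bound $|\tilde F(x) - F(t,x)| \le \eta/|x|$ blows up as $x \to 0$ and would only yield an $O(\eta/|t-\tc|) = O(1)$ error at $x = x(t) = \Theta(|t-\tc|)$ when $\eta = \Theta(|t-\tc|)$; the integration trick exploiting the exact pgf normalization $\tilde g(1)=1$ restores the needed factor of $|x|$ and delivers the required uniform $O(\eta)$ estimate.
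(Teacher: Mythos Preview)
Your proposal is correct. The paper does not prove this theorem in full here: Section~\ref{apx:period} only reduces the general case to the special case $\per=1$, $K=0$ (first replacing the exceptional first-generation outcomes $(0,k)$, $k\in[K]$, by $(0,\per)$---harmless for survival since $Y^0=0$ means immediate extinction---and then grouping particles into blocks of size~$\per$), and defers that special case to the companion paper~\cite{BPpaper}. Your direct implicit-function-theorem attack on the Galton--Watson fixed-point equation is the standard technique and presumably close in spirit to what~\cite{BPpaper} does; the advantage of your route over the one taken in this paper is that it handles arbitrary~$\per$ without any reduction, since the period enters your argument only via condition~\eqref{cnondeg}, used merely to obtain $\Var Y_{\tc}>0$. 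You have also correctly isolated the one genuinely delicate point in the perturbation step: the naive bound $|\tilde F(x)-F(t,x)|\le\eta/|x|$ loses a factor of~$|t-\tc|$ and yields only $\trho_1=\rho_1(t)+O(\eta/|t-\tc|)$; the integration device exploiting the exact normalization $\tilde g(1)=\phi(t,1)=1$ is precisely what restores the claimed~$O(\eta)$ error.
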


The $\per=1$ case of Theorem~\ref{thsurv} is exactly Theorem 4.5 of~\cite{BPpaper}.
In the light of Lemma~\ref{lem:bp:satisfy},
Theorems~\ref{thsurv-simple} and~\ref{thsurv-simple-pm} follow immediately
from Theorem~\ref{thsurv}, and Theorems~\ref{asyfull-simple} and~\ref{asyfull-simple-pm} from Theorem~\ref{asyfull}
and the discussion in Remark~\ref{rem:period}. Hence all that remains is to reduce Theorems~\ref{asyfull} and \ref{thsurv} to the case $\per=1$.

\subsubsection{Reduction to the special case $\per=1$ and $K=0$}\label{apx:period} 
The proofs of Theorems~\ref{asyfull} and~\ref{thsurv} in the key case $\per=1$ and $K=0$ are given in a companion paper~\cite{BPpaper} written with Svante Janson. 
In this subsection we outline how both theorems follow from these key special~cases; the argument is purely technical and requires no new ideas.

Turning to the details for Theorem~\ref{asyfull}, 
suppose that we ave given a $\tc$-critical branching process family $(\bp_t)_{t\in (t_0,t_1)}=
(\bp_{Y_t,Z_t,Y_t^0,Z_t^0})_{t\in (t_0,t_1)}$ with period $\per > 1$ and offset $K \ge 0$,
and a branching process $\bp=\bp_{Y,Z,Y^0,Z^0}$ of type $(t,\eta)$ 
with respect to this family.
(As discussed in Sections~\ref{ss:BPsetup}--\ref{ss:BPresults}, for period $\per = 1$ we make take offset $K=0$, and so there is nothing to show.) 
We shall modify these branching processes in two steps into ones corresponding 
to the case~$\per=1$, $K=0$. 
Of course, in each step we need to check that our branching processes 
satisfy Definitions~\ref{def:bpprops} and~\ref{def:dtype} (so we can apply Theorem~\ref{asyfull} to them),
and that the conclusion of Theorem~\ref{asyfull} for the new distributions implies the conclusion of Theorem~\ref{asyfull} for the old distributions (possibly after decreasing the corresponding constant~$c>0$). 

We start with a simple 
auxiliary claim for the distributions~$(Y_t^0,Z_t^0)$ and~$(Y^0,Z^0)$ fixed above. 
\begin{claim}\label{cl:analytic}
For each $k\ge 1$ 
the function $f_k(t) := \Pr(Y^0_t=0,\,Z^0_t=k)$ is defined for real $t$ with $|t-\tc|< \delta$, and satisfies $|\Pr(Y^0=0,\,Z^0=k) - \Pr(Y_t^0=0,\,Z_t^0=k)\bigr| \le \eta$. 
Furthermore, $f_k$ has an analytic extension to the complex domain $\fD_\delta := \{t \in \CC: \: |t-\tc|< \delta\}$.
\end{claim}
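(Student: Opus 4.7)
The plan is to extract $f_k(t)$ as a single Fourier-type coefficient of the probability generating function $\gf^0(t,\alpha,\beta)$ from Definition~\ref{def:bpprops}\ref{def:bp:analytic}, and then read off both the analyticity and the inequality from the corresponding statements about~$\gf^0$ (respectively from the closeness assumption~\eqref{dtype1}). The key observation, which makes the whole thing work cleanly, is that the radius~$R$ in Definitions~\ref{def:bpprops} and~\ref{def:dtype} satisfies $R>1$, so the unit circle $|\beta|=1$ lies strictly inside the analyticity/closeness domain.

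First I would write (with the convention $0^0=1$)
\[
\gf^0(t,0,\beta) = \sum_{k\ge 0}\Pr(Y_t^0=0,\,Z_t^0=k)\beta^k ,
\]
so that by Cauchy's integral formula,
\[
f_k(t) \;=\; \frac{1}{2\pi i}\oint_{|\beta|=1}\frac{\gf^0(t,0,\beta)}{\beta^{k+1}}\,\dd\beta .
\]
Since $\gf^0$ is analytic on $\fD_{\delta,R}$ by Definition~\ref{def:bpprops}\ref{def:bp:analytic}, the integrand is jointly analytic in $(t,\beta)$ on $\fD_\delta \times \{|\beta|=1\}$, so a standard application of Morera's theorem (or differentiation under the integral sign) gives the analytic extension of $f_k$ to the complex domain~$\fD_\delta$. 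In particular $f_k(t)$ is well-defined on this domain, and obviously coincides with the probability $\Pr(Y_t^0=0,Z_t^0=k)$ on the real interval $|t-\tc|<\delta$.

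Finally, an identical representation of $\Pr(Y^0=0,Z^0=k)$ as a contour integral of $\tg^0(0,\beta)/\beta^{k+1}$ (valid since $\tg^0$ is defined on~$\fN$ by Definition~\ref{def:dtype}\ref{def:bp2:analytic}) gives
\[
\Pr(Y^0=0,Z^0=k)-f_k(t) \;=\; \frac{1}{2\pi i}\oint_{|\beta|=1}\frac{\tg^0(0,\beta)-\gf^0(t,0,\beta)}{\beta^{k+1}}\,\dd\beta .
\]
The closeness bound~\eqref{dtype1} gives $|\tg^0(0,\beta)-\gf^0(t,0,\beta)|\le \eta$ uniformly on $|\beta|=1$, so the standard Cauchy estimate (with $r=1$) yields the claimed bound $|\Pr(Y^0=0,Z^0=k)-f_k(t)|\le \eta$. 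There is no real obstacle here; the only point worth stressing is that both conclusions are automatic consequences of $R>1$ together with the joint-analyticity/closeness hypotheses packaged into Definitions~\ref{def:bpprops} and~\ref{def:dtype}.
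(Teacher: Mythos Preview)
Your proof is correct and essentially identical to the paper's: the paper also extracts $f_k(t)$ as the coefficient $\gf^0_{\beta^k}(t,0,0)/k!$ and then applies standard Cauchy estimates (with multiradius $(1,1)$, using $R>1$) to bound the difference by $\sup_{|\alpha|,|\beta|\le 1}|\tg^0-\gf^0|\le\eta$. The only cosmetic difference is that you write out the Cauchy integral explicitly rather than citing the Cauchy estimate as a black box.
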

\begin{proof}
Since $\gf^0(t,\alpha,\beta)$ has an analytic extension to~$\fD_{\delta,R}$ 
by Definition~\ref{def:bpprops}, and $f_k(t) = \gf^0_{\beta^k}(t,0,0)/k!$,
it follows that $f_k(t)$ has an analytic extension to $\fD_{\delta}$. 
Furthermore, for any real $t$ with $|t-\tc|< \delta$, 
using standard Cauchy estimates (with center $a=(0,0)$ and multiradius $r=(1,1)$; see, e.g., Theorem~1.6 in~\cite{Range}) we obtain 
\begin{equation*}
\begin{split}
\bigl|\Pr(Y^0=0,\,Z^0=k) - \Pr(Y_t^0=0,\,Z_t^0=k)\bigr| & = \bigl|\tg^0_{\beta^k}(0,0) - \gf^0_{\beta^k}(t,0,0) \bigr| / k!\\
& \le \sup_{\alpha,\beta \in \CC\::\: |\alpha|,|\beta| \le 1} \bigl|\tg^0(\alpha,\beta) - \gf^0(t,\alpha,\beta) \bigr| \le \eta,
\end{split}
\end{equation*}
where we used~\eqref{dtype1} from Definition~\ref{def:dtype} for the last inequality (recall that $R > 1$). 
\end{proof}

For Theorem~\ref{asyfull} we first deduce the case $\per \ge 1$, $K > 0$ from the case  $\per \ge 1$, $K=0$. 
Recall that~$Z_t^0$, and also~$Z^0$, need not always be a multiple of $\per$. However, from
condition~\eqref{YZsupnew} of Definition~\ref{def:bpprops} (and its analogue in Definition~\ref{def:dtype}),
the only possible exceptions are values $(Y_t^0,Z_t^0)=(0,k)$ with $k \in [K]=\{1, \ldots, K\}$,
and similarly for~$Z^0$.
We modify the distribution of~$(Y_t^0,Z_t^0)$ by simply setting
this random variable to be equal to~$(0,\per)$, say, whenever it takes a value~$(0,k)$ with~$k \in [K]$.
We modify~$(Y^0,Z^0)$ in an analogous way.
It is easy to see that the resulting branching processes satisfy the conditions in Definitions~\ref{def:bpprops} and~\ref{def:dtype}.
Indeed, the key assumption is the analytic extension of the probability generating function~$\gf^0$,
but~$\gf^0$ has changed
only by the addition of the finite sum $\sum_{k \in [K]}(\beta^{\per}-\beta^k)\Pr(Y^0_t=0,Z^0_t=k)$, 
which is has an analytic extension to $\fD_{\delta,R}$ by Claim~\ref{cl:analytic}.
Since the distribution of $Y_t^0$ has not changed 
the new distribution still satisfies the criticality condition~\eqref{means:Y}.  
We next check that the new~$(Y^0,Z^0)$ is of type~$(t,C\eta)$ with respect to the new~$(Y_t^0,Z_t^0)$
for some constant~$C \ge 1$. 
Considering how $\tg^0(\alpha,\beta)-\gf^0(t,\alpha,\beta)$ changes when we modify the distributions, 
and using Claim~\ref{cl:analytic} to compare the relevant point probabilities, 
this is easily seen to follow from the fact that the original $(Y^0,Z^0)$ is of type~$(t,\eta)$ with respect
to the original $(Y_t^0,Z_t^0)$. 
In terms of the conclusion of Theorem~\ref{asyfull}, 
since $Y^0=0$ implies that the process stops immediately, and so~$|\bp|=Z^0$, 
we have only affected the value of $\Pr(|\bp|=k)$ for $k \in [K]$,
which does not alter the conclusion of Theorem~\ref{asyfull}.
To sum up, since $C \eta \le c|t-\tc|$ is equivalent to $\eta \le (c/C) \cdot |t-\tc|$, 
the conclusion of Theorem~\ref{asyfull} (with constant~$c$) 
for the modified distributions with $K =0$  
implies the conclusion of Theorem~\ref{asyfull} (with $c$ replaced by the constant~$c/C$) for the original distributions with $K>0$, as~claimed.

After this first change, for Theorem~\ref{asyfull} it remains to deduce the case $\per > 1$, $K = 0$ from the case~$\per=1$, $K=0$.
If the distributions $(Y_t^0,Z_t^0)$ and $(Y^0,Z^0)$ as well as $(Y_t,Z_t)$ and $(Y,Z)$ are all supported 
on~$(\per\NN)^2$, then, in the branching process, individuals are born in groups of size~$\per$ (both in
the first generation and later on). Thus we may describe the same random tree
differently as a branching process, by treating each such group as an individual.
The new branching process $\bp'$ deterministically satisfies 
\begin{equation}
\label{eq:bpmod}
|\bp|=\per |\bp'| .
\end{equation}
To check that it satisfies the conditions in Definitions~\ref{def:bpprops} and~\ref{def:dtype},
we now relate the initial generation and later offspring distributions of~$\bp$ 
to those of~$\bp'$.

For the initial generation, we simply divide $Y_t^0$, $Z_t^0$, $Y^0$ and~$Z^0$ by~$\per$, 
which preserves all relevant conditions in Definitions~\ref{def:bpprops} and~\ref{def:dtype}.
Indeed, the only condition that requires some argument is the analytic extension condition for~$\gf^0$:
the key point is that the original~$\gf^0$ has an analytic extension to the polydisk~$\fD_{\delta,R}$. 
By standard results for complex analytic functions, this extension is given by a single power series around~$(\tc,0,0)$ which 
converges in the entire polydisk (see, e.g., Theorem~1.18 in~\cite{Range}).
Since, for every~$t$, $Y_t^0$ and~$Z_t^0$ are both supported on $\per\NN$,
all powers of~$\alpha$ and~$\beta$ appearing in this power series are multiples of~$\per$.
Substituting~$\alpha^{1/\per}$ and~$\beta^{1/\per}$ thus gives a corresponding power series
for the new distributions, converging in $\fD_{\delta,R^{\per}}$. 

For the later offspring distributions $(Y_t,Z_t)$ and $(Y,Z)$, the operation is to take the sum of~$\per$ independent copies of the distribution divided by~$\per$. It is not hard to check that this preserves
the assumptions, after increasing~$\eta$ by a constant factor~$C \ge 1$. 
Firstly, the mean of~$Y_t$ is unaffected and the mean of $Y_t^0$ is simply divided by $\per > 1$, so the criticality condition~\eqref{means:Y} still holds. 
Secondly, the new `idealized' probability generating functions~$\hf$ and~$\hf^0$
satisfy  
\begin{equation}
\label{eq:bpmod:hf}
\hf(t,\alpha,\beta)=\gf(t,\alpha^{1/\per},\beta^{1/\per})^{\per} 
\qquad\text{and}\qquad
 \hf^0(t,\alpha,\beta) = \gf^0(t,\alpha^{1/\per},\beta^{1/\per}) ,
\end{equation}
so, arguing as above, they extend analytically to $\fD_{\delta,R^{\per}}$.
Thirdly, the new `perturbed' probability generating functions~$\htf$, $\htf^0$ satisfy
\begin{equation}
\label{eq:bpmod:htf}
\htf(\alpha,\beta) = \tg(\alpha^{1/\per},\beta^{1/\per})^{\per}
\qquad\text{and}\qquad
 \htf^0(t,\alpha,\beta) = \tg^0(t,\alpha^{1/\per},\beta^{1/\per}) ,
\end{equation}
so they are defined and (complex) analytic in ${\widehat\fN} := \{(\alpha,\beta) \in \CC^2: \: |\alpha|,|\beta| < R^{\per}\}$.
Now, since the~`$\eta$-close' condition~\eqref{dtype1} holds for the original distributions, using the form of~\eqref{eq:bpmod:hf}--\eqref{eq:bpmod:htf} it is easy to see that, after replacing~$\eta$ with $C \eta \ge \eta$, \eqref{dtype1} again holds for the new distributions. 
Furthermore, if the original distribution satisfies the non-degeneracy condition~\eqref{cnondeg} with $\per> 1$ and $k_0 \in \NN$, then it is not difficult to check that the new distribution satisfies~\eqref{cnondeg} with $\per=1$ and the same constant $k_0 \in \NN$ (when we sum the~$\per > 1$ independent copies of the modified distribution, we just take the value~$(k_0,k_0)/\per$ for all the first~$\per-1$ copies, and then consider the values $(k_0,k_0)/\per$, $(k_0+\per,k_0)/\per$, and $(k_0,k_0+\per)/\per$ for the last copy). 
To sum up, the new distributions associated to~$\bp'$ satisfy Definitions~\ref{def:bpprops} and~\ref{def:dtype}, and are of type~$(t,C\eta)$ for some~$C \ge 1$. 
Recalling~\eqref{eq:bpmod}, the conclusion of Theorem~\ref{asyfull} with constant~$c$ for the modified distributions with $\per=1$ and $K=0$ easily implies the conclusion of Theorem~\ref{asyfull} (with~$c$ replaced by the constant $c/C$) 
for the original distributions with~$\per>1$ and~$K = 0$, as~claimed.

Finally, the same arguments allow us to deduce Theorem~\ref{thsurv} from the special case~$\per=1$, $K=0$.
Indeed, we modify the branching process in two steps, as above, which preserves the assumptions
of the theorem (as we have just shown). Since we have only altered outcomes with finite size, 
conclusions about the survival probability thus carry over from the modified branching processes to the original~ones.

\newoddpage 

\section{Glossary of notation}\label{sec:notation}
\vspace{0.25em}
\begin{enumerate}[topsep=0.6em,itemsep=0.25em,leftmargin=3.0cm,labelsep=0.75em]

\item[$\NN$, $\NNP$] natural numbers with and without~$0$ 

\item[$L_j(G)$] size of the $j$th largest component in the graph~$G$ 

\item[$N_k(G)$, $N_{\ge k}(G)$] number of vertices in components with exactly/at least $k$-vertices in the graph~$G$ 

\item[$S_r(G)$] $r$th order susceptibility of the graph~$G$; see~\eqref{eq:def:Sr}

\item[$S_{r,n}(G)$] modified $r$th order susceptibility of the graph~$G$; see~\eqref{eq:def:SrG}

\item[$C_v(G)$] (vertex set of) the component of a graph $G$ containing a vertex~$v$

\item[$C_W(G)$] the union of $C_v(G)$ over $v\in W$

\end{enumerate}

\vspace{0.6em}
\noindent\textbf{For bounded-size rules $\cR$:}
\begin{enumerate}[topsep=0.6em,itemsep=0.25em,leftmargin=3.0cm,labelsep=0.75em]

\item[$\cR$] decision rule; see Section~\ref{sec:intro:def}

\item[$\cR(\vc) = \{j_1,j_2\}$] indices of the vertices joined by $\cR$ when presented with vertices $v_1,\ldots,v_\ell$ in components
of size $c_1,\ldots,c_\ell$; see Sections~\ref{sec:intro:def} and~\ref{sec:DEM} 

\item[$K$] cut-off in the bounded-size rule~$\cR$

\item[$\cC=\cC_K$] set $\{1,2,\ldots,K,\omega\}$ of `observable' component sizes, where $\omega$ means size~$>K$

\item[$G_i = G^{\cR}_{n,i}$] random graph after $i$ steps of the process (often with $i=tn$)

\item[$t$] time parameter (often corresponding to $i/n$)

\item[$\tc$] critical time; see~\eqref{eq:def:tc}

\item[$\cSR$] the set of possible component sizes; see Section~\ref{sec:period}

\item[$\per$] period of the rule; see Section~\ref{sec:period}

\end{enumerate}

\vspace{0.6em}
\noindent\textbf{For the graph $G_i = G^{\cR}_{n,i}$ after $i=tn$ steps:}
\begin{enumerate}[topsep=0.6em,itemsep=0.25em,leftmargin=3.0cm,labelsep=0.75em]

\item[$\cF_i$] $\sigma$-algebra corresponding to information revealed by step $i$; see Section~\ref{sec:DEM}

\item[$L_j(i)$] size $L_j(i)=L_j(G_i)$ of the $j$th largest component after $i$ steps of the process

\item[$N_k(i)$, $N_{\ge k}(i)$] number $N_k(i)=N_k(G_i)$ and $N_{\ge k}(i) = N_{\ge k}(G_i)$
 of vertices in components with exactly/at least $k$-vertices after~$i$ steps of the process

\item[$S_r(i)$] $r$th order susceptibility $S_r(i)=S_r(G_i)$ after~$i$ steps of the process; see~\eqref{eq:def:Sr}

\item[$\rho(t)$] scaling limit of $L_1$, i.e., limit of $L_1(tn)/n$; see~\eqref{eq:L1:pto}

\item[$\rho_k(t)$] scaling limit of $N_k$, i.e, limit of $N_k(tn)/n$; see~\eqref{eq:Nk:pto}

\item[$s_r(t)$] scaling limit of $S_r$, i.e., limit of $S_r(tn)/n$; see~\eqref{eq:Sj:pto}

\item[$\psi(t)$] rate function in the decay of the component size distribution at time $t$ (step $tn$); see Theorems~\ref{thm:rhok} and~\ref{asyfull-simple-pm}

\end{enumerate}

\vspace{0.6em}
\noindent\textbf{For the two-round exposure near around~$\tc$:}
\begin{enumerate}[topsep=0.6em,itemsep=0.25em,leftmargin=3.0cm,labelsep=0.75em]

\item[$t_0$, $t_1$] times with $t_0 < \tc < t_1$: the main focus of this paper are times $t \in [t_0,t_1]$; see~\eqref{def:sigma}--\eqref{def:t0t1t2} 

\item[$i_0$, $i_1$] steps $i_0=t_0n$ and $i_1=t_1n$: we reveal information about the steps $i_0<i \le i_1$ via a two-round exposure argument; see Section~\ref{sec:exp} and~\eqref{def:i0i1i2} 

\item[$V_S$, $V_L$] sets of vertices in {\bf S}mall and {\bf L}arge (size $>K$) components \emph{at step $i_0$}; see Section~\ref{sec:setup}

\item[$H_i$] the `marked graph' after $i$ steps; see Section~\ref{sec:exp}

\item[$Q_{k,r}(i)$] number of $(k,r)$-components after~$i$ steps of the process; see Section~\ref{sec:exp} (note that the definition for $k \ge 1$ and $k=0$ differs slightly, see also Sections~\ref{sec:DEM:VL}--\ref{sec:VS:finite})

\item[$q_{k,r}(t)$] scaling limit of $Q_{k,r}$, i.e., limit of $Q_{k,r}(tn)/n$; see Sections~\ref{sec:VS:finite} and~\ref{sec:VS}

\item[$\fS_i$] random `parameter list' generated by the random graph $G^{\cR}_{n,i}$ after $i$ steps; see \eqref{def:fS}

\item[$J_i=J(\fS_i)$] random graph constructed using the random `parameter list'~$\fS_i$; see Section~\ref{sec:exp}

\end{enumerate}

\vspace{0.6em}
\noindent\textbf{For the branching process comparison arguments:}
\begin{enumerate}[topsep=0.6em,itemsep=0.25em,leftmargin=3.0cm,labelsep=0.75em]

\item[$\eps$] generally used for $|t-\tc|$ 

\item[$\Psi=(\log n)^2$] a convenient cut-off size 

\item[$\fS$] an arbitrary `parameter list'; see Definition~\ref{def:paramlist}

\item[$J=J(\fS)$] random graph constructed using~$\fS$; see Definition~\ref{def:J}

\item[$\JP=\JP(\fS)$] Poissonized random graph constructed using $\fS$; see Definition~\ref{def:F}

\item[$t$-nice] condition on a parameter list $\fS$ that essentially says it behaves similarly to the list $\fS_i$ corresponding to $G_{n,i}^{\cR}$, where $i=tn$; see Definition~\ref{def:nice}
  
\item[$\cQ_{k,r}$] the set of $(k,r)$-components/hyperedges in $\JP$; see Section~\ref{sec:nep:fg}

\item[$\bp_t$] idealized branching process that approximates the neighbourhoods of the random graphs $G^{\cR}_{n,tn}$ and $\JP(\fS_{tn})$; see Section~\ref{sec:BPI:distr}

\item[$\fS^{\pm}_t$] perturbed variants of a given $t$-nice parameter list $\fS$ (see Definition~\ref{def:nice} for the definition of $t$-nice) which typically satisfy $J(\fS^{-}_t) \subseteq J(\fS) \subseteq J(\fS^{+}_t)$; see Definition~\ref{def:cpl} and Lemma~\ref{lem:cpl}

\item[$\bp_t^{\pm}=\bp_t^{\pm}(\fS)$] dominating branching processes that approximate (from above and below) the neighbourhoods of the random graphs $\JP(\fS^{\pm}_t)$; see Section~\ref{sec:dom:dom}

\item[$\Lambda$] cut-off size beyond which a component is `large', used for ending exploration arguments (often $n^{2/3}$, but not always); distinct from the cut-off $K$ in the bounded-size rule~$\cR$
  
\item[$n^{2/3}$] convenient cut-off size at which we abandon certain domination arguments; see Theorems~\ref{thm:cpl:LB}--\ref{thm:ENgekD} and Lemma~\ref{lem:Nkvar:sup}

\end{enumerate}

\end{appendix}

\end{document}